\documentclass[pdftex]{amsart}
\usepackage{graphicx, mathabx, amssymb,amsfonts,amsmath,amsthm,newlfont}
\usepackage{epsfig,url}
\usepackage[usenames,dvipsnames]{color}
\usepackage{enumerate}
\usepackage[colorlinks=true,linkcolor=red,citecolor=blue]{hyperref}
\usepackage{color}

\usepackage[all,2cell]{xy} \UseAllTwocells \SilentMatrices

\vfuzz2pt 
\hfuzz2pt 
\newtheorem{thm}{Theorem}[section]
\newtheorem{cor}[thm]{Corollary}
\newtheorem{lem}[thm]{Lemma}
\newtheorem{prop}[thm]{Proposition}

\theoremstyle{definition}
\newtheorem{defn}[thm]{Definition}
\newtheorem{notn}[thm]{Notation}

\newtheorem{ex}[thm]{Examples}
\newtheorem{example}[thm]{Example}

\theoremstyle{remark}
\newtheorem{rem}[thm]{Remark}

\numberwithin{equation}{section}

\newcommand{\To}{\longrightarrow}

\newcommand{\Z}{\mathbb Z}
\newcommand{\Q}{\mathbb Q}

\newcommand{\C}{\mathbb C}

\newcommand{\R}{\mathbb R}

\newcommand{\Pro}{\mathbb P}

\newcommand{\GC}{\mathcal{GC}}
\newcommand{\q}{/\!/}

\newcommand{\tr}{\mathrm{tr}}
\newcommand{\gr}{\mathrm{gr}}
\newcommand{\id}{\mathrm{id}}

\newcommand{\can}{\mathrm{can}}
\newcommand{\mer}{\mathrm{mer}}

\newcommand{\MM}{\mathsf{M}}

\newcommand{\mm}{\mathfrak{m}}

\newcommand{\Gmb}{\overline{\mathbb{G}}_m}

\newcommand{\SE}{\mathcal{S}}

\newcommand{\ql}{\setminus \! \setminus}

\newcommand{\Or}{\mathcal{O}}
\newcommand{\lf}{\mathrm{lf}}
\newcommand{\Gm}{\mathbb{G}_m}

\newcommand{\mot}{\mathrm{mot}}

\newcommand{\ess}{\mathrm{ess}}

\newcommand{\BB}{\mathcal{B}}

\newcommand{\BBB}{\mathfrak{B}}

\newcommand{\GL}{\mathrm{GL}}
\newcommand{\SL}{\mathrm{SL}}

\newcommand{\cone}{\mathsf{c}}
\newcommand{\trop}{\mathrm{trop}}
\newcommand{\vol}{\mathrm{vol}}

\newcommand{\rt}{\mathrm{rt}}
\newcommand{\perf}{\mathrm{perf}}
\newcommand{\opp}{\mathrm{opp}}

\newcommand{\Top}{\mathcal{T}op}

\newcommand{\Det}{\mathrm{Det}}
\newcommand{\red}{\mathrm{red}}
\newcommand{\Quad}{\mathcal{Q}}

\newcommand{\PC}{\mathcal{PC}}
\newcommand{\PF}{\mathsf{S}}
\newcommand{\PLC}{\mathrm{PLC}}
\newcommand{\BLC}{\mathrm{BLC}}

\newcommand{\LM}{L\mathcal{M}}
\newcommand{\LA}{L\mathcal{A}}

\newcommand{\stab}{\mathrm{stab}}


\usepackage{geometry}
\geometry{
 a4paper,
 left=35mm,
 right=35mm,
 top=38mm,
 bottom=39mm
 }
 
\begin{document}

\author{Francis Brown}
\begin{title}[Bordifications of tropical moduli spaces]
{Bordifications of the moduli spaces of tropical curves and abelian varieties, and unstable  cohomology of  $\mathrm{GL}_g(\mathbb{Z})$ and $\mathrm{SL}_g(\mathbb{Z})$}\end{title}
\maketitle

\begin{abstract} We construct  bordifications of the moduli spaces of tropical curves and of tropical abelian varieties, and show that  the tropical Torelli map extends to their bordifications. We prove that the classical bi-invariant differential forms studied by Cartan and others extend to these bordifications  by studying their behaviour at infinity, and consequently deduce infinitely many new non-zero unstable cohomology classes in the cohomology of the general and special linear groups $\mathrm{GL}_g(\mathbb{Z})$ and $\mathrm{SL}_g(\mathbb{Z})$.  In particular, we obtain  a new and geometric  proof of Borel's theorem on the stable cohomology of these groups. 
 In addition, we   completely determine the cohomology of the link  of the moduli space of tropical abelian varieties within a certain range, and show that it contains the stable cohomology of the general linear group. 
 In the process, we define  new transcendental invariants associated to  the minimal vectors of quadratic forms, and  also show that  a certain   part of the cohomology of the general linear group $\mathrm{GL}_g(\mathbb{Z})$ admits the structure of a motive.  
 In an appendix, we give an algebraic construction of the Borel-Serre compactification by embedding it  in the real points of an iterated blow-up of a projective space along linear subspaces, which may have  independent applications. \end{abstract} 
 
\section{Introduction}
The main goal of this paper is an algebro-geometric construction of bordifications of the moduli spaces of tropical curves and abelian varieties. Before describing these in detail, we first present some  applications to the cohomology of the special and general linear groups.

\subsection{Unstable cohomology of  linear groups}

Let $\mathcal{P}_g$ denote the space of symmetric positive definite $g\times g$ matrices $X$ with real entries. It is a connected contractible space equipped with a right action  of $h\in \GL_g(\R)$ via the map $h(X) = h^TX h$.

Since the orbifold $\mathcal{P}_g/ \GL_g(\Z)$ is a $K(\pi,1)$ one has  $H^n(\GL_g(\Z);\R) = H^n(\mathcal{P}_g/\GL_g(\Z); \R)$.
Block direct sum of matrices $X \mapsto X \oplus 1$ defines a  map $\mathcal{P}_g \rightarrow \mathcal{P}_{g+1}$. The stable cohomology is defined to be the limit with respect to these maps:
\[ H^n (\GL_{\infty}(\Z);\R) = \varprojlim_{g} H^n (\GL_g(\Z);\R)\ .  \]
It  was famously computed by Borel \cite{Borel}, from which  he deduced  the  ranks of the rational algebraic $K$-theory of the integers (and, more generally, of  all number fields), which is   of  fundamental importance in the modern theory of motives.  Very little is known about the unstable cohomology of the  groups $\SL_g(\Z)$ and $\GL_g(\Z)$. See figures  1 and 3 for the   range in which their  cohomology groups  are completely known.

\begin{figure}[h]
\[
\begin{array}{c|cccccccccccccccccc}
g    & H^i(\GL_g(\Z)) \\ \hline
2  & H^0       \\
 3 & H^0     \\
 4 &H^0    \\ 
 5 & H^0   && &H^5 &  \\ 
 6 &H^0   & & &H^5 & &\fbox{$H^8$}   \\ 
 7 &H^0    && &H^5 & & &H^9&&& & &  H^{14} & \fbox{$H^{15}$}  \\ 
\end{array}
\]
\caption{Known ranges for which the cohomology of $\GL_g(\Z)$ has been completely determined  \cite{SouleSL3, LeeSzczarba, ElbazVincentGanglSoule}.  An entry $H^i$ in the table  indicates that $H^i(\GL_g(\Z);\R)$ is non-zero and of rank $1$. All non-zero classes are explained by  theorem  \ref{thm: introcohomSLGL} except for the two boxed entries. }
\end{figure}

Even less is known about the 
  cohomology   with compact supports $ H_c^n(\mathcal{P}_g/\GL_g(\Z); \R)$,  which we shall denote by $H_c^n(\GL_g(\Z);\R)$. The notation is justified  by  duality: indeed, when $g$ is odd, the orbifold $\mathcal{P}_g/\GL_g(\Z)$ is orientable and its compactly supported cohomology is Poincar\'e dual to  ordinary cohomology. However,  in the case when $g$ is even, the cohomology with and without compact supports are \emph{a priori}  unrelated. There is no  stability property for cohomology with  compact supports,  as one may see from figure 2.

   The ordinary cohomology of the special linear group $\SL_g(\Z)$ coincides with that of $\GL_g(\Z)$ when $g$ is odd, but is built out of the cohomology of $\GL_g(\Z)$ with and without compact supports in the  case when $g$ is even. See figure 3 for a table of known results.

\subsection{Cohomological results}
  Consider the abstract graded exterior algebra 
 \[\Omega_{\can}^{\bullet} = \bigwedge \bigoplus_{k\geq 1} \Q \,\omega^{4k+1}\] with generators $\omega^{4k+1}$ in degree $4k+1$. 
 For  any  $X \in \mathcal{P}_g$  and $k>1$,  let
\begin{equation} \label{introomegadeftrace} \omega^{4k+1}_X =   \tr ((X^{-1} dX))^{4k+1} \ ,  
\end{equation} 
which is a well-defined closed differential form  on $\mathcal{P}_g$ with  a number of remarkable properties, including bi-invariance with respect to left and right multiplication by $\GL_g(\R)$. Consequently,   any  $\omega \in \Omega_{\can}^{n}$ defines  a closed differential  form of degree $n$ on $\mathcal{P}_g/\GL_g(\Z)$. 

Consequently there is a natural map, which is sometimes referred to as either the   `Borel map' or `Matsushima homomorphism':
\begin{equation} \label{into: jhomomorphism}  j :    \Omega_{\can}^{n}\otimes_{\Q} \R \To  H^{n} (\GL_g(\Z);\R)\ .
\end{equation}
Borel proved  
that $j$ is injective for $n\leq g/4$.  Results of Matsushima \cite{Matsushima}  and Garland  \cite{Garland} imply that the map $j$ is surjective in the same range.  Although this range is  narrow, it nonetheless tends to infinity, which allowed Borel to compute the stable limit:
\[  \Omega^{\bullet}_{\can}  \otimes_{\Q} \R  \cong    H^{\bullet} (\GL_{\infty}(\Z);\R)\ .    
\]

\begin{figure}
\[
\begin{array}{c|cccccccccccccccccc}
g    & 
 \\ \hline
2  &         \\
 3 & H^6_c \quad    \\
 4 &   \quad H^7_c    \\ 
 5 &   & & H^{10}_c  \quad   & & &   H^{15}_c \quad   \\ 
 6 &    & &  \quad H^{11}_c & \fbox{$H^{12}_c$} \quad  & &\quad  H^{16}_c   \\ 
 7 &   & &   &  \quad  \fbox{$H^{13}_c$} & H^{14}_c &&& &   H^{19}_c & & H^{23}_c & && H^{28}_c  \\ 
\end{array}
\]
\caption{Compactly-supported cohomology of $\mathcal{P}_g/\GL_g(\Z)$ in the known ranges  \cite{SouleSL3, LeeSzczarba, ElbazVincentGanglSoule}. An entry $H_c^i$ in the table  indicates that $H_c^i(\GL_g(\Z);\R)$ is non-zero and of rank $1$.    All non-trivial classes are explained by  theorem  \ref{thm: introcohomSLGL} except for the two boxed entries.    }
\end{figure}

Let $g>1$ be odd, and let  $\Omega(g) \subset \Omega_{\can}$ denote the graded subalgebra generated by $\omega^5,\ldots, \omega^{2g-1}$. All forms $\omega^{n}$  of degree $n \geq 2g$ vanish identically on $\mathcal{P}_g$.
In order to state our results, 
we divide  $\Omega(g)$ into two subspaces.
Let $\Omega_c(g) \subset \Omega(g)$ denote the ideal generated  by $\omega^{2g-1}$.  We shall call $\Omega_c(g)$ the space of forms    of  `compact type'  
and define $\Omega_{nc}(g) \subset \Omega(g)$ to be the graded subalgebra generated by $1, \omega^5, \ldots, \omega^{2g-5}$. The subscript $nc$ stands for `non-compact' type. One has $\Omega(g) = \Omega_c(g) \oplus \Omega_{nc}(g)$.

\begin{thm} \label{thm: introcohomSLGL} Let $g>1$ be odd.     There are injective maps  
\begin{eqnarray}
& (i). \! &\Omega^{\bullet}_{nc}(g)\otimes_{\Q}\R \hookrightarrow  H^{\bullet} ( \GL_h(\Z); \R)  \quad \hbox{ for all }  h\geq g  \ , \nonumber  \\
& (ii). & \Omega^{\bullet}_{c}(g) \otimes_{\Q}\R\hookrightarrow  H_c^{\bullet+1} ( \GL_g(\Z); \R) \ ,    \nonumber  \\
& (iii). & \Omega^{\bullet}_{c}(g)\otimes_{\Q}\R \hookrightarrow  H_c^{\bullet+2} ( \GL_{g+1}(\Z); \R)\ .  \nonumber  
\end{eqnarray} 
These facts  imply the following results about the special linear group: 
\begin{eqnarray}
& (iv). \! &\Omega^{\bullet}_{nc}(g)\otimes_{\Q}\R \hookrightarrow  H^{\bullet} ( \SL_h(\Z); \R)   \quad \hbox{ for all }  h\geq g\ ,   \nonumber  \\
& (v). & (\Omega^{\bullet}_{nc}(g) \oplus  \Omega^{d_{g+1} - \bullet -2 }_{c}(g))\otimes_{\Q}\R \hookrightarrow  H^{\bullet} ( \SL_{g+1}(\Z); \R)\ ,   \nonumber  
\end{eqnarray} 
where $d_g = \dim \mathcal{P}_g = \binom{g+1}{2}. $
\end{thm}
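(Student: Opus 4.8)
The plan is to build a bordification $\overline{\mathcal{P}_g}$ of $\mathcal{P}_g/\GL_g(\Z)$ (the main geometric construction of the paper) whose boundary is stratified by pieces indexed by the ways a quadratic form degenerates, and to show that the canonical forms $\omega^{4k+1}_X$ extend smoothly to this bordification. Once one has such an extension, each $\omega \in \Omega(g)$ gives a class in $H^\bullet(\overline{\mathcal{P}_g};\R)$, and by restricting to the interior one recovers a class in $H^\bullet(\GL_g(\Z);\R)$; the content of parts $(i)$--$(iii)$ is to determine which forms survive this restriction and which forms, vanishing at the boundary, instead lift to compactly supported cohomology with a degree shift coming from the codimension of the relevant boundary stratum (or of a collar neighbourhood). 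First I would analyse the asymptotic behaviour of $\tr((X^{-1}dX)^{2k+1})$ as $X$ approaches the boundary: in suitable coordinates adapted to a rational isotropic subspace, $X$ block-decomposes and $X^{-1}dX$ acquires a filtration whose associated graded is governed by a smaller $\mathcal{P}_{g'}$ together with a nilpotent/Levi contribution; the key computation is that $\omega^{4k+1}$ pulls back, up to exact forms, to the corresponding canonical form on the lower-rank stratum, with the top form $\omega^{2g-1}$ being precisely the one that does not extend to a neighbourhood of the "deepest" boundary face without acquiring a pole — which is why it generates the compact-type ideal $\Omega_c(g)$.

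The precise mechanism for $(i)$ is then: the subalgebra $\Omega_{nc}(g)$ is generated by $\omega^5,\dots,\omega^{2g-5}$, all of which extend to $\overline{\mathcal{P}_h}$ for every $h \ge g$ (by compatibility of the bordifications under the stabilisation $X \mapsto X \oplus 1$ and the fact that $\omega^{2j-1}$ with $2j-1 < 2g-1$ is "stable enough"), and one shows nondegeneracy of the resulting cohomology classes by pairing against explicit cycles — either the standard Borel-type pairing against the cycle built from a maximal compact subgroup, or a pairing internal to the link of the bordification. For $(ii)$ and $(iii)$ the idea is that an element $\omega \in \Omega_c(g)$, being divisible by $\omega^{2g-1}$, vanishes on the open boundary stratum but represents a nonzero class relative to the boundary; the long exact sequence of the pair $(\overline{\mathcal{P}_g}, \partial \overline{\mathcal{P}_g})$ — equivalently the localisation sequence relating $H^\bullet$, $H^\bullet_c$ of the interior and $H^\bullet$ of the boundary — converts such a relative class into a class in $H^{\bullet+1}_c(\GL_g(\Z);\R)$, the shift by $1$ being the codimension of the collar; the shift by $2$ in $(iii)$ reflects that over $\GL_{g+1}(\Z)$ the relevant form sits two strata deep. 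One must check the connecting maps actually hit these classes and do not kill them, which I expect to reduce to the nonvanishing of a boundary integral computed via the asymptotic analysis above.

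Finally, $(iv)$ and $(v)$ are meant to be formal consequences of $(i)$--$(iii)$ together with the relation between $\SL$ and $\GL$: for $h$ odd, $H^\bullet(\SL_h(\Z)) = H^\bullet(\GL_h(\Z))$ and $(iv)$ is immediate from $(i)$ when applied with $h$ odd, and for $h$ even one uses that $\mathcal{P}_h/\SL_h(\Z) \to \mathcal{P}_h/\GL_h(\Z)$ is a double cover whose "orientation" piece carries exactly the compactly supported cohomology; thus $H^\bullet(\SL_{g+1}(\Z))$ is assembled from $H^\bullet(\GL_{g+1}(\Z))$ and $H^\bullet_c(\GL_{g+1}(\Z))$, and Poincaré duality on the $(d_{g+1})$-dimensional orientable-after-passing-to-$\SL$ orbifold turns the $H^{\bullet+2}_c$ statement of $(iii)$ into the $\Omega^{d_{g+1}-\bullet-2}_c(g)$ summand in $(v)$. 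The main obstacle, and the technical heart of the argument, is the asymptotic/boundary analysis of the forms $\omega^{4k+1}_X$: proving that they extend smoothly (rather than merely boundedly) across each stratum of the bordification, identifying their restriction to boundary strata with lower canonical forms, and pinning down exactly where $\omega^{2g-1}$ fails to extend — everything else (the exact sequences, the $\SL$/$\GL$ bookkeeping, the duality) is comparatively formal once that analysis is in hand.
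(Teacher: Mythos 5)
Your overall strategy matches the paper's: construct a bordification, prove the canonical forms extend to it, analyse boundary behaviour, and convert into cohomology classes via exact sequences and the $\SL$/$\GL$ bookkeeping. But several of the mechanisms you propose are either backwards or miss the step that actually carries the argument.

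First, the boundary behaviour is the reverse of what you wrote. It is not that $\omega^{2g-1}$ ``fails to extend'' and thereby generates $\Omega_c(g)$. In the paper \emph{all} canonical forms blow up as $\det X\to 0$ in the naive closure, yet \emph{all} of them extend smoothly to the iterated blow-up $\LA_g^{\trop,\BB}$; what singles out the compact-type forms is that after extension they \emph{vanish identically} on the exceptional boundary. This is Corollary \ref{cor: compacttypevanishesonexceptional} and it is a consequence of $\Omega_c(g)$ being a Hopf ideal: the coproduct of a compact-type form is a sum of tensors each having a factor divisible by $\omega^{2g-1}$, and $\omega^{2g-1}$ kills any symmetric matrix of rank $<g$ by the rank-vanishing property. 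The non-compact forms extend but do \emph{not} vanish on the boundary. So your sentence ``$\omega^{2g-1}$ does not extend\dots which is why it generates the compact-type ideal'' has the logic inverted; you should instead be proving vanishing along exceptional divisors.

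Second, the degree shifts in (ii) and (iii) do not come from collar codimension or from the form ``sitting two strata deep.'' The shift by $+1$ in (ii) is purely the passage from $L\mathcal P_g=\mathcal P_g/\R^\times_{>0}$ to $\mathcal P_g$: one has $H_c^{n+1}(\mathcal P_g/\GL_g(\Z))\cong H_c^n(L\mathcal P_g/\GL_g(\Z))$ because $H_c^\bullet(\R^\times_{>0})$ is concentrated in degree one. The shift by $+2$ in (iii) is this $+1$ together with one genuine degree shift coming from a connecting map, and that connecting map is the crux you have missed: (iii) rests on the short exact sequence
\[
0\to H_{dR}^{n-1}(|\LA_{g-1}^{\trop}|)\to H_{dR,c}^n(|\LA_g^{\circ,\trop}|)\to H_{dR}^n(|\LA_g^{\trop}|)\to 0\ ,
\]
whose exactness at the right uses the vanishing of the restriction $H^n(|\LA_g^{\trop}|)\to H^n(|\partial\LA_g^{\trop}|)$, and this vanishing is \emph{not} elementary: it is the acyclicity of the inflation subcomplex proven by Chan--Galatius--Payne, together with the identification $|\partial\LA_g^{\trop}|\cong|\LA_{g-1}^{\trop}|$. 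Without citing that input you have no short exact sequence, only the long one, and (iii) does not follow.

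Third, even with that short exact sequence, (iii) requires knowing that $\Omega_c(g)$ injects into the \emph{ordinary} de Rham cohomology $H^\bullet_{dR}(|\LA_g^{\trop}|)$, not merely the compactly supported cohomology of the interior. This is a genuinely more delicate statement (Theorem \ref{thm: OmegagembedsLAg}). The obstruction is that the volume form $\vol_g$ dies in $H^\bullet(|\LA_g^{\trop,\BB}|)$ (where one could pair with $\star\omega$) and one cannot pair on $|\LA_g^{\trop}|$ because the non-compact forms do not descend there. The paper resolves this by interpolating: it introduces a ``reduced'' cohomology complex, modelled on the reductive Borel--Serre compactification, in which boundary forms are required to be pulled back under the collapse $\pi_{\red}$ of the nilpotent directions. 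On that complex $\vol_g$ survives (the nilpotent collapse forces the would-be primitive $\alpha$ in $\vol_g=d\alpha$ to vanish by a degree count), so the cup-product argument $[\omega]\cup[\star\omega]=\lambda[\vol_g]\neq 0$ goes through, and $\Omega_c(g)$ is then caught between $H^\bullet(|\LA_g^{\trop}|)$ and the reduced group. This intermediate construction is the technical heart of (iii) and it is absent from your sketch; ``pairing against a cycle built from a maximal compact subgroup'' does not substitute for it, since the issue is precisely that on $|\LA_g^{\trop}|$ the dual pairing partner $\star\omega$ is not available.

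Your (iv) and (v) are essentially right: for odd $h$ one uses $\GL_h(\Z)\cong\SL_h(\Z)\times\Z/2$, and for even $h$ one uses Shapiro's lemma with the determinant character (equivalently the double cover you describe) plus Poincar\'e duality against the orientation sheaf to convert $H_c^{d_{g+1}-n}(\GL_{g+1}(\Z))^\vee$ into the second summand of (v). That part is formal once (i)--(iii) are in hand, exactly as you say.
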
 
Statement $(i)$ (and implicitly $(ii)$) is discussed in a research announcement of Ronnie Lee \cite{LeeUnstable}, but no proof seems ever to have  appeared.  
The  statement also appears in   Franke  \cite{Franke, Grobner}  using automorphic methods. Our proof bears  some similarity to the strategy suggested by Lee and implies that the map $(i)$ for $h=g$ and $(ii)$ are canonically split: the key point is the construction of representatives for elements in $\Omega_c(g)$ which have compact support\footnote{It is easy to show that the canonical forms of compact type vanish along the boundary of the Borel-Serre compactification, but this is insufficient: the difficulty  is to prove that they actually extend to the boundary in the first place, since once sees from the definition  \eqref{introomegadeftrace} that they blow up as one approaches the boundary where  $\det(X)\rightarrow 0$. To prove this result, and resolve this apparent contradiction,  requires a very careful analysis of the geometry of $L\mathcal{P}_g/\GL_g(\Z)$ at infinity.  }. Statement $(i)$ implies, but is much stronger than, Borel's result on the injectivity of  \eqref{into: jhomomorphism}  in small degrees.

The statement $(iii)$ about the unstable cohomology of linear groups of even rank is new. It follows from   a much stronger theorem (theorem \ref{thmintroOmegacginjectsDrLAgtrop} below)  on the existence of cohomology classes in the moduli space of tropical abelian varieties and uses  recent results on the acyclicity of the `inflation complex'  in  \cite{TopWeightAg}.

To illustrate the content of $(iv)$ and $(v)$, consider the following table of the known cohomology of $\SL_g(\Z)$, taken from \cite{ChurchFarbPutman} and based on computer calculations of \cite{ElbazVincentGanglSoule}.

\begin{figure}[h]
\[
\begin{array}{c|c|ccccccccccccccccc}
g    & \dim (\mathcal{P}_g) & n=  0   & 1 &2 &3 &4 &5 & 6 & 7 & 8   & 9 & 10 & 11 &12 & 13 &14 & 15 \\ \hline
2  & 3 &  1  &   \\
 3 &  6& 1 &   \\
 4 &10 &1 &  & & 1 \\ 
 5 &15 &1 &  &&&&1 \\ 
 6 &21 &1 & &&&&2 && & \fbox{1} & \fbox{1} & 1   \\ 
 7 &28 &1 &  &&&& 1 &&&& 1 &&& && 1 & \fbox{1} \\ 
\end{array}
\]
\caption{An entry in row $g$ and column $n$ equals $\dim H^n(\SL_g(\Z);\R)$;  blank entries are zero.
All non-zero entries in this table are explained by the previous theorem except for the three boxed entries. These are  discussed in \S\ref{sect: RecentProgress}.
Some partial computations are known in $g=8$ by \cite{GL8}.
} \label{fig:SLn}
\end{figure}

\subsection{Moduli of tropical abelian varieties}
Theorem  \ref{thm: introcohomSLGL} is a consequence of a stronger result concerning  cohomology classes on the link  $\left| \LA_g^{\trop}\right|$ of the moduli space of tropical abelian varieties, which we presently explain.   The reason for the vertical bar  notation is that $\left| \LA_g^{\trop}\right|$ is merely the topological incarnation of a richer kind of hybrid geometric object, denoted by  $\LA_g^{\trop}$,  which we shall discus later.  In any case, 
as a set,  $\left| \LA_g^{\trop}\right|$ is the quotient of the rational closure   $\mathcal{P}^{\rt}_g$ of $\mathcal{P}_g$ consisting of positive semi-definite matrices with rational kernel, modulo the action of $\GL_g(\Z)$, and by $\R^{\times}$ acting by  scalar multiplication.  We consider its decomposition into perfect cones,  due to Vorono\"i \cite{Voronoi, Martinet}, which are described as follows. 
Let $Q$ be a positive definite quadratic form, and let 
\[ M_Q = \{\lambda \in \Z^g\backslash \{0\}:  Q(\lambda) \leq Q(\mu) \quad \hbox{for all } \mu   \in \Z^g\backslash \{0\} \}\] 
denote the set of minimal vectors of $Q$. The set of $\lambda\lambda^T$, for $\lambda \in M_Q$,  span a convex polyhedral cone $\sigma_Q$ in the  link   $L \mathcal{P}^{\rt}_g  $  of $ \mathcal{P}^{\rt}_g$, whose points are projective classes of symmetric matrices. The   topological space $\left| \LA_g^{\trop}\right|$ is obtained  by gluing together $\GL_g(\Z)$-equivalence classes of quotients $\sigma_Q/ \mathrm{Aut}(\sigma_Q) $  of polyhedral cones by their finite groups of automorphisms. 

\begin{ex} (See figure \ref{figureVoronoiGL2}). 
 The quadratic form
$Q(x_1,x_2) = x_1^2 + x_1x_2 +x_2^2$  has minimal vectors $M_Q= \{(\pm 1, 0), (0, \pm 1), \pm (1,-1)\}$, and hence
$$ \sigma_Q =\left\{   \begin{pmatrix} \alpha_1 + \alpha_3  & - \alpha_3 \\    - \alpha_3 & \alpha_2+ \alpha_3\end{pmatrix}: \alpha_1,\alpha_2,\alpha_3 \geq 0\right\}  $$
is the convex hull of the rank 1 matrices  $\left(\begin{smallmatrix} 1 & 0 \\ 0 & 0 \end{smallmatrix}\right)$, $\left(\begin{smallmatrix} 0 & 0 \\ 0 & 1 \end{smallmatrix}\right)$, $\left(\begin{smallmatrix} 1 & -1 \\ -1 & 1 \end{smallmatrix}\right)$ in the space of projective classes of $2\times 2$ symmetric matrices.  There is a single $\GL_2(\Z)$-orbit of cells of maximal dimension in $|\LA_2^{\trop}|$ generated by $\sigma_Q$.   The stabiliser of $\sigma_Q$ is the symmetric group on three elements which permutes the three vertices of  $\sigma_Q$.
\end{ex}

 Denote the closed subspace   of  $\left|\LA_g^{\trop}\right|$ corresponding to  symmetric  matrices with vanishing determinant by    $| \partial \LA_g^{\trop}|$. Its  open complement:
 \[   \left| \LA_g^{\circ, \trop} \right|  =   | \LA_g^{\trop}| \ \setminus \    | \partial \LA_g^{\trop}| \]
 is nothing other than the space $L\mathcal{P}_g/   \GL_g(\Z) $ where $L \mathcal{P}_g= \mathcal{P}_g /  \R^{\times}  $.

 \begin{thm}  \label{thmintroOmegacginjectsDrLAgtrop}  Let $g>1$ be odd. Every  form of compact type $\omega \in \Omega_c(g)$ extends to a smooth differential  form on $| \LA_g^{\trop}|$. This  defines an injective map of graded algebras:
 \begin{equation} \label{introOmegacInjects}  \Omega^{\bullet}_c(g)\otimes_{\Q} \R  \To H_{dR}^{\bullet} (  | \LA_g^{\trop}|)\ .
 \end{equation} 
\end{thm}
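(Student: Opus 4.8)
The plan is to work with the explicit polyhedral structure of $|\LA_g^{\trop}|$ coming from the perfect cone (Vorono\"i) decomposition, and to show that each canonical form $\omega^{4k+1}_X = \tr((X^{-1}dX))^{4k+1}$ of compact type, which a priori blows up as $\det(X)\to 0$, in fact extends smoothly once we pass to the link and take a product with a form $\omega^{2g-1}$ of top degree. The starting point is the observation that on a single perfect cone $\sigma_Q$ with minimal vectors $\lambda_1,\dots,\lambda_N$, a point of the cone is a matrix $X = \sum_i \alpha_i \lambda_i\lambda_i^T$ with $\alpha_i \geq 0$, and the $\alpha_i$ (together with the combinatorics of which faces are hit) serve as coordinates near the boundary strata. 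The key analytic input is to expand $X^{-1}$ and $X^{-1}dX$ in terms of these coordinates as some $\alpha_i\to 0$, i.e.\ as one approaches a boundary face of $\sigma_Q$, and to track the order of the pole.

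The crucial mechanism — and this is what makes the "compact type" hypothesis essential — is that $\omega^{2g-1}$ is the form of \emph{top} degree that does not vanish identically on $\mathcal{P}_g$ (forms of degree $\geq 2g$ vanish), so wedging with it kills precisely the directions transverse to the rational boundary. Concretely, I would show: (1) each $\omega^{4k+1}_X$ has at worst a pole of a controlled order along $|\partial \LA_g^{\trop}|$, with the polar part expressible through the $X^{-1}\lambda_i\lambda_i^T$ blocks; (2) the wedge $\omega^{2g-1} \wedge \omega^{4k_1+1}\wedge\cdots$, for any monomial in $\Omega_c(g)$, has all these polar contributions cancel, because the antisymmetrisation forces repeated minimal-vector directions and these square to zero in the exterior algebra. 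This requires the very careful local analysis at infinity that the footnote in the introduction flags; I expect it to be organised chart by chart over the cones $\sigma_Q$, using that the link $L\mathcal{P}_g^{\rt}/\GL_g(\Z)$ is glued from the quotients $\sigma_Q/\mathrm{Aut}(\sigma_Q)$, and checking smoothness is a local and $\mathrm{Aut}(\sigma_Q)$-invariant statement, hence reduces to a single cone.

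Once smoothness of the extended forms on $|\LA_g^{\trop}|$ is established, they are automatically closed (closedness is a closed condition, and they are closed on the dense open $L\mathcal{P}_g/\GL_g(\Z)$), so we get a map of complexes and hence a map of graded algebras $\Omega_c^{\bullet}(g)\otimes_{\Q}\R \to H_{dR}^{\bullet}(|\LA_g^{\trop}|)$; multiplicativity is immediate since on the dense open the forms multiply as in $\Omega_{\can}$. For injectivity, I would pair the extended classes against homology. Since $\Omega_c(g)$ is a free module over... more precisely, a monomial basis of $\Omega_c(g)$ consists of $\omega^{2g-1}\wedge \omega^{4k_1+1}\wedge\cdots\wedge\omega^{4k_r+1}$ with $1<k_1<\cdots<k_r$ and $4k_j+1 < 2g-1$; one shows these are linearly independent in cohomology by restricting to $L\mathcal{P}_g/\GL_g(\Z)$, where Borel's computation (or the Matsushima–Garland results quoted in the introduction) already gives that the $\omega^{4k+1}$ are independent in the stable range, and then leveraging the product structure. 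Alternatively, and more robustly, one constructs explicit cycles in $H_*(|\LA_g^{\trop}|)$ — for instance, diagonally embedded copies of lower tori $L\mathcal{P}_{g'}/\GL_{g'}(\Z)$ together with the fundamental-class direction provided by $\omega^{2g-1}$ — on which the relevant monomials evaluate nontrivially by Borel's theorem applied in the stable range for the sub-group.

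The main obstacle, as anticipated by the paper's own footnote, is \textbf{step (1)–(2)}: proving that the forms of compact type genuinely extend across $|\partial\LA_g^{\trop}|$ rather than merely vanishing there in a naive sense. The difficulty is that $\omega^{4k+1}_X$ individually diverges like a negative power of $\det(X)$ near the boundary, so one must show that inside any monomial of $\Omega_c(g)$ the divergent terms conspire to cancel — this is a genuinely delicate local computation in the perfect-cone coordinates, and getting the bookkeeping of pole orders and the exterior-algebra cancellations right (uniformly over all cone types and compatibly with the $\GL_g(\Z)$-gluing) is the technical heart of the argument. Once that is in hand, closedness, multiplicativity and injectivity are comparatively routine.
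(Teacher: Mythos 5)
Your proposal correctly identifies the two halves of the argument (show smoothness at the boundary, then prove injectivity), but both halves, as you have sketched them, have genuine gaps.

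\textbf{The extension step.} The cancellation mechanism you describe --- ``antisymmetrisation forces repeated minimal-vector directions and these square to zero'' --- is not what makes the forms extend, and I do not see how it could be made precise. Individual $\omega^{4k+1}_X$ appear to blow up as $\det(X)\to0$, and your plan is that wedging with $\omega^{2g-1}$ kills the divergence. But the paper's mechanism is different and cleaner: one performs an iterated blow-up of $\Pro(\Quad(V))$ along all rational subspaces $\Pro(\Quad(V/K))$, and a block-matrix computation (writing $s_K^*Q = \Lambda U$ with $\Lambda$ block diagonal) shows that \emph{every} canonical form $\omega^{4k+1}$, compact type or not, pulls back to a regular form on this blow-up $\LA_g^{\trop,\BB}$: no poles, no wedging needed. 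The compact-type hypothesis enters later, at a separate step, to show the restriction of the pulled-back form to the exceptional divisors \emph{vanishes}, via the Hopf-algebra coproduct $\Delta^{\can}\omega = \sum\omega'\otimes\omega''$ and the fact that $\omega^{2g-1}$ vanishes on matrices of rank $<g$. Your proposal conflates ``no poles'' with ``vanishes on the boundary,'' and replaces the structural input (the Hopf-ideal property of $\Omega_c(g)$, plus the rank-vanishing property) with an exterior-algebra heuristic whose bookkeeping you acknowledge you have not done; that bookkeeping is in fact a non-starter because the relevant cancellation is not an antisymmetry phenomenon.

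\textbf{The injectivity step.} This is where the proposal fails outright. You propose to detect non-vanishing of a class $[\omega]\in H^{\bullet}(|\LA_g^{\trop}|)$ for $\omega\in\Omega_c(g)$ by restricting to the interior $L\mathcal{P}_g/\GL_g(\Z)$ and invoking Borel's theorem. This cannot work, for two separate reasons: (i) by a theorem of Bismut--Lott (cited in the introduction), $\omega^{2g-1}$ is \emph{exact} on $L\mathcal{P}_g/\GL_g(\Z)$, so restriction to the interior gives zero and detects nothing; (ii) the degrees of forms in $\Omega_c(g)$ are $\geq 2g-1$, far outside the stable range $n\lesssim g/4$ (or even $n<g$ with the recent improvements) in which Borel's theorem gives any information. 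The ``diagonally embedded lower tori'' variant is equally problematic: those give cycles in lower-rank loci where $\omega^{2g-1}$ identically vanishes. The paper is explicit that the naive cup-product arguments --- cup in the ordinary cohomology of the blow-up, or cup in the ordinary cohomology of $|\LA_g^{\trop}|$ --- both fail, for good reasons (the volume form is exact on the former, the non-compact forms are not even defined on the latter), and introduces an intermediate ``reduced'' complex modelling the reductive Borel--Serre compactification precisely to get around this. Your claim that injectivity is ``comparatively routine'' once smoothness is established is exactly the opposite of the truth: the injectivity into $H^\bullet(|\LA_g^{\trop}|)$ (rather than merely into relative or compactly-supported cohomology) is the subtlest point of the theorem and requires a new geometric idea.
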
 
Theorem \ref{thm: introcohomSLGL} follows from theorem \ref{thmintroOmegacginjectsDrLAgtrop} using  a de Rham theorem for   certain kinds of 
 topological spaces, such as $ | \LA_g^{\trop}|$,  which  are defined  by gluing quotients of polyhedra together by finite group actions (see below) as well as results from \cite{TopWeightAg}.
 
 In fact, theorem \ref{thmintroOmegacginjectsDrLAgtrop} enables us to completely determine the cohomology of $  | \LA_{\bullet}^{\trop}|$ in a certain range.
If   $g>1$ is odd and $\kappa(g)$ denotes the stable range for the cohomology of the general linear group $\GL_g(\Z)$, we show in corollary \ref{cor: stablecohomLag} that 
\begin{align}
 H_{dR}^{n} (  | \LA_g^{\trop}|)& \cong  \Omega^{n}_c(g)_{\Q} \otimes \R     \\
   H_{dR}^{n-1} (  | \LA_{g-1}^{\trop}|) & =0  \nonumber 
   \end{align}
    for  $n\geq d_g - \kappa(g)$. Using  the recent results \cite{StableShuBinyong, KupersMillerPatzt}, we may take $\kappa(g)=g$.

\begin{figure}[h] \begin{center}
\quad {\includegraphics[width=6.5cm]{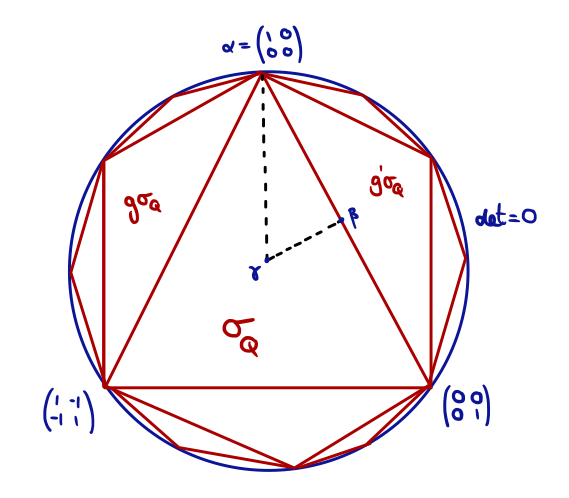}} 
\end{center}
\caption{The decomposition of the link $L \mathcal{P}^{\rt}$ into Vorono\"i cells viewed inside the projective space $\Pro^2(\R)$ of symmetric $2\times 2$ matrices.
The space $|\LA_2^{\trop}|$ is the quotient  of the closed simplex $ \sigma_{Q}$ by the action of $\Sigma_3$, and  is the one-point compactification of the interior  $|\LA^{\circ,\trop}|$ at the cusp.  The space $|\LA_2^{\trop,\BB}|$ has an additional half-interval at the  cusp.} \label{figureVoronoiGL2}

\end{figure}

\subsection{Bordification of the moduli space of  tropical abelian varieties.} 
In order to prove  theorem \ref{thmintroOmegacginjectsDrLAgtrop},  we construct an explicit bordification 
\[  \left |\LA_g^{\trop, \BB} \right| \To   \left |\LA_g^{\trop} \right| \ .\]
It has a boundary  $\left |\partial \LA_g^{\trop, \BB} \right| \subset \left |\LA_g^{\trop, \BB} \right|  $ with the property that 
\[     \left |\LA_g^{\trop, \BB} \right| \ \setminus \ \left |\partial \LA_g^{\trop,\BB} \right| =  \left |\LA_g^{\circ, \trop} \right| \ .\] 
The topological space $\left |\LA_g^{\trop, \BB} \right|$ is technically the space obtained by gluing together  `wonderful' compactifications  \cite{Wonderful} of perfect cones $\sigma_Q$ by   blowing up  specific boundary strata which lie at  infinity.  It may be constructed  informally as follows.

Consider the projective space $\Pro(\Quad(V))$ whose points are projective classes of quadratic forms $\Quad(V)$ in a  vector space $V$ of dimension $g$ over $\Q$. The vanishing of the  determinant defines a hypersurface  $\Det \subset \Pro(\Quad(V))$ whose complement satisfies:
\[ L\mathcal{P}_g \  \subset  \    \left( \Pro(\Quad(V))  \  \setminus  \ \Det \right) (\R) \  .  \]
Now consider the space obtained by blowing up the subspaces $\Pro(\Quad(V/K))$ whose points are quadratic forms with kernel $K$, for all rational subspaces $0 \neq K\subset V$, in increasing order of dimension. Since  $\Pro(\Quad(V/K))$ is contained in the determinant locus $\Det$,  every blow-up adds a new boundary component at infinity. Such a boundary component is  indexed by  a nested sequence $0< K_1 <K_2< \ldots < K_n < V$, and is isomorphic to a product of projective spaces. The closure of   $L\mathcal{P}_g$ in the iterated blow-up  admits an action by $\GL_g(\Z)$, whose quotient is the bordification 
 $\left |\LA_g^{\trop, \BB} \right|$. We show in an appendix that it is homeomorphic to  the Borel-Serre compactification \cite{BorelSerre}. 
However,  from this construction it is not obvious that the quotient  $\left |\LA_g^{\trop, \BB} \right|$  embeds into the real points of a finite diagram of algebraic varieties 
 obtained by blowing up projective spaces along \emph{finitely} many linear subspaces. This crucial point is proved in the main body of the text, and is discussed in   \S\ref{intro: PLCsection}

Theorem \ref{thmintroOmegacginjectsDrLAgtrop} is proven by studying the behaviour of the differential forms \eqref{introomegadeftrace} on the boundary of $  \left |\LA_g^{\trop, \BB} \right|$. The existence of this bordification, and the techniques used to construct it,  have a variety of other applications, which we discuss below.

\subsection{Bordification of the tropical Torelli map.} \label{subsect: introTorelli} 
The main thrust of this  paper  is to provide a  general technique  for constructing bordifications of spaces built out of quotients of  polyhedral cells. 
For example, we construct a bordification 
\[ \left| \LM_g^{\trop ,\BB} \right| \To    \left| \LM_g^{\trop} \right|\]
of the link of the moduli space of tropical curves whose existence was alluded to in \cite{BrSigma}. It is obtained by gluing together the `Feynman polytopes'  associated to stable  graphs and provides a single geometric object  whose cells are the spaces underling the  Feynman motives  of \cite{BEK, Cosmic}. This bordification  is presumably related to the bordification of Culler and Vogtmann's Outer space  which was constructed in  \cite{BestvinaFeighn, BordificationOuterSpace}. Note that our bordification concerns the quotient of unreduced Outer space by $\mathrm{Out}(F_n)$ and allows graphs with bridges; the  latter two papers  study reduced Outer space, which only  involves bridgeless (core) graphs, and work on the universal cover, rather than the quotient by $\mathrm{Out}(F_n)$.

The  bordifications of $\left|\LA^{\trop}_g\right|$ and $\left|\LM^{\trop}_g\right|$ are related as follows. The 
 tropical Torelli map $\lambda: \left| \LM_g^{\trop} \right|\rightarrow \left| \LA_g^{\trop} \right|$  was  studied  in  \cite{Nagnibeda, Baker, CaporasoViviani, MikhalinZharkov, BMV}.
It is non-degenerate on the subspace   $\left| \LM_g^{\red} \right| \subset \left| \LM_g^{\trop} \right|$ indexed by $3$-edge connected graphs. 

\begin{thm}The tropical Torelli map extends to a map  $\lambda^{\BB}$ of bordifications, giving a commutative diagram where the vertical maps are blow-downs: 
 \[
\begin{array}{ccc}
  \left| \LM_g^{\red ,\BB} \right| &   \overset{\lambda^{\BB}}{\To}   & \left| \LA_g^{\trop ,\BB} \right|    \\
  \downarrow &    &  \downarrow \\
 \left| \LM_g^{\red} \right|  &  \overset{\lambda}{\To}&    \left| \LA_g^{\trop } \right|  
\end{array}
\]
\end{thm}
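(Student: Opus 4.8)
The plan is to build $\lambda^{\BB}$ cell-by-cell, compatibly with the gluing data, and then check that it descends to the quotients and commutes with the blow-downs. Recall that both bordifications are assembled out of iterated blow-ups of projective spaces of quadratic forms along the linear subspaces $\Pro(\Quad(V/K))$ indexed by rational subspaces $K\subset V$ (respectively, the graph-theoretic analogue for $\left|\LM_g^{\red,\BB}\right|$, where the relevant linear subspaces correspond to the subgraphs obtained by contracting a subset of edges, equivalently to the kernels of the period matrix as edge lengths degenerate). On the level of the ambient projective spaces, the Torelli map is induced by the linear map sending a metric graph to its tropical Jacobian, i.e. by the second symmetric power of $H_1$ of the graph; the key point is that this linear map carries the subspace $\Pro(\Quad(H_1(G)/W))$ associated to a subgraph of $G$ into the subspace $\Pro(\Quad(V/K))$ associated to the corresponding degeneration of the kernel, \emph{in an order-preserving way} with respect to the nesting used to define the two iterated blow-ups.

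First I would make precise the combinatorial correspondence: to a stable graph $G$ with a nested sequence of edge-subsets $E_1\subset E_2\subset\cdots$ (these index the boundary strata of the Feynman polytope, i.e. the faces at infinity in the wonderful compactification of the associated cone) one associates the flag of subspaces $K_i\subset H_1(G;\Q)$ spanned by cycles supported in the complement, and one checks this flag is exactly the kind of nested rational flag that indexes boundary strata of $\left|\LA_g^{\trop,\BB}\right|$. Because an iterated blow-up of a projective space along a family of linear subspaces is functorial for linear maps that respect the containment poset of the centres (a linear map $L\colon\Pro(A)\dashrightarrow\Pro(B)$ with $L^{-1}$ of each centre either empty or one of the allowed centres lifts uniquely to the blow-ups), the linear Torelli map lifts to a morphism between the two iterated blow-ups, at least on the locus where it is defined, and in particular on the closure of $L\mathcal{P}_g$ coming from $3$-edge-connected graphs, where $\lambda$ is non-degenerate (an honest embedding on $\left|\LM_g^{\red}\right|$). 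Then I would verify $\GL_g(\Z)$-equivariance — the Torelli map intertwines the action of $\mathrm{Out}(F_g)$ (through its image, which lands in $\GL_g(\Z)$ on the $3$-edge-connected locus) with that of $\GL_g(\Z)$ — so it descends to a map of quotients $\left|\LM_g^{\red,\BB}\right|\to\left|\LA_g^{\trop,\BB}\right|$. Finally, commutativity of the square is immediate on the open dense interior $\left|\LM_g^{\red}\right|$, where all four maps are already defined and the square commutes by the classical definition of $\lambda$; since the bordifications are (by construction) Hausdorff and the interior is dense, the square commutes everywhere by continuity, and the fact that the vertical maps are the structural blow-downs is built into the construction.

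The main obstacle I anticipate is checking that the Torelli linear map genuinely respects the blow-up combinatorics — i.e. that the preimage of an allowed centre in $\Pro(\Quad(V))$ is again an allowed centre (or empty) in the graph side, and that the \emph{order} in which one blows up (by increasing dimension of the centres) is compatible on both sides. This is where $3$-edge-connectedness is essential: for graphs that are not $3$-edge-connected the period matrix is degenerate and the linear map collapses cells, so one genuinely must restrict to $\left|\LM_g^{\red,\BB}\right|$; one must also be careful that contracting edges of a $3$-edge-connected graph can produce graphs that are no longer $3$-edge-connected, so the boundary strata of $\left|\LM_g^{\red,\BB}\right|$ are not simply indexed by $3$-edge-connected graphs, and the map $\lambda^{\BB}$ will not be injective on the boundary — it should be described as a composition of a blow-down (contracting the strata where Torelli degenerates) followed by an embedding. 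Making this degeneration structure explicit, and matching it with the wonderful-compactification strata of $\left|\LA_g^{\trop,\BB}\right|$, is the technical heart of the argument; everything else (equivariance, descent to quotients, commutativity of the square) then follows formally from the universal properties of the iterated blow-ups established in the main body of the text.
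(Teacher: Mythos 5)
Your proposal rests on a claim that the paper explicitly disproves. You assert that because iterated blow-ups are functorial for linear maps whose preimages of centres are again (allowed) centres, "the linear Torelli map lifts to a morphism between the two iterated blow-ups." You rightly flag the needed compatibility as the "technical heart," but you assume it holds; in fact it fails. The paper works out a concrete counterexample for $G = W_3$ (\S\ref{sec: Example}): the blow-up locus on the $\LA_3^{\trop,\BB}$ side is $\Pro(\Quad(V/K)) \cong \Pro^2$, whose preimage under $\lambda_G$ is $V(x_1+x_2+x_4+x_5,\ x_2+x_4,\ x_4+x_5) \subset \Pro(\Q^{E_G})$, which strictly contains the coordinate subspace $V(x_1,x_2,x_4,x_5)$ that is the allowed centre in $\BB^G$. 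The paper explicitly concludes that "$\lambda_G$ does not extend to a morphism of blow-ups from $P^{\BB_G}$ to $P^{\BB^{\min,\sigma_{Q_3}}}$." So your functoriality argument cannot get off the ground at the level of schemes, and the map $\lambda^{\BB}$ must be constructed only on the topological polytopes $\sigma^{\BB}$, not on the ambient blown-up varieties.

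The idea you are missing is the role of extraneous modifications (definition~\ref{def: extraneousmodification} and proposition~\ref{prop: minimalblow-down}): blow-ups at loci which do not meet the polyhedron do not change the closed semi-algebraic set $\sigma^{\BB}$. The paper's proof of proposition~\ref{prop: lambdaBBG} introduces a common refinement $\widetilde{\BB}$ containing both $\BB^G$ and $\lambda^{-1}\BB^{Q_G}$, closed under intersections. The extra loci in $\lambda^{-1}\BB^{Q_G}$, though strictly larger than the coordinate subspaces of $\BB^G$, meet $\sigma_G$ along exactly the same faces $\sigma_{G/\gamma}$ for core $\gamma$ (this is the content of the computation $K_{\sigma_F} = H_1(\gamma;\Q)$ together with remark~\ref{rem: corevanishingLambda}). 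Hence $P^{\widetilde{\BB}} \to P^{\BB^G}$ is an extraneous modification, inducing a homeomorphism $\sigma_G^{\widetilde{\BB}} \cong \sigma_G^{\BB^G}$, while $P^{\widetilde{\BB}} \to P^{\lambda^{-1}\BB^{Q_G}} \to P^{\BB^{Q_G}}$ is honest; composing gives the continuous map $\sigma_G^{\BB} \to \sigma_{Q_G}^{\BB}$. Your density/continuity argument for the commutativity of the square is fine once $\lambda^{\BB}$ exists as a continuous map, and your observations about $3$-edge-connectedness and its non-stability under contraction are correct, but without the extraneous-modification mechanism the construction of $\lambda^{\BB}$ itself has a hole.
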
 
This  diagram gives relations between the cohomology of the  four spaces and the existence of $\lambda^{\BB}$ answers a question of Vogtmann. Note that
the  cohomology of $ \left| \LM_g^{\trop} \right| $  is  related  to the cohomology of the commutative graph complex $\GC_0$, and  to the top-weight cohomology of the moduli stack $\mathcal{M}_g$  \cite{CGP}. 
The cohomology of the bordification $ \left| \LM_g^{\trop ,\BB} \right| $  is described by a new graph complex  (see \S\ref{sect: NewGraphComplex})  involving nested sequences of graphs, and is   related to the Hopf algebra structure on graph homology. It would be  interesting to study its homology in relation to that of $\GC_0$.

\subsection{Canonical integrals of perfect cones} 
A further consequence of  the properties of canonical forms \eqref{introomegadeftrace} on the bordification of $\left| \LA_g^{\trop}\right|$ is that we may assign transcendental invariants to  polyhedral  cones in the Vorono\"i decomposition.

\begin{thm} \label{thm: introQvol} Let $Q$ be a positive definite quadratic form of rank $g$, and let $\sigma_Q$ be the associated cone.
Let $\omega \in \Omega^d_{\can}$ be any canonical form  \eqref{introomegadeftrace} of degree $d=\dim \sigma_Q$. If $\sigma_Q  $ has rank $g$  then the following  integral is finite: 
\begin{equation}  \label{intro: IQomega} I_Q(\omega)  = \int_{\sigma_Q}  \omega < \infty   \ .
\end{equation} 
The  integrals  $ I_Q(\omega) $ satisfy quadratic  relations arising from Stokes' formula (theorem \ref{thm: Stokes}).
\end{thm}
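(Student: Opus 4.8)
The plan is to realise $I_Q(\omega)$ as the integral of an honest differential form over a compact cell, and to extract both finiteness and the relations from the behaviour of the canonical forms \eqref{introomegadeftrace} on the bordification $\left|\LA_g^{\trop,\BB}\right|$.

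\emph{Reductions and finiteness.} If $\omega$, written in terms of the $\omega^{4k+1}$, involves a generator of degree $\geq 2g$, then it vanishes on $\mathcal{P}_g$, so $I_Q(\omega)=0$ and there is nothing to prove; hence I may assume $\omega$ is a wedge of distinct generators $\omega^{4k+1}$ with $1\le k$ and $4k+1\le 2g-1$, of total degree $d=\dim\sigma_Q$. That $\sigma_Q$ has rank $g$ means exactly that a point $X=\sum_{\lambda\in M_Q}c_\lambda\,\lambda\lambda^T$ in the relative interior of $\sigma_Q$ (all $c_\lambda>0$) has full rank, so $\mathrm{rel.int}(\sigma_Q)$ lies in the locus where $\omega$ is smooth, and the only issue is integrability of $\omega$ as one approaches the faces of $\sigma_Q$, all of which lie in the hypersurface $\Det$. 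When $d>\tfrac{g(g-3)}{2}$ — which holds in particular whenever $\sigma_Q$ is a top-dimensional perfect cone, since then $d=\binom{g+1}{2}-1$ — such an $\omega$ necessarily involves $\omega^{2g-1}$, i.e.\ is of compact type, and theorem \ref{thmintroOmegacginjectsDrLAgtrop} already asserts that $\omega$ extends to a smooth form on the compact space $\left|\LA_g^{\trop}\right|$; its integral over the compact cell $\sigma_Q$ is then trivially finite. For the remaining rank-$g$ cells I would pass to the wonderful compactification $\widetilde\sigma_Q\subset\left|\LA_g^{\trop,\BB}\right|$ obtained by blowing up the strata $\Pro(\Quad(V/K))$, and argue directly: the local analysis of $\tr((X^{-1}dX)^{4k+1})$ near these strata — the same analysis that underlies theorem \ref{thmintroOmegacginjectsDrLAgtrop} — shows that the pullback of $\omega$ to $\widetilde\sigma_Q$ has at worst a simple pole along each exceptional divisor, and the constraint $\deg\omega=\dim\sigma_Q$ forces just enough vanishing in the complementary directions to make the pole integrable. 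One can also exploit the special shape $X=\sum c_\lambda\lambda\lambda^T$ of points of $\sigma_Q$ head-on: expanding $\tr((X^{-1}dX)^{4k+1})$ in the coordinates $c_\lambda$ produces entries $\lambda_i^T X^{-1}\lambda_j$ whose blow-up as some $c_\lambda\to 0$ is governed by the combinatorics of $M_Q$, which again yields integrability.

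\emph{Quadratic relations.} By construction of the bordification, the codimension-one boundary faces of a compactified cell $\widetilde\sigma_\rho$ are indexed by the nonzero proper rational subspaces $K\subset V$ occurring in $\rho$, and each such face is (a finite quotient of) a product
\[ \widetilde\sigma_{Q'}\times\widetilde\sigma_{Q''},\qquad \dim\sigma_{Q'}+\dim\sigma_{Q''}=\dim\sigma_\rho-1, \]
with $Q'$ a positive form on $V/K$ and $Q''$ one on $K$. The restriction of a canonical form to such a face is governed by the deconcatenation coproduct: since $\tr((X^{-1}dX)^{4k+1})$ restricts there to $\tr((X'^{-1}dX')^{4k+1})+\tr((X''^{-1}dX'')^{4k+1})$, the generators $\omega^{4k+1}$ are primitive, and a wedge $\omega$ of them restricts to $\sum\omega_{(1)}\otimes\omega_{(2)}$, the sum running over all ways of partitioning the set of factors. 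Now apply Stokes' theorem to the closed form $\omega$ on the compact manifold with corners $\widetilde\sigma_\rho$, for $\rho$ a cell with $\dim\sigma_\rho=\deg\omega+1$:
\[ 0=\int_{\widetilde\sigma_\rho}d\omega=\sum_{F}\varepsilon_F\int_{\widetilde\sigma_F}\omega. \]
A face $F$ of rank $g$ contributes $\pm I_{Q_F}(\omega)$, while a face of rank $<g$, being a product, contributes $\pm\sum I_{Q'}(\omega_{(1)})\,I_{Q''}(\omega_{(2)})$ — only the partitions with $\deg\omega_{(1)}=\dim\sigma_{Q'}$ and $\deg\omega_{(2)}=\dim\sigma_{Q''}$ surviving, for dimension reasons. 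The resulting identities are the asserted relations (this is theorem \ref{thm: Stokes}), which are quadratic in the invariants $I_{(-)}(-)$; for cells $\sigma_Q$ of maximal dimension, where no such $\rho$ exists inside $L\mathcal{P}_g$, one instead applies Stokes on $\widetilde\sigma_Q$ itself to a canonical primitive of the top generator $\omega^{2g-1}$, whose restriction to the product boundary faces is again controlled by the coproduct since $\omega^{2g-1}$ vanishes on each lower-rank factor.

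\emph{Main obstacle.} The entire weight of the argument rests on the boundary analysis invoked above: that the canonical forms, pulled back to the wonderful compactification, acquire only the mild (integrable) singularities claimed, and that they restrict to boundary faces via the coproduct. This is precisely the delicate study of $L\mathcal{P}_g/\GL_g(\Z)$ at infinity flagged in the footnote to theorem \ref{thm: introcohomSLGL}; once it is in place, the product decomposition of boundary faces, the combinatorics of the Vorono\"i fan, and the bookkeeping in Stokes' theorem are routine. A subsidiary obstacle is the construction, in the maximal-dimensional case, of a sufficiently canonical primitive of $\omega^{2g-1}$. Finally, a minor technical care is needed in applying Stokes when $\omega$ is only integrable rather than smooth on $\widetilde\sigma_\rho$: one excises $\varepsilon$-neighbourhoods of the deepest strata, applies Stokes there, and lets $\varepsilon\to 0$, verifying that the excised boundary contributions vanish in the limit.
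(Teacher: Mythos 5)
The overall shape of your argument — pass to the iterated blow-up $\cone_Q^{\BB}$, control the canonical forms near the boundary, and combine the product structure of the boundary faces with the coproduct on $\Omega_{\can}$ to produce Stokes relations — is the right one, and it matches the paper's strategy (corollary \ref{cor: IntQcoverges} and theorem \ref{thm: Stokes}). But there is a genuine error in the key analytic step, and the correct statement is in fact stronger and simpler than what you propose.

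You assert that the pullback of $\omega$ to the wonderful compactification has ``at worst a simple pole along each exceptional divisor,'' with integrability forced by the degree constraint $\deg\omega = \dim\sigma_Q$. Neither half of this is right. First, a genuine simple pole $dt/t$ is \emph{not} integrable; your degree-counting argument gestures at some vanishing in the complementary directions but does not actually show that the coefficient of $dt/t$ dies. Second — and this is the point the paper is built on — the pullback has \emph{no pole at all}: proposition \ref{prop: blowupforms} shows that for any $\omega\in\Omega_{\can}$, $\pi^*\omega$ is a regular algebraic form on $P^{\BB}\setminus\widetilde{\Det}$, hence smooth on the whole compact polytope $\sigma_Q^{\BB}$ once one knows $\widetilde{\Det}$ does not meet it (theorem \ref{thm: Detatinfinity}). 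The mechanism is not order-of-pole bookkeeping but the matrix factorisation $s_K^*Q=\Lambda U$ in local blow-up coordinates together with conjugation-invariance of the trace, which exhibits $\omega_{s_K^*Q}|_{z=0}$ as a sum of canonical forms built from the two diagonal blocks, with no singular term. Once you have regularity rather than mere integrability, finiteness of $I_Q(\omega)$ is immediate (integral of a smooth form over a compact semi-algebraic set), and Stokes' theorem applies directly to $\widetilde{\omega}$ on $\sigma^{\BB}_Q$ with no $\varepsilon$-excision, no case distinction between compact-type and non-compact-type forms, and no need to manufacture a primitive of $\omega^{2g-1}$ for the top-dimensional cells. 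Your instinct that ``the entire weight of the argument rests on the boundary analysis'' is correct; the missing ingredient is that this analysis yields outright regularity, which you should establish rather than the weaker (and as stated, false) claim about simple poles.

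One further, smaller confusion: theorem \ref{thm: Stokes} takes $\omega$ of degree $\dim\sigma_Q - 1$ and applies Stokes on $\sigma^{\BB}_Q$ itself. There is no need to find a cell $\rho$ with $\dim\sigma_\rho = \deg\omega + 1$ containing $\sigma_Q$, nor a special argument for maximal-dimensional cones; the relation among the $I_{Q'}(\omega)$ for facets $Q'\le Q$ and the product contributions from exceptional facets comes out of one application of $0=\int_{\partial\sigma^{\BB}_Q}\widetilde{\omega}$, with the coproduct controlling the exceptional facets exactly as you describe.
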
 
The condition that $\sigma_Q$ has rank $g$ means that $\sigma_Q$ meets the interior $L\mathcal{P}_g$ of the link of the space of positive definite matrices (i.e., it does not completely lie at infinity).  In particular, every `perfect' quadratic form of maximal dimension may be assigned a volume, which is non-zero. 
Theorem \ref{thm: introQvol}  provides interesting transcendental invariants of quadratic forms which may provide a new perspective on the  extensive literature on perfect forms in relation to sphere packing problems, which was one of Vorono\"i's original goals.

In the case when the cone $\sigma_Q$ is cographical, i.e., the image under the tropical Torelli map of the cell associated to a graph in $\left|\LM_g^{\red}\right|$, the  integrals \eqref{intro: IQomega} reduce to the canonical integrals $I_G(\omega)$ studied in \cite{BrSigma}. It was proved in \emph{loc. cit.} that these are generalised Feynman integrals of the sort arising in perturbative quantum field theory.   An important slogan from this paper, therefore, is that  the \emph{volumes of cographical cells in the perfect cone decomposition of the space of symmetric matrices are Feynman integrals}. 
The  Borel regulator, in particular,  is a  linear combinations of integrals \eqref{intro: IQomega}.

The arithmetic nature of the period integrals  \eqref{intro: IQomega} is not  known.  However, a  computation  by Borinsky and Schnetz \cite{BorinskySchnetz} implies that the volume of the principle cone for $\GL_5(\Z)$ is a linear combination of  non-trivial multiple zeta values of weight $8$ which involves  $\zeta(3,5)$.

\subsection{Polyhedral cell complexes} \label{intro: PLCsection} 
We finally turn to the main geometric constructions.  Theorem \ref{thmintroOmegacginjectsDrLAgtrop} hinges upon  a de Rham theorem for  topological spaces  obtained by gluing together quotients of polyhedra by finite group actions.  In order to make sense of algebraic differential forms such as  \eqref{introomegadeftrace}  on  these spaces, we embed each polyhedron into an ambient algebraic variety, which are glued together according to the same pattern.

The most basic construction along these lines is a category, which we call $\PLC_k$,  of polyhedral linear configurations over a field $k\subset \R$. Its objects are triples:
$(\Pro(V), L_{\sigma}, \sigma)$,   where $V$ is a finite-dimensional  vector space over $k$,
 $\sigma \subset \Pro(V)(\R)$ is a closed convex polyhedron, and $L_{\sigma}\subset \Pro(V)$ is the union of linear subspaces defined by  the Zariski closure of the boundary $\partial \sigma$.   A morphism  $\phi: (\Pro(V), L_{\sigma}, \sigma)\rightarrow (\Pro(V'), L_{\sigma'}, \sigma')$ in this category is a  linear map  $\phi: \Pro(V) \rightarrow \Pro(V')$ such that $\phi(L_{\sigma}) \subset L_{\sigma'}$ and $\phi(\sigma) \subset \sigma'$. We demand, in addition, that  $\phi: \sigma \rightarrow \sigma'$ be either  the inclusion of a face, or an isomorphism.

We then define a   \emph{linear polyhedral complex}  to be  a space assembled out of such objects (and their quotients by finite linear group actions). 
More precisely, it is given by  
 a functor 
\[ F: \mathcal{D} \To  \PLC_{k} \] 
where $\mathcal{D}$ is equivalent to  a finite diagram category.  It has a topological realisation:
\[ |F| = \varinjlim_{x\in \mathcal{D}}  \sigma(F(x))\ ,\]
which is the topological space obtained by gluing the polyhedra $\sigma(F(x))$ along linear maps.
Examples of linear polyhedral complexes  include the moduli space of tropical curves $\LM^{\trop}_g$ of genus $g$, where $\mathcal{D}$ is a category of stable graphs,  and the moduli space of tropical abelian varieties   $\LA^{\trop}_g$, in which case $\mathcal{D}$ is a category of cones associated to quadratic forms. 
The topological realisation  of a polyhedral linear complex is closely related to similar notions in the literature, including stacky fans \cite{BMV}  and generalised CW-complexes \cite{ACP}.

However, a polyhedral linear complex   has  additional algebraic structure. In particular,  there is a 
functor $\PF F : \mathcal{D} \rightarrow \mathrm{Sch}_k$ to the category of schemes which picks out the first component in each triple
$x \mapsto \Pro(V_{F(x)})$. We can speak of a 
 subscheme of $F$: it is simply  a subfunctor  $\mathcal{X} : \mathcal{D} \rightarrow \mathrm{Sch}_k$ of $\PF F$. Concretely, it is given by the data of a compatible family of subschemes  $\mathcal{X}_x \subset \Pro(V_{F(x)})$ for every object $x$ in $\mathcal{D}$.  A global algebraic differential form on a polyhedral linear complex $F$ with poles along $\mathcal{X}$ is  simply an element of 
 \begin{equation} \label{intro: OmegaFminusX}   \Omega^{\bullet}(F \backslash \mathcal{X}) =  \varprojlim_{x\in \mathcal{D}} \,  \Omega^{\bullet} \left( \Pro(V_{F(x)})\backslash \mathcal{X}_x \right)    \ . 
 \end{equation}
It is a compatible system of differential forms on   $\Pro(V_{F(x)})$  with poles along $\mathcal{X}_x$ for each $x$.  
 Our main examples are: the subscheme of  $\LM_g^{\trop}$ defined  by the graph hypersurface locus, and the subscheme of $\LA_g^{\trop}$ defined by the determinant locus $\Det$. 

 \begin{figure}[h]\begin{center}
\quad {\includegraphics[width=9cm]{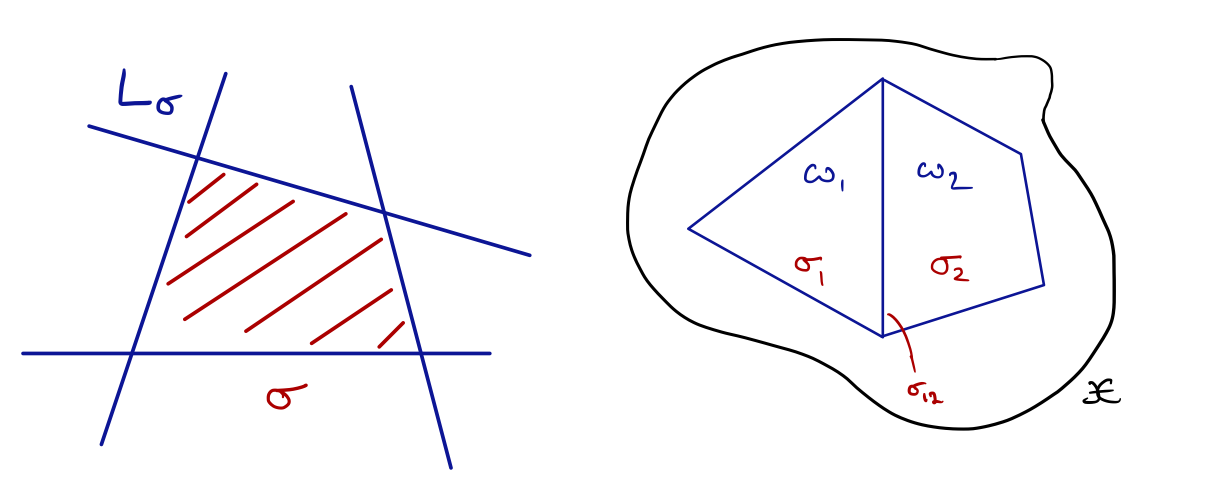}} 
\end{center}
\caption{Left:  a polyhedral linear configuration in $\Pro^2$. Right: two polyhedra $\sigma_1$,  $\sigma_2$ are glued together along the common face $\sigma_{12}=\sigma_1 \cap\sigma_2$.   An algebraic differential form $\omega$ defines two  forms $\omega_i= \omega\big|_{\sigma_i}$, for $i=1, 2$,   which coincide on $\sigma_{12}$. The form $\omega$ has poles along a subscheme $\mathcal{X}$, which, as depicted,  may not necessarily meet the topological realisation  $\sigma_1 \cup_{\sigma_{12}} \sigma_2$.   }
\end{figure}

The above definitions can be generalised. The most important construction, for our purposes, is a category $\BLC_k$ whose objects consist of iterated blow-ups of polyhedral linear configurations along linear subspaces. They are `wonderful' compactifications of linear polyhedra in the sense of \cite{Wonderful}.  
Using this concept,  we can efficiently  define the bordifications mentioned earlier. For instance,  we construct a  functor 
\[   \LA_g^{\trop,\BB}: \mathcal{D}_g^{\perf, \BB} \To \BLC_{\Q}   \] 
from a  suitable diagram category to $\BLC_{\Q}$.  It is defined by blowing up  the subspaces of quadratic forms with a non-trivial rational kernel which meet each  cone in the Vorono\"i decomposition. A key point is that that this space is a diagram of finitely many algebraic varieties with  extra structure; the informal definition of $\left| \LA_g^{\trop, \BB}\right|$ given earlier in this introduction \emph{a priori} involved infinitely many blow-ups.

The next theorem is the key geometric input in the proof of theorems  \ref{thm: introcohomSLGL} and \ref{thmintroOmegacginjectsDrLAgtrop}.

\begin{thm}  \label{thm: introLAgblowupdiagrams} There is a commutative diagram 
\[
\begin{array}{ccc}
\partial  \LA_g^{\trop,\BB}  &  \hookrightarrow   &  \LA_g^{\trop,\BB}  \\
\downarrow_{\pi_{\BB}}  &   &   \downarrow_{\pi_{\BB}} \\
 \partial  \LA_g^{\trop} &   \hookrightarrow &    \LA_g^{\trop}
\end{array}
\]
where the vertical maps $\pi_{\BB}$ are canonical blow-downs. The strict transform of the  determinant subscheme $\Det \subset \LA_g^{\trop}$ defines a subscheme 
\[ \widetilde{\Det} \subset \LA_g^{\trop, \BB}\]
which satisfies $\pi_{\BB} : \widetilde{\Det}  \rightarrow \Det$, and has  the  property 
\begin{equation} \label{intro: Dettildemovesaway} \widetilde{\Det} \cap  \left| \LA_g^{\trop, \BB} \right| = \emptyset\ .
\end{equation} 
On topological realisations, one has a commutative diagram of topological spaces
\[
\begin{array}{ccc}
  &    &   \left| \LA_g^{\trop,\BB}\right| \,  \setminus\,   \left| \partial \LA_g^{\trop,\BB}\right|  \\
   &  \nearrow &   \downarrow \\
\left|\LA_g^{\circ, \trop} \right| &   \rightarrow  &   \left| \LA_g^{\trop}\right| \,  \setminus\,   \left| \partial \LA_g^{\trop}\right| 
\end{array}
\]
where all arrows are isomorphisms.  The open    locus $\left|\LA_g^{\circ, \trop} \right|$ is  homeomorphic to the locally symmetric space $L \mathcal{P}_g/\GL_g(\Z) = \R_{>0}^{\times} \setminus \mathcal{P}_g/\GL_g(\Z) $. 
\end{thm}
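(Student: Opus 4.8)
The plan is to construct the diagram and verify its properties one piece at a time, working entirely in the category $\BLC_{\Q}$ and only descending to topological realisations at the end.

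\textbf{Construction of the bordification functor.} First I would make precise the diagram category $\mathcal{D}_g^{\perf,\BB}$: its objects are pairs consisting of a perfect cone $\sigma_Q$ (up to $\GL_g(\Z)$-equivalence) together with a choice of nested sequence $0 < K_1 < \cdots < K_n < V$ of rational subspaces of $V$ such that the corresponding loci $\Pro(\Quad(V/K_i))$ meet $\sigma_Q$; morphisms record face inclusions of cones together with compatible refinements/coarsenings of the nested sequences. For each object I define an object of $\BLC_{\Q}$ by taking $\Pro(\Quad(V))$, forming the wonderful-type iterated blow-up along the building set of linear subspaces $\{\Pro(\Quad(V/K))\}$ meeting $\sigma_Q$ (in increasing dimension order, as in \S\ref{intro: PLCsection}), and equipping it with the strict transform of $\sigma_Q$; functoriality in $\mathcal{D}_g^{\perf,\BB}$ must be checked, which amounts to the standard fact that wonderful blow-ups are compatible with inclusions of linear subspaces and with the relevant group actions. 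Here the key finiteness point — that only finitely many algebraic varieties are involved even though the naive construction blows up infinitely many $\Pro(\Quad(V/K))$ — is inherited from the finiteness of the Vorono\"i decomposition modulo $\GL_g(\Z)$, and I would invoke the main body of the text for this. The map $\pi_{\BB}$ is then the blow-down, which is a morphism of functors by construction. The boundary $\partial\LA_g^{\trop,\BB}$ is the locus obtained from the union of exceptional divisors together with the strict transform of $\partial\LA_g^{\trop}$, and commutativity of the square with the inclusions is immediate from the construction.

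\textbf{The determinant locus moves off the realisation.} This is the crux, i.e. \eqref{intro: Dettildemovesaway}. The point is local on each cone $\sigma_Q$. Inside $\Pro(\Quad(V))$, the polyhedron $\sigma_Q$ meets the determinant hypersurface $\Det$ precisely along its boundary faces lying at infinity, and each such face is contained in some $\Pro(\Quad(V/K))$ for a rational subspace $K$ that is part of the building set. The effect of the wonderful blow-up is exactly to separate the strict transform $\widetilde{\Det}$ from the strict transform of $\sigma_Q$: after blowing up $\Pro(\Quad(V/K))$, the new boundary stratum that $\sigma_Q$ acquires is a product of projective spaces (as described in the introduction), and on the real points the interior of the transformed polyhedron no longer touches $\widetilde{\Det}$. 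I would verify this by a local coordinate computation in one blow-up chart: choosing coordinates adapted to $K$ so that $\det$ factors as (a power of the exceptional parameter) times (a function nonvanishing on the relevant chart), one sees that $\widetilde{\Det}$ is cut out by the latter factor, which is bounded away from zero on the compact transformed polyhedron. Iterating over the nested sequence — handled cleanly because the building set is nested/"well-positioned" in the sense of \cite{Wonderful} — gives $\widetilde{\Det} \cap |\LA_g^{\trop,\BB}| = \emptyset$ globally after gluing, since the condition is checked cone-by-cone and the gluing maps are face inclusions. The identity $\pi_{\BB}: \widetilde{\Det} \to \Det$ is then just the statement that $\pi_{\BB}$ restricts to the strict transform, with the exceptional locus of $\widetilde{\Det}$ mapping into the centres, which all lie in $\Det$.

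\textbf{The open part and identification of $|\LA_g^{\circ,\trop}|$.} Since all blow-up centres lie inside $\Det$, hence inside $|\partial\LA_g^{\trop}|$, the blow-down $\pi_{\BB}$ is an isomorphism over the complement $\LA_g^{\trop}\setminus\partial\LA_g^{\trop}$; this gives the lower horizontal arrow and the right vertical arrow of the second diagram as isomorphisms, and the triangle commutes. That this open locus coincides with $|\LA_g^{\circ,\trop}|$ is recalled in the introduction: by definition $|\LA_g^{\circ,\trop}| = L\mathcal{P}_g/\GL_g(\Z)$, the locus of positive \emph{definite} classes, which is exactly the complement of the determinant locus in $|\LA_g^{\trop}|$ — here one uses that a positive semidefinite form with rational kernel has vanishing determinant iff the kernel is nonzero, and that $\GL_g(\Z)$ acts on $\mathcal{P}_g^{\rt}$ preserving this stratification. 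Finally $L\mathcal{P}_g/\GL_g(\Z) = \R_{>0}^{\times}\backslash\mathcal{P}_g/\GL_g(\Z)$ is the locally symmetric space by the very definition $L\mathcal{P}_g = \mathcal{P}_g/\R^{\times}$ together with the fact that the positive real scalars act freely and the orientation-reversing $-1$ acts trivially on symmetric matrices, so only $\R_{>0}^{\times}$ is seen.

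\textbf{Main obstacle.} I expect the real work to be \eqref{intro: Dettildemovesaway}: one must show not merely that the canonical forms vanish on the boundary (as noted in the footnote to Theorem \ref{thm: introcohomSLGL}, this is easy and insufficient) but that, after the iterated blow-up, the transformed determinant subscheme is genuinely disjoint from the compact transformed polyhedron. This requires keeping careful track, through a possibly long nested sequence of blow-ups, of which exceptional divisors the polyhedron meets and verifying that $\det$ never vanishes on the interior of the transform — a bookkeeping problem about the combinatorics of minimal vectors and their spans that interacts with the wonderful-compactification formalism. The compatibility of all of this with the $\GL_g(\Z)$-action and with the gluing data of the diagram category, so that the local statements assemble into the global disjointness, is the second delicate point.
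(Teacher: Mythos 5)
Your overall outline matches the paper's strategy — local coordinate factorisation of the determinant along the exceptional divisor, iterated through the nested sequence of blow-up centres, then descent to topological realisations — but two load-bearing steps are elided in a way that leaves a genuine gap.

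The first and more serious gap is in the sentence where you assert that, after the single blow-up along $\Pro(\Quad(V/K))$, the non-exceptional factor of $\det$ is ``bounded away from zero on the compact transformed polyhedron.'' The factorisation you have in mind (the paper's $\det(s_K^*Q)=z^{\dim K}\det(Q_K)\det(Q_C)\bmod z^{\dim K+1}$, its Proposition \ref{lem: stricttransformofDet}) produces a factor $\det(Q_K)\det(Q_C)$ which absolutely \emph{does} vanish in the chart, so the non-vanishing on the transformed polyhedron is not automatic and must be argued. The mechanism the paper supplies is Lemma \ref{lem: positivityoffaceandnormal}: for a strictly positive polyhedron $\sigma$ and any face $\sigma_F$, both the face $\sigma_F$ and its normal polyhedron $\sigma/\sigma_F$ are again strictly positive (i.e.\ their interiors avoid the determinant loci of $\Quad(V/K)$ and $\Quad(K)$ respectively). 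It is this positivity statement, propagated inductively through the iterated blow-up (the paper's Proposition \ref{prop: structureoffacesinblownupcone}, which identifies each face of $\sigma_Q^{\BB}$ with a product of blow-ups of strictly positive polyhedra), that makes the two determinant factors non-vanishing on the compact region. Your write-up identifies the right place to worry but misattributes the difficulty to ``bookkeeping about minimal vectors''; the actual crux is a positivity/convexity argument about kernels of positive semi-definite forms, and without it the claim does not follow.

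The second gap concerns your remark that functoriality of the blow-ups ``amounts to the standard fact that wonderful blow-ups are compatible with inclusions of linear subspaces.'' In the curve case ($\LM_g^{\trop,\BB}$), where the cells are simplices, this is indeed essentially automatic. Here, however, $\sigma_Q$ is not a simplex, so the building set $\BB_Q$ is \emph{not} intrinsic to a face $\sigma_{Q'}\subset\sigma_Q$: restricting $\BB_Q$ to $\Pro^{Q'}$ produces extra, extraneous blow-ups beyond the minimal $\BB_{Q'}$. The paper handles this with its notion of extraneous modification (Definition \ref{def: extraneousmodification} and Lemma \ref{lem: extraneousblowdowns}), replacing face maps by face \emph{diagrams} in $\BLC_{\Q}$. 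Your proposed diagram category (cones decorated by nested sequences of kernels) would have to be enriched with these extra morphisms, or else the gluing on the nose fails; on the other hand, as the paper's Remark \ref{rem: glueconesfacediagrams} points out, extraneous modifications become honest homeomorphisms after passing to topological realisations, so the final statement survives — but only once this subtlety is built in.
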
 
A similar theorem holds for the moduli space $\LM_g^{\trop}$ of tropical curves (\S \ref{sect: strictTransfGraphLocus}).

\subsection{`Motives' of quadratic forms} \label{sect: intromotivesQ} 
Hereafter, a motive refers to an object in a suitable Tannakian $\mathcal{H}_{\Q}$ category of realisations,  following Deligne \cite{DeligneP1}.  An object $M$ in 
$\mathcal{H}_{\Q}$ has Betti and de Rham realisations $M_B$, $M_{dR}$ and  a period pairing $M_B^{\vee}\otimes_{\Q} M_{dR} \rightarrow \C$. 
The property  \eqref{intro: Dettildemovesaway} of the determinant locus enables us to define a canonical  object $\mot_Q$ associated to a  positive definite quadratic form  $Q$  such that the integral \eqref{intro: IQomega}  is a period (definition \ref{def: motQ}).
In the special case when $\sigma_{Q_G}= \sigma_{G}$ is a cographical cone in the image of the tropical Torelli map, $\mot_{Q_G}$ is equivalent to the graph motive defined in  \cite{BEK}. 
 This construction provides, in particular, a motivic interpetation of   the Borel regulator.

\subsection{`Motives' associated to $\GL_g(\Z)$. } Similarly, we may find a motivic interpretation of the canonical cohomology of $\GL_g(\Z)$.  For every $g>1$ and $d\geq 0$ we define a  cohomology group $H^d_c(g)$   consisting of closed  compatible families of algebraic  differential forms with poles along the determinant locus \eqref{intro: OmegaFminusX}, modulo exact forms.
 There is a natural map:
\[  H^d_c(g) \To H_c^d(L \mathcal{P}_g/ \GL_g(\Z);\R) \]
which is injective on $\Omega^d_c(g) \subset H^d_c(g)$: in particular, $H^d_c(g)$ contains all the compactly-supported cohomology classes for $\GL_g(\Z)$ which are described in theorem \ref{thm: introcohomSLGL}.
Integration of differential forms   defines  a pairing 
\[  H^{\lf}_d(\GL_g(\Z);\Q)\otimes_{\Q} H^{d}_{ c}(g)  \To \C\]
where $H^{\lf}$ denotes locally finite (Borel-Moore) homology.
 \begin{thm} \label{introthm: MotiveGLg} There is an  object $\MM^d_g$ of $\mathcal{H}_{\Q}$ equipped with a pair of canonical  linear maps
\begin{eqnarray}
H^{\lf}_d(\GL_g(\Z);\Q)   & \To &   ( \MM^d_g)_B^{\vee} \nonumber \\ 
H^d_c(g)   & \To &   (\MM^d_g)_{dR} \nonumber 
\end{eqnarray} 
such that the integration pairing
factors through the period pairing:
$(\MM^d_g)_B^{\vee}  \otimes_{\Q}  (\MM^d_g)_{dR} \rightarrow \C$. 
\end{thm}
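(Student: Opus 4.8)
## Proof proposal for Theorem \ref{introthm: MotiveGLg}

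The plan is to realise $\MM^d_g$ as the (relative, or Borel–Moore) cohomology of an explicit pair of algebraic varieties extracted from the blow-up diagram of Theorem \ref{thm: introLAgblowupdiagrams}, and then to check that the two linear maps and the compatibility with the period pairing are formal consequences of the geometry already established. Concretely, I would work with the finite diagram $\LA_g^{\trop,\BB} : \mathcal{D}_g^{\perf,\BB} \to \BLC_{\Q}$ together with its strict-transform determinant subscheme $\widetilde{\Det}$. For each object $x$ of the diagram one has a smooth (or at worst simplicial-resolvable) projective $\Q$-variety $\PF(\LA_g^{\trop,\BB})(x)$ — an iterated blow-up of a projective space — equipped with a normal-crossings-type boundary divisor coming from the blown-up linear subspaces, and the subscheme $\widetilde{\Det}_x$. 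Taking the cohomology of the complement of $\widetilde{\Det}$ relative to the boundary divisor, and then taking a (homotopy) limit over $\mathcal{D}_g^{\perf,\BB}$, produces a complex of mixed Hodge structures / objects of $\mathcal{H}_{\Q}$; I would define $\MM^d_g$ to be the appropriate cohomology object of this limit, so that by construction $(\MM^d_g)_{dR}$ receives the de Rham complex computing $H^d_c(g)$ and $(\MM^d_g)_B^\vee$ maps to the corresponding locally-finite Betti homology. The essential input that makes this well posed is exactly property \eqref{intro: Dettildemovesaway}: because $\widetilde{\Det}$ is disjoint from the topological realisation $|\LA_g^{\trop,\BB}|$, the locally finite chains on $L\mathcal{P}_g/\GL_g(\Z)$ can be pushed into the complement of $\widetilde{\Det}$, giving a map from $H^{\lf}_d(\GL_g(\Z);\Q)$ to the Betti realisation, and dually the algebraic forms of \eqref{intro: OmegaFminusX}, which have poles only along $\Det$ (hence along $\widetilde{\Det}$ after pullback), are genuine de Rham classes on that complement.

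The key steps, in order, would be: (1) for a single blow-up tower, identify the mixed Hodge structure (object of $\mathcal{H}_{\Q}$) $H^d\bigl(\PF_x \setminus \widetilde{\Det}_x,\ \partial_x \setminus (\partial_x \cap \widetilde{\Det}_x)\bigr)$, using the standard description of the cohomology of a complement relative to a divisor via a logarithmic de Rham / mixed cone construction, exactly as in the graph-motive case of \cite{BEK}; (2) check functoriality in $x$ — the morphisms in $\mathcal{D}_g^{\perf,\BB}$ are face inclusions and isomorphisms of $\PLC$/$\BLC$ objects, so they induce maps of the corresponding relative cohomology objects, and one assembles the limit $\MM^d_g = \lim_{x} H^d(\cdots)$ (or its total complex) inside $\mathcal{H}_{\Q}$, which is legitimate because the diagram is finite and $\mathcal{H}_{\Q}$ is abelian with the requisite limits; (3) construct the Betti map: a locally finite $d$-cycle in $L\mathcal{P}_g/\GL_g(\Z)$ is, by Theorem \ref{thm: introLAgblowupdiagrams}, a $d$-cycle in $|\LA_g^{\trop,\BB}| \setminus |\partial \LA_g^{\trop,\BB}|$, hence a relative cycle in $(|\LA_g^{\trop,\BB}|, |\partial \LA_g^{\trop,\BB}|)$ avoiding $\widetilde{\Det}$; pairing against the diagram of ambient varieties gives the element of $(\MM^d_g)_B^\vee$; (4) construct the de Rham map: an element of $H^d_c(g)$ is by definition a closed compatible family of algebraic forms with poles along $\Det$, which pulls back to a compatible family of closed logarithmic-type forms on $\PF_x \setminus \widetilde{\Det}_x$ respecting the boundary, hence a class in $(\MM^d_g)_{dR}$; (5) verify that Stokes/period compatibility holds — this is the statement that integrating a de Rham representative over a Betti cycle depends only on the two classes, which follows because exact forms integrate to zero over cycles with boundary in the region where the forms are pulled back from the boundary strata, i.e. from the compatibility built into \eqref{intro: OmegaFminusX} and the normal-crossings structure.

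The main obstacle, I expect, is step (2) together with the precise bookkeeping in step (1): one must make the cone/limit construction over the diagram $\mathcal{D}_g^{\perf,\BB}$ actually land in the Tannakian category $\mathcal{H}_{\Q}$ rather than merely in a derived category, and one must be careful that the relative cohomology is taken with respect to the correct pair — the boundary divisor that one does \emph{not} contract versus the strict transform $\widetilde{\Det}$ that one removes — so that the resulting object simultaneously carries the locally finite homology of the open locally symmetric space and the differential forms with poles along the determinant. This is where the analogy with the graph motive of \cite{BEK} has to be upgraded from a single variety to a finite diagram; the formal properties of $\BLC_{\Q}$ and the functoriality of blow-ups along the linear subspaces indexed by nested rational kernels should make this go through, but the compatibility of the mixed Hodge / motivic structures under the gluing maps is the point that requires genuine care. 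Once the object $\MM^d_g$ is constructed functorially, the existence of the two linear maps and the factorisation of the integration pairing through the period pairing are then essentially tautological, reducing to the disjointness \eqref{intro: Dettildemovesaway} and the de Rham theorem for polyhedral linear complexes invoked in the proof of Theorem \ref{thmintroOmegacginjectsDrLAgtrop}.
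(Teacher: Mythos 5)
Your geometric inputs are the right ones: the relative cell motives of \cite{BEK} type, the disjointness property $\widetilde{\Det}\cap |\LA_g^{\trop,\BB}|=\emptyset$, the role of the boundary $\partial\LA_g^{\trop,\BB}$, and Stokes compatibility. However, the step you yourself flag as the main obstacle — assembling the diagram of relative cohomology objects into a single object of $\mathcal{H}_{\Q}$ by a (homotopy) limit over $\mathcal{D}_g^{\perf,\BB}$ — is a genuine gap, and not merely a bookkeeping issue. The difficulty is that $H^d_c(g)$ is defined as the cohomology of a \emph{complex} of compatible families of algebraic forms, not as the limit of individual $H^d(\PF_x\setminus\widetilde{\Det}_x,\ldots)$. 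Taking the limit of cohomology objects over a finite diagram with nontrivial face maps and finite automorphism groups is not the same thing; making this precise requires a derived (homotopy) limit, and one must then argue that a specific cohomology of that limit lands in $\mathcal{H}_{\Q}$ rather than in the derived category. You do not supply that argument, and indeed the de Rham theorem for polyhedral complexes (Theorem \ref{thm: CellularDeRhamCohomology}) lives at the level of spectral sequences, not of objects in a Tannakian category.

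The paper sidesteps this issue by a purely Tannakian device that you do not anticipate. Rather than form a limit, one sets $M^d_g=\bigoplus_{\dim\sigma_Q=d}\mot_Q^{\mathrm{Aut}(\sigma_Q)}$, a \emph{direct sum} over isomorphism classes of $d$-dimensional cones; this is unambiguously an object of $\mathcal{H}_{\Q}$. The compatibility (gluing) data is then encoded at the level of $\Q$-vector subspaces of the de Rham realisation: one takes $Z_g^d\subset\Omega^d(\LA_g^{\trop,\BB}\setminus\widetilde{\Det})$ the compatible closed forms vanishing on $\partial\LA_g^{\trop,\BB}$, and $B_g^d\subset Z_g^d$ the coboundaries, each mapping naturally into $(M^d_g)_{dR}$. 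By the Tannaka theorem there are unique subobjects $\langle B_g^d\rangle\subset\langle Z_g^d\rangle\subset M^d_g$ with $\langle Z_g^d\rangle_{dR}=G_{dR}\cdot Z_g^d$ and similarly for $B_g^d$, and one sets $N^d_g=\langle Z_g^d\rangle/\langle B_g^d\rangle$. This is automatically an honest object of $\mathcal{H}_{\Q}$. The Betti map is then built from cellular chains via $H_d(\sigma_Q,\partial\sigma_Q)\to(\mot_Q)^\vee_B$, and the well-definedness modulo boundaries and modulo chains supported on $\partial\LA_g^{\trop,\BB}$ is exactly what the \emph{motivic} Stokes relation (Theorem \ref{thm: Stokes}, valid for the objects $I^{\mm}_Q(\omega)$) supplies. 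In short: where you want to glue objects by a diagram limit, the paper takes a large ambient sum and cuts out the relevant subquotient using $G_{dR}$-generation from explicit cocycle data, letting the period relations do the gluing for you. This is both logically cleaner and gives the minimality statement of the body theorem for free. If you wish to salvage the limit approach you would need to develop a theory of diagrams of realisations and identify the correct $\mathcal{H}_{\Q}$-cohomology of the homotopy limit — a substantial undertaking that the Tannakian route avoids.
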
 
This theorem  implies that the part  of the locally finite homology $H^{\lf}_d(\GL_g(\Z);\Q) $ which pairs non-trivially with $H^d_c(g)$ is motivic, since 
$\MM^d_g$ lies in the Tannakian subcategory of $\mathcal{H}_{\Q}$ generated by the cohomology of algebraic varieties over $\Q$.
 In particular, the theorem provides a motivic interpretation of the   integrals of canonical differential forms over homology cycles, including   the volume integrals  over fundamental domains considered by Minkowski. The objects $\MM$ are interesting: for example,  $\MM^5_3$ is a non-trivial extension of $\Q(-3)$ by $\Q(0)$ with period a rational multiple of $ \zeta(3)$.


\subsection{Borel-Serre compactification in algebraic geometry} 
In an appendix we study the Borel-Serre compactifiction of $L\mathcal{P}_g/\GL_n(\Z)$, which is defined topologically, and prove that it is homeomorphic to
 $|\LA^{\trop,\BB}_g|$, which is constructed algebraically.
 
\subsection{Recent progress and discussion of unstable cohomology} \label{sect: RecentProgress}
It was proven by Bismut and Lott  \cite{BismutLott} that for $g>1$ odd,  the class of  $\omega^{2g-1}$ in $H^{2g-1}(\GL_g(\Z))$ vanishes.   
Conjecture 5.3 in \cite{MoritaSakasaiSuzuki} states that the class of $\omega^{2g-1}$ also vanishes  in $H^{2g-1}(\GL_{g+1}(\Z))$. 

Since the first version of this paper became available, a flurry  of preprints  appeared which have enormously   advanced our understanding of the unstable cohomology of the general and special linear groups. 
Firstly, Ash  \cite{AshUnstable}  independently constructed two infinite families of unstable classes  in the cohomology of $\SL_n(\Z)$, for $n=3k+3$ and $n=3k+4$.  When $k$ is even, they lie in the same degrees as the  duals of the  classes we denote by  $[\omega_c^{2g-1}]  \in  H_c^{2g}(\GL_g(\Z))$ and their inflations in $ H_c^{2g+1}(\GL_{g+1}(\Z))$ (see theorem \ref{thm: introcohomSLGL}, (ii), (iii)). For $k$ odd, they  lie in different degrees from the classes constructed in this paper. Ash suggests that they align with classes announced by Lee \cite{LeeUnstable}  (more on which below).

At around the same time, \cite{AMP} and \cite{BCGP}  independently showed  that the compactly supported cohomology of the general linear group admits a Hopf algebra structure. By composing the classes constructed  in theorem    \ref{thm: introcohomSLGL}, one can not only prove  all the claims of Lee, but 
also deduce the existence of a plethora of new and unexpected classes. See \emph{loc. cit.} for further details. In particular, all boxed classes in our previous figures can now be explained in terms of canonical forms using these new results. 
For example,  the class in $H^9(\SL_6(\Z);\R)$ in figure \ref{fig:SLn}, which corresponds to a class in $H^{12}_c(\GL_6(\Z);\R)$, is denoted by $[\omega^5|\omega^5]$ in Lee's notation. The class in $H^{15}(\SL_7(\Z);\R)$ corresponds to a class in $H^{13}_c(\GL_7(\Z);\R)$ which is  related to it by the inflation map of \cite{TopWeightAg}.  The paper \cite{OddGraph}, which builds on \cite{AMP} and the methods developed here, establishes infinitely many new  classes on $\GL_{2n}(\Z)$. In particular, the last remaining boxed class in $H^8(\GL_6(\Z))$ (equivalently $H^8(\SL_6(\Z))$, see figure \ref{fig:SLn}) is part of a large infinite family.  We refer the interested reader to the  survey \cite{Bruck} for futher context.

In a different direction, cuspidal classes for $\GL_n(\Z)$ were constructed recently in \cite{boxer-calegari-gee-cuspidal}. 

Finally, a recent preprint of Vogtmann \cite{Vogtmann2024} revisits  the bordification of outer space and  asks the question of bordifying the tropical Torelli map, which is answered in \S\ref{section: Torelli}.

\subsection{Plan and further comments}
Section \S\ref{section: PolyhedralCellComplexes} introduces a general notion of  polyhedral cell complexes in algebraic varieties, for which we prove
a cellular homology and de Rham cohomology theorem in  \S\ref{section: HomologyCohomology}. In section \S\ref{sect: PLC} we study the particular subcategory of \emph{linear} polyhedral cell complexes $\PLC_k$, and their iterated blow-ups   in \S\ref{sect: Blowups}.

The first application of this theory is to the moduli space of tropical curves \S\ref{section: LMg}, and its bordification \S\ref{section: MgBB}-\ref{sect: strictTransfGraphLocus}. 
The discussion of the moduli space of tropical abelian varieties begins in \S\ref{section: PolyhedraQuadForms} with a study of polyhedra in spaces of quadratic forms and their blow-ups. In section \ref{section: PerfectConeCompact} we study  $\LA_g^{\trop}$ and define its perfect cone bordification  $\LA_g^{\trop,\BB}$. 
Section \ref{section:  DetLocus} studies the properties of the determinant locus and its strict transform. In section \ref{section: Torelli}, we construct the bordification of the tropical Torelli map.

From section \ref{section: CanForms} onwards, we study the properties of canonical forms and their integrals,  and  in \S \ref{section: CohomologyClasses} we   prove the main  results on the cohomology of  $\SL_g(\Z)$ and $\GL_g(\Z)$ mentioned in this introduction. Finally, \S\ref{section: Periods} discusses the periods and motives associated to canonical integrals and in an appendix we discuss the relation between the Borel-Serre compactification and the space $\left| \LA^{\trop,\BB}_g\right|$ defined algebraically using blow-ups.
We expect that the methods of this paper may be used to 
define and study  the geometric  spaces, differential forms, and  cohomology classes associated to other types of graph complexes (see, e.g., \cite{ModuliColoured}), as well as the  quotients of symmetric spaces by general linear groups over number fields.

\subsection{Acknowledgements} This project has received funding from the European Research Council (ERC) under the European Union's Horizon 2020 research and innovation programme (grant agreement no. 724638). The author  thanks Trinity College, Dublin for a Simons visiting Professorship during which much of this work was carried out and the University of Geneva, where it was completed. Very many thanks are owed to Chan, Galatius, Grushevsky and Payne for extensive discussions on $\mathcal{A}_g$ which motivated this work. 
The author benefited from discussions with Berghoff, Dupont, Grobner and Vogtmann, whose online notes on the Borel-Serre construction were most useful. 
Many thanks to Chris Eur and  Shiyue Li for comments and corrections. The author is especially grateful to Melody Chan and Raluca Vlad for valuable feedback and corrections.

\section{Algebraic polyhedral cell complexes} \label{section: PolyhedralCellComplexes} 
We describe a   formalism to construct algebraic models  of certain  topological spaces defined by gluing together polyhedral cells
  according to a diagram category. 
In the first instance, the cells will be objects of a very general  category $\PC_k$, but for the applications we shall  work with more restrictive subcategories  of convex linear  polyhedra in projective space ($\PLC_k$),  and of   their blow-ups along linear spaces ($\BLC_k$).

\subsection{A category of polyhedral cells}
Let $k \subset \C$ be a field.  There are many situations in which one has a polyhedron embedded in the real or complex points of an algebraic variety. Since convexity does not make sense in this generality, one must define a  polyhedron in an algebraic  variety using different concepts. 
 To this end, let us denote by $\PC_k$ the category whose objects are triples
$ (P, L, \sigma) $
defined recursively as follows: 
\begin{itemize} \setlength\itemsep{0.03in}
\item  $P$ is a smooth scheme over $k$. 
\item $\sigma \subset P(\C)$  is  homeomorphic to a closed  real ball  $B_n\subset \R^n$ of dimension $n \geq 0$, i.e., there is a continuous map $g: B_n \hookrightarrow P(\C)$ such that $g: B_n \cong \sigma$. We call $n$ the dimension of $(P,L,\sigma)$. It may be strictly smaller than the dimension of $P$.

\item  The boundary of $\sigma $ satisfies  $\partial \sigma = \sigma \cap L(\C)$, 
where     $L \subset P$ is a subscheme with  finitely many distinct smooth irreducible components   $ L_i$, $i \in I$.
\item   Let $n\geq 1$. For every $i\in I$ such that  $L_i(\C) \cap\sigma$ is non-empty,  the triple
\begin{equation} \label{PC:face}  \left( L_i,   \bigcup_{L_j\neq L_i} L_i \cap  L_j , \sigma \cap L_i(\C)\right)
\end{equation}
is required to be an object in $\PC_k$ of dimension $<n$. 
\end{itemize} 
The objects in $\PC_k$ of dimension $0$ are triples $(P, L, \sigma)$ where $\sigma \subset L(\C)$ is  a point.   For any object $(P,L,\sigma)$, one has $\partial \sigma= \bigcup_{i \in I} \sigma_i$, where $\sigma_i = \sigma\cap L_i(\C)$ is of smaller dimension. Therefore by repeatedly taking boundaries, one obtains a stratification on  $\sigma$  giving a   structure of a regular CW-complex on the closed ball $B_n$.  Note that the   third condition $\partial \sigma = \sigma \cap L(\C)$ captures a notion of convexity (it fails  for non-convex Euclidean  polyhedra).  
 A morphism:
\[  \phi: (P, L, \sigma) \rightarrow (P',L',\sigma')\]
is given by a morphism $\phi: P \rightarrow P'$ such that 
$\phi(L) \subset L'$ and $\phi(\sigma) \subset \sigma'$.
  For any subset $J \subset I$, let $L_J = \bigcap_{j\in J} L_j$.  A \emph{face map} is  an inclusion  of a \emph{face}
\begin{equation} \label{iotafacemap}   \iota_{J} :    
\left(L_J,  \bigcup_{j\in I\setminus J} L_{J  \cup  \{j\}}, \sigma \cap L_J(\C)  \right)  \To (P, L,\sigma) 
\end{equation} 
where $\sigma \cap L_J(\C)   \neq \emptyset$. 
A \emph{facet} of $(P,L,\sigma)$ is a face
\eqref{PC:face} of dimension $\dim \sigma-1$.

\begin{example} \label{ex:simplex}  Let $P = \Pro^n_{\Q}$ denote projective space of dimension $n$ with homogeneous  coordinates $(x_0:\ldots : x_n)$. 
The \emph{algebraic simplex} is the triple $( \Pro^n_{\Q}, L, \sigma)$, where $L= V(x_0\cdots x_n)$ is the union of coordinate hyperplanes,   and $\sigma \subset   \Pro^n_{\Q}(\R)$ is  the standard coordinate simplex defined by  the region $x_i \geq 0$. Its faces are algebraic simplices of smaller dimension. 
\end{example}

\begin{notn} \label{notation: products} Define the product  $\prod_{i=1}^n (P_i, L_i, \sigma_i)$ of  objects $(P_i, L_i, \sigma_i)$ in $
\PC_k$ by 
\[\left( P_1 \times \ldots \times P_n \ , \  \bigcup_{i=1}^n P_1 \times \ldots \times P_{i-1} \times L_i \times P_{i+1} \times \ldots \times P_n \ , \  \sigma_1 \times \ldots \times \sigma_n\right)\ .\]
Denote a product of morphisms $f_1,\ldots, f_n$ in $\PC_k$  by $f_1\times \ldots \times f_n$.
\end{notn}

We shall also add to the collection of morphisms in $\PC_k$ the following  projection morphisms, defined for any ordered finite sets $I\supset J$:
\[   \prod_{i\in I} (P_i, L_i, \sigma_i) \rightarrow \prod_{j\in J} (P_j, L_j, \sigma_j) \ . \]

There are two natural functors, both of which preserve products: 
\begin{equation} \begin{array}{ccccccc} \label{defn: RealFunctors}
\sigma :   \PC_k  &\To & \Top & \qquad  \hbox{ and }   \qquad &   \PF: \PC_k  &\To& \mathrm{Sch}_k \\
 \sigma (P,L,\sigma) &  = &  \sigma  && \PF (P,L,\sigma) & =  & P
\end{array} \end{equation}
to the category of topological spaces, and schemes over $k$, respectively.

\subsection{Algebraic polyhedral cell complexes}
 Let $\mathcal{C}_k$ denote a  subcategory of $\PC_k$. 
The main cases of interest are $\mathcal{C}_{k} = \PLC_k$ (\S\ref{sect: PLC})  and 
 $\mathcal{C}_k= \BLC_k$ (\S \ref{sect: Blowups}).

\begin{defn} \label{defn: GenCkComplex}  A   $\mathcal{C}_k$-\emph{complex}   is a functor 
\begin{equation}  \label{PCcomplex} F: \mathcal{D} \To \mathcal{C}_k\end{equation} 
where $\mathcal{D}$ is equivalent to a finite  category. We write $F_x$ for $F(x)$ when $x$ is an object of $\mathcal{D}$.

 A \emph{morphism} $(\phi,\Phi)$ between two functors
$F: \mathcal{D} \rightarrow \mathcal{C}_k $  and  $ F': \mathcal{D'} \rightarrow \mathcal{C}_k$
   is the data of:
  
   \emph{(i)}      a functor $\phi: \mathcal{D} \rightarrow \mathcal{D'}$ and
   
   \emph{(ii)} 
    a natural transformation $\Phi:  F \rightarrow F' \circ \phi$. 
    \end{defn} 
    
   To spell this out, $\Phi$ is the data, for every object  $x$  of $\mathcal{D}$, of a morphism
   \[ \Phi_x :  F_x \To  (F' \circ \phi)_x \qquad  \hbox{ in } 
   \mathcal{C}_k\]
   such that, for every morphism $f: x \rightarrow y$ in $\mathcal{D}$, there is a commutative diagram 
   \begin{eqnarray} 
   F_x \quad & \overset{\Phi_x}{\To} &  (F' \circ \phi)_x \qquad \nonumber \\
   \downarrow_{F(f)}  & & \quad \,\,\, \downarrow_{(F' \circ \phi)(f)}  \nonumber \\
   F_y \quad & \overset{\Phi_y}{\To} & (F' \circ \phi)_y   \qquad \ .\nonumber 
   \end{eqnarray}  
   In particular,  any  functor   $\phi: \mathcal{D}\rightarrow \mathcal{D}'$ induces a morphism $(\phi, \id)$  between   $F=F'\circ \phi$ and $F'$. If $\phi$ is an equivalence, then $F$ and $F'$ are isomorphic. In particular, by replacing $\mathcal{D}$ with an equivalent category,  we may  assume that $\mathcal{D}$ is itself finite.

    \begin{defn}
    The \emph{topological realisation} of  \eqref{PCcomplex} is the topological space
    \begin{equation} \label{def: topreal}  |F| =  \varinjlim_{x\in \mathrm{Ob}(\mathcal{D})}  \sigma(F_x)  \  . \end{equation}
          By taking limits, a morphism $(\Phi,\phi)$ from $F$ to $F'$  induces a continuous map between their topological realisations. We denote it by 
$|\Phi|:  | F| \To |F'|.$
 \end{defn}

\subsection{Subschemes}  \label{sect: subschemes}   Let $F: \mathcal{D} \rightarrow \PC_k$ be a functor \eqref{PCcomplex} as above.
  \begin{defn} \label{defn: subscheme} 
 Define a closed (resp. open) \emph{subscheme} of $F$  to be a functor
 \[ \mathcal{X} : \mathcal{D} \To \mathrm{Sch}_k\] 
 such that $\mathcal{X}_x$ is a closed (resp. open) subscheme of $\PF F_x$, for all objects $x$ of $\mathcal{D}$, and such that the canonical embedding $i_x: \mathcal{X}_x \subset \PF F_x$
 is a natural transformation $i: \mathcal{X}\rightarrow \PF  F$.  If  $K= \R$ or $\C$, and  contains $k\subset K$, then the set of  $K$-points defines a topological space
 \[ |\mathcal{X}(K)| = \varinjlim_{x\in \mathcal{D}}  \mathcal{X}_x(K)\]
 with the analytic topology. 
   \end{defn}
 
 Definition \ref{defn: subscheme} 
 is analogous to that of a subfunctor. It 
  means that for all morphisms $f: x \rightarrow y$ in $\mathcal{D}$ there is a commutative diagram
  \[
  \begin{array}{ccc}
 \PF F_x  &  \overset{F(f)}{\To}   &\PF F_y \\
 \rotatebox[origin=c]{90}{$\subseteq$} &   &  \rotatebox[origin=c]{90}{$\subseteq$}   \\
 \mathcal{X}_x  &   \overset{X(f)}{\To}  &   \mathcal{X}_y
\end{array}
\]
and hence the morphisms between $\mathcal{X}_x$ are obtained by restricting those from $\PF F$. 
 
\begin{defn}  \label{defn: subschemeatinfinity} We say that a closed subscheme  $\mathcal{Z} $ of $F$ is \emph{at infinity}, 
which we denote by  $|F| \cap \mathcal{Z} = \emptyset$,  if its image does not meet any of the polyhedra  $\sigma(F_x)$:
\[\sigma(F_x) \cap i_x(\mathcal{Z}_x)(\C) = \emptyset  \ \hbox{ for all } \  x\in \mathrm{Ob}(\mathcal{D}) . \] 
 Similarly, for an open subscheme  $U $ of $F$  we write $|F| \subset U$ if
  \[\sigma(F_x) \ \subset \  U_x(\C)   \ \hbox{ for all } \  x\in  \mathrm{Ob}(\mathcal{D}) . \] 
\end{defn}

 \begin{defn} \label{defn: OpenSubComplex} 
 Given an open subscheme $\mathcal{U}$ of $ F$ such that $|F| \subset \mathcal{U}$, define \begin{eqnarray} F \cap  \mathcal{U}: \mathcal{D}  &\To & \PC_k    \\
  x & \mapsto &( F\cap \mathcal{U})_x =  (\mathcal{U}_x,  L_x \cap U , \sigma) \ , \nonumber
  \end{eqnarray} 
  which we may view as an (open) subfunctor of $F:\mathcal{D}  \rightarrow  \PC_k$.
  \end{defn} 
  
\subsection{Algebraic and meromorphic differential forms}  \label{sect: differentialforms}
We define various notions of differential forms on a $\mathcal{C}_k$-complex. The most important will be  the notion of  a smooth form \eqref{DGAdefnA}, which we can use  to compute cohomology. Since the cohomology classes we construct are in fact algebraic with specific polar locus, we also need to define notions  of algebraic forms with poles along a subscheme, and meromorphic forms with unspecified poles.

\begin{defn} \label{defn: GlobalForm} Consider a $\mathcal{C}_k$-complex $F: \mathcal{D} \rightarrow \mathcal{C}_k$ and a subscheme $\mathcal{X} \subset \PF F$.
A  global differential form of degree $d$ on $F$ with poles along $\mathcal{X}$  is an element  of the limit 
\[  \Omega^d(\PF F \backslash \mathcal{X}) = \varprojlim_{x\in \mathrm{Ob}(\mathcal{D})}  \Omega^d (\PF F_x \backslash \mathcal{X}_x )\ . \] 
Equivalently, it is a collection, for every $x\in \mathrm{Ob}(\mathcal{D})$, of  regular forms $\omega_x \in \Omega^d(\PF F_x \backslash \mathcal{X}_x)$  which are compatible in the sense  that 
\begin{equation} \label{CompatibilityForForms} 
F(f)^*( \omega_y )= \omega_x \quad \hbox{ for every }  \quad f \in \mathrm{Hom}_{\mathcal{D}}(x,y) \ .
\end{equation} 
\end{defn} 

The graded vector space $\Omega^{\bullet}( \PF F \backslash \mathcal{X} ) = \bigoplus_d \Omega^d(\PF F \backslash \mathcal{X})$ is a differential graded algebra.
We may also write it  $\Omega^{\bullet}( F \backslash \mathcal{X} )$, bearing in mind that it depends only on the functor $\PF F$. 

  Consider an object $(P, L, \sigma)$ of $\PC_k$. Let us denote by 
\[ \Omega_{\mer}^{d} \left( (P, L, \sigma) \right)  = \varinjlim_{U: \sigma \subset U(\C)}  \Omega^d(U;k)  \]
the space of meromorphic differential forms which are regular on an open affine subset   $U$ of $P$ whose complex points contain $\sigma$.
Such a form may be restricted to the faces of $\sigma$, and so $ \Omega_{\mer}^{\bullet} = \bigoplus_{d \geq 0} \Omega_{\mer}^d$ is a contravariant functor from $\PC_k$ to  the category of DGA's.

\begin{defn} \label{defn: MeroForm} Consider a $\mathcal{C}_k$-complex $F: \mathcal{D} \rightarrow \mathcal{C}_k$. 
A meromorphic differential form of degree $d$ on $F$ is an element  of the projective limit 
\[  \Omega_{\mer}^d(F) = \varprojlim_{x\in \mathrm{Ob}(\mathcal{D})}  \Omega_{\mer}^d (F_x)\ . \] 
It is a   compatible  collection  of  meromorphic forms $\omega_x \in \Omega^d_{\mer}(F_x)$ for $x\in \mathrm{Ob}(\mathcal{D})$.
The total space $\Omega_{\mer}^{\bullet} (F) = \bigoplus_{d\geq 0} \Omega_{\mer}^d(F)$ is a differential graded algebra. 
\end{defn} 

\begin{ex}
The DGA of meromorphic forms $\Omega_{\mer}^{d} \left( \Pro^n_{\Q}, L, \sigma\right)$  on an algebraic  simplex (Example \ref{ex:simplex})  contains the polynomial forms on $\sigma$ in the sense of Sullivan \cite{Sullivan}.  
\end{ex}

\begin{defn}
The $\C$-differential graded algebra of smooth forms on $|F|$ is defined to be
   \begin{equation} \label{DGAdefnA}  \mathcal{A}^{\bullet}(|F|) =\varprojlim_{x\in \mathrm{Ob}(\mathcal{D})}   \mathcal{A}^{\bullet}(\sigma(F_x))  \ ,
   \end{equation}  
   where $ \mathcal{A}^{\bullet}(\sigma(F_x))$ denotes the algebra of smooth differential forms over $\C$ which are defined in an open neighbourhood of 
   $\sigma(F_x)$ inside $\PF F_x(\C)$.  When   $k\subset \R$, and all  polyhedra $\sigma(F_x)$ are in fact contained in $\PF F_x(\R)$ (which will always be the case in our applications) then 
 $\mathcal{A}^{\bullet}(|F|)$  has a real structure   $ \mathcal{A}^{\bullet}(|F|;\R)$ consisting of the $\R$-subalgebra of real forms.
 \end{defn}
   
If $\mathcal{X}$ is at infinity, i.e.,  $\mathcal{X} \cap |F|= \emptyset$, then  there are natural maps of DGA's:   
\begin{equation}  \Omega^{\bullet}  (\PF F \backslash \mathcal{X} ) \ \subset  \  \Omega_{\mer}^{\bullet} (F) \To \mathcal{A}^{\bullet}(|F|) \ . \end{equation} 

\begin{defn} Define smooth (resp. global algebraic) cohomology groups:
\begin{equation}
H^n_{dR}(|F|;\C) = H^n ( \mathcal{A}^{\bullet}(|F|))  \quad (\hbox{resp. } \quad   H^n_{dR}(\PF F \backslash \mathcal{X}) = H_{dR}^n ( \Omega^{\bullet}(F \backslash \mathcal{X})))  \ . 
\end{equation} 
The former is a vector space over $\C$ isomorphic to the cohomology of $|F|$ as we shall show;  the latter a vector space over $k$, which  does not equal the cohomology of $|F|$ in general.  

If $\mathcal{X} \cap |F|= \emptyset$,  there is a natural map $H^n_{dR}(\PF F \backslash \mathcal{X}) \rightarrow H^n_{dR}(|F|) $. 
\end{defn}

\section{Homology and cohomology of  polyhedral complexes} \label{section: HomologyCohomology}

\subsection{Assumptions.} \label{sect: PolyhedralAssumptions} 
Let $\mathcal{C}_k$ be a subcategory of $\PC_k$ such  that:

\begin{enumerate}
\item   Every face of  every object $(P,L,\sigma)$ in  $\mathcal{C}_k$  is also an object of  $\mathcal{C}_k$, and the corresponding face map is a morphism in $\mathcal{C}_k$. 
\item All  morphisms $f: (P,L,\sigma) \rightarrow (P',L',\sigma')$ in  $\mathcal{C}_k$ are  either face maps, or induce homeomorphisms on topological realisations $f: \sigma \cong \sigma'.$   
\end{enumerate}

 In particular,  the topological realisation $f: \sigma \rightarrow \sigma'$ of any morphism in $\mathcal{C}_k$  is necessarily injective.  For every  $(P,L,\sigma)$ of dimension $n \geq 1$  one has 
\begin{equation}   \label{sigmaSphericalHomology} H_r(\sigma, \partial  \sigma;\Z) = \begin{cases} \Z  \quad  \hbox{ if }   r= n   \\
0 \quad  \hbox{ otherwise } \end{cases} \ . \end{equation}

An  \emph{orientation} on $\sigma$  is  a generator 
$\varpi \in H_n(\sigma, \partial  \sigma;\Z)$ if $n\geq 1$ or $\varpi \in H_0(\sigma;\Z)$ if $n=0$.
The two main categories of interest, $\PLC_k$ and $\BLC_k$,  satisfy $(1)$ and $(2)$.   An important  fact is that an isomorphism between objects of  $\mathcal{C}_k$ induces  isomorphisms of their faces.

\subsection{Cellular homology of a polyhedral $\mathcal{C}_k$-complex}  \label{sect: CellHomologyFaceComplex} 
Let $\mathcal{C}_k$ be a category satisfying the assumptions above and  let  $F: \mathcal{D} \rightarrow \mathcal{C}_k$ be a $\mathcal{C}_k$-complex.

\begin{defn} \label{defn: facecomplex} 
Define the  \emph{face complex} $ \left(\mathfrak{C}_F\right)_n$ of $F$ to be the graded $\Q$-vector space with generators $ [\sigma, \varpi]$,
where $\sigma$  is any face of dimension $n $ of the topological realisation $\sigma F_x$ of an  object $x$ of $\mathcal{D}$, and  $\varpi$ is an orientation on $\sigma$. These symbols are subject to relations:
\begin{eqnarray} 
&(i)& \qquad  [\sigma,  \lambda \varpi] = \lambda [\sigma, \varpi]  \qquad \hbox{ for any } \lambda \in \Z \ , \nonumber \\ 
& (ii)&\qquad \,\,\,\, [ \sigma, \varpi  ] = [\sigma', \varpi'] \nonumber
\end{eqnarray}
whenever $\sigma$, $\sigma'$ are $n$-dimensional faces of  $\sigma(F_x)$, $\sigma(F_y)$ respectively,  and  $g:x \rightarrow y$ is a morphism in  $\mathcal{D}$  which restricts to an isomorphism 
$ \sigma F(g) :    \sigma \overset{\sim}{\rightarrow}\sigma'$ which sends $\varpi$ to $\varpi'$.
\end{defn}

Define a  differential $d:   \left(\mathfrak{C}_F\right)_n \rightarrow  \left(\mathfrak{C}_F\right)_{n-1}$ by 
\begin{equation}  \label{FaceComplexDifferential} 
d [ \sigma  , \varpi] = \sum_{\tau}\,   [\tau, \varpi\big|_\tau]
\end{equation} 
where the sum is over all facets $ \tau$ of $\sigma$,  where $\dim \tau = \dim \sigma-1$,  and $\varpi\big|_\tau$ is the image of $\varpi$ under the boundary map $H_n(\sigma, \partial \sigma) \rightarrow H_{n-1} (\tau,\partial \tau)$.  One checks that $d^2=0$. 
\begin{thm} \label{thm: CellularHomology}   
 There is a natural isomorphism
 $H_n(  \mathfrak{C}_F ) \overset{\sim}{\To}  H_n( |F|;\Q)$.
 \end{thm}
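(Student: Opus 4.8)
The plan is to compare the face complex $\mathfrak{C}_F$ with the usual cellular chain complex of a CW-structure on $|F|$, and to show the two agree. First I would establish that $|F|$ carries a natural regular CW-structure. Each object $x$ of $\mathcal{D}$ contributes its polyhedron $\sigma(F_x)$, which by the recursive definition of $\PC_k$ and the discussion after Definition~2.1 already carries a regular CW-structure on the closed ball $B_n$ whose cells are the (iterated) faces $\sigma_J = \sigma \cap L_J(\C)$. By assumption (2) in \S\ref{sect: PolyhedralAssumptions}, every morphism in $\mathcal{D}$ realises either as a face inclusion or as a homeomorphism of polyhedra respecting faces, so in the colimit $|F| = \varinjlim_x \sigma(F_x)$ all gluings identify closed cells with closed cells via cellular homeomorphisms. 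Hence $|F|$ is a regular CW-complex whose open cells are the equivalence classes of open faces of the $\sigma(F_x)$ under the relation generated by $\mathcal{D}$-morphisms — and these equivalence classes are exactly the generators (modulo orientation) of $\mathfrak{C}_F$ by Definition~\ref{defn: facecomplex}(ii). Since $\mathcal{D}$ is (equivalent to) a finite category and each $\sigma(F_x)$ has finitely many faces, there are finitely many cells in each dimension.

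Next I would identify the differentials. For a regular CW-complex the cellular boundary map sends an $n$-cell to the sum of its codimension-one faces, each with incidence number $\pm 1$ determined by the chosen orientations; this is precisely \eqref{FaceComplexDifferential}, where $\varpi|_\tau$ records the induced orientation via the connecting homomorphism $H_n(\sigma,\partial\sigma) \to H_{n-1}(\tau,\partial\tau)$ coming from \eqref{sigmaSphericalHomology}. The relation (i) in Definition~\ref{defn: facecomplex} is just $\Z$-linearity in the orientation, matching the standard convention $[\sigma,-\varpi] = -[\sigma,\varpi]$ for cellular chains. Relation (ii) guarantees that a cell is counted once regardless of which $\sigma(F_x)$ it is seen inside, which is exactly the well-definedness needed for the cellular chain group of the colimit. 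Thus $\mathfrak{C}_F$ is canonically isomorphic, as a chain complex, to the cellular chain complex $C^{\mathrm{CW}}_\bullet(|F|;\Q)$, and cellular homology computes $H_\bullet(|F|;\Q)$. Naturality in $F$ follows because a morphism $(\phi,\Phi)$ of $\mathcal{C}_k$-complexes induces a cellular map $|\Phi|: |F| \to |F'|$ (it carries closed cells to closed cells by assumption (2)), hence a chain map on cellular complexes compatible with the one on face complexes.

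The main obstacle I anticipate is verifying carefully that $|F|$ is genuinely a regular CW-complex — i.e. that the characteristic maps $g: B_n \hookrightarrow \sigma(F_x) \hookrightarrow |F|$ remain embeddings after passing to the colimit, so that no unwanted self-identifications of a cell with itself occur, and that the attaching maps land in the $(n-1)$-skeleton. The injectivity of each $\sigma(F_x) \to |F|$ is noted in \S\ref{sect: PolyhedralAssumptions} (topological realisations of morphisms are injective), but one must still check that two distinct open faces of the \emph{same} $\sigma(F_x)$ are never glued together in $|F|$, which requires analysing the equivalence relation generated by all $\mathcal{D}$-morphisms and using that each such morphism restricts to an \emph{isomorphism} (not a degenerate map) on faces; the key input is the remark that an isomorphism between objects of $\mathcal{C}_k$ induces isomorphisms of their faces. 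Once the regular CW-structure is in hand, the identification of $\mathfrak{C}_F$ with the cellular complex and the computation of homology are routine, so I would spend most of the write-up on this point and treat the incidence-number bookkeeping for \eqref{FaceComplexDifferential} as standard.
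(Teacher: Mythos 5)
There is a genuine gap: your central claim — that $|F|$ is a \emph{regular} CW-complex whose cells are the $\mathcal{D}$-equivalence classes of open faces — is false in general. The obstruction is not the one you flag (distinct faces of the same polyhedron being glued together), but rather that a single face $\sigma_x$ may be glued to itself by a nontrivial group $\mathrm{Aut}_{\mathcal{D}}(\sigma_x)$ of automorphisms coming from $\mathcal{D}$; the characteristic map $\sigma_x \to |F|$ is then a quotient map, not an embedding. The paper's own example $|\LA_2^{\trop}|$ is the quotient of a $2$-simplex $\sigma_Q$ by $\Sigma_3$, and here the ``top cell'' is $\sigma_Q/\Sigma_3$ with a $6$-to-$1$ characteristic map. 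This is exactly why Definition~\ref{defn: facecomplex} is designed so that relations (i) and (ii) together \emph{kill} a generator $[\sigma,\varpi]$ whenever $\mathrm{Aut}_{\mathcal{D}}(\sigma)$ contains an orientation-reversing element — a phenomenon that simply cannot arise in the cellular chain complex of a regular CW-complex, whose chain groups are honestly free on the cells. So $\mathfrak{C}_F$ is not literally the cellular complex of a CW-structure on $|F|$, and the dictionary you propose breaks down at the first step. This is also why the theorem is stated over $\Q$ and not $\Z$: the argument needs finite-group coinvariants of homology to behave well, and this fails integrally.

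The paper circumvents this by filtering $|F|$ by skeleta $X_p$ and running the homology spectral sequence of the filtered singular chain complex. The key computation is that the natural map $\bigoplus_{\dim\sigma_x=p} H_p(\sigma_x,\partial\sigma_x)/\mathrm{Aut}_{\mathcal{D}}(\sigma_x) \to H_p(X_p,X_{p-1})$ is an isomorphism — surjectivity from the skeletal decomposition, injectivity via Borel–Moore homology of the disjoint open interiors — and the left-hand side, with the $\mathrm{Aut}$-coinvariants built in, is by construction $(\mathfrak{C}_F)_p$. The $E^1$ page then degenerates by \eqref{sigmaSphericalHomology}. Your approach can be repaired, but only by explicitly invoking the transfer argument that for a finite group $G$ acting cellularly, $H_\bullet(Y/G;\Q)\cong H_\bullet(Y;\Q)_G$, and then rebuilding the ``cellular'' chain complex with $\mathrm{Aut}$-coinvariants at every stage; at that point you have essentially reconstructed the spectral-sequence proof. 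I would suggest either doing that carefully, or adopting the filtration argument directly, rather than asserting regularity of the CW-structure, which does not hold.
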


\subsection{Differential forms and de Rham complex}
To show that the complex $\mathcal{A}^n$  of smooth forms (definition \ref{DGAdefnA}) computes cohomology, we only require that:
\begin{equation}   \label{Ancomputescohom} H^{\bullet}(\mathcal{A}^n(\sigma)) \cong H^n(\sigma;\C)  \quad \hbox{ and } \quad  H^{\bullet}(\mathcal{A}^n(\partial\sigma))= H^n( \partial \sigma;\C) 
\end{equation} 
and furthermore, that elements of $\mathcal{A}^n$ are  \emph{extendable}  in the sense of \cite[\S7]{Sullivan}:
\begin{equation} \label{Extendability} 
  i^* : \mathcal{A}^{\bullet} (\sigma) \To   \mathcal{A}^{\bullet}(\partial\sigma) 
 \quad \hbox{  is surjective, }  \end{equation}
where   $i: \partial \sigma \subset \sigma$ denotes the inclusion map.  The following theorem is in fact valid for any complex of differential forms which satisfies these properties.
   
   \begin{thm} \label{thm: CellularDeRhamCohomology}
Let $\mathcal{A}^n$ satisfy \eqref{Ancomputescohom}  and \eqref{Extendability} as above. Let $F: \mathcal{D} \rightarrow \mathcal{C}_k$ be a $\mathcal{C}_k$-complex, and define the de Rham complex of $F$ via \eqref{DGAdefnA}.
There is an isomorphism
\[ H^{n} \left( \mathcal{A}^\bullet(\left| F \right|) \right)  \overset{\sim}{\To} H^n\left(|F|;\C\right)\  , \]
where $H^n \left(|F|;\C\right)=H_n(|F|;\C)^{\vee}$,  
which is induced by a bilinear  pairing 
\begin{eqnarray}   \label{perfectpairing} H^{n} \left( \mathcal{A}^\bullet(| F |) \right) \otimes H_n(|F|)  &\To&  \C \\
\omega \otimes \gamma & \mapsto & \int_{\gamma} \omega\ . \nonumber    
\end{eqnarray} 
By  theorem \ref{thm: CellularHomology}, it may be interpreted as a pairing 
$H^{n} \left( \mathcal{A}^\bullet(|F|) \right) \otimes H_n(\mathfrak{C}(F))  \rightarrow \C.$ 
   \end{thm}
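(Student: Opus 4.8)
The plan is to follow the standard strategy for proving that a sheaf-theoretic/presheaf-of-DGAs computes cohomology: reduce the statement for the $\mathcal{C}_k$-complex $F$ to the known statement for each individual cell $\sigma(F_x)$ via a Mayer--Vietoris/descent argument over the diagram $\mathcal{D}$, using that $|F|$ is glued from the cells along face maps. The key formal input is that $\mathcal{A}^\bullet$ is a contravariant functor on $\mathcal{C}_k$ satisfying \eqref{Ancomputescohom} and \eqref{Extendability}, together with Theorem~\ref{thm: CellularHomology} identifying $H_\bullet(|F|)$ with the face complex $\mathfrak{C}_F$.

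\medskip

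\textbf{Step 1: Set up the double complex.} Choosing a skeletal model of $\mathcal{D}$, index the nondegenerate cells of $|F|$ by their faces as in Definition~\ref{defn: facecomplex}. I would build a first-quadrant double complex $K^{p,q} = \prod_{\sigma:\ \dim\sigma = p} \mathcal{A}^q(\sigma)$, where the product runs over $p$-dimensional faces of $|F|$ (equivalently, orbits of faces of the $\sigma(F_x)$ under isomorphisms in $\mathcal{D}$, matching the generators $[\sigma,\varpi]$ up to orientation). The vertical differential is the de Rham $d$ on each $\mathcal{A}^\bullet(\sigma)$; the horizontal differential is the simplicial/\v{C}ech-type alternating restriction map to facets, signed by the orientation compatibility of \eqref{FaceComplexDifferential}. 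By definition \eqref{DGAdefnA}, $\mathcal{A}^\bullet(|F|)$ is the equalizer of the restriction maps, hence is exactly the kernel of the horizontal differential in column $p=0$ extended appropriately; more precisely I would show $\mathcal{A}^\bullet(|F|) \xrightarrow{\sim} \mathrm{Tot}^{\le 0}$ sits as an augmentation so that $\mathcal{A}^\bullet(|F|) \to \mathrm{Tot}(K^{\bullet,\bullet})$ is a quasi-isomorphism once the rows are shown to be resolutions.

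\medskip

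\textbf{Step 2: Compute the two spectral sequences.} Running the spectral sequence that takes vertical cohomology first: by \eqref{Ancomputescohom}, $H^q(\mathcal{A}^\bullet(\sigma)) = H^q(\sigma;\C)$, which vanishes for $q>0$ since each $\sigma$ is a closed ball (contractible), and equals $\C$ for $q=0$. Hence the $E_1$ page is concentrated in the row $q=0$, where it becomes $\prod_{\dim\sigma=p}\C = (\mathfrak{C}_F)_p \otimes \C$ with differential the horizontal restriction map, which is precisely the cellular (co)differential dual to \eqref{FaceComplexDifferential}. Therefore $E_2^{p,0} = H^p(\mathfrak{C}_F;\C) = H^p(\mathfrak{C}_F)^{\vee}\otimes\C$ — here I use that $\mathfrak{C}_F$ is finite-dimensional in each degree (finiteness of $\mathcal{D}$), so its cochain cohomology is dual to its homology — and the spectral sequence degenerates. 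Running the other spectral sequence (horizontal cohomology first): for fixed $q$, the row is the cochain complex of the CW-structure on $|F|$ with coefficients in the presheaf $\sigma\mapsto \mathcal{A}^q(\sigma)$; I would argue that the augmented row $0\to \mathcal{A}^q(|F|)\to K^{0,q}\to K^{1,q}\to\cdots$ is exact. Exactness in positive horizontal degree is a partition-of-unity/extendability argument: given a cocycle of local $q$-forms that agree on overlaps up to the combinatorial coboundary, the extendability hypothesis \eqref{Extendability} lets one successively extend forms across skeleta (this is exactly Sullivan's argument in \cite[\S7]{Sullivan}), while exactness at $K^{0,q}$ is the definition of the limit \eqref{DGAdefnA}. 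Comparing the two degenerate spectral sequences gives $H^n(\mathcal{A}^\bullet(|F|)) \cong H^n(\mathfrak{C}_F)^{\vee}\otimes\C$, which by Theorem~\ref{thm: CellularHomology} is $H_n(|F|;\C)^{\vee} = H^n(|F|;\C)$.

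\medskip

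\textbf{Step 3: Identify the isomorphism with integration.} Finally I would check that the abstract isomorphism above is implemented by the pairing $\omega\otimes\gamma\mapsto\int_\gamma\omega$. This is a matter of chasing a class through the spectral sequence: a smooth closed form $\omega$ on $|F|$ restricts to each top-dimensional face $\sigma$, where $\int_\sigma \omega$ depends only on $[\omega]\in H^{\dim\sigma}(\sigma,\partial\sigma)$ by Stokes, and these integrals assemble into a cellular cochain on $\mathfrak{C}_F$ whose class is exactly the image of $[\omega]$ under the edge map. Stokes' theorem on each cell shows this cochain is closed and that exact forms pair to zero, so integration descends to the claimed pairing; nondegeneracy then follows from the isomorphism just proved. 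The reinterpretation via $H_n(\mathfrak{C}_F)$ is then immediate from Theorem~\ref{thm: CellularHomology}.

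\medskip

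\textbf{Main obstacle.} The genuinely delicate point is the exactness of the rows in Step~2 at all horizontal degrees $p\ge 1$ — that is, patching a compatible system of local smooth forms defined near each face into a single global one. This is where the extendability condition \eqref{Extendability} is essential and where orientation signs and the combinatorics of how faces of different $\sigma(F_x)$ get identified in $\mathcal{D}$ (including identifications coming from automorphisms / finite group quotients, if one allows them in $\mathcal{C}_k$) must be handled with care; one needs a partition of unity on $|F|$ subordinate to the closed stars of cells, and an inductive extension over skeleta exactly as in Sullivan's treatment of polynomial forms on simplicial sets. Everything else is formal homological algebra plus the contractibility of balls.
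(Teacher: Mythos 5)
Your overall strategy --- reduce to contractibility of individual cells and glue via a spectral sequence, with extendability as the key input --- is morally right, and your identification of the isomorphism with integration in Step 3 matches the paper. But the double-complex setup in Steps 1 and 2 does not work as stated. The horizontal differential you describe, ``alternating restriction to facets,'' goes from $K^{p,q}$ to $K^{p-1,q}$, since restriction of a form on a $p$-cell lands on its codimension-one faces. So your bicomplex is a \emph{chain} complex in the $p$-direction and a cochain complex in the $q$-direction; the two differentials move total degree in opposite directions, and the first-quadrant spectral sequence formalism you invoke does not apply. Concretely, the ``augmented row'' $0 \to \mathcal{A}^q(|F|) \to K^{0,q} \to K^{1,q} \to \cdots$ that Step 2 hinges on is ill-posed: there is no map $K^{0,q}\to K^{1,q}$, since one cannot restrict a form on a vertex to a form on an edge. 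The row-exactness claim therefore has nothing to attach to. There is also a smaller omission: the $E_1$ page you describe as $\prod_{\dim\sigma=p}\mathbb{C}$ must be quotiented by $\mathrm{Aut}_{\mathcal{D}}(\sigma)$ on each face to match $(\mathfrak{C}_F)_p$, which you do not address.

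The paper avoids a bicomplex entirely and uses the spectral sequence of a single filtered complex: it filters $\mathcal{A}^\bullet(|F|)$ by the subcomplexes $K^\bullet_p(|F|)$ of compatible forms that vanish on the $(p-1)$-skeleton (the cofiltration dual to the skeletal filtration used for Theorem~\ref{thm: CellularHomology}). The essential local input, and the place where \eqref{Extendability} is used, is the lemma $H^n(K^\bullet(\sigma)) \cong H^n(\sigma,\partial\sigma)$ for a single cell, where $K^\bullet(\sigma) = \ker\bigl(i^*\colon \mathcal{A}^\bullet(\sigma)\to\mathcal{A}^\bullet(\partial\sigma)\bigr)$; this relative cohomology is concentrated in degree $\dim\sigma$ and pairs with $H_{\dim\sigma}(\sigma,\partial\sigma)$ by integration, so the $E_1$ page is dual to $\mathfrak{C}(F)_\bullet\otimes\mathbb{C}$ and the spectral sequence degenerates. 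To salvage a bicomplex argument you would instead need the cosimplicial replacement of $\varprojlim_{x}\mathcal{A}^q(\sigma(F_x))$, indexed by strings of morphisms in $\mathcal{D}$ rather than by dimension of faces; that is a genuinely different construction with a different $E_1$ page.
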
  
   Before proceeding with the proof,  note that the integral \eqref{perfectpairing} makes sense because a differential form $\omega \in  \mathcal{A}^\bullet(F) $ gives rise to a well-defined smooth differential form on each simplex of $|F|$, because of the compatibility condition \eqref{CompatibilityForForms}. The integral converges because  the simplices  in $|F|$ 
  are compact, and have  finitely many isomorphism classes.

     \begin{rem}  \label{rem: AllFormsPolynomial} Sullivan defined a differential graded algebra of polynomial differential forms for  simplicial complexes \cite[\S7, (i), p. 297]{Sullivan},  and proved that  it satisfies the extendability condition \eqref{Extendability}. Since they  are special cases of meromorphic forms, his argument  implies that the induced map
 $H^n(\Omega^{\bullet}_{\mer}(F)) \rightarrow H^n(  \mathcal{A}^\bullet( | F| ))$
     is surjective. 
    \end{rem}

Even though the complex $ \Omega_{\mer}^{\bullet} (F)  $, and hence its cohomology, has a $k$-structure, the map $H^n(\Omega^{\bullet}_{\mer}(F))\otimes_k \C \rightarrow H^n(  \mathcal{A}^\bullet(  |F| ))$  is not an isomorphism (the former is  infinite-dimensional) and so cannot be used to define a rational structure of finite type on the de Rham cohomology $H^{n} \left( \mathcal{A}^\bullet(|F|) \right)$. In other words,  the periods one obtains by integration \eqref{perfectpairing} depend  on the  location of the poles of  $\omega$.

   \subsection{Proof of theorems \ref{thm: CellularHomology} and \ref{thm: CellularDeRhamCohomology}} 
   The proof of theorem \ref{thm: CellularHomology} is standard (compare with \cite[Theorem 2.35]{Hatcher}, \cite{TopWeightAg}). Consider the filtration of $X=|F|$ by subspaces
   \[ X_p =  \mathrm{Im} \left( \coprod_{\dim \sigma_x=p} \sigma_x \To X \right)\]
   where the disjoint union is over all faces  of polyhedral cells of dimension $p$. It induces a filtration $F_p C_{\bullet}(X)= C_{\bullet} (X_p)$ on the singular chain complex, giving rise to a spectral sequence 
   \[ E^1_{p,q} =  H_{p+q}(\gr^F_{p} C_{\bullet}(X)) \cong  H_{p+q} (  X_p, X_{p-1}) \] 
   which converges to $\gr^F_{\bullet} H_{p+q}(X)$.  The complex $(E^1_{p,q},d^1)$ takes the form: 
   \begin{equation} \label{E1complex} \cdots \To  H_{p+q}(X_p, X_{p-1}) \To H_{p+q-1}(X_{p-1}, X_{p-2} ) \To \cdots 
   \end{equation}
   where the maps are induced on the level of chains  by the boundary map.
   By definition of $X_p$ there is a  natural  morphism
     \begin{equation} \label{inproofdirectsumHpsigma}   \bigoplus_{\dim \sigma_x=p}  \left( H_{p}( \sigma_x, \partial \sigma_x) \right) / \mathrm{Aut}_{\mathcal{D}}(\sigma_x)  \To   H_{p}(X_p, X_{p-1}) 
     \end{equation} 
   where the     sum is over all equivalence classes  of faces,  where two faces $\sigma_x, \sigma_y$ are equivalent if they have the same image in $X$, and $\mathrm{Aut}_{\mathcal{D}}(\sigma_x)$ is the subgroup of automorphisms on the face $\sigma_x$ induced by morphisms in the category $\mathcal{D}$.
   The map \eqref{inproofdirectsumHpsigma}
 is surjective   since the intersections $\sigma_x \cap \sigma_y$ between all non-isomorphic faces $\sigma_x,  \sigma_y$ are unions of faces of dimension $\leq p-1$, and contained in $X_{p-1}$.
Since  $H_{p+q}(\sigma_x, \partial \sigma_x)$ is concentrated in degree $q=0$  \eqref{sigmaSphericalHomology} this proves that the  spectral sequence degenerates at $E^1$, and the cohomology of $X$ is isomorphic to the cohomology of the complex \eqref{E1complex} on setting $q=0$. In order to identify this complex with the face complex, 
 one may observe that  
 $H_{p+q}( \sigma_x, \partial \sigma_x) \cong H^{\mathrm{lf}}_{p+q}(  \overset{\circ}{\sigma}_x)$
   is isomorphic to the locally finite (Borel-Moore) homology  of the interior of $\sigma_x$. Since  by assumption on $\mathcal{C}_k$ the interiors of faces are either disjoint or isomorphic, we conclude  that \eqref{inproofdirectsumHpsigma}
 is also injective, and therefore defines an isomorphism 
\[ \mathfrak{C}(F)_p \cong   H_{p}(X_p, X_{p-1})\]
  by definition \ref{defn: facecomplex}. Furthermore,  since the morphisms in  \eqref{E1complex} are induced by the boundary map on relative cohomology, we may identify $(E^1_{p,0}, d^1)$  with  the face complex  \eqref{FaceComplexDifferential}. This completes the proof of Theorem  \ref{thm: CellularHomology}, and shows, in passing, that $d^2=0$.

  Theorem \ref{thm: CellularDeRhamCohomology} is  a de Rham version of theorem \ref{thm: CellularHomology}. We shall prove it in a similar way by replacing the singular chain complex with a complex of differential forms and replacing vector spaces and linear maps with their duals. First we need the following lemma.
     \begin{lem}
Assume \eqref{Ancomputescohom}  and \eqref{Extendability}. Let $K^{\bullet}(\sigma)= \ker (i^*: A^{\bullet}(\sigma) \rightarrow A^{\bullet}(\partial \sigma))$. Then  
\begin{equation}   \label{HnKtoRelCohom} H^n (K^{\bullet}(\sigma)) \overset{\sim}{\To} H^n(\sigma, \partial \sigma) \ . \end{equation} 
\end{lem}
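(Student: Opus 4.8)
The plan is to deduce the isomorphism \eqref{HnKtoRelCohom} from the long exact sequences of the pairs of complexes involved, using the hypotheses \eqref{Ancomputescohom} and \eqref{Extendability} as the only inputs. First I would record the tautological short exact sequence of cochain complexes
\[ 0 \To K^{\bullet}(\sigma) \To \mathcal{A}^{\bullet}(\sigma) \overset{i^*}{\To} \mathcal{A}^{\bullet}(\partial \sigma) \To 0 \ , \]
where surjectivity of $i^*$ is precisely the extendability property \eqref{Extendability}. Passing to the associated long exact sequence in cohomology then gives
\[ \cdots \To H^{n-1}(\mathcal{A}^{\bullet}(\partial\sigma)) \To H^n(K^{\bullet}(\sigma)) \To H^n(\mathcal{A}^{\bullet}(\sigma)) \To H^n(\mathcal{A}^{\bullet}(\partial\sigma)) \To \cdots \ . \]

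Next I would invoke the first half of \eqref{Ancomputescohom}, namely $H^n(\mathcal{A}^{\bullet}(\sigma)) \cong H^n(\sigma;\C)$ and $H^n(\mathcal{A}^{\bullet}(\partial\sigma)) \cong H^n(\partial\sigma;\C)$, and check that the connecting maps in the sequence above are compatible with the corresponding maps in the topological long exact sequence of the pair $(\sigma,\partial\sigma)$. Since the de Rham isomorphisms \eqref{Ancomputescohom} are natural with respect to the inclusion $i: \partial\sigma \subset \sigma$ (integration of forms over chains intertwines $i^*$ on forms with restriction on cohomology), the five lemma applied to the morphism of long exact sequences — from the sequence of $(K^{\bullet}(\sigma), \mathcal{A}^{\bullet}(\sigma), \mathcal{A}^{\bullet}(\partial\sigma))$ to the topological sequence of $(\sigma, \partial\sigma)$ with $H^n(\sigma,\partial\sigma)$ in place of $H^n(K^{\bullet}(\sigma))$ — yields the claimed isomorphism $H^n(K^{\bullet}(\sigma)) \overset{\sim}{\To} H^n(\sigma,\partial\sigma)$. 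One should also dispose of the low-degree edge case $n=0$ separately, where $\sigma$ is contractible and the statement reduces to the obvious fact that a locally constant function vanishing on the (nonempty) boundary vanishes identically.

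The main obstacle I anticipate is the verification of naturality, i.e. constructing an actual morphism of long exact sequences rather than merely matching terms. This requires exhibiting the comparison map $H^n(K^{\bullet}(\sigma)) \to H^n(\sigma,\partial\sigma)$ intrinsically — for instance by representing a class in $H^n(\sigma,\partial\sigma)$ via relative singular cochains or via the pair of complexes $(C^{\bullet}(\sigma), C^{\bullet}(\partial\sigma))$, and then pairing a form $\omega \in K^n(\sigma)$ against relative cycles by ordinary integration, noting that $i^*\omega = 0$ makes $\int_\gamma \omega$ well defined on relative chains and insensitive to boundaries. The commutativity of the resulting ladder of long exact sequences is then a routine (if slightly tedious) diagram chase, after which the five lemma finishes the argument. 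This lemma will then feed, in the proof of Theorem \ref{thm: CellularDeRhamCohomology}, into a dual version of the spectral sequence argument used for Theorem \ref{thm: CellularHomology}, with $H^n(K^{\bullet}(\sigma_x)) \cong H^n(\sigma_x,\partial\sigma_x)$ playing the role that $H_n(\sigma_x,\partial\sigma_x)$ played there.
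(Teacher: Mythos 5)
Your argument is correct, but it routes around the lemma differently from the paper. You form the short exact sequence of complexes
\[ 0 \To K^{\bullet}(\sigma) \To \mathcal{A}^{\bullet}(\sigma) \overset{i^*}{\To} \mathcal{A}^{\bullet}(\partial\sigma) \To 0 \ , \]
whose exactness on the right is \eqref{Extendability}, pass to the associated long exact sequence, and then invoke naturality plus the five lemma against the topological long exact sequence of the pair. The paper instead takes $H^n(\sigma,\partial\sigma)$ to be the cohomology of the mapping cone $\mathcal{A}^{\bullet}(\sigma)\oplus \mathcal{A}^{\bullet-1}(\partial\sigma)$, $d(\alpha,\beta)=(d\alpha,\,i^*\alpha-d\beta)$, and shows by an explicit diagram chase (using \eqref{Extendability} twice) that the inclusion $\kappa(\omega)=(\omega,0)$ of $K^{\bullet}(\sigma)$ into the cone is a quasi-isomorphism; the surjectivity step modifies a cycle $(\omega,\eta)$ by $d(\alpha,0)$ with $i^*\alpha=\eta$, and the injectivity step modifies the primitive $(\alpha,\beta)$ by $d\gamma$ with $i^*\gamma=\beta$. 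These two proofs are close cousins — the long exact sequence of your short exact sequence is the same as that of the mapping cone — but they put the burden in different places. Your version outsources the remaining work to the five lemma, which forces you to construct a genuine morphism of long exact sequences into the singular one (you correctly flag this as the nontrivial point, and the pairing with relative chains is the right way to do it). The paper's version never leaves the world of differential forms: once one accepts that the mapping cone computes $H^n(\sigma,\partial\sigma)$ (which is the analogue of your naturality step), the quasi-isomorphism is a self-contained two-paragraph chase. Your approach is arguably the more standard homological-algebra framing; the paper's is more hands-on and makes the dependence on \eqref{Extendability} completely explicit. Both are fine, and you correctly identify the precise role this lemma plays in the spectral-sequence proof of Theorem \ref{thm: CellularDeRhamCohomology}.
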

\begin{proof} By assumption, the relative cohomology group $H^n(\sigma, \partial \sigma)$ is the cohomology of  the  mapping cone of $i^*$, which is the complex 
$ \mathcal{A}^n(\sigma) \oplus \mathcal{A}^{n-1} (\partial \sigma)$ with  differential $d (\alpha, \beta) = (d\alpha,   i^* \alpha- d\beta )$.
The map    \eqref{HnKtoRelCohom} is induced by the morphism of complexes: 
\[ 
  \ K^{\bullet}(\sigma)  \overset{\kappa}{\To}  \mathcal{A}^n(\sigma) \oplus \mathcal{A}^{n-1} (\partial \sigma) 
\]
where  $\kappa(\omega) = (\omega ,0)$. 
To see that \eqref{HnKtoRelCohom} is surjective, let  $(\omega, \eta) \in \mathcal{A}^n(\sigma) \oplus \mathcal{A}^{n-1} (\partial \sigma)$ be closed, which implies that  $d\omega=0$ and  $i^*\omega = d\eta$.  By \eqref{Extendability}, there exists $\alpha\in \mathcal{A}^{n-1}(\sigma)$ such that $i^*\alpha= \eta$.  The cohomology class of $(\omega, \eta)$ is also represented by 
$(\omega, \eta) - d(\alpha,0)= (\omega-d\alpha,0) $, which equals $\kappa(\omega - d\alpha)$. Note that  $\omega- d\alpha \in K^{\bullet}(\sigma)$ since $i^*(\omega- d\alpha) = i^*(\omega) - d \eta=0$.   To establish the injectivity of \eqref{HnKtoRelCohom},   suppose that $\kappa(\omega) = d(\alpha, \beta)$ is exact.  This implies that $d\alpha= \omega$, and $i^*\alpha = d\beta. $
By \eqref{Extendability}, there exists $\gamma \in \mathcal{A}^{\bullet}(\sigma)$  such that $i^*\gamma = \beta$, and hence $\omega = d(\alpha-   d\gamma)$ is exact in $K^{\bullet}(\sigma)$, since $i^*(\alpha - d \gamma) = i^*\alpha - d\beta =0$. 
\end{proof}

The proof of theorem \ref{thm: CellularDeRhamCohomology}  proceeds as for theorem \ref{thm: CellularHomology}. The filtration $X_p$ on $X$ induces a cofiltration on the  differential graded algebra $\mathcal{A}^n(|F|)$.  It produces a spectral sequence converging to the cohomology  of  $\mathcal{A}^{\bullet}(|F|)$.  It is enough to show that this spectral sequence is isomorphic, via integration,  to the dual of the homology  spectral sequence considered above. The integration pairing is well-defined on the level of chain complexes because of  Stokes' theorem. 
 The associated graded  of the cofiltration on $\mathcal{A}^n(|F|)$ is 
\[ K^{\bullet}_p (|F|)=  \ker \left(  \varprojlim_{\dim \sigma_x \leq  p} \mathcal{A}^{\bullet}(\sigma_x) \To  \varprojlim_{\dim \sigma_x \leq p-1} \mathcal{A}^{\bullet}(\sigma_x) \right) \]
consisting of compatible systems of differential forms on faces of dimension $p$ which vanish on faces of dimension $p-1$. As previously, one has an isomorphism 
\[  H^{p+q}(K^{\bullet}_p(|F| )) \cong  \bigoplus_{\sigma_x}  H^{p+q} \left(K^{\bullet}_p (\sigma_x)\right) / \mathrm{Aut}_{\mathcal{D}}(\sigma_x)  =   \bigoplus_{\sigma_x}  H^{p+q} \left(\sigma_x, \partial \sigma_x \right) / \mathrm{Aut}_{\mathcal{D}}(\sigma_x) \]
where the direct sum is over equivalence classes of $\sigma_x$, and the second equality follows from  \eqref{HnKtoRelCohom}. The
relative cohomology $ H^{p+q} \left(\sigma_x, \partial \sigma_x \right)$ is isomorphic to the compactly supported cohomology  $H_c^{p+q} ( \overset{\circ}{\sigma}_x)$ and is canonically dual to $H_{p+q}(\sigma_x, \partial \sigma_x)\otimes_{\Z} \C$. This implies that 
$ H^{p+q}(K^{\bullet}_p(|F| ))$ vanishes except in degree $q=0$ and, in this degree, is dual to $\mathfrak{C}(F)_p\otimes \C$ via the integration pairing, which proves
theorem \ref{thm: CellularDeRhamCohomology}.

\subsection{Relative  homology and compact supports}
Let $F, G$ be two $\mathcal{C}_k$-complexes and $i: G\rightarrow F$ a  morphism such that $i:|G| \hookrightarrow |F|$. We may identify $|G|$ with its image $i(|G|$).
Define the DGA of compactly supported forms on the complement by
\[ \mathcal{A}^{\bullet}_c \left( |F|  \ \setminus \  |G| \right)  =  \varprojlim_{x\in \mathcal{D}}  \mathcal{A}_c^{\bullet} \left(\sigma(F_x) \backslash (\sigma(F_x) \cap |G|)\right)\ \]
and  write  $H^{n}_{dR, c} \left( |F|  \ \setminus \  |G| \right)= H^{n} (\mathcal{A}^{\bullet}_c \left( |F|  \ \setminus \  |G| \right)    ). $ Define the relative de Rham cohomology $H^{n}_{dR} \left( |F| ,   |G| \right)$ to be the cohomology of the mapping cone $\mathcal{A}^{\bullet} \left( |F|  \right) \oplus \mathcal{A}^{\bullet-1} \left( |G|  \right)    $ with respect to the differential $d (\omega, \eta) = (d\omega , i^*\omega - d \eta)$.

Relative homology and cohomology satisfy the usual long exact sequences. 

\begin{thm} \label{thm: RelativeCellular} There is an isomorphism $ H_n(\mathfrak{C}_F/  \mathfrak{C}_G) \cong H_n(|F|, |G|;\Q)$. Integration defines a canonical  isomorphism of $\C$-vector spaces: 
\[   H^{n}_{dR} \left( |F| ,   |G| \right) \To   \left(H_n \left( |F| ,   |G|;\Q \right)\right)^{\vee} \otimes\C  \ . \]
The map defined on complexes by $\omega \mapsto (\omega, 0)$ passes to   a canonical isomorphism 
\begin{equation} \label{dRCompactSupportsToRelative} H^n_{dR,c} \left( |F|  \ \setminus \  |G| \right)\overset{\sim}{\To} H^{n}_{dR} \left( |F| ,   |G| \right)  \ .
\end{equation} 
\end{thm}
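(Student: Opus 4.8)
The plan is to reduce all three assertions to the absolute cases already established, namely Theorems \ref{thm: CellularHomology} and \ref{thm: CellularDeRhamCohomology}, by means of the long exact sequences and a diagram chase. First I would prove the isomorphism $H_n(\mathfrak{C}_F/\mathfrak{C}_G) \cong H_n(|F|,|G|;\Q)$: the morphism $i\colon G\to F$ induces an injection of face complexes $\mathfrak{C}_G \hookrightarrow \mathfrak{C}_F$ (injectivity because $i\colon |G|\hookrightarrow |F|$ is a topological embedding and $\mathcal{C}_k$-morphisms are injective on realisations, so distinct faces of $G$ map to distinct faces of $F$), giving a short exact sequence of complexes $0\to \mathfrak{C}_G\to \mathfrak{C}_F\to \mathfrak{C}_F/\mathfrak{C}_G\to 0$. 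Its long exact sequence maps, via the natural transformation of Theorem \ref{thm: CellularHomology}, to the long exact sequence of the pair $(|F|,|G|)$; since the maps on $H_n(\mathfrak{C}_G)\to H_n(|G|)$ and $H_n(\mathfrak{C}_F)\to H_n(|F|)$ are isomorphisms, the five lemma yields the claim.

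Next I would treat the relative de Rham statement. One runs exactly the spectral-sequence argument of the proof of Theorem \ref{thm: CellularDeRhamCohomology}, but now for the mapping-cone complex $\mathcal{A}^{\bullet}(|F|)\oplus \mathcal{A}^{\bullet-1}(|G|)$, filtered by the subspaces $X_p = |F|_p$ together with the induced filtration on $|G|$. The associated graded is the kernel complex $K^{\bullet}_p(|F|)$ relative to its analogue for $|G|$; the faces of $|F|$ not lying in $|G|$ contribute terms $H^{p+q}(\sigma_x,\partial\sigma_x)$ exactly as before, which by the Lemma (equation \eqref{HnKtoRelCohom}) vanish outside $q=0$ and in degree $q=0$ are dual to $\mathfrak{C}(F)_p/\mathfrak{C}(G)_p\otimes\C$ via integration. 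Degeneration at $E^1$ then identifies $H^n_{dR}(|F|,|G|)$ with the dual of $H_n(\mathfrak{C}_F/\mathfrak{C}_G)\otimes\C$, which combined with the first part gives the stated pairing with $H_n(|F|,|G|;\Q)$. Equivalently, one can argue more cheaply: the mapping-cone long exact sequence for $\mathcal{A}^{\bullet}$ maps to the dual of the homology long exact sequence of the pair, and the five lemma applied to Theorem \ref{thm: CellularDeRhamCohomology} in the absolute cases closes the argument.

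For the final isomorphism \eqref{dRCompactSupportsToRelative}, the map $\omega\mapsto(\omega,0)$ from $\mathcal{A}^{\bullet}_c(|F|\setminus|G|)$ to the mapping cone is well-defined because a compactly supported form on $\sigma(F_x)\setminus(\sigma(F_x)\cap|G|)$ restricts to zero on a neighbourhood of $\sigma(F_x)\cap|G|$, in particular its pullback to $|G|$ vanishes. To see it is a quasi-isomorphism, work term by term on each cell: on a single polyhedron $\sigma$ with closed subspace $\tau=\sigma\cap|G|$, one has the standard identification $H^n_c(\sigma\setminus\tau)\cong H^n(\sigma,\tau)$ — this is the excision/collapse statement underpinning \eqref{HnKtoRelCohom}, now in the relative-to-$\tau$ rather than relative-to-$\partial\sigma$ form — and these assemble compatibly over $\mathcal{D}$. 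Passing to the limit over the diagram and invoking the same spectral-sequence bookkeeping as above shows $\mathcal{A}^{\bullet}_c(|F|\setminus|G|)\to \mathrm{Cone}(i^*)$ induces an isomorphism on cohomology.

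The main obstacle I anticipate is purely bookkeeping rather than conceptual: one must check that the filtration $X_p$ on $|F|$ restricts compatibly to $|G|$ (so that $\mathcal{A}^{\bullet}_c$ of the complement is correctly filtered and the two spectral sequences are genuinely compatible under integration), and that the equivalence-class quotients by $\mathrm{Aut}_{\mathcal{D}}$ behave well relative to the subcomplex — i.e.\ that an automorphism of a face of $F$ preserving $|G|$ descends correctly. Both follow from assumptions $(1)$ and $(2)$ on $\mathcal{C}_k$ in \S\ref{sect: PolyhedralAssumptions}, exactly as in the absolute proof, but verifying the compatibility of the two Stokes pairings on the level of associated graded pieces is where care is needed.
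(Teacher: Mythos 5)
Your overall framework---filtration by skeleta, spectral sequence, reduction to the absolute theorems via the five lemma for the first two parts---matches the paper's strategy, and the first two claims are handled correctly. However, the heart of the third claim, the identification of the $E^1$ page, is not quite right, and the place you anticipate needing "care" (Stokes compatibility, automorphism descent) is not where the actual content lies.

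The graded piece of the skeletal cofiltration on $\mathcal{A}^{\bullet}_c(|F|\setminus|G|)$ at a $p$-cell $\sigma_x$ consists of compactly supported forms on $\sigma_x\setminus(\sigma_x\cap|G|)$ \emph{which also vanish on the $(p-1)$-skeleton, i.e.\ on $\partial\sigma_x$}. Its cohomology is therefore $H^{p+q}_{dR}\bigl(\sigma_x,\ \partial\sigma_x\cup(\sigma_x\cap|G|)\bigr)$, \emph{not} $H^{p+q}_c(\sigma_x\setminus\tau)\cong H^{p+q}(\sigma_x,\tau)$ as you propose; your excision isomorphism with $\tau$ alone drops the boundary vanishing that the cofiltration imposes, and so does not describe the associated graded. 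The step you are missing---and which is the real content the assumptions of \S\ref{sect: PolyhedralAssumptions} buy---is the dichotomy: because every morphism in $\mathcal{C}_k$ is either a face inclusion or a homeomorphism, a cell $\sigma_x$ of $F$ satisfies either $\sigma_x\cap|G|\subseteq\partial\sigma_x$, in which case $H^{p+q}(\sigma_x,\partial\sigma_x\cup(\sigma_x\cap|G|))=H^{p+q}(\sigma_x,\partial\sigma_x)$ and the $E^1$ entry is the same as in the absolute case, or $\sigma_x\subseteq|G|$, in which case the relative group vanishes outright. It is precisely this dichotomy that collapses the $E^1$ page to $(\mathfrak{C}_F/\mathfrak{C}_G)^{\vee}\otimes\C$; without stating it, "the same spectral-sequence bookkeeping as above" does not close, because a priori $\sigma_x\cap|G|$ could meet $\sigma_x$ in some subset intermediate between a union of boundary faces and all of $\sigma_x$, and then the $E^1$ term would be neither $H^{p+q}(\sigma_x,\partial\sigma_x)$ nor zero. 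Once the dichotomy is in hand, the Stokes and $\mathrm{Aut}_{\mathcal{D}}$-equivariance checks you flag really are routine, exactly as in the absolute proof.
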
  

\begin{proof} 
The  first part follows formally from theorems \ref{thm: CellularHomology} and  \ref{thm: CellularDeRhamCohomology}. 
To prove \eqref{dRCompactSupportsToRelative}, one may follow  the same strategy as theorem \ref{thm: CellularDeRhamCohomology}: the filtration $X_p$ on $|F|$ gives rise to a cofiltration on   $\mathcal{A}^{\bullet}_c \left( |F|  \ \setminus \  |G| \right) $     and a  spectral sequence whose $E_{pq}^1$ terms are 
\begin{equation} \label{inprooftheorem3.6} 
 \bigoplus_{\sigma_x} H^{p+q}_{dR,c}\left(\sigma_x \backslash (\sigma_x \cap |G|) \right)  /\mathrm{Aut}_{\mathcal{D}} (\sigma_x)  \overset{\sim}{\rightarrow} \bigoplus_{\sigma_x} H_{dR}^{p+q}\left(\sigma_x  , \partial \sigma_x \cup (\sigma_x \cap |G|) \right)  /\mathrm{Aut}_{\mathcal{D}} (\sigma_x)  
 \end{equation} 
where the direct sum is over equivalence classes  $\sigma_x$ of cells of dimension $p$.\footnote{One can directly compare this with  the associated spectral sequence for the cohomology of  the mapping cone  $\mathcal{A}^{\bullet} \left( |F|  \right) \oplus \mathcal{A}^{\bullet-1} \left( |G|  \right)    )$.   One replaces $K^{\bullet}_p(|F|)$ with a  complex consisting of forms which vanish on $X_{p-1} \cup |G|$. Its  cohomology is  $\bigoplus_{\sigma_x} H_{dR}^{p+q} (\sigma_x,\partial \sigma_x \cup (\sigma_x \cap |G|))/\mathrm{Aut}_{\mathcal{D}}(\sigma_x)$.   } 
 Since every morphism in $\mathcal{C}_k$ is either a face map or an isomorphism,  it follows that either $ \sigma_x \cap |G| $ is contained in the boundary of $\sigma_x$, in which case $H_{dR}^{p+q}\left(\sigma_x  , \partial \sigma_x \cup (\sigma_x \cap |G|) \right)= H_{dR}^{p+q}(\sigma_x, \partial \sigma_x)$, or $\sigma_x \subset |G|$, in which case this group  vanishes. It follows that the complex on the  right-hand side of \eqref{inprooftheorem3.6} is dual, via integration,  to  $\mathfrak{C}_F/  \mathfrak{C}_G\otimes_k \C$. 
\end{proof}

\section{Linear polyhedral complexes} \label{sect: PLC}
Let $V$ be a  vector space of dimension $n+1$  over a field $k\subset \R$, and let $\Pro(V)$ denote the associated projective space of dimension $n$. When $V$ has a preferred basis, we  write  $\Pro^n_k$ instead of  $\Pro(k^{n+1})$. 
We  call a   \emph{projective linear configuration}  $L \subset \Pro(V)$     any  finite union   $L = \bigcup_{i\in I}L_i$ of linear spaces $L_i \subset \Pro(V)$, all of which have  equal dimension.  Correspondingly, there are  linear subspaces $W_i\subset V$, such that 
$L_i = \Pro(W_i)$  for all $i\in I$.
 For any subset  $J\subset I$,  we shall write $L_J = \bigcap_{j\in J} L_j$ and $W_J = \bigcap_{j\in J}W_j$.

\subsection{Polyhedral linear configurations} Let us write $V_{\R} =V \otimes_k \R$. 

\begin{defn} 
A  \emph{real polyhedral cone} defined over $V$ is   the convex hull of a finite set of vectors $v_1,\ldots, v_m \in V$, where $m\geq 1$:
\begin{equation} \label{sigmahat}   \widehat{\sigma} = \R_{\geq 0} \langle v_1, \ldots, v_m \rangle  \quad \subset \quad V_{\R}  \ . 
\end{equation} 
Its cone point is the origin. 
  A polyhedral cone is called \emph{strongly convex} if it does not contain any  real line $\R w$, for a  non-zero vector $w\in V_{\R}$.

A   \emph{(projective) polyhedron in $\Pro(V)$} is   a pair $(\sigma, V)$, where $\sigma \subset \Pro(V_{\R})$ is  the  link of the cone point of 
a  strongly convex polyhedral cone \eqref{sigmahat}  defined over $V$:
\[ 
\sigma  =  \left(  \,  \widehat{\sigma}\,  \setminus \,  \{ 0\} \,   \right) / \R^{\times}_{>0}  \ . \] 
\end{defn} 
Given a polyhedron $(\sigma,V)$, we write $V_{\sigma} \subset V$ for the $k$-linear span of its defining vectors $v_1,\ldots, v_m$ \eqref{sigmahat}.  The space $V_{\sigma}$ only depends on $\sigma$, and  indeed, the associated projective space $\Pro(V_\sigma)$  is the Zariski-closure of the $k$-rational points $\sigma\cap \Pro(V)(k)$. In particular,  $(\sigma, V_{\sigma})$ is a polyhedron in $\Pro(V_{\sigma})$ and has maximal dimension. In general, the vector space $V_{\sigma}$  may be strictly contained in  $V$.  
 We may allow the case when all  $v_1,\ldots, v_m=0$ and $\sigma$ is empty. 
 
 By a well-known theorem of  Minkowski and Weyl,  a polyhedron may equivalently be described  by its facets. There is    a  unique, finite,   minimal  set  of hyperplanes $(H_j)_{j\in J}  \subset V_{\sigma} \subset V$  where $H_j$ is defined by the vanishing of a non-zero  linear form $f_j \in V_{\sigma}^{\vee}$ defined over $k$,  such that for all $j\in J$:
 \[  \sigma \cap (V_{\sigma} \otimes_k \R) =\{ x\in  V_{\sigma}\otimes_k\R:  f_j(x) \geq 0  \hbox{ for all } j \in J \}  \ .\] 
  Note that $\sigma \cap (V_{\sigma}\otimes_k \R)$ is canonically identified with $\sigma$ via the inclusion $V_{\sigma} \hookrightarrow V $. 
  A \emph{facet of $\sigma$} is a non-empty projective polyhedron  of the form $(\sigma \cap H_j , V)$, and has dimension one less than $\sigma$.      A \emph{face of $\sigma$} is any non-empty intersection of facets $(\sigma \cap H_{j_1} \cap \ldots \cap H_{j_k}, V)$, for $k\geq 0$  and a
 \emph{vertex of $\sigma$} is a face of dimension zero. Every vertex  of $\sigma$ thus defines a $k$-rational point in $\Pro(V)$   
and, one may show,  is  the image  $[v_i] \in \Pro_k(V)(k)$  of  some vector $v_i$, for $1\leq i\leq m$, where the $v_i$ are as in \eqref{sigmahat}.
  Not all  of the  vectors $v_i$ are  necessarily  vertices and may be  redundant in the definition of
  $\widehat{\sigma}$.

\begin{defn} A \emph{polyhedral linear configuration} over $k$ is a triple $(\Pro(V), L_{\sigma} , \sigma)$ where 
 $(\sigma ,V)$ is a   polyhedron, and  $L_{\sigma}$ is the linear configuration in $\Pro(V)$ whose components $L_i =V_{\sigma_i}$ are  the affine spans of every facet $\sigma_i$ of $\sigma$.
 \end{defn}
 
  In particular, each component $L_i$ of $L_{\sigma}$ satisfies  $\dim L_i = \dim \sigma -1$  and   the set of real points $L_{\sigma}(\R)$ is nothing other than the Zariski-closure of the set of points of  the boundary $\partial \sigma$ in $\Pro(V)(\R)$. 
In this manner,  a  polyhedron $(V,\sigma)$ uniquely determines a polyhedral linear configuration $(\Pro(V), L_{\sigma}, \sigma)$, and vice-versa.

 \begin{defn}
A \emph{map of polyhedral linear configurations}, which we denote  by 
\begin{equation} \label{phimapdef} \phi:  (\Pro(V) , L_{\sigma}, \sigma) \To   (\Pro(V'), L_{\sigma'},  \sigma')\ , \end{equation} 
is given by an injective linear  map $ \phi: V \hookrightarrow V'$ such that the induced map of projective spaces,  also denoted by $\phi: \Pro(V) \rightarrow \Pro(V')$, satisfies both  
\[ \phi(L) \subset L' \quad \hbox{ and  }  \quad \phi(\sigma) \subset \sigma' . \]
In particular, every face  of $\sigma$ maps to a face  of $\sigma'$. 
\end{defn}
 \begin{rem}  The above definitions  are  insufficient  to express the subdivision of polyhedra into smaller polyhedra. For this one must  consider a more general notion  where $L$  contains further linear subspaces in addition to  the Zariski closures of the facets of $\sigma$.  \end{rem} 
 
\begin{example} \label{ex: standardsimplex}  The standard simplex  in projective space $ \Pro^n_{\Q}$ with homogeneous  coordinates $(x_0:\ldots : x_n)$ is  the polyhedral linear configuration 
$( \Pro^n_{\Q}, L, \sigma)$, where $L= V(x_0\cdots x_n)$,   and $\sigma \subset   \Pro^n_{\Q}(\R)$ is  defined by the region $x_i \geq 0$. 
\end{example} 

\subsection{Faces and their normals}Let $ (\Pro(V) , L_{\sigma}, \sigma)$ be a polyhedral linear configuration, and 
consider any face $\sigma_F$ of $\sigma$.
 The polyhedron $(\sigma_F,V_{\sigma_F})$   defines a polyhedral linear configuration $( \Pro(V_{\sigma_F}), L_{\sigma_F}, \sigma_F) $ and a `face' map
\begin{equation} \label{facemap}
( \Pro(V_{\sigma_F}), L_{\sigma_F}, \sigma_F)    \To  (\Pro(V) , L, \sigma) \ , \end{equation}
which is the map   \eqref{phimapdef} induced by the inclusion $V_{\sigma_F}$ in $V$, and corresponds to the inclusion of $\sigma_F$ in $\sigma$. 
Note that  $(\sigma_F, V_{\sigma_F})$ has maximal dimension, i.e., $\dim(\sigma_F)= \dim(V_{\sigma_F})-1$, but the same is not necessarily true of $(\sigma,V)$. For the trivial face, when  $\sigma_F$ is equal to $\sigma$ itself,     \eqref{facemap}   gives a linear map
$ ( \Pro(V_{\sigma}), L_{\sigma}, \sigma)    \rightarrow  (\Pro(V) , L, \sigma)$.

\begin{defn}   Let $W\subset V$ be a vector subspace such  that 
$\sigma_W = \sigma \cap \Pro(W)$
is  a face of $\sigma$ (here and elsewhere, we write $\sigma \cap \Pro(W)$ for $\sigma \cap \Pro(W)(\R)$).  Consider the  polyhedral cone: 
\[ \widehat{\sigma_{/W}}  \subset   (V/W )_{\R} \] 
 which is  the image of $\widehat{\sigma}$ \eqref{sigmahat} under the natural map $V_{\R} \rightarrow   (V/W )_{\R}$. 
 It is defined over $V/W$ since it is the convex hull of the images of a set of defining  vectors $v_i$ \eqref{sigmahat} under the natural map $V \rightarrow V/W$. 
Denote by $\sigma_{/W}$ the link of its cone point. 
\end{defn} 

It is important to note that $\sigma_W$ is not necessarily assumed to be  Zariski-dense  in $\Pro(W)$, i.e., $W$ may strictly contain  the space $V_{\sigma_W}$.  

\begin{lem}  The pair $(\sigma_{/W}, V/W)$ is a polyhedron.
\end{lem}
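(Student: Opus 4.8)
The plan is to verify the three defining conditions of a projective polyhedron for the pair $(\sigma_{/W}, V/W)$: that $\widehat{\sigma_{/W}}$ is the convex hull of finitely many vectors defined over $V/W$ (which is essentially built into the definition), that the cone is strongly convex, and that $\sigma_{/W}$ is correctly identified as the link of its cone point (which again is definitional). The only substantive point is \emph{strong convexity}: I must show that $\widehat{\sigma_{/W}}$ contains no real line $\R w$ for $w \neq 0$ in $(V/W)_\R$.

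First I would set up notation: write $\widehat{\sigma} = \R_{\geq 0}\langle v_1,\ldots,v_m\rangle \subset V_\R$, let $q\colon V_\R \to (V/W)_\R$ be the projection, and note $\widehat{\sigma_{/W}} = q(\widehat{\sigma}) = \R_{\geq 0}\langle q(v_1),\ldots,q(v_m)\rangle$, which exhibits it as a polyhedral cone defined over $V/W$. Suppose for contradiction that $\widehat{\sigma_{/W}}$ contains a line $\R \bar w$ with $\bar w \neq 0$. Then both $\bar w$ and $-\bar w$ lie in $\widehat{\sigma_{/W}}$, so there are $x, y \in \widehat{\sigma}$ with $q(x) = \bar w$ and $q(y) = -\bar w$; hence $q(x + y) = 0$, i.e., $x + y \in W_\R \cap \widehat{\sigma}$, while $x + y \in \widehat{\sigma}$ as well.

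The key step is to identify $W_\R \cap \widehat{\sigma}$ with the cone $\widehat{\sigma_W}$ over the face $\sigma_W$. Since $\sigma_W = \sigma \cap \Pro(W)$ is by hypothesis a \emph{face} of $\sigma$, it is cut out by setting some subset of the supporting linear forms $f_j$ (those vanishing identically on $W_\sigma$) to zero; a standard fact about faces of polyhedral cones is that a face $\widehat{\sigma_W}$ of $\widehat{\sigma}$ equals $\widehat{\sigma} \cap \langle\widehat{\sigma_W}\rangle_{\R}$, and in our situation $\widehat{\sigma}\cap W_\R$ is precisely this face cone (any point of $\widehat{\sigma}$ lying in $W_\R$ must have zero coordinate along every extreme ray not contained in $W_\R$, because $W$ meeting $\sigma$ in a face forces the supporting hyperplanes of that face to contain $W$). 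Thus $x+y$ lies in the strongly convex cone $\widehat{\sigma_W}$. Now decompose: write $x = \sum a_i v_i$, $y = \sum b_i v_i$ with $a_i, b_i \geq 0$; then $x+y = \sum (a_i+b_i) v_i \in \widehat{\sigma_W}$ forces, by the face property, $a_i + b_i = 0$ hence $a_i = b_i = 0$ for every index $i$ with $q(v_i) \neq 0$ — but then $q(x) = \sum a_i q(v_i) = 0$, contradicting $q(x) = \bar w \neq 0$. Therefore $\widehat{\sigma_{/W}}$ is strongly convex, and $(\sigma_{/W}, V/W)$ is a polyhedron.

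The main obstacle is the combinatorial bookkeeping in the middle step: one needs the precise statement that, because $\sigma_W$ is a face of $\sigma$ cut out by supporting hyperplanes containing $W$, the intersection $\widehat{\sigma}\cap W_\R$ is exactly the face cone spanned by those $v_i$ whose image in $V/W$ is zero — equivalently, that there is no "accidental" cancellation among the $q(v_i)$ producing a line. This is where the hypothesis that $\sigma_W$ is a \emph{face} (and not merely a sub-polyhedron) is used essentially; without it the projected cone could fail to be strongly convex. Everything else — that $q$ of a finitely generated cone is finitely generated, that it is defined over $V/W$, that $\sigma_{/W}$ is its link — is immediate from the definitions.
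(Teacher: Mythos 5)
Your proof is correct, and it takes a genuinely different route from the paper. The paper's argument is constructive: it selects a normal-crossing subset of the facet hyperplanes $L_1,\ldots,L_m$ cutting out $\sigma_W$, asserts that $p=\dim(V/W)$ of them descend to coordinate hyperplanes on $\Pro(V/W)$, and concludes that $\sigma_{/W}$ sits inside a standard simplex. You instead argue by contradiction using the face property of polyhedral cones: if $\widehat{\sigma_{/W}}$ contained a line $\R\bar w$ with $\bar w\neq 0$, lifting $\bar w$ and $-\bar w$ to $x,y\in\widehat{\sigma}$ gives $x+y\in\widehat{\sigma}\cap W_\R=\widehat{\sigma_W}$, which is a face of $\widehat{\sigma}$ precisely because $\sigma_W=\sigma\cap\Pro(W)(\R)$ is a face of $\sigma$; the face property then forces $x,y\in\widehat{\sigma_W}\subset W_\R$, so $q(x)=0$, a contradiction. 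This version is coordinate-free and needs no normal-crossing choice, and it isolates exactly where the hypothesis that $\sigma_W$ is a face enters.

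Two small points on the write-up. Once you know $x,y\in\widehat{\sigma_W}\subset W_\R$ from the face property, you are done immediately; the detour through $x=\sum a_iv_i$ is unnecessary, and since that representation need not be unique, the inference "$a_i+b_i=0$" for $q(v_i)\neq 0$ would itself require another application of the face property. More importantly, the parenthetical justification --- "$W$ meeting $\sigma$ in a face forces the supporting hyperplanes of that face to contain $W$" --- is false in general: a facet $L_i$ through $\sigma_W$ contains $\Pro(V_{\sigma_W})$ but need not contain $\Pro(W)$ when $W$ strictly contains $V_{\sigma_W}$ (a possibility the paper explicitly allows). Fortunately this remark is dispensable: the identity $\widehat{\sigma}\cap W_\R=\widehat{\sigma_W}$ follows directly from $\sigma_W=\sigma\cap\Pro(W)(\R)$ by taking cones, and the rest of your argument stands as written.
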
 
\begin{proof} It  suffices to show that  $\sigma_{/W}$  is strictly convex. We do this by showing that it is contained in a standard simplex  (example \ref{ex: standardsimplex}).
 Consider any  choice of irreducible components $L_i$ of $L_{\sigma}$ such that the face  $\sigma_W$ is  given by the    intersection 
$\sigma_W = \sigma \cap L_{1} \cap \ldots \cap L_{m}$. We may assume that $L_1,\ldots, L_m$ are normal crossing. Since $\sigma_W$ is Zariski-dense in $L_1\cap \ldots \cap L_m$ it follows that $L_1 \cap \ldots \cap L_m  \subset \Pro(W)$, and hence $\dim W\geq  \dim V- m$. 
Since the $L_i$ cross normally, it follows that any subset of $p =\dim (V/W) \leq m$ spaces $L_i$, and in particular $L_1,\ldots, L_{p}$,   defines a set of coordinate hyperplanes on $\Pro(V/W)$.  Let  $x_i$ be a system of coordinates on $\Pro(V/W)$   whose zero loci are the  $L_i$ for $i=1,\ldots,p$. 
By replacing $x_i$ with $-x_i$ we may assume that the $x_i$ are non-negative on $\sigma$.
 By construction, the link    $\sigma_{/W}$ is contained in the strictly convex region $\{(x_1:\ldots :x_p) :  x_i\geq 0\}$.  \end{proof} 

\begin{defn}  \label{defn: normal} Define the \emph{normal} of $\sigma$ relative to $W$ to be  
$(\Pro(V/W), L_{\sigma_{/W}}, \sigma_{/W})$. 
\end{defn}

A   map of polyhedral linear configurations induces maps   simultaneously on   faces and  their normals. More precisely, let $\phi$ be as in \eqref{phimapdef},   and  suppose that $W\subset V$ meets $\sigma$ in a face $\sigma_W =\sigma \cap W$ of $\sigma$. Then $\phi(W) = W'$ also meets $\sigma'$ in the face $\sigma'_{W'}= \sigma'\cap W'$, and  we deduce a pair of maps of polyhedral linear configurations:
\begin{eqnarray}
\phi\big|_W :   (\Pro(W), L_{\sigma_W}, \sigma_W)  & \rightarrow & (\Pro(W'), L_{\sigma'_{W'}}, \sigma'_{W'})   \\
\phi_{/W} :   (\Pro(V/W), L_{\sigma_{/W}}, \sigma_{/W})  & \rightarrow & (\Pro(V'/W'), L_{\sigma'_{/W'}} , \sigma'_{/W'})  \ . \nonumber 
\end{eqnarray}

\begin{rem}  \label{rem: Products}
The projectivised normal bundle   of the  linear subspace $\Pro(W) \subset \Pro(V)$  is trivial, and is  canonically isomorphic  to a product of projective spaces:
\[ \Pro (N_{\Pro(W)|\Pro(V)}) = \Pro(W) \times \Pro(V/W)\ .\]
For any subspace $W\subset V$ meeting $\sigma$ in a face $\sigma_W$, the  product of polyhedra
\[  \sigma_W  \times \sigma_{/W}  \quad \subset \quad   \Pro(W)(\R) \times \Pro(V/W)(\R) \]
is contained  within it.  A map of polyhedral linear configurations $\phi$ as above induces a map  on the products $ \sigma_W \times \sigma_{/W} \rightarrow \sigma'_{W'} \times \sigma'_{/W'}$.  These products  encode the infinitesimal structure of $\sigma$ in the neighbourhood of  $\sigma_W$. This will be discussed  in \S\ref{sect: FacesAndMultBoundary}. 
\end{rem}

\begin{lem} \label{lem: SlackFaces} Let $(\sigma, V)$ be a  polyhedron, and let $W_1\subsetneq W_2 \subset V$ such that $\sigma\cap \Pro(W_1)(\R)  = \sigma \cap \Pro(W_2)(\R)$ is a face of $\sigma$.
Then $\Pro(W_2/W_1) (\R) $ does not meet $(\sigma_{/W_1},V/W_1)$.
\end{lem}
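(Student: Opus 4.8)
The plan is to argue by contradiction, translating everything from projective polyhedra to the underlying convex cones in real vector spaces, and then simply to chase a hypothetical intersection point through the projection $V_{\R}\to (V/W_1)_{\R}$.

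First I would set up the cone dictionary. Write $\widehat{\sigma}=\R_{\ge 0}\langle v_1,\dots,v_m\rangle\subset V_{\R}$ for the cone whose link is $\sigma$. For any $k$-subspace $W\subset V$, a representative $v$ of a point of $\sigma\cap\Pro(W)(\R)$ is precisely a nonzero vector of $\widehat{\sigma}\cap W_{\R}$, so the cone over $\sigma\cap\Pro(W)(\R)$, together with the origin, is exactly $\widehat{\sigma}\cap W_{\R}$. Hence the hypothesis $\sigma\cap\Pro(W_1)(\R)=\sigma\cap\Pro(W_2)(\R)$ is equivalent to
\[ \widehat{\sigma}\cap (W_1)_{\R} \;=\; \widehat{\sigma}\cap (W_2)_{\R}\,. \]
By construction $\widehat{\sigma_{/W_1}}$ is the image $\pi(\widehat{\sigma})$ under the projection $\pi\colon V_{\R}\to (V/W_1)_{\R}$, and $\sigma_{/W_1}$ is the link of $\widehat{\sigma_{/W_1}}$; this is a genuine polyhedron by the preceding lemma, since $\sigma\cap\Pro(W_1)(\R)$ is a face of $\sigma$.

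Next I would run the contradiction argument. Suppose some point of $\Pro(W_2/W_1)(\R)$ lies in $\sigma_{/W_1}$. Lifting it, one obtains a nonzero vector $\bar v\in (W_2/W_1)_{\R}$ lying in $\widehat{\sigma_{/W_1}}=\pi(\widehat{\sigma})$; choose $v\in\widehat{\sigma}$ with $\pi(v)=\bar v$. Since $k\subset\R$, the functor $-\otimes_k\R$ is exact, so $(W_2/W_1)_{\R}=(W_2)_{\R}/(W_1)_{\R}$ inside $(V/W_1)_{\R}=V_{\R}/(W_1)_{\R}$; as $W_1\subset W_2$ this gives $\pi^{-1}\big((W_2/W_1)_{\R}\big)=(W_2)_{\R}$, whence $v\in\widehat{\sigma}\cap (W_2)_{\R}$. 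By the cone form of the hypothesis, $v\in\widehat{\sigma}\cap (W_1)_{\R}$, and therefore $\pi(v)=0$, contradicting $\bar v\neq 0$.

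The argument is essentially formal, so I do not anticipate a genuine obstacle; the only step requiring a little care is the first one, namely the bookkeeping of the cone/projective dictionary — verifying that the equality of the two projective-linear slices of $\sigma$ really does translate into $\widehat{\sigma}\cap(W_1)_{\R}=\widehat{\sigma}\cap(W_2)_{\R}$, and, symmetrically, that the preimage of $(W_2/W_1)_{\R}$ under $\pi$ is exactly $(W_2)_{\R}$ (this is precisely where the inclusion $W_1\subset W_2$ and the flatness of $\R$ over $k$ are used). Once that dictionary is in place, the contradiction is immediate.
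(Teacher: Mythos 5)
Your proof is correct. The core of your argument --- lift a hypothetical point of $\Pro(W_2/W_1)(\R)\cap\sigma_{/W_1}$ under $\pi\colon V_{\R}\to(V/W_1)_{\R}$ to a vector $v\in\widehat{\sigma}\cap(W_2)_{\R}$, note that $v\notin(W_1)_{\R}$ since $\pi(v)\ne0$, and contradict the equality $\widehat{\sigma}\cap(W_1)_{\R}=\widehat{\sigma}\cap(W_2)_{\R}$ --- is the same contradiction the paper uses, so this is not a fundamentally different route. Where you diverge is in how you obtain the lifted point. The paper asserts that if $\Pro(W_2/W_1)(\R)$ meets $\sigma_{/W_1}$ then there is a \emph{rational vertex} of $\sigma_{/W_1}$ lying in $\Pro(W_2/W_1)(\Q)$, and then runs the contradiction from that vertex. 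As stated this vertex step needs more justification than the paper gives (a linear subspace can meet a polyhedron without containing a vertex of it, and the intersection need not be a face), and in any case it is not used: any point of the intersection produces the contradiction once one observes $\pi^{-1}\bigl((W_2/W_1)_{\R}\bigr)=(W_2)_{\R}$. Your version simply drops that detour and works directly with the cone $\widehat{\sigma}$ and the identity $\widehat{\sigma_{/W_1}}=\pi(\widehat{\sigma})$, which is both shorter and avoids invoking $k$-rationality of vertices. One small remark: the appeal to exactness of $-\otimes_k\R$ is more machinery than needed --- the identity $\pi^{-1}\bigl((W_2/W_1)_{\R}\bigr)=(W_2)_{\R}$ is immediate from $W_1\subset W_2$ --- but it certainly does no harm.
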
 
\begin{proof} If  $\Pro(W_2/W_1)(\R)$ were to meet $\sigma_{/W_1}$, then     there  would exist a  $0 \neq v \in W_2/W_1$  whose image in $\Pro(W_2/W_1)(\Q)$ is a vertex of $\sigma_{/W_1}$.  This would imply that  $\sigma \cap  \Pro(W_2)(\R)$ strictly contains $\sigma\cap \Pro(W_1)(\R)$, a contradiction.
\end{proof}

\subsection{Category of polyhedral linear configurations}

\begin{defn}   \label{defn: PLC} Define a category $\PLC_k$  whose objects are polyhedral linear configurations over $k$, and whose  morphisms are generated by: 
\begin{enumerate}  \setlength\itemsep{0.03in} 
\item (Linear embeddings)  Maps  of the form $f: (\Pro(V) , L, \sigma) \rightarrow (\Pro(V'), L', \sigma')$, where  $f$ is a linear embedding  which satisfies $f(L) = L'$ and $f(\sigma) = \sigma'$, 
\item   (Inclusions of faces)  For any face $\sigma_F$ of $\sigma$, the  face maps  \eqref{facemap}:
\[
 ( \Pro(V_{\sigma_F}), L_{\sigma_F}, \sigma_F)    \To  (\Pro(V) , L, \sigma)\ . 
  \]
\end{enumerate} 
\end{defn}

The category $\PLC_k$ is a sub-category of $\PC_k$ which satisfies the assumptions of \S\ref{sect: PolyhedralAssumptions}.

\begin{rem} The above categories are adapted to studying the links of cones. If one is interested in  the cones  \emph{per se},  one may consider a  version in which one replaces projective space $\Pro^n_k$ with affine space $\mathbb{A}^{n+1}_k$, and $\sigma$ with $\widehat{\sigma}$, \emph{etc}. \end{rem}

  \subsection{Linear polyhedral  complexes}  \label{sect: LinearPolyhedralComplexes} 
 Definition \ref{defn: GenCkComplex} leads to the following:

 \begin{defn} A \emph{linear polyhedral  complex} is a $\PLC_k$-complex, i.e., a functor from a finite diagram category to the category of polyhedral linear configurations.
 \end{defn}

 \begin{rem} The topological realisation
  of a polyhedral   linear complex
is  obtained by gluing together finitely many quotients of strictly convex polyhedra by finite groups of automorphisms and defines a  symmetric CW-complex (see \cite{ACP}).  
 \end{rem} 
 
 We may define subschemes of polyhedral linear complexes, and differential forms upon them in the manner of \S\ref{sect: subschemes}, \ref{sect: differentialforms}.

\section{Wonderful compactications of linear polyhedral complexes} \label{sect: Blowups}

\subsection{Blowing-up linear subspaces} 
Let $\BB$ denote a  finite set of linear subspaces $\Pro(W) \subsetneq \Pro(V)$ with the property that $\BB$ is closed under intersections.

\begin{defn} The (wonderful)  compactification   \cite{Wonderful} of  $\Pro(V)$ along $\BB$ is denoted 
\[ 
\pi_{\BB} :  P^{\BB}(V) \To \Pro(V)\]
and is defined to be   the iterated  blow-up of $\Pro(V)$ along the strict transforms of the  strata $\Pro(W_j)  \in \mathcal{B}$, in increasing order of dimension. It is shown to be independent of the order of blowings-up, and  well-defined. Let  $D^{\BB} \subset P^{\BB}(V)$  denote the exceptional divisor.
 \end{defn}

\begin{prop}  \label{prop: UniversalANDIdeal} The iterated blow-up $P^{\BB}$ has the following  properties.

\vspace{0.02in} 
(i)  Let $f:U \rightarrow V$  be an injective linear map, and $\BB$ a set of subspaces of $\Pro(V)$  as above,  such that  $\Pro(fU)$ is not contained in any element of $\BB$.   If  $f^{-1} \BB $ denotes the set of preimages $f^{-1}\Pro(W)$ of spaces $\Pro(W)$ in $\BB$, there is a canonical map
$  f^{\BB}:  P^{f^{-1}\BB}(U) \rightarrow  P^{\BB}(V) $ such that 
\[ 
\begin{array}{ccc}
  P^{f^{-1}\BB}(U) & \overset{f^{\BB}}{\To}&   P^{\BB}(V)   \\
 \downarrow   &   &   \downarrow \\
\Pro(U)   &  \overset{f}{\To}   &   \Pro(V)
\end{array}
\]
commutes, where the vertical maps are the blow-downs $\pi_{f^{-1} \BB}$, $\pi_{\BB}$ respectively. 
\vspace{0.02in} 

(ii) Suppose that $\BB , \BB'$ are two sets of linear subspaces as above which are closed under intersections. Suppose that $\BB' \subset \BB$. Then there is a canonical morphism
\[ \pi_{\BB/\BB'} : P^{\BB}(V) \To P^{\BB'}(V)\ .\]
\end{prop}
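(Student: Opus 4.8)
The plan is to prove both statements by exploiting the defining property of the wonderful compactification as an iterated blow-up which is independent of the order of blowings-up, together with the universal property of blow-ups. For part (i), the key observation is that blowing up commutes with flat base change, and more to the point, with the (non-flat) inclusion of a linear subspace not contained in the center, up to taking strict transforms. First I would reduce to the case of a single blow-up: if $\BB = \{\Pro(W)\}$ with $\Pro(fU) \not\subset \Pro(W)$, then $f^{-1}\Pro(W) = \Pro(f^{-1}W)$ is a proper linear subspace of $\Pro(U)$ (possibly empty, in which case $P^{f^{-1}\BB}(U) = \Pro(U)$ and the map is just $f$), and the strict transform of $\Pro(U)$ under the blow-up of $\Pro(V)$ along $\Pro(W)$ is precisely $P^{f^{-1}\BB}(U) = \mathrm{Bl}_{\Pro(f^{-1}W)}\Pro(U)$; this is a standard fact about blowing up a subvariety intersecting the center properly in a smooth ambient space. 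The map $f^{\BB}$ is then the composite of this identification with the inclusion of the strict transform into $P^{\BB}(V)$, and the square commutes by construction since the blow-down maps are compatible with strict transforms. For the general case, I would induct on $|\BB|$, blowing up in increasing order of dimension: at each stage the strict transform of the center is smooth and the strict transform of $P^{\cdots}(U)$ meets it properly (or misses it), so the single-blow-up argument applies; one must check that the preimage set $f^{-1}\BB$ is again closed under intersections and that the induced ordering by dimension is respected, both of which are immediate since $f^{-1}(\Pro(W_1) \cap \Pro(W_2)) = f^{-1}\Pro(W_1) \cap f^{-1}\Pro(W_2)$ and preimages of larger subspaces are larger.

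For part (ii), since $\BB' \subset \BB$ and both are closed under intersections, I would construct $\pi_{\BB/\BB'}$ by observing that $P^{\BB}(V)$ can be obtained from $P^{\BB'}(V)$ by a further sequence of blow-ups: namely, blow up $P^{\BB'}(V)$ along the strict transforms of the strata in $\BB \setminus \BB'$, again in increasing order of dimension. By the order-independence of the wonderful construction, the space so obtained is canonically isomorphic to $P^{\BB}(V)$, and $\pi_{\BB/\BB'}$ is the composite of the resulting blow-down maps. The only subtlety is that $\BB \setminus \BB'$ need not itself be closed under intersections, but this is not required: the building set structure is inherited from $\BB$, and one checks that at each stage of the blow-up of $P^{\BB'}(V)$ the relevant strict transforms are smooth, which follows because $\BB$ is closed under intersections so the whole collection forms a building set (in the sense of \cite{Wonderful}) and the wonderful construction applies to any order of blow-ups compatible with dimension.

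The main obstacle I anticipate is bookkeeping rather than conceptual difficulty: one must verify carefully that at every stage of the induction the relevant centers remain smooth and meet the strict transforms of $\Pro(U)$ (respectively of the previously-introduced exceptional loci) transversally enough for the elementary "strict transform of a blow-up" statements to apply. This is where the hypothesis that $\BB$ is closed under intersections is essential — it guarantees that the nested collection of linear subspaces forms a building set, so that all the iterated strict transforms are smooth with normal-crossing exceptional divisors, which is exactly the content of the wonderful compactification package of \cite{Wonderful}. Once that framework is in place, both parts follow formally; I would simply cite \cite{Wonderful} for the smoothness and order-independence and keep the verification of the base-change compatibility in (i) at the level of the explicit local coordinate computation on a single blow-up, which is routine.
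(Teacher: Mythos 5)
For (i), your argument is essentially the paper's: reduce to a single blow-up via the universal property of strict transforms (Hartshorne II, Cor.\ 7.15), which says the strict transform of $\Pro(U)$ in $\mathrm{Bl}_{\Pro(W)}\Pro(V)$ is $\mathrm{Bl}_{\Pro(U)\cap\Pro(W)}\Pro(U)$, and iterate over the sequence of blow-ups. (Your appeals to ``proper intersection'' are a red herring: Hartshorne's corollary needs no transversality, only that $\Pro(U)\not\subset\Pro(W)$, which is your hypothesis.)

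For (ii), there is a genuine gap. You propose to obtain $P^{\BB}(V)$ from $P^{\BB'}(V)$ by further blowing up the strict transforms of the strata in $\BB\setminus\BB'$, and to take $\pi_{\BB/\BB'}$ to be the resulting blow-down. This fails whenever some $\Pro(W')\in\BB\setminus\BB'$ is contained in some $\Pro(W)\in\BB'$. Concretely, take $\BB'=\{\Pro(W)\}$ with $\dim\Pro(W)=1$ and $\BB=\{\Pro(W'),\Pro(W)\}$ with $\Pro(W')$ a point on $\Pro(W)$; both sets are closed under intersection and $\BB'\subset\BB$. The strict transform of $\Pro(W')$ in $P^{\BB'}(V)=\mathrm{Bl}_{\Pro(W)}\Pro(V)$ is empty, so your recipe returns $P^{\BB'}(V)$ unchanged, whereas $P^{\BB}(V)=\mathrm{Bl}_{\widetilde{\Pro(W)}}\mathrm{Bl}_{\Pro(W')}\Pro(V)$ has two exceptional components and is a genuinely larger variety. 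The underlying issue is that the order you propose --- first all of $\BB'$, then all of $\BB\setminus\BB'$ --- is not compatible with inclusion (equivalently, not non-decreasing in dimension), which is exactly the hypothesis the order-independence theorem for wonderful compactifications requires; you even invoke ``any order compatible with dimension'' while proposing an order that is not. In fact, in this example the map $P^{\BB}\to P^{\BB'}$ contracts the exceptional $\Pro^2$ over the point to a curve and so is not a blow-down along a smooth centre at all. The paper instead constructs $\pi_{\BB/\BB'}$ by induction along the dimension-compatible blow-up sequence defining $P^{\BB}$: at each step it distinguishes whether the new centre lies in $\BB'$ and, when it does, invokes the universal property of blowing up (Hartshorne II, Prop.\ 7.14) after checking that the pulled-back ideal of the centre becomes invertible. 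Your approach cannot be repaired by reordering, since it insists $\BB'$ be exhausted first; an induction that interleaves the two lists along the dimension ordering is required.
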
 

\begin{proof}
Part (i) follows from repeated application of the universal property of strict transforms  \cite{Hartshorne}[II, Corollary 7.15].  For (ii),  the proof is by induction. 
Choose any linear ordering on  $\BB$ compatible with the order of blowing-up (i.e., in increasing dimension). Let   $\BB_n\subset \BB $ denote  the first $n$ elements of $\BB$.  Suppose by induction on $n$ that we have constructed  morphisms $\pi_m: P^{\BB_m}(V) \rightarrow P^{\BB' \cap \BB_m}(V)$ for all $m\leq n$ such that the following diagrams commute, where the vertical maps are the natural (blow-down) maps: 
 \begin{equation} \label{inproof:recursiveblowups}
\begin{array}{ccc}
P^{\BB_{m}}(V) &  \overset{\pi_m}{\To}  & P^{\BB' \cap \BB_{m}}(V)  \\
 \downarrow  &   & \downarrow    \\
 P^{\BB_{m-1}}(V) &   \overset{\pi_{m-1}}{\To}  &    P^{\BB'\cap \BB_{m-1}} (V)
\end{array}
\end{equation} 
and where $\pi_0: \Pro(V) \rightarrow \Pro(V)$ is the identity. To define $\pi_{n+1}$, there are two cases to consider.  Let    $\BB_{n+1} \setminus \BB_n= \{ \Pro(W)\}$ where $\Pro(W) \subset\Pro(V)$ is  a blow-up locus for $P^{\BB}(V)$.      Either $\Pro(W)$  is an element of  $\BB'$,  or it is not.  
If it is not, then $\BB' \cap \BB_{n+1} =  \BB' \cap \BB_{n}$ and the map $ \pi_{n+1}: P^{\BB_{n+1}}(V) \rightarrow P^{\BB' \cap \BB_{n+1}}(V)$ is simply the composition of the blow-up $P^{\BB_{n+1}}(V) \rightarrow P^{\BB_n}(V) $ with $\pi_n$ (in this case, the vertical map on the right in \eqref{inproof:recursiveblowups}  is the identity  when $m=n+1$). In the contrary case, $\BB_{n+1} = \BB_n \cup \{\Pro(W)\}$ where $\Pro(W) \in \BB'$.  Let $Y$ denote the strict transform of $\Pro(W)$ in $P^{\BB' \cap \BB_n}(V)$.  By commutativity of  the diagrams \eqref{inproof:recursiveblowups} for all $0\leq m\leq n$, its inverse image $\pi_n^{-1}(Y)$ in $P^{\BB_n}(V)$ is a Cartier divisor since it is the union of the strict transform  $Y'$ of $\Pro(W)$ in   $P^{\BB_n}(V)$, which is Cartier, with exceptional divisors (likewise). By the universal property of blowing up
 \cite{Hartshorne}[II, Proposition 7.14], there is a unique morphism $\pi: \mathrm{Bl}_{Y'} P^{\BB_n}    (V)\rightarrow    \mathrm{Bl}_{Y} P^{\BB'\cap \BB_n}(V)$ such that the following diagram commutes:
 \[ 
\begin{array}{ccc}
\mathrm{Bl}_{Y'} P^{\BB_n}  (V) & \rightarrow   & \mathrm{Bl}_{Y} P^{\BB' \cap \BB_n}(V)  \\
 \downarrow  &   & \downarrow    \\
 P^{\BB_n}(V) &   \overset{\pi_n}{\To}  &    P^{\BB'\cap \BB_n} (V)
\end{array}
\]
Since $\mathrm{Bl}_{Y'} P^{\BB_n}(V)= P^{\BB_{n+1}}(V)$ and $ \mathrm{Bl}_{Y} P^{\BB'\cap \BB_n}(V) = P^{\BB' \cap \BB_{n+1}}(V)$ we may define $\pi_{n+1}$ to be the morphism $\pi$, which completes the induction step. 
\end{proof}

\begin{prop} \label{prop: BlowUpStructure}\cite{Wonderful}.
The  irreducible components $\mathcal{E}^{\BB}_W$ of $D^{\BB}$ are in one-to-one  correspondence  with subspaces $\Pro(W) \in \BB$ of codimension $\geq 1$, where 
 $\pi_{\BB}: \mathcal{E}^{\BB}_W \rightarrow  \Pro(W)$, and $\mathcal{E}^{\BB}_W$ is the Zariski closure of the inverse image with respect to $\pi_{\BB}$ of the generic point of $\Pro(W)$. 
 
 If we define  the following sets (which are  closed under intersections): 
 \begin{eqnarray} \label{BBrestrictions} 
  \BB_W & = &   \{  \Pro(T) \subsetneq \Pro(W)  \ \hbox{ for }  \ \Pro(T)  \in \BB  \hbox{ such that }  T \subseteq W \}   \\
 \BB_{/W} & = &  \{  \Pro(T/W)  \subset \Pro(V/W) \  \hbox{ for }  \     \Pro(T)\in \BB \hbox{ such that }    T \supseteq W\}   \nonumber 
 \end{eqnarray} 
 (recall that $\Pro(0) =\emptyset$) then there is a commutative diagram
 \begin{equation} \label{blowupsquare}
\begin{array}{ccc}
 P^{\BB_W}  \times P^{\BB_{/W}}     &  \overset{\sim}{\To}    &   \mathcal{E}^{\BB}_W   \\
  \downarrow_{\pi_{\BB_W} \times \pi_{\BB_{/W}}}    &   &    \downarrow_{\pi_{\BB}}\\
\Pro(W) \times \Pro(V/W)
     &   \To   &    \Pro(W) \end{array}
\end{equation}
where the horizontal  map along the top is a canonical isomorphism, and the one along the bottom is projection onto the first factor $\Pro(W)$. 
 The divisor $D^{\BB}$ is simple normal crossing, and   two components $\mathcal{E}^{\BB}_W$ and $\mathcal{E}^{\BB}_{W'}$ have non-empty intersection if and only if one of the  two spaces
 $\Pro(W) , \Pro(W')$ is contained in the other. 
\end{prop}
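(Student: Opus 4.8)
The plan is to reduce everything to the standard structure theory of wonderful compactifications of subspace arrangements, as developed by De Concini and Procesi (cited here as \cite{Wonderful}), and to identify the exceptional divisors and their intersections by induction on the number of blow-ups. First I would set up the correspondence between components of $D^{\BB}$ and subspaces $\Pro(W) \in \BB$ of positive codimension: each such $\Pro(W)$ is blown up at some stage of the iterated construction (namely at the moment when all strata of strictly smaller dimension have already been blown up, so that the strict transform of $\Pro(W)$ is smooth), and the resulting exceptional divisor, together with its strict transforms under all subsequent blow-ups, is the component $\mathcal{E}^{\BB}_W$. One checks this is well-defined and independent of the chosen order by the independence statement already quoted in the definition of $P^{\BB}$, and that $\pi_{\BB}(\mathcal{E}^{\BB}_W) = \Pro(W)$ with $\mathcal{E}^{\BB}_W$ the Zariski closure of the fibre over the generic point of $\Pro(W)$; this is essentially the defining property of strict transforms.

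Next I would establish the product decomposition \eqref{blowupsquare}. The key geometric input is Remark \ref{rem: Products}: the projectivised normal bundle of the \emph{linear} subspace $\Pro(W) \subset \Pro(V)$ is canonically trivial, $\Pro(N_{\Pro(W)|\Pro(V)}) \cong \Pro(W) \times \Pro(V/W)$. When we blow up $\Pro(W)$ (at the appropriate stage), the exceptional divisor over it is precisely this projectivised normal bundle, hence a product $\Pro(W) \times \Pro(V/W)$. The point is then to track how the remaining blow-ups of $\BB$ restrict to this divisor. A subspace $\Pro(T) \in \BB$ with $T \subseteq W$ meets $\Pro(W)$ and, after blow-up, its strict transform meets the exceptional divisor in $\Pro(T) \times \Pro(V/W)$ — so these cut out, on the first factor, exactly the arrangement $\BB_W$ of \eqref{BBrestrictions}. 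A subspace $\Pro(T) \in \BB$ with $T \supseteq W$ contains $\Pro(W)$; its strict transform meets the exceptional divisor of $\Pro(W)$ along $\Pro(W) \times \Pro(T/W)$, contributing the arrangement $\BB_{/W}$ on the second factor. Subspaces incomparable to $W$ either miss $\Pro(W)$ (so their strict transforms eventually miss $\mathcal{E}^{\BB}_W$) or meet it in a stratum already accounted for by $\BB_W$; here one uses that $\BB$ is closed under intersections, so $\Pro(T) \cap \Pro(W) = \Pro(T \cap W) \in \BB$ with $T \cap W \subseteq W$. Blowing up these traces in the stated dimension order on each factor separately, and invoking Proposition \ref{prop: UniversalANDIdeal}(i) to produce the maps functorially, gives the canonical isomorphism $P^{\BB_W} \times P^{\BB_{/W}} \overset{\sim}{\To} \mathcal{E}^{\BB}_W$ compatibly with the projection to $\Pro(W)$, which is the content of \eqref{blowupsquare}.

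For the normal crossing claim, I would again argue by induction on the blow-ups: blowing up a smooth center transverse to an existing simple normal crossing divisor preserves the simple normal crossing property, and by construction each center (the strict transform of $\Pro(W_j)$) is smooth and meets the accumulated exceptional divisor transversally — this is the standard local picture for wonderful compactifications. The intersection pattern of the components is then read off from the combinatorics: $\mathcal{E}^{\BB}_W \cap \mathcal{E}^{\BB}_{W'} \neq \emptyset$ precisely when $\Pro(W)$ and $\Pro(W')$ are nested (one contained in the other), because two strict transforms of arrangement members survive to meet only along a common stratum, and for linear subspaces a nonempty intersection that is itself a blow-up center forces comparability. Again closure of $\BB$ under intersections is what makes this clean: $\Pro(W) \cap \Pro(W') \in \BB$, and if neither contains the other this intersection is a proper common sub-stratum that gets separated by its own blow-up. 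The main obstacle I anticipate is the careful bookkeeping in the induction for \eqref{blowupsquare} — specifically verifying that the order in which the traces of $\BB$ appear on the two factors $\Pro(W)$ and $\Pro(V/W)$ matches the dimension order required to form $P^{\BB_W}$ and $P^{\BB_{/W}}$, and that incomparable subspaces genuinely contribute nothing new to $\mathcal{E}^{\BB}_W$. Both points are standard for De Concini--Procesi wonderful models, so I would cite \cite{Wonderful} for the local structure and supply only the linear-algebra verification of the combinatorics specific to this arrangement.
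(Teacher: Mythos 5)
Your overall strategy — an induction on the blow-up sequence, combined with the triviality of the projectivised normal bundle for linear subspaces and the pullback of normal bundles to strict transforms — is a legitimate and fairly standard route to this result, and it is genuinely different from the paper's. The paper simply cites \cite{Wonderful} and indicates (just after the proposition, in \S\ref{sect: LocalBlowUpCoordinates}) that the statement can be verified by direct computation in the explicit affine coordinates $\beta_1,\ldots,\beta_{n-1}$: in those coordinates the exceptional divisor $\mathcal{E}_{W_m}$ is the locus $\beta_{i_m}=0$ and the product decomposition \eqref{blowupsquare} is read off from the two groups of coordinates $\beta_1,\ldots,\beta_{i_m-1}$ and $\beta_{i_m+1},\ldots,\beta_{n-1}$. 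That computation is concrete and avoids the order-of-blow-ups bookkeeping entirely; yours is more conceptual but requires that bookkeeping to be exact, and at present it is not.

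The concrete problem is in your treatment of the moment $\Pro(W)$ gets blown up. You write that the exceptional divisor over $\Pro(W)$ "is precisely this projectivised normal bundle, hence a product $\Pro(W) \times \Pro(V/W)$," but at the appropriate stage the center being blown up is the \emph{strict transform} $\widetilde{\Pro(W)}$, not $\Pro(W)$ itself. All subspaces $\Pro(T)\in\BB$ with $T\subsetneq W$ have strictly smaller dimension and have therefore already been blown up; consequently $\widetilde{\Pro(W)}$ is already (by induction and closure of $\BB$ under intersection) the iterated blow-up $P^{\BB_W}$. The exceptional divisor at that moment is $\widetilde{\Pro(W)}\times\Pro(V/W)=P^{\BB_W}\times\Pro(V/W)$, where the triviality comes from the fact that $N_{\widetilde{\Pro(W)}|\ \cdot\ }$ is the pullback of $N_{\Pro(W)|\Pro(V)}\cong\mathcal{O}(1)\otimes(V/W)$ under the blow-down $\widetilde{\Pro(W)}\to\Pro(W)$. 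This also means your sentence "A subspace $\Pro(T)\in\BB$ with $T\subseteq W$ \ldots after blow-up, its strict transform meets the exceptional divisor in $\Pro(T)\times\Pro(V/W)$" cannot be right: such $\Pro(T)$ were blown up \emph{earlier}, so they no longer exist as subvarieties by the time $\widetilde{\Pro(W)}$ is blown up, and they contribute to the structure of $\mathcal{E}^{\BB}_W$ only through having already modified $\widetilde{\Pro(W)}$ into $P^{\BB_W}$, not by intersecting the new exceptional divisor. Only the subspaces $\Pro(T)$ with $T\supsetneq W$ (which have larger dimension and are blown up afterwards) contribute via their strict transforms meeting $\mathcal{E}^{\BB}_W$, and these produce the second factor $P^{\BB_{/W}}$.

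If instead you intended to blow up $\Pro(W)$ \emph{first} and then the rest — the picture of Remark \ref{rem: singleblowupfirst} — be careful: the order-independence recalled in the definition of $P^{\BB}$ only applies to orders compatible with inclusion (contained subspaces first), and blowing up $\Pro(W)$ before its subspaces is not such an order. Moreover Remark \ref{rem: singleblowupfirst} in the paper is stated as a \emph{consequence} of \eqref{blowupsquare}, so invoking it would be circular. The fix is to stick with increasing-dimension order and make the two corrections above; once you do, your structural argument goes through and matches the combinatorics as intended, including the treatment of incomparable $\Pro(T')$ via $\Pro(T'\cap W)\in\BB_W$ which you handle correctly.
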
 

\begin{rem}  \label{rem: singleblowupfirst} The following observation will be  useful.
By \eqref{blowupsquare}, $\mathcal{E}^{\BB}_W$ is canonically isomorphic to the  iterated blow-up of  
$\Pro(W) \times \Pro(V/W)$
relative to $\pi_{\BB_W}$ and $\pi_{\BB_{/W}}$ on each factor.  The product $\Pro(W) \times \Pro(V/W)$ is 
  isomorphic to  the exceptional divisor  of a single blow-up of $\Pro(V)$ along $\Pro(W)$. 
Thus  the exceptional divisor $\mathcal{E}^{\BB}_W$  may be computed by first blowing-up a single linear space $\Pro(W)$ inside $\Pro(V)$, and then computing the  iterated blow-ups relative to $\BB_W$ and $\BB_{/W}$ on each factor  $\Pro(W)$ and $\Pro(V/W)$ of its exceptional divisor. \end{rem} 

By repeated application of  proposition \ref{prop: BlowUpStructure}, one shows that intersections of  irreducible components of $D^{\BB}$ are in one-to-one correspondence with 
sequences of subspaces
\begin{equation}  \label{proWflag} \Pro(W_k) \subset \Pro(W_{k-1}) \subset \ldots \subset \Pro(W_1) \subset  \Pro(V)  \end{equation} 
where   $\Pro(W_i) \in \BB$ for $i=1,\ldots, k$ and all inclusions are strict. The corresponding subscheme of  $D^{\BB}$ is isomorphic to
$\prod_{i=1}^{k+1} P^{\BB_i}(W_{i-1}/W_i)$  where  $W_{k+1}=0$ and $W_0= V$ and where
 $\BB_i = \{ \Pro(T/W_i) \hbox{ where } \Pro(T) \in \BB \hbox{ such that }W_i \subseteq T \subsetneq W_{i-1}\}$. 
The previous  proposition can be proved using explicit local coordinates, which we describe presently.

\subsection{Local coordinates for linear blow ups}  \label{sect: LocalBlowUpCoordinates}
Given a sequence \eqref{proWflag}, we may choose projective coordinates $\alpha_1,\ldots, \alpha_n$  on $\Pro(V)$ such that 
 \[ \Pro(W_m) = V (\alpha_{1}, \alpha_2, \ldots, \alpha_{i_{m}} )   \quad \hbox{ for }  \ 1\leq m \leq k \ ,
 \] 
 for some increasing sequence $0< i_1 < \ldots <i_k < n$. 
A choice of  local affine coordinates on   $P^{\BB}$ lying over the  open chart $\alpha_n=1$  of $\Pro(V)$ is given by $\beta_1, \ldots, \beta_{n-1}$  where:
\begin{multline*} 
\beta_1= \frac{\alpha_1}{\alpha_{i_1}}, \  \ldots,  \   \beta_{i_1-1} = \frac{\alpha_{i_1-1}}{\alpha_{i_1}} \ , \    \beta_{i_1}= \frac{\alpha_{i_1}}{\alpha_{i_2}}, \ \ldots,  \beta_{i_2-1}=\frac{\alpha_{i_2-1}}{\alpha_{i_2}} \ , \  \ldots \\
   \beta_{i_{k-1}} =\frac{\alpha_{i_{k-1}}}{\alpha_{i_k}}, \ldots ,   \   \beta_{i_{k}-1} = \frac{\alpha_{i_{k}-1}}{\alpha_{i_k}} ,   \  \beta_{i_{k}} = \alpha_{i_k} , \ldots \ , \    \beta_{n-1} =\alpha_{n-1} 
\end{multline*}
 The equation of the exceptional divisor $\mathcal{E}_{W_m}$ which  lies over $\Pro(W_m)$ is  given by 
$\beta_{i_{m}} = 0$.

The isomorphism $\mathcal{E}_{W} \cong P^{\BB_{W}} \times P^{\BB_{/W}}$, in the case $W=W_m$,   is represented by  the partition of $\beta_1,\ldots, \beta_{n-1}$ into two sets of coordinates $\beta_1,\ldots,  \beta_{i_m-1}$ and $\beta_{i_{m}+1}, \ldots , \beta_{n-1}$ corresponding to the iterated blow-ups of  two  nested sequences: 
\[   \Pro(W_k) \subset  \ldots \subset \Pro(W_{m+2})   \subset \Pro(W_{m+1}) \subset  \Pro(W_m)\]
\[   \Pro(W_{m-1}/W_m) \subset \ldots \subset \Pro(W_{1}/W_m) \subset  \Pro(V/W_m)\]
lying over the  affine charts with coordinates $(\alpha_1: \ldots : \alpha_{i_{m}-1}:1)$ for $\Pro(V/W_m)$, and $(\alpha_{i_{m}+1}: \ldots : \alpha_{n-1}:1)$
for $\Pro(W_m)$. 
Propositions \ref{prop: UniversalANDIdeal},  \ref{prop: BlowUpStructure} may be proven by computing directly in  these coordinates.  

 \begin{lem} \label{lem: strtransformhyperplane} Let $H\subset \Pro(V)$  be a hyperplane and let $\Pro(W)\in \BB$.  Denote by $\widetilde{H} \subset P^{\BB}$ the strict transform of $H$  under $\pi_{\BB}$. Then 
 \[ 
  \widetilde{H} \cap \mathcal{E}_W  =  
 \begin{cases}   P^{\BB_W}  \times  \widetilde{H}_{/W} \quad \quad  \hbox{ if }  \quad  \Pro(W) \subseteq H \\ 
  \widetilde{H}_W \times P^{\BB_{/W}} \qquad \hbox{ if }  \quad  \Pro(W) \not\subseteq H\ , 
 \end{cases} 
 \]
where $H_W = H \cap \Pro(W)$, and,  when $H$ corresponds to a subspace $W \subset H_0 \subset V$, 
we write $H_{/W} =  \Pro(H_0/W)$. Their  versions with tildes denote  their strict transforms under the iterated blow-ups $\pi_{\BB_W}$ and $\pi_{\BB_{/W}}$, respectively.  Note that if $H\subseteq  \Pro(W)$ then $\widetilde{H}= \emptyset$.
\end{lem}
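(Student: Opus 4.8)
The plan is to verify the two cases by working in the explicit local coordinates of \S\ref{sect: LocalBlowUpCoordinates}, reducing to the single blow-up described in Remark \ref{rem: singleblowupfirst}. First I would recall that, by that remark, the exceptional divisor $\mathcal{E}_W$ is computed by blowing up the single linear subspace $\Pro(W) \subset \Pro(V)$ and then performing the iterated blow-ups along $\BB_W$ and $\BB_{/W}$ on the two factors $\Pro(V/W)$ and $\Pro(W)$ of its exceptional divisor $\Pro(W) \times \Pro(V/W)$. Since strict transforms commute with the order of blow-ups (by the universal property of strict transforms, \cite{Hartshorne}[II, Cor.~7.15], as used in Proposition \ref{prop: UniversalANDIdeal}), it suffices to prove the formula at the level of this first blow-up and then transport it through the remaining blow-ups of $\BB_W$ and $\BB_{/W}$, which only affect the respective factors.

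So the core computation is: let $\mathrm{Bl}_{\Pro(W)} \Pro(V) \to \Pro(V)$ be the blow-up of $\Pro(V)$ along $\Pro(W)$, with exceptional divisor $E \cong \Pro(W) \times \Pro(V/W)$, and let $\widetilde H$ be the strict transform of the hyperplane $H$. Choosing coordinates $(\alpha_1 : \cdots : \alpha_n)$ so that $\Pro(W) = V(\alpha_1, \ldots, \alpha_i)$ and working in the affine chart $\alpha_n = 1$, the blow-up has coordinates $\beta_j = \alpha_j/\alpha_i$ for $j < i$ and $\beta_i = \alpha_i$, with $E = V(\beta_i)$, and $E \cong \Pro(W) \times \Pro(V/W)$ corresponds to the partition $(\beta_{i+1}, \ldots, \beta_{n-1})$ (coordinates on $\Pro(W)$) and $(\beta_1, \ldots, \beta_{i-1})$ (coordinates on $\Pro(V/W)$). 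If $\Pro(W) \subseteq H$, then after scaling $H = V(\ell)$ where $\ell$ is a linear form in $\alpha_1, \ldots, \alpha_i$ only; its total transform is $\beta_i \cdot (\text{linear form in } \beta_1, \ldots, \beta_{i-1})$, so $\widetilde H \cap E = \Pro(W) \times H_{/W}$, matching the first case (and if $H = \Pro(W)$ is impossible since $H$ is a hyperplane, while if $H_0 = W$ we get $\widetilde H = \emptyset$, as noted). If $\Pro(W) \not\subseteq H$, then $\ell$ involves some $\alpha_j$ with $j > i$ (or $\alpha_n$), so the total transform of $\ell$ is already a unit times a form not divisible by $\beta_i$, and its restriction to $E = V(\beta_i)$ cuts out $H_W \times \Pro(V/W)$ inside $\Pro(W) \times \Pro(V/W)$, matching the second case. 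The final clause ($\widetilde H = \emptyset$ when $H \subseteq \Pro(W)$) is automatic since then the total transform of $H$ is a union of exceptional components with no strict transform.

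To finish, I would transport these two identities through the remaining iterated blow-ups along $\BB_W$ (which blows up loci inside the $\Pro(W)$-factor) and $\BB_{/W}$ (inside the $\Pro(V/W)$-factor), using Proposition \ref{prop: UniversalANDIdeal}(i) and the fact that these blow-ups act independently on the two factors of $\mathcal{E}_W^{\BB}$ as in \eqref{blowupsquare}. In the first case, $\widetilde H \cap \mathcal{E}_W = \Pro(W) \times H_{/W}$ becomes $P^{\BB_W} \times \widetilde{H}_{/W}$, where the tilde on $H_{/W}$ records that we take its strict transform under $\pi_{\BB_{/W}}$ and $P^{\BB_W}$ is the full iterated blow-up of the $\Pro(W)$-factor (since $H$ contains $\Pro(W)$, it imposes no condition there). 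In the second case, symmetrically, $\widetilde H \cap \mathcal{E}_W = \widetilde{H}_W \times P^{\BB_{/W}}$.

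The main obstacle I anticipate is purely bookkeeping: making sure the two factorizations $\mathcal{E}_W^{\BB} \cong P^{\BB_W} \times P^{\BB_{/W}}$ are correctly matched with the strict-transform operations on each side — in particular, checking that the strict transform of $H$ under the full iterated blow-up $\pi_{\BB}$ really does restrict on $\mathcal{E}_W$ to the product of (strict transform of one factor) with (the entire blow-up of the other factor), rather than picking up spurious exceptional components from $\BB_W$ or $\BB_{/W}$. This is handled by observing that in the case $\Pro(W) \subseteq H$, the hyperplane's strict transform meets $\mathcal{E}_W$ transversally to the $\BB_W$-exceptional divisors (it contains the generic point of $\Pro(W)$'s lift), and dually in the other case; a clean way to see this is again to pass to the local coordinates above, where the defining equation of $\widetilde H$ on the relevant chart is visibly a single coordinate hyperplane in one of the two coordinate groups, so its behaviour under the remaining blow-ups is governed entirely by Lemma \ref{lem: strtransformhyperplane} applied recursively (strong induction on $|\BB|$).
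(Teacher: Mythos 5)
Your proposal is correct and follows essentially the same route as the paper: reduce to a single blow-up of $\Pro(W)$ via Remark~\ref{rem: singleblowupfirst}, verify the two cases in the local coordinates of \S\ref{sect: LocalBlowUpCoordinates}, and transport through the remaining blow-ups of $\BB_W$ and $\BB_{/W}$. The paper's stated proof also offers a one-line conceptual alternative (the strict transform of $H$ meets the projectivised normal bundle in a product of a hyperplane and a projective space), and its ``alternative'' direct computation already works in the full iterated coordinates, which sidesteps the bootstrap/transport step you flagged as the main bookkeeping concern. One small slip worth noting: your parenthetical remark treats ``$H=\Pro(W)$'' and ``$H_0=W$'' as distinct cases with opposite conclusions, but they are the same condition (it arises only when $\Pro(W)$ has codimension one), and the correct conclusion is the one stated in the lemma, namely $\widetilde{H}=\emptyset$; also your first mention of the two factors pairs $\BB_W$ with $\Pro(V/W)$, reversing the convention you correctly use later.
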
 

\begin{proof} It follows from remark \ref{rem: singleblowupfirst}  that  it is enough  to compute the case when $\BB= \{ \Pro(W)\}$ reduces to a single blow-up.
 The strict transform of $H$ is 
either
\[  H_W \times \Pro(V/W)  \quad  \hbox{ or } \quad    \Pro(W) \times H_{/W}    \quad \hbox{ inside } \quad \Pro(W) \times \Pro(V/W)\] 
depending on which of the two cases are satisfied, since  $H$ meets the normal bundle of $\Pro(W)$ in the product of a hyperplane and a projective space. 
 Alternatively,  
 one can also verify the lemma by direct computation. Suppose that $H$ has the   equation 
\begin{equation}  \label{inproof: Hequation}  \lambda_1\alpha_1 + \cdots + \lambda_n \alpha_n=0 \ , \end{equation} 
and suppose  that  $W= W_m$. 
One has $\Pro(W_m) \subseteq H$ if and only if $\lambda_{i_m+1} = \ldots = \lambda_n=0$. 
After performing the change of variables on the affine chart described above, it becomes
\begin{multline*} \beta_{i_m}\ldots \beta_{i_{k}} \left( \lambda_1 \beta_1 \beta_{i_1} \ldots \beta_{i_{m-1}} +  \lambda_2 \beta_2 \beta_{i_1} \ldots \beta_{i_{m-1}}  + \cdots + \lambda_{i_{m}-1} \beta_{i_{m}-1} +  \lambda_{i_m} \right)  \\
+ \left(  \lambda_{i_{m}+1}\,  \beta_{i_{m}+1} \beta_{i_{m+1}} \ldots \beta_{i_k}   + \cdots + \lambda_{n-1}\, \beta_{n-1}   + \lambda_n \right ) \ .
\end{multline*} 
In the case when $\Pro(W_m) \not\subseteq H$, then setting $\beta_{i_m}=0$ annihilates the first line of the previous expression, leaving only the  second term in parentheses.   It is precisely the equation of the strict transform of $H_W$ in $P^{\BB_{W}}$. 
In the case when $\Pro(W_m) \subset H$,  the second term in parentheses is identically zero, and therefore the strict transform of $H$ has the equation:
\[ \lambda_1 \beta_1 \beta_{i_1} \ldots \beta_{i_{m-1}} +\lambda_2 \beta_2 \beta_{i_1} \ldots \beta_{i_{m-1}}   + \cdots + \lambda_{i_{m}-1} \beta_{i_{m}-1} +  \lambda_{i_m} \] 
which is the equation of the strict transform of $H_{/W}$ in $P^{\BB_{/W}}$. 
 \end{proof}

\subsection{Blow-ups of polyhedral linear configurations} Let  $(\Pro(V), L, \sigma)$ be a  polyhedral linear configuration, and let  $\BB$  be a finite set of linear subspaces $\Pro(W) \subsetneq \Pro(V)$  such that:
\vspace{0.03in}

\begin{enumerate}[(B1).]

\item   \label{B1} $\BB$ is stable under intersections, and  
\vspace{0.03in}

\item  \label{B2} for every  $\Pro(W) \in \BB$, the set $\sigma_W= \sigma \cap \Pro(W)(\R)$ is a face of $\sigma$, or is empty.
\end{enumerate}

Consider the iterated blow up $\pi_{\BB}: P^{\BB}\rightarrow \Pro(V)$ defined above.
In the usual case when $\sigma$ is not contained in $\Pro(W)(\R)$ for some $\Pro(W) \in \BB$, we let 
\begin{equation} \label{sigmaBBdefn} \sigma^{\BB} = \overline{ \pi_{\BB}^{-1} (\overset{\circ}{\sigma}) }   \ \subset \  P^{\BB}(\R)
\end{equation}
denote the closure, in the analytic topology, of the inverse image of the interior of $\sigma$. If $\sigma$ is contained in a $\Pro(W) \in \BB$, then $\sigma^{\BB}$ is defined to be the  empty set (alternatively, we may define $\sigma^{\BB}$ to be the closure of the inverse image of  $\sigma \cap (\Pro(V)\setminus \bigcup \BB)(\R)$, 
the intersection of $\sigma$ with the complement in $\Pro(V)(\R)$ of all blow-up loci  in $\BB$, 
 which covers both cases).  In local coordinates \S\ref{sect: LocalBlowUpCoordinates},  the interior of $\sigma^{\BB}$   is defined by the positivity of a finite number of polynomials $f_1,\ldots, f_k>0$ where the $f_i$ are the local equations of strict transforms of bounding hyperplanes, and  exceptional divisors. The semi-algebraic set $\sigma^{\BB}$ is then locally defined by $f_1,\ldots, f_k\geq 0$.

 When $\sigma^{\BB}$ is non-empty, 
we define a \emph{face} of $\sigma^{\BB}$ to be a non-empty intersection 
\[ \sigma^{\BB} \cap D(\R) \]
where $D$ is any  intersection of irreducible components  of the total transform $\pi_{\BB}^{-1}(L)$. A  \emph{facet} is a face of dimension $\dim \sigma -1$. Finally, define 
$L^{\BB} \subset P^{\BB}$ to be the union of the Zariski closures of the facets of $\sigma^{\BB}$. It depends on $\sigma$, but we usually write $L^{\BB}$ instead of $L^{\BB}_{\sigma}$. 

\begin{defn} The blow-up of $(\Pro(V), L, \sigma)$ along $\BB$ is  the triple $(P^{\BB}, L^{\BB}, \sigma^{\BB})$. 
\end{defn}

By definition, the facets of $\sigma^{\BB}$ are Zariski-dense in the irreducible components $L$ of $L^{\BB}$. From the description of the local coordinates on $P^{\BB}$ one sees that  $\sigma^{\BB}$ is a topological polyhedron inside $P^{\BB}(\R)$, whose boundary satisfies $\partial \sigma^{\BB} = \sigma^{\BB}\cap L^{\BB}(\R)$.   Note that $L^{\BB}$ contains the strict transform  of $L$, and is contained within its  total transform.  It will follow from the description of faces in \S\ref{sect: FacesAndMultBoundary}  that  the triple $(P^{\BB}, L^{\BB}, \sigma^{\BB})$ defines an object of $\PC_k$.
As in \S\ref{section: PolyhedralCellComplexes}, a map of triples $\phi: (P_1^{\BB}, L_1^{\BB}, \sigma_1^{\BB})\rightarrow (P_2^{\BB}, L_2^{\BB}, \sigma_2^{\BB})$  is a morphism of schemes $\phi: P_1^{\BB}\rightarrow P_2^{\BB}$ such that $\phi(L_1^{\BB}) \subset L_2^{\BB}$ and whose restriction to real points induces $\phi(\sigma_1^{\BB}) \subset \sigma_2^{\BB}$. In practice, we shall only consider maps of  very specific types.

\begin{example}  (i). The blow down map  
 \begin{equation} \label{FullBlowdown}   \pi_{\BB} :  (P^{\BB}, L^{\BB}, \sigma^{\BB})\rightarrow (\Pro(V), L, \sigma)
\end{equation}
is a morphism in $\PC_k.$ The   map $\pi_{\BB}: \sigma^{\BB} \rightarrow \sigma$ is not in general a homeomorphism since it may collapse faces.

(ii).  More generally,  consider two sets $\BB, \BB'$ which satisfy  B\ref{B1}, B\ref{B2} such that $\BB' \subset \BB$ (the previous example is the case $\BB'=\emptyset$). Then   Proposition \ref{prop: UniversalANDIdeal} (ii) provides a map 
\[ \pi_{\BB/\BB'} : P^{\BB} \To P^{\BB'} \ . \]
It restricts to a continuous map $\sigma^{\BB} \rightarrow \sigma^{\BB'}$ and defines a morphism
  \begin{equation}   \pi_{\BB/\BB'} :  (P^{\BB}, L^{\BB}, \sigma^{\BB})\rightarrow (P^{\BB'}, L^{\BB'}, \sigma^{\BB'}) \ .
\end{equation}
\end{example}

\begin{defn} \label{def: extraneousmodification} 
Recall from the proof of proposition \ref{prop: UniversalANDIdeal} (ii) that 
 $\pi_{\BB/\BB'}$ is constructed inductively by blowing up elements of $\BB$. Let  $\BB_n$ denote the first $n$ elements of $\BB$. Suppose that 
$\BB'\cap \BB_{n+1} = \BB' \cap \BB_{n}$, in which case we have 
a  commutative diagram \eqref{inproof:recursiveblowups}:
\[
\begin{array}{ccc}
P^{\BB_{n+1}}(V)   &  \overset{\pi_{n+1}}{\To} &  P^{\BB'\cap \BB_{n+1}}(V)  \\
  \downarrow &   &  ||  \\
 P^{\BB_{n}}(V)  &   \overset{\pi_{n}}{\To}     &   P^{\BB'\cap \BB_{n}}(V) 
\end{array}
\]
where the vertical map on the right is the identity. Suppose that $\BB_{n+1} = \BB_n \cup\{ \Pro(W)\}$, and let $Y$ denote the strict transform of $\Pro(W)$ in 
$P^{\BB_{n}}(V)$. Then $P^{\BB_{n+1}}(V) $ is the blow-up of $ P^{\BB_{n}}(V)$ along $Y$. We shall call this blow-up \emph{extraneous} (to $\sigma$) if 
\begin{equation} \label{Yavoidssigma} Y \cap \sigma^{\BB_n} = \emptyset \ . \end{equation}
 Then, since $P^{\BB_{n+1}}(V) $ is isomorphic to $P^{\BB_{n}}(V)$ away from $Y$, and hence in a Zariski-open neighbourhood of the compact set $\sigma^{\BB}_n$, it follows that $\sigma^{\BB_{n+1}}\cong  \sigma^{\BB_n}$. In other words, the extraneous blow up only changes the geometry 
 of   $P^{\BB_{n}}(V)$  away from the region $\sigma^{\BB_n}$. 

More generally, we shall call the  map $ \pi_{\BB/\BB'}$ an  \emph{extraneous  modification}  if, every time this situation occurs in the formation of $P^{\BB}(V)$,  i.e., for all $n$ such that  $\BB'\cap \BB_{n+1} = \BB'\cap \BB_n$, then the blow-up locus is extraneous (i.e., \eqref{Yavoidssigma} holds).  Since every such extraneous blow-up is  trivial on a Zariski-open neighbourhood of $\sigma^{\BB_n}$, it follows that every $\pi_n$ is an isomorphism  locally near $\sigma^{\BB_n}$. We conclude that an extraneous modification satisfies
\[  \pi_{\BB/\BB'}:    \sigma^{\BB} \overset{\sim}{\To} \sigma^{\BB'}\]
since the additional blow-ups in $\BB\setminus \BB'$ are away from  the polyhedron $\sigma^{\BB'}$. 

\end{defn}

\begin{example} (Linear embeddings).  
 Let $h:V_1\rightarrow V_2$ be an injective linear map and consider the corresponding  morphism  of polyhedral linear configurations:
\[ h: (\Pro(V_1), L_1, \sigma_1)\rightarrow    (\Pro(V_2), L_2, \sigma_2)\]
as in definition \ref{defn: PLC} (1), where $h(L_1) = L_2$ and $h(\sigma_1) = \sigma_2$.   If $\BB_2$ is  a set of linear subspaces of $\Pro(V_2)$  satisfying $(B1)$ and  $(B2)$ relative to $\sigma_2$, such that $\Pro(V_1)$ is not contained in $\Pro(W)$ for some $\Pro(W) \in \BB_2$ then let  
\[ \BB_1 =h^{-1} \BB_2  = \left\{  \Pro(h^{-1}W )  \subsetneq \Pro(V_1) \ , \hbox{ for all }\   \Pro(W) \in  \BB_2 \right\} \ . \] 
 The set $\BB_1$  satisfies $(B1), (B2)$   relative to $\sigma_1$ and proposition \ref{prop: UniversalANDIdeal} (i) gives a morphism
 \begin{equation} \label{LinearEmbeddingsBlowUps} (P^{\BB_1}(V_1), L_1^{\BB_1} , \sigma_1^{\BB_1} )  \To  (P^{\BB_2}(V_2), L_2^{\BB_2} , \sigma_2^{\BB_2} )  
 \  . \end{equation}
There is a commutative diagram where the vertical maps are $\pi_{\BB_1}$ and $\pi_{\BB_2}$:
 \begin{equation} 
\begin{array}{ccc}
(P^{\BB_1}(V_1), L_1^{\BB_1} , \sigma_1^{\BB_1} )  &  \overset{\sim}{\To}  & (P^{\BB_2}(V_2), L_2^{\BB_2} , \sigma_2^{\BB_2} )   \\
 \downarrow  &   & \downarrow   \\
 (\Pro(V_1), L_1, \sigma_1)  &  \overset{\sim}{\To}    &    (\Pro(V_2), L_2,  \sigma_2)
\end{array}
\end{equation}

\end{example}

\subsection{Faces and their product structure}  \label{sect: FacesAndMultBoundary}

\begin{prop} \label{prop: multiplicativestructureonBLC} Consider a polyhedral linear configuration  $(\Pro(V), L, \sigma)$ and  $\BB$ as  above. Let $\Pro(W) \in \BB$ which  meets $\sigma$, and let  $\mathcal{E}^{\BB}_W \subset P^{\BB}$ denote the exceptional divisor lying above $\Pro(W)$. 
Then, via the isomorphism $\mathcal{E}^{\BB}_W\cong   P^{\BB_W} \times P^{\BB_{/W}}$ (see  \eqref{blowupsquare}),  one has
\begin{equation} \label{ProductStructureonsigmaB} 
 \sigma^{\BB} \cap \mathcal{E}^{\BB}_W (\R)  =     \sigma^{\BB_W}_{W}  \times \sigma^{\BB_{/W}}_{/W}   
 \end{equation} 
where $\sigma_W = \sigma \cap \Pro(W)(\R)$ is the face of $\sigma$ cut out by $\Pro(W)$, which exists by assumption $(B2)$, and $\sigma_{/W} \subset  \Pro(V/W)(\R)$ is  its normal. If $\sigma_W$ is contained in some $\Pro(T) \in \BB$, where $T\subsetneq W$ (i.e.,  $\sigma_W=\sigma \cap \Pro(T)(\R)$), then $\sigma^{\BB_W}_W$ is the empty set. If not, 
 we have:
\[ L^{\BB} \cap \mathcal{E}^{\BB}_W =  \left( L^{\BB_W}_{\sigma_W}  \times P^{\BB_{/W}} \right)   \, \bigcup \,    \left( P^{\BB_W} \times  L^{\BB_{/W}}_{\sigma_{/W}}\right)  \ .  \]
\end{prop}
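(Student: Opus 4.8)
The plan is to reduce to the case of a single blow-up using Remark \ref{rem: singleblowupfirst}, and then to compute directly in the local coordinates of \S\ref{sect: LocalBlowUpCoordinates}. First I would fix a nested sequence \eqref{proWflag} with $W=W_m$ appearing in it, together with projective coordinates $\alpha_1,\ldots,\alpha_n$ adapted to it as in \S\ref{sect: LocalBlowUpCoordinates}, and introduce the associated affine coordinates $\beta_1,\ldots,\beta_{n-1}$ on $P^{\BB}$. By construction $\mathcal{E}^{\BB}_W$ is cut out by $\beta_{i_m}=0$, and the partition of the $\beta$'s into the two blocks $\beta_1,\ldots,\beta_{i_m-1}$ and $\beta_{i_m+1},\ldots,\beta_{n-1}$ realises the isomorphism $\mathcal{E}^{\BB}_W \cong P^{\BB_W}\times P^{\BB_{/W}}$ of \eqref{blowupsquare}, the first block being coordinates on $P^{\BB_{/W}}$ (the blow-up of $\Pro(V/W)$) and the second on $P^{\BB_W}$ (the blow-up of $\Pro(W)$).

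\textbf{Key steps.} (1) Recall from \S\ref{sect: Blowups} that $\overset{\circ}{\sigma}{}^{\BB}$ is cut out locally by the positivity $f_1,\ldots,f_k>0$ of the local equations of the strict transforms of the bounding hyperplanes $H_j = V(f_j)$ of $\sigma$ and of the exceptional divisors $\beta_{i_1},\ldots,\beta_{i_k}$. (2) Apply Lemma \ref{lem: strtransformhyperplane} to each bounding hyperplane $H_j$: restricting its strict transform $\widetilde{H}_j$ to $\beta_{i_m}=0$ yields either $P^{\BB_W}\times \widetilde{(H_j)}_{/W}$ (if $\Pro(W)\subseteq H_j$) or $\widetilde{(H_j)}_W \times P^{\BB_{/W}}$ (if not), i.e., an equation involving only the first block of $\beta$'s or only the second. (3) Intersecting $\sigma^{\BB}$ with $\mathcal{E}^{\BB}_W$ therefore amounts to imposing, inside $P^{\BB_W}\times P^{\BB_{/W}}$, the positivity conditions coming from those $H_j$ with $\Pro(W)\not\subseteq H_j$ (which involve only the $\Pro(W)$-coordinates, hence define $\sigma_W^{\BB_W}$ after strict transform) and from those $H_j$ with $\Pro(W)\subseteq H_j$ (which involve only the $\Pro(V/W)$-coordinates, hence define $\sigma_{/W}^{\BB_{/W}}$). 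The exceptional equations $\beta_{i_l}$ likewise split according to whether $l<m$ or $l>m$. This gives \eqref{ProductStructureonsigmaB} as a product of semi-algebraic sets. (4) To identify the two factors precisely: the hyperplanes $H_j$ with $\Pro(W)\not\subseteq H_j$ restrict (via $\pi_{\BB}$) to the bounding hyperplanes of the face $\sigma_W = \sigma\cap\Pro(W)(\R)$, and their positivity loci define exactly $\sigma_W$ inside $\Pro(W)(\R)$, so after passing to the iterated blow-up $\pi_{\BB_W}$ and taking the closure of the interior we obtain $\sigma_W^{\BB_W}$; dually, the $H_j$ with $\Pro(W)\subseteq H_j$ pass to the quotient $\Pro(V/W)$ and their positivity loci define the normal cone $\sigma_{/W}$ (cf. Definition \ref{defn: normal}), giving $\sigma_{/W}^{\BB_{/W}}$ after blow-up. (5) For the degenerate case: if $\sigma_W = \sigma\cap\Pro(T)(\R)$ for some $\Pro(T)\in\BB$ with $T\subsetneq W$, then $\sigma_W$ is contained in a blow-up locus for $P^{\BB_W}$, so by the very definition \eqref{sigmaBBdefn} (and the convention immediately following it) $\sigma_W^{\BB_W}=\emptyset$, whence $\sigma^{\BB}\cap\mathcal{E}^{\BB}_W(\R)=\emptyset$. (6) For $L^{\BB}\cap\mathcal{E}^{\BB}_W$: a facet of $\sigma^{\BB}$ is the Zariski closure of a facet, which by the product description of $\sigma^{\BB}\cap\mathcal{E}^{\BB}_W$ is either (the closure of) a facet of $\sigma_W^{\BB_W}$ times $P^{\BB_{/W}}$, or $P^{\BB_W}$ times (the closure of) a facet of $\sigma_{/W}^{\BB_{/W}}$; taking Zariski closures and unions over all facets gives $\bigl(L^{\BB_W}_{\sigma_W}\times P^{\BB_{/W}}\bigr)\cup\bigl(P^{\BB_W}\times L^{\BB_{/W}}_{\sigma_{/W}}\bigr)$.

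\textbf{Main obstacle.} The routine part is the coordinate bookkeeping in step (2)--(3); the genuinely delicate point is step (4): identifying the positivity conditions that survive on $\beta_{i_m}=0$ with the \emph{intrinsic} data $(\sigma_W, \sigma_{/W})$ and their blow-ups $\BB_W, \BB_{/W}$ as defined in \eqref{BBrestrictions}, rather than merely with "some product of polyhedra". One must check that the minimal set of bounding hyperplanes of $\sigma_W$ (respectively the normal $\sigma_{/W}$) is recovered exactly — no spurious faces and none missing — which uses that $\sigma_W$ is genuinely a face of $\sigma$ by hypothesis $(B2)$, that $\BB_W$ and $\BB_{/W}$ consist precisely of the subspaces of $\Pro(W)$ (resp. $\Pro(V/W)$) coming from $\BB$, and Lemma \ref{lem: SlackFaces} to control the case where $W$ is not Zariski-dense over the face it cuts out. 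A clean way to organise this is to observe that $\pi_{\BB}$ restricted to $\mathcal{E}^{\BB}_W$ is, up to the product isomorphism \eqref{blowupsquare}, the map $\pi_{\BB_W}\times\pi_{\BB_{/W}}$ followed by projection to $\Pro(W)$, so the whole identity is obtained by applying Lemma \ref{lem: strtransformhyperplane} componentwise and then taking closures of interiors, which commutes with the product since both factors are taken in their own ambient blow-ups.
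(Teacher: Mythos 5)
Your steps (1)--(4) follow essentially the same route as the paper: reduce to a single blow-up, invoke Lemma \ref{lem: strtransformhyperplane} to split the bounding conditions on $\mathcal{E}^{\BB}_W$ into a $\Pro(W)$-block and a $\Pro(V/W)$-block, and identify the factors with $\sigma_W^{\BB_W}$ and $\sigma_{/W}^{\BB_{/W}}$ via Remark \ref{rem: singleblowupfirst}. The paper organises this slightly differently (it works intrinsically via the factorisation of $\pi_{\BB}$ through $P^{\BB'}\setminus D$ rather than writing everything in the $\beta$-chart), but the underlying content is the same, and your warning that one must distinguish positivity loci from closures of interiors is on target.

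Your step (5) has a genuine gap, however. You observe that $\sigma_W^{\BB_W}=\emptyset$ "by the convention immediately following \eqref{sigmaBBdefn}" and then write "whence $\sigma^{\BB}\cap\mathcal{E}^{\BB}_W(\R)=\emptyset$." That inference uses the very formula \eqref{ProductStructureonsigmaB} you are trying to prove. Worse, if you instead unwind the calculation of steps (1)--(4) literally in the degenerate case, you would obtain the \emph{non-empty} set $\overline{\pi_{\BB_W}^{-1}(\overset{\circ}{\sigma}_W)}\times\overline{\pi_{\BB_{/W}}^{-1}(\overset{\circ}{\sigma}_{/W})}$ as the candidate first factor (here $\overset{\circ}{\sigma}_W$ is the non-empty relative interior, and $\pi_{\BB_W}$ blows up $\Pro(T)\supset\sigma_W$, producing a non-trivial exceptional fibre), which does \emph{not} match the empty-set convention for $\sigma_W^{\BB_W}$. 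So you cannot simply substitute the convention and conclude. What is missing is a direct proof that $\sigma^{\BB}$ does not meet $\mathcal{E}^{\BB}_W$ at all. The paper does this by observing that, in the single blow-up $P^{\BB'}$, the computed interior $\overset{\circ}{\sigma}_W\times\overset{\circ}{\sigma}_{/W}$ lies inside the total transform $\pi_{\BB'}^{-1}(\Pro(T))=\Pro(T)\times\Pro(V/W)$ because $\sigma_W\subset\Pro(T)(\R)$; since that total transform is part of the locus $D$ which must be deleted to pass from $P^{\BB'}$ to $P^{\BB}\setminus\bigcup_{T\neq W}\mathcal{E}_T$, the genuine interior of $\sigma^{\BB}\cap\mathcal{E}^{\BB}_W$ is empty. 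The conclusion $\sigma^{\BB}\cap\mathcal{E}^{\BB}_W=\emptyset$ then follows from the closure-of-interior property of the semi-algebraic face. You should replace your appeal to the convention with this argument (or an equivalent one using Lemma \ref{lem: SlackFaces}, which shows that after blowing up $\Pro(T)$ the strict transform of $\Pro(W)$ has moved off the polyhedron).
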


\begin{proof}   Let $\BB'= \{\Pro(W)\}$.   Proposition \ref{prop: UniversalANDIdeal} (ii)  implies that the blow-down  $P^{\BB}\rightarrow \Pro(V)$ 
factorizes through $P^{\BB} \rightarrow P^{\BB'} \rightarrow \Pro(V)$.  Consider its restriction to the complement of all exceptional divisors except for $\mathcal{E}_W$. It 
factors through the  maps \[  P^{\BB}\,  \setminus \,  \left( \bigcup_{\Pro(T) \in \BB, T\neq W}  \mathcal{E}_T \right)  
\To  P^{\BB'} \, \setminus D \overset{\pi_{\BB'}}{\To} \Pro(V) \]
where $D$ is the union of the strict transforms of $\Pro(T) \in \BB$   for every $T$ not contained in $W$, and the total transforms $\pi_{\BB'}^{-1}(T)$ for $\Pro(T) \subsetneq \Pro(W)$ in $\BB$.  Since the first map is an isomorphism, the interior of $ \sigma^{\BB} \cap \mathcal{E}^{\BB}_W (\R) $ can be computed in the middle space $P^{\BB'} \, \setminus D$, and so it suffices to consider a single-blow up  of $\Pro(V)$ along $\Pro(W)$. To compute it, note that the interior of the polyhedron $\sigma$ is  the intersection of a finite number of  regions $f_i > 0$, where $f_i$ is a linear form such as \eqref{inproof: Hequation} whose zero locus $H_i=V(f_i)$ is a bounding hyperplane of $\sigma$.
The inverse image of its interior, intersected with the exceptional divisor $\mathcal{E}' \subset    P^{\BB'}$ of $\pi_{\BB'}$,  is therefore cut out by the strict transforms of the  $H_i$   in $\mathcal{E}' \cong \Pro(W) \times \Pro(V/W)$. By lemma \ref{lem: strtransformhyperplane}, the strict transforms are  of the form $H \times \Pro(V/W)$ or $\Pro(W) \times H$. The former define bounding hyperplanes for  $\sigma_W  \subset \Pro(W)(\R)$; the latter define bounding hyperplanes for $\sigma_{/W}$, by definition of the normal polyhedron (definition \ref{defn: normal}). Thus, after a single blow-up of $\Pro(W)$ in $\Pro(V)$, the interior of the intersection
$  \sigma^{\BB'}\cap \mathcal{E}'(\R)$ is 
\[   \overset{\circ}{\sigma}_W \times \overset{\circ}{\sigma}_{/W}  \ \subset \ \Pro(W)(\R) \times \Pro(V/W)(\R) \ , \]
where a superscript $\circ$ denotes the interior.  In the case when there exists $\Pro(T) \subsetneq \Pro(W)$ in $\BB$ such that  $\Pro(W) \cap \sigma = \Pro(T) \cap \sigma$, we have $\pi_{\BB'}^{-1}(\Pro(T)) =  \Pro(T) \times \Pro(V/W) $,  and it follows that $\overset{\circ}{\sigma}_W \times \overset{\circ}{\sigma}_{/W} $ is contained in $D$. We conclude that the interior of     $ \sigma^{\BB} \cap \mathcal{E}^{\BB}_W (\R) $  is empty.
By the earlier description of $\sigma^{\BB}$ as a connected semi-algebraic set, the face $\sigma^{\BB} \cap \mathcal{E}^{\BB}_W (\R) $ is equal to the closure of its interior, which in this case is empty. By definition,  $\sigma_W^{\BB_W}$ is also empty and so  formula \eqref{ProductStructureonsigmaB} holds.

In the case when there does not exist such a $\Pro(T)$,   every element of  $\BB_{W}$ meets $\sigma_W \subset \Pro(W)(\R)$ along a face (and does not strictly contain it). The same holds for $\BB_{/W}$ relative to $\sigma_{/W} \subset \Pro(V/W)(\R)$. By 
  remark \ref{rem: singleblowupfirst}, the restriction of $\pi_{\BB/\BB'}$ to $\mathcal{E}_W$ is the product $\pi_{\BB_W} \times \pi_{\BB_{/W}}$.  The interior 
  of $\sigma^{\BB} \cap \mathcal{E}_W(\R)$ is therefore the product 
  \begin{equation} \label{inproof: pibsigmaproduct} \pi_{\BB_W}^{-1}   (\overset{\circ}{\sigma}_W) \times   \pi_{\BB_{/W}}^{-1}   (\overset{\circ}{\sigma}_{/W})  \ .
  \end{equation}
  As before, $\sigma^{\BB} \cap \mathcal{E}_W(\R)$  is equal to  the closure of its interior. 
  By \eqref{sigmaBBdefn}, the topological closure of   \eqref{inproof: pibsigmaproduct} is $\sigma^{\BB_W}_W \times \sigma^{\BB_{/W}}_{/W}$, which proves     \eqref{ProductStructureonsigmaB}.
    The final  statement concerning $L^{\BB} \cap \mathcal{E}^{\BB}_W$ follows from the definition of $L^{\BB}$ as the union of the Zariski closures of the facets of $\sigma^{\BB}$. 
\end{proof} 

In the particular case when $\BB=\{\Pro(W)\}$ is a singleton and hence $P^{\BB}$ is simply the blow-up of $\Pro(V)$ along $\Pro(W)$, formula \eqref{ProductStructureonsigmaB} states that 
\[  \sigma^{\BB} \cap \mathcal{E}_W (\R)  =   \sigma_{W}  \times \sigma_{/W} \  , \]
which provides an infinitesimal interpretation of the normal polyhedron (remark \ref{rem: Products}).

The situation when $\sigma^{\BB_W}_W =\emptyset$ arises when there exists $\Pro(T) \subsetneq \Pro(W)$ in $\BB$ such that $\sigma_W = \sigma \cap \Pro(T)(\R) = \sigma \cap \Pro(W)(\R)$, i.e., $W$ is not minimal amongst the set of spaces in  $\BB$ which meet $\sigma$ along $\sigma_W$. This phenomenon is  discussed in further detail in \S\ref{sect: MinimalBlowup}. 
Another way to see that  $\sigma^{\BB_W}_W =\emptyset$ in this case is as follows. After blowing up the subspace $\Pro(T)$, it follows from   proposition \ref{prop: BlowUpStructure} applied to $\mathcal{E}_T$ that the strict transform of $\Pro(W)$ no longer meets the closure of the inverse image of $\sigma$, since by lemma \ref{lem: SlackFaces}, it does not meet $\sigma_{/T}$ (see figure \ref{figureExtraneous}).

\begin{notn}
Let  $(P^{\BB}, L^{\BB}, \sigma^{\BB})$ be as above, and let $D$ be an intersection of irreducible components of $L^{\BB}= \bigcup  L $. Denote
 by \[ D\cap    (P^{\BB}, L^{\BB}, \sigma^{\BB}) = ( D \cap P^{\BB}  ,   \bigcup_{L\neq D} (D\cap L ) , D(\R) \cap \sigma^{\BB} ) \ .  \] 
 It is a face if $ D(\R) \cap \sigma^{\BB}\neq \emptyset$. 
 \end{notn}
\begin{cor}
A face of the blow-up of a polyhedral linear configuration is isomorphic to  a product of blow-ups of polyhedral linear configurations.

More precisely,   for any intersection of   irreducible components $D$   of $L^{\BB}$ we have 
\begin{equation} \label{faceidentification}  D \cap (P^{\BB}, L^{\BB}, \sigma^{\BB})  \cong   (P^{\BB_1}, L_1^{\BB_1}, \sigma_1^{\BB_1}) \times \ldots \times 
(P^{\BB_n}, L_n^{\BB_n}, \sigma_n^{\BB_n}) \end{equation}  
for suitable $\sigma_1,\ldots, \sigma_n$  polyhedra in $\Pro(V_1),\ldots, \Pro(V_n)$,  and  $\BB_i$ a  subset of linear subspaces in $\Pro(V_i)$, satisfying (B1) and (B2) relative to $\sigma_i$, for all $1\leq i \leq n$.
\end{cor}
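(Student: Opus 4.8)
The plan is to reduce the statement to the structural results already in hand: Proposition~\ref{prop: BlowUpStructure} and Proposition~\ref{prop: multiplicativestructureonBLC} take care of the exceptional divisors, Lemma~\ref{lem: strtransformhyperplane} and Proposition~\ref{prop: UniversalANDIdeal}(i) of the strict transforms of bounding hyperplanes, and the rest is assembly. First I would recall that the irreducible components of $L^{\BB}$ are of two kinds: the strict transforms $\widetilde{L_i}$ of the affine spans $L_i$ of the facets of $\sigma$, and certain exceptional divisors $\mathcal{E}^{\BB}_W$ with $\Pro(W)\in\BB$. An intersection $D$ of irreducible components of $L^{\BB}$ is therefore recorded by a set $T\subseteq\BB$ of subspaces together with a set $S$ of facets of $\sigma$. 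If $D(\R)\cap\sigma^{\BB}=\emptyset$ the assertion is vacuous (an empty product), so I assume otherwise; then Proposition~\ref{prop: BlowUpStructure} forces the elements of $T$ to be totally ordered, say $\Pro(W_k)\subsetneq\cdots\subsetneq\Pro(W_1)\subsetneq\Pro(V)$.

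Next I would dispose of the exceptional part. With $W_0=V$ and $W_{k+1}=0$, iterating Proposition~\ref{prop: BlowUpStructure} identifies $\bigcap_{W\in T}\mathcal{E}^{\BB}_W$ with $\prod_{i=1}^{k+1}P^{\BB_i}(W_{i-1}/W_i)$, the $\BB_i$ being the families of subspaces of $\Pro(W_{i-1}/W_i)$ described there; peeling off $\mathcal{E}^{\BB}_{W_1},\dots,\mathcal{E}^{\BB}_{W_k}$ one at a time and invoking Proposition~\ref{prop: multiplicativestructureonBLC} at each step shows that the corresponding face of $\sigma^{\BB}$ splits compatibly as $\prod_{i=1}^{k+1}\tau_i^{\BB_i}$, where $\tau_1=\sigma_{/W_1}$, $\tau_{k+1}=\sigma_{W_k}$, and $\tau_i$ ($2\le i\le k$) is the normal of the face $\sigma_{W_{i-1}}$ relative to $W_i$. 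At each such step one checks, using that the faces of a polyhedron are closed under intersection, that the restricted family of subspaces still satisfies (B\ref{B1}) and (B\ref{B2}) relative to the relevant polyhedron, so every factor is genuinely a blow-up of a polyhedral linear configuration; non-emptiness of $D(\R)\cap\sigma^{\BB}$ ensures no factor is empty. Iterating the last clause of Proposition~\ref{prop: multiplicativestructureonBLC} shows, moreover, that every irreducible component of $L^{\BB}$ other than the $\mathcal{E}^{\BB}_{W_j}$ restricts, on this product, to a component of $L^{\BB_i}_{\tau_i}$ lying in exactly one factor.

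Then I would fold in the strict transforms. Applying Lemma~\ref{lem: strtransformhyperplane} repeatedly as the $\mathcal{E}^{\BB}_{W_j}$ are peeled off, each $\widetilde{L_i}$ with $i\in S$ restricts, on $\prod_j P^{\BB_j}(W_{j-1}/W_j)$, to a divisor supported on a single factor $j=m(i)$ — the smallest index with $W_{m(i)}$ contained in the subspace underlying $L_i$ (take $W_{k+1}=0$) — on which it is the strict transform of the hyperplane that $L_i$ induces in $\Pro(W_{m(i)-1}/W_{m(i)})$. Grouping $S$ accordingly turns $D\cap(P^{\BB},L^{\BB},\sigma^{\BB})$ into $\prod_{i=1}^{k+1}\bigl(\widetilde{H}_{(i)}\cap(P^{\BB_i},L^{\BB_i}_{\tau_i},\tau_i^{\BB_i})\bigr)$, where $\widetilde{H}_{(i)}$ is the intersection of the strict transforms assigned to the $i$-th factor. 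Finally, for any component $\widetilde{H}$ of $L^{\BB_i}_{\tau_i}$ — necessarily the strict transform of the span $H$ of a facet $\tau_i\cap H$ of $\tau_i$, with $H$ contained in no member of $\BB_i$ because $\widetilde{H}\neq\emptyset$ — Proposition~\ref{prop: UniversalANDIdeal}(i) identifies $\widetilde{H}$ with the iterated blow-up of $H$ along the trace $\{\Pro(W)\cap H:\Pro(W)\in\BB_i\}$, while \eqref{sigmaBBdefn} identifies $\tau_i^{\BB_i}\cap\widetilde{H}$ with the blow-up of the polyhedron $\tau_i\cap H$; iterating over the finitely many facets assigned to the $i$-th factor, and again using the lattice property of faces to verify (B\ref{B1}) and (B\ref{B2}), exhibits $\widetilde{H}_{(i)}\cap(P^{\BB_i},L^{\BB_i}_{\tau_i},\tau_i^{\BB_i})$ as a product of blow-ups of polyhedral linear configurations, hence $D\cap(P^{\BB},L^{\BB},\sigma^{\BB})$ as one too.

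The genuinely delicate point is not any individual geometric input but the compatible bookkeeping: at every stage one must confirm that the triple obtained by restriction is an honest blow-up of a polyhedral linear configuration — that its $L$-datum is \emph{exactly} the union of the Zariski closures of the facets of the restricted $\sigma^{\BB}$, and that conditions (B\ref{B1}) and (B\ref{B2}) are inherited by the restricted family of subspaces. Propositions~\ref{prop: BlowUpStructure} and~\ref{prop: multiplicativestructureonBLC} and Lemma~\ref{lem: strtransformhyperplane} are stated so as to propagate the first of these, and the second reduces each time to the elementary fact that faces of a polyhedron form a lattice under intersection. I therefore expect the argument to be a careful but essentially formal induction — on $\dim\sigma$ and on the number of components of $D$ — requiring no new geometry.
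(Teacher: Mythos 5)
Your argument is correct and follows essentially the same route as the paper's: the corollary is reduced to intersecting with one irreducible boundary component at a time, using Proposition~\ref{prop: multiplicativestructureonBLC} for exceptional divisors and Proposition~\ref{prop: UniversalANDIdeal}(i) together with Lemma~\ref{lem: strtransformhyperplane} for strict transforms of facet hyperplanes. The paper organizes this as a brief induction on the codimension of $D$ without fixing an order on the components, whereas you peel off all the exceptional divisors first (using that they form a nested flag) and then dispatch the strict transforms factor by factor; this is a more explicit bookkeeping of the same steps, and your added remark that (B\ref{B1}) and (B\ref{B2}) propagate to each restricted family is a detail the paper leaves implicit.
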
 
\begin{proof}Let $(P^{\BB}, L^{\BB}, \sigma^{\BB})$ be the iterated blow up of 
$(\Pro(V), L, \sigma)$. Assume  for the time being  that 
the  face in question is given by intersecting with $D$, a single irreducible component of $L^{\BB}$. 
Suppose first of all that $D\subset \mathcal{E}_W$ is contained in an exceptional divisor for some $\Pro(W) \in \BB$. Then by proposition \ref{prop: multiplicativestructureonBLC},
\[ \mathcal{E}_W \cap (P^{\BB}, L^{\BB}, \sigma^{\BB}) \cong (P^{\BB_W}, L_{\sigma_W}^{\BB_W}, \sigma_W^{\BB_W}) \times (P^{\BB_{/W}}, L_{\sigma_{/W}} ^{\BB_{/W}}, \sigma_{/W}^{\BB_{/W}})\] 
 and by the final part of the same proposition, $D \subset   \mathcal{E}_W \cong P^{\BB_W}\times P^{\BB_{/W}}$ is of the form $D=D_W \times  P^{\BB_{/W}}$ or $D=P^{\BB_W} \times D_{/W}$.

Now suppose that $D$ is not contained in any exceptional divisor, and is therefore the  strict transform of a linear subspace   $\Pro(U)=\pi_{\BB}(D)$ for some $U\subset V$, such that  $\Pro(U)\subset L$ is not contained in any $\Pro(W) \in \BB$.  It follows from  proposition \ref{prop: UniversalANDIdeal} (i)  that $D$ is the iterated blow up of $\Pro(U)$ along the set of linear subspaces $\BB_U  = \{ \Pro(W \cap U) : \Pro(W) \in \BB\}$. It follows that  
 \[ D\cap    (P^{\BB}, L^{\BB}, \sigma^{\BB}) \cong  (P^{\BB_U}, L_U^{\BB_U}, \sigma_U^{\BB_U})     \]
   is the iterated blow up  of the face 
$(\Pro(U), L_U, \sigma_U)$ of $(\Pro(V), L, \sigma)$.

In   general when $D$ has codimension $>1$, we may proceed by induction by repeatedly taking the intersection with irreducible components of $D$ as above.
\end{proof} 

\subsection{Minimal blow-up} \label{sect: MinimalBlowup} 
The minimal blow up is obtained from $\BB$ by stripping out  extraneous modifications (definition \ref{def: extraneousmodification}). 

\begin{defn} \label{defn: Bmin} Let $(\Pro(V), L, \sigma)$ be as above, and let $\BB$ be a finite set of linear subspaces $\Pro(W)$ of $\Pro(V)$ such that  (B1) and (B2) hold.  Define $\BB^{\min, \sigma} \subset \BB$ as follows.

First consider the subset $S(\sigma) \subset \BB$  of spaces $\Pro(W) $ such that  
\vspace{0.02in}

(i). $\Pro(W) (\R) \cap \sigma \neq \emptyset$
\vspace{0.02in}

(ii).  if $\Pro(W) (\R) \cap \sigma  =  \Pro(W') (\R) \cap \sigma$  for some $\Pro(W') \in \BB$ then $W \subset W'$. 
\vspace{0.02in}

\noindent
The set of $U$ such that $\Pro(U) (\R) \cap \sigma$ equals the face $\Pro(W) (\R) \cap \sigma$ is closed under intersections: (ii) requires that $W$ be minimal for this property.

 Define $\BB^{\min, \sigma} \subseteq \BB$ to be the subspace generated by $S(\sigma)$   by taking intersections. In other words, it is the set of intersections of minimal spaces which meet $\sigma$ along a face. 
\end{defn} 

Note that taking intersections may violate $(i)$: in the situation where  $\sigma$ is not a simplex, there can be spaces $\Pro(W)$ in $\BB^{\min, \sigma}$ which do not meet $\sigma$. The definition of $\BB^{\min,\sigma}$ makes sense even if $\BB$  is infinite: the minimal  set $\BB^{\min,\sigma}$ is necessarily finite.

\begin{figure}[h] \begin{center}
\quad {\includegraphics[width=10.5cm]{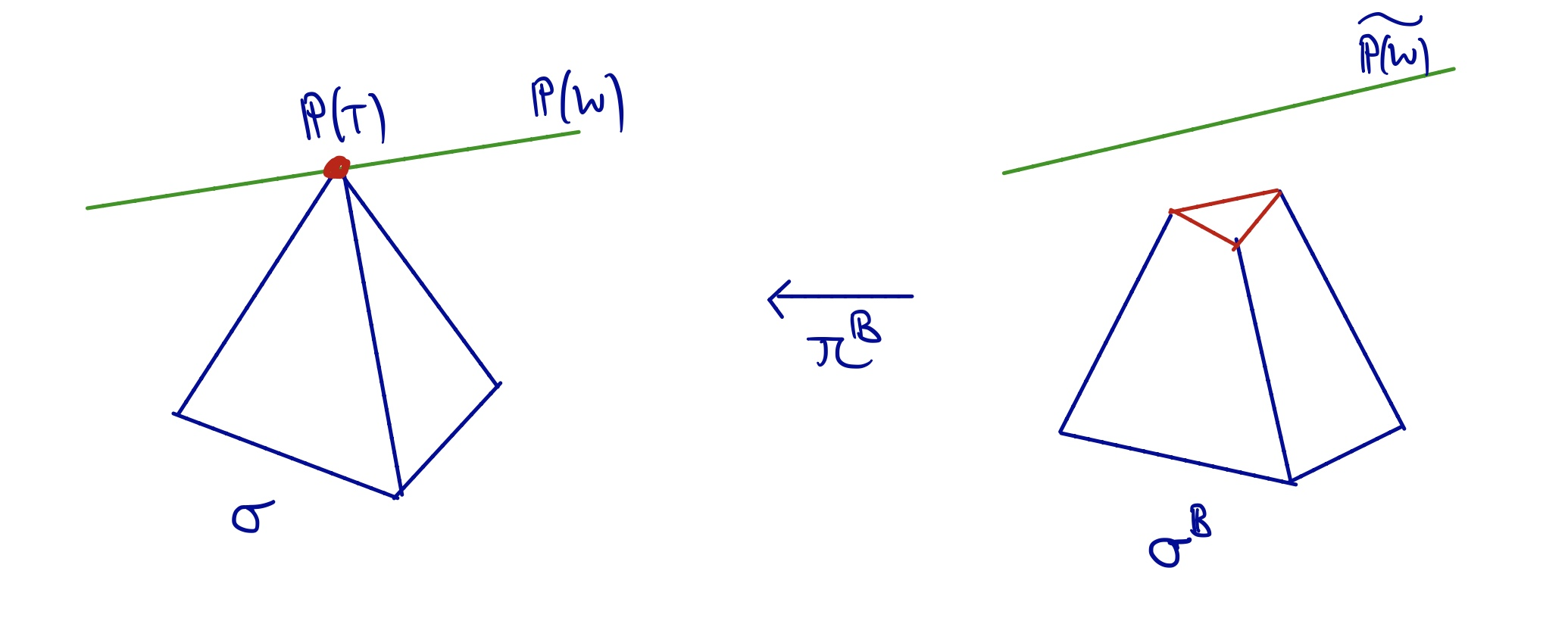}} 
\end{center}
\caption{Two loci $\Pro(T)\subset \Pro(W)$ meet $\sigma$ in the same face, and so $\Pro(W)$ is not minimal. After performing the blow-up of $\BB=\{\Pro(T)\}$, the strict transform of   $\Pro(W)$ has moved away from the polyhedron $\sigma^{\BB}$. The subsequent blow-up of $\Pro(W)$ is thus extraneous and does  not affect  $\sigma^{\BB}$.    } \label{figureExtraneous}
\end{figure}

\begin{prop}  \label{prop: minimalblow-down} The morphism $ \pi  : P^{\BB} \rightarrow  P^{\BB^{\min, \sigma }}$ constructed in proposition \ref{prop: UniversalANDIdeal} (ii)   is an extraneous modification (definition \ref{def: extraneousmodification}).  It restricts to an isomorphism:
\[ \pi: \sigma^{\BB}\overset{\sim}{\rightarrow} \sigma^{\BB^{\min, \sigma}}\ .\] 
\end{prop}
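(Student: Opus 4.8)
The plan is to show that each blow-up appearing in the construction of $\pi : P^{\BB} \to P^{\BB^{\min,\sigma}}$ which is \emph{not} recorded in $\BB^{\min,\sigma}$ is extraneous in the sense of definition \ref{def: extraneousmodification}; once this is established, the final isomorphism $\pi : \sigma^{\BB} \overset{\sim}{\to} \sigma^{\BB^{\min,\sigma}}$ follows formally from the last assertion of that definition. So the whole content is the verification of the extraneity condition \eqref{Yavoidssigma} at every step. First I would fix a linear ordering on $\BB$ compatible with the order of blowing-up (increasing dimension), as in the proof of proposition \ref{prop: UniversalANDIdeal} (ii), and write $\BB_n$ for the first $n$ elements. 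For an index $n$ with $\BB^{\min,\sigma} \cap \BB_{n+1} = \BB^{\min,\sigma} \cap \BB_n$, I need to show that the blow-up locus $Y$ — the strict transform in $P^{\BB_n}$ of the space $\Pro(W)$ with $\BB_{n+1} \setminus \BB_n = \{\Pro(W)\}$ — satisfies $Y \cap \sigma^{\BB_n} = \emptyset$.

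The key step is the following dichotomy for such a $\Pro(W) \in \BB \setminus \BB^{\min,\sigma}$. Either $\Pro(W)(\R) \cap \sigma = \emptyset$, in which case $Y$ already fails to meet $\sigma^{\BB_n}$ for trivial reasons (the blow-down of $Y$ is disjoint from $\sigma$, and $\sigma^{\BB_n}$ lies over $\sigma$); or $\Pro(W)(\R) \cap \sigma$ is a nonempty face, but $\Pro(W)$ is not minimal for this face, i.e.\ there exists $\Pro(T) \in \BB$ with $T \subsetneq W$ and $\Pro(T)(\R) \cap \sigma = \Pro(W)(\R) \cap \sigma$ (or more precisely $\Pro(W)$ is not in $S(\sigma)$, and the set of spaces cutting out this face, being closed under intersection, has a unique minimal member $\Pro(W_0) \in S(\sigma) \subseteq \BB^{\min,\sigma}$ with $W_0 \subsetneq W$). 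In the latter case $\Pro(W_0)$ precedes $\Pro(W)$ in the ordering (smaller dimension) and lies in $\BB^{\min,\sigma}$, hence has already been blown up in $P^{\BB_n}$. After blowing up $\Pro(W_0)$, the argument indicated just before definition \ref{defn: Bmin} applies: by proposition \ref{prop: BlowUpStructure}, $\mathcal{E}_{W_0} \cong P^{\BB_{W_0}} \times P^{\BB_{/W_0}}$, the polyhedron meets $\mathcal{E}_{W_0}$ in $\sigma_{W_0}^{\BB_{W_0}} \times \sigma_{/W_0}^{\BB_{/W_0}}$ by proposition \ref{prop: multiplicativestructureonBLC}, and since $W_0 \subsetneq W$ and $\Pro(W)(\R)\cap\sigma = \Pro(W_0)(\R)\cap\sigma$, lemma \ref{lem: SlackFaces} shows $\Pro(W/W_0)(\R)$ does not meet $\sigma_{/W_0}$; therefore the strict transform of $\Pro(W)$ has moved off the closure of the preimage of $\sigma$ (figure \ref{figureExtraneous}), i.e.\ $Y \cap \sigma^{\BB_n} = \emptyset$. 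One must also check that subsequent intermediate blow-ups (for indices between that of $\Pro(W_0)$ and that of $\Pro(W)$) do not bring the strict transform of $\Pro(W)$ back into contact with $\sigma^{\BB}$; this follows since strict transforms only shrink under further blow-ups and the containment $Y' \subset$ (complement of a Zariski-open neighbourhood of $\sigma^{\BB_m}$) is preserved, using proposition \ref{prop: UniversalANDIdeal} (i) to track $\Pro(W)$'s strict transform through the tower.

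Having shown every blow-up in $\BB \setminus \BB^{\min,\sigma}$ is extraneous, the map $\pi = \pi_{\BB/\BB^{\min,\sigma}}$ is an extraneous modification by definition, and the conclusion $\pi : \sigma^{\BB} \overset{\sim}{\to} \sigma^{\BB^{\min,\sigma}}$ is exactly what definition \ref{def: extraneousmodification} delivers. I expect the main obstacle to be the bookkeeping in the dichotomy: one has to handle carefully the case where $\sigma$ is not a simplex, so that taking intersections inside $\BB^{\min,\sigma}$ can produce spaces $\Pro(W)$ which do not meet $\sigma$ at all (this is the reason the trivial branch of the dichotomy is needed), and one must make sure that "minimal among spaces cutting out a given face" is well-defined — which is precisely the remark, made right after definition \ref{defn: Bmin}, that the set of $U$ with $\Pro(U)(\R)\cap\sigma$ equal to a fixed face is closed under intersection and hence has a least element. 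Once that combinatorial structure is pinned down, each individual extraneity check reduces to a direct application of lemma \ref{lem: SlackFaces} and proposition \ref{prop: multiplicativestructureonBLC}, so the calculation itself is light.
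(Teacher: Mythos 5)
Your proposal is correct and follows essentially the same strategy as the paper's proof: the dichotomy between loci that miss $\sigma$ entirely and loci that meet $\sigma$ but are non-minimal, with the non-trivial case handled by appealing to lemma \ref{lem: SlackFaces} and proposition \ref{prop: multiplicativestructureonBLC} (via the discussion that follows it and figure \ref{figureExtraneous}) to show the strict transform of $\Pro(W)$ moves off $\sigma^{\BB}$ after the smaller locus is blown up. The only cosmetic difference is that you verify disjointness stage-by-stage through the tower, whereas the paper argues once with the final $P^{\BB}$ (observing $\mathcal{E}_W \cap \sigma^{\BB} = \emptyset$ and pushing down); and you single out the minimal $\Pro(W_0) \in S(\sigma)$ where the paper uses any $\Pro(T) \subsetneq \Pro(W)$ cutting out the same face — both harmless variations.
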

\begin{proof} One must show by definition   \ref{def: extraneousmodification}   that every  blow-up locus in $\BB \setminus    \BB^{\min, \sigma } $  is extraneous. For those loci in $\BB$ which do not meet $\sigma$ (excluded from $ \BB^{\min, \sigma } $ in definition \ref{defn: Bmin} (i)), this clear. 
For non-minimal loci (in the sense of definition \ref{defn: Bmin} (ii)), we argue as follows.
 Let $\Pro(T) \subset \Pro(W)$ be elements of $\BB$ such that 
$\sigma \cap \Pro(T)(\R) = \sigma \cap \Pro(W)(\R)$.  By  proposition \ref{prop: BlowUpStructure} the exceptional divisor $\mathcal{E}_W\subset P^{\BB}$ does not meet $\sigma^{\BB}$. Since the iterated blow up is an isomorphism away from exceptional divisors, it follows that, immediately prior to blowing up $\Pro(W)$ in the formation of $P^{\BB}$,  the strict transform of $\Pro(W)$ does not meet the closure of the inverse image of the interior of $\sigma$, and hence $\Pro(W)$ is extraneous.
 \end{proof}

We shall call the $P^{\BB^{\min,\sigma}}$  the minimal blow-up of $\Pro(V)$ relative to $\BB$, $\sigma$. 

\subsection{Category of blow-ups of  polyhedral linear configurations} 
We may now define a category $\BLC_k$ of blow-ups of linear configurations.

Its objects are products (using notation  \ref{notation: products}):
\[ 
(P^{\BB_1}, L^{\BB_1}, \sigma^{\BB_1}) \times  \ldots \times    (P^{\BB_k}, L^{\BB_k}, \sigma^{\BB_k}) 
\]
of blow-ups of polyhedral linear configurations.
\vspace{0.04in}

Morphisms between objects are  generated by  products  of the following:

\begin{enumerate}

\item (Linear Embeddings). The maps   induced by linear embeddings \eqref{LinearEmbeddingsBlowUps}: 
\[  (P^{\BB_1}(V_1), L^{\BB_1} , \sigma^{\BB_1} )  \To  (P^{\BB_2}(V_2), L^{\BB_2} , \sigma^{\BB_2} ) \] 
They induce isomorphisms $\sigma^{\BB_1} \cong \sigma^{\BB_2}$. 
 \vspace{0.05in}

\item (Extraneous modifications). With notation as in definition \ref{def: extraneousmodification},  the morphisms 
\[  \pi_{\BB/\BB'}: (P^{\BB}, L^{\BB}, \sigma^{\BB})   \To     (P^{\BB'}, L^{\BB'}, \sigma^{\BB'}) \ . \]
where $\BB' \subseteq \BB$ and $ \pi_{\BB/\BB'}$ is an extraneous modification. Here too, $\sigma^{\BB} \cong \sigma^{\BB'}$. 
 \vspace{0.05in}

\item (Face maps).  A face map is defined to be a  composition of   inclusions of facets. There are two types of inclusions of facets depending on whether the   facet in question is contained in an exceptional divisor or not.
\vspace{0.03in}

(i). Consider the inclusion  of an exceptional divisor 
$\mathcal{E}_W$ into $P^{\BB}$, where $\Pro(W) \in \BB$ such that $\sigma_W = \sigma \cap \Pro(W)(\R)$ is non-empty and hence by $(B2)$ is a face of $\sigma$.  By identifying
$\mathcal{E}^{\BB}_W \cong   P^{\BB_W} \times P^{\BB_{/W}}$
we obtain a map via proposition \ref{prop: multiplicativestructureonBLC}:
\[  (P^{\BB_W}, L^{\BB_W}_{\sigma_W}, \sigma_W^{\BB_W})   \times    (P^{\BB_{/W}}, L^{\BB_{/W}}_{\sigma_{/W}}, \sigma_{/W}^{\BB_{/W}})      \To    (P^{\BB}, L^{\BB}_{\sigma}, \sigma^{\BB})    
\]
 which  defines a morphism in $\BLC_k$.
 \vspace{0.03in}

(ii).  Consider the case of an irreducible component   $D$  of $L^{\BB}$ which is the strict transform of $\Pro(U) \subset L$ for some $U\subset V$, where $\Pro(U)$ is not contained in any element of  $\BB$. By proposition \ref{prop: UniversalANDIdeal} (i)  we may identify
\[D\cap    (P^{\BB}, L^{\BB}, \sigma^{\BB}) \cong  (P^{\BB_U}, L_U^{\BB_U}, \sigma_U^{\BB_U}) \]
to obtain a map:
$
(P^{\BB_U}, L_U^{\BB_U}, \sigma_U^{\BB_U})   \To    (P^{\BB}, L^{\BB}_{\sigma}, \sigma^{\BB}). $  
\vspace{0.03in}

\end{enumerate} 
 \vspace{0.03in}

The category $\PLC_k$   is a  subcategory of $\BLC_k$, which
 in  turn is a subcategory of $\PC_k$. Assumptions (1) and (2) of \S\ref{sect: PolyhedralAssumptions}
 follow from 
 the description of iterated blow-ups  in local coordinates and the description of the faces of the $\sigma^{\BB}$ given in  \S\ref{sect: FacesAndMultBoundary}.

 \begin{rem} \label{rem: blowdowncontractsnormal}
  Consider   a  polyhedral linear configuration $(\Pro(V), L, \sigma)$ and a set $\BB$ satisfying $(B1),(B2)$.  Let $\Pro(W) \in \BB$. The inclusion of the exceptional divisor $\mathcal{E}_W$ gives rise to the horizontal map along the top of the following commutative diagram in $\PC_k$:
   \[ 
\begin{array}{ccccc}
 (P^{\BB_W}, L^{\BB_W}_{\sigma_W}, \sigma_W^{\BB_W})  & \times  &  (P^{\BB_{/W}}, L^{\BB_{/W}}_{\sigma_{/W}}, \sigma_{/W}^{\BB_{/W}})    &  \To   & (P^{\BB}, L^{\BB}_{\sigma}, \sigma^{\BB})    \\
  &  \downarrow  &  &   & \downarrow   \\
  & (\Pro(W), L_{\sigma_W}, \sigma_W) &  &  \To &     (\Pro(V), L, \sigma) 
\end{array}
\]
The top row is in the subcategory $\BLC_k$, the bottom row is in $\PLC_k$. 
The horizontal map along the bottom is the inclusion of the face $\sigma_W$ of $\sigma$. The  right-most vertical map is the blow-down $\pi_{\BB}$  and the left-hand vertical map is the projection onto the first factor, followed by  the blow-down $\pi_{\BB_W}$ (on topological realisations,   $\sigma_{/W}$  is collapsed  to a point.)
\end{rem}

 \subsection{Complexes of blow-ups of polyhedra}

 \begin{defn} \label{defn: BLCcomplex} A complex of blow-ups of polyhedra is a  $\BLC_k$-complex, i.e., a functor  
 $ F: \mathcal{D} \rightarrow \BLC_k  $
 where $\mathcal{D}$ is equivalent to a  finite diagram category. 
  \end{defn} 
 
 The definition of a morphism of $\BLC_k$-complexes  proceeds in an identical way to definition \ref{defn: GenCkComplex}.
 The  definition of  subschemes,  and differential forms follows  \S\ref{sect: subschemes}, \ref{sect: differentialforms}.

   Although blow-downs and projection morphisms exist in the category $\PC_k$, we do not  include them in the category $\BLC_k$ in order that 
 assumptions \S\ref{sect: PolyhedralAssumptions} hold.

\section{The moduli space $\mathcal{M}_{g}^{\trop}$ of tropical curves} \label{section: LMg}
We   recast the theory of Feynman polytopes  in the context of polyhedral linear complexes, before turning to the moduli space of tropical curves. 

\subsection{Graphs and polyhedral linear complexes}
Let $G$ be a finite  graph with vertices $V_G$ and edges $E_G$. 
We shall call a self-edge (or tadpole) an edge of $G$ with a single endpoint. The loop number (or genus) $h_G$ of $G$ is equal to the number of independent cycles in $G$, i.e.,  the rank of the free $\Z$-module $H_1(G;\Z)$.

Let $\Pro^{E_G} = \Pro( \Q^{E_G}) $ denote the  projective space with projective coordinates $\alpha_e$ for  every edge $ e\in E_G$. For the time being its vertices are unweighted (equivalently, have  weight zero).
For any subgraph $\gamma$ of $G$, defined by a subset of edges of $E_G$, we denote by $G/\gamma$ the graph obtained by contracting all edges $e$ of $E_{\gamma}$. It does not depend on the ordering of the contractions.  The edges of $G$ are labelled in this section.

\begin{defn}  \label{defn: Ffunctor} For any  $G$, consider the object in the category $\PLC_{\Q}$ defined by
\[  \mathcal{F}(G) =   \left(\Pro^{E_G},  L,  \sigma_G\right) \quad \hbox{where} \quad L = \bigcup_{e \in E_G}  L_{e},\]
where $\sigma_G$ is the region in projective space where $\alpha_e \geq 0$, and $L_{e}$ is the  coordinate hyperplane $\alpha_e=0$.  The object $\mathcal{F}(G)$  is nothing other than a standard simplex (Example \ref{ex: standardsimplex}).
\end{defn}

\begin{rem}  \label{rem: SimplexPropertyForGraphs} Since $\sigma_G$ is a simplex, the associated polyhedral configuration has the special property that any non-empty intersection of components of $L$ is the Zariski closure of a face of $\sigma_G$. This is not true for general polyhedra. 
\end{rem} 
For every edge $e\in E_G$, there is a  canonical  face map (definition \ref{defn: PLC}):
\[ \mathcal{F}(G/e) \To \mathcal{F}(G)\]
corresponding to the inclusion of the face $\sigma_{G/e} \subset \sigma_G$, which identifies $\Pro^{E_{G/e}}$  with the locus $\alpha_e=0$ in the projective space $\Pro^{E_G}$. 

The simplex $\mathcal{F}(G)$, together with the data of all its faces, admits a categorical description.
For this, all graphs in the following have labelled edges and hence have no non-trivial automorphisms. 
Consider  the category $\mathcal{C}_G$ whose objects are all quotients $G/\gamma$  of  $G$, for all strict subgraphs $\gamma \subsetneq E_G$, including the case $\gamma = \emptyset$. 
The morphisms in this category are  generated by edge contractions $\Gamma \rightarrow \Gamma/e$ for any edge $e \in E_{\Gamma}$.  The map $\mathcal{F}$ is a functor
\[  G  \mapsto  \mathcal{F}(G)  \quad :  \quad \mathcal{C}^{\mathrm{opp}}_G   \To  \PLC_{\Q}  \nonumber \\
\]
which sends   morphisms in $ \mathcal{C}^{\mathrm{opp}}_G$ to face maps. Indeed, the faces of the standard simplex $\mathcal{F}(G)$ are in one-to-one correspondence with the elements of the category $\mathcal{C}^{\mathrm{opp}}_G$.

\subsection{The moduli space of tropical curves $\mathcal{M}_g^{\trop}$}
The moduli space of tropical curves is  constructed in a completely analogous manner  to the above by gluing  together simplices associated to  isomorphism classes of stable graphs  of a fixed genus.

\subsubsection{Weighted and stable graphs} \label{subsect: WeightedGraphs}
Let $G$ be a finite graph.  A \emph{weighting} is a map $w: V(G) \rightarrow \mathbb{N}_{\geq0}$ which assigns a non-negative integer to every vertex. 

 The \emph{genus} of $G$ is then defined to be:
\begin{equation} \label{defn: GenusandWeight} g(G) =  h_G + w(G) \quad \hbox{ where  } \quad  w(G) = \sum_{v\in V_G} w(v)\ . 
\end{equation} 
A weighted graph $(G,w)$ is called \emph{stable} if every vertex of weight $0$ has degree at least 3, and if every vertex of weight $1$ has degree $\geq 1$. 
An isomorphism $ (G,w) \overset{\sim}{\rightarrow} (G',w')$ of weighted graphs is an isomorphism $G \overset{\sim}{\rightarrow} G'$ which respects the weightings.  
For every edge $e \in E_G$, the contraction of $e$ is defined as follows. If $e$ is not a self-edge, then 
$(G, w)/e$ is the  pair $(G/e, w')$ where $G/e$ is the graph in which $e$ is removed and the endpoints $v_1,v_2$ of $G$ are identified to produce a new vertex $v$  of weight $w'(v)= w(v_1)+ w(v_2)$. The weights of all other vertices are unchanged.  In the case when $e$ is a self-edge,  $G/e$ is defined by simply removing the edge $e$. In this case, the common endpoint  $v$ of $e$ has weight $w'(v) = w(v)+1$ in the graph $G/e$;  the weights of all other vertices are unchanged.

\begin{defn} \label{defnIg} Let $g \geq 2$ and let $I_g$ denote the category of connected,  stable,  weighted  graphs $(G,w)$  of genus $g$  and $|E_G|\geq 1$,   whose morphisms are generated by isomorphisms   and edge contractions $(G,w) \mapsto (G,w)/e$. Let $I^{\mathrm{opp}}_g$ denote the opposite category. 
\end{defn}

 \begin{figure}[h]\begin{center}
\quad {\includegraphics[width=10cm]{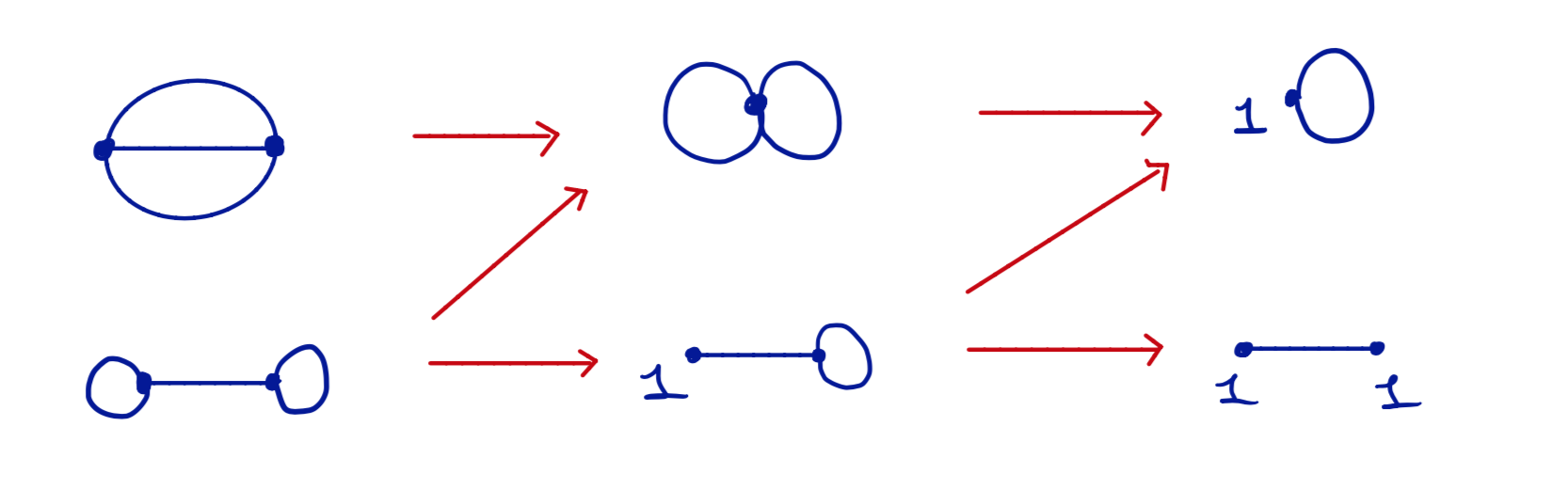}} 
\end{center}
\caption{Illustration of the category $I_2$. Unlabelled vertices have weight $0$. Automorphisms of graphs are not depicted, only edge contractions.}
\end{figure}

An isomorphism of weighted graphs $(G,w) \cong (G',w')$ is in particular an isomorphism of graphs $G \cong G'$ and induces a bijection between the corresponding sets of edges. This gives rise to a  linear isomorphism $\Pro^{E_G} \cong \Pro^{E_{G'}}$ and hence a   canonical isomorphism $\mathcal{F}(G) \cong \mathcal{F}(G')$ in $\PLC_{\Q}$. Consequently,    the restriction of $\mathcal{F}$ to stable, connected graphs defines a functor.

\begin{defn} 
\label{defnModuliTropCurves}The link of the moduli space of tropical curves (in the category $\PLC_{\Q}$)   is  the polyhedral linear complex defined by the functor 
\[  \LM_{g}^{\trop} : I_{g}^{\mathrm{opp}} \To  \PLC_{\Q} \] 
which to any weighted graph $(G,w)$ assigns the standard simplex (example \ref{ex: standardsimplex}):
\[  \LM_{g}^{\trop} (G,w)  =  \mathcal{F}(G)\ .\]
 Note that $\mathcal{F}(G)$ does not depend on the weighting of $G$.
\end{defn} 
 Its topological realisation $| \LM_g^{\trop}|$  \eqref{def: topreal}  is what is usually called  the `link of the moduli space of tropical curves'. In this paper, we use this phrase to describe   $\LM_{g}^{\trop}$.

 \begin{figure}[h]\begin{center}
\quad {\includegraphics[width=8cm]{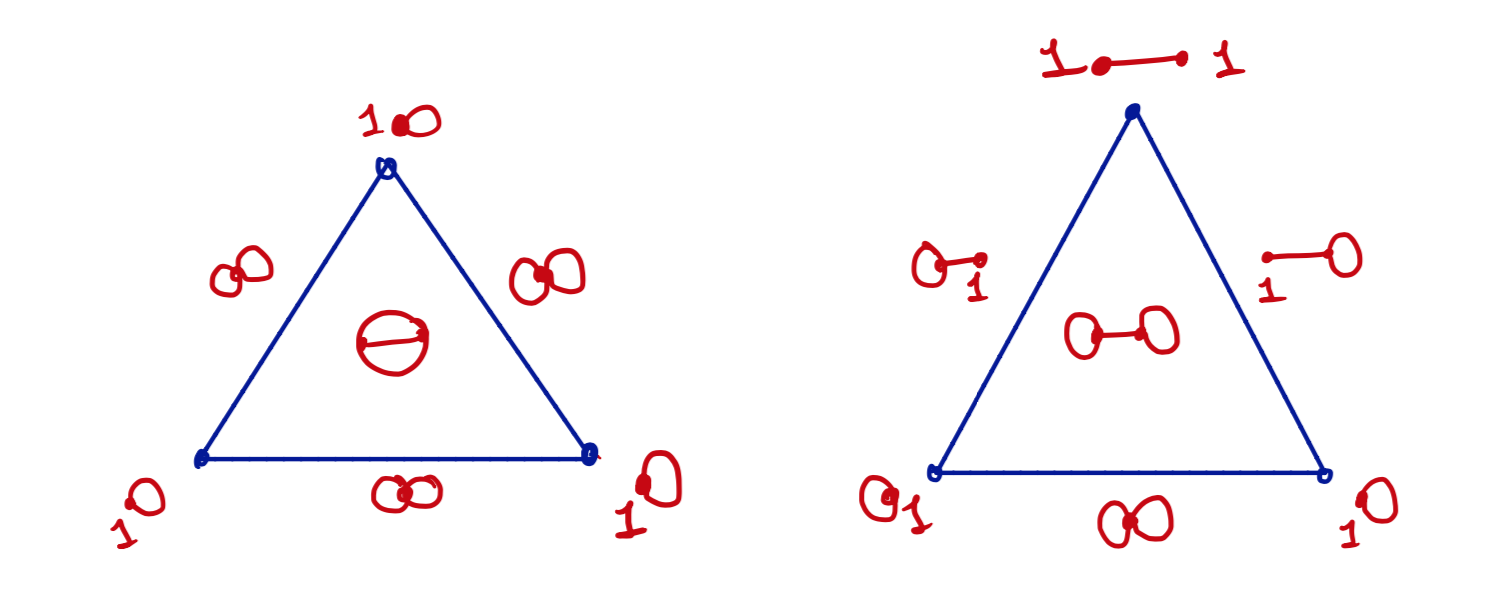}} 
\end{center}
\caption{A picture of  $|\LM_2^{\trop}|$. Faces with common labels are identified. The simplex on the left is to be quotiented by the action of its symmetry group $\Sigma_3$; the one on the right by $\Sigma_2$.
}
\end{figure}

\begin{rem} \label{rem: Mgtrop} The definitions are easily adapted to the moduli space of tropical curves, rather than the link of its cone point.  Let  $\widetilde{I}_{g}$ denote the category    $I_g$, 
with an additional, final, object given by    a single vertex of weight $g$. Consider the functor $\mathcal{M}_g^{\trop}$ from $\widetilde{I}_{g}^{\mathrm{opp}}$ to $\PC_{\Q}$:
\[ (G,w) \mapsto   \left( \mathbb{A}^{E_G}, V\left(\prod_{e\in E_G} \alpha_e\right)   , \widehat{\sigma}_G  \right)\ ,  \]
where $\widehat{\sigma}_G$  is the cone $\alpha_e\geq 0$.  The final object in $\widetilde{I}^{\opp}_{g}$ maps to $ \left(\mathrm{Spec}(\Q),   \mathrm{Spec}(\Q),    \{pt\}\right)$, whose image  in 
 $|\mathcal{M}_g^{\trop}|$ is the cone point.
The  functor $\mathcal{M}_g^{\trop}$ lands in a subcategory of $\PC_{\Q}$ which is an analogue of $\PLC_{\Q}$ defined using affine spaces. Since 
$\left|\mathcal{M}_g^{\trop}\right|$ is topologically trivial, these considerations will not be pursued further in this paper.
\end{rem} 

\subsection{The open moduli space and Outer Space} 
Let $\partial I_g$ denote the full subcategory of $I_g$ consisting of graphs of total weight $w(G)>0$. 
Consider the functor
\[   \partial  \LM^{\trop}_g  :  \partial I_g^{\opp}   \To    \PLC_{\Q} 
 \]   
obtained by restricting $\LM_g^{\trop}$ to  $\partial I_{g}^{\opp}$. It sends $G$  to $\mathcal{F}(G)$. 

\begin{defn} \label{defn: OpenModuliSpace} 
Let us denote its topological realisation by 
\[    \left|  \partial \LM_g^{\trop} \right|  =  \varinjlim_{ G\in  \partial I^{\opp}_g}  \sigma_G 
 \] which canonically embeds in $  \left|  \LM_g^{\trop}  \right| $. Let
 $ \left|   \LM_g^{\circ, \trop} \right| =  \left|  \LM_g^{\trop}\right| \setminus \left|  \partial \LM_g^{\trop} \right|$ denote its open complement.
 \end{defn}
 
 The open  moduli space 
is obtained by   gluing together the open interiors of simplices $\sigma_G$ for  graphs  $G$ which have  total weight $0$:
\[  \left|  \LM_g^{\circ, \trop} \right|  = \mathrm{Im} \left(\bigcup_{ G\in I^{\opp}_g, w(G) =0} \overset{\circ}{\sigma}_G \To    \left|  \LM_g^{\trop}\right|\right) \ ,\]
and is equipped with the subspace topology. 
Note that the open moduli space $ \left|  \LM_g^{\circ, \trop} \right| $ is not the topological realisation of a polyhedral complex since it is not  closed.
It is  the quotient of Outer space $\mathcal{O}_g$  \cite{CullerVogtmann} by the action of $\mathrm{Out}(F_g)$.

\subsection{The graph locus}
Let $G$ be a graph.
The graph polynomial for a connected graph $G$ (with all vertices of weight zero) is  defined to be the polynomial 
\[ \Psi_G = \sum_{T\subset G} \prod_{e\notin T} x_e   \quad \in \quad  \Z [ x_e, e\in E_G] \]
where the sum is over all spanning trees  $T$ of $G$.  It is homogeneous of degree $h_G$. 
For a graph $G$ with connected components $G_1,\ldots, G_n$, we define 
\[ \Psi_G = \prod_{i=1}^n \Psi_{G_i} \ .\]
If a graph $G$ has a vertex  $v$ with $w(v)>0$, i.e., the total weight  $w(G)$ is positive, then we define $\Psi_G =0$.
The graph locus $X_G \subset \Pro^{E_G}$ is defined to be the zero locus of  $\Psi_G$.
It is   a hypersurface if $G$ has total vertex weight $0$, but equals $\Pro^{E_G}$ otherwise.

\begin{prop}  \label{prop: GraphLocusFunctor}
The map $G \mapsto X_G$  defines a functor 
\[  \mathcal{X}:  \left(I_g\right)^{\opp}  \To  \mathrm{Sch}_{\Q}    \ . 
\]
It is a subscheme functor of $\LM_g^{\trop}$, i.e., of  $\PF\mathcal{F} : \left(I_g\right)^{\opp}   \rightarrow \mathrm{Sch}_{\Q}$ which sends $G\mapsto \Pro^{E_G}$,   (definition \ref{defn: Ffunctor}).   Its topological complement is the open moduli space:
 \[ \left|  \LM_g^{\circ, \trop}  \right|    =    \left| \LM_g^{\trop}\right| \setminus  \left(   \left| \LM_g^{\trop}\right| \cap  \left| \mathcal{X}(\R) \right|   \right)  
=
 \varinjlim_{x\in I_g^{\opp}} \sigma_G \backslash  (\sigma_G \cap  X_G(\R)  )  \ .  \nonumber  
 \]
 Equivalently, one has
$ \left| \mathcal{X}(\R) \right|  \cap | \LM_g^{\trop} |= |  \partial \LM_g^{\trop} |.$
\end{prop}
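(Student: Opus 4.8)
The plan is to verify the three claims of Proposition~\ref{prop: GraphLocusFunctor} in order: functoriality, the subscheme property, and the identification of the topological complement. For functoriality, I would first check that an edge contraction $G \to G/e$ (for $e$ not a self-edge) induces a morphism $X_{G/e} \to X_G$ of schemes. Concretely, the face map $\mathcal{F}(G/e) \to \mathcal{F}(G)$ identifies $\Pro^{E_{G/e}}$ with the hyperplane $\alpha_e = 0$ in $\Pro^{E_G}$, so the claim reduces to the classical contraction-deletion identity for the graph polynomial: setting $x_e = 0$ in $\Psi_G$ yields $\Psi_{G/e}$ (up to the bookkeeping that makes both sides zero when total weight is positive, since contracting a self-edge raises a vertex weight). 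Thus the restriction of $X_G$ to $\{\alpha_e = 0\}$ is exactly $X_{G/e}$, and compatibility with compositions of contractions follows because $\Psi_G$ and the graph-contraction operation are both independent of the order of contractions. For isomorphisms $(G,w) \cong (G',w')$, the induced linear isomorphism $\Pro^{E_G} \cong \Pro^{E_{G'}}$ carries $\Psi_G$ to $\Psi_{G'}$ since $\Psi_G$ depends only on the combinatorial spanning-tree structure; hence $X_G \cong X_{G'}$ compatibly. This gives the functor $\mathcal{X} : (I_g)^{\opp} \to \mathrm{Sch}_\Q$.

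For the subscheme property, I would invoke Definition~\ref{defn: subscheme}: one needs $X_G \subset \PF\mathcal{F}(G) = \Pro^{E_G}$ to be a closed subscheme for every $G$ (clear, as it is cut out by the single homogeneous equation $\Psi_G$), and the inclusions $i_G : X_G \hookrightarrow \Pro^{E_G}$ to assemble into a natural transformation $i : \mathcal{X} \to \PF\mathcal{F}$. The required commutative squares are precisely the contraction-deletion compatibility just established, together with the equivariance under isomorphisms; so $\mathcal{X}$ is a subscheme functor of $\LM_g^{\trop}$.

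For the final identification, the key observation is that $\sigma_G$ is a \emph{simplex} (Remark~\ref{rem: SimplexPropertyForGraphs}), so its boundary $\partial \sigma_G$ is the union of the coordinate faces $\{\alpha_e = 0\} \cap \sigma_G = \sigma_{G/e}$, and more generally a point of $\sigma_G$ lies on the boundary iff some coordinate $\alpha_e$ vanishes. Now I would argue pointwise. If $G$ has positive total weight, then $\Psi_G = 0$ identically, so $X_G(\R) \supset \sigma_G$, and such $\sigma_G$ contribute entirely to $|\partial \LM_g^{\trop}|$ by definition of $\partial I_g$. If $G$ has total weight zero, then for a point in the \emph{interior} $\overset{\circ}{\sigma}_G$ (all $\alpha_e > 0$) each monomial $\prod_{e \notin T} x_e$ appearing in $\Psi_G$ is strictly positive, hence $\Psi_G > 0$ there and the point does not lie on $X_G$; while a point on $\partial\sigma_G$ has some $\alpha_e = 0$ and lies in the face $\sigma_{G/e}$, which maps into $|\partial \LM_g^{\trop}|$ when $G/e$ still has weight zero, or is handled by the positive-weight case after enough contractions. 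Chasing this through the colimit defining $|\LM_g^{\trop}|$, and using that the open moduli space is by Definition~\ref{defn: OpenModuliSpace} the image of $\bigcup_{w(G)=0} \overset{\circ}{\sigma}_G$, one gets
\[
|\mathcal{X}(\R)| \cap |\LM_g^{\trop}| = |\partial\LM_g^{\trop}|,
\]
and the displayed formula for $|\LM_g^{\circ,\trop}|$ as a colimit of complements $\sigma_G \setminus (\sigma_G \cap X_G(\R))$ follows, since this colimit commutes with removing the (compatible) closed subsets $X_G(\R)$.

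I expect the only real subtlety — and hence the main obstacle — to be the bookkeeping at graphs of positive total weight and self-edges: one must check that the convention $\Psi_G = 0$ for $w(G) > 0$ is genuinely compatible with edge contraction (in particular that contracting a self-edge of a weight-zero graph, which raises a weight, sends $\Psi_G$ to $\Psi_{G/e} = 0$), so that $\mathcal{X}$ is well-defined as a functor and the boundary $|\partial\LM_g^{\trop}|$ is exactly the locus where $\Psi$ degenerates. Everything else is a direct consequence of the simplex structure of the cells and the classical deletion-contraction formula.
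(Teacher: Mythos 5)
Your proposal follows essentially the same approach as the paper: functoriality via the deletion--contraction identity $\Psi_G|_{x_e=0}=\Psi_{G/e}$ (including the self-edge case, where both sides vanish because $G$ has no spanning tree through $e$ and $w(G/e)>0$), the subscheme property as an immediate consequence, and the topological identification via positivity of $\Psi_G$ on the interior of weight-zero cells. That is all correct and matches the paper's proof.

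One sentence in your last step is stated backwards and needs tightening: you write that the face $\sigma_{G/e}$ ``maps into $|\partial \LM_g^{\trop}|$ when $G/e$ still has weight zero'', whereas a face lies in $|\partial\LM_g^{\trop}|$ precisely when it has \emph{positive} weight. The cleaner way to organize the boundary analysis, which the paper uses, is to observe that every point of $\partial\sigma_G$ lies in the interior of a unique face $\overset{\circ}{\sigma}_{G/\gamma}$ for some subgraph $\gamma\subset G$, and that $\Psi_G|_{x_e=0,\, e\in\gamma}$ vanishes identically iff $h_\gamma>0$ (no spanning tree of $G$ contains a cycle of $\gamma$), iff $w(G/\gamma)>0$. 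This gives the clean identity $\sigma_G\cap X_G(\R)=\bigcup_{h_\gamma>0}\sigma_{G/\gamma}$ simultaneously covering both directions (membership and non-membership in $X_G(\R)$); your recursion ``after enough contractions'' only addresses the forward direction, leaving implicit why points in a weight-zero face $\sigma_{G/\gamma}$ with $\gamma$ a forest are \emph{not} in $X_G(\R)$, and hence sit in the open moduli space.
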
 
\begin{proof} 
An isomorphism $(G,w) \cong (G',w')$ of weighted graphs induces a bijection on edge sets, and gives  a linear isomorphism $\Pro^{E_G} \cong \Pro^{E_{G'}}$  and an isomorphism $X_G \cong X_{G'}$. 
The image of an edge contraction  $G\rightarrow G/e$ under the functor $\PF \mathcal{F}$ is the inclusion $\Pro^{E_{G/e}} \rightarrow \Pro^{E_G}$ whose image is the coordinate hyperplane $x_e=0$.
If $e$ is not a self-edge, there is a one-to-one correspondence between the set of spanning trees of $G$ containing $e$, and the spanning trees of $G/e$, which implies that 
$\Psi_G|_{x_e=0} =  \Psi_{G/e}$,
and therefore $X_{G/e}= X_G \cap V(x_e)$.  This formula holds also for $e$ a self-edge ($G$ has no spanning trees containing $e$), in which case $G/e$ has positive weight, and both $\Psi_G|_{x_e=0}$ and  $\Psi_{G/e}$ vanish.  
This proves that $\mathcal{X}$ indeed defines a subscheme of $\PF\mathcal{F}$. 
For the last part,    observe that $\sigma_G \subset  X_G(\R) $ if $w(G) > 0$ and that 
\[
\overset{\circ}{\sigma}_G \cap X_G(\R) = \emptyset \quad \hbox{if} \quad w(G) = 0 \ . \] 
This  follows from the fact  that if $w(G)=0$ then $\Psi_G$ is  a  non-trivial sum of positive monomials, and so
$\Psi_G>0$  for $\alpha_e>0$.  The open faces of $\sigma_G$ are in one-to-one correspondence with the $\overset{\circ}{\sigma}_{G/\gamma}$ for all $\gamma \subset G$.  It follows that $\sigma_G \cap X_G(\R)$ 
is the union of the faces $\sigma_{G/\gamma}$  where $h_{\gamma}>0$, or equivalently, such that $w(G/\gamma)$ is  positive.  \end{proof}

\subsection{The graph complex $\GC^{\stab}_0$} 
Consider the  version of the graph complex \cite{Kontsevich93} defined to be the $\Q$-vector space generated by oriented, stable  unweighted graphs $[G,\omega]$. 
It is bigraded by genus $g$, and the number of edges $e_G$, which is the homological degree.  An orientation $\omega$ on $G$ is an element of $\bigwedge (\Z^{E_G})^{\times}$, i.e., an order of the edges up to even permutations. One has relations $[G,-\omega]=-[G,\omega]$ and $[G, \omega] = [ G', \omega']$ if $G \cong G'$ is an isomorphism which  sends  $\omega$ to $\omega'$. The differential is:
\[ d [G, e_1\wedge \ldots \wedge e_n ] = \sum_{e} (-1)^i [G\q e_i, e_1 \wedge \ldots \widehat{e_i} \wedge \ldots \wedge e_n ] \]
where $G\q e$ denotes the graph in which the edge $e$ is contracted, and  is the empty graph if $e$ is a self-edge.  One shows that $\GC^{\stab}_0$ is quasi-isomorphic to the complex usually denoted by $\GC_0$.  
The following proposition  is proven in \cite{CGP} and also follows from  theorem \ref{thm: RelativeCellular}.
\begin{prop} \label{prop: relativefacecomplexGC0}
The relative face complex $\mathfrak{C}(\LM_g^{\trop})/\mathfrak{C}(\partial \LM_g^{\trop}) $ is the graph complex $\GC^{\stab}_0[-1]$ where the homological degree is given by numbers of edges $-1$.  Thus:
\[ H_n (\GC_0 ) \cong \bigoplus_{g\geq 1}  H_{n-1}( |\LM_g^{\trop}|, \left| \partial \LM_g^{\trop} \right|;\Q)\ .\]
\end{prop}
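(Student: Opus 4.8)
The plan is to exhibit an isomorphism of chain complexes between the relative face complex $\mathfrak{C}(\LM_g^{\trop})/\mathfrak{C}(\partial \LM_g^{\trop})$ and the genus-$g$ part of $\GC^{\stab}_0$, regraded so that a graph with $k$ edges sits in homological degree $k-1$, and then to quote Theorem \ref{thm: RelativeCellular}. Since $\LM_g^{\trop}$ and $\partial\LM_g^{\trop}$ are $\PLC_{\Q}$-complexes (Definitions \ref{defnModuliTropCurves} and \ref{defn: OpenModuliSpace}), and $\PLC_{\Q}$ satisfies the assumptions of \S\ref{sect: PolyhedralAssumptions}, all the cellular and relative cellular machinery of \S\ref{section: HomologyCohomology} applies directly.

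First I would unwind Definition \ref{defn: facecomplex}. A generator of $\mathfrak{C}(\LM_g^{\trop})_n$ is a pair $[\sigma,\varpi]$ with $\sigma$ an $n$-dimensional face of some simplex $\sigma_G=\sigma(\mathcal{F}(G))$ for $(G,w)\in I_g$, and $\varpi$ an orientation; the faces of $\sigma_G$ are the $\sigma_{G/\gamma}$ for $\gamma\subsetneq E_G$, so an $n$-dimensional face is $\sigma_{G'}$ for a connected stable weighted graph $G'$ of genus $g$ with $|E_{G'}|=n+1$. Relation (ii) of Definition \ref{defn: facecomplex} identifies two such generators exactly when there is an isomorphism of weighted graphs carrying one orientation to the other, since the only morphisms in $I_g^{\opp}$ that restrict to isomorphisms on a given face are the graph isomorphisms (edge contractions are proper face maps). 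Quotienting by $\mathfrak{C}(\partial\LM_g^{\trop})$, which is spanned by the $[\sigma_G,\varpi]$ with $w(G)>0$, leaves precisely the classes $[\sigma_G,\varpi]$ with $G$ unweighted and stable of loop number $g$, which are exactly the generators of the genus-$g$ part of $\GC^{\stab}_0$.

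The remaining data match up as follows. An orientation of the simplex $\sigma_G$, a generator of $H_{|E_G|-1}(\sigma_G,\partial\sigma_G;\Z)\cong\Z$, is equivalent to an ordering of the vertices of $\sigma_G$ up to even permutation; and the vertices of $\sigma_G$ are the coordinate points indexed by $E_G$. Hence $\varpi$ is the same datum as a graph orientation $\omega\in\bigwedge^{|E_G|}\Q^{E_G}$ up to sign, relation (i) of the face complex becomes $[G,-\omega]=-[G,\omega]$, and relation (ii) becomes $[G,\omega]=[G',\omega']$ for isomorphisms preserving the orientation, so that a graph with an orientation-reversing automorphism vanishes in both complexes. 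For the differential, $d[\sigma_G,\varpi]=\sum_\tau[\tau,\varpi|_\tau]$ ranges over facets $\tau$ of dimension $|E_G|-2$, which are the $\sigma_{G/e}$ for single edges $e\in E_G$. If $e$ is not a self-edge then $G/e$ is again unweighted and stable, and under the dictionary above $\varpi|_\tau$ is the usual simplicial boundary term with its alternating sign; if $e$ is a self-edge then $G/e$ has a vertex of weight $1$, so $\sigma_{G/e}\subset|\partial\LM_g^{\trop}|$ and this term dies in the relative complex, matching the convention that contracting a self-edge gives the zero graph in $\GC^{\stab}_0$. Comparing with the differential on $\GC^{\stab}_0$ recalled above shows the two differentials coincide, so $\mathfrak{C}(\LM_g^{\trop})/\mathfrak{C}(\partial\LM_g^{\trop})\cong\GC^{\stab}_0[-1]$ in each genus $g$; since edge contraction preserves genus, the same holds after summing over $g$.

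Finally, Theorem \ref{thm: RelativeCellular} gives $H_{n-1}(|\LM_g^{\trop}|,|\partial\LM_g^{\trop}|;\Q)\cong H_{n-1}((\GC^{\stab}_0)^{(g)}[-1])=H_n((\GC^{\stab}_0)^{(g)})$, where $(\GC^{\stab}_0)^{(g)}$ denotes the genus-$g$ summand; summing over $g\geq 1$ and using the quasi-isomorphism $\GC^{\stab}_0\simeq\GC_0$ recalled above yields the stated isomorphism $H_n(\GC_0)\cong\bigoplus_{g\geq1}H_{n-1}(|\LM_g^{\trop}|,|\partial\LM_g^{\trop}|;\Q)$. The main obstacle I expect is entirely bookkeeping: fixing orientation conventions so that the connecting homomorphism $H_n(\sigma,\partial\sigma)\to H_{n-1}(\tau,\partial\tau)$ reproduces exactly the sign appearing in the graph-complex differential, and making sure the degree shift $[-1]$ and the self-edge/positive-weight vanishing are handled consistently on both sides; nothing conceptually deeper than Theorem \ref{thm: RelativeCellular} is needed.
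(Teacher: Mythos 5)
Your argument is correct and follows precisely the route the paper indicates but leaves implicit: the paper simply asserts that the proposition ``follows from Theorem~\ref{thm: RelativeCellular}'' (together with a citation to \cite{CGP}), and your proof supplies that step by matching generators, relations, orientations, and differentials between the relative face complex $\mathfrak{C}(\LM_g^{\trop})/\mathfrak{C}(\partial\LM_g^{\trop})$ and $\GC^{\stab}_0[-1]$, then invoking Theorem~\ref{thm: RelativeCellular} and the quasi-isomorphism $\GC^{\stab}_0\simeq\GC_0$. One small inaccuracy worth flagging: a morphism in $I_g^{\opp}$ coming from an edge contraction $G\to G/e$ \emph{does} restrict to an isomorphism on each face of $\sigma_{G/e}$ (the tautological identification of such a face with the corresponding face of $\sigma_G$), so your parenthetical claim that only graph isomorphisms do so is literally false; but since the identifications these face maps impose are tautological, the resulting presentation of the face complex is exactly the one you describe and the proof is unaffected.
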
 
A more profound statement is the fact, due to \cite{CGP},  that 
$\left| \partial \LM_g^{\trop} \right|$ is contractible. Therefore 
the graph complex also computes the reduced homology of $|\LM_g^{\trop}|$.

\section{The bordification    $\mathcal{M}_{g}^{\trop,\BB}$ of $\mathcal{M}_{g}^{\trop}$ } \label{section: MgBB}
We review the blow-ups associated to graphs, and define  $\mathcal{M}_{g}^{\trop,\BB}$     by gluing together the blow-ups of  polyhedral configurations of  stable, weighted graphs.

\subsection{Blow-ups and Feynman polytopes}A core graph is one which is bridgeless. 
\begin{defn} Let $G$ be a  graph. Consider the set of subspaces
\[ \BB^G=\BB^{G, \mathrm{core}} = \{ \Pro(\Q^{E_{G/\gamma}}) :   \gamma \subset G \quad \hbox{core subgraph} \}\] 
which, one verifies (see e.g. \cite[\S7]{BEK},  \cite[\S5-6]{Cosmic}), satisfies $(B1), (B2)$.\footnote{ 
If we set $\BB^{G,\max}= \{  \Pro(\Q^{E_{\gamma}}): \hbox{ all } \gamma \subset G\}$, then one may verify that $(\BB^{G,\max})^{\min} = \BB^{G, \mathrm{core}}$. Therefore the core blow-up is  minimal, and proposition \ref{prop: minimalblow-down}  defines  a morphism  $P^{\BB^{G,\max}} \rightarrow P^{\BB^{G,\mathrm{core}}}.$
}
 Define 
\[ \mathcal{F}^{\BB}(G) =    \left(P^{\BB^G},  L^{\BB^G}, \sigma^{\BB^G} \right)\]
to be the object in $\BLC_{\Q}$ obtained by blowing up the linear subspaces corresponding to core subgraphs.  It is equipped with a  canonical blow-down morphism in $\PC_{\Q}$: 
\[ \pi^{\BB} :  \mathcal{F}^{\BB}(G)  \To  \mathcal{F}(G)\ .\]
\end{defn}

A key point is that the set $\BB^G$ is intrinsic to $G$, which is why we shall drop the superscript $G$ with impunity.  By this we mean the following. Let $\gamma \subset G$ be a core subgraph. Then, employing the notation \eqref{BBrestrictions}, we have canonical identifications 
\[ \left(\BB^G\right)_{ /\Q^{E_{G/\gamma}}}    =   \BB^{\gamma} \qquad  \hbox{ and } \qquad 
   \left(\BB^G\right)_{\Q^{E_{G/\gamma}}} =  \BB^{G/\gamma} \]
since, for the first equation, there is a bijection between the set of  core subgraphs $g\subset G$ which are contained in $\gamma$,  and  the set of core subgraphs of $\gamma$  (the property of being a core subgraph is intrinsic, and does not depend on the ambient graph).  
For the second equation, there is a bijection between core subgraphs of $G$ which contain $\gamma$, and core subgraphs of $G/\gamma$. 
 For all  $e\in E_G$ which is not a self-edge,  there is a face morphism 
\begin{equation} \label{F1}   \mathcal{F}^{\BB}(G/e)  \To  \mathcal{F}^{\BB}(G)\end{equation}
in the category $\BLC_{\Q}$. It  is compatible, via blow-down, with the corresponding map   $ \mathcal{F}(G/e)  \rightarrow  \mathcal{F}(G)$. 
 Exceptional divisors are  indexed by core subgraphs $\emptyset \neq \gamma \subset G$. For each such subgraph, there is a canonical face morphism:
\begin{equation} \label{F2}  \mathcal{F}^{\BB}(\gamma) \times  \mathcal{F}^{\BB}(G/\gamma)   \To  \mathcal{F}^{\BB}(G) \ . 
\end{equation}
Compatibility with the face map    $ \mathcal{F}(G/\gamma)  \rightarrow  \mathcal{F}(G)$ is expressed by the diagram in $\PC_{\Q}$:
\[\begin{array}{ccccc}
\mathcal{F}^{\BB}(\gamma)&  \times &  \mathcal{F}^{\BB}(G/\gamma) &   \To &  \mathcal{F}^{\BB}(G)  \nonumber \\
  &  \downarrow  & & &  \downarrow  \nonumber \\
      &  \mathcal{F}(G/\gamma)&   &   \To & \mathcal{F}(G)  \nonumber 
\end{array} 
\]
which commutes (see remark \ref{rem: blowdowncontractsnormal}). The facets of $\mathcal{F}^{\BB}(G)$ are in one-to-one correspondence with either non-self-edge edges \eqref{F1}, or core subgraphs \eqref{F2}.

\subsection{Sequences of graphs and categorical formulation of the blow-up}

\begin{defn} \label{defnNestedGraphs} Consider a  sequence of  strictly  nested (non-empty) subgraphs 
\begin{equation}  \label{NestedGraphs}   (\gamma_1, \gamma_2, \ldots, \gamma_{n-1}, \gamma_n)  \ \hbox{ such that }  \ \gamma_i \subsetneq \gamma_{i+1}  \hbox{ for } 1\leq i \leq n-1\ , 
\end{equation}
which are not necessarily connected (even if $\gamma_n$ is).  Such a sequence may equivalently be viewed as the data of a strict  filtration $F_{\bullet}$ on   $G= \gamma_n$, where $F_i G = \gamma_i$. 
The  \emph{graded sequence} associated to this filtration  is  the sequence of graphs:
\[   (\gamma_1, \gamma_2/\gamma_1, \ldots, \gamma_n/\gamma_{n-1})\ . \]
Let $e \in E_{\gamma_n}$ be an edge. There is a unique index  $k$ such that $e$ is an edge of $\gamma_k/\gamma_{k-1}$. We call  $e$ \emph{admissible} if it is neither a self-edge nor the only edge in the quotient $\gamma_k/\gamma_{k-1}$, in which case we define the \emph{edge contraction}   with respect to $e$ to be the sequence:
\begin{equation} \label{EdgeContractSequence} (\gamma_1,\ldots, \gamma_n)/e \ =  \  \left( (\gamma_1 \cup e)/e \ , \   (\gamma_2\cup e)/e \ , \  \ldots \  , \  (\gamma_{n-1}\cup e) /e \  , \  \gamma_n/e \right)  
\end{equation} 
Define a \emph{refinement} of $(\gamma_1, \gamma_2, \ldots, \gamma_{n-1}, \gamma_n) $  to be: $(\gamma, \gamma_1,\ldots, \gamma_n)$ where $\gamma \subsetneq \gamma_1$ or
\[ (\gamma_1,\ldots, \gamma_i, \gamma, \gamma_{i+1} ,\ldots, \gamma_n) \hbox{ where } \gamma_i \subsetneq \gamma \subsetneq \gamma_{i+1}
\  \] for any $i$.
Define the 
  \emph{edge degree}  to be  $|E_{\gamma_n}|-n+1$, and the \emph{genus} to be $h_{\gamma_n}$. 
  \end{defn} 

Given a graph $G$ with labelled edges, define a category $\mathcal{C}^{\BB}_G$ whose objects are  sequences \eqref{NestedGraphs}  of edge degree $>0$ such that $\gamma_n =G/\gamma$ is a quotient of $G$ satisfying $h_{\gamma_n} = h_G$ (i.e., $\gamma$ does not contain any loops), and such that each  graph $\gamma_i$, $i<n$ is  core. The morphisms are given by contraction of admissible edges, and refinements (insertion of an additional graph in the nested sequence as above): indeed, $\mathcal{C}^{\BB}_G$ is simply the category of objects of edge degree $\geq 1$ generated by the sequence  $(G)$ under these two operations.  See figure \ref{fig: 8}.

\begin{thm} \label{thm: FeynmanBlowUpsFunctor}
There is a canonical functor $
\left(\mathcal{C}^{\BB}_G\right)^{\opp}  \rightarrow  \BLC_{\Q} $ which sends
\[ (\gamma_1,\gamma_2,\ldots, \gamma_{n-1}, \gamma_n)  \mapsto  \mathcal{F}^{\BB}(\gamma_1) \times   \mathcal{F}^{\BB}(\gamma_2/\gamma_1) \times \ldots 
\times \mathcal{F}^{\BB}(\gamma_n/\gamma_{n-1})  \] 
and all morphisms to  face morphisms.  The objects of $\mathcal{C}^{\BB}_G$ are in one-to-one correspondence with the faces of $\mathcal{F}^{\BB}(G)$; the morphisms of $\mathcal{C}^{\BB}_G$ are in one-to-one correspondence with inclusions of faces.
The  face corresponding to a sequence  $(\gamma_1,\ldots ,\gamma_n)$ has codimension $e_G - e_{\gamma_n} +n-1$.
There is a canonical blow-down  in $\PC_{\Q}$ from the functor 
\[ \mathcal{F}^{\BB}: \mathcal{C}^{\BB}_G \rightarrow \BLC_{\Q} \subset \PC_{\Q}   \qquad \hbox{ to } \qquad  \mathcal{F}: \mathcal{C}_G \rightarrow \PLC_{\Q}   \subset \PC_{\Q} \]
which is induced by  the pair $(\phi, \Phi)$, where the functor $\phi: \mathcal{C}^{\BB}_G  \rightarrow  \mathcal{C}_G$ is defined by 
\[\phi: (\gamma_1, \gamma_2,\ldots, \gamma_n)  \mapsto  \gamma_n/\gamma_{n-1} \]  
 and  $\Phi$ is the natural transformation defined by the family of morphisms in $\PC_{\Q}$:
\[   \mathcal{F}^{\BB}(\gamma_1) \times   \mathcal{F}^{\BB}(\gamma_2/\gamma_1) \times \ldots 
\times \mathcal{F}^{\BB}(\gamma_n/\gamma_{n-1})   \To   \mathcal{F}(\gamma_n/\gamma_{n-1}) \]
 given by projection onto the final component  followed by blow-down for $  \mathcal{F}^{\BB}(\gamma_n/\gamma_{n-1}) $.
\end{thm}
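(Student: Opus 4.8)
The statement packages together several claims whose verification is essentially bookkeeping, resting on the local-coordinate description of iterated blow-ups (\S\ref{sect: LocalBlowUpCoordinates}), the structure of exceptional divisors (Proposition \ref{prop: BlowUpStructure}), the product structure on faces of blow-ups of polyhedra (Proposition \ref{prop: multiplicativestructureonBLC} and its corollary), and the intrinsic identifications $\left(\BB^G\right)_{/\Q^{E_{G/\gamma}}} = \BB^{\gamma}$, $\left(\BB^G\right)_{\Q^{E_{G/\gamma}}} = \BB^{G/\gamma}$ recorded just above for core subgraphs. The plan is to proceed in four stages: (i) show the assignment on objects and morphisms is well-defined and functorial; (ii) establish the bijection between objects of $\mathcal{C}^{\BB}_G$ and faces of $\mathcal{F}^{\BB}(G)$, compatibly with morphisms and inclusions of faces; (iii) compute the codimension; (iv) verify that $(\phi,\Phi)$ is a morphism of $\PC_{\Q}$-complexes lifting the blow-down on each cell.

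For (i), I would argue that the two generating morphisms of $\mathcal{C}^{\BB}_G$ — admissible edge contractions \eqref{EdgeContractSequence} and refinements — are each sent to face morphisms in $\BLC_{\Q}$ of the two types listed in the definition of $\BLC_k$. A refinement inserting $\gamma$ with $\gamma_i \subsetneq \gamma \subsetneq \gamma_{i+1}$ corresponds, after passing to the graded sequence, to replacing the factor $\mathcal{F}^{\BB}(\gamma_{i+1}/\gamma_i)$ by the pair of factors $\mathcal{F}^{\BB}(\gamma/\gamma_i) \times \mathcal{F}^{\BB}(\gamma_{i+1}/\gamma)$; since $\gamma/\gamma_i$ is a core subgraph of $\gamma_{i+1}/\gamma_i$ (using the intrinsic nature of coreness), this is exactly the exceptional-divisor face map \eqref{F2} applied inside that factor, and the identifications $\left(\BB^{\gamma_{i+1}/\gamma_i}\right)_{/\bullet}$, $\left(\BB^{\gamma_{i+1}/\gamma_i}\right)_{\bullet}$ from the displayed compatibilities make this land in the correct blow-up. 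An admissible edge contraction is sent to the non-exceptional face map \eqref{F1} inside the relevant graded piece, using $X_{G/e} = X_G \cap V(x_e)$-type compatibility on the blow-up side, i.e. Proposition \ref{prop: UniversalANDIdeal}(i). Functoriality (that these face morphisms compose correctly and the square relations among refinements and contractions hold) then reduces to the analogous statements for $\mathcal{F}$, which hold because $\mathcal{C}_G$ is the face poset of the simplex, together with the fact that all the blow-up data is intrinsic and hence strictly compatible.

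For (ii), I would induct on the number of blow-ups defining $P^{\BB^G}$, or equivalently argue directly: by Proposition \ref{prop: multiplicativestructureonBLC} and its corollary, every face of $(P^{\BB^G}, L^{\BB^G}, \sigma^{\BB^G})$ obtained by intersecting with irreducible components of $L^{\BB^G}$ is a product of blow-ups of polyhedral linear configurations, and unwinding which components one intersects records precisely a chain of core subgraphs interleaved with a "bottom" quotient piece — i.e. an object of $\mathcal{C}^{\BB}_G$. Exceptional components $\mathcal{E}_{\Q^{E_{G/\gamma}}}$ contribute the core subgraph $\gamma$ to the nested sequence (Proposition \ref{prop: BlowUpStructure}, with two such components intersecting iff the subspaces are nested, matching strict nesting of subgraphs), while the strict transforms of the coordinate hyperplanes $L_e$ contribute edge contractions in the bottom piece (hence the condition $h_{\gamma_n}=h_G$, i.e. $\gamma$ loopless, and the edge-degree $>0$ condition ruling out empty cells). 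The codimension count in (iii) then falls out of the same local-coordinate picture: a sequence $(\gamma_1,\dots,\gamma_n)$ involves $n-1$ exceptional divisors and $e_G - e_{\gamma_n}$ further coordinate hyperplanes (the contracted edges), giving total codimension $e_G - e_{\gamma_n} + n - 1$; I would cross-check this against the known dimensions of Feynman polytopes.

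Finally, for (iv), the functor $\phi$ is manifestly well-defined ($\gamma_n/\gamma_{n-1}$ is a quotient of $G$ and edge contractions/refinements map to edge contractions/identities in $\mathcal{C}_G$), and $\Phi$ — projection to the last factor followed by blow-down on that factor — is a natural transformation precisely because the face maps \eqref{F1}, \eqref{F2} were already shown in \S\ref{section: MgBB} to be compatible with the $\mathcal{F}$-level face maps via the commuting squares displayed there (in particular the one invoking Remark \ref{rem: blowdowncontractsnormal}). Assembling these into the single naturality square of Definition \ref{defn: GenCkComplex} is then formal. \textbf{The main obstacle} I anticipate is stage (ii): carefully matching the combinatorics of nested sequences of (possibly disconnected) core subgraphs — together with the "admissibility" and edge-degree conventions — to the poset of faces of the wonderful compactification, so that the bijection is not merely a count but is natural with respect to all morphisms on both sides. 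Everything else is a controlled translation of Propositions \ref{prop: UniversalANDIdeal}, \ref{prop: BlowUpStructure}, \ref{prop: multiplicativestructureonBLC} into graph language.
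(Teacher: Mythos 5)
Your proposal matches the paper's proof in both structure and the specific ingredients it invokes: functoriality is checked on the two generating morphism types by sending admissible edge contractions to the face maps \eqref{F1} and refinements to the exceptional-divisor face maps \eqref{F2}, the identification of objects of $\mathcal{C}^{\BB}_G$ with faces of $\mathcal{F}^{\BB}(G)$ comes from Proposition \ref{prop: BlowUpStructure} (intersections of exceptional divisors indexed by nested sequences of core subgraphs), and the blow-down morphism $(\phi,\Phi)$ is handled via Remark \ref{rem: blowdowncontractsnormal}. Your treatment is somewhat more explicit than the paper's (especially on the codimension count and the intrinsicality of $\BB^G$), but it is the same argument.
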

 
\begin{proof}  
To verify that the map in the statement is indeed a functor, we must check that the following diagram commutes, where $e$ is an edge of $\gamma_i$ but not of $\gamma_{i-1}$ for $i\geq 2$:
\[
\begin{array}{ccc}
 (\gamma_1,\gamma_2,\ldots, \gamma_{n-1}, \gamma_n) &   \To   &  \mathcal{F}^{\BB}(\gamma_1) \times   \mathcal{F}^{\BB}(\gamma_2/\gamma_1) \times \ldots 
\times \mathcal{F}^{\BB}(\gamma_n/\gamma_{n-1}) \\
 \downarrow  &   & \uparrow   \\
  ((\gamma_1 \cup e)/e, \ldots  (\gamma_n\cup e)/e) & \To    &   \mathcal{F}^{\BB}(\gamma_1) \times \ldots   \times \mathcal{F}^{\BB}(\gamma_i/(\gamma_{i-1} \cup e)) \times \ldots 
\times \mathcal{F}^{\BB}(\gamma_n/\gamma_{n-1})
\end{array}
\]
This follows from the fact that $ \mathcal{F}^{\BB}(G/e) \rightarrow   \mathcal{F}^{\BB}(G)$ is a face  map \eqref{F1}, applied to $G= \gamma_i/\gamma_{i-1}$. 
The case $i=1$ is similar.  For refinements, we must check that 
\[
\begin{array}{ccc}
 (\gamma_1, \ldots,  \gamma_n) &   \To   &  \mathcal{F}^{\BB}(\gamma_1) \times   \mathcal{F}^{\BB}(\gamma_2/\gamma_1) \times \ldots 
\times \mathcal{F}^{\BB}(\gamma_n/\gamma_{n-1}) \\
 \downarrow  &   & \uparrow   \\
  (\gamma_1, \ldots, \gamma_{i-1}, \gamma, \gamma_i, \ldots, \gamma_n ) & \To    &   \mathcal{F}^{\BB}(\gamma_1) \times \ldots    \mathcal{F}^{\BB}(\gamma/\gamma_{i-1}) \times   \mathcal{F}^{\BB}(\gamma_i/\gamma)    \ldots 
\times \mathcal{F}^{\BB}(\gamma_n/\gamma_{n-1})
\end{array}
\]
commutes also. This follows from the fact that $ \mathcal{F}^{\BB}(\gamma/\gamma_{i-1}) \times   \mathcal{F}^{\BB}(\gamma_i/\gamma)   \rightarrow    \mathcal{F}^{\BB}(\gamma_i/\gamma_{i-1})  $ is the face map corresponding to the inclusion of an exceptional divisor \eqref{F2}. 
The statement follows from proposition \ref{prop: BlowUpStructure}: the exceptional divisors of $\mathcal{F}^{\BB}(\gamma_n)$  are indexed by strict core subgraphs of $\gamma_i$, and so an intersection of exceptional divisors  is indexed by nested sequences of core subgraphs. The description of the blow-down map  follows from remark \ref{rem: blowdowncontractsnormal}.
\end{proof} 

On topological realisations, $|\Phi|$ induces the blow-down morphism 
$ \left| \mathcal{F}^{\BB}(G) \right| \rightarrow \left| \mathcal{F}(G) \right| \    $
from $ \left| \mathcal{F}^{\BB}(G) \right|$ to the closed simplex. The space $ \left| \mathcal{F}^{\BB}(G) \right|$ is called    the Feynman polytope in  \cite[\S6]{Cosmic}, which provides a similar description of its faces in greater generality.

\subsection{Canonical blow-up of $\LM_g^{\trop}$.} \label{sect: canblowupG}
Consider the 
category  whose objects are the faces of all $\mathcal{F}^{\BB}(G)$, where $G$ ranges over stable connected graphs of total  weight $0$, and whose morphisms are generated by isomorphisms and  face morphisms. By construction, it admits a  canonical functor  to $\BLC_{\Q}$.  The complex 
 $\LM_g^{\trop, \BB}$ is defined to be this functor.   It may be described more explicitly using nested sequences  of graphs.

\begin{defn} \label{defn: IgBB} Define a category $I_{g}^{\BB}$ whose objects are nested sequences \eqref{NestedGraphs} of graphs  of genus $h_{\gamma_n}=g$, with the property  
that each $\gamma_i$, for $i<n$, is a core graph (but not necessarily connected), and $\gamma_n$ is a stable connected  graph with all vertices of weight $0$. The morphisms in this category  are isomorphisms, admissible edge contractions, and refinements. 
\end{defn}

 \begin{figure}[h] \begin{center}
\quad {\includegraphics[width=10cm]{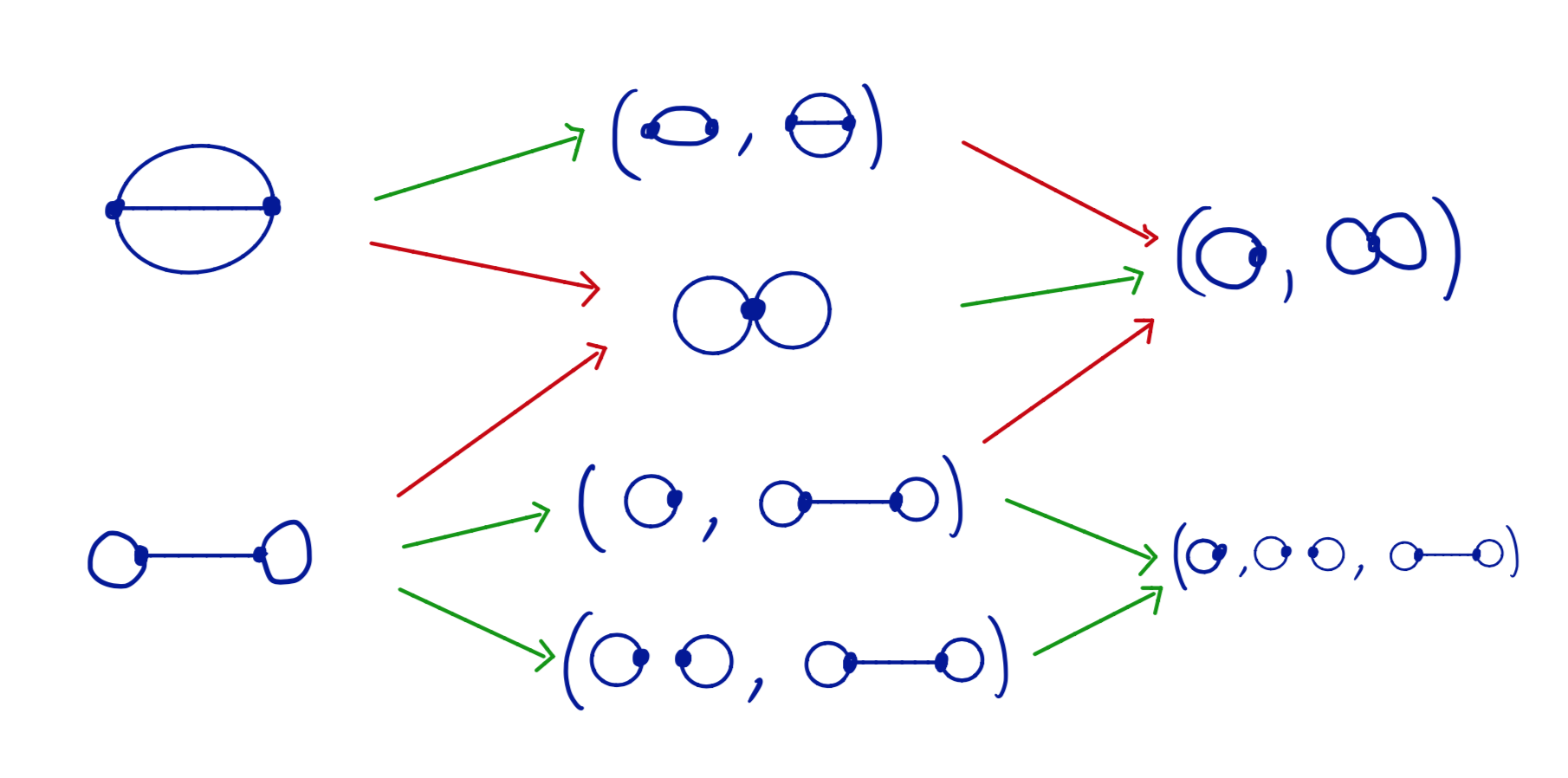}} 
\end{center} 
\caption{A picture of  $I_2^{\BB}$. All vertices have weight $0$.  Edge contractions are depicted in red,  refinements in green, and automorphisms are not shown. }\label{fig: 8}
\end{figure}

There is a `collapsing' functor:
\begin{eqnarray} \label{collapsing} 
I_{g}^{\BB}  & \To &  I_g \\
(\gamma_1,\ldots,\gamma_{n-1},  \gamma_n) & \mapsto & \gamma_n/\gamma_{n-1} \nonumber 
\end{eqnarray} 
which sends a filtered graph to its highest graded component, where
the quotient 
$ \gamma_n/\gamma_{n-1}$
is to be viewed as a weighted graph as follows. First assign weight $0$ to every vertex of $\gamma_n$, and  contract each edge of $\gamma_{n-1}$, in any order,
keeping track of the induced weights by the process described in \S\ref{subsect: WeightedGraphs}.
  To verify that this is indeed a functor, one only needs to consider  refinements where an additional graph $\gamma$ satisfying $\gamma_{n-1} \subset \gamma \subset \gamma_n$ is inserted between $\gamma_{n-1}$ and $\gamma_n$. It maps to the morphism in  $I_g$ given by  contracting the edges in $E_{\gamma} \backslash E_{\gamma_{n-1}}$. 
We emphasize that the collapsing functor sends \emph{unweighted} nested sequences  to \emph{weighted} graphs.
 It induces a functor between the opposite categories in the usual manner. 
The  functor  \eqref{collapsing} is essentially surjective. Furthermore, $I_g^{\BB}$ is generated  by singletons $(G)$ for $G$  an  unweighted, connected, stable graph  of genus $g$ 
 by edge contractions and refinements.   Since there are finitely many isomorphism classes,  $I_g^{\BB}$ is equivalent to a finite category.

\begin{defn} Consider the map
\begin{eqnarray}  \LM_g^{\trop , \BB}: \left( I_{g}^{\BB} \right)^{\opp} & \To&  \BLC_{\Q} \\ 
 (\gamma_1,\ldots, \gamma_n)  &\mapsto & \mathcal{F}^{\BB}(\gamma_1) \times \mathcal{F}^{\BB}(\gamma_2/\gamma_1) \times  \ldots \times \mathcal{F}^{\BB}(\gamma_n/\gamma_{n-1})  \nonumber 
 \end{eqnarray} 
 sending a nested sequence to the product of the blow-ups of its graded sequence.
\end{defn} 

\begin{thm} The map $\LM_g^{\trop , \BB}$ is a functor, and sends morphisms in $(I_g^{\BB})^{\opp}$  to isomorphisms and face maps. 
There is a  canonical  morphism of functors in $\PC_{\Q}$: 
\[ \LM_g^{\trop , \BB} \To \LM_g^{\trop }\]
which is induced by the pair $(\phi^{\opp},\Phi)$  where $\phi:I_{g}^{\BB}   \rightarrow    I_g$   is the collapsing functor, and the natural transformation $\Phi$ is defined on the image of $(\gamma_1,\ldots, \gamma_n)$ by 
\[  \Phi:  \mathcal{F}^{\BB}(\gamma_1) \times \mathcal{F}^{\BB}(\gamma_2/\gamma_1) \times  \ldots \times \mathcal{F}^{\BB}(\gamma_n/\gamma_{n-1})   \To \mathcal{F}(\gamma_n/\gamma_{n-1}) \
 , \]
namely, projection onto the last component followed by  blow down  $ \mathcal{F}^{\BB}(\gamma_n/\gamma_{n-1})   \rightarrow  \mathcal{F}(\gamma_n/\gamma_{n-1})$.
\end{thm}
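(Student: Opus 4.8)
The plan is to verify the two properties that define a morphism of $\PC_{\Q}$-complexes (Definition \ref{defn: GenCkComplex}): that $\LM_g^{\trop,\BB}$ is a functor sending morphisms to isomorphisms or face maps, and that the pair $(\phi^{\opp},\Phi)$ assembles into a natural transformation. The functoriality is almost entirely local: it reduces, face by face, to Theorem \ref{thm: FeynmanBlowUpsFunctor}, which already establishes that $\mathcal{F}^{\BB}$ is a functor from $(\mathcal{C}^{\BB}_G)^{\opp}$ to $\BLC_{\Q}$ sending all morphisms to face morphisms. First I would observe that the objects of $I_g^{\BB}$ are, by Definition \ref{defn: IgBB}, precisely nested sequences whose top graph $\gamma_n$ is a stable connected weight-$0$ graph of genus $g$, and that the morphisms — isomorphisms, admissible edge contractions, and refinements — are exactly the same three types appearing in $\mathcal{C}^{\BB}_G$. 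So the key point is that the category $I_g^{\BB}$ is glued together from the categories $\mathcal{C}^{\BB}_G$ as $G$ ranges over isomorphism classes of stable connected weight-$0$ graphs of genus $g$, in the same way that $I_g$ is glued from the $\mathcal{C}_G$ (cf. \S\ref{section: LMg}). Under this identification $\LM_g^{\trop,\BB}$ restricts on each $\mathcal{C}^{\BB}_G$ to the functor of Theorem \ref{thm: FeynmanBlowUpsFunctor}, so functoriality and the face-map property follow, once one checks compatibility with isomorphisms.

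The compatibility with isomorphisms is the one genuinely new ingredient relative to Theorem \ref{thm: FeynmanBlowUpsFunctor}, where all graphs had labelled edges and hence no non-trivial automorphisms. Here I would argue that an isomorphism $(G,w)\cong(G',w')$ induces a bijection $E_G\cong E_{G'}$, hence a linear isomorphism $\Pro^{E_G}\cong\Pro^{E_{G'}}$ taking $\sigma_G$ to $\sigma_{G'}$, and — because the collection $\BB^{G,\mathrm{core}}$ is intrinsic to $G$ (the stated key point that $\BB^G$ does not depend on any labelling, only on the core subgraphs of $G$) — a canonical identification $\mathcal{F}^{\BB}(G)\cong\mathcal{F}^{\BB}(G')$ in $\BLC_{\Q}$. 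Likewise an isomorphism of nested sequences induces a product of such identifications on the graded pieces. This shows $\LM_g^{\trop,\BB}$ is well-defined on the category $I_g^{\BB}$ (not just on the $\mathcal{C}^{\BB}_G$), because isomorphic objects get canonically isomorphic images; the argument is the exact blow-up analogue of the corresponding remark for $\LM_g^{\trop}$ in Definition \ref{defnModuliTropCurves}.

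For the morphism of functors, I would first confirm that $\phi\colon I_g^{\BB}\to I_g$ in \eqref{collapsing} is a functor — this is stated in the text just after \eqref{collapsing}, the only case needing checking being that a refinement inserting $\gamma$ with $\gamma_{n-1}\subset\gamma\subset\gamma_n$ maps to the edge contraction along $E_\gamma\setminus E_{\gamma_{n-1}}$ — and then that $\Phi$, defined on the image of $(\gamma_1,\dots,\gamma_n)$ as projection onto the last factor $\mathcal{F}^{\BB}(\gamma_n/\gamma_{n-1})$ followed by the blow-down $\pi^{\BB}\colon\mathcal{F}^{\BB}(\gamma_n/\gamma_{n-1})\to\mathcal{F}(\gamma_n/\gamma_{n-1})$, is natural. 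Naturality amounts to the commutativity of the two squares exactly as in the proof of Theorem \ref{thm: FeynmanBlowUpsFunctor}: for admissible edge contractions this is the compatibility of the face map \eqref{F1} with blow-down, and for refinements it is the commutativity of the diagram after \eqref{F2}, which holds by Remark \ref{rem: blowdowncontractsnormal} (the blow-down contracts the normal factor to a point). One must additionally note that the collapsing functor sends \emph{unweighted} nested sequences to \emph{weighted} graphs, but since $\mathcal{F}$ and $\mathcal{F}^{\BB}$ are insensitive to vertex weights (Definition \ref{defnModuliTropCurves}), this causes no difficulty.

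I expect the main obstacle to be purely bookkeeping: making precise the claim that $I_g^{\BB}$ is assembled from the $\mathcal{C}^{\BB}_G$ and that the collapsing functor is well-behaved on \emph{all} morphisms (in particular that contraction of an admissible edge in a graded piece $\gamma_k/\gamma_{k-1}$ descends correctly, and that one has not inadvertently broken admissibility under $\phi$). There is no hard geometry here — every geometric input is already packaged in Theorem \ref{thm: FeynmanBlowUpsFunctor}, Proposition \ref{prop: BlowUpStructure}, and Remark \ref{rem: blowdowncontractsnormal} — so the proof is essentially a matter of organizing the gluing of these local statements into a global functor and checking the naturality squares, precisely mirroring the already-proven Theorem \ref{thm: FeynmanBlowUpsFunctor} one categorical level up.
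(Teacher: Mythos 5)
Your proposal is correct and takes essentially the same approach as the paper: both isolate the handling of isomorphisms of nested sequences as the genuinely new content (since Theorem \ref{thm: FeynmanBlowUpsFunctor} only dealt with labelled edges), observe that an isomorphism of nested sequences induces an isomorphism of graded pieces and hence, by the intrinsic nature of the core blow-up set, a canonical isomorphism of the corresponding products in $\BLC_{\Q}$ compatible with blow-down, and then defer everything else (edge contractions, refinements, naturality of $\Phi$) to the already-proven Theorem \ref{thm: FeynmanBlowUpsFunctor}. The paper's proof is terser but makes the same reduction.
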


\begin{proof}
Any isomorphism of nested sequences
$ (\gamma_1, \gamma_2, \ldots, \gamma_n ) \cong  (\gamma'_1, \gamma'_2, \ldots, \gamma'_n )$
induces an isomorphism between the associated graded sequences, and hence a canonical isomorphism in the category $\BLC_{\Q}$ between 
\[\mathcal{F}^{\BB}(\gamma_1) \times \mathcal{F}^{\BB}(\gamma_2/\gamma_1) \times  \ldots \times \mathcal{F}^{\BB}(\gamma_n/\gamma_{n-1}) \]
and its version with each $\gamma_i$ replaced with $\gamma'_i$.  Furthermore, this isomorphism is compatible, via blow-downs, with the isomorphism
$\mathcal{F}(\gamma_n/\gamma_{n_1}) \cong  \mathcal{F}(\gamma'_n/\gamma'_{n_1}).$
The fact that $\LM_{g}^{\trop, \BB}$ is a functor then follows  the proof of theorem \ref{thm: FeynmanBlowUpsFunctor}, as does the rest of the statement.
\end{proof} 
The topological realisation
$ \left|   \LM_g^{\trop , \BB} \right|$
is defined differently, but  equivalent, we expect,  to the  bordification discussed in  \cite{BordificationOuterSpace}. There is a canonical continuous morphism:
\begin{equation} \label{BlowDownOnLMgTop}   \left|   \LM_g^{\trop , \BB} \right|  \To  \left|   \LM_g^{\trop } \right|  
\end{equation} 
which collapses all exceptional components.

 \begin{figure}[h]\begin{center}
\quad {\includegraphics[width=8cm]{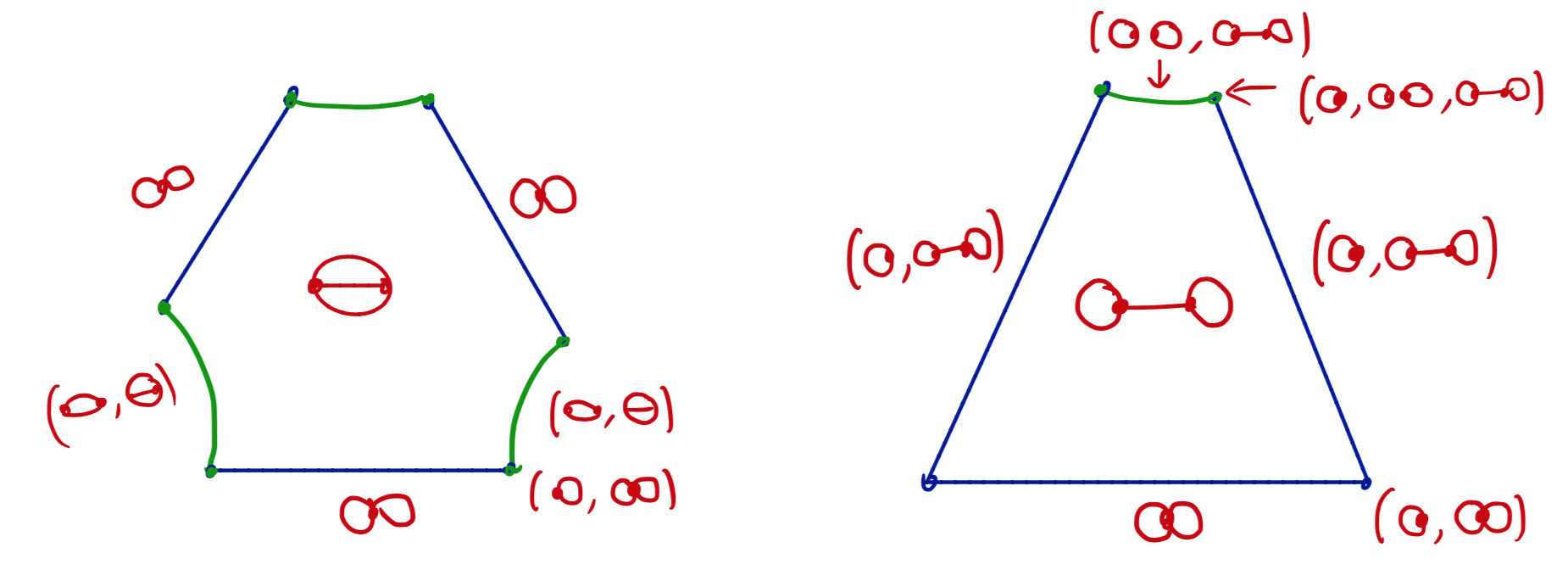}} 
\end{center}
\caption{A picture of  $|\LM_2^{\trop,\BB}|$. Faces with isomorphic labels are to be identified.   We have not labelled all faces for reasons of space.}
\end{figure}

\section{The strict transform of the graph locus in $\LM_g^{\trop,\BB}$} \label{sect: strictTransfGraphLocus}
Let  $G$ be  a graph  with zero weights. Recall that  $\pi_{\BB}: P^{\BB_G}\rightarrow \Pro^{E_G}$ is the corresponding blow-up and 
 $X_G    \subset \Pro^{E_G}$ is the graph hypersurface. Denote its strict transform by  
\[ \widetilde{X}_G \subset  P^{\BB_G}  \] 
and 
let  
$ U_G \subset  P^{\BB_G}$ 
denote its open complement $P^{\BB_G} \backslash \widetilde{X}_G$. 
Recall from theorem \ref{thm: FeynmanBlowUpsFunctor} that $ \left( \mathcal{C}^{\BB}_G \right)^{\opp}$ is equivalent to the category of faces of the blown-up Feynman polytope $\mathcal{F}^{\BB}_G$. 

\begin{defn} Define a functor
\[ \widetilde{\mathcal{X}}:  \left( \mathcal{C}^{\BB}_G \right)^{\opp} \To \mathrm{Sch}_{\Q} \]
which sends  the singleton $(G)$ to the strict transform $\widetilde{X}_G \subset  P^{\BB_G}$. It is  uniquely defined on all other objects \eqref{NestedGraphs} by restriction to faces (i.e., by intersecting with the corresponding divisors in $P^{\BB_G}$), and defines a subscheme functor of $\PF \mathcal{F}^{\BB}$. 
\end{defn} 

An isomorphism of graphs (or nested sequence of graphs) induces an isomorphism of graph hypersurfaces and hence of their strict transforms by 
 proposition \ref{prop: UniversalANDIdeal} (i).    Since $I^{\BB}_g$ is generated by stable unweighted graphs of genus $g$, we deduce the existence of a  functor 
 \[ \widetilde{\mathcal{X}}:  \left( I^{\BB}_g \right)^{\opp} \To \mathrm{Sch}_{\Q} \]
which is a subscheme functor of $\LM_g^{\trop,\BB}$.

\begin{prop}  \label{prop: BlowUpGraphLocusFunctor}
The functor 
$   \widetilde{\mathcal{X}}:  \left( I^{\BB}_g \right)^{\opp} \rightarrow \mathrm{Sch}_{\Q} $
is given on sequences \eqref{NestedGraphs} by
\begin{equation} \label{nestedtoGraphLocus}   (\gamma_1,\ldots, \gamma_n)    \ \mapsto \   \bigcup_{i=1}^n P^{\gamma_1} \times \ldots  \times P^{\gamma_{i-1}/\gamma_{i-2}}\times \widetilde{X}_{\gamma_i/\gamma_{i-1} } \times P^{\gamma_{i+1}/\gamma_{i}}\times  \ldots \times P^{\gamma_n/\gamma_{n-1}}        \ .
\end{equation}
It is a closed subscheme of $\LM_g^{\trop,\BB}$ at infinity, i.e., a subscheme of the functor $\PF\LM_g^{\trop,\BB}$, whose   real points do  not meet the topological realisation: 
\begin{equation} \label{XtildeavoidsLMgtropBB}   \widetilde{\mathcal{X}}(\R)   \cap   |\LM_g^{\trop,\BB}| = \emptyset\ .  
\end{equation}
The blow-down $(\phi,\Phi) : \LM_g^{\trop,\BB} \rightarrow \LM_g^{\trop}$ induces  
  a natural transformation
$\widetilde{\mathcal{X}} \rightarrow \mathcal{X}\circ \phi$, where $\phi$ is the collapsing functor.  
\end{prop}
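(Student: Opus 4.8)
The plan is to verify the three assertions of the proposition in turn: first the explicit formula \eqref{nestedtoGraphLocus} for the functor $\widetilde{\mathcal{X}}$ on a nested sequence, then the fact that $\widetilde{\mathcal{X}}$ defines a subscheme of $\PF\LM_g^{\trop,\BB}$ (i.e.\ that it is compatible with morphisms), then the disjointness statement \eqref{XtildeavoidsLMgtropBB}, and finally the compatibility with blow-down.

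First I would establish \eqref{nestedtoGraphLocus}. By definition $\widetilde{\mathcal{X}}$ sends $(G)$ to $\widetilde{X}_G \subset P^{\BB_G}$ and is determined on all other objects by restriction to faces. By Theorem \ref{thm: FeynmanBlowUpsFunctor}, the face of $\mathcal{F}^{\BB}(G)$ corresponding to a nested sequence $(\gamma_1,\ldots,\gamma_n)$ is the product $\mathcal{F}^{\BB}(\gamma_1)\times\cdots\times\mathcal{F}^{\BB}(\gamma_n/\gamma_{n-1})$, sitting inside $P^{\BB_G}$ as an intersection of exceptional divisors and coordinate subspaces. So I must compute $\widetilde{X}_G$ intersected with such a face. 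The key computational input is Lemma \ref{lem: strtransformhyperplane}, or rather its consequence that the strict transform of a hypersurface, when intersected with the exceptional divisor $\mathcal{E}_{W}\cong P^{\BB_W}\times P^{\BB_{/W}}$ over $\Pro(W)=\Pro(\Q^{E_{G/\gamma}})$, splits as a union $(\widetilde{X}_{\text{something}}\times P^{\BB_{/W}})\cup(P^{\BB_W}\times\widetilde{X}_{\text{something else}})$. For graph hypersurfaces the relevant fact is the classical contraction-deletion behaviour of $\Psi_G$: on the stratum over $\Pro(\Q^{E_{G/\gamma}})$ (equivalently, the exceptional divisor of the core subgraph $\gamma$), one has $\Psi_G = \Psi_\gamma \,\Psi_{G/\gamma} + (\text{terms of higher degree in the exceptional coordinate})$, so that the strict transform restricted there is $\widetilde{X}_\gamma \times P^{\BB_{G/\gamma}} \ \cup\ P^{\BB_\gamma}\times\widetilde{X}_{G/\gamma}$, using the intrinsic identifications $(\BB^G)_{\Q^{E_{G/\gamma}}}=\BB^{G/\gamma}$ and $(\BB^G)_{/\Q^{E_{G/\gamma}}}=\BB^\gamma$ recorded just before \eqref{F1}. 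Iterating this over a full nested sequence, and also using that $\widetilde{X}_{G/e}=\widetilde{X}_G\cap V(\alpha_e)$ for a non-self-edge $e$ (by compatibility of strict transforms with the face inclusions \eqref{F1}, and $X_{G/e}=X_G\cap V(\alpha_e)$ from Proposition \ref{prop: GraphLocusFunctor}), one arrives at \eqref{nestedtoGraphLocus}: the graph hypersurface of the total graph, restricted to a face indexed by a filtration, becomes the union over the graded pieces $\gamma_i/\gamma_{i-1}$ of the (strict transform of the) graph hypersurface of that piece, times the ambient spaces of the other pieces. That $\widetilde{\mathcal{X}}$ is a subscheme functor — i.e.\ that these closed subschemes are compatible with admissible edge contractions and refinements — then follows formally from Proposition \ref{prop: UniversalANDIdeal}(i) applied to isomorphisms, together with the fact that both edge contractions and refinements are face maps \eqref{F1}, \eqref{F2}, so $\widetilde{\mathcal{X}}$ on a smaller face is the restriction of $\widetilde{\mathcal{X}}$ on a larger one, by construction.

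Next, the disjointness \eqref{XtildeavoidsLMgtropBB}. It suffices to show, for every object $(\gamma_1,\ldots,\gamma_n)$ of $I^{\BB}_g$, that the real points of the scheme in \eqref{nestedtoGraphLocus} do not meet $\sigma^{\BB}$ of the corresponding polyhedral blow-up; and by the product description, this reduces to showing $\widetilde{X}_\Gamma(\R)\cap\sigma^{\BB_\Gamma}=\emptyset$ for each graded piece $\Gamma=\gamma_i/\gamma_{i-1}$, a connected graph with all vertices of weight $0$ — i.e.\ to the single-graph case. Here I would argue as follows. On the open cell $\overset{\circ}{\sigma}_\Gamma$ where all $\alpha_e>0$, one has $\Psi_\Gamma>0$ since it is a nonempty sum of positive monomials (Proposition \ref{prop: GraphLocusFunctor}), so $\overset{\circ}{\sigma}_\Gamma$ avoids $X_\Gamma$; on the boundary, passing to the blow-up, $\sigma^{\BB_\Gamma}$ is covered by charts in which, by the local coordinate description \S\ref{sect: LocalBlowUpCoordinates}, the strict transform $\widetilde\Psi_\Gamma$ of $\Psi_\Gamma$ is obtained by dividing out the appropriate monomial in exceptional coordinates, and the resulting polynomial is again (locally, on the region $\sigma^{\BB_\Gamma}$ where all coordinates are $\geq 0$) a sum of nonnegative monomials with a nonvanishing "leading" term — this is precisely the statement, established in the cited references \cite{BEK}, \cite{Cosmic}, that the core blow-up resolves the graph hypersurface into a normal-crossings divisor meeting the positive locus only along coordinate divisors. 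Hence $\widetilde\Psi_\Gamma>0$ on the interior of $\sigma^{\BB_\Gamma}$ and on each face $\widetilde\Psi$ reduces to the corresponding (strictly positive) object for the graded sub/quotient, so by induction on the number of edges $\widetilde{X}_\Gamma(\R)\cap\sigma^{\BB_\Gamma}=\emptyset$. (Alternatively one invokes that the blow-down $\pi_{\BB}:\sigma^{\BB_\Gamma}\to\sigma_\Gamma$ restricts to a homeomorphism over the interior $\overset{\circ}{\sigma}_\Gamma$, on which $X_\Gamma$ is absent, and that the exceptional fibres over $\partial\sigma_\Gamma$ are themselves, by the product structure of Proposition \ref{prop: multiplicativestructureonBLC}, blow-ups of polyhedra for proper sub/quotient graphs, to which induction applies.)

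Finally, the compatibility with blow-down: the map $(\phi,\Phi):\LM_g^{\trop,\BB}\to\LM_g^{\trop}$ has underlying scheme-level natural transformation $\PF\LM_g^{\trop,\BB}\to\PF\LM_g^{\trop}$ given, on the object $(\gamma_1,\ldots,\gamma_n)$, by projecting the product of blow-ups onto the last factor $P^{\BB_{\gamma_n/\gamma_{n-1}}}$ followed by $\pi_{\BB}$ down to $\Pro^{E_{\gamma_n/\gamma_{n-1}}}=\PF\mathcal{F}(\phi(\gamma_1,\ldots,\gamma_n))$. Under this, the last summand of \eqref{nestedtoGraphLocus}, namely $P^{\gamma_1}\times\cdots\times P^{\gamma_{n-1}/\gamma_{n-2}}\times\widetilde{X}_{\gamma_n/\gamma_{n-1}}$, maps to $\widetilde{X}_{\gamma_n/\gamma_{n-1}}$ and hence $\pi_{\BB}$ carries it into $X_{\gamma_n/\gamma_{n-1}}=\mathcal{X}(\phi(\gamma_1,\ldots,\gamma_n))$; the remaining summands lie in exceptional divisors that are collapsed by the projection into lower strata of $P^{\BB_{\gamma_n/\gamma_{n-1}}}$, whose images under $\pi_{\BB}$ are coordinate subspaces of $\Pro^{E_{\gamma_n/\gamma_{n-1}}}$ all contained in $X_{\gamma_n/\gamma_{n-1}}$ (because contracting any nonempty subgraph gives a graph of positive total weight, hence with $\Psi\equiv0$, by Proposition \ref{prop: GraphLocusFunctor}). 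Thus the image of $\widetilde{\mathcal{X}}(\gamma_1,\ldots,\gamma_n)$ is contained in $\mathcal{X}(\phi(\gamma_1,\ldots,\gamma_n))$, i.e.\ we obtain the asserted natural transformation $\widetilde{\mathcal{X}}\to\mathcal{X}\circ\phi$; naturality is immediate since all the maps in sight are already known to be natural.

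The step I expect to be the main obstacle is the disjointness \eqref{XtildeavoidsLMgtropBB}: while the formula \eqref{nestedtoGraphLocus} is a bookkeeping exercise and the blow-down compatibility is essentially formal, showing that the \emph{strict transform} of the graph hypersurface genuinely moves off the closed Feynman polytope requires the positivity analysis of $\widetilde\Psi_\Gamma$ in the blow-up charts — equivalently, the fact that the core blow-up is exactly the right modification to resolve $X_\Gamma$ away from the positive orthant. This is where the real content of \cite{BEK}, \cite{Cosmic} (on the behaviour of graph polynomials under iterated blow-ups of core subgraphs) gets used, and care is needed to see that the induction on sub/quotient graphs closes up, using that every graded piece $\gamma_i/\gamma_{i-1}$ is again a connected, zero-weight graph to which the inductive hypothesis applies.
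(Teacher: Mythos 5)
Your proposal follows essentially the same route as the paper's proof: the factorisation $\Psi_G = \Psi_\gamma\,\Psi_{G/\gamma} + R_{G/\gamma}$ (with $R_{G/\gamma}$ of higher degree in the $\gamma$-variables) is used exactly as you describe to obtain \eqref{nestedtoGraphLocus} by iterated refinement, and the disjointness \eqref{XtildeavoidsLMgtropBB} is reduced as you do to the single-graph statement $\widetilde{X}_G(\R)\cap\sigma^{\BB}_G=\emptyset$, which the paper simply cites from \cite[Prop.\ 7.3(iii)]{BEK} and \cite[\S6.5]{Cosmic} rather than re-deriving via a positivity analysis in charts. One small imprecision in your last paragraph: for $i<n$ the summand $P^{\gamma_1}\times\cdots\times\widetilde{X}_{\gamma_i/\gamma_{i-1}}\times\cdots\times P^{\gamma_n/\gamma_{n-1}}$ projects \emph{surjectively} onto the last factor $P^{\BB_{\gamma_n/\gamma_{n-1}}}$ rather than into a lower exceptional stratum, so its image under $\Phi$ is all of $\Pro^{E_{\gamma_n/\gamma_{n-1}}}$; the containment in $X_{\gamma_n/\gamma_{n-1}}$ nonetheless holds because, for $n\geq 2$, the weighted graph $\gamma_n/\gamma_{n-1}$ has positive total weight (as $\gamma_{n-1}$ is a non-empty core graph, so $h_{\gamma_{n-1}}>0$) and hence $X_{\gamma_n/\gamma_{n-1}}$ is the entire projective space. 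The paper compresses this whole final point to the remark that $X_G$ is by definition the blow-down of $\widetilde{X}_G$.
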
 
\begin{proof} 
If $e \in E_G$ is not a self-edge, there is a canonical inclusion $\widetilde{X}_{G/e} \rightarrow \widetilde{X}_G$ induced by the inclusion $P_{G/e}^{\BB}\rightarrow P_G^{\BB}$, since this is true for graph hypersurfaces and 
strict transforms are functorial (proposition \ref{prop: UniversalANDIdeal} (i)).  This proves the formula \eqref{nestedtoGraphLocus} for sequences of length $n=1$. 

The general case follows by taking refinements.  Let $\gamma \subset G$ be a core subgraph of $G$. One has the following identity of graph polynomials   \cite[(2.2)]{Cosmic}  
\begin{equation} \label{Psifactorizes} \Psi_G = \Psi_{\gamma} \Psi_{G/\gamma} + R_{G/\gamma}
\end{equation} 
where $R_{G/\gamma}$ is a polynomial of homogeneous degree strictly greater than $\deg \Psi_{\gamma}= h_{\gamma}$ in the edge variables $\alpha_e$ for $e \in E_{\gamma}$. The identity  implies that 
 if $D \subset P^G$ is the exceptional divisor corresponding to the blow-up of $\Pro^{E_\gamma}\subset \Pro^{E_G}$, one has a canonical isomorphism:
\[  \widetilde{X}_G \cap  D  \cong  \left( \widetilde{X}_{\gamma} \times P^{\BB}_{G/\gamma} \right) \   \cup    \ \left( P^{\BB}_{\gamma} \times \widetilde{X}_{G/\gamma} \right) \ . \]
See also \cite[(3.7)]{BEK}.  The right-hand side of   \eqref{nestedtoGraphLocus} follows by iterating this isomorphism.
The fact that $\widetilde{\mathcal{X}}$ is at infinity follows from the fact that the strict transform locus $\widetilde{X}_G(\R)$ does not meet the region $\sigma^{\BB}_G$ \cite[Proposition 7.3 (iii)]{BEK}, \cite[\S6.5]{Cosmic}.
 The final statement follows by definition since $X_G$ is the blow-down of $\widetilde{X}_G$.
\end{proof} 
 The properties of graph hypersurfaces required in the proof will actually be rederived in \S\ref{section:  DetLocus} from a  more conceptual viewpoint via properties of the determinant. 

The  open complement  of  the subscheme $\widetilde{\mathcal{X}}$  is the functor
\begin{eqnarray} \mathcal{U}: \left(I^{\BB}_g\right)^{\opp}& \To&  \mathrm{Sch}_{\Q}   \nonumber \\
 (\gamma_1,\ldots, \gamma_n)   \  &\mapsto&  \   U_{\gamma_1} \times U_{\gamma_2/\gamma_1} \times \ldots \times U_{\gamma_n/\gamma_{n-1}}  \nonumber 
\end{eqnarray} 
It is an open subscheme of $\PF \LM_g^{\trop,\BB}$ and  satisfies $ |\LM_g^{\trop,\BB}|  \subset \mathcal{U}$ (definition \ref{defn: subschemeatinfinity}).

\begin{cor} Via definition \ref{defn: OpenSubComplex} we deduce    a functor 
\begin{eqnarray} 
\LM_g^{\trop,\BB} \backslash \widetilde{\mathcal{X}} \  : \   \left( I_{g}^{\BB} \right)^{\opp}  &\To&  \PC_{\Q} \nonumber  \\
x & \mapsto & (\mathcal{U}_x,  L^{\BB} \cap \mathcal{U}_x, \sigma_x) \nonumber
\end{eqnarray} 
\end{cor}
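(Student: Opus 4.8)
The plan is to obtain this functor as a direct instance of Definition \ref{defn: OpenSubComplex}, applied to the $\PC_{\Q}$-complex $F=\LM_g^{\trop,\BB}$ and the open subscheme $\mathcal{U}$ appearing just before the statement. First I would record the two inputs. On the one hand, $\LM_g^{\trop,\BB}:\left(I_g^{\BB}\right)^{\opp}\to\BLC_{\Q}$ is in particular a functor into $\PC_{\Q}$, since $\BLC_{\Q}$ is a subcategory of $\PC_{\Q}$; thus it is a $\PC_{\Q}$-complex in the sense of Definition \ref{defn: GenCkComplex}. On the other hand, by Proposition \ref{prop: BlowUpGraphLocusFunctor} the functor $\widetilde{\mathcal{X}}$ is a closed subscheme of $\PF\LM_g^{\trop,\BB}$, so its open complement $\mathcal{U}$, with $\mathcal{U}_x=U_{\gamma_1}\times\cdots\times U_{\gamma_n/\gamma_{n-1}}$, is an open subscheme of $\PF\LM_g^{\trop,\BB}$ — this is exactly the displayed assertion immediately preceding the corollary. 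To see that $\mathcal{U}$ is a genuine subfunctor one uses that for a face map $F(f)$ the subscheme $\widetilde{\mathcal{X}}_x$ is, by construction, the pullback $F(f)^{-1}(\widetilde{\mathcal{X}}_y)$, so $F(f)$ carries the complement of $\widetilde{\mathcal{X}}_x$ into the complement of $\widetilde{\mathcal{X}}_y$; for isomorphisms this is immediate.

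Next I would check the crucial hypothesis of Definition \ref{defn: OpenSubComplex}, namely $|F|\subset\mathcal{U}$. This is precisely the statement \eqref{XtildeavoidsLMgtropBB} of Proposition \ref{prop: BlowUpGraphLocusFunctor}: since $\widetilde{\mathcal{X}}(\R)\cap|\LM_g^{\trop,\BB}|=\emptyset$, each polyhedron $\sigma_x=\sigma(\LM_g^{\trop,\BB}(x))$ lies in $\mathcal{U}_x(\R)\subset\mathcal{U}_x(\C)$ for every object $x$. Applying Definition \ref{defn: OpenSubComplex} then produces the functor $\LM_g^{\trop,\BB}\cap\mathcal{U}:\left(I_g^{\BB}\right)^{\opp}\to\PC_{\Q}$ sending $x\mapsto(\mathcal{U}_x,\,L^{\BB}\cap\mathcal{U}_x,\,\sigma_x)$, and this is by definition $\LM_g^{\trop,\BB}\backslash\widetilde{\mathcal{X}}$. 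The one point worth spelling out is that each triple $(\mathcal{U}_x,L^{\BB}\cap\mathcal{U}_x,\sigma_x)$ really is an object of $\PC_{\Q}$: smoothness is inherited by the open subscheme $\mathcal{U}_x$, the polyhedron $\sigma_x$ is unchanged, and $\partial\sigma_x=\sigma_x\cap L^{\BB}(\C)=\sigma_x\cap(L^{\BB}\cap\mathcal{U}_x)(\C)$ because $\sigma_x\subset\mathcal{U}_x(\C)$; the recursive condition on faces holds since every face of $\sigma_x$ is again contained in $\mathcal{U}_x(\C)$, and the restriction of a morphism in $\BLC_{\Q}$ to these open loci is still of one of the two admissible types (a face map, or an isomorphism on topological realisations).

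I do not expect a genuine obstacle here: all the substance has already been carried out in Proposition \ref{prop: BlowUpGraphLocusFunctor} — in particular the factorisation behaviour \eqref{Psifactorizes} of the graph polynomial along exceptional divisors, which governs how $\widetilde{X}_G$ meets the strata of $P^{\BB_G}$, and the fact that $\widetilde{X}_G(\R)$ avoids $\sigma^{\BB}_G$. The remaining content of the corollary is the purely formal observation that an \emph{at infinity} subscheme is exactly the data required by Definition \ref{defn: OpenSubComplex}, so that deleting it yields a well-defined $\PC_{\Q}$-complex; if any step deserves to be called the main point, it is the identification of \eqref{XtildeavoidsLMgtropBB} with the hypothesis $|F|\subset\mathcal{U}$.
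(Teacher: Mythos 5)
Your proposal is correct and follows exactly the route the paper intends: the corollary is presented without a separate argument because it is a direct application of Definition \ref{defn: OpenSubComplex} to the open subscheme $\mathcal{U}$, whose key hypothesis $|F|\subset\mathcal{U}$ is precisely the ``at infinity'' statement \eqref{XtildeavoidsLMgtropBB} established in Proposition \ref{prop: BlowUpGraphLocusFunctor}. Your additional check that the resulting triples remain objects of $\PC_{\Q}$ is a reasonable (if already implicit) elaboration and introduces no new idea.
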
 

\begin{rem} A functorial system of \emph{linear} subspaces of  $ \PF \mathcal{F}(G)$ was defined in \cite[\S5.4]{Cosmic} whose complement in $P^G$ is affine.  It defines a subscheme functor  $\mathcal{A} \subset \LM_g^{\trop, \BB}$  such that $\left| \LM_g^{\trop, \BB}\right| \subset  P^G \backslash \mathcal{A}$. It would be very interesting to  construct natural differential forms on $P^G \backslash \mathcal{A}$. They would have linear  poles (by contrast with  the canonical forms studied here.)
\end{rem} 

\subsection{The boundary and open locus}
\begin{defn}
Let $\partial I_{g}^{\BB}$ denote the full subcategory of $I_g^{\BB}$ whose objects are sequences of graphs $(\gamma_1,\ldots, \gamma_n)$ where $n\geq 2$.
Denote  the restriction of  $\LM_g^{\BB}$ to this category by
\[ \partial \LM_g^{\BB} : \partial  I_{g}^{\BB} \To \BLC_{\Q}\ . \]
  We shall call it the  boundary locus, or exceptional locus,  of $\LM_{g}^{\trop, \BB}$. 
\end{defn}

The open locus $|\LM_g^{\circ, \trop}|$ embeds canonically into both $|\LM_g^{\trop}|$ and $|\LM_g^{\trop,\BB}|$. 

\begin{prop}  \label{prop: EmbedOpentoBlowup} There is a morphism of $\PC_{\Q}$-complexes
\[ \partial \LM_g^{\trop,\BB}   \To \partial \LM_g^{\trop}   \]
given by the pair $(\phi^{\opp}, \pi^{\BB})$, where $\phi :  \partial I_{g}^{\BB} \rightarrow   \partial I_{g}$ is the restriction of the collapsing functor, and $\pi^{\BB}$ is the canonical blow-down map.
In addition,  there is a canonical embedding  
\begin{equation} \label{EmbedOpentoBlowup}   \left|  \LM_g^{\circ, \trop}\right| \overset{\sim}{\To} \left|   \LM_g^{\trop,\BB} \right|  \setminus  \left| \partial \LM_g^{\trop,\BB}\right|  
\end{equation} 
whose inverse is   the blow-down  
$  \left|   \LM_g^{\trop,\BB} \right|  \setminus  \left|  \partial \LM_g^{\trop,\BB} \right| \overset{\sim}{\To}  \left|   \LM_g^{\trop} \right|  \setminus  \left|   \partial \LM_g^{\trop}  \right|$.
\end{prop}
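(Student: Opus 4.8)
The plan is to establish both assertions by working cell-by-cell, i.e.\ on the level of objects of the two diagram categories, and then assembling via the colimit that defines topological realisation. For the first assertion, one checks that the restriction of the collapsing functor $\phi: I_g^{\BB}\to I_g$ to the full subcategories $\partial I_g^{\BB}$ and $\partial I_g$ is well-defined: a sequence $(\gamma_1,\dots,\gamma_n)$ with $n\geq 2$ collapses to $\gamma_n/\gamma_{n-1}$, and since $\gamma_{n-1}\subsetneq\gamma_n$ is a nonempty core subgraph, at least one edge is contracted, so the resulting weighted graph has positive total weight and hence lies in $\partial I_g$. Compatibility of $\pi^{\BB}$ with face maps and with the blow-down $\mathcal{F}^{\BB}(G)\to\mathcal{F}(G)$ has already been recorded in the construction of the morphism $\LM_g^{\trop,\BB}\to\LM_g^{\trop}$, so restricting that morphism of functors to the boundary subcategories gives the pair $(\phi^{\opp},\pi^{\BB})$ directly.

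For the second assertion, the key observation is that, by Definition~\ref{defn: OpenModuliSpace} and Definition~\ref{defn: IgBB}, the objects of $I_g^{\BB}$ not lying in $\partial I_g^{\BB}$ are exactly the singletons $(\gamma_n)$ with $\gamma_n$ a stable connected graph of genus $g$ with all weights $0$; under $\LM_g^{\trop,\BB}$ such an object maps to $\mathcal{F}^{\BB}(\gamma_n)$ whose interior, via the blow-down $\pi_{\BB}:\sigma^{\BB}_{\gamma_n}\to\sigma_{\gamma_n}$, is mapped homeomorphically onto the interior $\overset{\circ}{\sigma}_{\gamma_n}$ (the blow-up is an isomorphism over the interior, where no boundary linear subspace is met). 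The complement $\left|\LM_g^{\trop,\BB}\right|\setminus\left|\partial\LM_g^{\trop,\BB}\right|$ is, by the definition of the colimit and the fact that every morphism in $I_g^{\BB}$ is an isomorphism, an admissible edge contraction or a refinement, precisely the union of images of these open cells $\overset{\circ}{\sigma}^{\BB}_{\gamma_n}$. The blow-down \eqref{BlowDownOnLMgTop} restricted to this open locus therefore matches, cell by cell, the identification of $\left|\LM_g^{\circ,\trop}\right|$ as the union of the $\overset{\circ}{\sigma}_{\gamma_n}$ inside $\left|\LM_g^{\trop}\right|$, and is thus a continuous bijection; its inverse is continuous because each $\pi_{\BB}$ is an isomorphism over the interior. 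Hence it is a homeomorphism, and \eqref{EmbedOpentoBlowup} is its inverse.

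The step I expect to be the main obstacle is verifying that the images of the open cells $\overset{\circ}{\sigma}^{\BB}_{\gamma_n}$ are glued together in $\left|\LM_g^{\trop,\BB}\right|$ in exactly the same pattern as the $\overset{\circ}{\sigma}_{\gamma_n}$ in $\left|\LM_g^{\trop}\right|$; equivalently, that no two points of distinct open cells which become identified in the blow-up fail to be identified downstairs, and vice versa. This amounts to checking that the only identifications among interiors of top cells are induced by graph isomorphisms (all refinement and edge-contraction morphisms land in $\partial I_g^{\BB}$ when their target involves a proper face, so they do not affect the \emph{open} locus), which follows from the fact that in both categories the automorphisms of a top cell are exactly the graph automorphisms of $\gamma_n$ and the blow-up $\mathcal{F}^{\BB}$ is natural with respect to these. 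Once this combinatorial bookkeeping is done, the topological statement is formal: a continuous bijection between CW-type colimits whose inverse is continuous on each cell is a homeomorphism. I would also briefly note that $\left|\LM_g^{\circ,\trop}\right|$ is not itself the realisation of a $\PC_\Q$-complex (it is not closed), so the argument genuinely has to be run on the level of the open strata rather than invoking Theorem~\ref{thm: CellularHomology} or Theorem~\ref{thm: RelativeCellular}.
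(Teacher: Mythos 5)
Your treatment of the first assertion is essentially correct and aligns with the paper's argument, though your phrasing ``at least one edge is contracted, so the resulting weighted graph has positive total weight'' is loose: contracting an edge of a weight-zero graph does not by itself produce positive weight. The relevant point is that $\gamma_{n-1}$ is a nonempty core (bridgeless) subgraph, hence has positive loop number $h_{\gamma_{n-1}}>0$, and since genus is preserved under contraction this loop number is transferred into vertex weight. That detail should be stated.

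For the second assertion there is a genuine gap, and it is located exactly at the step you flag as the ``main obstacle.'' Your resolution of it reduces the problem to checking that interiors of singleton cells are identified only by graph automorphisms, but that is not the right thing to verify: the identifications that determine the colimit topology are identifications along \emph{shared faces} of distinct top cells, not among their interiors. Moreover, your parenthetical is factually wrong --- an admissible edge contraction $(G)\to(G/e)$ has source and target both singletons, so it does not land in $\partial I_g^{\BB}$, and it \emph{does} contribute a nontrivial gluing inside the open locus (the interior $\overset{\circ}{\sigma}^{\BB}_{G/e}$ sits on the boundary of $\sigma^{\BB}_G$). The substantive fact you need, and do not establish, is: whenever two distinct weight-zero cells $\sigma_{G_1}$, $\sigma_{G_2}$ share a weight-zero face $\sigma_\Gamma$ in $|\LM_g^{\trop}|$, their blow-ups $\sigma^{\BB}_{G_1}$, $\sigma^{\BB}_{G_2}$ share the face $\sigma^{\BB}_\Gamma$ in $|\LM_g^{\trop,\BB}|$, compatibly with the blow-down. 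This is not automatic; it holds because the blow-up centers in $\mathcal{F}^{\BB}(\Gamma)$ are determined by the core subgraphs of $\Gamma$ alone (core subgraphs are intrinsic, independent of the ambient cell), which is what makes the assignment $G\mapsto(G)$ into a \emph{functor} $I_{g,w=0}\to I_g^{\BB}$. Once that is in place, you get a morphism of topological diagram functors rather than merely a cell-by-cell bijection, and the homeomorphism of colimits follows formally. Without it, a continuous bijection that is a homeomorphism on each cell need not be a homeomorphism of the glued spaces, since the source and target could carry different quotient topologies.
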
 

\begin{proof}
The functor  $\phi$ is  given by restricting the collapsing functor  \eqref{collapsing} to nested sequences of length $\geq 2$. To see that it   lands in  the subcategory $\partial I_{g}$, note that collapsing a nested sequence of length $\geq 2$ results in a graph $\gamma_n/\gamma_{n-1}$ of positive weight,  since it involves the contraction of a core sugraph $\gamma_{n-1}$, which  has positive loop number $h_{\gamma_{n-1}}>0$.  The exceptional locus of $\mathcal{F}^{\BB}(G) \rightarrow \mathcal{F}(G)$ corresponds to precisely to sequences \eqref{NestedGraphs} of length $\geq 2$. 
 To prove \eqref{EmbedOpentoBlowup}, first define  the full subcategory $I_{g,w=0} $  of $I_g$ consisting of graphs of weight $0$, and consider the functor:
 \begin{eqnarray} 
\sigma\cap \mathcal{U} :  I^{\opp}_{g,w=0} & \To & \Top \label{inproof:sigmaU} \\
G & \mapsto &  ( \sigma\cap \mathcal{U})_G=  \sigma_G \backslash   \left( \sigma_G \cap X_G(\R) \right) \ .\nonumber 
\end{eqnarray} 
Note that $(\sigma\cap \mathcal{U})_G$ is contained in $\sigma_G$,  but  contains $\overset{\circ}{\sigma}_G$ since $(\sigma\cap \mathcal{U})_G= \bigcup_{h_{\gamma}=0} \overset{\circ}{\sigma}_{G/\gamma}$.
It follows from proposition \ref{prop: GraphLocusFunctor}  and the fact that $(\sigma\cap \mathcal{U})_G$ is the empty set if $w(G)>0$ that:
\[  \left|  \sigma \cap \mathcal{U} \right| =     \varinjlim_{G \in  I^{\opp}_{g,w=0}}\sigma_G \backslash   \left( \sigma_G \cap X_G(\R) \right) =          \left| \LM_g^{\circ, \trop} \right|  \ .\]  
 Now consider the functor  $\sigma\backslash \mathcal{E}: I_g^{\BB} \rightarrow \Top$  which sends sequences \eqref{NestedGraphs} of length $>1$ to the empty set, and maps singletons $(G)$ to
  \[ (G)  \ \mapsto  \  (\sigma\backslash \mathcal{E})_G = \sigma^{\BB}_G  \  \backslash \  \left( \sigma^{\BB}_G \cap  \mathcal{E}_G(\R)  \right) \]
 where $\mathcal{E}_G\subset P^{\BB_G}$ is the exceptional divisor of $\pi_{\BB_G}: P^{\BB_G} \rightarrow \Pro^{E_G}$. 
 The blow-down morphism  defines a functorial  isomorphism of algebraic varieties, and of topological spaces:
 \begin{eqnarray}   \pi_{\BB_G}  \  : \    P^{\BB_G} \backslash \left(\mathcal{E}_G \cup \widetilde{\mathcal{X}}_G \right) & \overset{\sim}{\To}     &  
 \Pro^{E_G} \backslash X_G  \nonumber \\
   \sigma^{\BB}_G  \  \backslash \  \left( \sigma^{\BB}_G \cap  \mathcal{E}_G(\R)  \right)      & \overset{\sim}{\To}&  \sigma_G \backslash (\sigma_G \cap X_G)  = (\sigma \cap \mathcal{U})_G
   \nonumber
 \end{eqnarray} 
 The second line follows from restriction of the first, since  $\widetilde{\mathcal{X}}_G(\R)$ does not meet $\sigma^{\BB}_G$ by \eqref{XtildeavoidsLMgtropBB}. 
  Let $j:   \sigma_G \backslash (\sigma_G \cap X_G) \overset{\sim}{\rightarrow}  \sigma^{\BB}_G  \  \backslash \  \left( \sigma^{\BB}_G \cap  \mathcal{E}_G(\R)  \right) $ denote its inverse. To define the   continuous  map \eqref{EmbedOpentoBlowup} we define a pair consisting of a functor and natural transformation, and take its limit.
 The natural transformation is given by $j$; the functor is given by
\[ 
\iota:  I_{g,w=0} \To   I_{g}^{\BB}  \qquad \hbox{where} \qquad  \iota:G \mapsto (G)  \ , 
 \] 
 and $(G)$ denotes a sequence of graphs of length one. The isomorphism $j$, which is functorial,  thus defines a  natural transformation from the functor $\sigma\cap \mathcal{U}$ defined by \eqref{inproof:sigmaU} to 
 \[  (\sigma \backslash \mathcal{E}) \circ \iota: I_{g,w=0} \overset{\iota}{\To} I_g^{\BB} \overset{\sigma\backslash \mathcal{E}}{\To}  \Top\]
 It induces a continuous map  
 \[  \left|  \sigma \cap \mathcal{U} \right| \To   \left| \sigma \backslash \mathcal{E}\right|=  \varinjlim_{G \in  (I_{g}^{\BB})^{\opp}} \sigma^{\BB}_G  \  \backslash \  \left( \sigma^{\BB}_G \cap  \mathcal{E}_G(\R) \right)   \] 
 which is exactly \eqref{EmbedOpentoBlowup}.
The last statement follows from the fact that the inverse of the continuous map $j$ is, by definition,   the restriction of the blow-down map $\pi^{\BB}$.
\end{proof}
The key point in the previous proposition is that simplices  which are glued together in the tropical moduli space, and which are not contained in  the graph hypersurface locus,  continue to be  glued together after  blowing-up   (i.e.,  if $\mathcal{F}(\gamma)$ is a common face of both $\mathcal{F}(G_1)$ and $\mathcal{F}(G_2)$ and $w(\gamma)=0$,  then  $\mathcal{F}^{\BB}(\gamma)$ is a common face of both $\mathcal{F}^{\BB}(G_1)$ and $\mathcal{F}^{\BB}(G_2)$). This is expressed by the fact that $\iota$ is a functor, using  the fact that  core graphs are intrinsic.

\subsection{The face complex associated to $\LM_g^{\trop,\BB}$}  \label{sect: NewGraphComplex} 
Using the general definition \ref{defn: facecomplex}, we may write down the face complex associated to $\LM_g^{\trop,\BB}$.
\begin{defn} Let $\GC_0^{\BB}$ denote the  $\Q$-vector space generated  by symbols $\big[ \underline{\Gamma}, \varpi \big] $
\[   \hbox{where } \quad \underline{\Gamma}=(\gamma_1,\ldots, \gamma_n)  \ , \quad \hbox{ and }  \quad  \varpi =   \varpi_{\gamma_1} \wedge \varpi_{\gamma_2/\gamma_1} \wedge  \ldots \wedge \varpi_{\gamma_n/\gamma_{n-1}}\ ,  \] 
where $\gamma_1 \subsetneq \ldots \subsetneq \gamma_n$ is a  strict nested sequence of  graphs where $\gamma_i$ is core for $i<n$ and $\gamma_n$ is stable and connected (definition \ref{defnNestedGraphs}), and 
 $\varpi_{\gamma_{i+1}/\gamma_i} $ is an orientation  on the  quotients $\gamma_{i+1}/\gamma_i$, with relations:
\[ 
 {[} \underline{\Gamma}, - \varpi]   =  -  [\underline{\Gamma}, \varpi] \qquad \hbox{ and } \qquad 
 {[} \underline{\Gamma},  \varpi]  =  [ \underline{\Gamma}', \varpi']  \]
whenever $\underline{\Gamma} \cong \underline{\Gamma}'$ is an isomorphism  of nested sequences $f: \gamma_n \cong \gamma_n'$ such that  $f(\gamma_i) = \gamma'_i$, and $\varpi'= f(\varpi)$.  It is bigraded by  genus, and edge number.  
The differential 
\[ d : \GC_0^{\BB} \To \GC_0^{\BB}\]
has two components:  $d= d^i+d^e$, which we call the internal and exceptional differentials, respectively. 
The internal differential is defined by edge contraction \eqref{EdgeContractSequence}
\[ d^i \big[(\gamma_1,\ldots, \gamma_n), \varpi  \big] = \sum_{e \in E_{\gamma_j/\gamma_{j-1}}} (-1)^j \big[(\gamma_1,\ldots, \gamma_{n})/e,  \varpi'
\big]\]
where  the sum is over all admissible edges,    $j$ is the unique index such that $e \in E_{\gamma_j/\gamma_{j-1}}$ and 
\[ \varpi'=  \varepsilon\,  \varpi_{\gamma_1} \wedge \ldots   \wedge \varpi_{\gamma_{i-1}/\gamma_{i-2}}  \wedge  \varpi'_{\gamma_i/\gamma_{i-1}} \wedge  \ldots \wedge \varpi_{\gamma_n/\gamma_{n-1}} \]
where $\varepsilon = \pm 1$  is defined by $\varpi'_{\gamma_{i}/\gamma_{i-1}} = \varepsilon \, e \wedge  \varpi_{\gamma_i /(\gamma_{i-1}\cup e)}$.
The exceptional differential is:
\[ d^e \big[(\gamma_1,\ldots, \gamma_n), \varpi  \big] = \sum_{\gamma} (-1)^i \big[(\gamma_1,\ldots, \gamma_{i-1},  \gamma, \gamma_i, \ldots,  \gamma_n),  \varpi'
\big]\]
where the sum is over all refinements, i.e., all possible choices of core graphs $\gamma$ such that  $\gamma_i \subsetneq \gamma \subsetneq \gamma_{i+1}$ 
for some $1 \leq i \leq n$, or $\gamma \subsetneq \gamma_1$.  The orientation is given by 
\[ \varpi'=  \varepsilon\,  \varpi_{\gamma_1} \wedge \ldots \wedge \varpi_{\gamma_{i-1}/\gamma_{i-2}} \wedge  \varpi'_{\gamma/\gamma_{i-1}} \wedge  \varpi'_{\gamma_i/\gamma} \wedge \ldots \wedge \varpi_{\gamma_n/\gamma_{n-1}} \]
where $\varepsilon = \pm 1$ is defined by 
$\varpi_{\gamma_{i}/\gamma_{i-1}} = \varepsilon     \varpi'_{\gamma/\gamma_{i-1}} \wedge  \varpi'_{\gamma_i/\gamma}  $.
\end{defn} 

The complex $\GC_0^{\BB}$ is filtered by the length  $n$ of sequences. Let $F_k \GC_0^{\BB}$ denote  the subcomplex whose generators $[(
\gamma_1,\ldots, \gamma_n), \varpi]$ have $n> k$ components. 

\begin{prop}  If we write $\GC_0^{\partial \BB} = F_1 \GC_0^{\BB}$ then   
\begin{equation} \label{homologynestedsequencecomplex}  H_n(\GC_0^{\BB}) \cong  \bigoplus_{g} H_{n}( \left| \LM_g^{\trop, \BB}\right|) \quad \hbox{ and } \quad  H_n(\GC_0^{\partial \BB}) \cong \bigoplus_g H_{n}( \left|\partial \LM_g^{\trop, \BB}\right|) \ .
\end{equation}
Furthermore, the homology of the  usual commutative even graph complex satisfies $ H_n(\GC_0) = H_{n}(\GC_0^{\BB} , \GC_0^{\partial\BB})$ and fits into a long exact sequence
\[ \ldots \To  H_{n}( \GC_0^{\partial \BB})  \To H_n( \GC_0^{\BB} ) \To  H_n(\GC_0) \overset{d^e}{\To} H_{n-1}( \GC_0^{\partial \BB})  \To \ldots \ . \]
This sequence may be identified with the direct sum over all $g$, of the relative homology sequence of the pair $(\left| \LM_g^{\trop, \BB}\right|,   \left| \partial \LM_g^{\trop, \BB}\right|)$. 
\end{prop}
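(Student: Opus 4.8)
The plan is to deduce everything from the general cellular homology machinery (Theorems \ref{thm: CellularHomology} and \ref{thm: RelativeCellular}) applied to the $\BLC_{\Q}$-complexes $\LM_g^{\trop,\BB}$ and $\partial\LM_g^{\trop,\BB}$, together with the explicit identification of their face complexes. First I would observe that, by Definition \ref{defn: facecomplex}, the face complex $\mathfrak{C}(\LM_g^{\trop,\BB})$ has generators indexed by pairs $(\sigma,\varpi)$ where $\sigma$ ranges over faces of the topological realisations $\sigma^{\BB}$ of objects of $I_g^{\BB}$; by Theorem \ref{thm: FeynmanBlowUpsFunctor} (and its analogue for $\LM_g^{\trop,\BB}$) these faces are exactly indexed by nested sequences $(\gamma_1,\ldots,\gamma_n)$, with the face corresponding to such a sequence being a product of blown-up simplices, hence carrying an orientation which decomposes as $\varpi_{\gamma_1}\wedge\cdots\wedge\varpi_{\gamma_n/\gamma_{n-1}}$ up to sign. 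Matching this with Definition of $\GC_0^{\BB}$ gives an isomorphism of graded vector spaces $\mathfrak{C}(\LM_g^{\trop,\BB})_\bullet \cong (\GC_0^{\BB})_\bullet$ in the appropriate genus; one then checks that the cellular differential \eqref{FaceComplexDifferential}, which sums over facets, matches $d^i+d^e$, since Theorem \ref{thm: FeynmanBlowUpsFunctor} says the facets of a blown-up Feynman polytope are of exactly two types: those obtained by admissible edge contraction \eqref{F1} and those obtained by inclusion of an exceptional divisor \eqref{F2}, i.e. refinements. Summing over $g$ and invoking Theorem \ref{thm: CellularHomology} yields the first isomorphism in \eqref{homologynestedsequencecomplex}; restricting to sequences of length $\geq 2$ (the subcategory $\partial I_g^{\BB}$) and using the same argument for $\partial\LM_g^{\trop,\BB}$ gives the second.

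Next I would identify the quotient complex. By definition $F_1\GC_0^{\BB}=\GC_0^{\partial\BB}$ consists of generators with $n\geq 2$, so the quotient $\GC_0^{\BB}/\GC_0^{\partial\BB}$ has generators indexed by singletons $(G)$ with $G$ stable connected of genus $g$ — i.e. exactly the generators of $\GC_0^{\stab}$ — and the induced differential is the part of $d^i$ that lands back in length-one sequences, namely ordinary edge contraction $[G]\mapsto\sum_e\pm[G/e]$ (with the convention that contracting a self-edge or producing a graph of positive weight gives zero, which is precisely how $\GC_0^{\stab}$ was defined). Thus $\GC_0^{\BB}/\GC_0^{\partial\BB}\cong\GC_0^{\stab}$, which is quasi-isomorphic to $\GC_0$ as noted in \S\ref{sect: NewGraphComplex}; this gives $H_n(\GC_0)=H_n(\GC_0^{\BB},\GC_0^{\partial\BB})$. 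The long exact sequence is then just the long exact homology sequence of the short exact sequence of complexes $0\to\GC_0^{\partial\BB}\to\GC_0^{\BB}\to\GC_0^{\stab}\to 0$, with connecting map $d^e$ (the only component of the differential on $\GC_0^{\BB}$ that can raise the filtration index from $1$ to $2$).

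To match this with topology, I would apply Theorem \ref{thm: RelativeCellular} to the morphism $\partial\LM_g^{\trop,\BB}\to\LM_g^{\trop,\BB}$, which is a morphism of $\BLC_{\Q}$-complexes inducing a closed embedding on topological realisations (Proposition \ref{prop: EmbedOpentoBlowup}): this gives $H_n(\mathfrak{C}_{\LM_g^{\trop,\BB}}/\mathfrak{C}_{\partial\LM_g^{\trop,\BB}})\cong H_n(|\LM_g^{\trop,\BB}|,|\partial\LM_g^{\trop,\BB}|;\Q)$, and the long exact sequence of the pair of spaces is compatible with the long exact sequence of the pair of complexes under the cellular isomorphism. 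Summing over $g$ and comparing with the algebraic long exact sequence above identifies the two sequences, completing the proof. The main obstacle — really the only nontrivial point — is the sign bookkeeping in verifying that the geometric boundary map on orientations, i.e. the image of $\varpi$ under $H_*(\sigma^{\BB},\partial\sigma^{\BB})\to H_{*-1}(\tau,\partial\tau)$ for a facet $\tau$ of a product of blown-up simplices, agrees with the explicitly prescribed orientations $\varpi'$ appearing in the formulas for $d^i$ and $d^e$; this requires carefully tracking the position of the contracted edge or inserted graph within the wedge product and the sign $\varepsilon$, using the product structure of faces from \S\ref{sect: FacesAndMultBoundary} (Proposition \ref{prop: multiplicativestructureonBLC}). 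Everything else is a formal consequence of the homology and de Rham formalism of \S\ref{section: HomologyCohomology}.
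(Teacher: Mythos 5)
Your proof is correct and follows essentially the same route as the paper's: identify the face complex of $\LM_g^{\trop,\BB}$ (resp.\ $\partial\LM_g^{\trop,\BB}$) with $(\GC_0^{\BB})_g$ (resp.\ $(\GC_0^{\partial\BB})_g$) via Theorem~\ref{thm: FeynmanBlowUpsFunctor}, apply Theorems~\ref{thm: CellularHomology} and~\ref{thm: RelativeCellular}, identify the quotient with $\GC_0^{\stab}$ and read off the long exact sequence with connecting map $d^e$. One small inaccuracy: the embedding $|\partial\LM_g^{\trop,\BB}|\hookrightarrow|\LM_g^{\trop,\BB}|$ does not follow from Proposition~\ref{prop: EmbedOpentoBlowup} (which concerns the \emph{open} locus $|\LM_g^{\circ,\trop}|$) but rather is immediate from $\partial I_g^{\BB}$ being a full subcategory of $I_g^{\BB}$, so that $\partial\LM_g^{\trop,\BB}$ is a subfunctor and its realisation a closed subcomplex.
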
 
\begin{proof}  
The face complex of $\left|\LM_g^{\trop, \BB}\right|$ is  $(\GC_0^{\BB})_g$, the subcomplex of $\GC_0^{\BB}$ in genus $g$. This follows from the categorical description \S\ref{sect: canblowupG} of $\left|\partial \LM_g^{\trop, \BB}\right|$ and theorem \ref{thm: FeynmanBlowUpsFunctor}: facets of $\sigma^{\BB}_G$ are identified either with $\sigma_{\gamma}^{\BB} \times \sigma_{G/\gamma}^{\BB}$ for $\gamma \subset G$ a core subgraph corresponding to an exceptional divisor, or $\sigma^{\BB}_{G/e}$, where $e$ is a non-tadpole edge corresponding to the intersection of $\sigma_G^{\BB}$ with strict transforms of coordinate hyperplanes $x_e=0$. These two types of facets correspond to the two edge differentials (respectively, exceptional and internal). By iterating one arrives at the desciption of the face complex given above. 
Similarly, the face complex of the subspace $\left|\partial \LM_g^{\trop, \BB}\right|$ is given by the subcomplex  $(\GC_0^{\partial \BB})_g$ of $(\GC_0^{\BB})_g$ consisting of nested sequences of length $\geq 2$, which correspond precisely to the exceptional faces.
These facts, together with theorem \ref{thm: CellularHomology} imply the two equations in   \eqref{homologynestedsequencecomplex}.

Finally, use the relative long exact homology sequence
\[  \ldots \To H_n( \GC_0^{\partial \BB}) \To H_n( \GC_0^{\BB}) \To H_n( \GC_0^{\BB},\GC_0^{\partial \BB}) \To   H_{n-1}( \GC_0^{\partial \BB})\To \ldots  \]
and use  the definition of $\GC_0$ to  identify the relative homology group $H_n( \GC_0^{\BB},\GC_0^{\partial \BB})$ with $H_n( \GC_0)$. Taking the boundary along $\GC_0^{\partial \BB}$  is precisely the exceptional differential $d^e$.  

Note that by exicision, the relative homology $H_n(\left| \LM_g^{\trop, \BB}\right|,   \left| \partial \LM_g^{\trop, \BB}\right|)$ is isomorphic to 
$   H_n(\left| \LM_g^{\trop}\right|,   \left| \partial \LM_g^{\trop}\right|)$, and so these sequences are compatible with Proposition  \ref{prop: relativefacecomplexGC0}. 
\end{proof}

\section{Polyhedra in spaces of quadratic forms}  \label{section: PolyhedraQuadForms}
We now turn to the  study of polyhedra  whose vertices are  positive semi-definite quadratic forms with rational null spaces.

\subsection{Positive semi-definite matrices }  \label{sect: Pg}
Let $\mathcal{P}_g$ denote the space of positive definite $g\times g$ real symmetric matrices. It may be identified, via the map $X \mapsto X^T X$, with  $      
 O_g(\R) \setminus \GL_g(\R) $, and admits a right  action  $M\mapsto h^TMh$ for  $h \in \mathrm{GL}_g(\R)$.
 We write $L\mathcal{P}_g = \mathcal{P}_g/\R^{\times}_{>0}$.
   
  The space of real non-zero symmetric matrices up to scalar multiplication may be identified with the 
  real points of the projective space $\Pro^{d_g-1}$,  where $d_g = \binom{g+1}{2}$, and hence $L \mathcal{P}_g \subset \Pro^{d_g-1}(\R)$.
  A choice of homogeneous coordinates on $\Pro^{d_g-1}$ are given by the set of matrix entries  $X_{ij}$ for $i\leq j$, for $X$ a symmetric matrix.
     Since the determinant  is a homogeneous function of these coordinates, its vanishing locus defines a hypersurface
\[ \Det \  \subset \  \Pro^{d_g-1}  \   \]
which satisfies  $  L\mathcal{P}_g  \cap  \Det (\R)=\emptyset$. 
The  linear  action of $\GL_g$ on 
$\Pro^{d_g-1}$,
 which corresponds to    the action $M \mapsto h^T M h$ on symmetric matrices,  preserves  $\Det$  and  $ L \mathcal{P}_g \subset \Pro^{d_g-1}(\R)$. 

\subsection{Positive polyhedra in the space of symmetric matrices} It is convenient to reformulate the above in a coordinate-free manner. 
Consider a vector space $V$ of dimension $g$ defined over a field $k\subset \R$.
Let us denote by 
\[ \Quad(V) = \left(\mathrm{Sym}^2\,  V\right)^{\vee}  \]
the $k$-vector space  of symmetric bilinear (i.e., quadratic) forms 
\[Q: V\otimes_k V \To k\ . \] 
Such a quadratic form may be viewed as a linear map $Q: V \rightarrow V^{\vee}$ which  is self-dual, i.e., $Q= Q^T$.  Consequently, 
$\Quad$ defines  a contravariant functor from the category of vector spaces to itself:  for any linear map $V \rightarrow W$, there is a natural map $\Quad(W) \rightarrow \Quad(V)$.  The functor $\Quad$ sends surjective maps to injective maps, and vice-versa. Given an isomorphism $V\cong k^g$  one may  identify  elements of $\Quad(V)$  with symmetric matrices with entries in $k$.

Recall that the null space  $\ker(Q)$ of a quadratic form  is defined to be the kernel of $Q: V \rightarrow V^{\vee}$.  If $Q$ is a  positive semi-definite quadratic form, one has 
\begin{equation} \label{NullQcriterion}  Q(v,v)=0 \quad \Longleftrightarrow \quad v\in \mathrm{ker}(Q) \ .
\end{equation}

Consider the projective space $\Pro(\Quad(V))$. It has distinguished linear subspaces:  
\[\Pro(\Quad(V/K)) \lhook\joinrel\rightarrow \Pro(\Quad(V)) \qquad \hbox{ for every subspace } K \subset V \ . \] 
Such a subspace is  contained in the  determinant locus $\Det_V \subset \Pro(\Quad(V))$ if and only if $K \neq 0$.  
Viewing $\Quad(V/K)$ as the  subspace of $\Quad(V)$ of quadratic forms $Q$  satisfying  $Q(k,v)=0$ for all $k\in K$, and $v\in V$, one deduces the formula:
\begin{equation} \label{IntersectionofQspaces}
\Quad \left(V/K_1\right) \cap \Quad \left(V/K_2\right) = \Quad \left(V/ (K_1+K_2)\right) 
\end{equation} 
which implies that $ \Pro\left(\Quad \left(V/K_1\right) \right)\cap \Pro\left(\Quad \left(V/K_2\right)\right) = \Pro\left(\Quad \left(V/ (K_1+K_2)\right) \right)$.

   Consider the convex subsets 
 \[ \Quad^{> 0} (V) \   \subset \  \Quad^{\geq 0} (V) \   \subset \ \Quad(V)\]
 consisting of  positive definite, and positive semi-definite, quadratic forms on $V$.  
 
 \begin{defn}  \label{defn: sigmapositive}
We shall say that a polyhedron $(\sigma, \Quad(V))$ in the space of quadratic forms is \emph{positive},  which we shall denote by $\sigma\geq 0$,   if its vertices  lie in $\Quad^{\geq 0} (V)$, and hence  $\sigma \subset  \Quad^{\geq 0} (V)(\R)$ is contained in the set of positive semi-definite quadratic forms. 

We shall call a polyhedron \emph{strictly positive}, denoted by $\sigma>0$,  if $\sigma$  positive, and  if in addition  it meets the interior $\sigma \cap  \Quad^{> 0} (V)  \neq \emptyset$.  In other words, $\sigma$ is strictly  positive if it contains at least one positive definite quadratic form. \end{defn}

Positive polyhedra meet the determinant locus in a  specific manner.
\begin{lem} \label{lem: sigmameetsDet} Let $\sigma$ be a   positive polyhedron in $\Quad(V)$  and let $\sigma_F$ be a face of $\sigma$ of dimension $>0$.  The following are equivalent:

(i) The face $\sigma_F$  is contained in the determinant locus $\Det(\R)$.

(ii) There is a point in the interior of $\sigma_F$  which lies in $\Det(\R)$, i.e., 
\[ \overset{\circ}{\sigma}_F  \cap \Det(\R) \neq \emptyset\]

(iii)  There is a non-zero linear subspace $0 \neq K\subset V$  such that
\[ \sigma_F \  \subset \   \Pro(\Quad(V/K))(\R)  \ .  \]
\end{lem}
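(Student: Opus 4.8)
The equivalences will be proved in the cycle $(iii)\Rightarrow(i)\Rightarrow(ii)\Rightarrow(iii)$, exploiting the positivity of $\sigma$ via the criterion \eqref{NullQcriterion}: for a positive semi-definite form $Q$, $Q(v,v)=0$ iff $v\in\ker(Q)$.

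The implication $(iii)\Rightarrow(i)$ is immediate: if $\sigma_F\subset\Pro(\Quad(V/K))(\R)$ with $K\neq 0$, then every point of $\sigma_F$ is a quadratic form with $K\subset\ker(Q)$, hence has vanishing determinant, so $\sigma_F\subset\Det(\R)$. The implication $(i)\Rightarrow(ii)$ is trivial once we know $\overset{\circ}{\sigma}_F$ is non-empty, which holds because $\dim\sigma_F>0$ so $\sigma_F$ is (homeomorphic to) a closed ball of positive dimension and hence has non-empty interior.

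The substantive implication is $(ii)\Rightarrow(iii)$. Suppose $Q_0$ lies in the interior of $\sigma_F$ and $\det(Q_0)=0$; since $\sigma$ is positive, $Q_0$ is positive \emph{semi}-definite, so $K:=\ker(Q_0)\neq 0$. The claim is that the whole face $\sigma_F$ lies in $\Pro(\Quad(V/K))(\R)$, i.e., every vertex $Q_i$ of $\sigma_F$ is positive semi-definite with $K\subseteq\ker(Q_i)$. Fix $0\neq v\in K$. Write $Q_0=\sum_i \lambda_i Q_i$ as a positive convex combination of the vertices of $\sigma_F$ (all $\lambda_i>0$, which is exactly what ``interior point'' gives us after choosing an affine representative). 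Then $0=Q_0(v,v)=\sum_i\lambda_i Q_i(v,v)$. Each $Q_i$ is positive semi-definite (as $\sigma$ is positive), so $Q_i(v,v)\geq 0$ for all $i$; combined with $\lambda_i>0$ this forces $Q_i(v,v)=0$ for every $i$, and then \eqref{NullQcriterion} applied to $Q_i$ gives $v\in\ker(Q_i)$. Since $v\in K$ was arbitrary, $K\subseteq\ker(Q_i)$ for all $i$, i.e., every vertex $Q_i\in\Quad(V/K)$. As $\Quad(V/K)$ is a linear subspace, the convex hull $\sigma_F$ is contained in $\Pro(\Quad(V/K))(\R)$, establishing $(iii)$.

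The only point requiring mild care — and the place I expect the main (minor) obstacle — is the passage from ``$Q_0$ is in the topological interior of the projective polyhedron $\sigma_F$'' to ``$Q_0$ is a convex combination of the vertices with \emph{strictly positive} coefficients.'' This is handled by working with the defining cone $\widehat{\sigma_F}\subset\Quad(V)_\R$: a point in the relative interior of $\sigma_F$ lifts to a ray in the relative interior of $\widehat{\sigma_F}$, and a standard fact about polyhedral cones says such a point is a strictly positive combination of the extremal generators (equivalently, of the $v_i$ from \eqref{sigmahat}, after discarding redundant ones); since $(\sigma_F,V_{\sigma_F})$ has maximal dimension its vertices are precisely the images of these extremal generators. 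The positivity of \emph{all} coefficients is essential to the argument — a convex combination with some zero coefficients would only control the corresponding sub-face — but it is exactly what the hypothesis ``interior point'' supplies, and the rest is a one-line inequality argument using semi-definiteness.
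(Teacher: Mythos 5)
Your proof is correct and follows essentially the same approach as the paper's: establish the easy implications $(iii)\Rightarrow(i)\Rightarrow(ii)$, then for $(ii)\Rightarrow(iii)$ write an interior point $Q_0$ of $\sigma_F$ as a strictly positive combination of the vertices $Q_i$, take $0\neq v$ in $\ker(Q_0)$, and use positive semi-definiteness together with \eqref{NullQcriterion} to force $v\in\ker(Q_i)$ for every $i$. The only cosmetic difference is that you set $K=\ker(Q_0)$ while the paper sets $K=\bigcap_i\ker(Q_i)$; since $Q_0=\sum\lambda_i Q_i$ with all $\lambda_i>0$, these subspaces actually coincide, so the two arguments are interchangeable.
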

\begin{proof} Clearly $(i)$ implies $(ii)$ and $(iii)$ implies $(i)$ since $\Pro(\Quad(V/K)) \subset \Det$. It suffices to prove that $(ii)$ implies $(iii)$.   Let us write 
$\widehat{\sigma}_F = \{ \sum_{i=1}^n \lambda_i Q_i :  \lambda_i \in \R_{\geq 0} \} $
where $Q_i$ are  non-zero positive semi-definite quadratic forms on $V$.
If we assume $(ii)$ then there exist  $\lambda_i >0$ such that 
$ \det \left(\sum_{i=1}^n \lambda_i Q_i \right) = 0$
since every interior point of $\sigma$ admits a (non-unique) representation as a linear combination of $Q_i$ with strictly positive coefficients.  
Therefore there is a non-zero vector $x\in V$ such that  $x\in \mathrm{ker} \left( \sum_{i=1}^n \lambda_i Q_i \right)$. This implies in particular that  
$ \sum_{i=1}^n \lambda_i Q_i (x,x)  =  0$.
Since $Q_i(x,x) \geq 0$ we deduce that  $Q_i(x,x)=0$ for all $i=1,\ldots, n$.  By \eqref{NullQcriterion},  $x\in \ker Q_i$ for all $i$.  If we set  $K_{\sigma_F} = \bigcap_{i=1}^n \ker(Q_i)$, then   $ x\in  K_{\sigma_F}$, and in particular $K_{\sigma_F} \neq 0$.
 Property $(iii)$ holds on setting $K= K_{\sigma_F}$.  
 \end{proof}
 
 It follows that if $\sigma\geq 0$ is a positive  polyhedron, then it is  strictly positive  if and only if its interior does not meet the determinant locus: 
 \begin{equation} \sigma>0 \quad \Longleftrightarrow \quad  \overset{\circ}{\sigma} \cap \Det(\R) = \emptyset  \ . \end{equation}

\begin{defn} \label{defn: Ksigma}  For any positive polyhedron $\sigma\geq 0 $ in $\Pro(\Quad(V))$, we denote by $K_{\sigma} \subset V$ the intersection of all the null spaces of its vertices. 
 \end{defn}

It follows from \eqref{IntersectionofQspaces} that $K_{\sigma}$ is the unique largest subspace of $V$ such that 
\[\sigma  \ \subset \  \Pro(\Quad(V/K_{\sigma}))(\R)\ .\]   
  In particular, $\sigma$ meets the  strictly positive locus in $V/K_{\sigma}$, i.e., $\sigma \cap \Quad^{>0}(V/K_{\sigma}) \neq \emptyset$ and thus the polyhedron  $(\sigma, V/K_{\sigma})$ is strictly positive.
 
 \begin{lem}  \label{lem: sigmameetsVmodKinface}
 Let $\sigma\geq 0$ be a  positive polyhedron.
 For any subspace $K\subset V$, 
 \[ \sigma  \cap \Pro(\Quad(V/K))(\R) \]
 is either empty, or is a face of $\sigma$.
 \end{lem}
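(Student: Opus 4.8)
The plan is to reduce to the case where $\sigma$ is strictly positive by first quotienting out the null space $K_\sigma$ (definition \ref{defn: Ksigma}), and then to argue directly with positive semi-definiteness. So, after replacing $V$ by $V/K_\sigma$ we may assume $\sigma > 0$, since $\Pro(\Quad(V/K))(\R) \cap \sigma$ is unchanged when we replace $V$ by $V/K_\sigma$ and $K$ by its image $\bar K = (K + K_\sigma)/K_\sigma$, using the fact (following from \eqref{IntersectionofQspaces}) that $\Quad(V/K) \cap \Quad(V/K_\sigma) = \Quad(V/(K+K_\sigma))$. If $\sigma \cap \Pro(\Quad(V/\bar K))(\R)$ is empty there is nothing to prove, so assume it is non-empty and let $Q_0$ be a point of it.

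The key step is then the following. Let $\sigma_F$ be the smallest face of $\sigma$ containing $Q_0$, so that $Q_0$ lies in the relative interior $\overset{\circ}{\sigma}_F$. I claim $\sigma_F \subset \Pro(\Quad(V/\bar K))(\R)$, which will finish the proof once I show that in fact $\sigma \cap \Pro(\Quad(V/\bar K))(\R) = \sigma_F$. For the first claim: write $\widehat{\sigma}_F = \{\sum_i \lambda_i Q_i : \lambda_i \geq 0\}$ with the $Q_i$ the vertices of $\sigma_F$, all positive semi-definite. Since $Q_0$ lies in the relative interior, $Q_0 = \sum_i \mu_i Q_i$ for some $\mu_i > 0$ (after rescaling). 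Now fix any $0 \neq v \in \bar K$; since $Q_0 \in \Quad(V/\bar K)$ we have $Q_0(v,v) = 0$, hence $\sum_i \mu_i Q_i(v,v) = 0$; as each $Q_i(v,v) \geq 0$ and $\mu_i > 0$, we get $Q_i(v,v) = 0$ for all $i$, so by \eqref{NullQcriterion} $v \in \ker(Q_i)$ for every $i$. Since $v$ was arbitrary in $\bar K$, we conclude $\bar K \subseteq \ker(Q_i)$, i.e. $Q_i \in \Quad(V/\bar K)$ for all $i$, whence $\sigma_F \subset \Pro(\Quad(V/\bar K))(\R)$.

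It remains to show $\sigma \cap \Pro(\Quad(V/\bar K))(\R) \subseteq \sigma_F$, i.e. that no point of $\sigma$ outside $\sigma_F$ can lie in $\Quad(V/\bar K)$. Suppose $Q' \in \sigma \cap \Pro(\Quad(V/\bar K))(\R)$; let $\sigma_{F'}$ be the smallest face containing $Q'$. By the argument just given applied to $\sigma_{F'}$, all vertices of $\sigma_{F'}$ lie in $\Quad(V/\bar K)$. Now consider the face $\sigma_G$ spanned by the union of the vertices of $\sigma_F$ and $\sigma_{F'}$ (this is again a face of $\sigma$, being the smallest face containing both $Q_0$ and $Q'$, or can be obtained as the face cut out by the bounding hyperplanes of $\sigma$ active at both points). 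All its vertices lie in $\Quad(V/\bar K)$. Thus $\sigma_G \subset \Pro(\Quad(V/\bar K))(\R)$, and by maximality of $K_\sigma = 0$ in the reduced situation — wait, here we must instead invoke: $\sigma_G$ is a positive polyhedron whose vertices all have null space containing $\bar K$, so $\bar K \subseteq K_{\sigma_G}$; since $\sigma > 0$, its interior meets $\Quad^{>0}(V)$, and as $\sigma_G$ is a proper face unless $\sigma_G = \sigma$, we need $\sigma_G \neq \sigma$ (else $\bar K \subseteq K_\sigma = 0$, contradicting $\bar K \neq 0$, unless $\bar K = 0$ in which case $\Pro(\Quad(V/\bar K)) = \Pro(\Quad(V))$ and the statement is trivial). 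So assuming $\bar K \neq 0$, $\sigma_G$ is a proper face, hence by lemma \ref{lem: sigmameetsDet} (it lies in $\Det(\R)$) it is one of the faces in the boundary stratification; and $\sigma_F \subseteq \sigma_G$, $\sigma_{F'} \subseteq \sigma_G$. This shows $\sigma \cap \Pro(\Quad(V/\bar K))(\R)$ is contained in a single face, but I still need that it equals that face. The cleanest route: show directly that $\sigma \cap \Pro(W)(\R)$ is a face whenever $W = \Quad(V/\bar K)$, by noting that $\Pro(W)$ is an intersection of the coordinate-type linear subspaces cut out by the bounding hyperplanes of $\widehat\sigma$ — concretely, $\sigma \cap \Pro(W)(\R)$ equals the set of $x \in \sigma$ with $Q_x(v,v) = 0$ for all $v \in \bar K$, and since $x \mapsto Q_x(v,v)$ is a non-negative linear functional on $\widehat\sigma$, its zero locus is a face; intersecting finitely many such faces (over a spanning set of $v$'s) is again a face.

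\textbf{Main obstacle.} The genuinely delicate point is the last paragraph: identifying $\sigma \cap \Pro(\Quad(V/\bar K))(\R)$ as \emph{exactly} a face rather than merely being contained in one. The right framing is that for each fixed $v \in \bar K$ the map $Q \mapsto Q(v,v)$ is a linear form on the ambient space of quadratic forms which is $\geq 0$ on the cone $\widehat\sigma$ (since all vertices are positive semi-definite), so its vanishing locus on $\widehat\sigma$ is a face of the cone; the locus $\widehat\sigma \cap \widehat{\Quad(V/\bar K)}$ is the intersection of these faces over $v$ ranging in a finite spanning set of $\bar K$, hence a face; passing to links gives the result. I expect assembling this cleanly — and handling the trivial cases $\bar K = 0$ and $\bar K = V$ — to be the only real work; everything else is the positivity argument already used in lemma \ref{lem: sigmameetsDet}.
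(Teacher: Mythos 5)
Your final paragraph (``Main obstacle'') is, in fact, exactly the paper's proof, and it is already complete rather than a framing that needs further assembly: for each $v$ in a finite spanning set of $K$, the linear form $f_v(Q) = Q(v,v)$ is non-negative on $\widehat\sigma$ because all vertices of $\sigma$ are positive semi-definite; the vanishing locus of a non-negative linear form on a polyhedral cone is a face or the cone itself (Minkowski--Weyl), and by \eqref{NullQcriterion} the common zero locus over the spanning set is precisely $\widehat\sigma \cap \widehat{\Quad(V/K)}$. Intersecting finitely many faces gives a face or the empty set, and passing to links gives the lemma. That two-sentence argument should be the whole proof.

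The earlier material is an unnecessary detour and one piece of it is actually false. The reduction to $\sigma > 0$ is harmless but never used. The middle step --- taking $Q_0$ in the intersection, passing to the smallest face $\sigma_F$ containing it, and showing $\sigma_F \subset \Pro(\Quad(V/\bar K))(\R)$ --- is correct but only gives one inclusion. The attempted reverse inclusion via $\sigma_G$ relies on the claim that the smallest face of $\sigma$ containing both $Q_0$ and $Q'$ has vertex set equal to the union of the vertex sets of $\sigma_F$ and $\sigma_{F'}$; this is wrong in general (e.g.\ in a pentagon, the midpoints of two disjoint edges have smallest common face the whole pentagon, whose vertex set strictly contains the four edge-endpoints). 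You correctly abandon that route mid-stream, but the fix is simply to delete it: the functional argument you give at the end is self-contained and handles the trivial cases $K = 0$ and $K = V$ automatically.
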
 
 
 \begin{proof} For every $v \in K$, consider the linear map $f_v: \Quad(V) \rightarrow k$ defined by 
 $  Q  \mapsto  Q(v,v).$
  Its kernel is $\Quad(V/kv)$, which contains  $\Quad(V/K)$  by definition. The linear form  
  $f_{v}$ is non-negative on the subspace $\Quad^{\geq 0}(V) \subset \Quad(V)$. 
 It follows from \eqref{IntersectionofQspaces} that, for any  vectors $v_1,\ldots, v_d$ which span $K$, one has 
 $ \Quad(V/K)   = \bigcap_{i=1}^d   \Quad(V/kv_i)= \bigcap_{i=1}^d   H_i$, where $H_i=\ker ( f_{v_i})$. Since $f_{v_i}$  is non-negative on $\sigma$,  it follows from the interpretation due to Minkowski and Weyl  of a polyhedron as a region bounded by  a finite number of hyperplanes, that the set 
 $\sigma \cap \Pro(H_i) $ is either empty (in the case  when $f_{v_i}>0$  on $\sigma$), or a face of $\sigma$, and
 \[ \sigma \cap \Pro(V/K) (\R)  = \bigcap_{i=1}^d  \sigma\cap H_i (\R) \]
 is either empty, or a non-empty intersection of faces of $\sigma$ and hence also a face.   \end{proof}
 
The previous lemma has the important consequence that  in defining $\LA_g^{\trop,\BB}$ we shall only need to blow up linear subspaces of $\Pro(\Quad(V))$  of the special form $\Pro(\Quad(V/K))$.

\begin{defn} For any face $\sigma_F$ of $\sigma$,   define its \emph{essential  envelope}  to be  the face
\[   \sigma_F^{\ess}=    \sigma \cap \Pro(\Quad(V/K_{\sigma_F}))(\R) \ .\] 
It satisfies $\sigma_F \subset \sigma_F^{\ess}$.  We call a face  $\sigma_F$ \emph{essential} if $\sigma_F = \sigma^{\ess}_F$. 
\end{defn}

If $\sigma_F>0$ is strictly positive,  then $K_{\sigma_F}=0$ and  one has $\sigma^{\ess}_F= \sigma$. 
 In general, a  face  $\sigma_F$ of $\sigma$ (whether essential or not) will not be Zariski dense in  $\Pro(\Quad(V/K_{\sigma_F}))$. 
 See \S\ref{sec: Example} for some examples of essential faces (we shall show that for the image of Feynman polytopes under the tropical Torelli map, 
 the essential envelope of a face $\sigma_{G/\gamma}$ indexed by a subgraph $\gamma \subset G$ is the face $ \sigma_{G/\gamma^{\mathrm{core}}}$
 where $\gamma^{\mathrm{core}} \subset \gamma$ is the maximal core subgraph of $\gamma$).
 
\begin{lem}  \label{lem: EssentialStableIntersection}
The intersection of an essential face of $\sigma$ with any other face $\sigma_{Q'}$ of $\sigma$,   is either empty or   an  essential face of $\sigma_{Q'}$. 
\end{lem}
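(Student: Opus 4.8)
The plan is to unwind the definition of ``essential'' in terms of the spaces $K_{(-)}$ and reduce everything to the algebra of null spaces, using Lemma~\ref{lem: sigmameetsVmodKinface} to guarantee that the relevant intersections with $\Pro(\Quad(V/K))(\R)$ are faces. So let $\sigma_F$ be an essential face of $\sigma$, meaning $\sigma_F = \sigma \cap \Pro(\Quad(V/K_{\sigma_F}))(\R)$, and let $\sigma_{Q'}$ be any face of $\sigma$. Set $\tau = \sigma_F \cap \sigma_{Q'}$ and assume $\tau \neq \emptyset$. First I would observe that $\tau$ is indeed a face of $\sigma_{Q'}$: it is an intersection of two faces of $\sigma$, hence a face of $\sigma$, and being contained in $\sigma_{Q'}$ it is a face of $\sigma_{Q'}$. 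So the only content is that $\tau$ is \emph{essential} as a face of $\sigma_{Q'}$, i.e., that $\tau = \sigma_{Q'} \cap \Pro(\Quad(V/K_{\tau}))(\R)$.

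The key algebraic point is to identify $K_\tau$. By Definition~\ref{defn: Ksigma}, $K_\tau$ is the intersection of the null spaces of the vertices of $\tau$. Since $\tau \subset \sigma_F$ and $\sigma_F$ is a face of $\sigma$, every vertex of $\tau$ is a vertex of $\sigma$ lying on $\sigma_F$; but since $\sigma_F \subset \Pro(\Quad(V/K_{\sigma_F}))(\R)$, each such vertex has null space containing $K_{\sigma_F}$, so $K_{\sigma_F} \subseteq K_\tau$. Now I claim $\tau = \sigma_{Q'} \cap \Pro(\Quad(V/K_\tau))(\R)$. The inclusion $\tau \subseteq \sigma_{Q'} \cap \Pro(\Quad(V/K_\tau))(\R)$ holds because $\tau \subseteq \sigma_{Q'}$ and, by definition of $K_\tau$ and the identity \eqref{IntersectionofQspaces} (more precisely, because $\tau \geq 0$ is a positive polyhedron whose largest ``kernel subspace'' is $K_\tau$, so $\tau \subset \Pro(\Quad(V/K_\tau))(\R)$). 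For the reverse inclusion, take a point $x \in \sigma_{Q'} \cap \Pro(\Quad(V/K_\tau))(\R)$. Since $K_{\sigma_F} \subseteq K_\tau$, we have $\Pro(\Quad(V/K_\tau)) \subseteq \Pro(\Quad(V/K_{\sigma_F}))$, so $x \in \sigma_{Q'} \cap \Pro(\Quad(V/K_{\sigma_F}))(\R) \subseteq \sigma \cap \Pro(\Quad(V/K_{\sigma_F}))(\R) = \sigma_F$, the last equality being essentiality of $\sigma_F$. Hence $x \in \sigma_F \cap \sigma_{Q'} = \tau$, which gives $\sigma_{Q'} \cap \Pro(\Quad(V/K_\tau))(\R) \subseteq \tau$ and therefore equality.

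The one remaining gap is the claim used above that a positive polyhedron $\tau$ satisfies $\tau \subset \Pro(\Quad(V/K_\tau))(\R)$ where $K_\tau$ is the intersection of the null spaces of its vertices; this is exactly the remark following Definition~\ref{defn: Ksigma} (it follows from \eqref{IntersectionofQspaces}, since $\Quad(V/K_1) \cap \Quad(V/K_2) = \Quad(V/(K_1+K_2))$ and a convex combination of quadratic forms each killed by $K_\tau$ is again killed by $K_\tau$), so I would simply cite it. I expect the main obstacle to be purely bookkeeping: making sure that ``vertex of $\tau$'' is correctly related to ``vertex of $\sigma$ lying in $\sigma_F \cap \sigma_{Q'}$'' so that the inclusion $K_{\sigma_F} \subseteq K_\tau$ is airtight, and checking that $\tau$ being nonempty is enough to run the argument without worrying about low-dimensional degeneracies — but Lemma~\ref{lem: sigmameetsVmodKinface} already handles the face-ness of all the intersections $\sigma_{Q'} \cap \Pro(\Quad(V/K))(\R)$, so no separate argument is needed there.
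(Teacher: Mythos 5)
Your proof is correct and follows essentially the same approach as the paper's: the paper simply states that essential faces of $\sigma$ are precisely the nonempty sets $\sigma \cap \Pro(\Quad(V/K))(\R)$ for subspaces $K\subset V$, so that $\sigma_F\cap \sigma_{Q'} = \sigma_{Q'} \cap \Pro(\Quad(V/K_{\sigma_F}))(\R)$ is automatically an essential face of $\sigma_{Q'}$ or empty. Your argument makes this explicit by tracking the subspace $K_\tau$ and verifying $\tau = \sigma_{Q'} \cap \Pro(\Quad(V/K_\tau))(\R)$ directly, which is a slightly longer but correct elaboration of the same idea.
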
 

\begin{proof}The set of essential faces of $\sigma$ are precisely the sets 
 $\sigma \cap \Pro(\Quad(V/K))(\R)$ where $K\subset V$ is a linear subspace. The intersection of such a face with $\sigma_{Q'}$ is
$  \sigma_{Q'} \cap  \Pro(\Quad(V/K))(\R)$, which 
is  therefore also essential or empty. 
\end{proof}

\subsection{Collapsing the complement of the determinant in the normal bundle} \label{sect: CollapseNormal} 
Restriction of quadratic  forms to any  subspace $K\subset V$ defines a  linear map:
 \begin{equation} \label{RestrictQtoK}   \Quad(V) / \Quad(V/K) \To \Quad(K)\ .
 \end{equation}
 It does not induce a map on the corresponding projective spaces since it is not injective in general. To see this, 
choose a  complementary space  $V\cong K \oplus C$ where $C \cong V/K$. The space  $\Quad(V)/\Quad(V/K)$ 
may be represented by  symmetric matrices in block matrix  form,
\begin{equation} \label{QBlockform} Q =   \left( 
\begin{array}{c|c}
Q_0   & Q_1 \\ \hline 
Q_1^T & 0 \end{array} \right) \ ,
\end{equation} 
 which implies that $\Quad(V)/\Quad(V/K)$ is isomorphic to the product $\Quad(K) \times  \mathrm{Hom}(V/K,K)$.
The map $ \Quad(V) / \Quad(V/K) \rightarrow \Quad(K)$ sends $Q\mapsto Q_0$.  
 The determinant function $\det: \Quad(K) \rightarrow k$ defines a homogeneous polynomial  
\[ Q \mapsto \det(Q\big|_K)  \ :  \ \Quad(V) / \Quad(V/K)\To  k\ . \]  
In the  coordinates $\eqref{QBlockform}$, $\det(Q\big|_K) $ is simply $\det(Q_0)$. 
 Denote its zero locus  by 
\begin{equation} \label{defn: detslashK}   \Det\big|_K \  \subset \  \Pro \left( \Quad(V) / \Quad(V/K)\right)\ . 
\end{equation} 
\begin{lem} \label{lem: PiKdef} Restriction of quadratic forms to  $K$  gives a well-defined  projection
\begin{equation} \label{piKdef}    \pi_K \ : \  \Pro \left( \Quad(V) / \Quad(V/K)\right) \  \setminus \  \Det\big|_K \To  \Pro\left( \Quad(K) \right) \  \setminus \   \Det_K    
\end{equation}
which is   a fibration in affine spaces $\mathrm{Hom}(V/K,K ) \cong \mathbb{A}^{\dim K \cdot \dim V/K}  $. 
It is functorial, i.e., for  any  $h\in \GL(V)$,   there is a commutative diagram:
\[ 
\begin{array}{ccc}
 \Pro \left( \Quad(V) / \Quad(V/K)\right) \  \setminus \  \Det\big|_K  & \overset{\pi_K}{\To}    & \Pro\left( \Quad(K) \right) \  \setminus \  \Det_K  \\
  \uparrow &   &  \uparrow  \\
  \Pro \left( \Quad(V) / \Quad(V/K')\right) \  \setminus \  \Det\big|_{K'}  &   \overset{\pi_{K'}}{\To}    & \Pro\left( \Quad(K') \right)  \ \setminus \   \Det_{K'}  
\end{array}
\]
where the vertical maps are isomorphisms  $Q\mapsto h^TQh$  and  $K'= h K $.
\end{lem}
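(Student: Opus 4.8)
\textbf{Proof plan for Lemma \ref{lem: PiKdef}.}

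The plan is to work in the explicit block coordinates \eqref{QBlockform} throughout, reducing everything to an elementary computation about the locus where $\det(Q_0)\neq 0$. First I would fix a splitting $V\cong K\oplus C$ with $C\cong V/K$, so that a representative of a class in $\Quad(V)/\Quad(V/K)$ is a block matrix $\left(\begin{smallmatrix}Q_0 & Q_1\\ Q_1^T & 0\end{smallmatrix}\right)$ as in \eqref{QBlockform}, and the isomorphism $\Quad(V)/\Quad(V/K)\cong \Quad(K)\times\mathrm{Hom}(V/K,K)$ sends the class of $Q$ to $(Q_0,Q_1)$. The polynomial $Q\mapsto\det(Q\big|_K)$ is $\det(Q_0)$, which is homogeneous of degree $\dim K$ in the coordinates of $Q_0$ and constant (degree $0$) in those of $Q_1$; since it is homogeneous, its vanishing locus $\Det\big|_K$ is a well-defined projective hypersurface in $\Pro(\Quad(V)/\Quad(V/K))$, and its complement is exactly the set of classes $[(Q_0,Q_1)]$ with $Q_0$ invertible. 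On this complement $Q_0\neq 0$, so the assignment $[(Q_0,Q_1)]\mapsto [Q_0]\in\Pro(\Quad(K))\setminus\Det_K$ is well-defined (it is clearly independent of the choice of representative scalar), and this is the map $\pi_K$ of \eqref{piKdef}.

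Next I would check that $\pi_K$ is a fibration in affine spaces. Over the affine chart of $\Pro(\Quad(K))\setminus\Det_K$ on which some fixed entry of $Q_0$ is normalised to $1$, a point of the source lying over it is represented by $(Q_0,Q_1)$ with that entry of $Q_0$ equal to $1$, and $Q_1\in\mathrm{Hom}(V/K,K)$ is a free parameter; this identifies the fibre with the affine space $\mathrm{Hom}(V/K,K)\cong\mathbb{A}^{\dim K\cdot\dim V/K}$, and the transition functions between these charts are affine-linear in $Q_1$, so $\pi_K$ is a Zariski-locally trivial affine-space bundle. Equivalently one can exhibit $\pi_K$ as the composition of the linear projection $\Quad(V)/\Quad(V/K)\to\Quad(K)$, $(Q_0,Q_1)\mapsto Q_0$, with projectivisation, restricted to the locus where the target is nonzero; the fibre over a line $[Q_0]$ is the projectivisation of the preimage line-bundle minus a point, which is an affine space of the stated dimension.

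Finally, for functoriality, let $h\in\GL(V)$ and $K'=hK$. Conjugation $Q\mapsto h^TQh$ gives a linear isomorphism $\Quad(V)\to\Quad(V)$ which carries $\Quad(V/K)$ to $\Quad(V/K')$ (a quadratic form kills $K$ iff its conjugate kills $hK$), hence induces an isomorphism on the quotients and their projectivisations; it carries $\det(Q\big|_K)$ to a nonzero scalar multiple of $\det(Q\big|_{K'})$ and so identifies $\Det\big|_K$ with $\Det\big|_{K'}$, and likewise $\Det_K$ with $\Det_{K'}$ inside $\Pro(\Quad(K))$ resp. $\Pro(\Quad(K'))$ after composing with the isomorphism $\Quad(K)\to\Quad(K')$ induced by $h\big|_K:K\to K'$. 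Tracing through the block form, restricting $h^TQh$ to $K'$ computes the same thing as first restricting $Q$ to $K$ and then conjugating by $h\big|_K$, which gives the stated commuting square. I do not expect any serious obstacle here; the only mildly delicate point is bookkeeping the two meanings of "$\Det$" (the hypersurface $\Det\big|_K$ in the quotient space versus the ordinary determinant hypersurface $\Det_K$ downstairs), and checking that the block-matrix formula $\det\big((h^TQh)\big|_{K'}\big)$ really does factor through $\det(Q\big|_K)$ up to a unit, which is immediate from $h$ being block-triangular with respect to the decomposition after an appropriate further change of the complement $C$. This last identification — that conjugation is compatible with restriction to the respective maximal "nondegenerate blocks" — is the step I would be most careful about writing out.
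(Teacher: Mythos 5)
Your proof is correct and follows essentially the same route as the paper: fix a splitting $V\cong K\oplus C$, write elements of $\Quad(V)/\Quad(V/K)$ in block form \eqref{QBlockform}, observe that away from $\Det\big|_K$ the block $Q_0$ is invertible (hence nonzero) so $[(Q_0,Q_1)]\mapsto[Q_0]$ descends to projective spaces, and read off the fibre as $\mathrm{Hom}(V/K,K)$. The paper's proof is terser and does not spell out local triviality or functoriality; your affine-chart argument for the former and the identity $(h^*Q)\big|_K = (h\big|_K)^*\big(Q\big|_{K'}\big)$ for the latter are the right supplements. One small remark on the functoriality step: you do not actually need $h$ to be block-triangular with respect to any choice of complement, since restriction of a quadratic form to a subspace is defined independently of the complement and the identity just above is a direct pointwise computation; the bookkeeping you flag as delicate is in fact unnecessary.
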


\begin{proof} 
A  decomposition $V\cong K \oplus C$  defines projective coordinates on $\Pro \left( \Quad(V) / \Quad(V/K)\right)$. The   complement of $  \Det\big|_K$ consists of projective classes  of  matrices \eqref{QBlockform} with $\det(Q_0)\neq 0$. In particular, $Q_0 \neq 0$ and so the map $Q \mapsto Q_0$ is  well-defined on projective spaces. Its fiber is isomorphic to the space
of matrices $Q_1$, which is nothing other than the vector space $ \mathrm{Hom}(V/K,K)$, viewed as the $k$ points of an affine space of equal dimension.
\end{proof}

  Note that the ambiguity in the  choice of decomposition $V\cong  K \oplus V/K$, is   the set of splittings of the exact sequence $0 \rightarrow K \rightarrow V \rightarrow V/K \rightarrow 0$, which is a torsor over   $ \mathrm{Hom}(V/K,K)$.

\subsection{Strict positivity  of normal faces} 
For simplicity of notation, let us write
\begin{equation} \label{notation:sigmamodF} \sigma/\sigma_{F} = \sigma_{/K_{\sigma_F}} 
\end{equation} 
for the normal polyhedron (definition \ref{defn: normal}) of $\sigma$ relative to the null space $K_{\sigma_F}$ of a face $\sigma_F$. 
\begin{lem}  \label{lem: positivityoffaceandnormal}  Let $\sigma$ be strictly positive and let $\sigma_F$ be a face of $\sigma$.  In the product 
\[ \sigma_{F} \times \left( \sigma/  \sigma_{F} \right)  \ \subset \ \Pro\left(\Quad(V/K_{\sigma_F})\right) \  \times \  \Pro\left(\Quad(V)/\Quad(V/K_{\sigma_F})\right)\ ,  \] 
both $\sigma_F$ and $\sigma/\sigma_F$ are strictly positive (definition \ref{defn: sigmapositive}). In  the latter case, this means that  the interior of the polyhedron $\sigma/\sigma_F$ does not meet the determinant locus: 
\[  \overset{\circ}{\left(\sigma/\sigma_F  \right)}  \subset   \Pro\left(\Quad(V)/\Quad(V/K_{\sigma_F}) \right)   \ \setminus \  \Det\big|_{K_{\sigma_F}} \]
or equivalently, that the image  $\pi_K ( \sigma/\sigma_F)$ is a strictly positive polyhedron in $\Pro(\Quad(K_{\sigma_F}))$.  
\end{lem}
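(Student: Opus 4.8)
The plan is to reduce the whole statement to one elementary fact about quadratic forms: a strictly positive combination $\sum_i \lambda_i Q_i$ (all $\lambda_i>0$) of positive semi-definite forms $Q_i$ on a space $W$ is positive definite if and only if $\bigcap_i \ker Q_i=0$, and, by \eqref{NullQcriterion}, $\ker(Q|_K)=K\cap\ker Q$ for every subspace $K\subseteq W$ and every positive semi-definite $Q$. The ``only if'' direction is immediate, and the ``if'' direction holds because $\bigl(\sum_i\lambda_i Q_i\bigr)(x,x)=0$ with $\lambda_i>0$ forces each $Q_i(x,x)=0$, hence $x\in\ker Q_i$ for all $i$.

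First I would treat $\sigma_F$. Its vertices, regarded as forms on $V/K_{\sigma_F}$, are positive semi-definite, and by the maximality property of $K_{\sigma_F}$ (Definition \ref{defn: Ksigma}) together with \eqref{IntersectionofQspaces} their common null space in $V/K_{\sigma_F}$ is zero. Hence their sum is positive definite on $V/K_{\sigma_F}$; since it lies in the relative interior of $\sigma_F$, this gives $\sigma_F\cap\Quad^{>0}(V/K_{\sigma_F})\neq\emptyset$, i.e. $\sigma_F$ is strictly positive.

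Next I would treat $\sigma/\sigma_F$. Write $K=K_{\sigma_F}$, let $v_1,\dots,v_m$ be the vertices of $\sigma$, and let $p\colon\Quad(V)_{\R}\to(\Quad(V)/\Quad(V/K))_{\R}$ be the projection, so that $\widehat{\sigma/\sigma_F}=p(\widehat\sigma)$ by Definition \ref{defn: normal}. Since $\widehat\sigma$ is the image of the orthant $\R_{\geq 0}^{m}$ under $(\lambda_j)\mapsto\sum_j\lambda_j v_j$, its projection $\widehat{\sigma/\sigma_F}$ is the image of the same orthant under $(\lambda_j)\mapsto\sum_j\lambda_j p(v_j)$; because the relative interior of a convex set is carried onto the relative interior of its image by an affine map, every relative-interior point of $\sigma/\sigma_F$ is the class of $p\bigl(\sum_j\lambda_j v_j\bigr)$ with all $\lambda_j>0$. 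Applying the restriction-to-$K$ map $\pi_K$ of Lemma \ref{lem: PiKdef} yields $\sum_j\lambda_j(v_j|_K)$, a strictly positive combination of the positive semi-definite forms $v_j|_K$ on $K$, whose common null space is $\bigcap_j\ker(v_j|_K)=K\cap\bigcap_j\ker v_j=K\cap K_\sigma$. Since $\sigma$ is strictly positive it contains a positive definite form and so is not contained in $\Det(\R)$; were $K_\sigma\neq0$ we would have $\sigma\subseteq\Pro(\Quad(V/K_\sigma))(\R)\subseteq\Det(\R)$, a contradiction, so $K_\sigma=0$ and $\sum_j\lambda_j(v_j|_K)$ is positive definite on $K$. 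By the description of $\pi_K$ in \eqref{piKdef} this means $p\bigl(\sum_j\lambda_j v_j\bigr)\notin\Det\big|_K$; running over all $\lambda_j>0$ gives $\overset{\circ}{(\sigma/\sigma_F)}\cap\Det\big|_K=\emptyset$, and since the vertices $v_j|_K$ of $\pi_K(\sigma/\sigma_F)$ are positive semi-definite, $\pi_K(\sigma/\sigma_F)$ is a strictly positive polyhedron in $\Pro(\Quad(K))$.

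I do not expect a genuine obstacle here; the only points needing a little care are the convex-geometry fact that relative interior commutes with affine images (so that relative-interior points of the normal cone are exactly the strictly positive combinations of the $p(v_j)$), the observation that one must use the vertices of $\sigma$ rather than those of $\sigma_F$, and the trivial degenerate case $K_{\sigma_F}=0$ in which $\sigma/\sigma_F$ is a point.
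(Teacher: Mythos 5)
Your proof is correct and follows essentially the same route as the paper: decompose $V=K_{\sigma_F}\oplus C$, observe that $K_{\sigma_F}=\bigcap_{i\in I}\ker Q_i$ implies the restrictions $\overline{Q}_i$ have trivial common null space, and use $K_\sigma=0$ to show the restrictions to $K_{\sigma_F}$ of the interior points of $\sigma$ are positive definite; the paper phrases both halves as proofs by contradiction via Lemma \ref{lem: sigmameetsDet}, while you argue directly via the elementary fact about strictly positive combinations of positive semi-definite forms and the preservation of relative interiors under linear images, which is marginally cleaner since it avoids having to say which $p(v_j)$, $j\notin I$, are actually vertices of $\sigma/\sigma_F$. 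One small slip: when $K_{\sigma_F}=0$ the space $\Pro(\Quad(V)/\Quad(V/K_{\sigma_F}))=\Pro(0)$ is empty, so $\sigma/\sigma_F$ is empty rather than a point; the claim for it is then vacuous, so this does not affect the argument.
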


\begin{proof} 
The vertices of $\sigma$ are projective classes of positive semi-definite quadratic forms $Q_1,\ldots, Q_n \in \Quad(V)$.   The vertices of the face 
$\sigma_F$ correspond to a  subset  of them:  $\{ Q_i , i \in I\}$, where   $I \subset \{1,\ldots, n\}$.
By definition, 
\begin{equation}  \label{inproof: verticescondition} K_{\sigma_F}  =   \bigcap_{i\in I}  \ker Q_i \ , 
\end{equation} 
which implies that in any choice of decomposition $V = K_{\sigma_F} \oplus C$, where $C\cong V/K_{\sigma_F}$, the matrices $Q_i$ have  the block matrix form 
\[ Q_i =     \left(
\begin{array}{c|c}
 0  & 0      \\ \hline
  0 &   \overline{Q}_i    
\end{array}
\right)   \quad  \hbox{ if  }  \quad  i \in I \ , 
\]
where the $\overline{Q}_i$ are the restrictions of  $Q_i$ to $C$, and 
 are positive semi-definite. 
 Suppose  that $\sigma_F$ is not strictly positive. Then $\sigma_F $ is contained in  $\Det_{V/K_{\sigma_F}}(\R) \subset \Pro(\Quad(V/K_{\sigma_F}))(\R)$. By lemma  \ref{lem: sigmameetsDet},  there is a  subspace $0 \neq K' \subset V/K_{\sigma_F}$ which is contained in $\bigcap_i \ker \, \overline{Q}_i$. 
 But this implies   $\bigcap_i \ker \, Q_i$ is strictly larger than $K_{\sigma_F}$, contradicting  \eqref{inproof: verticescondition}.

 The polyhedron $\sigma/\sigma_F$ has vertices given by the projective classes of the $Q_j$, $j\notin I$ which are non-zero in the quotient $\Quad(V)/\Quad(V/K_{\sigma_F})$ and have  block matrix representatives 
 \[ Q_j =  \left(
\begin{array}{c|c}
 R_j   & P_j    \\ \hline
  P_j^T &  0  
\end{array}
\right)   \quad   \hbox{ for all } j \notin I \ .
\]
If $\sigma/\sigma_F$  were contained in  $\Det|_{K_{\sigma_F}}(\R)$,  then  by the argument of 
lemma \ref{lem: sigmameetsDet} applied to the matrices $R_j$, for $j\notin I$,  there would exist $0\neq x\in K_{\sigma_F}$ such that $x\in \ker(R_j) \subset K_{\sigma_F}$ for all $j\notin I$.   In this case, the image $y$ of $x$ in $V$ satisfies $Q_j(y,y)=R_j(x,x)=0$ and hence   $y\in \ker(Q_j)$ for all $j\notin I$. Since
 $y\in \ker(Q_i)$ for all $i \in I$ by        \eqref{inproof: verticescondition},  we deduce that $y$ lies in  $\ker\, Q_i$, for all $i=1,\ldots, n$,  contradicting the strict positivity of $\sigma$  by lemma  \ref{lem: sigmameetsDet}. Note that 
 the image $\pi_K(\sigma/\sigma_F)$ is the link of the convex hull of the quadratic forms $R_j$, for $j\notin I$.
 \end{proof}

 \begin{rem}  \label{rem:  ExtendedLemmaOnStrictPositivity} This lemma easily extends to the case when $\sigma$ is a strictly positive polyhedron in the space $\Pro(\Quad(V)/\Quad(V/K))$. The statement is the following: in the  product of polyhedra 
 \[ \sigma_F \times ( \sigma/\sigma_F)  \subset  \Pro \left(\Quad(V/K_{\sigma_F})/\Quad(V/K) \right)  \times  \Pro \left(\Quad(V)/\Quad(V/K_{\sigma_F}) \right) \] 
  both $\sigma_F$ and  $\sigma/\sigma_F$ are strictly positive. 
  The proof is identical: one only needs to  consider symmetric  matrices with a number of initial rows and columns which are identically zero. 
  \end{rem} 
 
 \subsection{Blow-ups and the determinant locus}  \label{sect: BlowUpandDet}
 The   determinant locus is well-behaved with respect to  blowing up linear subspaces of the form $\Pro(\Quad(V/K))$.  
  \begin{prop} \label{lem: stricttransformofDet}
  Let $0\neq K \subset V$ be a subspace and let 
  \[ \pi : P^{\BB}  \To  \Pro(\Quad(V))\]
  denote the blow-up of $ \Pro(\Quad(V))$ along $\BB=\{\Pro(\Quad(V/K))\}$.  The exceptional divisor  $\mathcal{E}$ may be canonically identified with the projectivised normal bundle of $\Pro(\Quad(V/K))$: 
    \[ \mathcal{E} \cong  \Pro(\Quad(V/K)) \times \Pro(\Quad(V)/ \Quad(V/K))\ . \]
        If $\widetilde{\Det}_V$ denotes  the strict transform of the determinant locus, we have
  \[   \widetilde{\Det}_V  \cap \mathcal{E} \  \cong  \   \left( \Det_{V/K} \times \Pro(\Quad(V)/\Quad(V/K))  \right) \  \cup  \ \left(   \Pro(\Quad(V/K)) \times \Det\big|_{K} \right) \ , \]
  where $\Det\big|_K$ was defined in \eqref{defn: detslashK}.
   In particular one has  the product formula:
 \begin{equation}  \nonumber  \mathcal{E}  \ \setminus \  \left(  \mathcal{E} \cap \widetilde{\Det}_{V}  \right) \ \cong  \  \left( \Pro(\Quad(V/K))  \ \setminus \  \Det_{V/K} \right) \ \times \ \left( \Pro(\Quad(V)/\Quad(V/K)) 
  \ \setminus \  \Det\big|_{K} \right) \ . 
 \end{equation} 
  \end{prop}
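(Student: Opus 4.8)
The plan is to compute the strict transform of $\Det_V$ in local coordinates adapted to the single blow-up along $\Pro(\Quad(V/K))$, and to identify the two irreducible components that arise on the exceptional divisor. First I would fix a decomposition $V \cong K \oplus C$ with $C \cong V/K$, and write a symmetric matrix in block form
\[ Q = \begin{pmatrix} A & B \\ B^T & D \end{pmatrix} \]
where $A \in \Quad(K)$, $D \in \Quad(C) = \Quad(V/K)$ (identifying $C\cong V/K$), and $B \in \mathrm{Hom}(V/K,K)$. In these coordinates $\Pro(\Quad(V/K))$ is the locus $A = 0$, $B = 0$, and $\Quad(V)/\Quad(V/K)$ is identified with the pairs $(A,B)$, so that the projectivised normal bundle is indeed $\Pro(\Quad(V/K)) \times \Pro(\Quad(V)/\Quad(V/K))$ as in \eqref{blowupsquare} and remark \ref{rem: Products}. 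The exceptional divisor is covered by charts where the normal coordinates $(A,B)$ are rescaled by a single parameter $t$ (the local equation of $\mathcal{E}$): on the chart where, say, one fixed entry $D_{i_0 j_0}$ of $D$ is set to $1$ and one fixed normal coordinate is set to $1$, the blow-up substitutes $A = t\,A'$, $B = t\,B'$ with $(A',B')$ projective coordinates on $\Pro(\Quad(V)/\Quad(V/K))$.

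Next I would expand $\det Q$ under this substitution. Writing $\det\begin{pmatrix} tA' & tB' \\ tB'^T & D\end{pmatrix}$ and using multilinearity of the determinant in the first $\dim K$ rows, every monomial picks up at least one factor of $t$; the terms with exactly $\dim K$ factors of $t$ (the lowest order) come from choosing all entries from the top $\dim K$ rows to lie in the $tA'$ block, which gives $t^{\dim K}\det(A')\det(D) + (\text{higher order in }t)$. Hence, after dividing by $t^{\dim K}$, the equation of the strict transform $\widetilde{\Det}_V$ becomes
\[ \det(A')\det(D) + t\,(\cdots) = 0, \]
and restricting to $\mathcal{E}$, i.e.\ setting $t = 0$, yields $\det(A')\det(D) = 0$. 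The factor $\det(D) = 0$ is exactly the equation of $\Det_{V/K} \times \Pro(\Quad(V)/\Quad(V/K))$, while $\det(A') = 0$ — recalling that $A'$ is the $Q_0$-block of \eqref{QBlockform} — is exactly the equation of $\Pro(\Quad(V/K)) \times \Det|_K$ in the notation of \eqref{defn: detslashK}. This establishes the stated formula for $\widetilde{\Det}_V \cap \mathcal{E}$. One must also check that neither factor is spurious, i.e.\ that both components genuinely appear (not, say, absorbed into $\mathcal{E}$ itself); this follows since $\Det_V$ has multiplicity one along $\Pro(\Quad(V/K))$ precisely when $\dim K$ equals the order of vanishing computed above, which the lowest-order expansion confirms.

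The product formula in the last display then follows immediately by taking complements: removing $\widetilde{\Det}_V$ from $\mathcal{E}$ removes exactly the union $\{\det D = 0\} \cup \{\det A' = 0\}$, whose complement in $\Pro(\Quad(V/K)) \times \Pro(\Quad(V)/\Quad(V/K))$ is the product of $\left(\Pro(\Quad(V/K))\setminus \Det_{V/K}\right)$ with $\left(\Pro(\Quad(V)/\Quad(V/K))\setminus \Det|_K\right)$. I expect the main obstacle to be the bookkeeping in the determinant expansion — verifying cleanly that the lowest-order term in $t$ is precisely $\det(A')\det(D)$ and that all other terms carry a positive power of $t$, so that they vanish on $\mathcal{E}$. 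This is the Schur-complement-type identity $\det\begin{pmatrix} A & B \\ B^T & D\end{pmatrix} = \det(D)\det(A - BD^{-1}B^T)$ valid where $D$ is invertible, which on the chart at hand gives $\det(D)\det(A - BD^{-1}B^T) = t^{\dim K}\det(D)\det(A' - tB'D^{-1}B'^T)$, making the claim transparent; the only care needed is to treat the charts where $D$ is not invertible, which is handled by symmetry (choosing a different pivot block) or by noting that the strict transform is determined on the dense open chart where $D$ is invertible. Finally, functoriality of this identification under $\GL(V)$, compatible with lemma \ref{lem: PiKdef}, is automatic since the whole construction is natural in the pair $(V,K)$.
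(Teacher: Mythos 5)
Your proof is correct and takes essentially the same approach as the paper: both choose a splitting $V \cong K \oplus C$, write the blow-up substitution explicitly in block matrix coordinates, and extract the lowest-order term of the determinant via a Schur-complement identity. The only cosmetic difference is the choice of pivot block (you pivot on $D$, the paper pivots on $Q_K$), which is immaterial and leads to the same factorisation $z^{\dim K}\det(A')\det(D) \bmod z^{\dim K + 1}$.
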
 
  
  \begin{proof}
 Choose a complementary space  to $K$, i.e., 
$ V  \cong   K  \oplus C$
and  choose bases of $K,C$. We can compute the blow up in local affine coordinates (\S\ref{sect: LocalBlowUpCoordinates})  as follows.  
A  quadratic form in $Q\in \Quad(V)$ may be written  in the form
\begin{equation} \label{inproofQblockform}  Q = \begin{pmatrix} 
Q_K   &   P \\ 
P^T  &  Q_C 
\end{pmatrix} 
\end{equation}
 Let $x_{ij}$ denote the $(i,j)^{\mathrm{th}}$ matrix coefficient. 
A choice of   affine coordinates on $\Pro(\Quad(V))$ is given by the  $x_{ij}$  for $i\leq j$, one  of which, say  $x_{i_0j_0}$, is set to $1$.  The locus $\Pro(\Quad(V/K))$ is represented by matrices such that $Q_K=P=0$, i.e., the vanishing of all other $x_{ij}$ for $i\leq j$ such that $i \leq \dim(K)$. 
Therefore, suppose that $i_0 \leq \dim(K)$. The blow up may be represented in any such local affine coordinates (see \S\ref{sect: LocalBlowUpCoordinates})   by the map: 
\begin{eqnarray} s^*_K: k[x_{ij}, i\leq j ]  &\To &  k[z, x_{ij} , i\leq j]    \nonumber \\ 
s^*_K (x_{ij} )  &= & z  x_{ij}  \quad \hbox{ if } i  \leq   \dim(K)  \nonumber \\
s^*_K (x_{ij} )  &= &  x_{ij}   \quad \hbox{ otherwise } \nonumber
\end{eqnarray} 
The exceptional divisor is  defined by the equation  $z=0$. 
Applying the  map $s^*_K$ to the entries of the matrix  of a quadratic form \eqref{inproofQblockform}, we obtain 
\begin{equation}  \label{LambdaBlockForm}
s_K^* Q  = \begin{pmatrix}     zQ_K  &  z P    \\    z P^{T}  &    Q_C   \end{pmatrix}  \ . 
\end{equation} 
Taking determinants of the  following matrix identity:
\[ 
\begin{pmatrix}    z Q_K  &  z P    \\    z P^{T}  &    Q_C   \end{pmatrix}  = \begin{pmatrix}     I  &  0    \\    P^TQ_K^{-1}   &  I  \end{pmatrix} \begin{pmatrix}     z Q_K  &  0    \\    0  &    Q_C  - z P^TQ_K^{-1} P  \end{pmatrix} \begin{pmatrix}    I   &  Q_K^{-1}P   \\    0  &   I  \ .\end{pmatrix} 
\] 
 gives 
$\det( s_K^* Q ) = \det(  z Q_K)  \det ( Q_C   -  z P^TQ_K^{-1} P  )  $.    This implies that 
\begin{equation} \label{detfact} \det( s_K^* Q ) =  z^{\dim K} \det(  Q_K)  \det ( Q_C )  \mod( z^{\dim K +1}) \ .  
\end{equation}
It follows that the  intersection of the  strict transform of the locus $\det(Q)=0$ with the exceptional divisor $z=0$  is given by the equation $\det(Q_K) \det(Q_C)=0$. 
The zero loci of $\det(Q_K)$ and $\det(Q_C)$ are  $ \Det\big|_{K}$, and $\Det_{V/K}$  respectively.   \end{proof} 

 This result generalises several  asymptotic factorisation theorems for graph polynomials \cite[\S2]{Cosmic} simply by applying the proposition to a suitable graph Laplacian matrix. In particular,  \eqref{Psifactorizes} and other similar formulae  can be deduced  from the universal fact that the determinant admits an asymptotic factorisation formula 
 \eqref{detfact}. 
  
\section{Perfect cone compactification}  \label{section: PerfectConeCompact}
Now consider the case when $k= \Q$ and the vector space $V = V_{\Z} \otimes\Q$ has a lattice $V_{\Z}$. 
It is convenient to identify $V_{\Z}  \cong \Z^g$ for the purposes of the ensuing discussion.

\subsection{Rational closure of $\mathcal{P}_g$}
 The rational closure   $\mathcal{P}^{\rt}_g$ of the space of positive definite matrices $\mathcal{P}_g$  is defined to be the space of positive semi-definite matrices $M$  whose null space $\ker(M)$ is defined over $\Q$. A positive semi-definite  matrix $M$ has rational kernel if and only if  there exists an element $h\in \GL_g(\Z)$ such that 
  \[ h^T M h = \begin{pmatrix}  M_0 & 0 \\ 0 & 0   \end{pmatrix}  \]
  where $M_0$ is positive definite, i.e., $M_0 \in \mathcal{P}_{g'}$ for some $g'\leq g$.

Recall  from \S\ref{sect: Pg} that we identify the space of $g\times g$ symmetric matrices with $ \R^{d_g}$, where $d_g = \binom{g+1}{2}$.
We have inclusions $\mathcal{P}_g \subset \mathcal{P}^{\rt}_g \subset \R^{d_g}$ and 
\[   L \mathcal{P}_g  \subset L \mathcal{P}^{\rt}_g \subset\Pro^{d_g-1}(\R) \]
where
the link of $\mathcal{P}^{\rt}_g$ is defined by 
 $L\mathcal{P}^{\rt}_g = \left( \mathcal{P}^{\rt}_g  \setminus \{0\}\right)/\R^{\times}_{>0}$.  
 The group $\GL_g(\Z)$ preserves all three spaces.
 One has 
$L \mathcal{P}^{\rt}_g  \  \setminus \  L\mathcal{P}_g \  \subset \  \Det(\R)$.

 \begin{rem} Lemma \ref{lem: sigmameetsDet} and the discussion which follows implies that a positive polyhedron $(\sigma, \Quad(\Q^g))$ has the property that $\sigma$ is contained in the space $\mathcal{P}_g^{\rt}$, i.e., every point of $\sigma$ defines a real symmetric matrix with rational kernel. 
  \end{rem} 

\subsection{Minimal vectors and polyhedral linear configurations} 

For any positive definite real quadratic form  $Q$  on $\R^g$, denote its set of   minimal vectors by
\[ M_{Q} = \{ \lambda \in \Z^g \setminus \{0\}:  Q(\lambda) \leq  Q(\mu)  \hbox{ for all } \mu \in \Z^g \setminus \{0\} \}\ . \]
The associated polyhedral cone is  defined to be 
\[ \widehat{\sigma}_Q =  \R_{\geq 0} \langle \lambda \lambda^T\rangle_{\lambda\in M_Q} \subset \mathcal{P}^{\rt}_g \]
where the    $\lambda \lambda^T$ are  rank one quadratic forms in $\mathcal{P}^{\rt}_g$. 
The cone  $\widehat{\sigma}_Q$ is strongly convex since this is already the case for $\mathcal{P}^{\rt}_g$ (although $\mathcal{P}^{\rt}_g$
is not  itself a polyhedral cone). A positive definite quadratic form $Q$ is   called  \emph{perfect} if $\widehat{\sigma}_{Q}$  has maximal dimension $d_g$. Equivalently, $Q$ is uniquely determined up to a scalar  multiple by its set of minimal vectors \cite{Martinet, Voronoi}.

\begin{defn} \label{defn: coneQ} For any positive definite real quadratic form $Q$ denote by  
\begin{equation} \label{eqndef: coneQ}
 \cone_{Q} = ( \Pro^Q  ,  L_Q ,  \sigma_Q  ) 
\end{equation} 
 the polyhedral linear configuration where $\sigma_Q =L\widehat{\sigma}_Q  = \left( \widehat{\sigma}_Q \backslash \{0\}\right)/\R^{\times}_{>0}  $ is the link of $\widehat{\sigma}_Q$ in $\Pro^{d_g-1}(\R)$,  $L_Q = \bigcup_i L_i$ is the union of the Zariski closures of its facets, and  $\Pro^Q \subset \Pro^{d_g-1} \cong \Pro (\Quad (V))$ is the linear subspace defined by the Zariski-closure of $\sigma_Q$.
\end{defn} 

The configuration $\cone_{Q}$ is thus an  object of the category $\PLC_{\Q}$.  The form $Q$ is perfect if and only if $\Pro^Q = \Pro^{d_g-1}$, i.e., $\sigma_Q$ is Zariski dense in the whole space of quadratic forms. 

\subsection{Admissible decompositions}  \label{sect: AdmissDecomp} See  \cite{TopWeightAg,Namikawa, FaltingsChai, Voronoi}.
\begin{defn} A  set  $\Sigma = \{ \widehat{\sigma}_i\}$  of polyhedral cones $\widehat{\sigma}_i  \subset \mathcal{P}_g^{\rt}$ is called \emph{admissible} if it is stable under the action of $\GL_g(\Z)$, and such that:
\begin{itemize} 
\item it covers $\mathcal{P}^{\rt}_g$, i.e., 
$ 
\bigcup_{i}  \widehat{\sigma}_i = \mathcal{P}^{\rt}_g, 
$
\item if $\widehat{\sigma} \in \Sigma$, then all the faces of $\widehat{\sigma}$ are elements of  $\Sigma$,
\item the intersection of two cones in $\Sigma$ is a face of both cones,
 \item the set of $\GL_g(\Z)$-orbits of $\Sigma$ is finite.
\end{itemize} 
\end{defn} 

\begin{thm} (Vorono\"i).  The set of  cones $\{\widehat{\sigma}_Q, Q \in \mathcal{P}_g\}$ is an admissible decomposition of $\mathcal{P}^{\rt}_g$. In particular, every face of $\widehat{\sigma}_Q$ is a cone $\widehat{\sigma}_{Q'}$ for some other quadratic form $Q'$.
Furthermore, every cone $\widehat{\sigma}_Q$ is a face of a perfect cone. 
\end{thm}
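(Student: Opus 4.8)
This is Vorono\"i's foundational theorem on the perfect cone decomposition \cite{Voronoi} (see also \cite{Martinet}, and \cite{Namikawa, FaltingsChai} for the admissibility formalism); I outline the classical route. The formal properties come first: equivariance of minimal vectors, $M_{h^TQh}=h^{-1}M_Q$ for $h\in\GL_g(\Z)$, shows the collection $\{\widehat{\sigma}_Q\}$ is stable under the action $X\mapsto h^TXh$, and Minkowski's finiteness of $M_Q$ shows each $\widehat{\sigma}_Q$ is a genuine finitely generated polyhedral cone, strongly convex because $\mathcal{P}_g^{\rt}$ is. The key local input is the description of faces. Using the trace pairing $\langle X,Y\rangle=\tr(XY)$, for which $\langle X,\lambda\lambda^T\rangle=X(\lambda)$, every face of $\widehat{\sigma}_Q$ has the form $F_A=\widehat{\sigma}_Q\cap A^{\perp}$ for a symmetric matrix $A$ with $A(\lambda)\geq 0$ on $M_Q$, and equals the cone on $\{\lambda\lambda^T:\lambda\in M_Q,\ A(\lambda)=0\}$; I would then check that $Q'=Q+tA$ for small $t>0$ is positive definite with $M_{Q'}=\{\lambda\in M_Q:A(\lambda)=0\}$ (no shorter lattice vectors appear for $t$ small, since $\min Q$ is strictly separated from the next value of $Q$ and only finitely many lattice vectors have bounded length), whence $F_A=\widehat{\sigma}_{Q'}$. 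This establishes the clause that every face of $\widehat{\sigma}_Q$ is again some $\widehat{\sigma}_{Q'}$, and it is what organizes the cones into a fan.

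For the last assertion, that every $\widehat{\sigma}_Q$ is a face of a perfect cone, I would argue directly. Normalise $\min(Q)=1$ and consider the polyhedron $T(Q)=\{Q''\in\mathcal{P}_g:Q''(\lambda)=1\text{ for }\lambda\in M_Q,\ Q''(\mu)\geq 1\text{ for all }\mu\in\Z^g\setminus\{0\}\}$, which contains $Q$ and, sitting inside the space of positive definite forms, contains no line and hence has a vertex $Q_0$. The constraints active at $Q_0$ are exactly $Q''(\nu)=1$ for $\nu\in M_{Q_0}\supseteq M_Q$; since $Q_0$ is a vertex their gradients $\nu\nu^T$ span the whole space of symmetric matrices, which is to say $Q_0$ is perfect. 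Finally $A=Q-Q_0$ satisfies $A(\lambda)=0$ for $\lambda\in M_Q$ and $A(\mu)=Q(\mu)-1>0$ for $\mu\in M_{Q_0}\setminus M_Q$, so $A\in\widehat{\sigma}_{Q_0}^{\vee}$ and $F_A=\widehat{\sigma}_{Q_0}\cap A^{\perp}=\widehat{\sigma}_Q$, exhibiting $\widehat{\sigma}_Q$ as a face of $\widehat{\sigma}_{Q_0}$. In particular $\bigcup_Q\widehat{\sigma}_Q=\bigcup_{Q\,\mathrm{perfect}}\widehat{\sigma}_Q$, so the covering statement reduces to perfect cones.

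It then remains to prove that the perfect cones cover $\mathcal{P}_g^{\rt}$ and that there are finitely many $\GL_g(\Z)$-orbits. For the covering I would show the perfect cones have pairwise disjoint interiors (a positive combination of the rank-one forms attached to the minimal vectors of a perfect form determines that form up to scaling), that their union is open — around a perfect form, the cones obtained by crossing each of its facets via the construction of the first paragraph already tile a neighbourhood, which is Vorono\"i's contiguity step — and that their union is closed because the family is locally finite; connectedness of $\mathcal{P}_g$ then forces the union to be all of $\mathcal{P}_g$, and the rational closure $\mathcal{P}_g^{\rt}$ follows by passing to closures of cones, or stratum by stratum from the analogous statement for $\GL_{g'}(\Z)$ with $g'<g$. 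That two perfect cones meet along a common face, completing the fan structure, then follows from their maximality together with the face description above.

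The deep arithmetic input — which the local finiteness invoked above also relies on — is Vorono\"i's finiteness theorem: there are only finitely many perfect forms up to scaling and $\GL_g(\Z)$-equivalence. I would prove it by normalising $\min(Q)=1$, using Hermite's inequality to bound $\det(Q)$ from below on perfect forms, using $\GL_g(\Z)$-reduction to confine a set of minimal vectors to a bounded region, and concluding that only finitely many minimal-vector configurations occur and hence, by perfectness, only finitely many such $Q$; for this last point I would cite \cite{Voronoi, Martinet}. The main obstacle is exactly this finiteness statement; everything else is convex-geometric bookkeeping resting on the face lemma.
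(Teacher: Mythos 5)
Your outline reproduces Vorono\"i's classical argument (the face lemma via perturbation $Q' = Q + tA$, the Ryshkov-polyhedron vertex argument showing every cone embeds as a face of a perfect cone, the contiguity/covering step, and the finiteness theorem), which is exactly what the paper relies on by citing \cite{Voronoi, Martinet, Namikawa, FaltingsChai}; the paper states this theorem without proof. Your sketch is correct in outline, and you are right to flag the finiteness theorem and local finiteness as the nontrivial inputs that must be sourced from the literature.
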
 
Let us say that two quadratic forms $Q, Q'$ are equivalent if  their sets of minimal vectors (and hence their associated cones) coincide: $M_{Q}= M_{Q'}$. 
Denote by $[Q]$ the equivalence classes. The group  $\GL_g(\Z)$ acts upon the classes $[Q]$. 
Write \[ [Q'] \leq [Q] \]
 if  the cone $\widehat{\sigma}_{Q'}$ is a face of $\widehat{\sigma}_{Q}$.  Let us write $\dim\, [Q] = \dim \sigma_{Q}$.
 
\begin{defn} \label{defn:Dperfg} Let  $\mathcal{D}^{\perf}_g$ denote the category whose  objects are equivalence classes $[Q]$ for   $Q\in \mathcal{P}_g$,  and  whose  morphisms are  generated by the following two kinds of maps:
\begin{enumerate}[(i)]
\item  face maps 
$[Q'] \rightarrow [Q]$ whenever $[Q']\leq [Q]$, 
\item   isomorphisms  $h: [Q ] \rightarrow [h^TQh]$ for any $h\in \GL_g(\Z)$ \ .
\end{enumerate}
  \end{defn}

The category $\mathcal{D}^{\perf}_g$   is equivalent to a finite category \cite{TopWeightAg}, since there are 
 only finitely many isomorphism classes of objects, and automorphism groups of cones are finite.

Every $h\in \GL_g(\Z)$ defines a linear isomorphism in $\PLC_{\Q}$  we denote by:
\[ \cone_Q  \overset{h}{\To} \cone_{h^T Qh}   \ . \] 
 For every $Q' \leq Q$ one has a face map  
$ \cone_{Q'}  \rightarrow  \cone_{Q}$ in the category $\PLC_{\Q}$.

 \begin{defn} Consider the functor 
 \begin{equation} 
 \LA_g^{\trop}: \mathcal{D}^{\perf}_g   \To   \PLC_{\Q}  
  \end{equation}
 which sends $Q$ to $\cone_Q$.  
Its   topological realisation  $ | \LA_g^{\trop}| $ 
is the link of the moduli space of tropical abelian varieties.  By construction, it is  a generalised cone complex as in \cite{TropModuli}. 
 \end{defn}

\subsection{Blow-ups of cones associated to positive definite quadratic forms}

\begin{defn} Suppose that  $Q$ is a positive definite quadratic form on $V$, such that  $\sigma_Q>0$ is strictly positive (definition \ref{defn: sigmapositive}).    Its Zariski closure  $\Pro^Q$ admits a canonical embedding into $\Pro(\Quad(V))$.    Consider the finite set  of subspaces of $\Pro^Q$ defined as follows:
\begin{equation} \label{defn: BlowupSetforQ} \BB_Q  =\bigcup_{n\geq 1} \left\{  \bigcap_{i=1}^n  \Pro^{Q} \cap \Pro(\Quad(V/K_{\sigma_{i}}))   \ : \ \hbox{ where } \sigma_i \hbox{ are faces of } \sigma_Q   \right\}  \ .  \end{equation}

By definition and \eqref{IntersectionofQspaces}, the spaces which are to be blown up are of the form $\Pro(\Quad(V/K))$,  which  meet $\sigma_Q$ precisely along its essential faces.

Alternatively, consider  for all  $0\neq K\subset V$,  the (infinite) set $\BB^{\mathrm{all}}$ of  all subspaces
\[  \Pro(\Quad(V/K))   \subset \Pro(\Quad(V))\]
Then  $\BB_Q =  (\BB^{\mathrm{all}})^{\min, \sigma_Q} \cap \Pro^{Q}$,  by definition \ref{defn: Bmin} and lemma \ref{lem: sigmameetsVmodKinface}.

Denote  the iterated blow-up of  $\cone_Q$ along $\BB_Q$ to be:
\[     
\cone^{\BB}_Q
  \quad \in \quad \mathrm{Ob}(\BLC_{\Q}) \ .
\]
Let us denote its topological realisation  by $\sigma^{\BB}_Q = \sigma(\cone^{\BB}_Q)$.
There is a natural blow-down morphism $\cone^{\BB}_Q \rightarrow \cone_Q$ in the category $\PC_{\Q}$, which induces a continuous map 
$\sigma^{\BB}_Q \rightarrow \sigma_Q$.

 \end{defn} 
\begin{rem} Because $\sigma_Q$ is not a simplex in general,  the set of loci $\Pro^{Q} \cap \Pro(\Quad(V/K)) $  where   $ \Pro(\Quad(V/K)) $ meets $\sigma_Q$   is not  closed under intersections (the intersection of two essential faces can be empty). Thus the intersection of two blow-up loci may not intersect $\sigma_Q$ at all, giving rise to an extraneous blow-up. This does not occur in  $\LM_g^{\trop,\BB}$ because $\sigma_G$ is a simplex. 
This is a fundamental difference between the constructions for $\LM_g^{\trop,\BB}$ and $\LA_g^{\trop,\BB}$. 
Another difference between these constructions arises for a different reason. Unlike the case of graphs, $\BB_Q$  does \emph{not} only consist  of the Zariski closures of the faces of $\sigma_Q$: we must blow up loci which can be strictly larger.    See \S \ref{sec: Example} for some concrete examples. 
\end{rem}

Consequently, by contrast to the situation with polytopes associated to graphs, the blow-ups are not intrinsic, which means that we do not strictly have face maps.
In other words, the blow-up locus is not necessarily stable under passing to faces: if $Q'$ is a face of $Q$, one has $\BB_{Q'} \subseteq \Pro^{Q'} \cap \BB_Q$ but we do not \emph{a priori} have equality. 
 Nevertheless, the following lemma implies that face maps are intrinsic,  up to extraneous modifications. 

\begin{lem} \label{lem: extraneousblowdowns}
Let    $Q' \leq Q$ be positive definite.  
Let  us denote by $F_{Q'} \cone^{\BB}_{Q}$ the face of $\cone^{\BB}_{Q}$ corresponding to the face $\sigma_{Q'} $. Then there is a canonical  morphism
\begin{equation}\label{extraneousblowdown}  F_{Q'} \cone^{\BB}_{Q} \To \cone^{\BB}_{Q'}
\end{equation}
in the category $\BLC_{\Q}$ 
which is a composition of extraneous modifications. Consequently, it induces an isomorphism on topological realisations:
\[ F_{Q'} \sigma_Q^{\BB} \cong \sigma_{Q'}^{\BB}\ .\]
\end{lem}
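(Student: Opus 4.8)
The statement to prove is Lemma~\ref{lem: extraneousblowdowns}: for $Q' \leq Q$ positive definite, the face $F_{Q'}\cone^{\BB}_Q$ of the blow-up $\cone^{\BB}_Q$ admits a canonical morphism to $\cone^{\BB}_{Q'}$ in $\BLC_{\Q}$ which is a composition of extraneous modifications, and hence induces an isomorphism $F_{Q'}\sigma_Q^{\BB} \cong \sigma_{Q'}^{\BB}$ on topological realisations. The plan is to compute the face $F_{Q'}\cone^{\BB}_Q$ explicitly using the product structure of faces of blow-ups (Corollary after Proposition~\ref{prop: multiplicativestructureonBLC}), identify it as a blow-up of $\cone_{Q'}$ along a possibly-larger set of linear subspaces than $\BB_{Q'}$, and then invoke Proposition~\ref{prop: minimalblow-down} together with the observation that $\BB_{Q'} = (\BB^{\mathrm{all}})^{\min,\sigma_{Q'}} \cap \Pro^{Q'}$ to conclude that the extra blow-ups are extraneous.

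\textbf{Step 1: describe the face $F_{Q'}\cone^{\BB}_Q$.} The face $\sigma_{Q'}$ of $\sigma_Q$ is cut out by intersecting with Zariski closures of facets; its Zariski closure is $\Pro^{Q'} \subset \Pro^Q$. By the face-description corollary in \S\ref{sect: FacesAndMultBoundary}, $F_{Q'}\cone^{\BB}_Q$ is isomorphic to a product of iterated blow-ups of polyhedral linear configurations, and since $\sigma_{Q'}$ is a strictly positive polyhedron (it contains a positive definite form, namely $Q'$), its essential envelope is all of $\sigma_{Q'}$ and $K_{\sigma_{Q'}} = 0$; I would first reduce to the case where the face $\sigma_{Q'}$ is not contained in any exceptional divisor, i.e. is the strict transform of $\Pro^{Q'}$, handled by Proposition~\ref{prop: UniversalANDIdeal}(i). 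In that case $F_{Q'}\cone^{\BB}_Q$ is the iterated blow-up of $\cone_{Q'}$ along the set $\BB_Q|_{\Pro^{Q'}} := \{ \Pro^{Q'} \cap \Pro(\Quad(V/K)) : \Pro(\Quad(V/K)) \in \BB_Q \}$. By formula~\eqref{IntersectionofQspaces} these intersections are again of the form $\Pro(\Quad(V/K'))$, so $\BB_Q|_{\Pro^{Q'}}$ is a finite set of subspaces of the allowed shape, satisfying (B1) and (B2) relative to $\sigma_{Q'}$ by Lemma~\ref{lem: sigmameetsVmodKinface}.

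\textbf{Step 2: compare $\BB_Q|_{\Pro^{Q'}}$ with $\BB_{Q'}$ and apply minimality.} By the alternative description, $\BB_{Q'} = (\BB^{\mathrm{all}})^{\min,\sigma_{Q'}} \cap \Pro^{Q'}$, i.e. $\BB_{Q'}$ is generated under intersection by the \emph{minimal} subspaces $\Pro(\Quad(V/K)) \cap \Pro^{Q'}$ which meet $\sigma_{Q'}$ along a face. Since $\BB_Q$ is built from the spaces $\Pro(\Quad(V/K_{\sigma_i}))$ for $\sigma_i$ a face of $\sigma_Q$, their traces on $\Pro^{Q'}$ all lie in $\BB^{\mathrm{all}}|_{\Pro^{Q'}}$ and so $\BB_{Q'} \subseteq \BB_Q|_{\Pro^{Q'}}$ (each minimal generator of $\BB_{Q'}$ arises, up to the minimality operation, from restricting some member of $\BB_Q$, because a face $\sigma_{F'}$ of $\sigma_{Q'}$ is also a face of $\sigma_Q$ and hence $\Pro(\Quad(V/K_{\sigma_{F'}}))$ occurs in~\eqref{defn: BlowupSetforQ} for $Q$). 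The point is then that $\BB_{Q'} = (\BB_Q|_{\Pro^{Q'}})^{\min,\sigma_{Q'}}$: every extra element of $\BB_Q|_{\Pro^{Q'}}$ not already in $\BB_{Q'}$ is either a non-minimal space meeting $\sigma_{Q'}$ in the same face as a smaller one, or a space not meeting $\sigma_{Q'}$ at all (an intersection of essential faces that happens to be empty on the smaller polyhedron). Proposition~\ref{prop: minimalblow-down} then directly gives the morphism~\eqref{extraneousblowdown} $F_{Q'}\cone^{\BB}_Q = P^{\BB_Q|_{\Pro^{Q'}}} \to P^{\BB_{Q'}} = \cone^{\BB}_{Q'}$ as an extraneous modification, restricting to the asserted isomorphism $F_{Q'}\sigma_Q^{\BB} \cong \sigma_{Q'}^{\BB}$ by Definition~\ref{def: extraneousmodification}.

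\textbf{Main obstacle.} The step I expect to require the most care is the equality $\BB_{Q'} = (\BB_Q|_{\Pro^{Q'}})^{\min,\sigma_{Q'}}$ in Step 2 — specifically, verifying that no \emph{new minimal} blow-up locus is introduced by restriction that was not already in $\BB_{Q'}$. One must check that if $\Pro(\Quad(V/K)) \cap \Pro^{Q'}$ is minimal among elements of $\BB_Q|_{\Pro^{Q'}}$ meeting $\sigma_{Q'}$ along a face $\sigma_{F'}$, then $K$ can be taken to be $K_{\sigma_{F'}}$ (the null space intrinsic to the face of $\sigma_{Q'}$), so that this space already appears in $\BB_{Q'}$ via~\eqref{defn: BlowupSetforQ} applied to $Q'$. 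This uses that $\sigma_{F'}$, being a face of $\sigma_{Q'}$, is also a face of $\sigma_Q$ with the \emph{same} intrinsic null space $K_{\sigma_{F'}}$ (null spaces of vertices do not depend on the ambient polyhedron), together with Lemma~\ref{lem: EssentialStableIntersection} on the stability of essential faces under intersection, and the fact that $K_{\sigma_{F'}}$ is the unique largest such subspace by~\eqref{IntersectionofQspaces}. The remaining case distinctions (when the face $\sigma_{Q'}$ sits inside an exceptional divisor of $\cone^{\BB}_Q$) follow by the product formula~\eqref{ProductStructureonsigmaB} and Remark~\ref{rem:  ExtendedLemmaOnStrictPositivity}, reducing to the strictly positive situation already treated, and are routine once Step 2 is in place.
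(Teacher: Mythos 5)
Your proof is correct and follows the same route as the paper: identify the face $F_{Q'}\cone^{\BB}_Q$ as the iterated blow-up of $\cone_{Q'}$ along $\BB_Q \cap \Pro^{Q'}$ via Proposition~\ref{prop: UniversalANDIdeal}(i), establish $\BB_{Q'} = (\BB_Q \cap \Pro^{Q'})^{\min,\sigma_{Q'}}$, and apply Proposition~\ref{prop: minimalblow-down}. The equality you flag as the main obstacle is exactly the step the paper states without elaboration ("we deduce that..."), and your argument for it — that the intrinsic null space $K_{\sigma_{F'}}$ of a face is independent of whether it is viewed inside $\sigma_{Q'}$ or $\sigma_Q$, so the minimal generators match — is the right justification.
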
 
\begin{proof}
By definition  and \eqref{IntersectionofQspaces},   $\BB_Q$  consists of spaces of the form $\Pro^Q \cap \Pro(\Quad(V/K))$ for certain $0\neq K\subset V$. By lemma \ref{lem: sigmameetsVmodKinface}, $\sigma_{Q'} \cap \Pro(\Quad(V/K))(\R) $ is either a face of $\sigma_{Q'}$ or the empty set. Since $\Pro^{Q'}$ is the Zariski-closure of $\sigma_{Q'}$,  we deduce that  $\BB_{Q'}=  (  \BB_{Q} \cap \Pro^{Q'})^{\min,\sigma_{Q'}}$ and  \eqref{extraneousblowdown} is a consequence of
proposition \ref{prop: minimalblow-down} and proposition \ref{prop: UniversalANDIdeal} (i) applied to $\Pro^{Q'} \subset \Pro^Q$.
\end{proof} 
Face morphisms are thus replaced in this context by \emph{face diagrams} in $\BLC_{\Q}$:
\begin{equation}  \label{facediagram}
\cone^{\BB}_{Q'} \longleftarrow   F_{Q'} \cone^{\BB}_{Q}  \longrightarrow  \cone^{\BB}_{Q} \\
\end{equation} 
where the map on the right is the  inclusion of a face, and the map on the left is an extraneous modification \eqref{extraneousblowdown}. The blow-down of \eqref{facediagram} in  $\PLC_{\Q}$ is the diagram 
\begin{equation} \label{coneQfacediag} \cone_{Q'}  \ \overset{\sim}{\longleftarrow} \   \cone_{Q'}   \longrightarrow  \cone_{Q}  \ .
\end{equation}

\subsection{Definition of $\LA_{g}^{\trop, \BB}$} 
Every  $h \in \GL_g(\Z)$
 gives rise to  a bijection
 $ \BB_{Q} \overset{h}{\rightarrow}  \BB_{h^TQh} $
 and hence induces a linear isomorphism and commutative diagram in $\PC_{\Q}$: 
\begin{equation} 
\begin{array}{ccc}\label{blowupsfunctorial} 
 \cone^{\BB}_Q & \overset{h}{\To}   &   \cone^{\BB}_{h^TQh} \\
   \downarrow &   &   \downarrow  \\
   \cone_Q &  \overset{h}{\To}    &  \cone_{h^T Qh}  \ 
\end{array}
\end{equation}
where the horizontal maps are in $\BLC_{\Q}$. 
In other words,  blow-ups along linear strata are functorial for  the action of  $\GL_g(\Z)$.  
Consequently, the  action of $\GL_g(\Z)$ is compatible with inclusions of faces:  
for every face morphism $F \rightarrow \cone^{\BB}_Q$ and $h \in \GL_g(\Z)$ there is a unique face  $h(F)$  of $\cone^{\BB}_{h^TQh}$  such that 
 the following diagram in $\BLC_{\Q}$ commutes:
\[
\begin{array}{ccc}
   F    & \overset{h}{\To}  & h(F)   \nonumber \\ 
  \downarrow   &   & \downarrow    \\
  \cone^{\BB}_Q     &  \overset{h}{\To} &   \cone^{\BB}_{h^TQh} \\
\end{array} \ ,
\]
where the vertical maps are inclusions of faces.  Furthermore, this diagram is compatible with face diagrams \eqref{facediagram} and their blow-downs \eqref{coneQfacediag}.
The reason  is that  an element $h \in \GL_g(\Z)$ induces an isomorphism not only on faces but also on their normals (remark \ref{rem: Products}). 

\begin{defn}
Define a category  $\mathcal{D}_g^{\perf, \BB}$ whose objects are the sets of faces of $\cone^{\BB}_Q$, for all equivalence classes of positive definite quadratic forms $[Q]$,  and whose morphisms are generated by face maps,  linear isomorphisms $h$, for all $h \in \GL_g(\Z)$, and extraneous modifications of the form  \eqref{extraneousblowdown}.
\end{defn} 
Since   $\cone^{\BB}_Q$ has finitely many faces, each of which blows down to a face of $\cone_{Q}$, it follows that $\mathcal{D}_g^{\perf, \BB}$, like $\mathcal{D}_g^{\perf}$, is equivalent to a finite category. 
Blow-down defines a canonical functor 
\[ \phi :  \mathcal{D}_g^{\perf,\BB} \To  \mathcal{D}^{\perf}_g\ .\]

\begin{defn} Consider the $\BLC_{\Q}$-complex   
 \begin{equation}  
\LA_g^{\trop,\BB}: \mathcal{D}^{\perf,\BB}_g   \To    \BLC_{\Q}  
\end{equation}
which to any object of $ \mathcal{D}^{\perf,\BB}_g$ associates the corresponding face of  $ \cone^{\BB}_Q$,  for suitable $Q$. 
\end{defn} 

There is a canonical blow-down morphism (in $\PC_{\Q}$)
\[ \LA_g^{\trop,\BB} \To  \LA_g^{\trop}\]
given by the pair $(\phi, \Phi)$, where $\phi:  \mathcal{D}_g^{\perf,\BB} \rightarrow  \mathcal{D}^{\perf}_g$ is the functor considered above, and 
$\Phi$ is the natural transformation obtained by restricting the canonical blow-down map $\cone^{\BB}_Q \rightarrow \cone_Q$ to faces.
It induces a continuous map on topological realisations:
\[   \left| \LA_g^{\trop,\BB}  \right|   \To  \left| \LA_g^{\trop}  \right| \ .  \]

\begin{rem}  \label{rem: glueconesfacediagrams} It is instructive to explain how, in the absence of face maps \emph{per se}, the blow ups of cones are glued together along faces.
Consider two positive definite quadratic forms $Q_1, Q_2$ which share  a common face $[Q] \leq [Q_1], [Q_2]$. 
In the perfect cone complex, this is reflected by face maps 
$  \cone_{Q} \rightarrow \cone_{Q_1}$  and  $\cone_{Q} \rightarrow \cone_{Q_2}$
which we think of as gluing $\cone_{Q_1}, \cone_{Q_2}$ along the common face $\cone_Q$; indeed this is precisely what happens in the topological realisation: the images of the polyhedra  $\sigma_{Q_1}$ and $\sigma_{Q_2}$ are identified along $\sigma_Q$ in the limit $|\LA_g^{\trop}|$. 

In the complex $\LA_g^{\trop,\BB}$, we have instead a slightly more complicated picture obtained by joining two face diagrams \eqref{facediagram} in $\BLC_{\Q}$ pictured below on the left-hand side: 
\[
\begin{array}{ccccccc}
F_{Q} \cone^{\BB}_{Q_1}  &  \To   &   \cone^{\BB}_{Q_1}  &   \qquad  \qquad   &  \sigma^{\BB}_{Q}  &  \To   &   \sigma^{\BB}_{Q_1}     \\ 
\downarrow   &   &   &   \qquad  \qquad  & \downarrow   &   &   \\
  \cone^{\BB}_{Q} &   &   &   \qquad  \overset{\sigma}{\mapsto}  \qquad&  \sigma^{\BB}_{Q} &   & \\
  \uparrow   &   &  &   \qquad    \qquad&   \uparrow   &   &   \\
  F_{Q} \cone^{\BB}_{Q_2}  &  \To   &   \cone^{\BB}_{Q_2}   &   \qquad \qquad   &    \sigma^{\BB}_{Q}  &  \To   &   \sigma^{\BB}_{Q_2}  
\end{array}
\]
where the horizontal maps are inclusions of  faces, and the vertical maps are extraneous modifications. This diagram means that the blow-ups $ \cone^{\BB}_{Q_1} $ and  $\cone^{\BB}_{Q_2}$  are indeed glued along the common face $\cone^{\BB}_{Q}$, but only after collapsing extraneous exceptional divisors. 
The diagram on the right is the realisation, in the category of toplogical spaces, of  the diagram on the left.
The vertical maps are isomorphisms, since they are induced by extraneous modifications. Thus, in the topological realisation $\left| \LA_g^{\trop,\BB}  \right|$,  this subtlety falls away and one does in effect have canonical face maps, as in  the case of the perfect cone complex.
\end{rem}

\section{The determinant locus in $\LA_g^{\trop}$ and its blow-up} \label{section:  DetLocus}

\subsection{The determinant locus in $\LA_g^{\trop}$}

\begin{lem} \label{lem: DetSubFunctor} 
The determinant locus defines a subscheme functor of $\LA^{\trop}_g$: 
\[ \Det: \mathcal{D}^{\perf}_g \To \mathrm{Sch}_{\Q} \ .  \]
The topological realisation  of its complement is 
$ \left|  \LA_g^{\trop} \right| \backslash \left| \Det(\R) \right|  \cong  L \mathcal{P}_g / \GL_g(\Z) $.
\end{lem}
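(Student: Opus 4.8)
The plan is to verify the two claims of Lemma \ref{lem: DetSubFunctor} separately: first that $\Det$ is a well-defined subscheme functor on $\mathcal{D}^{\perf}_g$, and then that the complement of its topological realisation is the locally symmetric space $L\mathcal{P}_g/\GL_g(\Z)$.

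\textbf{Step 1: $\Det$ is a subscheme functor.} For each object $[Q]$ of $\mathcal{D}^{\perf}_g$, the configuration $\LA_g^{\trop}([Q]) = \cone_Q = (\Pro^Q, L_Q, \sigma_Q)$ has $\Pro^Q \subset \Pro(\Quad(V)) = \Pro^{d_g-1}$ a linear subspace, and I set $\Det_{[Q]} = \Det \cap \Pro^Q$, the restriction of the determinant hypersurface. This is a closed subscheme of $\PF\LA_g^{\trop}([Q]) = \Pro^Q$. To see this defines a natural transformation $\Det \rightarrow \PF\LA_g^{\trop}$, I check compatibility with the two kinds of generating morphisms of $\mathcal{D}^{\perf}_g$ (Definition \ref{defn:Dperfg}). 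For an isomorphism $h : [Q] \rightarrow [h^TQh]$ induced by $h \in \GL_g(\Z)$, the linear action of $\GL_g$ on $\Pro^{d_g-1}$ preserves $\Det$ (noted in \S\ref{sect: Pg}), so $h(\Det_{[Q]}) = \Det_{[h^TQh]}$. For a face map $[Q'] \rightarrow [Q]$ with $[Q'] \leq [Q]$, the induced morphism $\Pro^{Q'} \hookrightarrow \Pro^Q$ is a linear embedding, and $\Det \cap \Pro^{Q'} = (\Det \cap \Pro^Q) \cap \Pro^{Q'}$, which is exactly the pullback of $\Det_{[Q]}$ along the embedding; so the required square commutes. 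Hence $\Det$ is a closed subscheme of $\LA_g^{\trop}$ in the sense of Definition \ref{defn: subscheme}.

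\textbf{Step 2: identifying the complement.} By Definition \ref{defn: subscheme} and Theorem-style limit formulas, $|\Det(\R)| = \varinjlim_{[Q]} \Det_{[Q]}(\R)$ sits inside $|\LA_g^{\trop}| = \varinjlim_{[Q]} \sigma_Q$, and I want to show the complement is $L\mathcal{P}_g/\GL_g(\Z)$. The key observation is the Remark preceding \S\ref{sect: AdmissDecomp}: each $\sigma_Q$ is a positive polyhedron in $\Quad(\Q^g)$, so $\sigma_Q \subset L\mathcal{P}^{\rt}_g$, and by Lemma \ref{lem: sigmameetsDet} together with the discussion following it, a point of $\sigma_Q$ lies off $\Det(\R)$ precisely when the corresponding positive semi-definite form is positive definite, i.e. lies in $L\mathcal{P}_g$. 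Therefore $\sigma_Q \setminus (\sigma_Q \cap \Det(\R)) = \sigma_Q \cap L\mathcal{P}_g$, the interior of $\sigma_Q$ relative to the perfect-cone stratification restricted to the positive-definite locus. Taking the colimit over $\mathcal{D}^{\perf}_g$, the complement $|\LA_g^{\trop}| \setminus |\Det(\R)|$ is $\varinjlim_{[Q]} (\sigma_Q \cap L\mathcal{P}_g)$. Now I invoke Vorono\"i's theorem (\S\ref{sect: AdmissDecomp}): the cones $\{\widehat{\sigma}_Q\}$ form an admissible $\GL_g(\Z)$-stable decomposition of $\mathcal{P}^{\rt}_g$, hence their links cover $L\mathcal{P}^{\rt}_g$, and the cells meeting $L\mathcal{P}_g$ cover $L\mathcal{P}_g$ with intersections being common faces. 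Since $\mathcal{D}^{\perf}_g$ records exactly these cells, their $\GL_g(\Z)$-identifications, and face inclusions, the colimit computing the complement is the quotient of the union of the (relatively open) cells meeting $L\mathcal{P}_g$ by the $\GL_g(\Z)$-action, which is $L\mathcal{P}_g/\GL_g(\Z)$. Concretely: the natural map $L\mathcal{P}_g \rightarrow |\LA_g^{\trop}| \setminus |\Det(\R)|$ sending $X$ to its image in the cell $\sigma_{Q}$ containing it is surjective (Vorono\"i covering) and its fibers are exactly $\GL_g(\Z)$-orbits (stability of the decomposition and the fact that morphisms in $\mathcal{D}^{\perf}_g$ are generated by face maps and $\GL_g(\Z)$-isomorphisms), so it descends to a homeomorphism $L\mathcal{P}_g/\GL_g(\Z) \xrightarrow{\sim} |\LA_g^{\trop}| \setminus |\Det(\R)|$.

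\textbf{Main obstacle.} Step 1 is essentially formal. The subtle point in Step 2 is checking that the colimit topology on $\varinjlim(\sigma_Q \cap L\mathcal{P}_g)$ agrees with the quotient topology on $L\mathcal{P}_g/\GL_g(\Z)$, rather than just having a continuous bijection. This requires knowing that every point of $L\mathcal{P}_g$ has a neighbourhood meeting only finitely many cells up to the $\GL_g(\Z)$-action and that the cell decomposition is locally finite on $L\mathcal{P}_g$ — which is exactly the content of admissibility (finitely many $\GL_g(\Z)$-orbits of cones, plus the local structure of the Vorono\"i fan), so it follows from the cited results of Vorono\"i and the finiteness of $\mathcal{D}^{\perf}_g$. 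I would state this carefully but not belabour it, since it is standard for the perfect cone compactification of $\mathcal{A}_g$.
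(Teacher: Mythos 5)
Your proof is correct and takes essentially the same approach as the paper: the paper also defines $\Det([Q]) = \Det \cap \Pro^Q$, checks naturality against the two generating kinds of morphisms of $\mathcal{D}_g^{\perf}$ exactly as you do, and then disposes of the topological identification by citing Lemma \ref{lem: sigmameetsDet}. Your Step 2 is simply a more explicit unwinding of that last citation, and your flagging of the colimit-versus-quotient-topology issue is a reasonable caution that the paper (like its analogous Proposition \ref{prop: GraphLocusFunctor}) passes over in silence.
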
 
\begin{proof}The functor $\PF\,  \LA^{\trop}_g$ is given on   $[Q]$  by the first component $\Pro^{Q}$ of $ \cone_{Q} = ( \Pro^{Q}, L_{Q}, \sigma_{Q})$ which is  canonically embedded  in $\Pro(\Quad(V))$. Define 
$\Det([Q]) =  \Det \cap \Pro^{Q} \subseteq \Pro^Q $. 
It is compatible with inclusions of  faces since they  correspond to  inclusions  $\Pro^{Q'} \rightarrow \Pro^Q$ of linear subspaces of $\Pro(\Quad(V))$. The compatibility with isomorphisms of definition \ref{defn:Dperfg} (ii) follows since the determinant locus  is invariant under the action of $\GL_g(\Z)$ (i.e., $\det(P) =0  \Leftrightarrow \det (hPh^T)=0$ for all $h \in \GL_g(\Z)$.)
 The second statement  follows from lemma \ref{lem: sigmameetsDet}.
\end{proof} 
The subscheme $\Det$ is not at infinity (definition \ref{defn: subschemeatinfinity}) since, in general: 
\[ \sigma_{Q} \cap \Det(\R)  \neq \emptyset\ .\]
Indeed, any face $\sigma_{Q'}$ where $[Q'] \leq [Q]$ lies at infinity  is necessarily contained in  $\Det(\R)$.
Nevertheless, we show below how  performing  iterated  blow-ups has the effect of separating  the strict transform of the determinant locus away  from the perfect cones.

\subsection{The strict transform of the determinant locus in  $\LA_g^{\trop,\BB}$}

\begin{lem}
The   strict transform of the determinant  defines a subfunctor of $\PF\LA_g^{\trop, \BB}$:
\[ \widetilde{\Det} :  \mathcal{D}^{\perf,\BB}_g \To \mathrm{Sch}_{\Q}\ . \]
\end{lem}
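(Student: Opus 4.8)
The plan is to construct the functor $\widetilde{\Det}$ objectwise and check compatibility with the three kinds of generating morphisms of $\mathcal{D}^{\perf,\BB}_g$. On an object of $\mathcal{D}^{\perf,\BB}_g$, which is a face $F$ of some $\cone^{\BB}_Q$, the scheme $\PF\LA_g^{\trop,\BB}(F)$ is (by the face identification of \S\ref{sect: FacesAndMultBoundary}) a product of blow-ups of projective spaces $\Pro(\Quad(W_i))$, or quotients thereof by $\Pro(\Quad(W_i/W'))$; on each such factor we take the strict transform of the determinant locus $\Det_{W_i}$ (or of $\Det|_{W'}$, as appropriate) under the corresponding iterated blow-up, and set $\widetilde{\Det}(F)$ to be the union of the products in which one factor is replaced by its strict transform of the determinant, the others left as the ambient blow-up — exactly as in the analogous graph-hypersurface statement, Proposition \ref{prop: BlowUpGraphLocusFunctor}, formula \eqref{nestedtoGraphLocus}. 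The point is that Proposition \ref{lem: stricttransformofDet} tells us precisely how the strict transform of the determinant restricts to each exceptional divisor $\mathcal{E}_W \cong \Pro(\Quad(V/K)) \times \Pro(\Quad(V)/\Quad(V/K))$, namely as $(\widetilde\Det_{V/K} \times P) \cup (P' \times \widetilde{\Det}|_K)$, so $\widetilde{\Det}$ is automatically compatible with the face maps of type (3)(i), and by Proposition \ref{prop: UniversalANDIdeal}(i) (functoriality of strict transforms under the linear embeddings cutting out the non-exceptional faces) it is compatible with face maps of type (3)(ii).

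Next I would verify compatibility with the two remaining classes of morphisms. For linear isomorphisms $h\in\GL_g(\Z)$ (morphisms of type (2)): $h$ carries $\Det$ to $\Det$ (invariance $\det(h^TPh)=0\Leftrightarrow\det(P)=0$, as used in Lemma \ref{lem: DetSubFunctor}), and it carries the blow-up $\cone^{\BB}_Q$ isomorphically to $\cone^{\BB}_{h^TQh}$ by \eqref{blowupsfunctorial}; since strict transforms commute with isomorphisms of the ambient blow-up diagram, $h$ carries $\widetilde{\Det}$ to $\widetilde{\Det}$. For extraneous modifications $\pi_{\BB/\BB'}$ (morphisms of type (2) in the $\BLC$ list / the face-diagram maps \eqref{extraneousblowdown}): these are iterated blow-ups along loci disjoint from the polyhedron, hence they are isomorphisms in a Zariski neighbourhood of $\sigma^{\BB'}$; but the relevant assertion is a scheme-theoretic one, so I would instead invoke Proposition \ref{prop: UniversalANDIdeal}(ii): since $\pi_{\BB/\BB'}$ is built by blowing up further elements of $\BB$, and the strict transform of $\Det$ under one further blow-up pulls back (via the universal property of strict transforms, \cite{Hartshorne}[II, Cor. 7.15]) to the strict transform of its strict transform, the natural transformation $\widetilde{\Det}\to\widetilde{\Det}\circ\pi_{\BB/\BB'}$ exists and the square with $\PF\LA_g^{\trop,\BB}$ commutes. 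In each case one also checks the embedding $\widetilde{\Det}(F)\hookrightarrow\PF\LA_g^{\trop,\BB}(F)$ is a closed immersion — immediate, being a strict transform — so that $\widetilde{\Det}$ is indeed a closed subscheme in the sense of Definition \ref{defn: subscheme}.

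The only genuine subtlety — and the step I would flag as the main obstacle — is that, unlike the graph case, the blow-up loci $\BB_Q$ are \emph{not} intrinsic to faces: for $Q'\leq Q$ one only has $\BB_{Q'}\subseteq \Pro^{Q'}\cap\BB_Q$ rather than equality, so a face $F_{Q'}\cone^{\BB}_Q$ of the big blow-up is not literally equal to $\cone^{\BB}_{Q'}$ but only maps to it by an extraneous modification \eqref{extraneousblowdown}. Thus to define $\widetilde{\Det}$ as a functor on $\mathcal{D}^{\perf,\BB}_g$ I must check the cocycle-type coherence: the strict transform of $\Det$ computed inside $F_{Q'}\cone^{\BB}_Q$ must correspond, under the extraneous modification $F_{Q'}\cone^{\BB}_Q\to\cone^{\BB}_{Q'}$, to the strict transform of $\Det$ computed inside $\cone^{\BB}_{Q'}$. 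This follows from Lemma \ref{lem: extraneousblowdowns} — which identifies $\BB_{Q'}$ with $(\BB_Q\cap\Pro^{Q'})^{\min,\sigma_{Q'}}$ — together with the observation that the extra (extraneous) blow-ups contributing to $F_{Q'}\cone^{\BB}_Q$ are along loci of the form $\Pro(\Quad(W/K))$, which are contained in $\Det$ whenever $K\neq0$; by Proposition \ref{lem: stricttransformofDet} (in particular formula \eqref{detfact}) the strict transform of $\Det$ behaves predictably under blowing up such a locus, so these extraneous blow-ups modify $\widetilde{\Det}$ only in the exceptional region and the two strict transforms are canonically identified. Once this coherence is established, functoriality on all of $\mathcal{D}^{\perf,\BB}_g$ follows from the already-verified compatibilities, completing the proof.
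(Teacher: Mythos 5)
Your proposal is correct and takes essentially the same approach as the paper's, which defines $\widetilde{\Det}$ on $[Q]$ as the strict transform of $\Det\cap\Pro^Q$ in $P^{\BB_Q}$ and dispatches all three generating morphism types in a few lines: isomorphisms by $\GL_g(\Z)$-invariance of $\det$, face morphisms by Proposition \ref{prop: UniversalANDIdeal}(i), and extraneous modifications ``by definition since $\widetilde{\Det}$ is a strict transform.'' Your version is a considerably more expanded unpacking of the same argument, making explicit the face decompositions (via Propositions \ref{lem: stricttransformofDet} and \ref{prop: structureoffacesinblownupcone}) and the coherence under extraneous modifications (via Lemma \ref{lem: extraneousblowdowns}) that the paper compresses into its final sentence.
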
 

\begin{proof}  Recall that $\PF\,  \LA^{\trop}_g$ on $[Q]$ is the iterated blow-up $P^{\BB_Q}$ of $\Pro^{Q}$ along $\BB_{Q}$. Let
$\widetilde{\Det}$ denote the strict transform of $\Det \cap \Pro^Q$.  The functoriality with respect to isomorphisms follows from  
the invariance of the determinant locus under $\GL_g(\Z)$. The functoriality with respect to face diagrams follows from proposition \ref{prop: UniversalANDIdeal}
(i) in the case of face morphisms, and for extraneous modifications by definition since $\widetilde{\Det}$ is a strict transform.
 \end{proof} 

The next section is devoted to proving the following theorem.

\begin{thm} \label{thm: Detatinfinity}
The functor $\widetilde{\Det}: \mathcal{D}_g^{\perf, \BB} \rightarrow \mathrm{Sch}_{\Q}$ of $\LA^{\trop, \BB}_g$ is at infinity, i.e. 
\[  \left|    \LA_g^{\perf, \BB} \right|  \cap \widetilde{\Det} = \emptyset  \ . \]
\end{thm}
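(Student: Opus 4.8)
The statement is local: it suffices to show that for every object $[Q]$ of $\mathcal{D}_g^{\perf,\BB}$ and every face of $\cone_Q^{\BB}$, the strict transform $\widetilde{\Det}$ does not meet the corresponding polyhedron $\sigma_Q^{\BB}$ (and all of its faces). By the description of the faces of a blow-up of a polyhedral linear configuration in \S\ref{sect: FacesAndMultBoundary} (the corollary giving \eqref{faceidentification}), and by Lemma \ref{lem: positivityoffaceandnormal} and Remark \ref{rem:  ExtendedLemmaOnStrictPositivity}, every face of $\cone_Q^{\BB}$ is itself a product of blow-ups $\cone_{Q_i}^{\BB_i}$ of cones associated to \emph{strictly positive} polyhedra $\sigma_{Q_i}>0$ in spaces of quadratic forms. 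Since a point lies at infinity iff one of its factors does, one reduces to proving the following local claim: if $(\sigma,\Quad(V))$ is a strictly positive polyhedron and $\BB = \BB_\sigma$ is the minimal blow-up set \eqref{defn: BlowupSetforQ}, then the strict transform $\widetilde{\Det}_V$ of the determinant locus satisfies $\widetilde{\Det}_V \cap \sigma^{\BB} = \emptyset$. This is the reduction I would carry out first.

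\textbf{Main step: induction on $\dim V$.} The claim is proved by induction on $g = \dim V$, using Proposition \ref{lem: stricttransformofDet} (the product formula for the strict transform of $\Det$ across an exceptional divisor). The base case $g=1$ is trivial since $\Det_V = \emptyset$. For the inductive step: by definition of $\sigma^{\BB}$ as a connected semialgebraic set equal to the closure of $\pi_\BB^{-1}(\overset{\circ}{\sigma})$, and since $\overset{\circ}{\sigma}$ does not meet $\Det(\R)$ (strict positivity, Lemma \ref{lem: sigmameetsDet}), it follows that $\pi_\BB^{-1}(\overset{\circ}{\sigma})$ does not meet $\widetilde{\Det}_V$; hence the only possible intersection of $\widetilde{\Det}_V$ with $\sigma^{\BB}$ would occur on the boundary $\partial\sigma^{\BB}$, i.e. inside some exceptional divisor $\mathcal{E}_W$ (or a strict transform of a bounding hyperplane, which reduces to a lower-dimensional face and is handled by the face reduction above). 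So fix $\Pro(W) = \Pro(\Quad(V/K)) \in \BB$ meeting $\sigma$ (necessarily along an essential face $\sigma_W$, by Lemma \ref{lem: sigmameetsVmodKinface} and the choice of $\BB$). By Proposition \ref{prop: multiplicativestructureonBLC}, $\sigma^{\BB}\cap \mathcal{E}_W(\R) = \sigma_{W}^{\BB_W}\times \sigma_{/W}^{\BB_{/W}}$, and by Proposition \ref{lem: stricttransformofDet},
\[ \widetilde{\Det}_V \cap \mathcal{E}_W \ \cong\ \bigl(\widetilde{\Det}_{V/K}\times P^{\BB_{/W}}\bigr)\ \cup\ \bigl(P^{\BB_W}\times \widetilde{\Det}\big|_K\bigr)\ ,\]
where I must be careful to check that $\BB_W$, $\BB_{/W}$ as defined in \eqref{BBrestrictions} are exactly the minimal blow-up sets for the two factors — this is where the intrinsic compatibility of essential faces with passing to faces and normals (Lemma \ref{lem: EssentialStableIntersection}, Lemma \ref{lem: SlackFaces}) is used. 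Now intersect with $\sigma_W^{\BB_W}\times \sigma_{/W}^{\BB_{/W}}$: the first factor cannot meet $\widetilde{\Det}_{V/K}$ because $\sigma_W$ is strictly positive in $\Quad(V/K_{\sigma_W})=\Quad(V/K)$ (Lemma \ref{lem: positivityoffaceandnormal}) and $\dim(V/K)<\dim V$, so the inductive hypothesis applies to $\sigma_W^{\BB_W}$; the second factor cannot meet $\widetilde{\Det}\big|_K$ because the normal polyhedron $\sigma_{/W}$ has interior avoiding $\Det\big|_K$ (again Lemma \ref{lem: positivityoffaceandnormal}, using that $\pi_K(\sigma_{/W})$ is strictly positive in $\Quad(K)$) — here one either invokes a mild variant of the induction for the ``truncated determinant'' $\Det\big|_K = \pi_K^{-1}(\Det_K)$ via the affine fibration $\pi_K$ of Lemma \ref{lem: PiKdef}, or iterates Proposition \ref{lem: stricttransformofDet} once more. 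Since $\widetilde{\Det}_V\cap\mathcal{E}_W$ is contained in the union of these two products, and $\sigma^{\BB}\cap\mathcal{E}_W$ is contained in the product of the complementary strictly-positive pieces, the intersection is empty.

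\textbf{Expected obstacle.} The routine parts are the semialgebraic ``boundary-only'' reduction and the bookkeeping with the product formula. The genuinely delicate point is ensuring that the restricted sets $\BB_W$ and $\BB_{/W}$ appearing in Proposition \ref{prop: multiplicativestructureonBLC} genuinely coincide with the \emph{minimal} blow-up sets $\BB_{\sigma_W}$ and $\BB_{\sigma_{/W}}$ that the inductive hypothesis requires — there is a real subtlety here because (unlike the graph case) the blow-up locus $\BB_Q$ is \emph{not} intrinsic and $\BB_{Q'}\subsetneq \Pro^{Q'}\cap\BB_Q$ is possible; one must use Lemma \ref{lem: extraneousblowdowns} and the fact that the discrepancy is always by \emph{extraneous} modifications, which are isomorphisms near the polyhedron and hence do not affect whether $\widetilde{\Det}$ meets it. Equivalently, one can phrase the whole induction not for a fixed $\BB$ but as: for \emph{any} $\BB$ satisfying (B1), (B2) and containing $\BB_\sigma^{\min}$, the strict transform of $\Det$ avoids $\sigma^{\BB}$; the statement for $\LA_g^{\trop,\BB}$ then follows by Proposition \ref{prop: minimalblow-down}. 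I would set up the induction in this robust form to sidestep the minimality bookkeeping cleanly. The second, lesser obstacle is the ``truncated determinant'' factor $\widetilde{\Det}\big|_K$: one should record once and for all, as a small lemma, that $\Det\big|_K$ is the pullback of $\Det_K$ under the affine fibration $\pi_K$, so that its strict transform under the further blow-ups $\BB_{/W}$ is likewise a pullback and avoids $\sigma_{/W}^{\BB_{/W}}$ precisely because $\pi_K(\sigma_{/W}^{\BB_{/W}})$ avoids $\widetilde{\Det}_K$ by the inductive hypothesis applied in dimension $\dim K < \dim V$.
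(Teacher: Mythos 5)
Your argument is correct and closely parallels the paper's: both reduce to the local claim that $\widetilde{\Det}$ avoids $\sigma^{\BB}$, both use Lemma \ref{lem: positivityoffaceandnormal} / Remark \ref{rem:  ExtendedLemmaOnStrictPositivity} to propagate strict positivity to faces and normals, and both use Proposition \ref{lem: stricttransformofDet} for the product formula of $\widetilde{\Det}$ across exceptional divisors. The main organizational difference is that the paper (Proposition \ref{prop: structureoffacesinblownupcone} plus Corollary \ref{cor: sigmadoesnotmeetDet}) packages all the iterated normal bundles at once by indexing strata of $\sigma^{\BB}$ by full nested sequences of spaces $K_\bullet$, and then concludes by applying Lemma \ref{lem: sigmameetsDet} directly to the \emph{interiors} of each product factor rather than by an explicit induction on $\dim V$; since the blow-up map is an isomorphism over the complement of $\Det$, the interior of each factor $\sigma_i^{\BB_i}$ lifts $\overset{\circ}{\sigma}_i$, which avoids its own determinant locus by strict positivity, and no inductive hypothesis about strict transforms in lower dimension is needed. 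This also renders your ``main expected obstacle'' moot: the paper sidesteps the minimality question for $\BB_W$, $\BB_{/W}$ exactly in the way you propose in your final paragraph, namely by working in the ambient $\Pro(\Quad(V))$ with a set $\BB$ that is merely closed under intersections and meets $\sigma$ in faces, and letting \emph{strict positivity} — not minimality — be the invariant that propagates; Proposition \ref{prop: structureoffacesinblownupcone} already accounts for extraneous blow-ups by allowing $\sigma^{\BB}\cap\mathcal{E}(\R)$ to be empty. So your robust formulation of the induction is the right one, and is in effect what the paper does, just presented non-recursively.
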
 
The proof  follows
from corollary \ref{cor: sigmadoesnotmeetDet}.
 Consequently we define a  functor (definition \ref{defn: OpenSubComplex})
\begin{equation} \label{LAminusDet}  \LA_g^{\trop, \BB} \  \backslash  \ \widetilde{\Det} :   \mathcal{D}_g^{\perf, \BB} \To \BLC_{\Q} \ .
\end{equation} 

\subsection{Structure of faces and proof of theorem \ref{thm: Detatinfinity}}
Let $Q$ be a positive definite quadratic form such that $\sigma_Q>0$.  Let 
$\BB $   denote  the set of subspaces $\Pro(\Quad(V/K))$ of $\Pro(\Quad(V))$   occuring in equation \eqref{defn: BlowupSetforQ}, such that 
$ \BB_Q = \BB \cap \Pro^Q$. By proposition \ref{prop: UniversalANDIdeal} (i), we may  form the blow-up $P^{\BB}$ either by blowing up $\Pro^Q$ along $\BB_Q$, or by  first blowing up $\Pro(\Quad(V))$ along $\BB$, and then restricting to the strict transform of $\Pro^Q$. Here we shall do the latter.

\begin{prop}  \label{prop: structureoffacesinblownupcone} An intersection $\mathcal{E}$ of irreducible components of the  exceptional divisor  of the iterated  blow-up $P^{\BB}$ of $\Pro(\Quad(V))$ along $\BB$  is indexed by nested sequences of spaces
\begin{equation} \label{Kflag} 0=K_0 \subset  K_1 \subset K_2 \subset \ldots \subset K_{n+1} = V\ . 
\end{equation}
There is a canonical isomorphism $\mathcal{E} \cong  P^{\BB_1}_1 \times \ldots \times P^{\BB_n}_n$
where $P^{\BB_i}_i$ is the iterated blow-up  of 
\[    \Pro \left( \frac{\Quad(V/K_i)}{\Quad(V/K_{i+1}) }\right)    \] 
along a  certain set $\BB_i$ of   spaces $\Pro(\Quad(V/K)/\Quad(V/K_{i+1}))$, where $K_{i} \subset K \subset K_{i+1}$.  The space $\BB_i$ is closed under taking intersections.

The   intersection  of $\mathcal{E}$ with the strict transform $\widetilde{\Det}$ is  canonically isomorphic to 
\[ \mathcal{E} \cap \widetilde{\Det}  \ \cong  \  \bigcup_{i=1}^{n}  P^{\BB_1}_1 \times \ldots \times P^{\BB_{i-1}}_{i-1} \times \widetilde{\Det}_i \times P^{\BB_{i+1}}_{i+1} \times \ldots \times P^{\BB_n}_n\]
where $\widetilde{\Det}_i \subset P^{\BB_i}_i$  is the strict transform of the  zero locus of the homogeneous polynomial  map   obtained by composing the restriction  map \eqref{RestrictQtoK} with the determinant:
\[    \frac{\Quad(V/K_i)}{\Quad(V/K_{i+1})} \To \Quad\left( K_{i+1}/K_i \right)  \overset{\det}\To \Q\ .\]
Let $\sigma^{\BB}$ be the closure of the inverse image of the interior  of $\sigma_{Q}$, viewed inside $\Pro(\Quad(V))(\R)$. The  set    $ \sigma^{\BB}\cap \mathcal{E}(\R)$ is either empty or a face which is canonically homeomeorphic to  a product
\begin{equation} \label{Quadpolytopefaces} \sigma^{\BB_1}_{1} \times \sigma^{\BB_2}_{2} \times \ldots \times \sigma^{\BB_n}_{n}\end{equation} 
where  $\sigma^{\BB_i}_i \subset P_i(\R)$ is the blow-up of a polyhedral cone 
$ \sigma_i   \subset   \Pro \left(  \Quad(V/K_i) / \Quad(V/K_{i+1}) \right) (\R) $ which is strictly positive. 
\end{prop}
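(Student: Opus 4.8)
The plan is to reduce everything to the local computation already carried out in Proposition~\ref{lem: stricttransformofDet}, iterated once for each step of the flag \eqref{Kflag}, together with the product structure of faces of blow-ups established in Proposition~\ref{prop: multiplicativestructureonBLC} and Proposition~\ref{prop: BlowUpStructure}. First I would recall that, by Proposition~\ref{prop: BlowUpStructure} applied repeatedly, an intersection $\mathcal{E}$ of irreducible components of the exceptional divisor of $P^{\BB}$ corresponds to a maximal chain of blow-up loci, one inside the next; since every element of $\BB$ has the special form $\Pro(\Quad(V/K))$ and, by \eqref{IntersectionofQspaces}, $\Pro(\Quad(V/K))\subset\Pro(\Quad(V/K'))$ iff $K\supseteq K'$, such a chain is exactly a strictly increasing flag $0=K_0\subset K_1\subset\cdots\subset K_{n+1}=V$ with $\Pro(\Quad(V/K_i))\in\BB$ for $1\le i\le n$. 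The identification $\mathcal{E}\cong\prod_i P_i^{\BB_i}$ with $P_i^{\BB_i}$ an iterated blow-up of $\Pro(\Quad(V/K_i)/\Quad(V/K_{i+1}))$ is then the statement, already proved in the discussion following Proposition~\ref{prop: BlowUpStructure}, that an intersection of exceptional divisors indexed by \eqref{proWflag} is isomorphic to $\prod_{i=1}^{k+1}P^{\BB_i}(W_{i-1}/W_i)$; here $W_i=\Quad(V/K_{\,?})$ must be read off correctly from the contravariance of $\Quad$, and the sets $\BB_i$ are the induced restrictions \eqref{BBrestrictions}, which are closed under intersection by construction.

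Next I would compute $\mathcal{E}\cap\widetilde{\Det}$. The key input is the asymptotic factorisation \eqref{detfact}: blowing up $\Pro(\Quad(V/K))$ in $\Pro(\Quad(V))$ makes the restriction of $\Det$ to the exceptional divisor $\mathcal{E}_K\cong\Pro(\Quad(V/K))\times\Pro(\Quad(V)/\Quad(V/K))$ equal to $\bigl(\Det_{V/K}\times\Pro(\Quad(V)/\Quad(V/K))\bigr)\cup\bigl(\Pro(\Quad(V/K))\times\Det|_K\bigr)$, exactly Proposition~\ref{lem: stricttransformofDet}. Applying this successively along the flag, the factor $\Det|_{K_{i}}$ is itself the zero locus of $\Quad(V)/\Quad(V/K_i)\to\Quad(K_i)\overset{\det}{\to}\Q$, which upon the next blow-up splits again; tracking which "block" the determinant degenerates on at each stage yields precisely that $\widetilde{\Det}\cap\mathcal{E}$ is the union over $i$ of $P_1^{\BB_1}\times\cdots\times\widetilde{\Det}_i\times\cdots\times P_n^{\BB_n}$, where $\widetilde{\Det}_i\subset P_i^{\BB_i}$ is the strict transform of the zero locus of the composite $\Quad(V/K_i)/\Quad(V/K_{i+1})\to\Quad(K_{i+1}/K_i)\overset{\det}{\to}\Q$. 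I would phrase this as an induction on $n$, the inductive step being a single application of Proposition~\ref{lem: stricttransformofDet} combined with the compatibility of strict transforms with the further blow-ups on each factor (Proposition~\ref{prop: UniversalANDIdeal}).

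For the statement about $\sigma^{\BB}\cap\mathcal{E}(\R)$, I would invoke Proposition~\ref{prop: multiplicativestructureonBLC} iteratively: each time we pass to an exceptional divisor $\mathcal{E}_{K}$, the intersection of the blown-up polyhedron with it is $\sigma^{\BB_K}_{\sigma_K}\times\sigma^{\BB_{/K}}_{\sigma_{/K}}$ whenever $\sigma_K=\sigma\cap\Pro(\Quad(V/K))(\R)$ is a face (which holds by Lemma~\ref{lem: sigmameetsVmodKinface}), and is empty otherwise. Iterating produces the product \eqref{Quadpolytopefaces}, with each $\sigma_i$ a normal polyhedron of a face of $\sigma_Q$ inside $\Pro(\Quad(V/K_i)/\Quad(V/K_{i+1}))$; strict positivity of each $\sigma_i$ is then exactly Lemma~\ref{lem: positivityoffaceandnormal} and its extension Remark~\ref{rem:  ExtendedLemmaOnStrictPositivity}, applied to the pair consisting of a face and its normal (taking $K=K_{i+1}$ and the face cut out by $K_i$), since at each stage the essential envelope has been used precisely so that the null space of the relevant face is $K_i$. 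Finally, Theorem~\ref{thm: Detatinfinity} follows: by strict positivity, the interior of each $\sigma_i$ avoids the determinant locus of its ambient space, hence avoids each $\widetilde{\Det}_i$, so the product \eqref{Quadpolytopefaces} is disjoint from $\mathcal{E}\cap\widetilde{\Det}$; since the faces $\sigma^{\BB}\cap\mathcal{E}(\R)$ cover $\partial\sigma^{\BB}_Q$ and the interior of $\sigma^{\BB}_Q$ maps isomorphically to $L\mathcal{P}_g$ which misses $\Det$, one concludes $\widetilde{\Det}\cap|\LA_g^{\trop,\BB}|=\emptyset$.

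The main obstacle I anticipate is bookkeeping rather than conceptual: one must be scrupulous about the contravariance of $\Quad$ when translating the flag of kernels $K_i$ into the flag of subspaces $W_i$ to which Propositions~\ref{prop: BlowUpStructure} and~\ref{prop: multiplicativestructureonBLC} directly apply, and one must verify that the set $\BB_i$ of subspaces of $\Pro(\Quad(V/K_i)/\Quad(V/K_{i+1}))$ produced at step $i$ really is of the asserted form $\{\Pro(\Quad(V/K)/\Quad(V/K_{i+1})): K_i\subset K\subset K_{i+1}\}$ and closed under intersection — this uses \eqref{IntersectionofQspaces} once more, now for the quotients. The only genuine subtlety is the possibility of \emph{extraneous} blow-up loci: because $\sigma_Q$ need not be a simplex, an intersection in $\BB$ of two essential-face loci may fail to meet $\sigma_Q$, so some $\mathcal{E}$ will have $\sigma^{\BB}\cap\mathcal{E}(\R)=\emptyset$; this is harmless for the theorem (an empty face trivially avoids $\widetilde{\Det}$) but must be flagged, and it is the reason the statement reads "either empty or a face".
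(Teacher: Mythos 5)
Your proposal is correct and follows essentially the same route as the paper: the paper's own proof is a terse sketch citing the flag description \eqref{proWflag}, repeated application of Lemma \ref{lem: positivityoffaceandnormal}, Remark \ref{rem: ExtendedLemmaOnStrictPositivity}, Proposition \ref{lem: stricttransformofDet}, and extraneousness for possible emptiness, which are exactly the ingredients you identified and correctly assembled. Your inductive organisation of the $\mathcal{E}\cap\widetilde{\Det}$ computation via Proposition \ref{prop: UniversalANDIdeal} is a reasonable way to make the paper's phrase ``repeated application'' precise, and the closing paragraph about Theorem \ref{thm: Detatinfinity} is accurate but belongs to Corollary \ref{cor: sigmadoesnotmeetDet} rather than to this proposition.
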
 

\begin{proof} The first part follows from the  description  \eqref{proWflag} of exceptional divisors and their intersections in iterated blow-ups in terms of nested sequences of spaces in  $\BB$. The rest follows from repeated application of lemma  
\ref{lem: positivityoffaceandnormal}, remark \ref{rem:  ExtendedLemmaOnStrictPositivity} and  proposition \ref{lem: stricttransformofDet}. 
The fact that $\sigma^{\BB} \cap \mathcal{E}(\R)$ may be empty follows since some  blow-ups may be extraneous to $\sigma$.  \end{proof}

\begin{cor}  \label{cor: sigmadoesnotmeetDet} 
Let  $\cone^{\BB_Q}_{Q}=(P^{\BB_Q},L^{\BB_Q}, \sigma^{\BB}_Q)$ be the  blow-up of $\cone_Q$ (definition \ref{defn: coneQ}).  Then 
\[ \sigma_Q^{\BB} \cap\widetilde{\Det}(\C) = \emptyset \ . \]  

\end{cor}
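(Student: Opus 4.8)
The statement asserts that, after performing the iterated blow-up $\pi_{\BB_Q}\colon P^{\BB_Q}\to \Pro^Q$, the strict transform $\widetilde{\Det}$ of the determinant locus no longer meets the polyhedron $\sigma_Q^{\BB}$. Since $P^{\BB_Q}$ is obtained from $P^{\BB}$ (the blow-up of the ambient $\Pro(\Quad(V))$ along $\BB$) by restricting to the strict transform of $\Pro^Q$, and the polyhedron $\sigma_Q^{\BB}$ is the closure of $\pi_{\BB}^{-1}(\overset{\circ}{\sigma}_Q)$ inside $P^{\BB}(\R)$, it suffices to prove $\sigma_Q^{\BB}\cap \widetilde{\Det}(\C)=\emptyset$ inside $P^{\BB}$. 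The idea is to argue by induction on the number of blow-ups, or more cleanly, to decompose $\sigma_Q^{\BB}$ into its interior together with its faces, and handle each piece using the product structure furnished by Proposition \ref{prop: structureoffacesinblownupcone}.

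First I would dispose of the interior: a point of the interior of $\sigma_Q^{\BB}$ maps under $\pi_{\BB}$ to a point of $\overset{\circ}{\sigma}_Q$, which by strict positivity of $\sigma_Q$ (definition \ref{defn: sigmapositive}) is a positive \emph{definite} quadratic form, hence has nonzero determinant; and $\pi_{\BB}$ is an isomorphism away from the exceptional divisor, so $\widetilde{\Det}$ agrees with $\pi_{\BB}^{-1}(\Det)$ there. Thus the interior of $\sigma_Q^{\BB}$ avoids $\widetilde{\Det}$. It remains to treat the boundary $\partial\sigma_Q^{\BB}$, which by \S\ref{sect: FacesAndMultBoundary} is a union of faces of the form $\sigma_Q^{\BB}\cap \mathcal{E}(\R)$ for $\mathcal{E}$ an intersection of irreducible components of the exceptional divisor (together with faces lying on strict transforms of the bounding hyperplanes, but these reduce, by the description of $L^{\BB}$ and the product structure, to lower-dimensional instances of the same problem). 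By Proposition \ref{prop: structureoffacesinblownupcone}, any such face $\sigma_Q^{\BB}\cap\mathcal{E}(\R)$ — when nonempty — is homeomorphic to a product $\sigma_1^{\BB_1}\times\cdots\times\sigma_n^{\BB_n}$ where each $\sigma_i^{\BB_i}$ is the blow-up of a \emph{strictly positive} polyhedral cone $\sigma_i$ in $\Pro(\Quad(V/K_i)/\Quad(V/K_{i+1}))$, and moreover the intersection $\mathcal{E}\cap\widetilde{\Det}$ is the union $\bigcup_i P_1^{\BB_1}\times\cdots\times\widetilde{\Det}_i\times\cdots\times P_n^{\BB_n}$ of the strict transforms of the individual determinant loci in each factor. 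Therefore the assertion $\sigma_Q^{\BB}\cap\widetilde{\Det}(\C)=\emptyset$ reduces, factor by factor, to the \emph{same} statement — namely $\sigma_i^{\BB_i}\cap\widetilde{\Det}_i(\C)=\emptyset$ — but now for strictly positive polyhedral cones in strictly smaller spaces of quadratic forms (the dimension $\dim(K_{i+1}/K_i)$ of the relevant form space, or the number of blow-up loci, strictly drops).

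This sets up a clean induction: the base case is a strictly positive cone whose blow-up locus $\BB$ is empty, where the claim is precisely $\overset{\circ}{\sigma}\cap\Det(\R)=\emptyset$, which holds by strict positivity and Lemma \ref{lem: sigmameetsDet}. For the inductive step, having established the interior case above, every boundary face is a product of strictly positive blown-up cones in smaller form spaces, so the inductive hypothesis applies to each factor; since $\widetilde{\Det}$ restricted to such a face is a union of pullbacks of the determinant loci from the factors, and each factor's cone avoids its determinant locus, the product avoids the union. The main obstacle — and the place where one must be careful — is the bookkeeping in Proposition \ref{prop: structureoffacesinblownupcone}: one needs that the sets $\BB_i$ appearing on the factors are again of the ``allowed'' form (spaces $\Pro(\Quad(W/K_{i+1}))$ meeting the cone along essential faces), so that the inductive hypothesis is genuinely applicable, and one must handle the extraneous blow-ups (where $\sigma_Q^{\BB}\cap\mathcal{E}(\R)$ is empty) so that they contribute nothing. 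Both of these are controlled by Lemma \ref{lem: sigmameetsVmodKinface}, Lemma \ref{lem: positivityoffaceandnormal} and Remark \ref{rem:  ExtendedLemmaOnStrictPositivity}, together with the compatibility of $\BB_Q$ with passing to faces recorded in Lemma \ref{lem: extraneousblowdowns}; the determinant's own asymptotic factorisation \eqref{detfact} from Proposition \ref{lem: stricttransformofDet} is what makes the product decomposition of $\widetilde{\Det}$ along $\mathcal{E}$ exact rather than merely approximate.
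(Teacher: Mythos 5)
Your proposal is correct and follows the same route as the paper's proof: both reduce the statement, via Proposition \ref{prop: structureoffacesinblownupcone}, to the product decomposition of faces of $\sigma^{\BB}_Q$ into blow-ups of strictly positive polyhedra, and both then invoke Lemma \ref{lem: sigmameetsDet} to conclude that the interior of each factor avoids the determinant locus.

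The only distinction is one of presentation. You frame the argument as an induction on the size of the ambient form space (with the base case being a cone whose blow-up set $\BB$ is empty), and you apply the \emph{full} statement of the corollary to each factor $\sigma_i^{\BB_i}$. The paper avoids explicit induction by observing that $\sigma^{\BB}_Q$ is stratified into the disjoint union of the \emph{interiors} of all its faces (of every codimension); each such interior is a product of interiors $\prod_i \overset{\circ}{\sigma}_i^{\BB_i}$, and each factor's interior is carried homeomorphically by the blow-down to $\overset{\circ}{\sigma}_i$, which avoids $\Det$ by strict positivity. This sidesteps the need to invoke the full statement recursively on factors and hence the need for an induction hypothesis; it also makes the base-case bookkeeping you worry about moot (as you note, $\BB_Q=\emptyset$ never actually occurs for $g>1$, since the vertices $\lambda\lambda^T$ are rank $1$ and hence lie in $\Det$). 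Both formulations are sound and rest on identical ingredients.
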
 
\begin{proof}
One has $\sigma_Q^{\BB} \cong \sigma^{\BB}$ since  $\Pro^Q$ is the Zariski closure of $\sigma_Q$.  
Since the  boundary of  $\sigma^{\BB}$ is the union of the interiors of its faces,    it suffices to show that 
the interior of each face does not meet the strict transform of the determinant locus. Since each face of $\sigma_Q$  either lies at infinity (when it is contained in an exceptional divisor), or is a strictly positive polyhedron of the form 
$\sigma_{Q'}$ for some $[Q']\leq [Q]$,  every face of $\sigma^{\BB}$ is  a product of blow-ups of strictly positive polyhedra
by \eqref{Quadpolytopefaces} (on applying proposition \ref{prop: structureoffacesinblownupcone} to a polyhedron $\sigma_{Q'}$). The statement follows from lemma \ref{lem: sigmameetsDet} which implies that their interiors do not meet the strict transform of the determinant locus. 
\end{proof} 

\begin{defn} With the notations of proposition  \ref{prop: structureoffacesinblownupcone}, define a morphism
\begin{equation} \label{defn: pired}
\pi_{\red}:  \mathcal{E} \setminus (\mathcal{E}\cap \widetilde{\Det} )  \To  \prod_{i=0}^n  \left(  \Pro(\Quad(K_{i+1}/K_i))  \ \setminus \ \Det_{K_{i+1}/K_i} \right) \ .
\end{equation} 
It is obtained from the blow-down map $\mathcal{E} \cong P_1^{\BB_1} \times \ldots \times P_n^{\BB_n}\rightarrow \prod_{i=1}^n \Pro \left( \frac{\Quad(V/K_i)}{\Quad(V/K_{i+1}) }\right) $,  by restricting it to the complement of the strict transform of the determinant locus, and composing with the  restriction  map $\eqref{piKdef}$ on each component. 
\end{defn}

\subsection{Inverting extraneous modifications} \label{sect: InvertExtraneous} Consider the  category 
$\mathcal{I}_g^{\perf,\BB}$ which has the same objects as $D_g^{\perf,\BB}$ but in which all morphisms corresponding to extraneous modifications are inverted.
A face diagram in the category $\mathcal{D}_g^{\perf,\BB}$  which is represented by 
\[  [Q'] \longleftarrow F_{Q'} [Q] \longrightarrow [Q]\]
is  therefore replaced, in the category $\mathcal{I}_g^{\perf,\BB}$,  by morphisms
\[   [Q'] \overset{\sim}{\longrightarrow} F_{Q'} [Q] \longrightarrow [Q]\]
giving rise to a genuine face map $[Q'] \rightarrow [Q]$ in $\mathcal{I}_g^{\perf,\BB}$. The functor $\LA_g^{\trop,\BB}$ does not extend to  $\mathcal{I}_g^{\perf,\BB}$, but does if the target category $\BLC_{\Q}$ is replaced by its localisation 
 with respect to  extraneous modifications. We shall not pursue this any further in this paper.

However, by  the final comments of remark \ref{rem: glueconesfacediagrams}, extraneous modifications are already isomorphisms in the topological realisation. As a result,   the topological realisation  functor $ \sigma \LA^{\trop,\BB}_g: \mathcal{D}_g^{\perf,\BB} \rightarrow \Top$ canonically  extends to a functor 
\[  \sigma \LA^{\trop,\BB}_g \  :  \   \mathcal{I}_g^{\perf,\BB} \To \Top
\] 
whose associated topological  space is precisely 
\begin{equation} \label{LAgtropBBasFunctoronInvertedBlowups} \left|  \LA^{\trop,\BB}_g  \right|  = \varinjlim_{x\in \mathcal{I}_g^{\perf,\BB}}  \sigma_x  \ .
\end{equation} 

\begin{rem} The category $ \mathcal{I}_g^{\perf,\BB}$ can be described in terms of nested sequences of objects 
$([Q_1], \ldots, [Q_r])$ where $[Q_i]$ are objects of the category $\mathcal{D}_g^{\perf}$, in much the same way as  definition \ref{defnNestedGraphs}  in the case of graphs. This will not be discussed further here.  
\end{rem}

\subsection{The boundary and interior} 
Let $\partial \mathcal{D}_g^{\perf}$ denote the full subcategory of $\mathcal{D}_g^{\perf}$ generated by classes of positive definite quadratic forms $Q$ such that  $\sigma_Q \subset \Det(\R)$. 
\begin{defn}
Denote the  restriction of the functor $\LA_g^{\trop}$ to  $\partial \mathcal{D}_g^{\perf} \subset \mathcal{D}_g^{\perf}$ by
\[\partial \LA_g^{\trop}: \partial\mathcal{D}_g^{\perf}\rightarrow \PLC_{\Q}\ .\] \end{defn} 
Its topological realisation  $\left|\partial \LA_g^{\trop}  \right| $ is the union  of all faces  of cones in the Vorono\"i decomposition which lie at infinity, i.e., in $P_g^{\rt} \backslash P_g$.   Denote its complement by 
\begin{equation} 
\left|{\LA}_g^{\circ, \trop}  \right| = \left| \LA_g^{\trop}  \right| \setminus  \left|\partial \LA_g^{\trop}  \right| \ .
\end{equation} 
By  lemma \ref{lem: DetSubFunctor},
$\left| \LA_g^{\trop}  \right| \cap \left|\Det(\R) \right| =   \left| \partial \LA_g^{\trop} \right|$
or equivalently,   
\[ \left|{\LA}_g^{\circ, \trop}  \right|  =   \left| \LA_g^{\trop}  \right| \backslash \left(  \left| \LA_g^{\trop}  \right| \cap \left|\Det(\R) \right| \right)  \cong    L \mathcal{P}_g /\GL_g(\Z) \ . \]

Now let us define $ \partial \mathcal{D}_g^{\perf, \BB}$ to be the full subcategory of $\mathcal{D}_g^{\perf,\BB}$ consisting of all faces whose image under canonical blow-downs are objects of $\partial \mathcal{D}_g$. 

\begin{defn}
Denote the  restriction of $\LA_g^{\trop, \BB}$ to $ \partial \mathcal{D}_g^{\perf,\BB}$ by 
\[\partial \LA_g^{\trop, \BB}:  \partial \mathcal{D}_g^{\perf, \BB} \rightarrow \BLC_{\Q}\ . \] 
\end{defn}

We now show the  open  $ \left|{\LA}_g^{\circ, \trop}  \right| $ embeds canonically into  $|\LA_g^{\trop,\BB}|$. 

\begin{prop} There is a morphism of $\PC_{\Q}$ complexes  \[ \partial \LA_g^{\trop, \BB} \rightarrow \partial \LA_g^{\trop}\] given by the pair 
 $(f, \Phi)$ where the functor  $f: \partial \mathcal{D}_g^{\perf, \BB} \rightarrow \partial \mathcal{D}_g^{\perf}$  and the natural transformation $\Phi$ 
is induced by the canonical blow-downs $\cone^{\BB}_Q \rightarrow \cone_Q$. 
There is a canonical embedding  
\begin{equation} \label{EmbedOpenofAgtoBlowup}   \left|  \LA_g^{\circ, \trop}  \right| \overset{\sim}{\To} \left|   \LA_g^{\trop,\BB} \right|  \setminus  \left|  \partial  \LA_g^{\trop,\BB} \right|  
\end{equation} 
whose inverse is  the blow-down  
$\left|   \LA_g^{\trop,\BB} \right|  \setminus  \left|  \partial \LA^{\trop,\BB}_g \right| \overset{\sim}{\To}  \left|   \LA_g^{\trop} \right|  \setminus  \left|   \partial \LA_g^{\trop}  \right|.$
\end{prop}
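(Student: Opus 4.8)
The plan is to mimic, almost verbatim, the proof of Proposition~\ref{prop: EmbedOpentoBlowup} for the moduli space of tropical curves, since the combinatorial input there (``core graphs are intrinsic'') has been replaced in the present setting by the fact that \emph{strictly positive faces are intrinsic up to extraneous modification}. First I would verify that the functor $f$ and natural transformation $\Phi$ actually land in $\partial\mathcal{D}_g^{\perf}$: an object of $\partial\mathcal{D}_g^{\perf,\BB}$ blows down to a face $[Q']$ of some $[Q]$ lying at infinity, and by Lemma~\ref{lem: positivityoffaceandnormal} applied through Proposition~\ref{prop: structureoffacesinblownupcone}, every face of $\sigma^{\BB}_Q$ which is contained in an exceptional divisor, or whose blow-down lies in $\Det(\R)$, has the property that its blow-down is contained in the determinant locus, i.e.\ is an object of $\partial\mathcal{D}_g^{\perf}$. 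That the blow-down maps $\cone^{\BB}_Q\to\cone_Q$ assemble into a natural transformation compatible with face diagrams \eqref{facediagram} and the $\GL_g(\Z)$-action is exactly the content of \eqref{blowupsfunctorial} and the commuting diagrams preceding Definition of $\mathcal{D}_g^{\perf,\BB}$.

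Next I would construct the embedding \eqref{EmbedOpenofAgtoBlowup}. By Corollary~\ref{cor: sigmadoesnotmeetDet}, $\widetilde{\Det}(\R)$ does not meet $\sigma^{\BB}_Q$, so the blow-down restricts to a homeomorphism
\[
\pi_{\BB_Q}:\ \sigma^{\BB}_Q\ \setminus\ \bigl(\sigma^{\BB}_Q\cap\mathcal{E}_Q(\R)\bigr)\ \overset{\sim}{\To}\ \sigma_Q\ \setminus\ \bigl(\sigma_Q\cap\Det(\R)\bigr),
\]
where $\mathcal{E}_Q\subset P^{\BB_Q}$ is the total exceptional divisor; denote by $j$ the inverse. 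As in the graph case, I would package this into a pair (a functor $\iota$ together with a natural transformation given by $j$) whose colimit is the desired continuous map. The functor $\iota$ sends a strictly positive $[Q]$ (equivalently, an object of the full subcategory $\mathcal{D}_{g,>0}^{\perf}$ of classes with $\sigma_Q\cap\Det(\R)=\emptyset$) to the corresponding top face of $\cone^{\BB}_Q$ in $\mathcal{D}_g^{\perf,\BB}$, and the point is that this assignment is functorial precisely because, by Lemma~\ref{lem: extraneousblowdowns}, face maps between strictly positive cones survive (up to extraneous modification, which is inverted in $\mathcal{I}_g^{\perf,\BB}$ and in any case is an isomorphism on topological realisations). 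Here it is cleanest to work with the description \eqref{LAgtropBBasFunctoronInvertedBlowups} of $|\LA_g^{\trop,\BB}|$ as a colimit over $\mathcal{I}_g^{\perf,\BB}$, so that genuine face maps $[Q']\to[Q]$ are available. Using Lemma~\ref{lem: DetSubFunctor} one identifies $\varinjlim_{[Q]\in\mathcal{D}_{g,>0}^{\perf}}\bigl(\sigma_Q\setminus(\sigma_Q\cap\Det(\R))\bigr)$ with $|\LA_g^{\circ,\trop}|\cong L\mathcal{P}_g/\GL_g(\Z)$, and the colimit of the $j$'s then furnishes a continuous map $|\LA_g^{\circ,\trop}|\to |\LA_g^{\trop,\BB}|\setminus|\partial\LA_g^{\trop,\BB}|$, which by construction is inverse to the restriction of the blow-down $|\LA_g^{\trop,\BB}|\to|\LA_g^{\trop}|$. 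Finally, combining this with the corresponding isomorphism $|\LA_g^{\circ,\trop}|\cong|\LA_g^{\trop}|\setminus|\partial\LA_g^{\trop}|$ from Lemma~\ref{lem: DetSubFunctor} gives the stated homeomorphism.

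I expect the main obstacle to be bookkeeping around the absence of honest face maps in $\mathcal{D}_g^{\perf,\BB}$: one must check carefully that the ``collapsing'' functor $\iota:\mathcal{D}_{g,>0}^{\perf}\to\mathcal{I}_g^{\perf,\BB}$ is well defined and functorial, i.e.\ that for a chain $[Q'']\leq[Q']\leq[Q]$ of strictly positive forms the induced face diagrams \eqref{facediagram} compose correctly after inverting extraneous modifications, and that the natural transformation given by $j$ is compatible with all these morphisms as well as with $\GL_g(\Z)$-isomorphisms and their action on normals (remark~\ref{rem: Products}, \ref{rem: glueconesfacediagrams}). Once one has set up the right index category --- passing to $\mathcal{I}_g^{\perf,\BB}$ as in \S\ref{sect: InvertExtraneous} and to the description \eqref{LAgtropBBasFunctoronInvertedBlowups} --- the remaining verifications are formal: the homeomorphism property of $\pi_{\BB_Q}$ on the relevant open loci is Corollary~\ref{cor: sigmadoesnotmeetDet}, the identification of colimits with $L\mathcal{P}_g/\GL_g(\Z)$ is Lemma~\ref{lem: DetSubFunctor}, and the inverse statement is immediate from the definition of $j$ as the inverse of a restriction of the blow-down. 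The only genuinely geometric ingredient beyond what is already proved is the observation that an interior point of a face of $\sigma^{\BB}_Q$ lying over $\partial\mathcal{D}_g^{\perf}$ indeed maps into $\Det(\R)$, which follows from Proposition~\ref{prop: structureoffacesinblownupcone} together with Lemma~\ref{lem: sigmameetsDet}.
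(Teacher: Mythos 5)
Your proposal is correct and follows essentially the same route as the paper's own proof: both identify the key obstacle (the lack of honest face maps in $\mathcal{D}_g^{\perf,\BB}$), pass to $\mathcal{I}_g^{\perf,\BB}$ via the description \eqref{LAgtropBBasFunctoronInvertedBlowups}, construct the pair $(\iota,j)$ using Corollary~\ref{cor: sigmadoesnotmeetDet} to get the fibrewise homeomorphism, and take colimits. The only difference is that you spell out in more detail the preliminary check that $f$ lands in $\partial\mathcal{D}_g^{\perf}$, which the paper leaves implicit.
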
 

\begin{proof}
Consider the full  subcategory $\mathcal{D}^{\circ,\perf}_g$  of $\mathcal{D}_g^{\perf}$ whose objects are  classes $[Q]$, where $Q$ is positive definite, such that $\sigma_Q>0$.  Consider the functor 
$\mathcal{D}^{\circ,\perf}_g  \rightarrow  \Top $ defined by  
\begin{equation}  \label{inproofQtoopensigmaQfunctor}
{[}Q]  \mapsto  \sigma_Q \backslash \left( \sigma_Q \cap \Det(\R) \right) \ .  \end{equation}
The associated topological space is 
\[  \left| \LA^{\circ, \trop}_g \right| = \varinjlim_{Q \in  \mathcal{D}^{\circ,\perf}_g}  \sigma_Q \backslash \left( \sigma_Q \cap \Det(\R) \right)  \ .  \]
Because $ \mathcal{D}_g^{\perf,\BB}$ does not have face maps, there is no  natural functor from $\mathcal{D}^{\circ,\perf}_g$ to $\mathcal{D}_g^{\perf,\BB}$, but  since we are working with topological spaces, for which face maps do exist, we can work in the category $ \mathcal{I}_g^{\perf,\BB}$ (see \S\ref{sect: InvertExtraneous}) for which $[Q']\cong F_{Q'}[Q]$.  For this category, there is a functor  
$\iota: \mathcal{D}^{\circ,\perf}_g \rightarrow   \mathcal{I}_g^{\perf,\BB}$. 
From this point onwards, the argument is similar to that of  proposition \ref{prop: EmbedOpentoBlowup} on replacing 
``$\LM_g$" with ``$\LA_g$" and the graph hypersurface locus $\mathcal{X}$ with $\Det$. 
 Indeed, the blow-down induces a canonical isomorphism
\[  \sigma_Q^{\BB}  \backslash \left(  \sigma_Q^{\BB} \cap \partial \LA^{\trop, \BB}_g  \right)   \  \overset{\sim}{\To} \  \sigma_Q \backslash (   \sigma_Q \cap \Det(\R) ) \]
whose inverse we shall denote by $j$. 
The pair $(\iota, j)$ defines a morphism of functors from the functor $\mathcal{D}^{\circ,\perf}_g  \rightarrow  \Top $ defined by \eqref{inproofQtoopensigmaQfunctor} to the functor $I_g^{\perf,\BB} \rightarrow \Top$ which sends $[Q]$ to $\sigma_Q^{\BB}  \backslash \left(  \sigma_Q^{\BB} \cap \partial \LA^{\trop, \BB}_g  \right)$.  By taking the limit over objects, 
 we deduce  from  \eqref{LAgtropBBasFunctoronInvertedBlowups}  that it induces a continuous map 
 \[  \left| \LA^{\circ, \trop}_g  \right|  \To  \left| \LA^{\trop, \BB}_g  \right| \setminus  \left|  \partial\LA^{\trop, \BB}_g  \right| \ . \]
 The last statement follows since  the composition  of the previous map with the blow-down map 
$  \left| \LA^{\trop, \BB}_g  \right| \setminus  \left|  \partial\LA^{\trop, \BB}_g  \right| \rightarrow   \left| \LA^{\trop}_g  \right| \setminus  \left|  \partial\LA^{\trop}_g  \right| =    
 \left| \LA^{\circ, \trop}_g  \right|$
 is the identity. 
\end{proof}

\section{The blow-up of the tropical Torelli map}  \label{section: Torelli}

\subsection{The tropical Torelli map}
Let $G$ be a connected  graph with zero weights. To each edge $e\in E_G$ we assign a variable $x_e$.   
Define a bilinear  inner product on  $\Z^{E_G}$ by setting $\langle e, e'\rangle = \delta_{e,e'} x_e$ for all edges $e,e'$. 
The graph Laplacian  
is defined to be  its restriction to $H_1(G;\Z) \subset \Z^{E_G}$
which defines  a quadratic form on $ H_1(G;\Z)$  taking values in $\Z[x_e, e\in E_G]$.  See \S \ref{sec: Example} for an example. 
 Equivalently, it is given by  a linear map
\[ \Lambda_G :  H_1(G;\Z) \To  \mathrm{Hom} \left(H_1(G;\Z) , \Z[x_e, e\in e_G] \right) \ .\]  
A graph Laplacian matrix is a matrix representative with respect to a  basis of $H_1(G;\Z)$. It is an $h_G \times h_G$ symmetric matrix whose entries lie in 
$\Z[x_e, e\in E_G]$.  Its determinant, which is well-defined, is equal to the graph polynomial:
\begin{equation} \label{detLambdaPsi} \det \Lambda_G = \Psi_G\ .
\end{equation}

The definition makes sense for any such graph $G$, even in the absence of a metric, since  the edge parameters $x_e$ are interpreted as abstract variables. In the case when $G$ is  a metric graph, with edge lengths $\ell_e \in \R_{\geq 0}$, then the variables $x_e$ are assigned the values $\ell_e$ and the corresponding graph Laplacian is the real quadratic form
\[     \Lambda_G = \Lambda_G \Big|_{x_e=\ell_e}   \quad \in \quad \Quad(H_1(G;\R))   \] 
which is denoted, by abuse of notation, by the same symbol. One shows that the graph Laplacian (for a metric graph) is positive semi-definite, with rational kernel.

The tropical Torelli map, defined and studied in \cite{Nagnibeda, Baker, CaporasoViviani, MikhalinZharkov}, is the map 
\begin{eqnarray}  \label{tgtropicalTorelli} 
t_g: \mathcal{M}_g^{\trop}    & \To&   \mathcal{A}_g^{\trop}   \\
G & \mapsto & [ \Lambda_G  \oplus 0^{w(G)}]\nonumber
\end{eqnarray}  
which sends a weighted metric graph $G$ to the class of the Laplacian of the associated  unweighted metric graph, where $0^{w(G)}$ denotes the $w(G)\times w(G)$ matrix  whose entries are all zero.    A matrix representative $\Lambda_G \oplus 0^{w(G)}$  has $g$ rows and columns, where $g$ is the genus of $G$.  It was proven in \cite{BMV} that the tropical Torelli map is a morphism in a category of stacky fans.
In this section, we extend it to a  map on bordifications. 
Before doing so, we observe that the tropical Torelli map is degenerate on certain kinds of graphs.

\begin{figure}[h]\begin{center} 
\quad {\includegraphics[width=10cm]{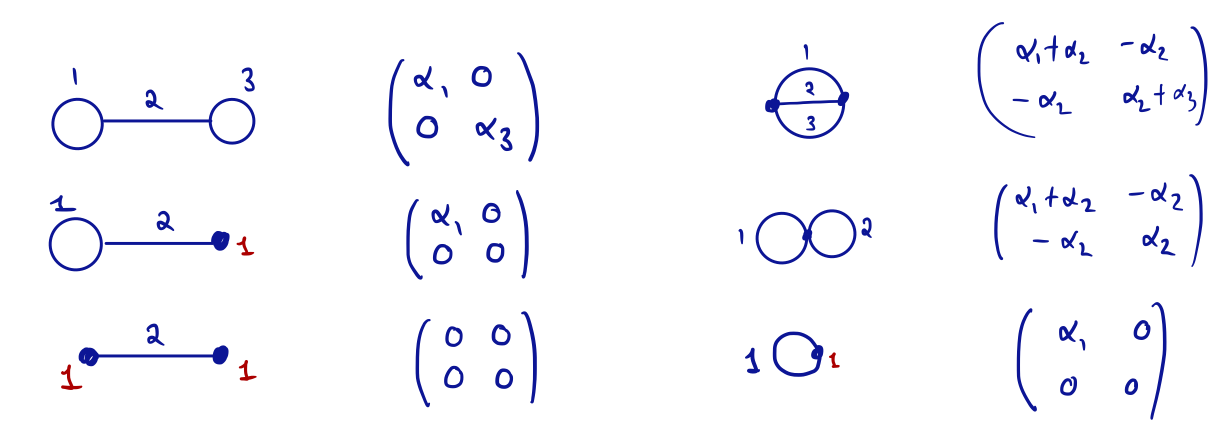}} 
\end{center}
\caption{The graph Laplacians of the stable graphs of genus 2}\label{fig: genus2pictures}
\end{figure}
\begin{example} Consider  the sunrise diagram (figure \ref{fig: genus2pictures}, top right), with edges oriented to the right, and let $c_1= e_1-e_2$, $c_2=e_2-e_3$ denote  cycles in $\Z^{E_G}$ whose image in $H_1(G;\Z)$ is a basis.  The graph Laplacian matrix  with respect to this basis is 
\[  \Lambda_G = \begin{pmatrix} 
x_1+ x_2 & -x_2 \\
-x_2 & x_2+x_3
\end{pmatrix} \]
with determinant $\Psi_G = x_1x_2+x_1x_3+x_2x_3$.  The vanishing of the matrix $\Lambda_G$ implies that $x_1=x_2=x_3=0$.
The Laplacians of the remaining stable graphs of genus 2 are depicted in the same figure.  
The tropical Torelli map is not injective on the dumbbell graph (top left), and is even identically zero on the graph depicted in the bottom left. 
Thus the tropical Torelli map collapses the  dumbbell cell into  the boundary of the  image of the sunrise cell. 
\end{example}

Let $G$ be a connected graph. The Laplacian $\Lambda_G$ defines a  linear map
\begin{equation} \label{LinearMapLambda}  \lambda_G:  \Q^{E_G} \To  \Quad(H_1(G;\Q)) 
\end{equation} 
which is not injective in general, as the previous example shows. When this is the case, the tropical Torelli map does not  extend to a morphism in the category $\PLC_{\Q}$.

\begin{prop} \label{Prop: lambdaInjective} The linear map \eqref{LinearMapLambda} is injective if and only if $G$ is 3-edge connected (the removal of any edge of $G$ is connected and bridgeless). 
\end{prop}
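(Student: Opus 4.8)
\textbf{Proof strategy for Proposition \ref{Prop: lambdaInjective}.}

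The plan is to translate the injectivity of $\lambda_G$ into an edge-wise statement about the graph Laplacian, and then relate that statement to standard combinatorial connectivity. First I would set up coordinates: the target $\Quad(H_1(G;\Q))$ receives the image of each basis vector $e\in E_G$ of $\Q^{E_G}$, and $\lambda_G(e)$ is the rank-one form $v\mapsto \langle v, e\rangle^2$ on $H_1(G;\Q)\subset \Q^{E_G}$, where $\langle v,e\rangle$ is the coefficient of $e$ in a cycle $v$. Equivalently, $\lambda_G(e) = \ell_e^*\otimes\ell_e^*$ where $\ell_e: H_1(G;\Q)\to \Q$ is the linear functional ``$e$-coordinate of a cycle''. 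So $\lambda_G(\sum_e \mu_e\, e) = \sum_e \mu_e\, \ell_e^*\otimes \ell_e^*$, and injectivity means: the quadratic forms $\{\ell_e^*\otimes\ell_e^*\}_{e\in E_G}$ are linearly independent in $\mathrm{Sym}^2(H_1(G;\Q)^\vee)$.

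The key reduction is a standard fact about rank-one forms: a collection of squares of linear functionals $\{f_e^2\}$ is linearly dependent exactly when some nontrivial linear combination $\sum \mu_e f_e^2 = 0$, and (after a Vandermonde-type / pairing argument, or by evaluating on well-chosen vectors) one can show this forces a strong coincidence among the functionals $f_e$. Concretely I would argue: if $\sum_e \mu_e \ell_e^2 = 0$ with not all $\mu_e=0$, pick $e_0$ with $\mu_{e_0}\ne 0$; there must be another edge $e_1$ with $\ell_{e_1}$ proportional to $\ell_{e_0}$ on $H_1(G;\Q)$ (otherwise one can separate $\ell_{e_0}^2$ from the rest by evaluating on a vector in $\ker\ell_e$ for $e\ne e_0$ but not in $\ker \ell_{e_0}$ — this is where a short linear-algebra lemma is needed, handling the case of several mutually proportional functionals at once). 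So injectivity of $\lambda_G$ is equivalent to: \emph{no two distinct edges $e,e'$ have proportional coordinate functionals $\ell_e, \ell_{e'}$ on $H_1(G;\Q)$, and moreover no $\ell_e$ vanishes identically.}

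Finally I would identify these two conditions with $3$-edge-connectivity. The functional $\ell_e$ vanishes on all of $H_1(G;\Q)$ iff no cycle passes through $e$, i.e.\ $e$ is a bridge. Two distinct functionals $\ell_e,\ell_{e'}$ are proportional on $H_1(G;\Q)$ iff every cycle contains $e$ with the same (up to a global sign) multiplicity as $e'$; by the theory of cuts and cycles this happens exactly when $\{e,e'\}$ is a cocycle (2-edge-cut), i.e.\ $G\setminus\{e,e'\}$ is disconnected — equivalently $G$ becomes a graph with a bridge after deleting one of them. Putting this together: $\lambda_G$ fails to be injective $\iff$ $G$ has a bridge or a 2-edge-cut $\iff$ $G$ is not 3-edge-connected, which is exactly the stated criterion (removal of any edge leaves a connected bridgeless graph). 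The main obstacle I anticipate is the middle step: making the rank-one linear-independence lemma clean when there are clusters of several mutually proportional $\ell_e$'s, and correctly matching ``proportional coordinate functionals'' with ``2-edge-cut'' including the sign bookkeeping in $H_1$ — this requires a careful but elementary argument with the cycle space and its orthogonal complement (the cut space) under the standard pairing on $\Q^{E_G}$.
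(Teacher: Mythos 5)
Your reformulation of the problem---injectivity of $\lambda_G$ is equivalent to linear independence of the rank-one forms $\ell_e^2$ in $\mathrm{Sym}^2(H_1(G;\Q)^\vee)$---is correct, and so is your combinatorial dictionary at the end ($\ell_e\equiv 0$ $\Leftrightarrow$ $e$ a bridge; $\ell_e\propto\ell_{e'}$ $\Leftrightarrow$ $\{e,e'\}$ a $2$-edge-cut). But the middle step has a genuine gap. The proposed ``rank-one linear-independence lemma''---that a nontrivial dependence $\sum_e\mu_e\ell_e^2=0$ forces two of the $\ell_e$ to be proportional---is simply false in general: for linear functionals $x,y,x+y,x-y$ on $\Q^2$ one has $(x+y)^2+(x-y)^2-2x^2-2y^2=0$, yet no two of the four are proportional. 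Your sketched separation argument does not save it: you want a vector in $\bigcap_{e\ne e_0}\ker\ell_e$ but outside $\ker\ell_{e_0}$, i.e.\ a cycle whose support is the single edge $e_0$, which exists only when $e_0$ is a self-loop. For a generic edge there is no such vector, so the argument stalls well before the ``clusters of proportional functionals'' subtlety you anticipate. The equivalence of ``$\lambda_G$ injective'' with ``no two $\ell_e$ proportional, none vanishing'' does hold for graphs, but it is a consequence of the proposition, not something that follows from the rank-one shape of the forms alone; proving it requires exploiting the cycle-space structure.

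The paper's proof does exactly that and explains why a subtler probe is needed. Instead of trying to isolate $\ell_{e_0}^2$, it reads off $x_e$ from an off-diagonal Laplacian entry: given a non-self-edge $e$ in a $3$-edge-connected graph, Menger's theorem applied to $G\setminus e$ yields two edge-disjoint paths between the endpoints of $e$, hence two cycles $c_1,c_2$ whose edge sets intersect in $\{e\}$ alone. Completing these to a basis of $H_1(G;\Z)$ makes $(\Lambda_G)_{1,2}=\pm x_e$, which forces $\ker\lambda_G\subset\{x_e=0\}$; self-edges are handled with a diagonal entry. Running over all $e$ gives injectivity. Your converse direction (a $2$-edge-cut gives a relation in the kernel) is correct and is essentially the paper's argument. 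If you want to salvage your framework, you would need to replace the false general lemma by a graph-theoretic statement---and the Menger-type construction of two cycles meeting in a single edge is the natural one.
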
 

\begin{proof} Suppose that $G$ is 3-edge connected. Let $e\in E_G$ be any edge of $e$ which is not a self-edge.  The graph $G\backslash e$ is 2-edge connected. By Menger's theorem  there exist two edge-disjoint paths $p_1, p_2$ between the endpoints of $e$ which lie in $G\backslash e$. 
It follows that, by choosing an orientation on the edges of $G$ and adjoining the edge $e$ to the paths $p_1, p_2$, we may find two cycles $c_1,c_2 \in \Z^{E_G}$  
which are independent homology classes in $H_1(G;\Z)$, and only overlap in the single edge $e$.  We may complete these to a family of cycles $c_1,c_2, \ldots, c_h$ which represent a basis of $H_1(G;\Z)$. It follows from the definition of $\Lambda_G$ that the entry 
$( \Lambda_G)_{1,2}  = \pm x_e$, 
where the sign depends on the choice of orientation of edges, and is immaterial. Consequently, the kernel of $\lambda_G$ is contained in the subspace $x_e=0$ of $\Q^{E_G}$. 
Now suppose that $e$ is a self-edge. We may choose the first cycle $c_1$ in a choice of representatives $c_1, \ldots, c_h$ for $H_1(G;\Z)$ to consist of the single edge $e$. In this case, $( \Lambda_G)_{1,1}  =  x_e $
and again we conclude that the kernel of \eqref{LinearMapLambda} is contained in $x_e=0$. Since this holds  for all edges $e\in E_G$, $\lambda_G$ is injective. 

Conversely,  if $G$ is not $3$-edge connected,  there exist distinct edges $e_1,e_2 \in E_G$ such that $G'=G\backslash \{e_1,e_2\}$ is disconnected, and  
 $G/G'$ is the graph with  two vertices $v_1,v_2$ and two edges connecting $v_1$ and $v_2$.   Its Laplacian is represented by the $1\times 1$ matrix with a single entry $x_{e_1}+x_{e_2}$.  It follows that $\lambda_{G/G'}$, and a fortiori $\lambda_G$ (which restricts to $\lambda_{G/G'}$ on the subspace where all $x_e=0$ for $e\in G'$), are  not injective since the linear subspace of $\Q^{E_G}$ given by  $x_{e_1} = -x_{e_2}$ and all remaining $x_e=0$, is contained  in their kernel. 
\end{proof} 

The answer to a more precise question, namely when two tropical curves have equivalent Laplacians, is provided in \cite{CaporasoViviani}. 
Note that  for any bridge $e \in E_G$,  the variable $x_e$ does not appear at all in the Laplacian $\Lambda_G$ and one always has $\Q e \subset \ker (\lambda_G)$. 

\begin{rem} \label{rem: corevanishingLambda}
For any core (bridgeless) graph $G$, the intersection of the kernel of $\lambda_G$ with the region $\widetilde{\sigma}_G = \{x_e\geq 0, e\in E_G\}$  (which is the affine cone over  $\sigma_G$) is zero. This is because the vanishing locus of $\lambda_G$ contains the vanishing locus of $\Psi_G$, which is positive on the interior of $\widetilde{\sigma}_G$ and hence only vanishes along the boundary faces $\widetilde{\sigma}_{G/e} = \widetilde{\sigma}_G \cap V(x_e)$.  One concludes by repeating the argument with $G/e$, which is also core. 
\end{rem} 

\subsection{Reduced moduli space of tropical curves}
The set of $3$-edge connected weighted graphs is stable  under edge contractions. 

\begin{defn} Let $I_g^{\red}$ be the full subcategory of $I_g$ (definition \ref{defnIg}) whose objects are isomorphism classes of  weighted connected graphs with edge-connectivity three. 
Let 
\[ \LM_g^{\red, \trop} :  (I_g^{\red})^{\opp} \To \PLC_{\Q}\]
denote the restriction of the functor $\LM_g^{\trop}$ to $(I_g^{\red})^{\opp}$. Similarly, let 
\[ \mathcal{X}^{\red} :  (I_g^{\red})^{\opp} \To \mathrm{Sch}_{\Q}\]
denote the restriction of the graph locus functor to  $(I_g^{\red})^{\opp}$.
\end{defn}

There is a natural map $\LM_g^{\red, \trop} \rightarrow \LM_g^{\trop}$ and an inclusion 
$ \left| \LM_g^{\red, \trop}\right|  \  \hookrightarrow   \   \left| \LM_g^{ \trop}\right|$. 
In genus 2, for example, the reduced moduli space of tropical curves has only one top-dimensional cell, which is indexed by the sunrise graph (figure \ref{fig: genus2pictures}).

\subsection{The projective tropical  Torelli map}
Let  $G$ be a 3-edge connected weighted graph. By proposition \ref{Prop: lambdaInjective},   $\lambda_G$ is injective and  its projectivisation defines   a linear morphism: 
\begin{eqnarray} 
\Pro^{E_G}  & \overset{[\lambda_G]}{\To}&  \Pro(\Quad(H_1(G;\Q)))  \\ 
(x_e)_{e\in E_G} & \mapsto & \left[  \sum_{e} x_e Q_e\right]  \ ,\nonumber 
\end{eqnarray} 
where $Q_e$ is the quadratic form $Q_e(e_1,e_2) = \delta_{e,e_1} \delta_{e,e_2}$.
A key theorem \cite[Defn. 3.4 and Theorem 6.7]{AlexeevBrunyate} (see also  
\cite[Theorem 4.2.1]{MeloViviani}), states that for any such $G$, there exists a quadratic form $Q_G$ whose minimal vectors are exactly the  set of $Q_e$, for $e\in E_G$. Thus, for  every 3-edge connected graph $G$ of weight zero, the map   $[\lambda_G]: \sigma_G \cong \sigma_{Q_G}$ is an isomorphism. Therefore $[\lambda_G]$ defines a morphism 
\begin{equation} \label{lambdaGiso} 
 \cone_{G} = (\Pro^{E_G}, L_G, \sigma_G)  \quad  \To \quad  \cone_{Q_G} = (  \Pro^{Q_G}, L_{\sigma_{Q_G}}, \sigma_{Q_G})
\end{equation} 
in the category $\PLC_{\Q}$. 
The second part of the  following proposition is a paraphrase of the results \cite{BMV} for stacky fans, transposed to the setting of polyhedral linear complexes.
\begin{prop}
There is a canonical  morphism of  $\PLC_{\Q}$-complexes:
\begin{equation} \label{TropicalTorellionPLC}  \lambda:  \LM^{\red, \trop}_g \rightarrow \LA^{\trop}_g 
\end{equation}
which maps the subscheme $\mathcal{X}^{\red} $ to $\Det$.  Its topological incarnation
\[  \left|   \LM^{\red, \trop}_g  \right| \To \left|\LA^{\trop}_g \right|\]
is  the restriction of the tropical Torelli map \eqref{tgtropicalTorelli} to the reduced moduli space. 
\end{prop}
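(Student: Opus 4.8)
The plan is to construct the morphism \eqref{TropicalTorellionPLC} on each object of $(I_g^{\red})^{\opp}$ and then verify functoriality. First I would observe that for a $3$-edge connected weighted graph $G$ of genus $g$, the weighting plays no role in the Laplacian in the sense that $\lambda_G$ factors through $\Q^{E_G} \to \Quad(H_1(G;\Q))$, and $H_1(G;\Q)$ has dimension $h_G = g - w(G)$. Using the identification $\Quad(H_1(G;\Q)) \hookrightarrow \Quad(\Q^g)$ coming from a choice of complement (i.e.\ adding $w(G)$ zero rows and columns, corresponding to the $0^{w(G)}$ summand in \eqref{tgtropicalTorelli}), the isomorphism \eqref{lambdaGiso} together with the theorem of Alexeev--Brunyate and Melo--Viviani cited in the excerpt gives, for every such $G$, a morphism $\cone_G \to \cone_{Q_G}$ in $\PLC_{\Q}$; moreover $[Q_G]$ is a well-defined object of $\mathcal{D}_g^{\perf}$. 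Set $\lambda(G) = \cone_{Q_G}$, viewed as $\LA_g^{\trop}([Q_G])$.

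Next I would check that this assignment is functorial. The morphisms of $(I_g^{\red})^{\opp}$ are generated by isomorphisms and edge contractions. An isomorphism of weighted graphs $G \cong G'$ induces a bijection $E_G \cong E_{G'}$ and an isomorphism $H_1(G;\Z) \cong H_1(G';\Z)$, hence a linear isomorphism $\Quad(H_1(G;\Q)) \cong \Quad(H_1(G';\Q))$ carrying the minimal vectors $Q_e$ to $Q_{e'}$; by uniqueness of the perfect-type form determined by its minimal vectors this gives $[Q_G] \cong [Q_{G'}]$ compatibly with \eqref{lambdaGiso}, which is the required isomorphism in $\mathcal{D}_g^{\perf}$. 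For an edge contraction $G \to G/e$ (with $e$ not a bridge, automatic since $G$ is $3$-edge connected, so $G/e$ is again $3$-edge connected of the same genus after the weight-update of \S\ref{subsect: WeightedGraphs}), I would use that contracting $e$ amounts to setting $x_e = 0$, which on the level of Laplacians restricts $\Lambda_G$ to the hyperplane $x_e = 0$ and identifies it with $\Lambda_{G/e}$ on $H_1(G/e;\Q) = H_1(G;\Q)$ (the cycle space is unchanged under contracting a non-bridge). Hence the face map $\sigma_{G/e} \subset \sigma_G$ maps, under $[\lambda_\bullet]$, to the face $\sigma_{Q_{G/e}} \subset \sigma_{Q_G}$ cut out by the corresponding coordinate hyperplane, i.e.\ $[Q_{G/e}] \le [Q_G]$, and one obtains the commuting square exhibiting $\lambda$ as a natural transformation of functors into $\PLC_{\Q}$. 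This establishes \eqref{TropicalTorellionPLC}.

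It then remains to identify the subscheme functors and the topological realisation. For the subscheme statement, recall $\mathcal{X}^{\red}(G) = X_G = V(\Psi_G) \subset \Pro^{E_G}$ and $\Det([Q_G]) = \Det \cap \Pro^{Q_G}$ (Lemma \ref{lem: DetSubFunctor}); by \eqref{detLambdaPsi} one has $\Psi_G = \det \Lambda_G$, so $[\lambda_G]$ pulls back the determinant hypersurface to the graph hypersurface, i.e.\ $[\lambda_G]^{-1}(\Det) = X_G$, and this is compatible with restriction to faces by the same $x_e = 0$ specialisation argument together with $\Psi_G|_{x_e = 0} = \Psi_{G/e}$. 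Hence $\lambda$ maps $\mathcal{X}^{\red}$ to $\Det$ as subscheme functors of $\PF\LM_g^{\red,\trop}$ and $\PF\LA_g^{\trop}$. Finally, taking topological realisations $|\cdot|$ of the morphism $(\phi, \Phi)$ just built yields the continuous map $|\LM_g^{\red,\trop}| \to |\LA_g^{\trop}|$, and on the open dense locus of graphs of weight zero it is by construction $G \mapsto [\Lambda_G \oplus 0^{w(G)}]$, which is exactly the tropical Torelli map \eqref{tgtropicalTorelli} restricted to the reduced locus; by continuity and density it agrees with $t_g|_{|\LM_g^{\red,\trop}|}$ everywhere.

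\textbf{Main obstacle.} The routine parts are the functoriality checks; the genuine point requiring care is the interaction between edge contraction and the weighting: one must confirm that after contracting a non-bridge edge and updating vertex weights as in \S\ref{subsect: WeightedGraphs}, the genus is preserved, $G/e$ remains $3$-edge connected, the cycle space $H_1(\cdot;\Q)$ is canonically unchanged, and the zero-padding $0^{w(G)}$ is consistent across the contraction — so that the face map on the $\LM$ side really does land on a face of $\cone_{Q_G}$ corresponding to a class $[Q_{G/e}] \le [Q_G]$ in $\mathcal{D}_g^{\perf}$, rather than a more degenerate cone. Equivalently, one must be sure that the theorem of Alexeev--Brunyate applies uniformly to all graphs in the nested family and that the resulting forms $Q_{G/e}$ are compatible, under $\GL_g(\Z)$, with the faces of $Q_G$; this is where the bulk of the (short) argument lies.
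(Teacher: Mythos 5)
Your overall strategy mirrors the paper's proof, which is essentially: take the functor $t_g : (I_g^{\red})^{\opp}\to\mathcal{D}_g^{\perf}$, $G\mapsto[Q_G]$, whose existence and functoriality it deduces from the cited references (Alexeev--Brunyate, Melo--Viviani, BMV), pair it with $[\lambda]$ as a natural transformation, and verify the subscheme compatibility via $\Psi_G=\det\Lambda_G$. You attempt to spell out the functoriality check directly rather than citing it, which is reasonable, and your argument for isomorphisms and for the subscheme/topological statements is fine.

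However, your verification of functoriality for edge contractions contains a genuine (if small) gap. You assert that for a non-bridge edge $e$, the contraction satisfies $H_1(G/e;\Q)=H_1(G;\Q)$, ``the cycle space is unchanged under contracting a non-bridge.'' This is true for non-self-loop edges but \emph{false} when $e$ is a self-edge (a tadpole), which the paper's conventions for $I_g$ explicitly allow: by \S\ref{subsect: WeightedGraphs}, contracting a self-edge removes it and increments the weight of its endpoint by $1$, so $h_{G/e}=h_G-1$ and $w(G/e)=w(G)+1$. Such morphisms do occur in $I_g^{\red}$ --- e.g.\ already for $g=2$, contracting a tadpole of the figure-eight graph (one weight-$0$ vertex, two self-edges) lands on the graph with one weight-$1$ vertex and one self-edge, both of which are stable, connected, and $3$-edge connected. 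In that case $\Lambda_G|_{x_e=0}$ is not $\Lambda_{G/e}$ as a form on a fixed $H_1$; one must instead observe that, for a basis of $H_1(G;\Q)$ beginning with the cycle class of $e$, $\Lambda_G|_{x_e=0}$ becomes block-diagonal $0\oplus\Lambda_{G/e}$, so $\Lambda_G|_{x_e=0}\oplus 0^{w(G)}$ and $\Lambda_{G/e}\oplus 0^{w(G)+1}$ are $\GL_g(\Z)$-conjugate rather than equal. Consequently $t_g$ must send the face morphism $G/e\to G$ to a composition of an isomorphism (type (ii) of definition \ref{defn:Dperfg}) with a face map (type (i)), rather than a face map alone. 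Your ``main obstacle'' paragraph identifies exactly this as the point requiring care, but then asserts the very claim (cycle space unchanged, zero-padding consistent) whose failure for self-edges is the issue. The conclusion of the proposition is unaffected, but your argument as stated does not cover this case.
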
 
\begin{proof}  By the references quoted above, there exists a functor 
$ t_g:  \left(I_g^{\red}\right)^{\opp} \To \mathcal{D}_g^{\perf}$
which sends  a $3$-edge connected graph $G$  to $[Q_G]$.
The pair $(t_g, [\lambda])$ defines the required morphism of $\PLC_{\Q}$-complexes, since $[\lambda]$ is functorial and defines a natural transformation. The compatibility of $\mathcal{X}^{\red}$ and $\Det$ follows since $\Psi_G = \det \Lambda_G$, and hence $[\lambda](X_G) = \Det$.  
\end{proof} 

\subsection{The blow-up of the tropical Torelli map}
The reader may wish to refer to the example studied in  \S\ref{sec: Example}. 

\begin{prop} \label{prop: lambdaBBG} 
Let $G$ be a 3-edge connected   graph. Then the tropical Torelli map induces  a continuous map
$ \lambda^{\BB}_G:  \sigma^{\BB}_G  \rightarrow \sigma^{\BB}_{Q_G}$ on blown-up polyhedra. 
\end{prop}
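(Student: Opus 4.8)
The plan is to show that the linear isomorphism $[\lambda_G]: \cone_G \to \cone_{Q_G}$ from \eqref{lambdaGiso} transports the blow-up locus $\BB^G$ for $\LM_g^{\trop,\BB}$ onto a subset of the blow-up locus $\BB_{Q_G}$ for $\LA_g^{\trop,\BB}$, so that Proposition \ref{prop: UniversalANDIdeal} (i) and Proposition \ref{prop: minimalblow-down} together produce the desired continuous map on realisations. First I would recall that $\BB^G = \BB^{G,\mathrm{core}} = \{\Pro(\Q^{E_{G/\gamma}}): \gamma \subset G \text{ core}\}$, and that the linear embedding $\Q^{E_G} \hookrightarrow \Quad(H_1(G;\Q))$ induced by $\lambda_G$ sends the coordinate subspace $\Q^{E_\gamma} \subset \Q^{E_G}$ (spanned by the $x_e$ for $e\in E_\gamma$) into the image. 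The key algebraic fact I would invoke is the factorisation \eqref{Psifactorizes}, $\Psi_G = \Psi_\gamma \Psi_{G/\gamma} + R_{G/\gamma}$, together with its conceptual origin in the asymptotic determinant factorisation \eqref{detfact} of Proposition \ref{lem: stricttransformofDet}: this identifies, under $[\lambda_G]$, the subspace $\Pro(\Q^{E_{G/\gamma}})$ (the vanishing locus of the coordinates $x_e$, $e\in E_\gamma$) with $\Pro^{Q_G} \cap \Pro(\Quad(H_1(G;\Q)/K))$ for the appropriate rational subspace $K = K_{\sigma_{G/\gamma}}$, namely the span of the cycles supported on $\gamma^{\mathrm{core}}$. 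Concretely, for a core subgraph $\gamma$, the face $\sigma_{G/\gamma}$ of $\sigma_G$ maps to the essential face $\sigma_{Q_G} \cap \Pro(\Quad(V/K))(\R)$, and its Zariski closure is exactly one of the strata being blown up in $\BB_{Q_G}$ by \eqref{defn: BlowupSetforQ}.

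The second step is to handle the mismatch in the two constructions noted in the remarks following the definition of $\cone_Q^{\BB}$: $\BB_{Q_G}$ may contain strata $\Pro(\Quad(V/K))$ that do not meet $\sigma_{Q_G}$ at all (extraneous strata arising because $\sigma_{Q_G}$ is not a simplex, whereas $\sigma_G$ is), and it may contain strata which are larger than Zariski closures of faces. But these only contribute extraneous modifications: by Proposition \ref{prop: minimalblow-down}, $P^{\BB_{Q_G}}$ maps by an extraneous modification onto $P^{(\BB_{Q_G})^{\min,\sigma_{Q_G}}}$, inducing an isomorphism $\sigma_{Q_G}^{\BB} \cong \sigma_{Q_G}^{(\BB_{Q_G})^{\min}}$ on polyhedra. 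So it suffices to match $\BB^{G}$ with the part of $\BB_{Q_G}$ that actually meets $\sigma_{Q_G}$, i.e. with $[\lambda_G]$-images of core strata; and since core subgraphs are intrinsic and closed under the relevant operations (as used throughout \S\ref{section: MgBB}), the set $[\lambda_G](\BB^{G})$ is exactly $(\BB_{Q_G}) \cap \Pro^{Q_G}$ modulo extraneous members, i.e. $(\BB^{G,\max})^{\min} = \BB^{G,\mathrm{core}}$ matches $(\BB^{\mathrm{all}})^{\min,\sigma_{Q_G}}\cap \Pro^{Q_G}$ up to loci disjoint from the polyhedron. Applying Proposition \ref{prop: UniversalANDIdeal} (i) to the linear embedding $[\lambda_G]$ then gives a canonical map $P^{\BB^G}(\Q^{E_G}) \to P^{\BB_{Q_G}}(\Pro^{Q_G})$ commuting with the blow-downs, whose restriction to the closures of inverse images of interiors is the sought continuous map $\lambda_G^{\BB}: \sigma_G^{\BB} \to \sigma_{Q_G}^{\BB}$.

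The third step is to verify that $\lambda_G^{\BB}$ is well-defined as a map of the semi-algebraic sets $\sigma_G^{\BB}$, $\sigma_{Q_G}^{\BB}$ — i.e. that the closure of $\pi_{\BB}^{-1}([\lambda_G](\overset{\circ}{\sigma}_G))$ lands inside $\sigma_{Q_G}^{\BB}$. This follows because $[\lambda_G]$ restricts to a homeomorphism $\sigma_G \cong \sigma_{Q_G}$ on the base, hence restricts to a homeomorphism of interiors, and the blow-down maps $\sigma_G^{\BB}\to\sigma_G$, $\sigma_{Q_G}^{\BB}\to\sigma_{Q_G}$ are each identified with the closure of the inverse image of the respective interior by \eqref{sigmaBBdefn}; continuity is automatic since $\lambda_G^{\BB}$ is the restriction of a morphism of schemes to real points.

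\textbf{Main obstacle.} I expect the principal difficulty to be the precise identification, under $[\lambda_G]$, of the core stratum $\Pro(\Q^{E_{G/\gamma}})$ with the quadratic-form stratum $\Pro(\Quad(V/K))$ for the correct $K$ — in other words, pinning down that the essential envelope of the face $\sigma_{G/\gamma}$ is $\sigma_{G/\gamma^{\mathrm{core}}}$ (the fact foreshadowed in \S\ref{section: PolyhedraQuadForms}), together with checking that intersections of such strata correspond correctly on both sides so that $[\lambda_G](\BB^G)$ really is the non-extraneous part of $\BB_{Q_G}$. This is where the graph polynomial factorisation \eqref{Psifactorizes}, \eqref{detLambdaPsi} and the null-space computation (Remark \ref{rem: corevanishingLambda}) do the real work: $K_{\sigma_{G/\gamma}} = \ker(\lambda_{G/\gamma}) \cap (\text{image of } H_1(\gamma;\Q))$, and one must check this equals $H_1(\gamma^{\mathrm{core}};\Q)$, so that $\Pro(\Quad(V/K_{\sigma_{G/\gamma}}))$ pulls back under $[\lambda_G]$ to exactly $\Pro(\Q^{E_{G/\gamma^{\mathrm{core}}}})$. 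Everything else is a formal consequence of Propositions \ref{prop: UniversalANDIdeal} and \ref{prop: minimalblow-down}.
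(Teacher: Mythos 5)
There is a genuine gap in the main construction step, and it is precisely the one the paper flags at the end of \S\ref{sec: Example}. You claim that $[\lambda_G](\BB^G)$ ``is exactly $\BB_{Q_G}\cap\Pro^{Q_G}$ modulo extraneous members,'' and on that basis apply Proposition \ref{prop: UniversalANDIdeal}~(i) to produce a map $P^{\BB^G}(\Q^{E_G})\to P^{\BB_{Q_G}}(\Pro^{Q_G})$. This does not work, because the two collections of blow-up loci are genuinely different in dimension, not merely different by extraneous members. The element $\Pro(\Q^{E_{G/\gamma}})\in\BB^G$ maps under $\lambda_G$ to $\Pro(V_{\sigma_{G/\gamma}})$, the Zariski closure of the face $\sigma_{G/\gamma}$; whereas the corresponding element of $\BB_{Q_G}$ is $\Pro(\Quad(V/K_{\sigma_{G/\gamma}}))\cap\Pro^{Q_G}$, which is \emph{strictly larger} (in the $W_3$ example one has $\Pro^1\subsetneq\Pro^2$). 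Both meet $\sigma_{Q_G}$ along the same face, and the latter is already minimal in the sense of Definition \ref{defn: Bmin}, so your invocation of Proposition \ref{prop: minimalblow-down} applied to $\BB_{Q_G}$ removes nothing. Consequently $\lambda_G^{-1}\BB_{Q_G}\neq\BB^G$, and Proposition \ref{prop: UniversalANDIdeal}~(i) yields only a map $P^{\lambda_G^{-1}\BB_{Q_G}}\to P^{\BB_{Q_G}}$, whose source is \emph{not} $P^{\BB^G}$; the paper states explicitly that ``$\lambda_G$ does not extend to a morphism of blow-ups from $P^{\BB_G}$ to $P^{\BB^{\min,\sigma_{Q_3}}}$.''

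The missing idea is an auxiliary blow-up: take $\widetilde{\BB}$ to be the smallest intersection-closed family of subspaces of $\Pro(\Q^{E_G})$ containing both $\BB^G$ and $\lambda_G^{-1}\BB_{Q_G}$. Proposition \ref{prop: UniversalANDIdeal}~(ii) then gives $P^{\widetilde{\BB}}\to P^{\BB^G}$ and $P^{\widetilde{\BB}}\to P^{\lambda_G^{-1}\BB_{Q_G}}$. The key observation — which does use the essential-envelope computation $K_{\sigma_{G/\gamma}}=H_1(\gamma;\Q)=H_1(\gamma^{\mathrm{core}};\Q)$ that you correctly identify and the fact that each $\lambda_G^{-1}\Pro(\Quad(V/K))$ meets $\sigma_G$ along a face $\sigma_{G/\gamma}$ for $\gamma$ core and therefore contains $\Pro(\Q^{E_{G/\gamma}})\in\BB^G$ — is that $P^{\widetilde{\BB}}\to P^{\BB^G}$ is an \emph{extraneous modification}: the extra loci in $\widetilde{\BB}\setminus\BB^G$ are larger spaces meeting $\sigma_G$ along faces already produced by smaller spaces of $\BB^G$, hence the additional blow-ups miss $\sigma_G^{\BB^G}$. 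This gives the homeomorphism $\sigma_G^{\BB^G}\cong\sigma_G^{\widetilde{\BB}}$, which one then composes with $\sigma_G^{\widetilde{\BB}}\to\sigma_{Q_G}^{\BB_{Q_G}}$ coming from $P^{\widetilde{\BB}}\to P^{\lambda_G^{-1}\BB_{Q_G}}\to P^{\BB_{Q_G}}$. Your step one (the null-space identification) is the right computation and your step three (well-definedness on semi-algebraic sets) is routine; what is missing is the passage through $P^{\widetilde{\BB}}$ and the argument that the map \emph{back down} to $P^{\BB^G}$ (not the map to $P^{\BB_{Q_G}}$) is the one that is extraneous.
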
 

\begin{proof} Because  $G$ is 3-edge connected, the map $\lambda_G$ is injective. Since $\lambda_G: \sigma_G \cong \sigma_{Q_G}$ is an isomorphism and since $\Pro^{Q_G}$ is  the Zariski-closure of $\sigma_{Q_G}$ it follows that \eqref{lambdaGiso} is an isomorphism in  $\PLC_{\Q}$. 
It suffices to show that for every face $\sigma_F$ of   $\sigma_{Q_G}$, one has  $\Pro(\Quad(V/K_{\sigma_F}))(\R)\cap \sigma_{Q_G} = \lambda_G \sigma_{G/\gamma}$ for some core subgraph $\gamma \subset G$.  To see this, 
 note that 
since $\sigma_G$ is a simplex, its faces  $\sigma_G \cap L_{\gamma} \cong \sigma_{G/\gamma}$ are in one-to-one correspondence with subgraphs $\gamma \subset G$ where $L_{\gamma} $ is the linear coordinate  space defined by the vanishing of $x_e$, for $e\in E_{\gamma}$. Therefore let $\gamma \subset G$ such that $\lambda: \sigma_{G/\gamma} \cong \sigma_F$. We first show that $K_{\sigma_{F}} = H_1(\gamma;\Q)$. 
For this, choose an isomorphism $H_1(G;\Q) \cong H_1(\gamma; \Q) \oplus H_1(G/\gamma ; \Q)$. The restriction of the graph Laplacian $\Lambda_G$ to  $L_{\gamma}$  takes the block matrix form 
\[  \Lambda_G \Big|_{L_{\gamma}} = \begin{pmatrix}  0 & 0 \\
0 & \Lambda_{G/\gamma} 
\end{pmatrix}\ .  \]
Since $\det(\Lambda_{G/\gamma})=\Psi_{G/\gamma}$  is positive on the interior of $\sigma_G$, the kernel of this quadratic form is precisely $H_1(\gamma;\Q)$ and hence    $K_{\sigma_{F}} = H_1(\gamma;\Q)$.   Let $\gamma^{\mathrm{core}}$ denote the largest core subgraph of $\gamma$. Since $H_1(\gamma;\Q) =  H_1(\gamma^{\mathrm{core}};\Q)$, we deduce that 
 $\Pro(\Quad(V /H_1(\gamma;\Q))(\R)  \cap 
 \lambda  \sigma_G$ contains
  $\lambda \sigma_{\gamma^{\mathrm{core}}}$. To show the reverse inclusion, note that the 
 restriction of $\Lambda_G$ to $H^1(\gamma;\Q)$ is $\Lambda_{\gamma^{\mathrm{core}}}$.  By remark \ref{rem: corevanishingLambda}, 
it vanishes on $\widetilde{\sigma}_{\gamma^{\mathrm{core}}}$ only when $x_e=0$ for all $e\in \gamma^{\mathrm{core}}$ and 
hence  $\Pro(\Quad(V /H_1(\gamma;\Q))(\R)  \cap \lambda \sigma_G =\lambda \sigma_{\gamma^{\mathrm{core}}}$.

Consider   the smallest set of subspaces  $\widetilde{\BB}$ of $\Pro(\Q^{E_G})$ which contains both $\BB^G$ and $\lambda^{-1} \BB^{Q_G}$ and is closed under intersections.
It admits morphisms, by proposition \ref{prop: UniversalANDIdeal} (ii),  to both 
\[ P^{\widetilde{\BB}} \To    P^{\BB^G}\  \hbox{ and }   \   P^{\widetilde{\BB}} \To    P^{\lambda^{-1} \BB^{Q_G}}\ . \]
We have established that the first is an extraneous modification since the subspaces of $\lambda^{-1} \BB^{Q_G}$ meet $\sigma_G$ along $\sigma_{G/\gamma}$ for core $\gamma$ and therefore contain the Zariski closures of $\sigma_{G/\gamma}$, which are in $\BB^G$ by definition.  It gives a homeomorphism $\sigma_G^{\BB} \cong \sigma^{\widetilde{\BB}}_G$. The second can be composed with the map $\lambda: P^{\lambda^{-1} \BB^{Q_G}}\rightarrow P^{\BB^{Q_G}}$ from proposition  \ref{prop: UniversalANDIdeal} (i) and leads to a map $P^{\widetilde{\BB}} \rightarrow P^{\BB^{Q_G}}$. It induces a continuous map $\sigma^{\widetilde{\BB}}_G\rightarrow \sigma_{Q_G}^{\BB}$.  We deduce the existence of a continous map $\sigma_G^{\BB}\rightarrow   \sigma_{Q_G}^{\BB}$ as claimed. 
  \end{proof}

Define $\LM^{\red,\BB}_g$ to be the restriction of the functor $\LM^{\trop,\BB}_g$ to the (opposite of the) subcategory $I_g^{\red,\BB}$ of $I_g^{\BB}$ 
generated by the images of three-edge connected graphs under admissible edge contractions and refinements (definition  \ref{defn: IgBB}).
\begin{thm}
The  tropical Torelli map induces a continuous map 
\[ \lambda^{\BB} :   | \LM^{\red,\BB}_g |  \To | \LA^{\trop,\BB}_g| \] 
which maps $|\partial \LM^{\red,\BB}_g|$ to   $|\partial \LA^{\trop,\BB}_g|$ and is compatible, via  the canonical blow-down maps, with the Torelli map \eqref{TropicalTorellionPLC}. 
\end{thm}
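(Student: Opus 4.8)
The plan is to assemble the map $\lambda^{\BB}$ from the local data already provided: Proposition~\ref{prop: lambdaBBG} gives, for every $3$-edge connected graph $G$, a continuous map $\lambda^{\BB}_G : \sigma^{\BB}_G \to \sigma^{\BB}_{Q_G}$, and what remains is to check that these maps are compatible with the gluings that define the two topological realisations, so that they descend to a map of colimits. Concretely, $|\LM_g^{\red,\BB}|$ is the colimit of the functor $\LM_g^{\red,\BB}$ over $(I_g^{\red,\BB})^{\opp}$ (equivalently, over the associated category $\mathcal{I}$ with inverted extraneous modifications on the target side), and $|\LA_g^{\trop,\BB}|$ is the colimit \eqref{LAgtropBBasFunctoronInvertedBlowups} over $\mathcal{I}_g^{\perf,\BB}$; so it suffices to produce a morphism of $\PC_{\Q}$-complexes (a pair consisting of a functor and a natural transformation) from $\LM_g^{\red,\BB}$ to $\LA_g^{\trop,\BB}$ and take topological realisations. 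For the functor part, I would promote the classical Torelli functor $t_g : (I_g^{\red})^{\opp} \to \mathcal{D}_g^{\perf}$ to a functor on the blow-up categories: send a nested sequence $(\gamma_1,\ldots,\gamma_n)$ of core-then-stable graphs to the corresponding nested sequence $([Q_{\gamma_1/\cdots}],\ldots)$ of quadratic-form classes, using that $t_g$ is defined by minimal vectors and therefore commutes with the operations of edge contraction and refinement.

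The key steps, in order, would be: (1) Record that for core $\gamma \subset G$ the Laplacian respects the nested/graded structure, i.e.\ the computation in the proof of Proposition~\ref{prop: lambdaBBG} shows $K_{\sigma_F} = H_1(\gamma;\Q)$ for the face $\sigma_F = \lambda_G \sigma_{G/\gamma}$ and that the essential envelope of $\sigma_{G/\gamma}$ is $\sigma_{G/\gamma^{\mathrm{core}}}$; this identifies $\lambda^{-1}\BB^{Q_G}$ with a subset of $\BB^G$ up to the core-reduction, hence the blow-down $P^{\widetilde{\BB}} \to P^{\BB^G}$ is an extraneous modification and $\sigma_G^{\BB}\cong\sigma_G^{\widetilde\BB}$, exactly as in that proof. (2) Check compatibility of $\lambda^{\BB}_G$ with the two types of face morphisms \eqref{F1}, \eqref{F2}: for an admissible edge contraction this is functoriality of strict transforms (Proposition~\ref{prop: UniversalANDIdeal}(i)) applied to $\lambda$, and for a core subgraph $\gamma$ it follows from the product decompositions of exceptional divisors (Proposition~\ref{prop: BlowUpStructure}, Proposition~\ref{prop: multiplicativestructureonBLC}) together with the fact that $\lambda$ carries the exceptional divisor of $\mathrm{Bl}_{\Pro^{E_\gamma}}$ to that of $\mathrm{Bl}_{\Pro(\Quad(H_1(\gamma)))}$ up to the extraneous modification; here the factorisation $\Psi_G = \Psi_\gamma\Psi_{G/\gamma} + R_{G/\gamma}$ \eqref{Psifactorizes}, which is a shadow of the determinant factorisation \eqref{detfact}, is what makes the two product structures match. (3) Check compatibility with isomorphisms of nested sequences (automatic, since $t_g$ and $[\lambda]$ are functorial) and with extraneous modifications on both sides (they become isomorphisms on topological realisations, cf.\ \S\ref{sect: InvertExtraneous}, so pose no obstruction after passing to $|\cdot|$). (4) Take the colimit to obtain $\lambda^{\BB}: |\LM_g^{\red,\BB}| \to |\LA_g^{\trop,\BB}|$; the boundary statement then follows because a nested sequence of length $\geq 2$ maps to a nested sequence of length $\geq 2$ (a core graph $\gamma_{n-1}$ has $h_{\gamma_{n-1}}>0$, so its image is a class at infinity), so $|\partial\LM_g^{\red,\BB}|$ lands in $|\partial\LA_g^{\trop,\BB}|$; and compatibility with \eqref{TropicalTorellionPLC} is immediate from compatibility of $\lambda^{\BB}_G$ with $\pi^{\BB}$, which is built into Proposition~\ref{prop: lambdaBBG} and Proposition~\ref{prop: UniversalANDIdeal}.

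I expect the main obstacle to be step (2) for core subgraphs: one must verify that the blow-up of $\Pro^{E_G}$ along the cographical strata $\Pro^{E_{G/\gamma}}$ is carried by $\lambda$, up to extraneous blow-ups that do not touch $\sigma_G^{\BB}$, onto the blow-up of $\Pro^{Q_G}$ along the strata $\Pro^{Q_G}\cap\Pro(\Quad(V/K_{\sigma_F}))$. The subtlety is that $\sigma_{Q_G}$ is a simplex (being $\cong \sigma_G$), so $\BB^{Q_G}$ restricted to $\Pro^{Q_G}$ really is generated by the Zariski closures of faces and the argument of Proposition~\ref{prop: lambdaBBG} applies; but one must be careful that the \emph{larger} blow-up loci appearing in $\BB_{Q_G}$ inside the ambient $\Pro(\Quad(V))$ (the spaces $\Pro(\Quad(V/K))$ which may be strictly larger than Zariski closures of faces of $\sigma_{Q_G}$) restrict, on $\Pro^{Q_G}$, back to those face closures — this is precisely lemma~\ref{lem: sigmameetsVmodKinface} plus the identification $K_{\sigma_F}=H_1(\gamma;\Q)$, so it does go through, but it is the place where the bookkeeping is genuinely delicate and where one must invoke that $\sigma_{Q_G}$ has no ``slack'' in the sense of Lemma~\ref{lem: SlackFaces}. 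Once this matching is in hand the rest is formal manipulation of colimits.
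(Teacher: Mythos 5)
Your proposal is essentially correct and follows the same approach as the paper's own, very terse proof, which simply cites Proposition~\ref{prop: lambdaBBG}, the functoriality of \eqref{TropicalTorellionPLC}, and the Alexeev--Brunyate theorem on the existence and functoriality of $[Q_G]$; your steps (1)--(4) are a faithful unpacking of what that citation entails.

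One imprecision in your discussion of the ``main obstacle'' is worth correcting, as it contradicts the example in \S\ref{sec: Example}. You assert that because $\sigma_{Q_G}$ is a simplex, $\BB_{Q_G}$ restricted to $\Pro^{Q_G}$ ``really is generated by the Zariski closures of faces'' and that the larger loci $\Pro(\Quad(V/K))$ ``restrict, on $\Pro^{Q_G}$, back to those face closures.'' This is false: for $G=W_3$ the form $Q_3$ is perfect, so $\Pro^{Q_3}=\Pro^5$ and the blow-up locus $\Pro(\Quad(V/K))\cong\Pro^2$ strictly contains the Zariski closure $\Pro^1=\Pro(V_{\sigma_{G/\gamma}})$ of the corresponding face, and hence $\lambda_G$ does \emph{not} extend to a morphism of blow-up schemes $P^{\BB_G}\to P^{\BB^{\min,\sigma_{Q_G}}}$. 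What is true, and what the proof of Proposition~\ref{prop: lambdaBBG} actually uses, is only that the larger loci \emph{meet $\sigma_{Q_G}$} along the same faces as the closures $\Pro(V_{\sigma_{G/\gamma}})$ (Lemma~\ref{lem: sigmameetsVmodKinface}), so that the extra blow-ups in $\widetilde\BB\setminus\BB^G$ are extraneous to $\sigma_G$ and $\sigma_G^{\BB}\cong\sigma_G^{\widetilde\BB}$. This is why the global map must be built as a continuous map of topological realisations (or in the localisation $\mathcal{I}_g^{\perf,\BB}$ with extraneous modifications inverted), rather than as a natural transformation of $\BLC_\Q$-complexes, as you do in fact acknowledge at the outset; the rest of your argument goes through as written.
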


 \begin{proof} This   follows from  propositions  \ref{prop: lambdaBBG} and \eqref{TropicalTorellionPLC}, which makes essential use of the   theorem of \cite{AlexeevBrunyate} on the existence and functoriality of $[Q_G]$, for $G$ a graph. 
 \end{proof}

\section{An example} \label{sec: Example}

The following example serves to illustrate the difference in the blow-up constructions for $\LM_3^{\trop}$ and $\LA_3^{\trop}$.
Let $G=W_3$ be the wheel with 3 spokes, with inner edges oriented inwards  from the center and outer edges oriented counter-clockwise (below). 
First we consider the   quadratic form in three variables $z_1,z_2,z_3$ defined by   $Q_3=z_1^2+z_2^2+z_3^2+ z_1z_2+z_2z_3+z_1z_3$. Its minimal vectors are \[
M=\{ \pm (1,0,0), \pm (0,1,0), \pm (0,0,1), \pm (1,-1,0), \pm (1,0,-1), \pm (0,1,-1)\} \ .\]  
The cell $\sigma_{Q_3}$ is the convex hull of the six matrices 
\[  \begin{pmatrix} 1 & 0 & 0 \\ 
0 & 0 & 0 \\ 
0 & 0 & 0 \end{pmatrix} \ , \  \begin{pmatrix} 0 & 0 & 0 \\ 
0 & 1 & 0 \\ 
0 & 0 & 0 \end{pmatrix} \ , \
\begin{pmatrix} 0 & 0 & 0 \\ 
0 & 0 & 0 \\ 
0 & 0 & 1 \end{pmatrix} \ , \
\begin{pmatrix} 1 & -1 & 0 \\ 
-1 & 1 & 0 \\ 
0 & 0 & 0 \end{pmatrix}\ ,  \ \begin{pmatrix} 1 & 0 & -1 \\ 
0 & 0 & 0 \\ 
-1 & 0 & 1 \end{pmatrix}
\ ,  \ \begin{pmatrix} 0 & 0 & 0 \\ 
0 & 1 & -1 \\ 
0 & -1 & 1 \end{pmatrix}
  \]
and is given explicitly by the set of projective equivalence classes of matrices
\[    \begin{pmatrix} \alpha_1 + \alpha_4 +\alpha_5 & -\alpha_4& -\alpha_5 \\ 
-\alpha_4 &  \alpha_2 + \alpha_4 +\alpha_6  & -\alpha_6 \\ 
-\alpha_5 & -\alpha_6 & \alpha_3 + \alpha_5 +\alpha_6 \end{pmatrix}         \] 
for $\alpha_1,\ldots, \alpha_6 \in \R_{\geq 0}$.  The essential faces of $\sigma_{Q_3}$ are   those where this matrix has null space exactly $K$ for some subspace $K \subset \Q^3$. For example, if $K$ is the one-dimensional subspace corresponding to $z_1$, the intersection $\Quad(\Q^3/K) \cap \sigma_{Q_3}$ corresponds to when the  matrix has vanishing first row and column: $\alpha_1=\alpha_4=\alpha_5=0$. 
Such a face will be blown-up. Examples of non-essential faces are given by $\alpha_i=0$, or even $\alpha_i=\alpha_j=0$  for any $i,j$, since the matrix above continues to have full rank along these loci.  They will not be blown-up.

This matrix can be  realised as the graph Laplacian for a particular choice of homology basis for the wheel with 3 spokes. However, to illustrate our point,  we shall consider a different  basis of cycles for $H_1(G;\Z)$ consisting of edges $\{1,2,4,5\}$, $\{2,4,6\}$, $\{3,5,4\}$:
\[ h_1 = e_1 + e_2 +e_4 -e_5 \ , \  h_2 = e_2 + e_4 - e_6\ , \ h_3 = e_3-e_4 + e_5\]    \begin{figure}[h]\begin{center}
\quad {\includegraphics[width=3cm]{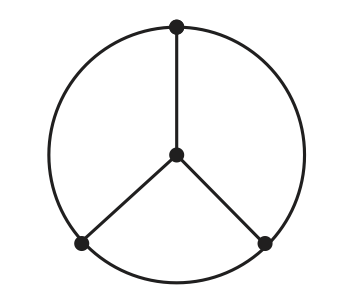}} 
\put(-80,50){$1$}\put(-12,50){$3$}\put(-50,-5){$2$}
\put(-50,45){$5$}\put(-62,27){$6$}
\put(-30,27){$4$}
\end{center}
\end{figure}
With respect to this basis and orientation,  the graph Laplacian is represented by the $3\times 3$ matrix
\[ \Lambda_G=  \mathcal{H}^T_G \, D_G \,\mathcal{H}_G = 
   \begin{pmatrix} 
x_1 +x_2+x_4 +x_5 &  x_2 + x_4 & -x_4 -x_5 \\
x_2+x_4 & x_2+ x_4+x_6 & - x_4 \\
-x_4-x_5 & -x_4 & x_3+x_4+x_5 
\end{pmatrix}\ . \]
The $(i,j)$th entry is given by $\langle h_i,h_j\rangle$ where $\langle , \rangle$ is the bilinear product  on $\Z^{E_G}$ such that $\langle e_i, e_j \rangle = \delta_{ij} x_i$. The matrix $\Lambda_G$ gives a  particular choice of isomorphism from the coordinate simplex $\sigma_{W_3}$ in $\Pro^5(\R)$ to $\sigma_{Q_3}$.  The essential faces of $\sigma_{Q_3}$, via this isomorphism, correspond precisely to the faces defined where $x_e=0, e \in E(\gamma)$, where $\gamma$ ranges over core (bridgeless) graphs; all other faces are non-essential.

Consider the subgraph $\gamma$ spanned by the edges $1,2,4,5$. The quotient graph $G/\gamma$ has a single vertex, and two edges $3, 6$. Setting $x_e=0$ for $e\in E_{\gamma}$ one obtains
\[   \Lambda_G\big|_{L_\gamma} =   
   \begin{pmatrix} 
0  & 0 & 0 \\
0 &  x_6 & 0 \\
0 & 0  & x_3 
\end{pmatrix}\  , 
\qquad \hbox{ with Zariski closure}  \qquad 
V_{\sigma_{G/\gamma}}= \left\{   
   \begin{pmatrix} 
0  & 0 & 0 \\
0 &  * & 0 \\
0 & 0  & * 
\end{pmatrix} \right\}   \ . 
\]
It defines a quadratic form on $V= \bigoplus_{i=1}^3  h_i \Q \cong H_1(G;\Q)$ whose kernel is the one-dimensional subspace $K=h_1 \Q$. The space of quadratic forms $\Quad(V/K)$ with null space $K$  may be identified with the space of symmetric matrices of the form 
\[ \Quad(V/K) \cong \left\{   \begin{pmatrix} 
0  & 0 & 0 \\
0 &  * & * \\
0 & * & *
\end{pmatrix} \right\}\ . \]

The image of $\Lambda_G|_{L_\gamma}$ has codimension $1$ inside it, and therefore is not Zariski-dense.  This example illustrates the difference between $\LM_3^{\trop,\BB}$ and $\LA_3^{\trop,\BB}$.  The former involves blowing  up the subspace $L_{\gamma}= \{x_e=0, e\in E_{\gamma}\}$, corresponding to the core subgraph $\gamma$, whose image under the Torelli map $\lambda_G$ is the projective space 
 $\Pro(V_{\sigma_{G/\gamma}} )\cong \Pro^1$. However, in $\LA_3^{\trop,\BB}$ it is the space $\Pro(\Quad(V/K))\cong \Pro^2$ which is blown up,  which strictly contains $\Pro(V_{\sigma_{G/\gamma}} )$.  
  Indeed, its preimage under $\lambda_G$ equals
 \[ V(x_1+x_2+x_4+x_5,  x_2+ x_4 , x_4+x_5) \subset \Pro(\Q^{E_G}) \]
 which strictly contains the locus $V(x_1,x_2,x_4,x_5)$ corresponding to $\gamma$. It follows that  $\lambda_G$ does not extend to a morphism of blow-ups from $P^{\BB_G}$ to $P^{\BB^{\min,\sigma_{Q_3}}}$. 
Nevertheless  $\Pro(V_{\sigma})(\R)$ and $\Pro(\Quad(V/K))(\R)$ both meet  $\sigma_G$  along the same face $\sigma_{G/\gamma}$, and so
$\lambda$ does extend to a morphism between the  corresponding blown-up polytopes $\sigma^{\BB}_{W_3}$ and $\sigma^{\BB}_{Q_3}$.

\section{Canonical bi-invariant forms and their integrals}  \label{section: CanForms}
 
\subsection{Canonical forms}  \label{subsect: CanForms} 
Employing the same notation as in \cite[\S4.1]{BrSigma},
we define \begin{equation} \Omega_{\can}  = \bigwedge \bigoplus_{k\geq 1} \Q\, \omega^{4k+1}  \end{equation}
to be the graded-commutative Hopf algebra generated by elements $\omega^{4k+1}$ in degree $4k+1$, for $k\geq 1$, where the coproduct 
$\Delta^{\can}:  \Omega^{\can}  \rightarrow  \Omega^{\can}  \otimes_{\Q}  \Omega^{\can}  $ has the property that the $\omega^{4k+1}$ are primitive, i.e., 
$\Delta^{\can} (\omega^{4k+1}) =  \omega^{4k+1} \otimes 1 + 1 \otimes \omega^{4k+1}$. 

Let $R= \bigoplus_{n\geq0 } R^n$ be a differential graded algebra with differential $d$ of degree $+1$,  and let $X \in \GL_g(R^0)$ be an invertible  $g\times g$ symmetric matrix. Let  $dX$ denote the matrix whose  entries $(dX)_{ij}= dX_{ij}$ are in $R^1$. We define
\[ \omega_X^{4k+1} = \tr\left( ( X^{-1} dX)^{4k+1} \right) \quad \in \quad R^{4k+1} \   .   \]
More generally,  any $\omega\in \Omega_{\can}$  is a polynomial in the $\omega^{4k+1}$. We write $\omega_X$  for the corresponding polynomial in the $\omega_X^{4k+1}$.  One can show that  the  $\omega_X$  are:
\begin{enumerate}  \setlength\itemsep{0.03in}
\item  \emph{Closed}:  $d\omega_X =0$ for any $X$ as above.
\item \emph{Bi-invariant}: $\omega_{AXB} = \omega_X$ for all $A, B \in \GL_g(R^0)$ such that $dA=dB=0$.  
\item \emph{Additive}:  if we write $\Delta^{\can} \omega = \sum \omega' \otimes \omega''$  and $X_1,X_2 \in   \GL_g(R^0)$, then 
\[ \omega_{X_1\oplus X_2} = \sum \omega'_{X_1} \wedge  \omega''_{X_2} \ . \]  
\item \emph{Projective}:  $\omega_{\lambda X} = \omega_X$ for all $\lambda \in (R^0)^{\times}$,
\item  \emph{Vanish in low rank}:  $\omega^{4k+1}_X =0 $ \hbox{ if }  $4 k+1>  2 g.$
\end{enumerate}
See \cite[Lemmas 4.3, 4.4 and Proposition 4.5]{BrSigma} for proofs. In \emph{loc. cit.} these forms were studied on the moduli space $|\LM_g^{\trop}|$ and were used to study  the cohomology of the  even commutative graph complex. In this paper we shall study them on the space $|\LA_g^{\trop}|$, and in particular, reprove and strengthen the results of Borel \cite{Borel}, who showed that  
\[\Omega_{\can} \overset{\sim}{\To} \varprojlim_g H^{\bullet}(\GL_g(\Z);\R)\ .\]
\subsection{Forms of `compact type'} 
Let $g>1$ be odd. By the vanishing property (5), only a finite subspace of $\Omega_{\can}$ is non-zero on  $g\times g$ symmetric matrices.

\begin{defn} \label{defn: Omegagnotions}
 Let  $\Omega(g) \subset \Omega_{\can}$ denote the graded exterior algebra generated by $1$,  $\omega^5$ , \ldots, $\omega^{2g-1}$.  It is naturally a quotient of $\Omega_{\can}$ via the map $\Omega_{\can} \rightarrow \Omega(g)$ which  sends $\omega^{4k+1}$ to zero for all $2 k > g$. We shall call a form \emph{simple} if it is a  monomial in the generators of $\Omega(g)$,  i.e., a product of primitive elements.  An element of $\Omega(g)$ is a finite linear combination of simple forms.
  Call a form $\omega \in \Omega(g)$ of \emph{compact type} in genus  $g$ if it is divisible by $\omega^{2g-1}$
 \begin{equation} \label{defnCompactType} \omega = \eta \wedge \omega^{2g-1} \quad \hbox{ for some }\  \eta \in \Omega(g) \ .  \end{equation}
  Otherwise,  a simple form $\omega$ will be called  of 
  \emph{non-compact type}. Denote the space spanned by  forms of compact  (resp. non-compact) type by  $\Omega_c(g)$  (resp. $\Omega_{nc}(g)$).  We have
 \[ \Omega(g) = \Omega_c(g) \oplus \Omega_{nc}(g)\]
 The subspace  $\Omega_c(g) \subset \Omega(g)$ is an ideal. 
 \end{defn}

 Consider the volume form 
 \begin{equation} \label{volg}  \vol_g = \omega^5 \wedge \ldots \wedge \omega^{2g-1} \quad \in \quad \Omega_c(g) \end{equation}
 which is of degree $d_g-1$, where $d_g=\frac{g(g+1)}{2}$. It is  simple, and of compact type.
 
 \begin{defn}  Let $\omega \in \Omega(g)$ be simple. There is a unique  $\star \omega \in \Omega(g)$ such that
 \[ \omega \wedge \star \omega = \vol_g \ .\]
 The   map $\star$ extends by linearity to an endomorphism (the Hodge star operator) on $\Omega(g)$ which interchanges the subspaces of forms of compact and non-compact types:
 \[ \star: \Omega_c(g) \overset{\sim}{\To} \Omega_{nc}(g)\ .\] 
 It is an involution up to sign: $\star \star \omega = \pm \omega$. 
  \end{defn} 
 \begin{example} Let $g=7$. Then $\vol_7 = \omega^5 \wedge \omega^9 \wedge \omega^{13}$.  The following table lists the simple elements in $\Omega_{nc}(7)$, and below them, their images     in $\Omega_{c}(7)$ under the map $\star$: 
 \[
\begin{array}{cccc}
  \quad 1 \quad &  \quad \omega^5 \quad  & \quad \omega^9\quad  &\quad  \omega^5\wedge \omega^9  \\
 \quad  \downarrow\quad  & \quad \downarrow\quad   &  \quad \downarrow\quad & \quad \downarrow \\
 \omega^5 \wedge \omega^9 \wedge \omega^{13}  \quad   &  \quad    \omega^9 \wedge \omega^{13}  \quad   & \quad   - \,\omega^5 \wedge \omega^{13}    \quad  & \quad  \omega^{13} 
\end{array}
\]
  \end{example} 
\subsection{Canonical forms on $\LM_g^{\trop}$ and $\LA_g^{\trop}$. } 

\begin{lem} Let $\omega \in \Omega^d_{\can}$ be a canonical form of degree $d$, and let $V$ be a vector space over $k\subset \R$.  Then $\omega$ defines a closed projective differential form 
\[ \omega\quad  \in \quad \Omega^d(\Pro(\Quad(V)) \backslash \Det) \]
which is invariant under the action of $\GL(V)$. 
\end{lem}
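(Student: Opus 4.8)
The plan is to produce the form $\omega$ concretely, using the general recipe of \S\ref{subsect: CanForms} applied to a suitable differential graded algebra, and then to check the two assertions (that it is regular on $\Pro(\Quad(V))\setminus\Det$ and that it is $\GL(V)$-invariant) by tracing through the properties (1)--(5) listed there. First I would choose a basis of $V$ over $k$, identifying $\Quad(V)$ with symmetric $g\times g$ matrices; let $R^{\bullet}=\Omega^{\bullet}(\mathcal{U})$ be the de Rham complex of the affine open subset $\mathcal{U}\subset \Pro(\Quad(V))$ where one matrix entry $X_{i_0 j_0}$ is set equal to $1$. On $\mathcal{U}\setminus\Det$, the symmetric matrix $X$ of coordinates is an element of $\GL_g(R^0)$ (its determinant is a unit, being a nonvanishing regular function), so by \S\ref{subsect: CanForms} the matrix of one-forms $X^{-1}dX$ is defined with entries in $R^1$, and $\omega_X$ (the polynomial in the $\omega_X^{4k+1}=\tr((X^{-1}dX)^{4k+1})$ corresponding to $\omega$) is a well-defined algebraic differential form of degree $d$ on $\mathcal{U}\setminus\Det$, which is closed by property (1).

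Next I would check that this local construction glues to a global form on $\Pro(\Quad(V))\setminus\Det$. Cover $\Pro(\Quad(V))$ by the affine charts $\mathcal{U}_{ij}$ where $X_{ij}=1$; on the overlap of two such charts the coordinate matrices differ by rescaling $X\mapsto \lambda X$ with $\lambda\in(R^0)^{\times}$, and the projectivity property (4), $\omega_{\lambda X}=\omega_X$, shows the locally-defined forms agree on overlaps. Hence they patch to a single $\omega\in\Omega^d(\Pro(\Quad(V))\setminus\Det)$, and it is closed since closedness is local. (Alternatively, one can phrase this coordinate-freely: on the total space $\Quad(V)\setminus\Det_{\mathrm{aff}}$ one has the $\GL_g$-equivariant tautological matrix, the form $\omega_X$ is $\mathbb{G}_m$-invariant by (4), hence descends to the projectivization.)

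Finally, $\GL(V)$-invariance: an element $h\in\GL(V)$ acts on $\Pro(\Quad(V))$ by $X\mapsto h^T X h$ and preserves $\Det$, so $h$ restricts to an automorphism of $\Pro(\Quad(V))\setminus\Det$. Its pullback sends the tautological matrix $X$ to $h^T X h$ with $h$ a constant matrix ($dh=0$), and the bi-invariance property (2), $\omega_{AXB}=\omega_X$ for $A,B$ closed, gives $h^*\omega = \omega_{h^T X h}=\omega_X=\omega$. I expect the only mildly delicate point — the main obstacle, such as it is — to be the bookkeeping of the gluing: one must be careful that the local coordinate matrices on different charts really are related by a scalar (not merely by $\GL_g(R^0)$), which is exactly why property (4) rather than the weaker (2) is needed there, and one must note that $\det X$ is invertible precisely on the complement of $\Det$ so that $X^{-1}dX$ makes sense on the nose. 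Everything else is a direct appeal to the already-established properties (1)--(5) of the canonical forms from \cite[\S4.1]{BrSigma}.
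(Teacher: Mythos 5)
Your proposal is correct and follows the same route as the paper: the paper's proof is a one-line appeal to property (4) of \S\ref{subsect: CanForms} for descent to projective space and property (2) for $\GL(V)$-invariance, which is exactly the content of your gluing and invariance steps (with the regularity away from $\Det$ and closedness as observed). Your version merely spells out the chart-by-chart bookkeeping that the paper leaves implicit.
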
 
\begin{proof}
The projectivity follows from  \S\ref{subsect: CanForms} (4), the invariance from (2). 
\end{proof} 
\begin{prop}\label{prop: CanFormsOnLA}
A canonical form $\omega \in \Omega^d(g)$   defines an algebraic differential form (\S \ref{sect: differentialforms}): 
\begin{equation} \label{omegainOmegadLag}  \omega  \quad \in \quad \Omega^d ( \LA_g^{\trop} \backslash \Det)  \ . 
\end{equation}
It restricts to a smooth differential form, also denoted by $\omega$, on 
\[ \omega \quad \in \quad \mathcal{A}^d \left( \left|    \LA^{\circ, \trop}_g  \right| \right) = \mathcal{A}^d \left(  L \mathcal{P}_g / \GL_g(\Z) \right) \ .   \]
 \end{prop}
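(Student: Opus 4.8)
The statement to be proved has two parts. First, a canonical form $\omega\in\Omega^d(g)$ defines a global algebraic differential form on the linear polyhedral complex $\LA_g^{\trop}$ with poles along the determinant locus $\Det$, i.e.\ an element of the projective limit $\Omega^d(\LA_g^{\trop}\backslash\Det)=\varprojlim_{[Q]\in\mathcal{D}_g^{\perf}}\Omega^d(\Pro^Q\backslash(\Pro^Q\cap\Det))$. Second, its restriction to the topological realisation of the open part is a smooth form on $|\LA_g^{\circ,\trop}|\cong L\mathcal{P}_g/\GL_g(\Z)$.

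\textbf{Step 1: the compatible family.} By the previous lemma, $\omega$ defines a closed algebraic form on $\Pro(\Quad(V))\backslash\Det_V$ for every vector space $V/k$, and this assignment is functorial: for a linear surjection $V\to W$ (hence an injection $\Quad(W)\hookrightarrow\Quad(V)$, compatible with determinant loci on the loci where the relevant forms are defined), the pullback of $\omega$ on $\Pro(\Quad(W))\backslash\Det_W$ is $\omega$ on $\Pro(\Quad(V))\backslash\Det_V$, simply because both are built from $\tr((X^{-1}dX)^{4k+1})$ and the trace and matrix inverse are natural under restriction of a quadratic form to a subspace via a self-dual inclusion. More precisely, for every object $[Q]$ of $\mathcal{D}_g^{\perf}$ we set $\omega_{[Q]}$ to be the restriction of the canonical form on $\Pro(\Quad(V))\backslash\Det_V$ to $\Pro^Q\backslash(\Pro^Q\cap\Det)$, where $\Pro^Q\subset\Pro(\Quad(V))$ is the distinguished linear subspace. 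We must check the compatibility condition \eqref{CompatibilityForForms} for the two kinds of generating morphisms in $\mathcal{D}_g^{\perf}$ (Definition \ref{defn:Dperfg}): for face maps $[Q']\to[Q]$, which are induced by linear inclusions $\Pro^{Q'}\hookrightarrow\Pro^Q$ of subspaces of $\Pro(\Quad(V))$, compatibility is immediate since restricting the ambient form to $\Pro^Q$ and then to $\Pro^{Q'}$ agrees with restricting directly; for isomorphisms $h:[Q]\to[h^TQh]$ with $h\in\GL_g(\Z)$, compatibility is exactly the bi-invariance property \S\ref{subsect: CanForms}(2), since $h$ acts by $Q\mapsto h^TQh$. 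Hence $(\omega_{[Q]})_{[Q]}$ is a well-defined element of $\Omega^d(\LA_g^{\trop}\backslash\Det)$, proving \eqref{omegainOmegadLag}.

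\textbf{Step 2: passage to the smooth form on the open part.} The key geometric input is that $\Det$ restricted to the open subcategory $\mathcal{D}_g^{\circ,\perf}$ (strictly positive cones) does not meet the relevant polyhedra: by Lemma \ref{lem: sigmameetsDet} and the discussion following it, for $Q$ positive definite with $\sigma_Q>0$ one has $\overset{\circ}{\sigma}_Q\cap\Det(\R)=\emptyset$, and more generally $L\mathcal{P}_g\cap\Det(\R)=\emptyset$ (\S\ref{sect: Pg}). Therefore, restricting the compatible family of Step 1 to cones $\sigma_Q$ with $\sigma_Q>0$, each $\omega_{[Q]}$ is a regular algebraic — hence smooth — form in an open neighbourhood of $\overset{\circ}{\sigma}_Q$ inside $\PF(\LA_g^{\trop})_{[Q]}(\R)$. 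Since the identification $|\LA_g^{\circ,\trop}|\cong L\mathcal{P}_g/\GL_g(\Z)$ of Theorem \ref{thm: introLAgblowupdiagrams} (also Lemma \ref{lem: DetSubFunctor}) realises the open moduli space as the colimit of the open interiors $\overset{\circ}{\sigma}_Q$ glued along $\GL_g(\Z)$-equivalences, and the forms $\omega_{[Q]}$ are compatible under exactly these gluing data (face maps and the $\GL_g(\Z)$-action), they descend to a well-defined element of $\mathcal{A}^d(|\LA_g^{\circ,\trop}|)=\varprojlim_{x}\mathcal{A}^d(\sigma(F_x))$ as in \eqref{DGAdefnA}. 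Because the canonical forms are real (built from real matrix entries), the resulting form lies in the real subalgebra $\mathcal{A}^d(|\LA_g^{\circ,\trop}|;\R)$.

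\textbf{Main obstacle.} The only genuinely substantive point is the functoriality/compatibility in Step 1 — verifying that the canonical forms $\tr((X^{-1}dX)^{4k+1})$ restrict coherently along the inclusions $\Pro(\Quad(V/K))\hookrightarrow\Pro(\Quad(V))$ that appear implicitly in face maps, away from the determinant loci. This reduces to the elementary fact that for a quadratic form $Q$ on $V$ supported on a subspace (block form with a zero block), its restriction to the complementary subspace has the obvious matrix inverse, so the trace of any power of $Q^{-1}dQ$ computed on $\Pro^Q$ agrees with that computed after restriction; combined with bi-invariance this handles the $\GL_g(\Z)$-isomorphisms. Everything else is a formal unwinding of the definitions in \S\ref{sect: differentialforms} together with the already-established fact (Lemma \ref{lem: sigmameetsDet}, \S\ref{sect: Pg}) that $\Det$ avoids $L\mathcal{P}_g$.
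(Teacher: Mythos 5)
Your proposal takes essentially the same route as the paper: verify the compatibility condition \eqref{CompatibilityForForms} for the two generating types of morphisms in $\mathcal{D}_g^{\perf}$ (automatic for face maps, bi-invariance \S\ref{subsect: CanForms}(2) for the $\GL_g(\Z)$-isomorphisms), and observe that $X^{-1}dX$ is regular away from $\det(X)=0$ so the poles lie in $\Det$, whose real points avoid $L\mathcal{P}_g$. Steps 1 and 2 are correct and match the paper's (much terser) proof.

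One small correction to the ``Main obstacle'' paragraph: the face maps in $\mathcal{D}_g^{\perf}$ are inclusions of linear subspaces $\Pro^{Q'}\hookrightarrow\Pro^{Q}$ where both sit inside the fixed ambient $\Pro(\Quad(V))$ for $\dim V = g$; they are \emph{not} the inclusions $\Pro(\Quad(V/K))\hookrightarrow\Pro(\Quad(V))$, and no block-matrix analysis of $Q^{-1}dQ$ along such inclusions is needed here. The compatibility under face maps is simply the transitivity of restriction of a differential form along a chain of linear subspaces, so it really is ``automatic'' as the paper says. The block-matrix/additivity considerations you raise become genuinely relevant only for the blow-up $\LA_g^{\trop,\BB}$ (Proposition \ref{prop: blowupforms}), where the canonical form must be shown to have no poles along the exceptional divisor — that is a later and harder point, not the one at issue in this proposition.
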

 \begin{proof}  
By definition \ref{defn: MeroForm}  a differential form  must verify the compatibility relations \eqref{CompatibilityForForms}. A canonical  form $\omega$ satisfies the compatibilities for the two types of morphisms in the category $\mathcal{D}_g^{\perf}$ of definition \ref{defn:Dperfg}: for face morphisms, the compatibility is automatic, and for isomorphisms, this follows from the bi-invariance \S\ref{subsect: CanForms} (2).  The fact that $\omega$ only has poles along the determinant locus follows immediately from its definition since for any square matrix $X$, the one-form $X^{-1} dX$ is regular outside the locus  $\det(X)=0$. 
 \end{proof} 
 
 In  \cite{BrSigma}, we studied the canonical forms  $\omega_G = \omega_{\Lambda_G}$ on  the polyhedral linear configuration  associated to  a graph $G$.    \begin{prop}  A form  $\omega \in \Omega^d(g)$  defines an algebraic  form  
$ \omega  \in\Omega^d ( \LM_g^{\trop} \backslash \mathcal{X})$.
It restricts to a smooth differential form, also denoted by $\omega$, on 
\[ \omega \quad \in \quad \mathcal{A}^d \left( \left|    \LM^{\trop}_g \right| \backslash \left|   \partial \LM^{\trop}_g \right|  \right) \ .   \]
The restriction of  $\omega$ to the reduced moduli space $\LM_g^{\red, \trop}$ is the pull-back $\lambda_G^*\omega$ of the corresponding canonical forms \eqref{omegainOmegadLag} on $\LA_g^{\trop}$ by the tropical Torelli map.
 \end{prop}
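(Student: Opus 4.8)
The plan is to break the statement into three independent assertions and prove each in turn, working from the definitions in \S\ref{sect: differentialforms} and \S\ref{sect: subschemes}. First, to see that $\omega$ defines an element of $\Omega^d(\LM_g^{\trop}\backslash \mathcal{X})$: by Definition \ref{defn: GlobalForm} this amounts to checking that the collection of forms $\omega_G = \omega_{\Lambda_G}$, as $G$ ranges over the objects of $(I_g)^{\opp}$, is (i) regular on $\Pro^{E_G}\backslash X_G$ and (ii) compatible under all morphisms of $I_g$ (edge contractions and isomorphisms). For (i), $\omega_{\Lambda_G}$ is a polynomial in the entries of $\Lambda_G^{-1}d\Lambda_G$, and since $\det\Lambda_G = \Psi_G$ by \eqref{detLambdaPsi}, the matrix $\Lambda_G^{-1}$ is regular precisely on the complement of $X_G = V(\Psi_G)$. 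For (ii), an edge contraction $G\to G/e$ corresponds to the face inclusion $\Pro^{E_{G/e}}\hookrightarrow\Pro^{E_G}$, i.e.\ to setting $x_e=0$; one checks $\Lambda_G|_{x_e=0} = \Lambda_{G/e}$ (this is the Laplacian statement, a consequence of the compatibility of $H_1$ with edge contraction, already implicit in the proof of Proposition \ref{prop: GraphLocusFunctor} via $\Psi_G|_{x_e=0}=\Psi_{G/e}$), so $\omega_G$ restricts to $\omega_{G/e}$; for isomorphisms of weighted graphs the linear isomorphism $\Pro^{E_G}\cong\Pro^{E_{G'}}$ carries $\Lambda_G$ to $\Lambda_{G'}$ and hence $\omega_G$ to $\omega_{G'}$ by bi-invariance \S\ref{subsect: CanForms}(2). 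This is exactly the content established in \cite[\S4]{BrSigma}, so I would simply cite it.

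Second, the restriction to a smooth form on $\left|\LM_g^{\trop}\right|\backslash\left|\partial\LM_g^{\trop}\right|$. By Proposition \ref{prop: GraphLocusFunctor} the complement of the graph locus inside $\left|\LM_g^{\trop}\right|$ is exactly $\left|\LM_g^{\circ,\trop}\right| = \left|\LM_g^{\trop}\right|\backslash\left|\partial\LM_g^{\trop}\right|$, so $\mathcal{X}$ is at infinity on this open subcomplex in the sense of Definition \ref{defn: subschemeatinfinity}; the natural map $\Omega^{\bullet}(\PF F\backslash\mathcal{X})\to\mathcal{A}^{\bullet}(|F\backslash\mathcal{X}|)$ of \S\ref{sect: differentialforms} then produces the desired smooth form by restriction. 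Concretely, on each open cell $\overset{\circ}{\sigma}_G$ with $w(G)=0$ one has $\Psi_G>0$, so $\omega_{\Lambda_G}$ is a genuine smooth form there, and the compatibility already verified guarantees these glue across faces. This is the analogue for $\LM_g^{\trop}$ of Proposition \ref{prop: CanFormsOnLA}, whose proof transcribes verbatim.

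Third, and this is the only assertion requiring a small new argument, the compatibility with the tropical Torelli map: $\omega|_{\LM_g^{\red,\trop}} = \lambda^*\omega$. Here the morphism $\lambda:\LM_g^{\red,\trop}\to\LA_g^{\trop}$ is given by the pair $(t_g,[\lambda])$, where on a $3$-edge-connected graph $G$ the linear map $[\lambda_G]:\Pro^{E_G}\to\Pro(\Quad(H_1(G;\Q)))$ sends $(x_e)\mapsto[\sum_e x_e Q_e]$, i.e.\ on the underlying symmetric matrix it is $(x_e)\mapsto\Lambda_G$ with $x_e$ plugged into the edge variables. Thus the pullback of the universal symmetric matrix $X$ on $\Pro(\Quad(H_1(G;\Q)))$ along $[\lambda_G]$ is precisely $\Lambda_G$, and therefore $[\lambda_G]^*(\omega_X) = \omega_{\Lambda_G} = \omega_G$ by naturality of the trace-form construction (the formation of $\tr((X^{-1}dX)^{4k+1})$ commutes with pullback along any morphism of DGAs, applied to the ring map induced by $[\lambda_G]$). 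Since $\omega$ on $\LA_g^{\trop}$ is by Proposition \ref{prop: CanFormsOnLA} the compatible system $\{\omega_{Q}\}$ with $\omega_{Q_G} = \omega_X|_{\Pro^{Q_G}}$, and since $\omega$ on $\LM_g^{\red,\trop}$ is the restriction of the system $\{\omega_G\}$ to $(I_g^{\red})^{\opp}$, the identity $[\lambda_G]^*\omega_{Q_G}=\omega_G$ for every $G$ is exactly the statement that $\lambda^*\omega = \omega|_{\LM_g^{\red,\trop}}$ as elements of $\Omega^d(\LM_g^{\red,\trop}\backslash\mathcal{X}^{\red})$, and hence also after passing to smooth forms. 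I expect no real obstacle here; the only point to be careful about is that $[\lambda_G]$ is only defined (i.e.\ injective, giving an honest morphism in $\PLC_{\Q}$) when $G$ is $3$-edge-connected, by Proposition \ref{Prop: lambdaInjective}, which is precisely why the statement is phrased in terms of the \emph{reduced} moduli space $\LM_g^{\red,\trop}$ rather than all of $\LM_g^{\trop}$.
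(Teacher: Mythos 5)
Your proposal is correct and follows essentially the same route as the paper: the paper's proof is a two-line citation of \cite[Theorem 2.1]{BrSigma} (for the compatibility/equivariance of $\omega_G=\omega_{\Lambda_G}$), plus Proposition \ref{prop: GraphLocusFunctor} (for the second assertion), plus the observation that the Torelli compatibility is ``automatic from the definition''—and your three steps reproduce exactly this, with the details filled in. Your expansion of the third point (pull-back of the universal symmetric matrix $X$ along $[\lambda_G]$ is $\Lambda_G$, and the trace-form construction commutes with DGA pullback) is the correct content behind ``automatic from the definition''; and your remark that the Torelli map is only an honest $\PLC_\Q$-morphism on $3$-edge-connected graphs, by Proposition \ref{Prop: lambdaInjective}, correctly explains why the statement is phrased over $\LM_g^{\red,\trop}$. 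One small imprecision: the matrix identity $\Lambda_G|_{x_e=0}=\Lambda_{G/e}$ (for $e$ not a self-edge) does not follow from the scalar identity $\Psi_G|_{x_e=0}=\Psi_{G/e}$; rather the matrix identity (which holds because $H_1(G;\Z)\cong H_1(G/e;\Z)$ canonically for a non-self-edge contraction) is what \emph{implies} the scalar one. Since you also cite \cite{BrSigma} for this verification, it does not affect the validity of the argument, but the logical direction in that parenthetical is reversed.
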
 
 This follows from Theorem 2.1 in \cite{BrSigma}:  the first statement is equivalent to the `compatibility' and `equivariance' property $(2.2)$ \emph{loc. cit.}; the second on applying proposition \ref{prop: GraphLocusFunctor}. That they are pulled back from the  tropical Torelli map is automatic   from their definition. 
 \subsection{Behaviour at infinity}
 A key property of  canonical forms  is their behaviour on  blow-ups and  in particular, a factorisation property  at infinity.

 \begin{prop} \label{prop: blowupforms} 
 Let $0 \leq K \subset V$ be a subspace and let 
$\pi: P^{\BB} \rightarrow   \Pro(\Quad(V))$
 denote the blow-up along $\BB=\{\Pro(\Quad(V/K))\}$.   Let $\omega \in \Omega^d_{\can}$. Denote its coproduct by 
 \[ \Delta^{\can} \omega = \sum_i \omega_i' \otimes \omega_i'' \ .\] 
 The pull-back $\pi^*\omega$ to $P^{\BB}$ has no poles along the exceptional divisor, and hence defines a regular  differential form on the complement of the strict transform $\widetilde{\Det}$ of the determinant:
 \[ \pi^*(\omega) \quad \in \quad \Omega^{d}  \left( \Pro \, \setminus \,  \widetilde{\Det} \right) \ . \]  
Its restriction to the exceptional divisor  $\mathcal{E}$, which we recall by proposition \ref{lem: stricttransformofDet} is canonically isomorphic to the product
$ \mathcal{E}  = \Pro\left(\Quad(V/K)\right ) \times \Pro \left( \Quad(V) / \Quad(V/K) \right)$,
 satisfies 
 \begin{equation} \label{omegafactorisationalongE} \pi^*(\omega) \big|_{\mathcal{E}} = \sum_i \omega_i' \wedge \pi_K^*(\omega_i'') \end{equation} 
 where $\omega_i' \in  \Omega^{\bullet}  \left( \Pro(\Quad(V/K)) \backslash \Det_{V/K} \right)$,
 $\omega_i'' \in\Omega^{\bullet}  \left(\Pro \left( \Quad(K)\right)  \backslash \Det_{K} \right)$, and 
 $\pi_K$ is the map \eqref{piKdef}.
 \end{prop}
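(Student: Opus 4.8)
\textbf{Proof strategy for Proposition \ref{prop: blowupforms}.}
The plan is to reduce everything to an explicit computation in the local coordinates $s_K^*$ of Proposition \ref{lem: stricttransformofDet}, where the blow-up is given by $s_K^*(x_{ij}) = z\,x_{ij}$ for $i\le \dim K$ and $s_K^*(x_{ij})=x_{ij}$ otherwise, so that the exceptional divisor is $z=0$ and, by \eqref{LambdaBlockForm}, a quadratic form $Q$ with block decomposition $\left(\begin{smallmatrix} Q_K & P \\ P^T & Q_C\end{smallmatrix}\right)$ pulls back to $s_K^*Q = \left(\begin{smallmatrix} zQ_K & zP \\ zP^T & Q_C\end{smallmatrix}\right)$. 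First I would record the elementary but crucial observation that if $X(t)$ is a matrix-valued function and $X \mapsto \mu X$ is scaling by a scalar function, then $X^{-1}dX$ is \emph{not} changed by the scaling in the naive sense, but the relevant point here is subtler: scaling only the first $\dim K$ rows and columns by $z$. So the real content is to understand $(s_K^*Q)^{-1}\, d(s_K^*Q)$ near $z=0$.

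The key computational step is to block-diagonalise. Using the matrix identity already written out in the proof of Proposition \ref{lem: stricttransformofDet},
\[
\begin{pmatrix} zQ_K & zP \\ zP^T & Q_C \end{pmatrix} = \begin{pmatrix} I & 0 \\ P^TQ_K^{-1} & I\end{pmatrix}\begin{pmatrix} zQ_K & 0 \\ 0 & Q_C - zP^TQ_K^{-1}P\end{pmatrix}\begin{pmatrix} I & Q_K^{-1}P \\ 0 & I\end{pmatrix},
\]
which expresses $s_K^*Q = A\,\Lambda\,A^T$ with $A$ unipotent and $\Lambda$ block-diagonal, and the fact (\S\ref{subsect: CanForms}, property (2)) that the canonical forms are bi-invariant: $\omega_{BXC} = \omega_X$ for constant $B,C$. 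Here $A$ is not constant, so I would instead use that $\omega_X$ depends only on $X^{-1}dX$ through its trace powers, together with the standard identity $\omega_{AXA^T}^{4k+1} = \omega_X^{4k+1} + (\text{exact})$ when $dA \ne 0$ — more precisely, since $\omega$ is closed and its cohomology class is invariant, and since here we only need the \emph{restriction to $z=0$}, it suffices to show the pullback is regular at $z=0$ and to identify its restriction. The regularity: from $s_K^*Q = A\Lambda A^T$ and the fact that $A$, $A^{-1}$, $dA$ are all regular near $z=0$ (as $Q_K$ is invertible there, i.e. away from $\widetilde{\Det}$), one gets $\omega_{s_K^*Q} = \omega_\Lambda + d(\cdots)$, and $\Lambda = \mathrm{diag}(zQ_K, Q_C - zP^TQ_K^{-1}P)$. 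Now $\omega_{\mathrm{diag}(Y_1,Y_2)}$ factors by the additivity property (3): $\omega_{Y_1\oplus Y_2} = \sum \omega_i'{}_{Y_1}\wedge \omega_i''{}_{Y_2}$. For $Y_1 = zQ_K$, the projectivity property (4) gives $\omega_{zQ_K} = \omega_{Q_K}$, which is manifestly regular at $z=0$ and depends only on the $K$-block; restricting to $z=0$, $Y_2 = Q_C - zP^TQ_K^{-1}P \mapsto Q_C$, which is exactly the image of $Q$ under $\pi_K$ (restriction of $Q$ to $K$, in the coordinates of \eqref{QBlockform}–\eqref{piKdef}). This yields precisely \eqref{omegafactorisationalongE}, with $\omega_i'$ a form on $\Pro(\Quad(V/K))\setminus\Det_{V/K}$ (the $Q_K$-dependence) and $\omega_i''$ pulled back via $\pi_K$ from $\Pro(\Quad(K))\setminus\Det_K$ (the $Q_C$-dependence).

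The main obstacle I anticipate is handling the exact correction term $d(\cdots)$ coming from the non-constant conjugating matrix $A$: one must check that this term, a priori a form on the blow-up with potential poles along $\mathcal{E}$, in fact extends regularly across $z=0$ and that its restriction to $\mathcal{E}$ vanishes (or is itself of the stated factorised shape), so that it does not spoil \eqref{omegafactorisationalongE}. The clean way around this is probably to avoid the conjugation identity and instead argue directly: write $s_K^*Q = D_z Q' D_z$ where $D_z = \mathrm{diag}(z^{1/2}I_{|K|}, I)$ is a formal scaling (or work with $D_z = \mathrm{diag}(zI_{|K|},I)$ on one side) and exploit that $X^{-1}dX$ under $X\mapsto D X D$ with $D$ constant is $D^{-1}(X^{-1}dX)D$, whose trace powers are unchanged — but since $D_z$ is \emph{not} constant there is an extra $D_z^{-1}dD_z = \tfrac{dz}{2z}\,\mathrm{diag}(I_{|K|},0)$ term, and one must show all the $\tfrac{dz}{z}$ contributions cancel in the trace of the $(4k+1)$-st power (an odd power of a sum involving a rank-$\le\dim K$ idempotent times $dz/z$). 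Verifying this cancellation — equivalently, that $\tr((X^{-1}dX + \tfrac{dz}{2z}\Pi)^{4k+1})$ has no pole at $z=0$ when $\Pi$ is the relevant projector — is the genuine technical heart, and I would do it by expanding, using $\Pi^2=\Pi$, $dz\wedge dz = 0$, and the structure of $X^{-1}dX$ in block form near $z=0$ to see that every term with a $\tfrac{dz}{z}$ factor either vanishes by $dz\wedge dz=0$ or is multiplied by a factor of $z$ coming from the off-diagonal blocks of $s_K^*Q$. Once regularity is established, the identification of the restriction to $\mathcal{E}$ with \eqref{omegafactorisationalongE} follows from properties (3) and (4) as above, and the compatibility with $\pi_K$ is immediate from the coordinate description of \eqref{piKdef}.
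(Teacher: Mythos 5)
Your second proposal --- factoring out a one-sided scaling $D_z = \mathrm{diag}(zI_{|K|},I)$ and expanding the trace --- is in substance the paper's argument, which writes $s_K^*Q = \Lambda U$ with $\Lambda = \mathrm{diag}(zQ_K, Q_C)$ block-diagonal and $U=\Lambda^{-1}s_K^*Q$ upper-unipotent modulo $z$, then applies cyclicity of the trace to obtain $\omega^{4k+1}_{\Lambda U} = \tr\bigl((\Lambda^{-1}d\Lambda + (dU)U^{-1})^{4k+1}\bigr)$. Since $\Lambda^{-1}d\Lambda = \mathrm{diag}(Q_K^{-1}dQ_K, Q_C^{-1}dQ_C) + \tfrac{dz}{z}\Pi$ with $\Pi = \mathrm{diag}(I_K,0)$, the pole structure you isolate is exactly the one the paper handles, and both you and the paper defer the $dz/z$-cancellation (the paper to Theorem 7.4 of \cite{BrSigma}). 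Your first attempt --- the Schur-complement factorization $s_K^*Q = A\Lambda A^T$ plus bi-invariance up to exact terms --- does have the gap you flag: \eqref{omegafactorisationalongE} is asserted as an identity of the actual form $\pi^*\omega$ restricted to $\mathcal{E}$, not merely of cohomology classes, so the exact correction from non-constant $A$ would have to be shown to restrict to zero on $\mathcal{E}$ at the form level, and this is precisely what the one-sided multiplicative factorization circumvents.

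Two further remarks. First, the labels of $Q_K$ and $Q_C$ are transposed in your closing identification: $\Pro(\Quad(V/K))$ parametrises forms vanishing on $K$, hence the $Q_C$-dependence, while $\pi_K$ restricts to $K$ so $\pi_K(Q)=Q_K$; thus $\omega_i'$ carries the $Q_C$-part and $\pi_K^*(\omega_i'')$ the $Q_K$-part. This is harmless for the final formula because the coproduct is cocommutative, but it suggests the coordinates were not followed through. Second, the mechanism that actually kills the $dz/z$-pole is not the one you sketch. After $dz\wedge dz=0$ collapses the expansion to terms with at most one $\tfrac{dz}{z}\Pi$ factor, the residue is a nonzero multiple of $\tr(\Pi N^{4k})$ with $N=\mathrm{diag}(Q_K^{-1}dQ_K, Q_C^{-1}dQ_C)+(dU)U^{-1}$. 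At $z=0$ and $dz=0$ the matrix $(dU)U^{-1}$ is strictly block upper-triangular, so $N$ is block upper-triangular and $\tr(\Pi N^{4k})$ reduces to $\tr\bigl((Q_K^{-1}dQ_K)^{4k}\bigr)$, which vanishes identically: $\tr\bigl((X^{-1}dX)^{m}\bigr)=0$ for every even $m$ by graded cyclicity of the trace on matrices of one-forms. It is this even-power vanishing, rather than a compensating factor of $z$ from the off-diagonal entries, that closes the argument.
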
 
 \begin{proof} The proof is almost identical to that of theorem 7.4 in \cite{BrSigma} to which we refer for further details.  The key point is to 
 choose a complementary subspace $V\cong  K \oplus C$ and write $Q$ in block form as in \eqref{inproofQblockform}. Then we may 
  write formula \eqref{LambdaBlockForm} in the form:
 \[ s_K^* Q = \begin{pmatrix} zQ_K & zP \\ zP^T & Q_C\end{pmatrix}  = \Lambda U  \quad \hbox{ where we set }   \quad  \Lambda =  \begin{pmatrix} zQ_K &   0 \\ 0 & Q_C\end{pmatrix}\ ,  \]
and  where 
 \[  U = \Lambda^{-1} s_K^* Q \equiv   \begin{pmatrix} 1 &   Q_K^{-1}P \\ 0 & 1 \end{pmatrix} \pmod{z}\ .  \]
 For simplicity, assume that $\omega = \omega^{4k+1}$ is primitive. Then 
  \[ \omega^{4k+1}_{\Lambda U}= \tr \left( (  U^{-1} \Lambda^{-1} (d\Lambda) U +  U^{-1}dU )^{4k+1} \right) =\tr \left( \left( \Lambda^{-1} d\Lambda  +  dU. U^{-1}\right)^{4k+1} \right)   \ .   \]
 The argument follows exactly as in theorem 7.4 of \emph{loc. cit.} to conclude that 
 \[ \omega_{ s_K^* Q}   \Big|_{z=0} = \omega^{4k+1}_{Q_C} + \omega^{4k+1}_{ Q_K} \]
 and hence $\pi^*(\omega) \big|_{\mathcal{E}} =    \omega^{4k+1}_{Q_C} \wedge 1  + 1\wedge \omega^{4k+1}_{ Q_K} $. The case when $\omega$ is a simple form follows by multiplying primitive forms together; the general case follows   by linearity. 
  \end{proof} 
   \begin{rem} \label{rem: canformsarereduced}
 This implies that the restriction of $\pi^*(\omega)$  to the exceptional divisor is actually just the pre-image of canonical forms 
  under the collapsing map \S\ref{sect: CollapseNormal} of the normal bundle. 
 \end{rem}

 It may often happen that the restriction $\pi^*(\omega)$ to the exceptional divisor $\mathcal{E}$ vanishes. There is one important  situation in which this always occurs. 
 \begin{cor} \label{cor: compacttypevanishesonexceptional}
 Let $\omega \in \Omega^d_c(g)$ be of compact type, where $g=\dim(V)>1$  is odd. 
 Then, in the situation of   proposition \ref{prop: blowupforms}, $\pi^*\omega$ vanishes along the exceptional divisor $\mathcal{E}$.  
 \end{cor}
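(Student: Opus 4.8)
The plan is to combine the factorisation formula \eqref{omegafactorisationalongE} with the vanishing property \S\ref{subsect: CanForms} (5) of canonical forms in low rank. Write $g = \dim V > 1$, odd, and let $\omega = \eta \wedge \omega^{2g-1} \in \Omega^d_c(g)$ be of compact type, so $\eta \in \Omega(g)$. Since $\Omega_{\can}$ is a Hopf algebra in which each $\omega^{4k+1}$ is primitive, the coproduct of $\omega^{2g-1}$ is $\omega^{2g-1}\otimes 1 + 1 \otimes \omega^{2g-1}$, and therefore every term $\omega_i' \otimes \omega_i''$ appearing in $\Delta^{\can}\omega = \Delta^{\can}\eta \cdot \Delta^{\can}\omega^{2g-1}$ has the property that the factor $\omega^{2g-1}$ appears either entirely in $\omega_i'$ or entirely in $\omega_i''$.

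First I would recall from proposition \ref{prop: blowupforms} that the restriction of $\pi^*\omega$ to the exceptional divisor $\mathcal{E} \cong \Pro(\Quad(V/K)) \times \Pro(\Quad(K))$ (after applying the fibration $\pi_K$, by remark \ref{rem: canformsarereduced}) equals $\sum_i \omega_i' \wedge \pi_K^*(\omega_i'')$, where $\omega_i'$ is a canonical form on $\Pro(\Quad(V/K)) \setminus \Det_{V/K}$ and $\omega_i''$ is a canonical form on $\Pro(\Quad(K)) \setminus \Det_K$. Now $\dim(V/K) < g$ and $\dim(K) < g$ since $0 \neq K \subsetneq V$ (the case $K = V$ is excluded, as $\Pro(\Quad(V/K))$ would be empty, and $K \neq 0$ is hypothesised in proposition \ref{prop: blowupforms} for the exceptional divisor to exist).

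Second, I would argue term by term. Fix a term $\omega_i' \otimes \omega_i''$ in $\Delta^{\can}\omega$. By the observation above, $\omega^{2g-1}$ divides either $\omega_i'$ or $\omega_i''$. If $\omega^{2g-1}$ divides $\omega_i'$, then $\omega_i'$ is a canonical form of degree $\geq 2g-1$ on $\Pro(\Quad(V/K))$; but $\dim(V/K) = g - \dim K \leq g - 1$, so $2g - 1 > 2\dim(V/K)$ exactly when $2g - 1 > 2(g - \dim K)$, i.e. $2\dim K > 1$, i.e. $\dim K \geq 1$, which holds. Hence by property (5), $\omega^{2g-1}$ — and therefore $\omega_i'$, which is divisible by it — vanishes identically on $\Pro(\Quad(V/K))$. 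Symmetrically, if $\omega^{2g-1}$ divides $\omega_i''$, then $\omega_i''$ has degree $\geq 2g-1 > 2\dim K$ (again because $\dim K \leq g-1$ forces $2g-1 > 2(g-1) \geq 2\dim K$), so $\omega_i''$ vanishes on $\Pro(\Quad(K))$ and hence so does $\pi_K^*(\omega_i'')$. Either way, the term $\omega_i' \wedge \pi_K^*(\omega_i'')$ is zero. Summing over $i$ gives $\pi^*\omega|_{\mathcal{E}} = 0$, which is the claim.

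I do not anticipate a serious obstacle here: the argument is essentially bookkeeping with the coproduct and the degree bound. The one point that requires a little care is to confirm that property (5) in the form stated — "$\omega^{4k+1}_X = 0$ if $4k+1 > 2g$" — does apply to the forms $\omega_i'$, $\omega_i''$ living on $\Pro(\Quad(V/K))$ and $\Pro(\Quad(K))$ respectively, i.e. that these forms really are built out of the canonical generators evaluated on symmetric matrices of the appropriate smaller size. This is exactly the content of proposition \ref{prop: blowupforms} together with remark \ref{rem: canformsarereduced}: the restriction to $\mathcal{E}$ is pulled back from genuine canonical forms on the normal-bundle factors, which are spaces of quadratic forms of dimension $\dim(V/K)$ and $\dim(K)$; so the vanishing property applies with $g$ replaced by $\dim(V/K)$ and $\dim(K)$, and the inequality $2g-1 > 2\dim(V/K)$, $2g-1 > 2\dim(K)$ follows from $K \neq 0$ and $K \neq V$ together with the oddness (which guarantees $2g-1$ is the top relevant degree). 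This completes the proof.
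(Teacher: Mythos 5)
Your argument is correct and takes essentially the same route as the paper's proof: both rest on the Hopf-ideal observation that, because $\omega^{2g-1}$ is primitive, each term $\omega_i'\otimes\omega_i''$ of $\Delta^{\can}\omega$ has $\omega^{2g-1}$ entirely in one factor, and then invoke the low-rank vanishing property \S\ref{subsect: CanForms} (5) on the two factors of the exceptional divisor, whose matrix sizes $\dim(V/K)$ and $\dim K$ are both $<g$ since $0\neq K\subsetneq V$. (Minor slip: $\mathcal{E}\cong\Pro(\Quad(V/K))\times\Pro(\Quad(V)/\Quad(V/K))$, not $\Pro(\Quad(V/K))\times\Pro(\Quad(K))$; you correct for this implicitly by routing through $\pi_K$, so the argument is unaffected.)
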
 
 \begin{proof} We  may assume that $\omega$ is simple. Since $\omega$ is of compact type, it follows that in the expression
$\Delta^{\can} \omega = \sum_i \omega_i' \otimes \omega_i''$  either $\omega_i'$ or $\omega_i''$ is of compact type for each $i$, since one of these two forms must  be divisible by $\omega^{2g-1}$. Note that since $g=2k+1$ is odd, $\omega^{2g-1} = \omega^{4k+1}$. Property  \S\ref{subsect: CanForms} (5) implies that $\omega^{2g-1}$ vanishes on all matrices of rank  $h$ satisfying $2h<2g-1$, or equivalently, $h<g$. 
Let us choose a complement $V\cong K \oplus C$ and use the notations of proposition \ref{prop: blowupforms}.  Since $K\neq 0$,  
 both $Q_K$ and $Q_C$ have rank strictly smaller than $g$, and hence 
either    $(\omega_i')_{Q_C}  = 0 $  or 
$(\omega_i'')_{Q_K}  = 0$  
for every $i$, since one or other is divisible by $\omega^{2g-1}$.  It follows from the formula  \eqref{omegafactorisationalongE} that
   $\pi^*(\omega)$ vanishes along $\mathcal{E}$.   \end{proof} 

The essential part of the proof is the fact that $\Omega_c(g)$ is a Hopf ideal in $\Omega(g)$: 
\[ \Delta^{\can} (\Omega_c(g))  \ \subset \  \left(\Omega_c(g) \otimes_{\Q} \Omega_{nc}(g)\right)  +    \left(\Omega_{nc}(g) \otimes_{\Q} \Omega_{c}(g)\right)\ . \]

\subsection{Canonical forms on the blow-ups $\LA_g^{\trop,\BB}$ and $\LM_g^{\trop, \BB}$}

\begin{thm}  \label{thm:  CanFormsOnAgBlowup} Let $\omega \in \Omega^d_{\can}$. It defines a closed algebraic  differential form 
\begin{equation} \label{omegaMeroOnAg}   \widetilde{\omega} \in  \Omega^d \left( \LA_g^{\trop,\BB} \backslash \widetilde{\Det} \right) \end{equation} 
which restricts to a  smooth form, also denoted by the same notation, 
\begin{equation} \label{omegasmoothAgTop} \widetilde{\omega} \in  \mathcal{A}^d \left( \left| \LA_g^{\trop,\BB} \right| \right) \end{equation}
on the topological realisation.  In the case when $g>1$ is odd and $\omega \in \Omega^d_c(g)$ is of compact type, then   the restriction of \eqref{omegaMeroOnAg} to the boundary 
$\partial \LA_g^{\trop,\BB} \  \backslash \  ( \partial \LA_g^{\trop,\BB}  \cap  \widetilde{\Det}   ) $ vanishes.  In particular, it follows that the form  
$ \eqref{omegasmoothAgTop}$ on $ \left| \LA_g^{\trop,\BB} \right|$ vanishes on  $\left| \partial \LA_g^{\trop,\BB} \right|$. 
\end{thm}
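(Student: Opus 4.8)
The plan is to reduce the global statement on the complex $\LA_g^{\trop,\BB}$ to the local statement for a single blown-up cone $\cone^{\BB}_Q$, and there to the factorisation property of canonical forms established in Proposition \ref{prop: blowupforms} together with the vanishing in low rank (Corollary \ref{cor: compacttypevanishesonexceptional}). First I would observe that a differential form in $\Omega^d(\LA_g^{\trop,\BB}\backslash\widetilde{\Det})$ is, by Definition \ref{defn: GlobalForm}, a compatible family of forms $\omega_x$ on $\PF\LA_g^{\trop,\BB}(x)\backslash\widetilde{\Det}_x$ indexed by objects $x$ of $\mathcal{D}_g^{\perf,\BB}$, and that the form $\widetilde{\omega}$ in \eqref{omegaMeroOnAg} is obtained by pulling back the canonical form $\omega\in\Omega^d(\LA_g^{\trop}\backslash\Det)$ (which exists by Proposition \ref{prop: CanFormsOnLA}) along the canonical blow-down $\pi_{\BB}$. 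The compatibility conditions \eqref{CompatibilityForForms} hold because $\pi_{\BB}$ is a natural transformation and $\omega$ already satisfies them (bi-invariance under $\GL_g(\Z)$, and face compatibility); the pullback has no pole along any exceptional divisor by the first assertion of Proposition \ref{prop: blowupforms}, applied iteratively along the tower of blow-ups defining $P^{\BB_Q}$. This gives the first two assertions \eqref{omegaMeroOnAg} and \eqref{omegasmoothAgTop}, the latter by restricting via the maps of DGA's $\Omega^{\bullet}(\PF F\backslash\mathcal{X})\subset\Omega^{\bullet}_{\mer}(F)\to\mathcal{A}^{\bullet}(|F|)$, which is legitimate precisely because $\widetilde{\Det}$ is at infinity (Theorem \ref{thm: Detatinfinity}).

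\textbf{The vanishing on the boundary.} For the final statement, assume $g>1$ is odd and $\omega\in\Omega^d_c(g)$. The boundary $\partial\LA_g^{\trop,\BB}$ is, by definition, the subcomplex on the full subcategory $\partial\mathcal{D}_g^{\perf,\BB}$ of faces whose blow-down lies in $\partial\mathcal{D}_g^{\perf}$; concretely, every such face is contained in some exceptional divisor $\mathcal{E}$ of some $\cone^{\BB}_Q$. By Proposition \ref{prop: structureoffacesinblownupcone}, an intersection $\mathcal{E}$ of exceptional components (away from $\widetilde{\Det}$) is a product $\prod_i P_i^{\BB_i}$ of blow-ups of projective spaces $\Pro(\Quad(V/K_i)/\Quad(V/K_{i+1}))$ attached to a flag $0=K_0\subsetneq K_1\subsetneq\cdots\subsetneq K_{n+1}=V$ with $n\geq 1$. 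Iterating the factorisation formula \eqref{omegafactorisationalongE} along this flag, the restriction $\pi^*_{\BB}(\omega)\big|_{\mathcal{E}}$ becomes a sum of wedge products of canonical forms $(\omega^{(i)})_{Q_i}$, one for each graded piece $V/K_i/(V/K_{i+1})$, which has dimension $\dim(K_{i+1}/K_i)<g$ since $n\geq 1$ forces every $K_{i+1}/K_i$ to be a proper quotient of $V$. Here I use that $\Omega_c(g)$ is a Hopf ideal, so in the iterated coproduct $\Delta^{\can}\omega=\sum\omega^{(1)}\otimes\cdots\otimes\omega^{(n+1)}$ at least one tensor factor in each term is divisible by $\omega^{2g-1}=\omega^{4k+1}$ (with $g=2k+1$), which by property \S\ref{subsect: CanForms}(5) vanishes on any matrix of rank $<g$. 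Hence every term in the expansion of $\pi^*_{\BB}(\omega)\big|_{\mathcal{E}}$ contains a vanishing factor, so the restriction to $\mathcal{E}\backslash(\mathcal{E}\cap\widetilde{\Det})$ is zero. Exactly as in Corollary \ref{cor: compacttypevanishesonexceptional}, this shows $\widetilde{\omega}$ vanishes on $\partial\LA_g^{\trop,\BB}\backslash(\partial\LA_g^{\trop,\BB}\cap\widetilde{\Det})$; restricting further to the topological realisation $|\partial\LA_g^{\trop,\BB}|$, which by Theorem \ref{thm: Detatinfinity} is disjoint from $\widetilde{\Det}$, yields that the smooth form \eqref{omegasmoothAgTop} vanishes on $|\partial\LA_g^{\trop,\BB}|$.

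\textbf{Technical points and the main obstacle.} Two bookkeeping issues need care. First, one must check that the iterated application of Proposition \ref{prop: blowupforms} is legitimate: Proposition \ref{prop: structureoffacesinblownupcone} only tells us the geometry of the exceptional locus after the full iterated blow-up, so I would argue by induction on the length $n$ of the flag, using Remark \ref{rem: singleblowupfirst} to reduce each step to a single blow-up of $\Pro(\Quad(V))$ along $\Pro(\Quad(V/K_1))$, then pass to the exceptional divisor, identify it with a product via \eqref{blowupsquare}, and repeat on each factor. Second, one must confirm that the canonical forms on the graded factors are again of the shape to which property (5) applies — this is exactly Remark \ref{rem: canformsarereduced}, which says $\pi^*(\omega)\big|_{\mathcal{E}}$ is a pullback of canonical forms under the collapse map $\pi_K$ of Lemma \ref{lem: PiKdef}, so the inductive hypothesis applies verbatim. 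I expect the main obstacle to be purely organisational rather than mathematical: namely, setting up the induction over the flag $0=K_0\subsetneq\cdots\subsetneq K_{n+1}=V$ so that the combinatorics of the iterated coproduct $\Delta^{\can}$ lines up cleanly with the product decomposition of $\mathcal{E}$ and with the fact that extraneous modifications (which may intervene when $\sigma_Q$ is not a simplex) are isomorphisms on topological realisations and hence harmless for the vanishing statement. Once the indexing is fixed, each individual step is an immediate consequence of results already proved.
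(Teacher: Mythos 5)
Your proof is correct and takes essentially the same approach as the paper: pull $\omega$ back along the blow-down, apply Proposition \ref{prop: blowupforms} iteratively to rule out poles on the exceptional locus and to verify the compatibility conditions \eqref{CompatibilityForForms}, then pass to smooth forms via Theorem \ref{thm: Detatinfinity}. One small economy you miss: for the vanishing statement the paper simply applies Corollary \ref{cor: compacttypevanishesonexceptional} to each irreducible exceptional component $\mathcal{E}_W$ in turn; since every face of $\partial\LA_g^{\trop,\BB}$ lies in some such $\mathcal{E}_W$, vanishing there already gives vanishing on all intersections, so the flag-indexed induction through Proposition \ref{prop: structureoffacesinblownupcone} and the iterated coproduct that you flag as the main organisational obstacle is not actually needed.
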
 

\begin{proof}
Let $Q$ be a positive definite quadratic form such that $\sigma_Q>0$.  Let  $\widetilde{\omega} = \pi^* \omega$ denote the pull-back of the form $\omega$ along the iterated blow-up 
$\pi: P^{\BB_Q} \rightarrow \Pro^Q.$
By repeated application of proposition \ref{prop: blowupforms} for each blow-up in the formation of   $P^{\BB_Q}$, we deduce that $\widetilde{\omega}$ is a meromorphic differential form with no poles along the exceptional divisor. 
It follows that 
$\widetilde{\omega} \in \Omega^d ( P^{\BB_Q} \  \backslash  \    \widetilde{\Det})$.
By restricting it to faces, we obtain  a family of  forms  in $\Omega^d  (( \PF   \LA^{\trop, \BB}_g    \backslash \widetilde{\Det} )(x) )$ for every object $x$ of   $\mathcal{D}_g^{\perf}$.  By the universal property of strict transforms (proposition \ref{prop: UniversalANDIdeal} (i)) and the fact that restriction of differential  forms is compatible with pull-back along blow-ups, the compatibility conditions   \eqref{CompatibilityForForms} for $\widetilde{\omega}$ follow from those of $\omega$,  established in proposition \ref{prop: CanFormsOnLA}.  This is  because the objects in a  face diagram \eqref{facediagram}  are the blow-ups of a face map in $\mathcal{D}_g^{\perf}$, and the form $\widetilde{\omega}$ on each component is the pull-back of $\omega$ along a suitable blow-up.

The statement \eqref{omegasmoothAgTop} is a consequence of the fact that $ \left| \LA_g^{\trop,\BB} \right|$ does not meet the strict transform of the determinant  locus  (theorem \ref{thm: Detatinfinity}). When $\omega$ is of compact type, $\widetilde{\omega}$  vanishes along exceptional divisors by 
corollary \ref{cor: compacttypevanishesonexceptional}.
\end{proof}

The following  statement for $\LM_g^{\trop, \BB}$ may be proved using \cite[Theorem 7.4]{BrSigma}. 
\begin{thm} Let $\omega \in \Omega^d_{\can}$.  It defines both a holomorphic differential form on $\LM_g^{\trop,\BB} \backslash \widetilde{\mathcal{X}}$, and  restricts to a smooth form on     $\left| \LM_g^{\trop,\BB} \right|$   which we may  denote by:  
\begin{equation} \label{omegaMeroOnMg}   \widetilde{\omega} \in  \Omega^d \left( \LM_g^{\trop,\BB} \backslash \widetilde{\mathcal{X}} \right) \qquad \hbox{resp. }  \qquad 
\widetilde{\omega} \in  \mathcal{A}^d \left( \left| \LM_g^{\trop,\BB} \right| \right)\ .
 \end{equation}
\end{thm}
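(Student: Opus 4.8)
The plan is to mirror the proof of Theorem \ref{thm:  CanFormsOnAgBlowup}, using the tropical Torelli map and the analogous properties of the graph Laplacian established in \S\ref{section: Torelli}. The starting point is the observation, recorded after Proposition \ref{prop: CanFormsOnLA}, that on $\LM_g^{\trop}$ a canonical form $\omega\in\Omega^d_{\can}$ is nothing other than $\omega_{\Lambda_G}$ for each graph $G$, i.e. its restriction to the simplex $\mathcal{F}(G)$ is obtained by substituting the graph Laplacian $\Lambda_G$ into the universal expression $\tr((X^{-1}dX))^{4k+1}$ and taking products. Since $\det\Lambda_G=\Psi_G$ by \eqref{detLambdaPsi}, the form $\omega_{\Lambda_G}$ is regular on $\Pro^{E_G}\setminus X_G$, which already gives $\omega\in\Omega^d(\LM_g^{\trop}\setminus\mathcal{X})$; we now want the version on the blow-up.

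First I would invoke the local factorisation for the Laplacian. For a core subgraph $\gamma\subset G$ one has the identity \eqref{Psifactorizes}, $\Psi_G=\Psi_\gamma\Psi_{G/\gamma}+R_{G/\gamma}$, and more fundamentally, after the block decomposition $H_1(G;\Q)\cong H_1(\gamma;\Q)\oplus H_1(G/\gamma;\Q)$, the Laplacian restricted near $\Pro^{E_\gamma}$ has exactly the shape $\begin{pmatrix} zQ_K & zP\\ zP^T & Q_C\end{pmatrix}$ appearing in \eqref{LambdaBlockForm}, with $z$ the local equation of the exceptional divisor and $Q_K$ the Laplacian of $\gamma^{\mathrm{core}}$. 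This is precisely the situation of Proposition \ref{prop: blowupforms}, applied not to a generic blow-up of $\Pro(\Quad(V))$ but to the blow-up of $\Pro^{E_G}$ along $\Pro^{E_\gamma}$; alternatively one cites \cite[Theorem 7.4]{BrSigma} directly, which does exactly this computation for the graph Laplacian. The conclusion is that for each core subgraph $\gamma$, $\pi_{\BB}^*\omega$ has no pole along the exceptional divisor $\mathcal{E}_\gamma$, and its restriction to $\mathcal{E}_\gamma\cong P^{\BB_\gamma}\times P^{\BB_{G/\gamma}}$ factorises according to the coproduct $\Delta^{\can}\omega=\sum_i\omega_i'\otimes\omega_i''$, i.e. $\widetilde\omega|_{\mathcal{E}_\gamma}=\sum_i(\omega_i')_{\Lambda_{\gamma^{\mathrm{core}}}}\wedge(\omega_i'')_{\Lambda_{G/\gamma}}$ up to the collapsing map of the normal bundle. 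Iterating over all core subgraphs in $\BB^G$ and restricting to faces, exactly as in the proof of Theorem \ref{thm:  CanFormsOnAgBlowup}, produces a regular form on $P^{\BB_G}\setminus\widetilde X_G$ for each $G$; the compatibility conditions \eqref{CompatibilityForForms} for the functor $\LM_g^{\trop,\BB}$ follow from those already verified on $\LM_g^{\trop}$ together with functoriality of strict transforms (Proposition \ref{prop: UniversalANDIdeal} (i)) and of pullback of forms. Since the core subgraph structure is intrinsic, here there are genuine face maps (not just face diagrams), so this step is actually cleaner than in the $\LA_g^{\trop,\BB}$ case.

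For the second assertion, that $\widetilde\omega$ extends to a smooth form in $\mathcal{A}^d(|\LM_g^{\trop,\BB}|)$, I would appeal to Proposition \ref{prop: BlowUpGraphLocusFunctor}, specifically \eqref{XtildeavoidsLMgtropBB}: the strict transform $\widetilde{\mathcal{X}}$ is at infinity, $\widetilde{\mathcal{X}}(\R)\cap|\LM_g^{\trop,\BB}|=\emptyset$. Hence the open subscheme $\mathcal{U}=\PF\LM_g^{\trop,\BB}\setminus\widetilde{\mathcal{X}}$ contains every polyhedron $\sigma^{\BB}_x$, and the natural maps of DGA's $\Omega^{\bullet}(\PF F\setminus\mathcal{X})\subset\Omega^{\bullet}_{\mer}(F)\to\mathcal{A}^{\bullet}(|F|)$ from \S\ref{sect: differentialforms} send $\widetilde\omega$ to a well-defined compatible system of smooth forms near each $\sigma^{\BB}_x$, i.e. an element of $\mathcal{A}^d(|\LM_g^{\trop,\BB}|)$. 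Closedness is inherited from $d\omega=0$ (property (1) of \S\ref{subsect: CanForms}), which is preserved by pullback.

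\textbf{Main obstacle.} The genuinely substantive input is the factorisation of the canonical form along exceptional divisors — that $\pi^*\omega$ is pole-free along $\mathcal{E}$ and restricts compatibly with the coproduct. This is where the specific algebra of $\tr((\Lambda^{-1}d\Lambda)^{4k+1})$ under the degeneration $\Lambda\rightsquigarrow\begin{pmatrix}zQ_K & zP\\ zP^T& Q_C\end{pmatrix}$ is used, and it is exactly the content of Proposition \ref{prop: blowupforms} (equivalently \cite[Theorem 7.4]{BrSigma}); everything else — the functoriality, the passage to topological realisations, the use of $\widetilde{\mathcal{X}}$ being at infinity — is formal bookkeeping over the diagram category $(I_g^{\BB})^{\opp}$. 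So rather than re-running that computation I would simply cite \cite[Theorem 7.4]{BrSigma} and Proposition \ref{prop: blowupforms}, and spend the words on checking that the compatibility system assembles correctly across the admissible edge contractions and refinements of $I_g^{\BB}$.
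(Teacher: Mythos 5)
Your proposal is correct and takes essentially the same approach as the paper, which supplies no proof beyond the remark that the theorem "may be proved using \cite[Theorem 7.4]{BrSigma}" together with the implicit analogy with Theorem \ref{thm:  CanFormsOnAgBlowup}. You have simply unwound that citation: the pole-free extension across $\mathcal{E}_\gamma$ and its coproduct factorisation are exactly \cite[Theorem 7.4]{BrSigma} (or equivalently Proposition \ref{prop: blowupforms} transported via the graph Laplacian), the compatibility over $(I_g^{\BB})^{\opp}$ is the bookkeeping from the proof of Theorem \ref{thm:  CanFormsOnAgBlowup} (with the correct observation that genuine face maps make it cleaner here), and smoothness on the realisation is \eqref{XtildeavoidsLMgtropBB}.
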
 
Note that in the case of $\LM_g^{\trop,\BB}$ the existence of the dual graph Laplacian  implies that most forms of compact type in genus $g$ actually vanish  (see  \cite[Lemma 8.4]{BrSigma}), except for the primitive ones. 
 See the discussion in \S \ref{sect: VanishUnderTorelli}.

\subsection{Integrals of canonical forms} \label{sect: CanonicalIntegrals}
Canonical integrals  define new invariants associated to the minimal vectors of quadratic forms. 

\begin{cor} \label{cor: IntQcoverges}
Let $Q$ be a positive definite quadratic form such that $\sigma_Q$ is strictly positive.
Then for any canonical form $\omega \in \Omega^d_{\can}$ where $d=\dim \sigma_Q$, the integral 
\[ I_Q(\omega) = \int_{\sigma_{Q}} \omega \]
is finite.  In the case where $Q= Q_G$ is associated to a graph, the  integral $I_{Q_G}(\omega)$  reduces  to the  canonical integrals $I_G(\omega)$ studied in \cite[\S8]{BrSigma}. 
\end{cor}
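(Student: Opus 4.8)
The finiteness of $I_Q(\omega)$ is a consequence of the fact (Theorem \ref{thm: Detatinfinity}, via Corollary \ref{cor: sigmadoesnotmeetDet}) that the blow-up $\sigma_Q^{\BB}$ separates the perfect cone $\sigma_Q$ from the determinant locus where the canonical forms have singularities. First I would pass from $\sigma_Q$ to its wonderful compactification $\sigma_Q^{\BB}$ using the set $\BB_Q$ of equation \eqref{defn: BlowupSetforQ}. By the definition of $\BB_Q$ and the blow-down map, the map $\pi_{\BB}: \sigma_Q^{\BB} \rightarrow \sigma_Q$ is a homeomorphism over the interior $\overset{\circ}{\sigma}_Q$, and moreover, since $\omega$ is a closed algebraic differential form on $\LA_g^{\trop} \backslash \Det$ (Proposition \ref{prop: CanFormsOnLA}), its pull-back $\widetilde{\omega} = \pi_{\BB}^* \omega$ extends to a smooth form on all of $\left| \LA_g^{\trop,\BB}\right|$ by Theorem \ref{thm: CanFormsOnAgBlowup} (specifically equation \eqref{omegasmoothAgTop}), because $\widetilde{\omega}$ has poles only along $\widetilde{\Det}$, which does not meet $\sigma_Q^{\BB}$ by Corollary \ref{cor: sigmadoesnotmeetDet}.

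Next I would rewrite the integral as a pull-back: since $\pi_{\BB}$ is a homeomorphism from $\sigma_Q^{\BB}$ onto $\sigma_Q$ up to a measure-zero set (the exceptional locus over $\partial \sigma_Q$), and since both forms agree on the interior, we have
\[ I_Q(\omega) = \int_{\sigma_Q} \omega = \int_{\sigma_Q^{\BB}} \pi_{\BB}^* \omega = \int_{\sigma_Q^{\BB}} \widetilde{\omega}\ . \]
The right-hand integral is now manifestly finite: $\sigma_Q^{\BB}$ is a compact topological polyhedron (it is an object of $\PC_{\Q}$, being a face of a $\BLC_{\Q}$-object glued into $\left|\LA_g^{\trop,\BB}\right|$), and $\widetilde{\omega}$ is a smooth differential form defined in an open neighbourhood of $\sigma_Q^{\BB}$ inside $P^{\BB_Q}(\R)$ — indeed it is regular on $P^{\BB_Q} \backslash \widetilde{\Det}$, an open set containing $\sigma_Q^{\BB}$. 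A smooth form integrated over a compact semi-algebraic set of the matching dimension $d = \dim \sigma_Q$ gives a finite answer. One should also note that the degree matches: $\widetilde\omega$ has degree $d$, so its restriction to the $d$-dimensional cell $\sigma_Q^{\BB}$ is a top form there, and the integral makes sense.

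Finally, for the comparison with the graph case, I would invoke the compatibility of canonical forms with the tropical Torelli map. When $Q = Q_G$ for a $3$-edge connected graph $G$, the isomorphism $[\lambda_G]: \sigma_G \cong \sigma_{Q_G}$ of \eqref{lambdaGiso} pulls $\omega$ on $\cone_{Q_G}$ back to $\omega_{\Lambda_G} = \omega_G$ on $\cone_G$, by Proposition \ref{prop: CanFormsOnLA} and the last sentence of the proposition preceding \S\ref{sect: CanonicalIntegrals} (the restriction of $\omega$ to $\LM_g^{\red,\trop}$ is $\lambda_G^*\omega$). Since $[\lambda_G]$ restricts to a homeomorphism $\sigma_G \cong \sigma_{Q_G}$, the change of variables gives $I_{Q_G}(\omega) = \int_{\sigma_{Q_G}} \omega = \int_{\sigma_G} \lambda_G^* \omega = \int_{\sigma_G} \omega_G = I_G(\omega)$, which is the canonical integral of \cite[\S8]{BrSigma}.

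\textbf{Main obstacle.} The technical heart is verifying that $\int_{\sigma_Q^{\BB}} \widetilde{\omega}$ genuinely computes $\int_{\sigma_Q}\omega$, i.e., that no mass is lost or gained along the exceptional locus. This requires knowing that $\pi_{\BB}^{-1}(\partial\sigma_Q)\cap\sigma_Q^{\BB}$ has measure zero in $\sigma_Q^{\BB}$ (clear, since it is a union of lower-dimensional faces) and that $\widetilde\omega$ restricted to $\overset{\circ}{\sigma}_Q^{\BB}$ really is the pull-back of $\omega|_{\overset{\circ}{\sigma}_Q}$ under the isomorphism $\pi_{\BB}: \overset{\circ}{\sigma}_Q^{\BB} \cong \overset{\circ}{\sigma}_Q$ — which holds because blow-ups are isomorphisms away from their centres, and the centres in $\BB_Q$ all lie in $\Det \supset \partial$-faces, hence miss $\overset{\circ}{\sigma}_Q$. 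Everything else is a routine application of the machinery already built: the real content is Theorem \ref{thm: Detatinfinity}, which has been assumed.
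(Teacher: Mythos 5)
Your argument for finiteness is essentially identical to the paper's: pass to the blow-up $P^{\BB_Q}\to\Pro^Q$, observe that $\pi_{\BB}$ is a homeomorphism over $\overset{\circ}{\sigma}_Q$ and the exceptional complement has measure zero so $I_Q(\omega)=\int_{\sigma_Q^{\BB}}\widetilde\omega$, and conclude by compactness of $\sigma_Q^{\BB}$ together with Corollary \ref{cor: sigmadoesnotmeetDet}, which places the poles of $\widetilde\omega$ away from it. The paper's proof addresses only the finiteness claim and leaves the identification $I_{Q_G}(\omega)=I_G(\omega)$ as an observation; your change-of-variables argument via the isomorphism $[\lambda_G]\colon\sigma_G\cong\sigma_{Q_G}$ of \eqref{lambdaGiso} and the compatibility $\lambda_G^*\omega=\omega_G$ is the correct way to fill that in (though note that the compatibility is stated in the unlabelled proposition immediately following Proposition \ref{prop: CanFormsOnLA}, not the one immediately preceding \S\ref{sect: CanonicalIntegrals}).
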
 
\begin{proof} By passing to the blow-up $P^{\BB_Q} \rightarrow \Pro^Q$, we may write  
\[  I_Q(\omega) = \int_{\sigma^{\BB}_Q} \widetilde{\omega}\]
since $\overset{\circ}{\sigma}_Q$ is contained in $\sigma^{\BB}_Q$, and has complement  of Lebesgue measure zero. 
By theorem  \ref{thm: Detatinfinity}, $\widetilde{\omega}$ has no poles along the compact region $\sigma^{\BB}_Q$ and therefore the integral is finite. 
\end{proof} 

These integrals satisfy many interesting relations. The analogous relations for graphs were studied in \cite[\S 8.2]{BrSigma} and related to the `waterfall' spectral sequence of \cite{SpectralSequenceGC2}.

\begin{thm} \label{thm: Stokes} (Stokes relation).  Let $Q$ be a positive definite quadratic form such that $\sigma_{Q}>0$, and let  $\omega \in \Omega_{\can}$ have  degree $\dim \sigma_{Q}-1$. Let $\Delta^{\can}\omega = \sum_i \omega'_i \otimes \omega''_i$. Then
\begin{equation} \label{StokesForQ}    0= \sum_{Q'} \int_{\sigma_{Q'}} \omega +   \sum_i \sum_F  \int_{\sigma_F} \omega'_i \int_{\sigma_Q/\sigma_F}  \pi^*_{K_{\sigma_F}} (\omega''_i)  
\end{equation} 
where $Q'\leq Q$ denote the facets of $\sigma_Q$ which are strictly positive (not contained in $\Det(\R)$), and $\sigma^{\BB}_F\times (\sigma_Q/\sigma_F)^{\BB}$ denote the set of facets of 
its blow-up $\sigma^{\BB}_Q$ which are at infinity (in other words, $\sigma_F$ is an essential face of $\sigma_Q$ at infinity, and $\sigma_Q/\sigma_F$ is
defined in \eqref{notation:sigmamodF}). The map $\pi_{K_{\sigma_F}}$ is defined in lemma \ref{lem: PiKdef}.
\end{thm}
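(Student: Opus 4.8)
The plan is to apply Stokes' theorem to the smooth form $\widetilde{\omega} \in \mathcal{A}^{\bullet}(|\LA_g^{\trop,\BB}|)$ on the compact blown-up polyhedron $\sigma_Q^{\BB}$, and then to identify the resulting boundary contributions. First I would note that by Theorem~\ref{thm: Detatinfinity} (equivalently Corollary~\ref{cor: sigmadoesnotmeetDet}), the form $\widetilde{\omega} = \pi^*\omega$ is smooth on an open neighbourhood of the compact set $\sigma_Q^{\BB}$ in $P^{\BB_Q}(\R)$, since $\sigma_Q^{\BB}$ does not meet $\widetilde{\Det}$. Because $d\omega = 0$ (property (1) of \S\ref{subsect: CanForms}), we also have $d\widetilde{\omega} = 0$. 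Since $\deg \widetilde{\omega} = \dim \sigma_Q - 1 = \dim \sigma_Q^{\BB} - 1$, Stokes' theorem gives
\[ 0 = \int_{\sigma_Q^{\BB}} d\widetilde{\omega} = \int_{\partial \sigma_Q^{\BB}} \widetilde{\omega} = \sum_{\tau} \int_{\tau} \widetilde{\omega}\Big|_\tau \]
where the sum is over the facets $\tau$ of $\sigma_Q^{\BB}$, each given its induced orientation. (I would fix the orientation conventions so that the signs work out; since the final formula \eqref{StokesForQ} is written with all $+$ signs, the orientations on $\sigma_{Q'}$ and on the product faces must be chosen compatibly, and I would absorb any sign into $\pi^*_{K_{\sigma_F}}(\omega_i'')$ or note it is immaterial as in the discussion following \eqref{detLambdaPsi}.)

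Next I would classify the facets $\tau$ of $\sigma_Q^{\BB}$ into two types using \S\ref{sect: FacesAndMultBoundary} and Proposition~\ref{prop: structureoffacesinblownupcone}. A facet is either (a) the strict transform of a facet of $\sigma_Q$ that is itself strictly positive, i.e.\ of the form $\sigma_{Q'}^{\BB}$ for a facet $Q' \leq Q$ with $\sigma_{Q'} > 0$; or (b) contained in an exceptional divisor $\mathcal{E}_W$ with $W = K_{\sigma_F}$ for an essential face $\sigma_F$ of $\sigma_Q$ lying at infinity, in which case by Proposition~\ref{prop: multiplicativestructureonBLC} it is homeomorphic to the product $\sigma_F^{\BB_W} \times (\sigma_Q/\sigma_F)^{\BB_{/W}}$ (using notation \eqref{notation:sigmamodF}). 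For facets of type (a), since the blow-up locus is extraneous near $\sigma_{Q'}$ one has $\int_{\sigma_{Q'}^{\BB}} \widetilde{\omega} = \int_{\sigma_{Q'}} \omega$ by Corollary~\ref{cor: IntQcoverges} (applied to $Q'$), giving the first sum in \eqref{StokesForQ}. There is a subtlety: a facet of $\sigma_Q$ at infinity could \emph{a priori} give a non-product contribution, but by Lemma~\ref{lem: sigmameetsDet} and the definition of essential envelope, every facet of $\sigma_Q$ at infinity has the same span as $\Pro(\Quad(V/K))$ for $K = K_{\sigma_F}$ with $\sigma_F$ the essential envelope, so it is blown up and replaced by a product face of type (b); I would spell this out carefully.

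For facets of type (b), the key computation is the restriction formula \eqref{omegafactorisationalongE} from Proposition~\ref{prop: blowupforms}, iterated if necessary: on $\mathcal{E}_W \cong \Pro(\Quad(V/W)) \times \Pro(\Quad(V)/\Quad(V/W))$ one has $\pi^*\omega\big|_{\mathcal{E}_W} = \sum_i \omega_i' \wedge \pi_W^*(\omega_i'')$ where $\Delta^{\can}\omega = \sum_i \omega_i' \otimes \omega_i''$, with $\omega_i'$ a canonical form on the first factor and $\pi_W^*(\omega_i'')$ pulled back via $\pi_W$ \eqref{piKdef} from $\Pro(\Quad(W))$. Integrating over the product $\sigma_F^{\BB_W} \times (\sigma_Q/\sigma_F)^{\BB_{/W}}$ and applying Fubini, each term becomes $\left(\int_{\sigma_F^{\BB_W}} \omega_i'\right)\left(\int_{(\sigma_Q/\sigma_F)^{\BB_{/W}}} \pi_W^*(\omega_i'')\right)$; the first factor equals $\int_{\sigma_F} \omega_i'$ by Corollary~\ref{cor: IntQcoverges} applied to the strictly positive polyhedron $\sigma_F$ (Lemma~\ref{lem: positivityoffaceandnormal}), and the second equals $\int_{\sigma_Q/\sigma_F} \pi^*_{K_{\sigma_F}}(\omega_i'')$, matching the second sum in \eqref{StokesForQ}. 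For degree reasons (the total degree must be $\dim\sigma_Q - 1$ and distributes as $\dim\sigma_F + \dim(\sigma_Q/\sigma_F) = \dim\sigma_Q - 1$), only finitely many coproduct terms contribute to each face.

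\textbf{Main obstacle.} The main difficulty I anticipate is the bookkeeping of which faces of $\sigma_Q^{\BB}$ are facets and the precise matching between essential faces $\sigma_F$ of $\sigma_Q$ at infinity, their null spaces $K_{\sigma_F}$, and the exceptional divisors $\mathcal{E}_W$ of the iterated blow-up — together with the fact that, because $\sigma_Q$ is not a simplex, some exceptional divisors are extraneous and contribute nothing while others are genuine facets. Getting Fubini to apply cleanly also requires knowing that $\widetilde{\omega}$ restricted to each product facet genuinely decomposes as a finite sum of exterior products of forms on the two factors, which is exactly the content of \eqref{omegafactorisationalongE} but must be iterated correctly when the facet sits over a deeper stratum (this is handled by Proposition~\ref{prop: structureoffacesinblownupcone}). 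Orientation signs are a secondary nuisance but, as the paper notes elsewhere, can be absorbed or checked to cancel.
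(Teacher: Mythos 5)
Your argument is correct and follows the same route as the paper's proof, which is given there only as a brief sketch: apply Stokes on $\sigma_Q^{\BB}$, classify the boundary facets (strict transforms of strictly positive facets vs.\ exceptional facets), and invoke the factorisation formula \eqref{omegafactorisationalongE} together with Fubini on the product faces. The extra care you take with essential envelopes, Corollary~\ref{cor: IntQcoverges}, and degree bookkeeping is exactly the detail the paper leaves to the reader.
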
 

\begin{proof} This follows from an application of Stokes' formula:
\[ 0 = \int_{\sigma^{\BB}_Q} d \widetilde{\omega} = \int_{\partial  \sigma^{\BB}_Q} \widetilde{\omega}\ , \]
 the description of the boundary facets of $\sigma^{\BB}_{Q}$, and the factorisation formula \eqref{omegafactorisationalongE}.
\end{proof}

\subsection{Compactly-supported representatives}
Every form of compact type has a representative which vanishes identically  in a neighbourhood of the boundary. 
\begin{cor} \label{cor: CompactSupportedomega} 
Let $g>1$ be odd. The cohomology class of the differential form $\omega^{2g-1}$ in $ H_{dR}^{2g-1}( \left|\LA_g^{\trop,\BB} \right|, \left| \partial \LA_g^{\trop,\BB} \right|)$ has a compactly supported representative \[  [\omega^{2g-1}_c]  \  \in \  H_{dR,c}^{2g-1} ( \left| \LA_g^{\circ, \trop} \right| )\ . \] 
 Having chosen  $\omega^{2g-1}_c$,  any other  form $\omega = \omega^{2g-1}\wedge \eta \in \Omega_c(g)$ of compact type  has a canonical  compactly supported representative $\omega_c = \omega^{2g-1}_c \wedge \eta$. 
\end{cor}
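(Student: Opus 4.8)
The plan is to produce the compactly supported representative $\omega^{2g-1}_c$ by a standard relative-de-Rham argument, using Theorem \ref{thm:  CanFormsOnAgBlowup} together with the isomorphism \eqref{dRCompactSupportsToRelative} of Theorem \ref{thm: RelativeCellular}. First I would recall from Theorem \ref{thm:  CanFormsOnAgBlowup} that, since $2g-1 = 4k+1$ with $g = 2k+1$ odd, the form $\omega^{2g-1}$ is of compact type in genus $g$, so its smooth incarnation $\widetilde{\omega}^{2g-1} \in \mathcal{A}^{2g-1}\!\left(\left|\LA_g^{\trop,\BB}\right|\right)$ restricts to zero on the boundary $\left|\partial\LA_g^{\trop,\BB}\right|$. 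Hence the pair $(\widetilde{\omega}^{2g-1}, 0)$ is a closed element of the mapping cone complex $\mathcal{A}^{\bullet}\!\left(\left|\LA_g^{\trop,\BB}\right|\right) \oplus \mathcal{A}^{\bullet-1}\!\left(\left|\partial\LA_g^{\trop,\BB}\right|\right)$ computing $H^{2g-1}_{dR}\!\left(\left|\LA_g^{\trop,\BB}\right|, \left|\partial\LA_g^{\trop,\BB}\right|\right)$, and defines a relative class. Note that this uses that $\widetilde{\omega}^{2g-1}$ is smooth on all of $\left|\LA_g^{\trop,\BB}\right|$, which is exactly the content of Theorem \ref{thm: Detatinfinity}: the strict transform of the determinant locus, along which the naive form has poles, has moved off the polyhedra.

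Next I would invoke the canonical isomorphism \eqref{dRCompactSupportsToRelative}:
\[
H^{2g-1}_{dR,c}\!\left(\left|\LA_g^{\circ,\trop}\right|\right) \overset{\sim}{\To} H^{2g-1}_{dR}\!\left(\left|\LA_g^{\trop,\BB}\right|, \left|\partial\LA_g^{\trop,\BB}\right|\right),
\]
valid because $\left|\LA_g^{\circ,\trop}\right| = \left|\LA_g^{\trop,\BB}\right|\setminus\left|\partial\LA_g^{\trop,\BB}\right|$ by Theorem \ref{thm: introLAgblowupdiagrams}, and because every morphism in the relevant category is a face map or an isomorphism (the hypothesis used in the proof of Theorem \ref{thm: RelativeCellular}). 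Concretely, unwinding the proof of \eqref{dRCompactSupportsToRelative}, given the relative class represented by $(\widetilde{\omega}^{2g-1},0)$ one produces a genuinely compactly supported form $\omega^{2g-1}_c \in \mathcal{A}^{2g-1}_c\!\left(\left|\LA_g^{\circ,\trop}\right|\right)$ whose image under $\omega \mapsto (\omega,0)$ is cohomologous to $(\widetilde{\omega}^{2g-1},0)$: one uses a smooth cutoff/Poincar\'e-lemma argument near the boundary, pushing the (already vanishing) boundary value of $\widetilde{\omega}^{2g-1}$ to an identically-zero value on a collar neighbourhood at the cost of an exact correction term. This is precisely the extendability/surjectivity input \eqref{Extendability} that makes the mapping-cone computation work. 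I would then \emph{fix} one such choice $\omega^{2g-1}_c$ once and for all.

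For the second sentence of the corollary, I would argue as follows. Let $\omega = \omega^{2g-1}\wedge\eta \in \Omega_c(g)$ with $\eta \in \Omega(g)$; set $\omega_c := \omega^{2g-1}_c \wedge \eta$, where $\eta$ is interpreted as the smooth form $\widetilde{\eta} \in \mathcal{A}^{\bullet}\!\left(\left|\LA_g^{\trop,\BB}\right|\right)$ obtained from Theorem \ref{thm:  CanFormsOnAgBlowup}. Since $\omega^{2g-1}_c$ is compactly supported inside $\left|\LA_g^{\circ,\trop}\right|$ and $\widetilde{\eta}$ is a globally defined smooth form, the wedge product $\omega^{2g-1}_c\wedge\widetilde{\eta}$ is again compactly supported in $\left|\LA_g^{\circ,\trop}\right|$; it is closed because $d\omega^{2g-1}_c = 0$ (it represents a cohomology class, and $\omega^{2g-1}$ is closed by \S\ref{subsect: CanForms}(1)) and $d\widetilde{\eta}=0$ (closedness of canonical forms, preserved under pullback along blow-downs). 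Finally, one checks that $[\omega_c]$ maps to $[\widetilde{\omega}]$ under the natural map $H^{\bullet}_{dR,c}\!\left(\left|\LA_g^{\circ,\trop}\right|\right)\to H^{\bullet}_{dR}\!\left(\left|\LA_g^{\trop,\BB}\right|\right)$: indeed $\omega^{2g-1}_c - \widetilde{\omega}^{2g-1} = d\xi$ for some $\xi$ by construction, hence $\omega_c - \widetilde{\omega} = d(\xi\wedge\widetilde{\eta})$ is exact, so $\omega_c$ is a legitimate compactly supported representative of the class of $\omega$.

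\textbf{Main obstacle.} The substantive point is not the formal wedge-product bookkeeping but getting the first compactly supported representative $\omega^{2g-1}_c$ honestly: one must confirm that the abstract isomorphism \eqref{dRCompactSupportsToRelative} — proved via a spectral sequence on the polyhedral filtration with the extendability hypothesis \eqref{Extendability} — really does convert "vanishes on the boundary" into "compactly supported away from the boundary" at the level of forms, rather than merely at the level of cohomology. I expect this to require care about whether $\mathcal{A}^{\bullet}$ satisfies extendability for the hybrid spaces $\left|\LA_g^{\trop,\BB}\right|$ (faces being quotients of blown-up polyhedra by finite groups), but this is exactly what \S\ref{sect: PolyhedralAssumptions} and the proof of Theorem \ref{thm: RelativeCellular} were set up to guarantee, so the argument should go through; the only genuinely new observation needed is that a form vanishing on the closed boundary can be made to vanish on an open collar by a relative Poincar\'e lemma with parameters, which is standard.
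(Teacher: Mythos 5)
Your proposal is correct and takes essentially the same approach as the paper: you invoke the isomorphism \eqref{dRCompactSupportsToRelative} between compactly supported de Rham cohomology of $\left|\LA_g^{\circ,\trop}\right|$ and the relative cohomology of the pair $(\left|\LA_g^{\trop,\BB}\right|, \left|\partial\LA_g^{\trop,\BB}\right|)$, and then use Theorem \ref{thm:  CanFormsOnAgBlowup} to see that $\omega^{2g-1}$, being of compact type, vanishes on the boundary and hence defines a relative class. The extra detail you supply — the mapping-cone description, the collar/Poincar\'e-lemma argument that upgrades ``vanishes on the boundary'' to ``compactly supported'', and the explicit check that $\omega^{2g-1}_c\wedge\widetilde{\eta}$ is closed, compactly supported, and cohomologous to $\widetilde{\omega}$ — is precisely what the paper's two-line proof delegates to Theorem \ref{thm: RelativeCellular} and its extendability hypothesis \eqref{Extendability}, so nothing is missing or divergent.
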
 

\begin{proof} By  \eqref{dRCompactSupportsToRelative}, and \eqref{EmbedOpenofAgtoBlowup} we have
$ H^{\bullet}_{dR,c} ( \left|\LA_g^{\circ,\trop} \right|) \cong   H_{dR}^{\bullet}( \left|\LA_g^{\trop,\BB} \right|, \left| \partial \LA_g^{\trop,\BB} \right|)$.
The form $\omega^{2g-1}$, being of compact type, vanishes on $ \left| \partial \LA_g^{\trop,\BB} \right|$ by theorem \ref{thm:  CanFormsOnAgBlowup} and therefore defines a class in the latter cohomology group.
\end{proof} 
The fact that $\omega^{2g-1}$ has a compactly supported representative on $L\mathcal{P}_g/\GL_g(\Z)$  was claimed without proof in \cite{LeeUnstable}.
Since  one also has $\left|\LA_g^{\circ,\trop} \right| =  \left|\LA_g^{\trop} \right| \setminus \left| \partial \LA_g^{\trop} \right| $, 
the blow-down map $\pi:|\LA_g^{\trop,\BB}| \rightarrow \left|\LA_g^{\trop}\right|$  induces an isomorphism (excision):
\[   \pi^*:  H_{dR}^{\bullet}( \left|\LA_g^{\trop} \right|, \left| \partial \LA_g^{\trop} \right|) \overset{\sim}{\To}  H_{dR}^{\bullet}( \left|\LA_g^{\trop,\BB} \right|, \left| \partial \LA_g^{\trop,\BB} \right|)  \]
since both sides may be identified with the compactly supported cohomology of the interior $\left|\LA_g^{\circ,\trop} \right| $.  In particular, any compactly supported form 
 $\omega_c$   as in the corollary may  be viewed as a closed differential form on $ \left|\LA_g^{\trop} \right|$. Its  cohomology class 
 \begin{equation} \label{omegacasformonLagdownstairs}  [ \omega_c]   \in   H_{dR}^{\bullet}( \left|\LA_g^{\trop} \right|) 
 \end{equation}
 does not depend on the choice of representative for $\omega_c^{2g-1}$ since 
  it is the image of $(\pi^*)^{-1} [ \widetilde{\omega}]$ under the natural  map $ H_{dR}^{\bullet}( \left|\LA_g^{\trop} \right|, \left| \partial \LA_g^{\trop} \right|)\rightarrow   H_{dR}^{\bullet}( \left|\LA_g^{\trop} \right|). $

\section{Cohomology classes in $\left|\LA_g^{\trop}\right|$.  } \label{section: CohomologyClasses}
In this section we prove the theorems announced in the introduction. 

\subsection{Canonical Classes in $\left|\LA_g^{\trop}\right|$} Let $g>1$ be odd. We  use the notation
\[  d_g =  \frac{g(g+1)}{2} \quad , \quad  \ell_g = \frac{g(g+1)}{2}-1 \]
where $d_g = \dim \mathcal{P}_g$, and $\ell_g = \dim L \mathcal{P}_g$.

\begin{lem} \label{lem: volnonzeroinrelative} 
The  form $\vol_g$ defines a non-trivial relative cohomology class
\[ 0 \neq  [ \vol_g ]  \ \in \    H_{dR}^{\ell_g} \left( | \LA_g^{\trop, \BB} |,  | \partial \LA_g^{\trop, \BB} | \right) . \]   
\end{lem}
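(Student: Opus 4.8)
The plan is to show that $[\vol_g]$ pairs non-trivially with a fundamental class in $H_{\ell_g}(|\LA_g^{\trop,\BB}|, |\partial \LA_g^{\trop,\BB}|;\Q)$, using the integration pairing of Theorem \ref{thm: RelativeCellular}. Since $g$ is odd, the orbifold $L\mathcal{P}_g/\GL_g(\Z)$ is orientable of dimension $\ell_g$, and by the isomorphisms \eqref{EmbedOpenofAgtoBlowup} and \eqref{dRCompactSupportsToRelative} one has
\[ H_{dR}^{\ell_g}\left(|\LA_g^{\trop,\BB}|, |\partial\LA_g^{\trop,\BB}|\right) \cong H^{\ell_g}_{dR,c}\left(|\LA_g^{\circ,\trop}|\right) = H^{\ell_g}_{dR,c}\left(L\mathcal{P}_g/\GL_g(\Z)\right)\ , \]
which is Poincar\'e dual to $H^0_{dR}(L\mathcal{P}_g/\GL_g(\Z);\R) \cong \R$, since $L\mathcal{P}_g/\GL_g(\Z)$ is connected. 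So the group in question is one-dimensional, and it suffices to exhibit one relative cycle $\gamma$ (equivalently, a locally finite fundamental cycle for the interior) with $\int_\gamma \vol_g \neq 0$.

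First I would note that by the factorisation/vanishing properties of canonical forms (Theorem \ref{thm: CanFormsOnAgBlowup}), $\vol_g = \omega^5 \wedge \cdots \wedge \omega^{2g-1}$ is of compact type, hence extends to a smooth form on $|\LA_g^{\trop,\BB}|$ vanishing on $|\partial \LA_g^{\trop,\BB}|$, so it genuinely represents a relative class; this is exactly the content needed to make the statement meaningful. Next, I would reduce non-vanishing of $\int \vol_g$ over a fundamental domain to a nonzero computation. The cleanest route is to invoke Borel's theorem: the map $j: \Omega^\bullet_{\can}\otimes\R \to H^\bullet(\GL_g(\Z);\R)$ is injective in a stable range, and more to the point the forms $\omega^{4k+1}$ generate the stable cohomology, so $\vol_g$ is not exact on $L\mathcal{P}_g/\GL_g(\Z)$ as an ordinary (non-compactly-supported) form. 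But here I actually need the stronger statement that its \emph{compactly supported} class is nonzero — equivalently that $\int_{L\mathcal{P}_g/\GL_g(\Z)} \vol_g \neq 0$ where the integral is over the whole (finite-volume) locally symmetric space. The key input is that $\vol_g$, being (up to normalization) the Riemannian volume form associated to the bi-invariant metric on $\mathcal{P}_g$ — or more precisely a nonzero multiple of it in top degree on $L\mathcal{P}_g$, by property \S\ref{subsect: CanForms}(5) combined with the fact that $\Omega(g)$ has one-dimensional top degree $\ell_g$ — has a fixed sign, so its integral over the finite-volume fundamental domain is a nonzero (finite) number.

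The step I expect to be the main obstacle is precisely verifying that $\vol_g$ is, up to a nonzero scalar, a volume form (a nowhere-vanishing top form) on $L\mathcal{P}_g$, rather than merely a nonzero element of the top graded piece of $\Omega(g)$ that could in principle vanish at some points of $\mathcal{P}_g$. One way around this: the $\GL_g(\R)$-invariance of $\vol_g$ (property \S\ref{subsect: CanForms}(2)) together with transitivity of the $\GL_g(\R)$-action on $\mathcal{P}_g$ (and projectivity, property (4), to descend to $L\mathcal{P}_g$) forces $\vol_g$ to be either identically zero or nowhere zero; and it is not identically zero because, e.g., its value at the identity matrix can be computed from the structure of the canonical forms on the symmetric space (this is classical — these are the Cartan forms, and the product $\omega^5\wedge\cdots\wedge\omega^{2g-1}$ restricts to a nonzero invariant volume form, a fact going back to Hopf's computation of the cohomology of the compact dual $SU(g)/SO(g)$). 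Granting that, finiteness of $\int_{L\mathcal{P}_g/\GL_g(\Z)}\vol_g$ follows from Corollary \ref{cor: IntQcoverges} applied cell-by-cell over the perfect cone decomposition (each perfect cone $\sigma_Q$ of maximal dimension contributes a finite integral $I_Q(\vol_g)$, and there are finitely many $\GL_g(\Z)$-orbits), and positivity of each contribution gives non-vanishing of the sum. Hence $[\vol_g] \neq 0$ in the relative cohomology, as claimed.
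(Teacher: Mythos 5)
Your proof is correct and follows essentially the same route as the paper's: reduce to showing $\vol_g$ is (up to nonzero scalar) a nowhere-vanishing top-degree invariant form on $L\mathcal{P}_g$, hence of constant sign, and then integrate against the relative fundamental class, using the finite volume of $L\mathcal{P}_g/\GL_g(\Z)$. The paper packages the nowhere-vanishing step slightly differently: it writes down the explicit $\GL_g(\R)$-invariant Riemannian volume form $\eta_g(M) = \partial\bigl(\det(M)^{-(g+1)/2}\bigwedge_{i\leq j} dM_{ij}\bigr)$ and invokes Cartan's theorem (Exp.\ 17, \S12) that the algebra of invariant forms on the symmetric space is a free exterior algebra on the $\omega^{4k+1}$, concluding $\eta_g = \alpha\,\vol_g$ with $\alpha\neq 0$ because $\vol_g$ is the unique top-degree generator of $\Omega(g)$. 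Your alternative — bi-invariance plus transitivity of the $\GL_g(\R)$-action forces $\vol_g$ to be identically zero or nowhere vanishing, then rule out identical vanishing via the same Cartan/Hopf input on the compact dual — is logically equivalent and relies on the same classical theorem; the paper's formulation additionally pins down the proportionality constant $\alpha$, which it then uses to connect the class to Minkowski's volume formula \eqref{MinkowskiVolume}. Your appeal to Corollary \ref{cor: IntQcoverges} for finiteness is a valid alternative to citing Minkowski directly. One small slip: the compact dual relevant to the $\GL_g$ setting is $U(g)/O(g)$ rather than $SU(g)/SO(g)$ (the latter is the dual for $\SL_g$, whose invariant forms carry an extra degree-$1$ generator), but this changes nothing in the argument.
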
 
\begin{proof}

 Since $g$ is odd, the space $L\mathcal{P}_g/\GL_g(\Z) $ is orientable and has a relative fundamental class $[\mathcal{F}^{\lf}_{g}] \in H^{\lf}_{\ell_g} \left( \left|\LA_g^{\circ, \trop}\right|\right)$ in locally finite homology, or equivalently,
\[ [\mathcal{F}^{\lf}_{g}] \in H_{\ell_g} \left(          \left|\LA_g^{ \trop,\BB}\right|,   \left|\partial\LA_g^{ \trop,\BB}\right|        \right)\ . \]
A volume form  $\eta_g$ on $L\mathcal{P}_g/\GL_g(\Z)$ is defined on symmetric matrices $M= (M_{ij})_{ij}$ by
\[  \eta_g(M) = \partial \left(\frac{1}{\det(M)^{\frac{g+1}{2}}}  \bigwedge_{1\leq i \leq j \leq g}  d M_{ij}  \right) \]   
  where $\partial = \sum_{i,j} M_{ij} \frac{\partial}{\partial M_{ij}}$ is the Euler vector field. 
 By H. Cartan \cite[Exp. 17, \S12]{Cartan}  and \cite[figure on page 265]{Borel},  the algebra of invariant differential forms on $\SL_g(\R)$ is isomorphic to a graded exterior algebra  on elements  in degree $4k+1$, for $k\geq 0$ (and in degree $4k+1$ for $k\geq 1$ for $\GL_g(\R)$). Since the forms $\omega^{4k+1}$ are invariant and non-zero  they give an explicit choice of algebra generators. 
 It follows that  there exists  an $\alpha \in \Q^{\times}$ such that 
$ \eta_g = \alpha \, \vol_g = \alpha\,  \omega^5 \wedge \omega^{9} \wedge \ldots \wedge \omega^{2g-1}$, since $\eta_g$ is invariant and $\vol_g$ (defined by  \eqref{volg})  is the unique generator  in $\Omega^{\bullet}(g)$ of degree  $\ell_g$. 
 We have
 \[ \vol( L\mathcal{P}_g/\GL_g(\Z)) =  \int_{ L\mathcal{P}_g/\GL_g(\Z) } \eta_g=   \alpha  \int_{  [ \mathcal{F}^{\lf}_{g}]   }   \vol_g  \ .   \]
Since $ \vol( L\mathcal{P}_g/\GL_g(\Z)) \neq 0 $  
(for a more precise statement see  \eqref{MinkowskiVolume}), we deduce that the cohomology class of 
$ [ \vol_g ] $  in   $H_{dR}^{\ell_g} \left( | \LA_g^{\trop, \BB} |,  | \partial \LA_g^{\trop, \BB} | \right) $ is non-zero.
\end{proof} 

\begin{rem}  It follows from a theorem of  Minkowski (see \cite{Siegel}) that 
\begin{equation} \label{MinkowskiVolume}  \int_{L\mathcal{P}_g/\GL_g(\Z)  } \eta_g  \ \in  \    \zeta(3) \zeta(5) \ldots \zeta(2g-1)\, \Q^{\times} \ .
\end{equation}
The point is that the volume of the symmetric space $\SL_g(\R) /\SL_g(\Z)$ is proportional, by a rational number,  to the product of consecutive zeta values $\zeta(2) \ldots \zeta(2g-1)$. One proves \cite{Voskresenski} that the volume  of $L\mathcal{P}_g/\GL_g(\Z)$  is obtained by dividing by   the volume of  $\mathrm{SO}_g(\R)$, which is proportional to the product of the even zeta values $\zeta(2)\ldots \zeta(2g-2)$, giving \eqref{MinkowskiVolume}.
\end{rem}

We shall first prove an injectivity theorem for the cohomology of the general linear group, before proving a stronger but more technical result for $|\LA_g^{\trop}|$.  

\begin{thm} \label{thm: weakerembedding} 
The  map $ \omega \mapsto [\omega_c]$ defined in corollary \ref{cor: CompactSupportedomega} induces an injective map of  graded algebras: 
\[  \Omega^{\bullet}_c(g) \hookrightarrow H_{dR}^{\bullet} \left(L \mathcal{P}_g/\GL_g(\Z)\right)\ .\]
There are canonical injective maps of graded algebras:
\begin{eqnarray}
\Omega^{\bullet}_{nc}(g)  & \hookrightarrow  &   H_{dR}^{\bullet} \left( |\LA^{\trop, \BB}_g| \right) \  ,  \nonumber  \\
\Omega^{\bullet}_{nc}(g)  & \hookrightarrow  &   H_{dR}^{\bullet} (L\mathcal{P}_g/\GL_g(\Z))  \ .  \nonumber 
\end{eqnarray} 
There are natural splittings  of vector spaces  $H_{dR}^{n} \left(L \mathcal{P}_g/\GL_g(\Z)\right)\rightarrow  \Omega^{n}_c(g)\otimes_{\Q} \R$ for each $n$, and likewise  $H_{dR}^{n} (L\mathcal{P}_g/\GL_g(\Z)) \rightarrow \Omega^{n}_{nc}(g) _{\Q} \otimes \R$.
\end{thm}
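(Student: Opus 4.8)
The plan is to deduce Theorem~\ref{thm: weakerembedding} from the stronger injectivity statement on $\left|\LA_g^{\trop}\right|$ (which will occupy the next section of the paper) together with the relative de Rham theory of \S\ref{section: HomologyCohomology}. First I would treat the forms of compact type. By Corollary~\ref{cor: CompactSupportedomega}, each $\omega \in \Omega_c^{\bullet}(g)$ has a compactly supported representative $\omega_c \in \mathcal{A}^{\bullet}_c(\left|\LA_g^{\circ,\trop}\right|)$, and via the identification $H^{\bullet}_{dR,c}(\left|\LA_g^{\circ,\trop}\right|) \cong H^{\bullet}_{dR}(\left|\LA_g^{\trop,\BB}\right|, \left|\partial\LA_g^{\trop,\BB}\right|)$ of Theorem~\ref{thm: RelativeCellular}, the assignment $\omega \mapsto [\omega_c]$ is an algebra homomorphism $\Omega_c^{\bullet}(g) \to H^{\bullet}_{dR}(\left|\LA_g^{\circ,\trop}\right|)$, since the multiplication on compactly supported forms is compatible with that on $\Omega_c(g)$ (the product $\omega^{2g-1}_c \wedge \eta$ of Corollary~\ref{cor: CompactSupportedomega} is exactly the image of $\omega^{2g-1}\wedge\eta$). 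Using $\left|\LA_g^{\circ,\trop}\right| \cong L\mathcal{P}_g/\GL_g(\Z)$ gives the first claimed map. For injectivity, I would use the Hodge star: given any nonzero $\omega \in \Omega_c^{\bullet}(g)$, write it as a linear combination of simple forms and pick a simple summand $\omega_0$ of top filtration; then $\star\omega_0 \in \Omega_{nc}(g)$ and the wedge $\omega \wedge \star\omega_0$ has a nonzero component equal to $\vol_g$. Pairing $[\omega_c]$ against a suitable locally finite homology class obtained by capping the fundamental class $[\mathcal{F}_g^{\lf}]$ against the (smooth, globally defined) form $\star\omega_0$, and invoking Lemma~\ref{lem: volnonzeroinrelative} which shows $\int_{[\mathcal{F}_g^{\lf}]}\vol_g \neq 0$, forces $[\omega_c]\neq 0$. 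Injectivity of the whole algebra map then follows by a standard leading-term argument on the filtration by form-degree together with the grading.

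For the non-compact type, I would observe that every $\omega \in \Omega_{nc}^{\bullet}(g)$ is already a smooth closed form on $\left|\LA_g^{\trop,\BB}\right|$ by Theorem~\ref{thm:CanFormsOnAgBlowup}, hence defines a class in $H^{\bullet}_{dR}(\left|\LA_g^{\trop,\BB}\right|)$, and restricting along the open embedding $\left|\LA_g^{\circ,\trop}\right| \hookrightarrow \left|\LA_g^{\trop,\BB}\right|$ gives a class in $H^{\bullet}_{dR}(L\mathcal{P}_g/\GL_g(\Z))$; both assignments are algebra homomorphisms since the restriction map on de Rham complexes is a map of DGAs. For injectivity on $\left|\LA_g^{\trop,\BB}\right|$, I would again use the star pairing: for nonzero $\omega \in \Omega_{nc}^{\bullet}(g)$, choose a simple summand $\omega_0$, form $\star\omega_0 \in \Omega_c(g)$, replace it by its compactly supported representative $(\star\omega_0)_c$, and compute $\int_{[\mathcal{F}_g^{\lf}]} \omega \wedge (\star\omega_0)_c$. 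Since $(\star\omega_0)_c$ represents the same relative cohomology class as $\star\omega_0$ and $\omega \wedge \star\omega_0$ has $\vol_g$ as a summand in the appropriate component, this integral is nonzero by Lemma~\ref{lem: volnonzeroinrelative}, which shows $[\omega]\neq 0$; the usual leading-term argument promotes this to injectivity of the whole map, and the same pairing descends to $L\mathcal{P}_g/\GL_g(\Z)$ since $(\star\omega_0)_c$ is compactly supported there.

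Finally, for the splittings: having produced injections $\Omega_c^{n}(g)\otimes_{\Q}\R \hookrightarrow H^n_{dR}(L\mathcal{P}_g/\GL_g(\Z))$ and $\Omega_{nc}^n(g)\otimes_{\Q}\R \hookrightarrow H^n_{dR}(L\mathcal{P}_g/\GL_g(\Z))$, the splitting is constructed by the same star-pairing, now read as a linear functional: the map $H^n_{dR}(L\mathcal{P}_g/\GL_g(\Z)) \to \Omega_c^n(g)\otimes_{\Q}\R$ sends a class $[\alpha]$ to the element of $\Omega_c^n(g)\otimes\R$ whose coefficient against each simple basis form $\omega_i$ (of compact type, degree $n$) is a fixed nonzero rational multiple of $\int_{[\mathcal{F}_g^{\lf}]}\alpha \wedge \star\omega_i$; by the computation above this functional restricted to the image of $\Omega_c^n(g)\otimes\R$ is (up to the normalisation $\alpha\in\Q^\times$ of Lemma~\ref{lem: volnonzeroinrelative}) the identity, hence gives a section. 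The analogous construction with $\star$ applied to forms of non-compact type gives the section onto $\Omega_{nc}^n(g)\otimes\R$. The main obstacle I anticipate is purely bookkeeping: making sure the pairing $\int_{[\mathcal{F}_g^{\lf}]} \omega \wedge \omega'_c$ is well-defined and Stokes-compatible when one of the two forms is only a relative class (i.e.\ supported away from, or vanishing on, the boundary) — this is exactly the content of Theorem~\ref{thm: RelativeCellular} and the compatibility of the integration pairing with the cofiltration, so the pairing $H^n_{dR}(\left|\LA_g^{\circ,\trop}\right|)_c \otimes H^{\ell_g-n}_{dR}(\left|\LA_g^{\trop,\BB}\right|) \to \R$ factoring through $\int\vol_g$ needs to be set up carefully, but no genuinely new idea is required.
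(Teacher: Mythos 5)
Your approach is essentially the paper's: both the injectivity claims and the splitting are established via the nonvanishing of the volume class $[\vol_g]$ (Lemma~\ref{lem: volnonzeroinrelative}) together with the Hodge star pairing on $\Omega(g)$. Your injectivity argument, which isolates a simple summand $\omega_0$ of $\omega$ and pairs against $\star\omega_0$, is a slightly more elementary phrasing of the paper's observation that $\omega\wedge\star\omega$ is a nonzero multiple of $\vol_g$ by non-degeneracy of $\star$; both are correct. The ``leading-term argument'' you invoke is superfluous --- the maps are degree-preserving, so injectivity in each graded piece is injectivity of the whole map.

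There is, however, a genuine gap in your construction of the splitting $H^n_{dR}(L\mathcal{P}_g/\GL_g(\Z)) \to \Omega_c^n(g)\otimes_{\Q}\R$. You define it by $[\alpha]\mapsto$ the functional $\omega_i\mapsto c\int_{[\mathcal{F}_g^{\lf}]}\alpha\wedge\star\omega_i$, where $\omega_i$ runs over simple forms of compact type of degree $n$ and hence $\star\omega_i\in\Omega_{nc}^{\ell_g-n}(g)$. But neither $\alpha$ (a general smooth representative of a class in ordinary cohomology) nor $\star\omega_i$ (non-compact type, no compactly supported representative) is compactly supported. The integral over the noncompact cycle $[\mathcal{F}_g^{\lf}]$ therefore has no reason to converge, and even when it does, it is not Stokes-compatible: replacing $\alpha$ by $\alpha + d\beta$ changes the integral by $\pm\int d(\beta\wedge\star\omega_i)$, and this boundary term does not vanish since $\beta\wedge\star\omega_i$ is not compactly supported. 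Your acknowledged ``bookkeeping'' concern addresses only pairings where one factor has compact support; the case you actually use for the $\Omega_c$ splitting has neither. The fix --- which is what the paper does for its primary ($\Omega_{nc}$) splitting --- is to always place the compactly supported representative $\omega_c$ inside the integrand, i.e.\ pair $[\alpha]$ against $\Omega_c^{\ell_g-n}(g)$ via $\omega\mapsto\int[\alpha]\wedge\omega_c$, obtaining a functional on $\Omega_c^{\ell_g-n}(g)$ and then transporting through the isomorphism $\mathrm{Hom}(\Omega_c^{\ell_g-n}(g),\R)\cong\Omega_{nc}^n(g)\otimes\R$ induced by $\star$. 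You should reorganise so that the convergent integral always has compact support built in, rather than pairing against the unbounded forms of non-compact type.
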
 
\begin{proof} Recall that $\ell_g = \dim L \mathcal{P}_g$. Let $n\geq 0$ and
consider the  cup product:
\begin{eqnarray} 
H^{n}(|\LA_g^{\trop,\BB}|, |\partial \LA_g^{\trop,\BB}| ) \otimes H^{\ell_g-n}(|\LA_g^{\trop,\BB}|)     & \To &   H^{\ell_g}(|\LA_g^{\trop,\BB}|, |\partial \LA_g^{\trop,\BB}| ) \nonumber \\
{[} \omega ] \otimes [\eta] & \mapsto & [\omega \wedge \eta]  \ .   \nonumber
\end{eqnarray} 
Apply this map to $\omega \in \Omega^n_{c}(g)$ of compact type, and $\eta= \star \omega $ which is of non-compact type. They define classes in the above cohomology groups by theorem \ref{thm:  CanFormsOnAgBlowup}. Since the Hodge star operator is non-degenerate,  $[\omega \wedge \eta] = [\omega \wedge \star \omega] $ is a non-zero rational multiple of $[\vol_g]$, which is itself  non-zero  by lemma  \ref{lem: volnonzeroinrelative}. We conclude that both $[\omega]$ and $[\star \omega]$ are non-vanishing in the respective cohomology groups.   This proves the first two equations, since every element of $\Omega^{\bullet}_{nc}$ is in the image of the star operator.

Recall that $|\LA_g^{\trop,\BB}| \setminus   |\partial \LA_g^{\trop,\BB}| =  |\LA_g^{\circ,\trop}| \cong  L \mathcal{P}_g/\GL_g(\Z)  $. By theorem \ref{thm: RelativeCellular}, relative cohomology may be interpreted as compactly supported cohomology.
A variant of the  above cup product may thus be written  as follows:
\begin{equation}   \label{CupProdLPG} H_{c, dR}^{n} \left(L \mathcal{P}_g/\GL_g(\Z)  \right) \otimes   H_{ dR}^{\ell_g-n} \left( L\mathcal{P}_g/\GL_g(\Z)  \right)  \To H_{ c,dR}^{\ell_g} \left(L \mathcal{P}_g/\GL_g(\Z)  \right)  \ .  \end{equation} 
It  sends $[\omega_c] \otimes [\star \omega] \mapsto [ \omega_c \wedge \star \omega] = [(\vol_{g})_c] \neq 0$.   It follows that both $[\omega_c]$ and $[\star \omega]$ are non-zero classes in the respective cohomology groups. This proves that $\Omega_{nc}^{\bullet}(g) $ injects into the ordinary (non compactly-supported) cohomology 
of $L\mathcal{P}_g/\GL_g(\Z)  $.

A splitting   $H^{\bullet}(L \mathcal{P}_g/\GL_g(\Z))  \rightarrow  \Omega^{\bullet}_{nc}(g)\otimes_{\Q} \R $ may be defined as follows. For any 
$[\alpha]  \in  H^{n}(L\mathcal{P}_g/ \GL_g(\Z))$,  it follows from Stokes' theorem that the  map 
\[\omega_c \mapsto \int_{L\mathcal{P}_g/\GL_g(\Z) } [\alpha] \wedge \omega_c\] is a well-defined  element of 
$\mathrm{Hom}(\Omega^{\ell_g-n}_c(g) , \R)$, which is isomorphic to $\Omega_{nc}^n(g) \otimes_{\Q} \R$ via the star operator since 
  $ \star \omega\wedge \omega_c$ pairs non-trivially with  the fundamental class $[\mathcal{F}^{\lf}_g]$.  This defines a linear map $H^{\bullet}(L\mathcal{P}_g/ \GL_g(\Z);\R)  \rightarrow  \Omega^{\bullet}_{nc}(g)\otimes_{\Q} \R $  which, after rescaling, defines the required splitting.  A splitting $H_{dR}^{\bullet} \left(L \mathcal{P}_g/\GL_g(\Z)\right)\rightarrow  \Omega^{\bullet}_c(g)$  may be defined  similarly. 
\end{proof} 

We wish to repeat a similar argument in order to prove the more subtle fact that $\Omega_{c}^{\bullet}(g)$ injects into the cohomology of $|\LA_g^{\trop}|$. 
However, replacing the cup product for the  \emph{relative} cohomology of $|\LA_g^{\trop,\BB}|$ with \emph{ordinary} cohomology will not succeed, since the volume form actually vanishes in the  cohomology of   $|\LA_g^{\trop,\BB}|$. Replacing $|\LA_g^{\trop,\BB}|$ with $|\LA_g^{\trop}|$ is also of no avail, since the non-compact canonical forms  are not defined on the latter space in the first place. The solution is to work on a space which lies in between $ | \LA_g^{\trop,\BB} | \rightarrow |\LA_g^{\trop}|$.  If we interpret the former as the Borel-Serre compactification of $L\mathcal{P}_g/\GL_g(\Z)$ (see Appendix), then an appropriate space is the reductive Borel-Serre compactification. It has the property that its boundary is of sufficiently high codimension that the volume class is non-zero (unlike $| \LA_g^{\trop,\BB} |$), but  in addition is sufficiently regular that the non-compact canonical forms extend to it (unlike $|\LA_g^{\trop}|$). Instead of constructing this space explicitly, we shall imitate its cohomology with an ad-hoc complex whose cohomology will be denoted by $H^{\mathrm{red}}$.

\begin{thm}  \label{thm: OmegagembedsLAg}
The  map $ \omega \mapsto [\omega_c]$ \eqref{omegacasformonLagdownstairs} induces an injective map of graded algebras
\[  \Omega^{\bullet}_c(g) \hookrightarrow H_{dR}^{\bullet} \left(\left|\LA_g^{\trop}\right|\right)\ .\]
It factors through the  injective  map 
$ \Omega^{\bullet}_c(g) \hookrightarrow H_{dR}^{\bullet} \left(\left|\LA_g^{\trop}\right| ,  \left|\partial\LA_g^{\trop}\right| \right)\cong H_{c,dR}^{\bullet} \left(\left|\LA_g^{\circ, \trop}\right|  \right)   $ constructed in theorem \ref{thm: weakerembedding}.
\end{thm}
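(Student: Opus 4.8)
\textbf{Proof strategy for Theorem \ref{thm: OmegagembedsLAg}.}

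The plan is to factor the map $\omega \mapsto [\omega_c]$ through an intermediate cohomology theory $H^{\red}$ that behaves like the cohomology of the reductive Borel-Serre compactification. First I would set up an explicit complex computing $H^{\red}$: rather than constructing a new space, I would build a subcomplex of $\mathcal{A}^{\bullet}(|\LA_g^{\trop,\BB}|)$ (or a complex mapping to it) obtained by keeping only those differential forms whose restriction to each exceptional divisor $\mathcal{E}_W$ is pulled back, via the collapsing map $\pi_{K}$ of \S\ref{sect: CollapseNormal}, from a form on the base $\Pro(\Quad(K))\setminus\Det_K$. By Remark \ref{rem: canformsarereduced} and the factorisation formula \eqref{omegafactorisationalongE}, every canonical form $\widetilde{\omega}$ on $|\LA_g^{\trop,\BB}|$ automatically lies in this subcomplex, so both $\widetilde{\omega}$ for $\omega\in\Omega_c(g)$ and $\widetilde{\star\omega}$ for $\star\omega\in\Omega_{nc}(g)$ define classes in $H^{\red}$. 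There are natural maps $H_{dR}^{\bullet}(|\LA_g^{\trop}|) \to H^{\red,\bullet} \to H_{dR}^{\bullet}(|\LA_g^{\trop,\BB}|)$ and, crucially, the forms of compact type $\omega_c$ — which vanish on all exceptional divisors by Theorem \ref{thm: CanFormsOnAgBlowup} — descend to the quotient $|\LA_g^{\trop}|$ since collapsing a divisor along $\pi_K$ is exactly what the blow-down does on the relevant strata; this gives the factorisation through $H^{\red}$ of the class $[\omega_c]\in H^{\bullet}_{dR}(|\LA_g^{\trop}|)$ claimed in \eqref{omegacasformonLagdownstairs}.

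Next I would establish a Poincaré–Lefschetz-type pairing at the level of $H^{\red}$. The key geometric input is that the boundary of the reductive Borel-Serre compactification has codimension $\geq 2$ (it is obtained from $|\partial\LA_g^{\trop,\BB}|$ by collapsing the $\Pro(\Quad(V/K))$-factors of each exceptional stratum), so the relative cohomology $H^{\red,\ell_g}(\,\cdot\,,\partial)$ still receives the relative fundamental class $[\mathcal{F}^{\lf}_g]$, and $[\vol_g]$ is non-zero there by the computation in Lemma \ref{lem: volnonzeroinrelative} (Minkowski's volume formula \eqref{MinkowskiVolume}). Then the cup product
\[
H^{\red,n}_c\!\left(|\LA_g^{\circ,\trop}|\right)\otimes H^{\red,\ell_g-n}\!\left(|\LA_g^{\circ,\trop}|\right) \To H^{\red,\ell_g}_c\!\left(|\LA_g^{\circ,\trop}|\right)
\]
sends $[\omega_c]\otimes[\star\omega]\mapsto[(\vol_g)_c]\neq 0$, exactly as in the proof of Theorem \ref{thm: weakerembedding}, but now with $[\omega_c]$ a genuine class in the cohomology of a space lying \emph{between} $|\LA_g^{\trop,\BB}|$ and $|\LA_g^{\trop}|$. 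Non-degeneracy of the Hodge star on $\Omega(g)=\Omega_c(g)\oplus\Omega_{nc}(g)$ then forces $[\omega_c]$ to be non-zero in $H^{\red}$, hence (pushing forward) in $H^{\bullet}_{dR}(|\LA_g^{\trop}|)$. Multiplicativity of $\omega\mapsto[\omega_c]$ follows because $(\omega\wedge\eta)_c$ and $\omega_c\wedge\eta$ agree up to a boundary, and one checks on monomials; injectivity of the graded-algebra map is immediate once every nonzero $\omega\in\Omega_c(g)$ has $[\omega_c]\neq 0$, since $\star\omega$ provides a pairing witness and $\Omega_c(g)$ is spanned by simple forms distinguished by which $\omega^{4k+1}$ they contain.

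The main obstacle is making the intermediate theory $H^{\red}$ both well-defined and computable without actually constructing the reductive Borel-Serre space from scratch: one must verify that the subcomplex of ``reductively-extendable'' forms still satisfies the extendability and acyclicity axioms \eqref{Ancomputescohom}, \eqref{Extendability} needed for the cellular de Rham machinery of \S\ref{section: HomologyCohomology} to apply, and in particular that cupping against it still detects $[\vol_g]$. Equivalently, one needs the precise statement that collapsing the $\Pro(\Quad(V/K))$-directions in each boundary stratum of $|\LA_g^{\trop,\BB}|$ raises the codimension of the boundary to at least $2$ and produces a space (or complex) on which the non-compact canonical forms — which blow up along $\Det$ but are regular on the collapsed strata by \eqref{omegafactorisationalongE} — remain smooth. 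This is a careful but local analysis of the normal bundle geometry, carried out using the block-matrix coordinates of \eqref{inproofQblockform}–\eqref{LambdaBlockForm} and Proposition \ref{prop: structureoffacesinblownupcone}; once it is in place, the cup-product argument is formally identical to Theorem \ref{thm: weakerembedding}.
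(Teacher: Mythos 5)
Your proposal takes essentially the same approach as the paper: an intermediate ``reduced'' cohomology $H^{\red}$ built from forms whose restriction to each exceptional stratum is pulled back along $\pi_{\red}$, a cup-product argument against the volume form, and the factorisation of the blow-down through this reduced theory. One clarification: the obstacle you flag at the end --- verifying extendability and acyclicity for the reduced complex so that the cellular de Rham machinery of \S\ref{section: HomologyCohomology} applies --- is not actually needed, because the paper never invokes that machinery for $H^{\red}$; it proves $[\vol_g]\neq 0$ in $H^{\ell_g,\red}$ by a direct degree count (any $\alpha$ with $d\alpha=\vol_g$ and $\alpha|_{\mathcal{E}}=\pi_{\red}^*\beta$ would force $\beta$ to have degree $\ell_g-1$ on a space of dimension at most $\ell_g-g$, hence $\beta=0$, so $\vol_g$ would be exact relative to the boundary, contradicting Lemma \ref{lem: volnonzeroinrelative}), and this together with the multiplicativity of the compatibility condition \eqref{omegapiKetacompat} is all that the cup-product argument requires.
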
 

\begin{proof}  
We prove a stronger result, namely that the classes $\Omega^{\bullet}_c(g)$ are non-zero in a larger complex of  differential forms 
with `reduced boundary'.\footnote{To motivate this in the case $n=2$, 
consider the following picture of the projectivised normal bundles associated to a single  blow-up  of $\Pro(\Quad(V/K))$ in $\Pro(\Quad(V))$ where  $K \subset V$. We choose, as  usual, a complement $V= K \oplus C$. We have the maps
\begin{equation} \label{footnoteexact}  \Pro(\Quad(V/K)) \times  \Pro(\Quad(V)/\Quad(V/K)) \setminus \Det\big|_K  \To   \Pro(\Quad(V/K)) \times  \Pro(\Quad(K)) \To  \Pro(\Quad(V/K))  
\end{equation}  
where the first map is $\id \times \pi_K$ (see \S\ref{sect: CollapseNormal}, \eqref{piKdef}) and the second map is projection on to the first factor. Their composition is  the blow-down map (see remark \ref{rem: blowdowncontractsnormal}).  The corresponding map on affine spaces, after identifying the middle space as block diagonal matrices, may be viewed on symmetric matrices as the map 
\[ 
\begin{pmatrix}
Q_K & P \\
P^T & Q_C
\end{pmatrix}
\To \begin{pmatrix}
Q_K & 0 \\
0  & Q_C
\end{pmatrix}
\To  
\begin{pmatrix}  0  &  0 \\
0 &  Q_C \end{pmatrix}
\]
The far left  space in \eqref{footnoteexact} describes the local structure of an exceptional divisor on $\LA_g^{\trop, \BB}$, the far right space describes the structure of a  face at infinity in $\LA_g^{\trop}$. The space in the middle corresponds to the structure at infinity of the reductive Borel-Serre compactification. 
We consider forms whose restriction to the boundary are constant in the $P$-direction, i.e., pull-backs from the middle space to the left-most one.}   
\[   \Omega^{\bullet,\red} ( \LA_g^{\trop, \BB})    \quad \subset  \quad   \Omega^{\bullet} ( \LA_g^{\trop, \BB}  \ \setminus \  \widetilde{\Det} )  \ \oplus \  \bigoplus_{\mathcal{E}}  \Omega^{\bullet} \left( \prod_{i=0}^n  \Pro \left(\Quad(K_{i+1}/K_{i})  \backslash \Det_{K_{i+1}/K_i}   \right) \right)    \] 
where the direct sum is over all components of the boundary $\partial \LA_g^{\BB}$ indexed by  nested sequences \eqref{Kflag}. 
The strategy will be similar  to the proof of theorem \ref{thm: weakerembedding} once we have  shown that the volume class is non-zero.
 The complex $ \Omega^{\bullet,\red} ( \LA_g^{\trop, \BB}) $
consists of pairs $(\omega, (\eta_{\mathcal{E}})_{\mathcal{E}})$ such that the restriction of $\omega$ to a  face $ \mathcal{E} \cap \partial \LA_g^{\BB}$ satisfies
\begin{equation} \label{omegapiKetacompat}   \omega \Big|_{\mathcal{E}} =  \pi_{\red}^* \, \eta_{\mathcal{E}}
\end{equation} 
where $\pi_{\red}$ was defined in \eqref{defn: pired}.   There is a corresponding complex  $ \mathcal{A}^{\bullet,\red} (  | \LA_g^{\trop, \BB} |   )$  of smooth differential forms  defined in the same way: it consists  of pairs  $(\omega, (\eta_{\mathcal{E}})_{\mathcal{E}})$ which satisfy \eqref{omegapiKetacompat},   where
$\omega \in  \mathcal{A}^{\bullet} (  | \LA_g^{\trop, \BB} |   )$  and 
 $\eta_{\mathcal{E}}$ are  smooth forms on  $ \pi_{\red}(|\partial \LA_g^{\trop,\BB}| \cap \mathcal{E}(\R))$.
 Let us denote its cohomology by $H^{\bullet, \red}( | \LA_g^{\trop, \BB} | ) $.    There is a natural map of complexes $  \Omega^{\bullet,\red} ( \LA_g^{\trop, \BB})  \rightarrow     \mathcal{A}^{\bullet,\red} (  | \LA_g^{\trop, \BB} |   )$.

 Since the restriction of the blow down  to the boundary  $ |\partial \LA_g^{\trop,\BB}| \rightarrow |\partial \LA_g^{\trop}|$  factors through $\pi_{\red}$ (it factors through the map  which  projects  a product  \eqref{Quadpolytopefaces} onto its final component), it follows that 
the map on cohomology induced by   $\pi_{\BB} : | \LA_g^{\trop,\BB}| \rightarrow | \LA_g^{\trop}|$  
factors through the reduced cohomology, i.e., $\pi_{\BB}^*$ is the composition of two maps:
\begin{equation} \label{blowupfactorsthroughcollapse}
  H^{\bullet}_{dR} (   |\LA_g^{\trop}| )  \To   H^{\bullet,\red}_{dR} (   |\LA_g^{\trop,\BB}| )  \To H^{\bullet}_{dR} (   |\LA_g^{\trop,\BB} | )   
  \end{equation}  
where the first map is  $\omega \mapsto (\pi_{\BB}^* \omega,0)$  the second is $(\omega, (\eta)_{\mathcal{E}}) \mapsto \omega$.

We will show that there is an injective map 
\[     \  \Omega^{\bullet}(g)\To   H^{\bullet,\red}_{dR} (   |\LA_g^{\trop,\BB}| )\ . \]
First of all, observe that there is a well-defined map  
$ \Omega^{\bullet}(g) \rightarrow \Omega^{\bullet,\red} ( \LA_g^{\trop, \BB})   $
by  equation \eqref{omegafactorisationalongE}, which implies that the restriction of   a canonical form to an exceptional divisor 
$\mathcal{E}$  is always of the form $\pi^*_{\red}(\eta_{\mathcal{E}})$. See remark \ref{rem: canformsarereduced}.   Concretely, the forms $\eta_{\mathcal{E}}$ are products of canonical forms which can be computed explicity from the coproduct of $\omega$.

 We next show, as before,  that the  volume form is non-zero 
in  $H^{\ell_g, \red}( | \LA_g^{\trop, \BB} | )$.   If $\vol_g$ were exact, there would exist forms  $\alpha$ on $|\LA_g^{\trop,\BB}|$ and  $(\beta_{\mathcal{E}})_{\mathcal{E}}$   of degree $\ell_g-1$ such that 
\begin{equation} \label{volasalphabeta} \vol_g= d \alpha \quad \hbox{ and } \quad  \alpha\Big|_{ \mathcal{E}(\R)} = \pi_{\red}^*  \beta  \ . 
\end{equation} 
However, since $\pi_{\red}$ decreases the dimension by at  least $g-1$, such a form $\beta$ is necessarily zero since it has degree $> \ell_{g}-g$. 
Equation \eqref{volasalphabeta} then implies that, since $\alpha$ restricts to zero on every exceptional divisor $\mathcal{E}(\R)$,  
 $\vol_g$ is  zero in relative cohomology $H^{\ell_g}_{dR}\left(          \left|\LA_g^{ \trop,\BB}\right|,   \left|\partial\LA_g^{ \trop,\BB}\right|        \right)$,  which contradicts lemma \ref{lem: volnonzeroinrelative}.  

Now let $\omega \in \Omega(g)$.  There is a $\lambda \neq 0$ such that $\omega \wedge \star \omega = \lambda \vol_g.$ Since \eqref{omegapiKetacompat} is compatible with exterior products of differential forms, the complex  $\mathcal{A}^{\bullet,\red} (  | \LA_g^{\trop, \BB} |)$ inherits a multiplicative structure giving rise to  a product on reduced cohomology: 
\[  H_{dR}^{m,\red}( |\LA^{\trop, \BB}_g|) \otimes  H_{dR}^{n,\red}( |\LA^{\trop, \BB}_g|) \To    H_{dR}^{m+n,\red}( |\LA^{\trop, \BB}_g|) \ . \]
Since $[\star \omega] \wedge [\omega] = \lambda [\vol_g]$ is non-zero, it follows that $[\omega] \in H_{dR}^{\bullet,\red}( |\LA^{\trop, \BB}_g|)$ is non-zero and hence  $\Omega(g)$ indeed injects into $H_{dR}^{\bullet,\red}( |\LA^{\trop, \BB}_g|)$ as claimed. 

By theorem \ref{thm:  CanFormsOnAgBlowup} and  \eqref{blowupfactorsthroughcollapse}, the image of the subspace of compact type  $\Omega_c(g) $   in 
 $H_{dR}^{\bullet,\red}( |\LA^{\trop, \BB}_g|)$ factors through its image in  $H^{\bullet}(   |\LA^{\trop}_g|)$. It follows that the map  
 $\Omega_c(g) \rightarrow  H^{\bullet}(   |\LA^{\trop}_g|)$ given by 
 \eqref{omegacasformonLagdownstairs}  must already be injective.  
 
 The last part follows from the fact that the map  $\Omega_c(g)  \rightarrow  H^{\bullet}(   |\LA^{\trop}_g|)$  factors through $H^{\bullet}(   |\LA^{\trop}_g| ,|\partial \LA_g^{\trop} |)$ by definition.  
\end{proof} 

\subsection{Cohomology of $\GL_g(\Z)$} Theorem \ref{thm: weakerembedding} has the following corollaries. First, some notation. 
Since $\mathcal{P}_g$ is contractible  and an $\R_{>0}^{\times}$-bundle over  the space $L\mathcal{P}_g$, we have
\begin{eqnarray} \label{HnGLg}  H^n(\GL_g(\Z); \R) & = &  H^n(L\mathcal{P}_g/\GL_g(\Z)) \ ,  \\ 
   H_c^{n+1}(\GL_g(\Z); \R)  &= &  H_c^{n}(L\mathcal{P}_g/\GL_g(\Z)) \ .  \nonumber 
   \end{eqnarray} 
   The reason for the shift in indices is that $H^n_c(\R_{>0}^{\times})=0$ for $n\neq 1$ and $H_c^1(\R_{>0}^{\times})=\R$. 
\begin{cor}   Let $g>1$ be odd. The forms of  non-compact  (resp. compact) type embed into the cohomology (resp. compactly supported cohomology) 
of $L \mathcal{P}_g/\GL_g(\Z)$:
\begin{eqnarray}  \label{OmegaInjectsBothLPG} 
 \Omega^{\bullet}_{nc}(g) \otimes_{\Q} \R  &\hookrightarrow& 
  H^{\bullet}(\GL_g(\Z);\R)   \\ 
 \Omega^{\bullet}_c(g) \otimes_{\Q} \R &\hookrightarrow&  
   H_c^{\bullet+1} (\GL_g(\Z);\R)  \nonumber  \ . 
\end{eqnarray} 
 \end{cor}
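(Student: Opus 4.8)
The plan is to deduce the corollary directly from Theorem~\ref{thm: weakerembedding} together with the two identifications \eqref{HnGLg}, which express the cohomology of $\GL_g(\Z)$ in terms of the cohomology of the locally symmetric space $L\mathcal{P}_g/\GL_g(\Z)$. Both statements in Theorem~\ref{thm: weakerembedding} are already phrased as injective maps of graded algebras into $H^{\bullet}_{dR}(L\mathcal{P}_g/\GL_g(\Z))$ and $H^{\bullet}_{dR,c}(L\mathcal{P}_g/\GL_g(\Z))$ respectively, so the only work is bookkeeping: translating de Rham cohomology into the group cohomology of $\GL_g(\Z)$ with real coefficients, and accounting for the degree shift in the compactly supported case.

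First I would recall that $\mathcal{P}_g$ is a contractible space on which $\GL_g(\Z)$ acts properly discontinuously with finite stabilisers, so that $\mathcal{P}_g/\GL_g(\Z)$ is a rational $K(\pi,1)$ orbifold and $H^n(\GL_g(\Z);\R)\cong H^n(\mathcal{P}_g/\GL_g(\Z);\R)$. Since $\mathcal{P}_g \to L\mathcal{P}_g$ is a trivial $\R^{\times}_{>0}$-bundle, the projection induces $H^n(\mathcal{P}_g/\GL_g(\Z);\R)\cong H^n(L\mathcal{P}_g/\GL_g(\Z);\R)$, giving the first line of \eqref{HnGLg}; for compact supports one uses the Künneth/Thom isomorphism $H^{n+1}_c(\mathcal{P}_g/\GL_g(\Z))\cong H^n_c(L\mathcal{P}_g/\GL_g(\Z))$, which holds because $H^{\bullet}_c(\R^{\times}_{>0};\R)$ is concentrated in degree $1$ and one-dimensional there. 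Finally, the de Rham theorem (in the orbifold setting, or equivalently Theorem~\ref{thm: CellularDeRhamCohomology} applied to $|\LA_g^{\circ,\trop}|$, which is homeomorphic to $L\mathcal{P}_g/\GL_g(\Z)$) identifies $H^{\bullet}_{dR}$ and $H^{\bullet}_{dR,c}$ of smooth forms with the corresponding singular cohomology with $\R$-coefficients.

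With these identifications in place, the first embedding of \eqref{OmegaInjectsBothLPG} is precisely the second injective map $\Omega^{\bullet}_{nc}(g)\hookrightarrow H^{\bullet}_{dR}(L\mathcal{P}_g/\GL_g(\Z))$ of Theorem~\ref{thm: weakerembedding}, composed with $H^{\bullet}_{dR}(L\mathcal{P}_g/\GL_g(\Z))\cong H^{\bullet}(\GL_g(\Z);\R)$; it is a map of graded algebras because the de Rham isomorphism is multiplicative and $\Omega^{\bullet}_{nc}(g)$ is a subalgebra of $\Omega_{\can}$. The second embedding of \eqref{OmegaInjectsBothLPG} comes from the first injective map of Theorem~\ref{thm: weakerembedding}, namely $\omega\mapsto[\omega_c]$, $\Omega^{\bullet}_c(g)\hookrightarrow H^{\bullet}_{dR}(L\mathcal{P}_g/\GL_g(\Z))$, which by Corollary~\ref{cor: CompactSupportedomega} is realised by compactly supported representatives and hence lands in $H^{\bullet}_{dR,c}(L\mathcal{P}_g/\GL_g(\Z))$; composing with the isomorphism $H^{\bullet}_{dR,c}(L\mathcal{P}_g/\GL_g(\Z))\cong H^{\bullet+1}_c(\GL_g(\Z);\R)$ of the second line of \eqref{HnGLg} yields the claimed injection, with the degree shift accounting for the superscript $\bullet+1$. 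It remains an algebra map because $\Omega^{\bullet}_c(g)$ is an ideal in $\Omega(g)$ and the chosen compactly supported representatives $\omega_c=\omega^{2g-1}_c\wedge\eta$ are multiplicative in $\eta$.

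The only point requiring genuine care—and hence the main obstacle—is ensuring that the compatibility of products and of the fundamental-class pairing survives all three identifications simultaneously, i.e.\ that the compactly supported de Rham cup product used in \eqref{CupProdLPG} matches the cup product on $H^{\bullet}_c(\GL_g(\Z);\R)$ under the Thom isomorphism, so that injectivity (which in Theorem~\ref{thm: weakerembedding} was proved via non-degeneracy of the pairing with $\star\omega$ against $[\vol_g]$) is genuinely transported to group cohomology rather than merely to de Rham cohomology of the quotient. This is standard, but it is where a careless argument could go wrong: one should check that the orbifold de Rham theorem is compatible with the $\R^{\times}_{>0}$-bundle structure and with the module structure of $H^{\bullet}_c$ over $H^{\bullet}$, so that the nonvanishing of $[\omega_c]\wedge[\star\omega]=[(\vol_g)_c]$ established in the proof of Theorem~\ref{thm: weakerembedding} directly forces the nonvanishing of the images of $[\omega_c]$ and $[\star\omega]$ in $H^{\bullet+1}_c(\GL_g(\Z);\R)$ and $H^{\bullet}(\GL_g(\Z);\R)$ respectively. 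Once this is granted, the corollary is immediate.
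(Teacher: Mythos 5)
Your proposal is correct and follows the same route as the paper: Theorem~\ref{thm: weakerembedding} already supplies the injections into (compactly supported) de Rham cohomology of $L\mathcal{P}_g/\GL_g(\Z)$, and the identifications \eqref{HnGLg} translate these into statements about $H^{\bullet}(\GL_g(\Z);\R)$ and $H^{\bullet+1}_c(\GL_g(\Z);\R)$; the paper treats this as immediate and gives no separate proof. Your concluding caution about compatibility of cup products with the $\R^\times_{>0}$-bundle identifications is reasonable but is handled implicitly in the text, which simply records \eqref{HnGLg} as a fact.
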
 

\begin{cor}
 Let $g>1$ be odd. Then  for all $h\geq g$, we have:
\begin{equation}  \label{Omegancinjectsallh}  \Omega^{\bullet}_{nc}(g) \quad \hookrightarrow \quad H^{\bullet}(\GL_h(\Z);\R)   \ .   
\end{equation} 
\end{cor}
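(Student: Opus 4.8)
The goal is to deduce \eqref{Omegancinjectsallh} for all $h \geq g$ from the case $h=g$, which is the second displayed injection in Theorem \ref{thm: weakerembedding} (equivalently the first line of \eqref{OmegaInjectsBothLPG}). The plan is to exploit the stabilisation maps $\mathcal{P}_h \hookrightarrow \mathcal{P}_{h+1}$ given by block sum $X \mapsto X \oplus 1$, together with the additivity property \S\ref{subsect: CanForms}(3) of canonical forms, which says precisely that these forms are compatible with block direct sums. Concretely, for $h \geq g$ the inclusion $\iota_h: L\mathcal{P}_h/\GL_h(\Z) \to L\mathcal{P}_{h+1}/\GL_{h+1}(\Z)$ (or rather the associated map on $\mathcal{P}$) pulls back the generator $\omega^{4k+1}$ on $\mathcal{P}_{h+1}$ to $\omega^{4k+1}$ on $\mathcal{P}_h$: this is immediate from $\omega^{4k+1}_{X\oplus 1} = \omega^{4k+1}_X + \omega^{4k+1}_1 = \omega^{4k+1}_X$ since the $1\times 1$ identity contributes nothing (or more directly from property (3) with $X_2$ the identity, noting $\omega^{4k+1}_{1}=0$ as $4k+1>2$). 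Hence $\iota_h^*$ carries the subalgebra generated by $\omega^5,\dots,\omega^{2h-1}$ onto that generated by $\omega^5,\dots,\omega^{2h-1}$, and in particular restricts to a map $\Omega_{nc}(g) \to \Omega_{nc}(g)$ which is the identity (as long as $2h-1 \geq 2g-1$, i.e.\ $h\geq g$, the generators $\omega^5,\dots,\omega^{2g-1}$ of $\Omega_{nc}(g)$ survive).

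First I would record the commutative square
\[
\begin{array}{ccc}
\Omega^{\bullet}_{nc}(g) & \overset{j_h}{\hookrightarrow} & H^{\bullet}_{dR}(L\mathcal{P}_h/\GL_h(\Z)) \\
\parallel & & \uparrow_{\iota_h^*} \\
\Omega^{\bullet}_{nc}(g) & \overset{j_g}{\hookrightarrow} & H^{\bullet}_{dR}(L\mathcal{P}_g/\GL_g(\Z))
\end{array}
\]
where $j_h$ is the map $\omega \mapsto [\omega]$ (which makes sense for $h\geq g$ since $\Omega_{nc}(g)\subseteq \Omega(h)\subseteq\Omega_{nc}(h)$ — note $\omega^{2g-1}$ is \emph{not} of compact type in genus $h>g$, which is exactly why these classes become ``non-compact type'' in higher rank and survive), and the vertical map is restriction along the stabilisation inclusion. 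The square commutes by the additivity/pullback computation above: $\iota_h^*[\omega] = [\iota_h^*\omega] = [\omega]$ for $\omega \in \Omega_{nc}(g)$. Since the bottom map $j_g$ is injective by Theorem \ref{thm: weakerembedding} (applied in rank $g$) and the left vertical arrow is the identity, the composite $\iota_h^* \circ j_h = j_g$ is injective, and therefore $j_h$ itself is injective. This gives $\Omega^{\bullet}_{nc}(g) \hookrightarrow H^{\bullet}_{dR}(L\mathcal{P}_h/\GL_h(\Z)) = H^{\bullet}(\GL_h(\Z);\R)$ for every $h\geq g$, which is \eqref{Omegancinjectsallh}.

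The main point requiring care — and the only genuine obstacle — is verifying that $j_h$ is a well-defined map of graded \emph{algebras} in rank $h$, i.e.\ that an element of $\Omega_{nc}(g)$ really does define a de Rham class on $L\mathcal{P}_h/\GL_h(\Z)$ when viewed in genus $h$. For $h=g$ this is part of Theorem \ref{thm: weakerembedding}; for $h>g$ one should observe that $\Omega_{nc}(g)$, as a subalgebra of $\Omega(h)$, consists of forms $\eta$ that are \emph{not} divisible by $\omega^{2h-1}$ (since all their generators have degree $\leq 2g-1 < 2h-1$), hence $\Omega_{nc}(g) \subseteq \Omega_{nc}(h)$, and then invoke the rank-$h$ instance of the second injection of Theorem \ref{thm: weakerembedding}, namely $\Omega^{\bullet}_{nc}(h) \hookrightarrow H^{\bullet}_{dR}(L\mathcal{P}_h/\GL_h(\Z))$. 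Composing the inclusion $\Omega_{nc}(g)\hookrightarrow \Omega_{nc}(h)$ of graded algebras with this injection already yields the result directly, and the stabilisation square is then just a consistency check rather than a logical necessity. Either route works; I would present the cleaner one, deriving \eqref{Omegancinjectsallh} as an immediate corollary of Theorem \ref{thm: weakerembedding} in rank $h$ together with the elementary algebraic fact $\Omega_{nc}(g) \subseteq \Omega_{nc}(h)$ for $h \geq g$, and mention the stabilisation compatibility as the geometric reason the two are consistent.
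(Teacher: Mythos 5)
Your Route A is essentially the paper's own proof, and is correct modulo two typographical slips: the vertical arrow in your square should point downwards (since $\iota_h^*$ is restriction along the stabilisation inclusion, and you need $\iota_h^*\circ j_h=j_g$), and the middle step of ``$\Omega_{nc}(g)\subseteq\Omega(h)\subseteq\Omega_{nc}(h)$'' is false, as $\Omega(h)=\Omega_c(h)\oplus\Omega_{nc}(h)$ strictly contains $\Omega_{nc}(h)$. For $j_h$ to be defined no intermediate algebra is needed at all: the generators of $\Omega_{nc}(g)$ are elements of $\Omega_{\can}$ and so yield closed $\GL_h(\Z)$-invariant forms on $\mathcal{P}_h$ for every $h$, without any reference to $\Omega(h)$ or $\Omega_{nc}(h)$.

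The genuine gap is that Route B, which you say you would present, only handles odd $h$, whereas the corollary asserts the injection for \emph{all} $h\geq g$. Indeed $\Omega_{nc}(h)$ is only defined for $h>1$ odd (Definition \ref{defn: Omegagnotions}), and the rank-$h$ instance of Theorem \ref{thm: weakerembedding} that Route B invokes is only proved for odd $h$: it rests on Lemma \ref{lem: volnonzeroinrelative}, which requires $L\mathcal{P}_h/\GL_h(\Z)$ to be orientable, hence $h$ odd. For even $h$ there is no rank-$h$ injection to compose with, so Route B does not get off the ground. Route A, which you dismiss as ``just a consistency check rather than a logical necessity'', is in fact the logically necessary argument and is exactly what the paper does: it writes the chain
\[
\Omega^{\bullet}_{nc}(g)\to\Omega^{\bullet}_{nc}(g+2k)\to H^{\bullet}(\GL_{g+2k}(\Z);\R)\to\cdots\to H^{\bullet}(\GL_g(\Z);\R)\ ,
\]
observes that the total composite is the known injection $j_g$ (because $\omega_{X\oplus I_m}=\omega_X$), and deduces that every initial-segment composite $\Omega^{\bullet}_{nc}(g)\to H^{\bullet}(\GL_h(\Z);\R)$ for $g\leq h\leq g+2k$, and thus for all $h\geq g$, is injective.
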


\begin{proof}Let $k\geq 1$ and consider the maps
\[ \Omega^{\bullet}_{nc}(g) \rightarrow  \Omega^{\bullet}_{nc}(g+2k)   \rightarrow  H^{\bullet}( \GL_{g+2k}(\Z);\R)  \rightarrow   H^{\bullet}( \GL_{g+2k-1}(\Z);\R)   \rightarrow  H^{\bullet}( \GL_{g}(\Z);\R) \]
where the last two maps $H^{\bullet}(  \GL_{g+m}(\Z);\R) \rightarrow H^{\bullet}(  \GL_{g}(\Z);\R)$ are  induced by the stabilisation map  $X \mapsto X \oplus I_m$, where $I_m$ is the identity  matrix of rank $m$.  The composition of all the maps in the above  is the natural map    $\Omega^{\bullet}_{nc}(g) \rightarrow  H^{\bullet}( \GL_{g}(\Z);\R)$, 
since $\omega_{X \oplus I_m} = \omega_X$ for any canonical form $\omega$. We have shown it is  injective, by  \eqref{OmegaInjectsBothLPG}.
Therefore $\Omega^{\bullet}_{nc}(g)$ injects into all intermediate spaces, and hence into $ H^{\bullet}( \GL_{h}(\Z);\R)$ for all $h\geq g$.
\end{proof}
\begin{rem}  A  brief discussion of the history of this result is in the introduction.  Note that the previous  two corollaries are strictly weaker than  theorem \ref{thm: OmegagembedsLAg}.
\end{rem} 

\subsection{Compactly supported cohomology} The following proposition  is essentially a restatement, in a different form,  of  results of \cite{TopWeightAg}.
\begin{prop}  \label{prop: ShortExactAgDRsequence} For all $n, g>1$, there are  short exact sequences:
\[ 0 \To H_{dR}^{n-1} (\left| \LA^{\trop}_{g-1}\right|    ) \To  H_{dR,c}^n (\left| \LA^{\circ, \trop}_g \right|  ) \To H_{dR}^n (\left| \LA^{\trop}_g\right|    )  \To 0 \]
\end{prop}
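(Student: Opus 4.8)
The plan is to build the short exact sequence by comparing the compactly-supported de Rham cohomology of the open locus $\left|\LA_g^{\circ,\trop}\right|$ with the ordinary de Rham cohomology of $\left|\LA_g^{\trop}\right|$, using the ``inflation''/excision structure that relates $\LA_g^{\trop}$ to $\LA_{g-1}^{\trop}$. First I would invoke Theorem~\ref{thm: RelativeCellular}, whose identification \eqref{dRCompactSupportsToRelative} gives a canonical isomorphism $H_{dR,c}^n(\left|\LA_g^{\circ,\trop}\right|)\cong H_{dR}^n(\left|\LA_g^{\trop}\right|,\left|\partial\LA_g^{\trop}\right|)$, since $\left|\LA_g^{\circ,\trop}\right| = \left|\LA_g^{\trop}\right|\setminus\left|\partial\LA_g^{\trop}\right|$ by Lemma~\ref{lem: DetSubFunctor} and the discussion following it. The long exact sequence of the pair $(\left|\LA_g^{\trop}\right|,\left|\partial\LA_g^{\trop}\right|)$ in de Rham cohomology (which holds by the relative-cohomology formalism of \S\ref{section: HomologyCohomology}, in particular by Theorem~\ref{thm: RelativeCellular} applied to $F=\LA_g^{\trop}$ and $G=\partial\LA_g^{\trop}$) then reads
\[
\cdots \To H_{dR}^{n-1}(\left|\partial\LA_g^{\trop}\right|) \To H_{dR,c}^n(\left|\LA_g^{\circ,\trop}\right|) \To H_{dR}^n(\left|\LA_g^{\trop}\right|) \To H_{dR}^n(\left|\partial\LA_g^{\trop}\right|) \To \cdots
\]
So the entire claim reduces to showing that the restriction map $H_{dR}^n(\left|\LA_g^{\trop}\right|) \To H_{dR}^n(\left|\partial\LA_g^{\trop}\right|)$ is zero (equivalently, split injectivity of $H_{dR}^{n-1}(\left|\partial\LA_g^{\trop}\right|) \To H_{dR,c}^n$) together with the identification $H_{dR}^{n-1}(\left|\partial\LA_g^{\trop}\right|) \cong H_{dR}^{n-1}(\left|\LA_{g-1}^{\trop}\right|)$.

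The second of these two points is where the results of \cite{TopWeightAg} enter. The boundary $\left|\partial\LA_g^{\trop}\right|$ is, by definition, the union of all Voronoi cells lying in $\mathcal{P}_g^{\rt}\setminus\mathcal{P}_g$, i.e. the perfect cones $\sigma_{Q'}$ with $\sigma_{Q'}\subset\Det(\R)$, and these are organized by the rank drop of the associated positive semidefinite form. The acyclicity of the inflation complex proved in \cite{TopWeightAg} is precisely the statement that, in the relevant range, the only surviving contribution to $H_\bullet(\left|\partial\LA_g^{\trop}\right|)$ comes from the stratum of corank one, whose link is $\left|\LA_{g-1}^{\trop}\right|$; this gives $H^{n-1}_{dR}(\left|\partial\LA_g^{\trop}\right|) \cong H^{n-1}_{dR}(\left|\LA_{g-1}^{\trop}\right|)$. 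I would phrase this via the face/cellular description of \S\ref{sect: CellHomologyFaceComplex} (Theorem~\ref{thm: CellularHomology}), so that the inflation complex of \cite{TopWeightAg} literally computes the face complex $\mathfrak{C}(\partial\LA_g^{\trop})$; then the de Rham version follows from Theorem~\ref{thm: CellularDeRhamCohomology}. One has to be a little careful that ``inflation complex'' in \cite{TopWeightAg} and the face complex here agree up to the dualities and shifts in Theorems~\ref{thm: CellularHomology}--\ref{thm: RelativeCellular}, but this is bookkeeping.

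The first point --- vanishing of $H_{dR}^n(\left|\LA_g^{\trop}\right|) \To H_{dR}^n(\left|\partial\LA_g^{\trop}\right|)$ --- is the main obstacle, and again I expect it to come directly from \cite{TopWeightAg}: the point is that $\left|\LA_g^{\trop}\right|$ is, up to homotopy, a cone (it contains the cone point of the tropical moduli space), or more precisely that the inclusion $\left|\partial\LA_g^{\trop}\right| \hookrightarrow \left|\LA_g^{\trop}\right|$ induces the zero map on reduced cohomology in the range considered; this is the cohomological shadow of contractibility-type statements. Concretely, one can argue that every closed form on $\left|\LA_g^{\trop}\right|$ extends the zero form on the boundary after adjusting by an exact form, because the scaling/star structure on canonical forms (or, at the level of the abstract complex, the cone structure on the diagram category $\mathcal{D}_g^{\perf}$) contracts $\left|\LA_g^{\trop}\right|$ onto its cone point which lies in the interior. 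Assembling: combine the long exact sequence of the pair, the identification of the connecting term via the acyclicity of the inflation complex \cite{TopWeightAg}, and the vanishing of the boundary restriction, to split the long exact sequence into the asserted short exact sequences for each $n,g>1$. The only genuinely non-formal input is \cite{TopWeightAg}; everything else is the de Rham/cellular formalism already developed in \S\S\ref{section: HomologyCohomology}--\ref{section: CohomologyClasses}.
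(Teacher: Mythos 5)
Your skeleton matches the paper's: long exact sequence of the pair $(\left|\LA_g^{\trop}\right|,\left|\partial\LA_g^{\trop}\right|)$, identification of the middle term with compactly-supported cohomology via \eqref{dRCompactSupportsToRelative}, identification of $\left|\partial\LA_g^{\trop}\right|$ with $\left|\LA_{g-1}^{\trop}\right|$, and vanishing of the restriction map. But there are two genuine errors in where and how the inputs from \cite{TopWeightAg} are used.

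First, you misattribute the identification $H^{n-1}_{dR}(\left|\partial\LA_g^{\trop}\right|)\cong H^{n-1}_{dR}(\left|\LA_{g-1}^{\trop}\right|)$ to acyclicity of the inflation complex, via a ``corank-one stratum survives'' argument. That is not the mechanism. The two spaces are literally homeomorphic: $\left|\partial\LA_g^{\trop}\right|\cong\left|\LA_{g-1}^{\trop}\right|$, which is Corollary~4.12 of \cite{TopWeightAg}. No acyclicity statement is needed there and your strata-surviving argument is not what is going on (the boundary is not assembled from corank-one pieces whose lower-rank contributions cancel; the whole boundary is already a copy of the link in rank $g-1$).

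Second, and more seriously, your argument for the vanishing of $H_{dR}^n(\left|\LA_g^{\trop}\right|)\to H_{dR}^n(\left|\partial\LA_g^{\trop}\right|)$ would not work. You appeal to a cone structure on $\left|\LA_g^{\trop}\right|$ and a ``cohomological shadow of contractibility,'' but $\left|\LA_g^{\trop}\right|$ is the \emph{link} of a cone, not the cone itself. It is the space $\left|\mathcal{A}_g^{\trop}\right|$ (remark~\ref{rem: Mgtrop} discusses the analogue for $\mathcal{M}_g^{\trop}$) that has a cone point and is contractible; the link $\left|\LA_g^{\trop}\right|$ has nontrivial cohomology in general — indeed that is the whole point of theorem~\ref{thm: OmegagembedsLAg}. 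There is no cone point of $\left|\LA_g^{\trop}\right|$ lying in its interior, and the scaling structure you invoke has already been quotiented out. The vanishing of the restriction map is established instead by dualizing to homology and using the face/cellular description (theorem~\ref{thm: CellularHomology}): the image of any chain supported on $\left|\partial\LA_g^{\trop}\right|$ only involves cones $\sigma_Q$ equivalent to cones of rank $<g$, so it lies in the inflation subcomplex $I^{(g)}$ of the perfect cone complex $P^{(g)}$; the acyclicity of $I^{(g)}$, which is Theorem~5.15 of \cite{TopWeightAg}, forces such a chain to be a boundary and hence zero in $H_\bullet(\left|\LA_g^{\trop}\right|)$. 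This is where the inflation-complex acyclicity actually enters, and it replaces, rather than parallels, your contractibility heuristic.
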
 
\begin{proof}
Consider the long exact relative de Rham cohomology sequence:
\[ \cdots\To H_{dR}^{n-1} (\left| \partial \LA^{\trop}_{g}\right|   ) \To  H_{dR}^n (\left| \LA^{\trop}_g\right| , \left| \partial  \LA^{\trop}_g\right|   ) \To H_{dR}^n (\left| \LA^{\trop}_g\right|     )  \To \cdots \ . \]
The statement  follows on interpreting the relative cohomology group in the middle as compactly supported cohomology \eqref{dRCompactSupportsToRelative};  from the homeomorphism $\left| \partial \LA^{\trop}_{g}\right| \cong   \left| \LA^{\trop}_{g-1}\right|$, which follows from  \cite[Corollary 4.12]{TopWeightAg}; and from the fact that 
the restriction  maps $H_{dR}^n (\left| \LA^{\trop}_g\right| )\rightarrow H_{dR}^{n} (\left| \partial \LA^{\trop}_{g}\right|    ) $ are zero. This last fact is dual 
to the statement that the inclusion of the boundary  $\left| \partial \LA^{\trop}_{g}\right|\hookrightarrow \left|  \LA^{\trop}_{g}\right|$ induces the zero map in homology   $H_{n} (\left| \partial \LA^{\trop}_{g}\right|   ;\R )  \rightarrow H_n (\left| \LA^{\trop}_g\right|    ;\R )$.
 The reason for this is that the image of a chain in the boundary $\left| \partial \LA^{\trop}_{g}\right|$ only  involves cones $\sigma_Q$ which are equivalent to the cone of a  positive definite quadratic form of rank strictly less than $g$, and therefore lies
 in the inflation subcomplex $I^{(g)}$   of the perfect cone complex $P^{(g)}$. Since the inflation complex is acyclic, by  \cite[Theorem 5.15]{TopWeightAg}, any  such  chain is  trivial in homology. 
\end{proof} 

\begin{cor} Let $g>1$ be odd.  Then there is a natural injective map 
\begin{equation} \label{OmegacinjectsCompactSupports}
\Omega_c^{\bullet} (g)  \hookrightarrow   H_{c}^{\bullet+2} ( \GL_{g+1}(\Z)   ;\R )\ .  
\end{equation} 
\end{cor}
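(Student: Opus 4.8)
The goal is to combine the injectivity result of Theorem \ref{thm: weakerembedding} for $\GL_g(\Z)$ with the short exact sequence of Proposition \ref{prop: ShortExactAgDRsequence} for one index higher. The plan is as follows. First, I would apply Proposition \ref{prop: ShortExactAgDRsequence} with $g$ replaced by $g+1$ (so $g$ here is odd, $g+1$ is even), obtaining the short exact sequence
\[ 0 \To H_{dR}^{n-1}(|\LA_g^{\trop}|) \To H_{dR,c}^n(|\LA_{g+1}^{\circ,\trop}|) \To H_{dR}^n(|\LA_{g+1}^{\trop}|) \To 0\ . \]
The middle term is, by \eqref{HnGLg} and the identification $|\LA_{g+1}^{\circ,\trop}| \cong L\mathcal{P}_{g+1}/\GL_{g+1}(\Z)$ together with \eqref{dRCompactSupportsToRelative}, canonically $H_c^{n+1}(\GL_{g+1}(\Z);\R)$. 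So the sequence reads
\[ 0 \To H_{dR}^{n-1}(|\LA_g^{\trop}|) \To H_c^{n+1}(\GL_{g+1}(\Z);\R) \To H_{dR}^n(|\LA_{g+1}^{\trop}|) \To 0\ . \]
Setting $n = d+1$ and combining with the injection $\Omega_c^d(g) \hookrightarrow H_{dR}^d(|\LA_g^{\trop}|)$ of Theorem \ref{thm: OmegagembedsLAg} gives, for each $d$,
\[ \Omega_c^d(g) \hookrightarrow H_{dR}^d(|\LA_g^{\trop}|) \hookrightarrow H_c^{d+2}(\GL_{g+1}(\Z);\R)\ , \]
where the second map is the left-hand map of the short exact sequence (hence injective). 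Composing these two injections yields the desired map $\Omega_c^{\bullet}(g) \hookrightarrow H_c^{\bullet+2}(\GL_{g+1}(\Z);\R)$.

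The one point that needs care, and which I expect to be the main (though modest) obstacle, is bookkeeping the degree shift coming from the $\R_{>0}^{\times}$-bundle $\mathcal{P}_{g+1} \to L\mathcal{P}_{g+1}$: Proposition \ref{prop: ShortExactAgDRsequence} is phrased in terms of $H_{dR,c}^n(|\LA_{g+1}^{\circ,\trop}|)$, and \eqref{HnGLg} tells us $H_c^{n+1}(\GL_{g+1}(\Z);\R) = H_c^n(L\mathcal{P}_{g+1}/\GL_{g+1}(\Z))$. Thus a class in $\Omega_c^d(g)$, which by Theorem \ref{thm: OmegagembedsLAg} lands in $H_{dR}^d(|\LA_g^{\trop}|) = H_{dR}^{n-1}(|\LA_g^{\trop}|)$ with $n-1 = d$, maps into $H_{dR,c}^{d+1}(|\LA_{g+1}^{\circ,\trop}|) = H_c^{d+1}(L\mathcal{P}_{g+1}/\GL_{g+1}(\Z)) = H_c^{d+2}(\GL_{g+1}(\Z);\R)$, which is exactly the claimed target. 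I would verify this index arithmetic explicitly once, and also check that the composite is a map of graded algebras (or at least graded vector spaces, which is all that is claimed) — the algebra structure is inherited from that on $H_c^{\bullet}$ via the Hopf-algebra/cup-product structures of \cite{AMP, BCGP}, but for the statement as written it suffices to note that both maps being composed are linear and degree-preserving up to the fixed shift.

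Finally, I would remark that the injectivity is genuinely using the full strength of Theorem \ref{thm: OmegagembedsLAg} (the injection into $H_{dR}^{\bullet}(|\LA_g^{\trop}|)$, not merely into the relative cohomology), because it is precisely $H_{dR}^{\bullet}(|\LA_g^{\trop}|)$ — the quotient term in the previous index's short exact sequence and the sub-term in the current one — that provides the bridge between the two linear groups. No further input is needed: the acyclicity of the inflation complex from \cite{TopWeightAg}, already invoked in Proposition \ref{prop: ShortExactAgDRsequence}, is what makes the relevant restriction map vanish and hence makes the sequence short exact on the left. This completes the argument.
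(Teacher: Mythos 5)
Your proof is correct and matches the paper's: apply Proposition \ref{prop: ShortExactAgDRsequence} at genus $g+1$ to obtain the injection $H_{dR}^{\bullet}(|\LA_g^{\trop}|) \hookrightarrow H_{dR,c}^{\bullet+1}(|\LA_{g+1}^{\circ,\trop}|)$, identify the target with $H_c^{\bullet+2}(\GL_{g+1}(\Z);\R)$ via \eqref{HnGLg}, and precompose with the injection of Theorem \ref{thm: OmegagembedsLAg}. Your closing remark is also exactly right: Theorem \ref{thm: weakerembedding} alone would leave you in $H_{dR,c}^{\bullet}(L\mathcal{P}_g/\GL_g(\Z))$, which does not feed into the left-hand term of the genus-$(g+1)$ exact sequence, so the absolute (rather than merely relative/compactly-supported) injectivity supplied by Theorem \ref{thm: OmegagembedsLAg} is genuinely required here.
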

\begin{proof} 
By  proposition \ref{prop: ShortExactAgDRsequence}, there is an injective map 
\[   H_{dR}^{\bullet} (\left| \LA^{\trop}_{g}\right|  ) \To    H_{dR, c}^{\bullet+1} ( |\LA_{g+1}^{\circ, \trop}| ) \cong   H_{dR,c}^{\bullet+1} ( L \mathcal{P}_{g+1} /\GL_{g+1}(\Z)    )  \]
The result follows from theorem \ref{thm: OmegagembedsLAg} and \eqref{HnGLg}.
\end{proof} 

\subsection{Unstable cohomology of $\SL_g(\Z)$}  \ The following lemma is standard \cite{ElbazVincentGanglSoule}.
\begin{lem} Let $n\geq 0$. Then 
\[ H^n(\SL_g(\Z); \R) \cong  \begin{cases}  H^n(\GL_g(\Z);\R) \qquad \qquad\qquad\qquad \qquad \quad  \hbox{ if } g \hbox{ is  odd}  \nonumber \\ 
 H^n(\GL_g(\Z);\R) \oplus H_c^{d_g-n}(\GL_g(\Z); \R)^{\vee}  \qquad  \hbox{ if } g \hbox{  is even }  \nonumber \end{cases}
 \]
\end{lem}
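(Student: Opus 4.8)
The statement relates the cohomology of $\SL_g(\Z)$ to that of $\GL_g(\Z)$, so the natural approach is to study the short exact sequence of groups
\[ 1 \To \SL_g(\Z) \To \GL_g(\Z) \To \Z/2\Z \To 1\]
and the associated action of $\Z/2\Z = \GL_g(\Z)/\SL_g(\Z)$ on the cohomology of $\SL_g(\Z)$. Since we are working with real (hence $\Z/2\Z$-torsion-free) coefficients, the Lyndon--Hochschild--Serre spectral sequence collapses and yields
\[ H^n(\GL_g(\Z);\R) \cong H^n(\SL_g(\Z);\R)^{\Z/2\Z}\ ,\]
the $\Z/2\Z$-invariants, where $\Z/2\Z$ acts via any representative $h \in \GL_g(\Z)$ with $\det(h) = -1$ (for instance $h = \mathrm{diag}(-1,1,\ldots,1)$), acting on $L\mathcal{P}_g/\SL_g(\Z)$ and hence on its cohomology; the induced action on $H^n(\GL_g(\Z);\R) = H^n(\SL_g(\Z);\R)^{\Z/2\Z}$ is trivial. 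Thus $H^n(\SL_g(\Z);\R)$ decomposes into $\pm 1$ eigenspaces for this involution, with the $+1$-eigenspace being precisely $H^n(\GL_g(\Z);\R)$. The content of the lemma is therefore the identification of the $-1$-eigenspace $H^n(\SL_g(\Z);\R)^-$.

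First I would recall that $L\mathcal{P}_g/\SL_g(\Z)$ is a double cover of $L\mathcal{P}_g/\GL_g(\Z)$, which when $g$ is odd is \emph{disconnected}: the element $-I \in \GL_g(\Z)$ has determinant $(-1)^g = -1$, lies in $\GL_g(\Z)\setminus\SL_g(\Z)$, and acts trivially on $\mathcal{P}_g$ (since $(-I)^T X (-I) = X$), so it acts trivially on $L\mathcal{P}_g/\SL_g(\Z)$. Hence when $g$ is odd the deck transformation group $\Z/2\Z$ acts trivially, the cover is trivial, $L\mathcal{P}_g/\SL_g(\Z) \cong L\mathcal{P}_g/\GL_g(\Z) \times \{\pm 1\}$... more precisely the quotient map is a homeomorphism on each component and $-I$ swaps the two components while acting as the identity on each, so $H^n(\SL_g(\Z);\R)$ is simply $H^n(\GL_g(\Z);\R)$ with the involution acting trivially. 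This gives the odd case immediately: $H^n(\SL_g(\Z);\R) \cong H^n(\GL_g(\Z);\R)$, and there is no $-1$-eigenspace. (Equivalently: $\SL_g(\Z) \cong \GL_g(\Z)/\{\pm I\}$ when $g$ is odd, and $\{\pm I\}$ acts trivially on the contractible space $\mathcal{P}_g$, so the two group cohomologies agree with $\R$-coefficients.)

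When $g$ is even, $-I \in \SL_g(\Z)$, so this trick is unavailable and the cover $L\mathcal{P}_g/\SL_g(\Z) \to L\mathcal{P}_g/\GL_g(\Z)$ is genuinely nontrivial (and $L\mathcal{P}_g/\GL_g(\Z)$ is now non-orientable). Here I would identify the $-1$-eigenspace $H^n(\SL_g(\Z);\R)^-$ with twisted cohomology $H^n(L\mathcal{P}_g/\GL_g(\Z); \mathcal{L})$, where $\mathcal{L}$ is the orientation local system (the rank-one real local system on $L\mathcal{P}_g/\GL_g(\Z)$ with monodromy $\det$). Since $g$ even means $\dim(L\mathcal{P}_g) = d_g - 1$ with $d_g = \binom{g+1}{2}$ odd, $L\mathcal{P}_g$ is odd-dimensional; the orientation sheaf of the orbifold $L\mathcal{P}_g/\GL_g(\Z)$ is exactly $\mathcal{L}$ (an element of $\GL_g(\Z)$ preserves orientation of $\mathcal{P}_g$ iff its determinant is positive—this uses that $\mathcal{P}_g$ is an open cone in $\R^{d_g}$ and the linear action $X\mapsto h^TXh$ on symmetric matrices has determinant a power of $\det(h)$ of the appropriate parity). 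Poincaré--Lefschetz duality for the orbifold then gives
\[ H^n(L\mathcal{P}_g/\GL_g(\Z);\mathcal{L}) \cong H^{\lf}_{d_g-1-n}(L\mathcal{P}_g/\GL_g(\Z);\R) \cong H_c^{d_g-1-n}(L\mathcal{P}_g/\GL_g(\Z);\R)^{\vee}\ ,\]
where the last step is the universal-coefficients/duality identification of locally finite homology with the dual of compactly supported cohomology, valid over $\R$. Accounting for the $\R^\times_{>0}$-bundle shift $H_c^{m}(L\mathcal{P}_g/\GL_g(\Z);\R) = H_c^{m+1}(\mathcal{P}_g/\GL_g(\Z);\R) = H_c^{m+1}(\GL_g(\Z);\R)$ from \eqref{HnGLg}, and noting $d_g - 1 - n + 1 = d_g - n$, this yields $H^n(\SL_g(\Z);\R)^- \cong H_c^{d_g-n}(\GL_g(\Z);\R)^{\vee}$, and adding the $+1$-eigenspace $H^n(\GL_g(\Z);\R)$ gives the claimed formula.

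\textbf{Main obstacle.} The routine inputs are the spectral sequence collapse and the $-I$ argument in the odd case; the delicate point in the even case is the clean identification of the $-1$-eigenspace with twisted cohomology and the correct bookkeeping of the orientation local system—specifically verifying that the monodromy of the orientation sheaf of $L\mathcal{P}_g/\GL_g(\Z)$ is exactly $\det$ (equivalently, computing the sign by which $h\in\GL_g(\Z)$ scales a volume form on $\mathcal{P}_g$, which is $\det(h)^{g+1}$, negative precisely when $\det(h)<0$ and $g$ even), together with making Poincaré duality precise in the orbifold / virtual-manifold setting with real coefficients. Since the lemma is stated as "standard" and attributed to \cite{ElbazVincentGanglSoule}, I would present the argument concisely, citing Poincaré duality for $\R$-coefficient orbifold cohomology rather than reproving it, and emphasize only the orientation-sheaf computation, which is where all the parity hypotheses enter.
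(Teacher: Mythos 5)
Your proposal is correct and takes essentially the same route as the paper: the paper's proof also reduces to the extension $1 \to \SL_g(\Z) \to \GL_g(\Z) \to \Z/2\Z \to 1$, handles $g$ odd by the centrality of $-I$, and for $g$ even identifies the complementary summand with $H^n(\GL_g(\Z);\R^{\det}) = H^n(\mathcal{P}_g/\GL_g(\Z);\mathcal{O})$ and applies Poincar\'e duality for the non-orientable orbifold. The only cosmetic difference is that you phrase the decomposition as the $\pm1$-eigenspace splitting coming from the collapsed Lyndon--Hochschild--Serre spectral sequence, whereas the paper invokes Shapiro's lemma for $\mathrm{Ind}_{\SL_g(\Z)}^{\GL_g(\Z)}\R \cong \R \oplus \R^{\det}$; these are the same decomposition, and your explicit computation of the orientation monodromy via the Jacobian $\det(h)^{g+1}$ supplies the sign check the paper delegates to a citation.
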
 
\begin{proof} 
Consider the short exact sequence
\[ 1 \To \SL_g(\Z) \To \GL_g(\Z) \overset{\det}{\To} \GL_1(\Z)  \To 1\]
where $\GL_1(\Z) = \Z^{\times} \cong \Z/2\Z$. 
When $g$ is odd, it splits: in fact, since the diagonal matrix $-I_g$ of rank $g$ is central, $\GL_g(\Z)$ is isomorphic to the direct product  $\SL_g(\Z) \times \Z/2\Z$.   Since $\Z/2 \Z$ is torsion we deduce that $H^n(\SL_g(\Z); \Q) \cong H^n(\GL_g(\Z);\Q)$ when $g$ is odd. 
Now suppose that $g$ is even, and consider the rank one  $\R$-vector space $\R^{\det}$ equipped with the action of $g\in \GL_g(\Z)$ given by $g. v = \det(g) v$.  By Shapiro's lemma one has:
\[ H^n(\SL_g(\Z); \R) \cong H^n(\GL_g(\Z);\mathrm{Ind}_{\SL_g(\Z)}^{\GL_g(\Z)} \, \R) \cong  H^n(\GL_g(\Z);\R \oplus \R^{\det}) \ .\]  
Denote by $\mathcal{O}$ the orientation bundle on $L\mathcal{P}_g$, which is non-trivial when $g$ is even. We have
\[ H^n(\GL_g(\Z); \R^{\det} ) = H^n( \mathcal{P}_g /\GL_g(\Z); \mathcal{O}) \cong H_c^{d_g-n} (  \mathcal{P}_g /\GL_g(\Z); \R)^{\vee} \]
where the first equality  follows from the argument of \cite[Lemma 7.2]{ElbazVincentGanglSoule} and the second for  Poincar\'e duality for non-orientable homology manifolds. 
\end{proof} 
The previous lemma and \eqref{OmegacinjectsCompactSupports} imply the following result on the cohomology of $\SL_g(\Z)$.  Choose a surjective map $s:H_{c}^{\bullet+2} ( \GL_{g+1}(\Z)   ;\R )\rightarrow  \Omega_c^{\bullet} (g)\otimes_{\Q} \R $ which is a section of  the injective map
$ \Omega_c^{\bullet} (g) \otimes_{\Q} \R  \hookrightarrow  H_{c}^{\bullet+2} ( \GL_{g+1}(\Z)   ;\R )$  equivalent to
\eqref{OmegacinjectsCompactSupports}.

\begin{cor} Let $g>1$ be odd and $n\geq 0$. Then 
\begin{eqnarray}   \Omega_{nc}^{n}(g) \otimes_{\Q} \R  &\hookrightarrow  & H^n(\SL_g(\Z);\R)   \nonumber \\ 
\left(\Omega^n_{nc}(g)\otimes_{\Q} \R \right)\oplus  \left(\Omega_c^{d_{g+1}-n-2}(g)\otimes_{\Q} \R \right)^{\vee} & \hookrightarrow &  H^n(\SL_{g+1}(\Z);\R)  \nonumber
\end{eqnarray} 
 The first map is natural. The second map depends on the choice of section $s$: it is given by its dual $s^{\vee}$ on the second component  $(\Omega_c^{d_{g+1}-n-2}(g)\otimes \R)^{\vee}$.
\end{cor}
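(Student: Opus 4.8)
The plan is to combine the previous lemma relating $H^\bullet(\SL_{g+1}(\Z);\R)$ to the cohomology and compactly supported cohomology of $\GL_{g+1}(\Z)$ with the injectivity statements already established in theorem \ref{thm: weakerembedding} and the corollary \eqref{OmegacinjectsCompactSupports}. Since $g$ is odd, $g+1$ is even, so the lemma gives
\[ H^n(\SL_{g+1}(\Z);\R) \;\cong\; H^n(\GL_{g+1}(\Z);\R) \;\oplus\; H_c^{d_{g+1}-n}(\GL_{g+1}(\Z);\R)^{\vee}\ . \]
I would treat the two summands separately. For the first summand, the map \eqref{Omegancinjectsallh} (applied with $h=g+1\geq g$) gives a natural injection $\Omega_{nc}^{n}(g)\otimes_\Q\R \hookrightarrow H^n(\GL_{g+1}(\Z);\R)$, hence into the first factor of $H^n(\SL_{g+1}(\Z);\R)$. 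For the second summand, I would dualise the injection \eqref{OmegacinjectsCompactSupports}, namely $\Omega_c^{\bullet}(g)\otimes_\Q\R \hookrightarrow H_c^{\bullet+2}(\GL_{g+1}(\Z);\R)$: taking $\bullet = d_{g+1}-n-2$ gives $\Omega_c^{d_{g+1}-n-2}(g)\otimes_\Q\R \hookrightarrow H_c^{d_{g+1}-n}(\GL_{g+1}(\Z);\R)$. The chosen section $s$ of this injection gives, by dualising, a surjection $\Omega_c^{d_{g+1}-n-2}(g)^{\vee} \twoheadleftarrow \dots$; more precisely $s^{\vee}$ is an injection $(\Omega_c^{d_{g+1}-n-2}(g)\otimes_\Q\R)^{\vee} \hookrightarrow H_c^{d_{g+1}-n}(\GL_{g+1}(\Z);\R)^{\vee}$, landing in the second factor.

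The remaining point is to check that the direct sum of these two maps is itself injective, i.e.\ that together they produce an injection of $(\Omega^n_{nc}(g)\oplus \Omega_c^{d_{g+1}-n-2}(g))\otimes\R$ (with the dual on the second component) into the direct sum $H^n(\GL_{g+1}(\Z);\R)\oplus H_c^{d_{g+1}-n}(\GL_{g+1}(\Z);\R)^{\vee}$. This is immediate once each component map is injective, since a map $A\oplus B \to C\oplus D$ which is the direct sum of an injection $A\hookrightarrow C$ and an injection $B\hookrightarrow D$ is injective. Thus the corollary follows formally. The naturality assertion for the first map is inherited from the naturality of \eqref{Omegancinjectsallh}, which in turn comes from the fact that $\omega_{X\oplus I_m}=\omega_X$ for canonical forms, so the classes are pulled back along the stabilisation maps without any choices. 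The dependence of the second map on $s$ is built into its definition as $s^\vee$.

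The main (and only genuine) obstacle is bookkeeping of the degree shifts and making sure the indices line up: one must carefully track that $H_c^{\bullet+1}$ versus $H_c^{\bullet+2}$ shifts in \eqref{OmegaInjectsBothLPG} and \eqref{OmegacinjectsCompactSupports} arise respectively from the $\R_{>0}^\times$-bundle $\mathcal{P}_g\to L\mathcal{P}_g$ (contributing the $+1$ in \eqref{HnGLg}) and from the short exact sequence of proposition \ref{prop: ShortExactAgDRsequence} (contributing the additional $+1$ upon passing from $g$ to $g+1$), and that the Poincar\'e-duality index $d_{g+1}-n$ for the non-orientable homology manifold $\mathcal{P}_{g+1}/\GL_{g+1}(\Z)$ in the lemma then matches $(\bullet+2)$ with $\bullet = d_{g+1}-n-2$. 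Once this arithmetic is verified, there is nothing further to prove: the statement is a direct consequence of results already in hand.
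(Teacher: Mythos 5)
Your argument is correct and follows the paper's intended route: combine the $\SL$-vs-$\GL$ lemma with the injections \eqref{Omegancinjectsallh} and \eqref{OmegacinjectsCompactSupports}, tracking the degree shift $\bullet = d_{g+1}-n-2$ so that the compactly supported degree matches the Poincar\'e-duality index $d_{g+1}-n$, and observing that the direct sum of two injections into a direct sum is injective. You only spell out the $\SL_{g+1}$ case and leave the first injection (into $H^n(\SL_g(\Z);\R)$) unaddressed, but since $g$ is odd that is just the odd branch of the same lemma together with \eqref{OmegaInjectsBothLPG}, so nothing is missing in substance.
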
 
If we identify $( \Omega_c^{d_{g}-n-1} (g))^{\vee} $ with $\Omega^{n}_{nc}(g)$ via the Hodge star operator, the second map in the corollary may be written (non-canonically) in the more convenient form
\[ \left( \Omega^n_{nc}(g) \oplus \Omega^{n-g}_{nc}(g)\right)\otimes_{\Q} \R  \hookrightarrow   H^n(\SL_{g+1}(\Z);\R) \ . \] 
\begin{ex} For $g=3$,  we have $\Omega_{nc}(3) =\Q$ and $\Omega_c(3) = \omega^5 \Q$.  The previous corollary produces classes in
$H^0(\SL_3(\Z);\R)$, as well as  $H^0(\SL_4(\Z);\R)$, $H^3(\SL_4(\Z);\R)$.

For $g=5$,   we have  forms of non-compact type $\Omega_{nc}(5) =\Q\oplus \Q \omega^5$.   They give rise, via the previous corollary, to classes in
$H^0(\SL_5(\Z);\R)$ and $H^5(\SL_5(\Z);\R)$,  and also
$ H^0(\SL_6(\Z);\R)$ and $H^5(\SL_6(\Z);\R)$. The  forms of compact type $\Omega_c(5) = \omega^9 \Q\oplus (\omega^5\wedge \omega^9) \Q$ give rise to two further   classes in 
$H^5(\SL_6(\Z);\R)$ and $H^{10} (\SL_6(\Z);\R)$.
\end{ex}
\begin{rem} A natural way to phrase the  corollary is to say that there is a complex
\[   \Omega^n_{nc}(g)\otimes_{\Q} \R \To   H^n(\SL_{g+1}(\Z);\R) \To   \left(\Omega_c^{d_{g+1}-n-2}(g) \otimes_\Q \R\right)^{\vee} \]
where the first map is injective, and the second map is surjective. 
\end{rem}

\subsection{Stable range} Let $g>1$ be odd.
By  Borel \cite{Borel},  rephrased in terms of forms of non-compact type, there exists a $\kappa(g)$ which grows linearly in $g$ such that  
\begin{equation} \label{stablerange}   \Omega_{nc}^n(g)\otimes_{\Q} \R \overset{\sim}{\To} H^n(\GL_g(\Z);\R) \quad \hbox{ for all } \,   n< \kappa(g) \ . 
\end{equation}
Borel showed that $\kappa(g)$ is of the order of $g/4$.
Using the recent work  \cite[Theorem A]{KupersMillerPatzt}, \cite{StableShuBinyong},  and  \eqref{Omegancinjectsallh},  it may be improved to  $\kappa(g) =g$. For $n$ in this range, $  \Omega_{nc}^n(g) = \Omega^n(g)$.
\begin{cor} \label{cor: stablecohomLag} Let  $g>1$ be odd and let $\kappa(g)$ be such that  \eqref{stablerange} holds. Then 
\begin{eqnarray}   H^n_{dR}(\left|\LA_g^{\trop}\right|) & \cong&  \Omega^n_c(g)\otimes_{\Q} \R    \nonumber \\ 
  H^{n-1}_{dR}(\left|\LA_{g-1}^{\trop}\right|) &  = &  0 \ , \nonumber
   \end{eqnarray}
for all $n\geq d_g - \kappa(g)$. 
\end{cor}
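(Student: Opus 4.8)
\textbf{Proof strategy for Corollary \ref{cor: stablecohomLag}.}
The plan is to combine the short exact sequence of Proposition \ref{prop: ShortExactAgDRsequence} with the stable computation \eqref{stablerange} of $H^{\bullet}(\GL_g(\Z);\R)$ and Poincar\'e--Lefschetz duality for $L\mathcal{P}_g/\GL_g(\Z)$, which is orientable since $g$ is odd. Concretely, recall from Proposition \ref{prop: ShortExactAgDRsequence} that for each $g$ there is an exact sequence
\[ 0 \To H_{dR}^{n-1}(\left|\LA^{\trop}_{g-1}\right|) \To H_{dR,c}^{n}(\left|\LA^{\circ,\trop}_g\right|) \To H_{dR}^{n}(\left|\LA^{\trop}_g\right|) \To 0\ , \]
and that $\left|\LA^{\circ,\trop}_g\right| \cong L\mathcal{P}_g/\GL_g(\Z)$ by Theorem \ref{thm: introLAgblowupdiagrams}. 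Since $g$ is odd, $L\mathcal{P}_g/\GL_g(\Z)$ is an orientable $\ell_g$-dimensional orbifold, so Poincar\'e duality gives $H_{dR,c}^{n}(\left|\LA^{\circ,\trop}_g\right|) \cong H_{dR}^{\ell_g-n}(L\mathcal{P}_g/\GL_g(\Z))^{\vee} \cong H^{\ell_g-n}(\GL_g(\Z);\R)^{\vee}$, using \eqref{HnGLg}. Thus the whole problem reduces to understanding $H^{\ell_g-n}(\GL_g(\Z);\R)$ and $H^{\ell_{g-1}-n+1}(\GL_{g-1}(\Z);\R)$ (the latter appearing after shifting the analogous sequence for $g-1$, or rather appearing directly in the inductive/duality bookkeeping).

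First I would treat the claim $H^{n-1}_{dR}(\left|\LA^{\trop}_{g-1}\right|)=0$. By the duality computation applied with $g$ replaced by $g-1$ (which is even, so one must be slightly more careful: $L\mathcal{P}_{g-1}/\GL_{g-1}(\Z)$ is non-orientable and one uses duality with coefficients in the orientation bundle as in the lemma preceding the $\SL_g(\Z)$ corollary), $H^{n-1}_{dR}(\left|\LA^{\trop}_{g-1}\right|)$ is a quotient of $H_{dR,c}^{n-1}(\left|\LA^{\circ,\trop}_{g-1}\right|)$, which is dual to $H^{\ell_{g-1}-n+1}(\GL_{g-1}(\Z);\mathcal{O})$. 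Now $\ell_{g-1}-n+1 = d_{g-1}-n = \binom{g}{2}-n$. For $n\geq d_g-\kappa(g) = \binom{g+1}{2}-\kappa(g)$, we get $\binom{g}{2}-n \leq \binom{g}{2}-\binom{g+1}{2}+\kappa(g) = \kappa(g)-g < \kappa(g)$ (strictly, since $\kappa(g)=g$), indeed $\leq -1$ when one uses $\kappa(g)=g$ exactly — so the relevant cohomological degree is negative and the group vanishes outright. (If one only assumes $\kappa(g)$ linear but possibly smaller, one instead needs $\binom{g}{2}-n<\kappa(g-1)$ together with the stable computation showing $H^{m}(\GL_{g-1}(\Z);\R)$ in that range is $\Omega^m_{nc}(g-1)$, which contains nothing pulling back from the twisted coefficient system; I would check that the twisted stable cohomology also vanishes in low positive degrees, which is classical.) Either way, $H^{n-1}_{dR}(\left|\LA^{\trop}_{g-1}\right|)=0$ in the asserted range.

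With that vanishing, the exact sequence collapses to an isomorphism $H_{dR}^{n}(\left|\LA^{\trop}_g\right|) \cong H_{dR,c}^{n}(\left|\LA^{\circ,\trop}_g\right|) \cong H^{\ell_g-n}(\GL_g(\Z);\R)^{\vee}$. It remains to identify $H^{\ell_g-n}(\GL_g(\Z);\R)$ with $(\Omega^n_c(g)\otimes\R)^{\vee}$, equivalently $\Omega^n_c(g)$ via the $\star$ operator. For $n\geq d_g-\kappa(g)$ one has $\ell_g-n \leq \ell_g-d_g+\kappa(g) = \kappa(g)-1 < \kappa(g)$, so by \eqref{stablerange}, $H^{\ell_g-n}(\GL_g(\Z);\R) \cong \Omega^{\ell_g-n}_{nc}(g)\otimes\R$. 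The Hodge star operator $\star:\Omega^{\ell_g-n}_{nc}(g)\overset{\sim}{\To}\Omega^n_c(g)$ is an isomorphism since $(\ell_g-n)+n=\ell_g$, and taking duals once more (finite-dimensional spaces) gives $H_{dR}^{n}(\left|\LA^{\trop}_g\right|)\cong \Omega^n_c(g)\otimes\R$. To see this isomorphism is the natural one, i.e.\ realised by $\omega\mapsto[\omega_c]$ of theorem \ref{thm: OmegagembedsLAg}, I would note that that map is injective with image of the correct dimension $\dim\Omega^n_c(g)$ (by the dimension count just made), hence an isomorphism; compatibility with the Poincar\'e pairing is exactly the cup-product computation $[\omega_c]\wedge[\star\omega]=\lambda[(\vol_g)_c]\neq 0$ from the proof of theorem \ref{thm: OmegagembedsLAg} and Lemma \ref{lem: volnonzeroinrelative}.

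\textbf{Main obstacle.} The genuinely delicate point is the vanishing $H^{n-1}_{dR}(\left|\LA^{\trop}_{g-1}\right|)=0$ for $g-1$ \emph{even}: one cannot directly invoke the oriented Poincar\'e duality used for odd rank, and one must control the twisted stable cohomology $H^{\bullet}(\GL_{g-1}(\Z);\mathcal{O})$ in low degree (showing it contains no classes, unlike the untwisted case which contributes $\Omega_{nc}$). Fortunately in the range at hand the relevant degree turns out to be negative once $\kappa(g)=g$ is used, so the obstruction evaporates; but a careful writeup should either make that numerical inequality explicit or cite the vanishing of low-degree twisted stable cohomology of $\GL_{g-1}(\Z)$. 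A secondary bookkeeping subtlety is keeping the two instances of Proposition \ref{prop: ShortExactAgDRsequence} (for $g$ and for $g-1$) straight, since the $g-1$ sequence is what is implicitly needed to know $H^{n-1}_{dR}(\left|\LA^{\trop}_{g-1}\right|)$ vanishes rather than merely being controlled.
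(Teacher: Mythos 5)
Your argument is essentially correct, but it follows a genuinely different (and less robust) route than the paper's, and there is a minor numerical slip.

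The paper's proof is shorter and avoids the "main obstacle" you identify. It does \emph{not} try to establish the vanishing $H^{n-1}_{dR}(|\LA_{g-1}^{\trop}|)=0$ separately via twisted Poincar\'e duality for the even-rank group $\GL_{g-1}(\Z)$. Instead it first uses the (untwisted, odd-rank) duality and \eqref{stablerange} to pin down the \emph{middle} term of the short exact sequence: for $n>\ell_g-\kappa(g)$ one has $\dim H^n_c(L\mathcal{P}_g/\GL_g(\Z))=\dim\Omega_{nc}^{\ell_g-n}(g)=\dim\Omega_c^n(g)$, so the injection $\Omega_c^n(g)\otimes\R\hookrightarrow H_c^n(L\mathcal{P}_g/\GL_g(\Z))$ from \eqref{OmegaInjectsBothLPG} is an \emph{isomorphism}. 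The exact sequence of Proposition \ref{prop: ShortExactAgDRsequence} then reads $0\to H^{n-1}_{dR}(|\LA_{g-1}^{\trop}|)\to\Omega_c^n(g)\otimes\R\xrightarrow{(*)} H^n_{dR}(|\LA_g^{\trop}|)\to 0$, and since Theorem \ref{thm: OmegagembedsLAg} asserts precisely that $(*)$ (the map $\omega\mapsto[\omega_c]$) is \emph{injective}, both conclusions drop out simultaneously: $(*)$ is an isomorphism, and its kernel --- which is $H^{n-1}_{dR}(|\LA_{g-1}^{\trop}|)$ --- vanishes. In other words, the vanishing is a \emph{byproduct} of the injectivity theorem, not an independent input, so the twisted cohomology of $\GL_{g-1}(\Z)$ never appears.

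Your route does work in principle, but two points need correcting. First, the bound: for $n\geq d_g-\kappa(g)$ with $\kappa(g)=g$, one gets $n\geq\binom{g}{2}=d_{g-1}$, hence $\ell_{g-1}-(n-1)=d_{g-1}-n\leq 0$, not $\leq -1$; the borderline degree is $0$, and you must additionally observe that $H^0(\GL_{g-1}(\Z);\R^{\det})$ vanishes because $\GL_{g-1}(\Z)$ contains elements of determinant $-1$. Second, for a general $\kappa(g)$ (not assuming $\kappa(g)=g$) your argument does not close without an independent input about twisted stable cohomology of $\GL_{g-1}(\Z)$ --- exactly the issue you flag as the ``main obstacle.'' The paper's argument is robust against any $\kappa(g)$ satisfying \eqref{stablerange} precisely because it never needs this. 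So: your approach is salvageable but heavier, and I would recommend restructuring along the paper's lines, letting Theorem \ref{thm: OmegagembedsLAg} do the work.
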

\begin{proof} 
Let $g>1$ be odd. Since $L \mathcal{P}_g/\GL_g(\Z)$ is orientable of dimension $\ell_g= d_g-1$, Poincar\'e duality implies that 
$H_{c}^n (L\mathcal{P}_g/ \GL_g(\Z)  ;\R )  \cong H^{\ell_g-n} (L\mathcal{P}_g/ \GL_g(\Z)  ;\R )^{\vee} $ and so we have 
\[ \dim_{\R} H_c^{n}(L\mathcal{P}_g/\GL_g(\Z);\R) =  \dim_{\R} H^{\ell_g-n} (L\mathcal{P}_g/ \GL_g(\Z)  ;\R )  \overset{\eqref{stablerange}}{=} \dim_{\Q} \Omega_{nc}^{\ell_g-n}(g) \]
where the last equality holds for $n > \ell_g - \kappa(g)$. 
In  \eqref{OmegaInjectsBothLPG}   we established an injection 
\begin{equation} \label{recapOmegainjects} \Omega_c^{n} (g) \otimes_{\Q} \R \hookrightarrow  H_c^{n}(L\mathcal{P}_g/\GL_g(\Z)) 
\end{equation}  for all $n$. The Hodge star operator  formalism implies  that 
$  \dim_{\Q} \Omega_c^n(g) = \dim_{\Q} \Omega_{nc}^{\ell_g-n}(g)$ for all $n$, and hence if   $ n > \ell_g - \kappa(g)$,   we have equality of dimensions in \eqref{recapOmegainjects}. In summary,
\[  \Omega^n_c(g) \otimes_{\Q} \R \cong H_c^n (L\mathcal{P}_g/ \GL_g(\Z)  ;\R )   \quad \hbox{ for } n > \ell_g - \kappa(g)\ .\]
Since  $  \left| \LA^{\circ, \trop}_g \right| \cong L\mathcal{P}_g/\GL_g(\Z)$ we deduce that  
$H_{dR,c}^n (\left| \LA^{\circ, \trop}_g \right|  ) \cong   \Omega^n_c(g) \otimes_{\Q} \R $ for $n$ in this range, and the exact sequence of 
proposition  \ref{prop: ShortExactAgDRsequence} becomes 
\[ 0 \To H_{dR}^{n-1} (\left| \LA^{\trop}_{g-1}\right|    ) \To  \Omega^n_c(g) \otimes_{\Q} \R \overset{(*)}{\To} H_{dR}^n (\left| \LA^{\trop}_g\right|    )  \To 0 \ .  \]
By theorem \ref{thm: OmegagembedsLAg}, the  map  $\Omega^n_c(g) \otimes_{\Q} \R \rightarrow H_{dR}^n (\left| \LA^{\trop}_g\right|    ) $ is injective, hence  $(*)$ is an isomorphism, giving the first statement. Its kernel vanishes, which gives the second.
\end{proof}

\subsection{Canonical forms and the tropical Torelli map} \label{sect: VanishUnderTorelli} 
The composition 
\[  \Omega^n_c(g)\otimes_{\Q} \R  \To    H^n_{dR} \left(  |\LA_g^{\trop}| \right)   \overset{\lambda^*}{\To}  H^n_{dR} \left(  |\LM_g^{\trop}| \right) \]
 sends the majority of forms to zero. Indeed, if $\eta \in \Omega^d(g)$ has degree $d$ then
\begin{equation}  \label{lambdastarkills} 
\lambda^*  (\omega^{2g-1} \wedge \eta) = 0  \quad \hbox{ if }\quad   d> g-2  \ . \end{equation} 
This is because the left-hand side restricts to the form  $\omega^{2g-1}_G\wedge \eta_G$ in the notation of \cite{BrSigma} on the simplex $\sigma_G$ associated to a graph $G$. By  \cite[Corollary 6.19]{BrSigma}, it vanishes if its degree is greater than $3g-3$. It might  be possible to reduce the degree $d$ in  \eqref{lambdastarkills} by a more detailed analysis  of vanishing of graph forms \cite[\S6.5]{BrSigma}. Evidence suggests that  $\lambda^*(\omega)$ is non-zero if and only if $\omega$ is primitive, or   more precisely,  that the image of
\[  \lambda^*: H_{dR}^n(L\mathcal{P}_g/\GL_g(\Z)) \To  H^n_{dR}(\left| \LM_g^{\trop} \right|) \] 
is  one-dimensional:   $\mathrm{Im}(\lambda^*) = \Q [\omega^{2g-1}]$.  The results of  \cite{Wheels} imply that the image of $\lambda^*$ contains $ \Q [\omega^{2g-1}]$  and is dual to the class of the wheel with $g$ spokes.

\begin{rem} Nevertheless, a conjecture in \emph{loc. cit.} states that the free Lie algebra  embeds:
\[ \mathrm{Lie}\,(\Omega^{\can}) \hookrightarrow  \bigoplus_{g\geq 1} H^\bullet  \left(  |\LM_g^{\trop}| \right)\ .\]
It is corroborated by all  known evidence  in low degrees, and suggests that all canonical forms  do appear in the cohomology of 
$ H^{\bullet}_{dR} \left(  |\LM_g^{\trop}| \right)$, but for a higher than expected genus. 
\end{rem}
\section{Periods} \label{section: Periods} 
We work in a suitable neutral Tannakian category  $\mathcal{H}_{\Q}$  of motivic realisations over $\Q$.   There are many possible variants \cite{DeligneP1}. We assume that  it has two fiber functors
\[ M \mapsto M_B, M_{dR}  \ : \  \mathcal{H}_{\Q} \To \mathrm{Vec}_{\Q}\]
and that there is a canonical element $\mathrm{comp}_{B,dR} \in \mathrm{Isom}_{\mathcal{H}_{\Q}}^{\otimes}(\omega_{dR}, \omega_B)(\C)$.  A convenient option  is to let  $\mathcal{H}_{\Q}$  be the category of mixed Hodge structures with additional $\Q$-de Rham structure studied in \cite{NotesMot}. 
 The element $\mathrm{comp}_{B,dR}$ defines an isomorphism 
 \[M_{dR} \otimes_{\Q} \C\overset{\sim}{\To} M_B\otimes_{\Q} \C \ ,\]  which is natural in $M$,  which may be interpreted as a  period pairing 
$M_{dR} \otimes_{\Q} M_B^{\vee} \rightarrow \C$.

Consider the affine group scheme:
\[ G_{dR} = \mathrm{Aut}^{\otimes}_{\mathcal{H}_{\Q}}(\omega_{dR})\]
given by  the automorphisms of the fiber functor $\omega_{dR}: M \mapsto M_{dR}$.  By the Tannaka theorem \cite{Saavedra}, the fiber functor $\omega_{dR}$ defines an equivalence of categories:
\[ \omega_{dR}:  \mathcal{H}_{\Q}   \overset{\sim}{\To}  \mathrm{Rep}\, G_{dR} \ .\]  
\subsection{`Motives' of quadratic forms} 

\begin{defn} \label{def: motQ} Let $Q$ be a positive definite quadratic form on $V$ such that $\sigma_Q>0$. Recall that 
$\cone_{Q}^{\BB}=   ( P^{\BB_Q},   L^{\BB_Q}, \sigma^{\BB}_Q)$.  Let $d = \dim \sigma_Q$. Define 
\begin{equation}\label{motQ}  \mot_Q  = H^d \left( P^{\BB_Q} \backslash \widetilde{\Det} \  , \  L^{\BB_Q} \backslash (L^{\BB_Q} \cap  \widetilde{\Det}) \right)\ .
\end{equation}
By corollary \ref{cor: sigmadoesnotmeetDet}, we have $[\sigma^{\BB}_Q] \in (\mot_Q)_B^{\vee}$. Any canonical form  $\omega \in \Omega^d_{\can}$ defines  a class $[\omega] \in (\mot_Q)_{dR}$, and so we may define the `motivic period':
\begin{equation} \label{ImotivicQ}
I^{\mm}_Q(\omega) =  [ \mot_Q, [\sigma_Q] , [\omega] ]^{\mm}   \quad \in \quad \mathcal{O}(\mathrm{Isom}^{\otimes}_{\mathcal{H}_{\Q}}(\omega_{dR}, \omega_B)) \ .
\end{equation} 
\end{defn} 
Note that for cones $\sigma_Q$ which are contained in the determinant locus,  \cite[Lemma 4.9]{TopWeightAg} implies that they are equivalent to the cone of a quadratic form of smaller rank, so definition \ref{def: motQ} in fact covers all cases. 
The period of $I^{\mm}_Q(\omega)$ is the convergent integral
\[ \mathrm{per} \left( I^{\mm}_Q(\omega) \right) = \int_{\sigma_Q} \omega \]
which is finite by corollary \ref{cor: IntQcoverges}. Furthemore, the relation of theorem \ref{thm: Stokes} is motivic: it holds  verbatim for the objects $I^{\mm}_Q(\omega)$. 
In the case when $Q=Q_G$ comes from a graph, the object $I^{\mm}_{Q_G}(\omega)$ is equal to the `canonical  motivic Feynman integral' $I^{\mm}_G(\omega)$ defined in \cite{BrSigma}. 

\begin{rem} 
Every isomorphism $[Q] \cong [Q']$ in the category $\mathcal{D}^{\perf, \BB}$ gives rise to an isomorphism $\mot_Q \cong  \mot_Q'$. 
Similarly, face diagrams give rise to a diagram of morphisms in the category $\mathcal{H}_{\Q}$ (see \cite{BrSigma} for the case of graphs), but extraneous modifications are \emph{not} necessarily isomorphisms (although they do induce equivalences of motivic periods \eqref{ImotivicQ}). 
\end{rem} 

\begin{example} We refer to \cite{BrSigma} for many examples of graphs for which $I_G(\omega)$ are known. 
The most interesting was computed by Borinsky and Schnetz \cite{BorinskySchnetz}: 
\begin{equation} \label{IK6}  I_{K_6} (\omega^5 \wedge \omega^9) = 3\, \frac{10!}{16} \left( 12 \,  \zeta(3,5)  -  29\, \zeta(8) + 23 \, \zeta(3)\zeta(5) \right) \ .
\end{equation} 
It was proven in \cite[lemmas 3.7, 6.8]{BrSigma} that the canonical integrals are stable under duality, which implies that the volume of the graphic cell $\sigma_{K_6^{\vee}}$ where $K_6^{\vee}$ is the  matroid  dual to $K_6$, is given by the same integral. It is the `principal cone' \cite[Def. 2.13]{TopWeightAg} in the fundamental domain $\mathcal{F}_5$ for $\GL_5(\Z)$.  By \eqref{MinkowskiVolume} the volume $\int_{\mathcal{F}_5} \omega^5 \wedge \omega^9$ is  proportional to a product  $\zeta(3)\zeta(5)$. The appearance, therefore, of the  non-trivial multiple zeta value  $\zeta(3,5)$ in \eqref{IK6} suggests that  the principal cone does not span the fundamental domain: 
$\sigma_{K_6^{\vee}} \subsetneq  \mathcal{F}_5$  
and  that the tropical Torelli map is not surjective. 
   Indeed, it is known (e.g. \cite{ElbazVincentGanglSoule}) that the fundamental domain involves two further cones. We expect from \eqref{IK6} that their volumes, which are not yet  known,  will involve  non-trivial multiple zeta values. 
\end{example}
As noted in \cite[\S10.3.1]{BrSigma}, the particular linear combination $12 \,  \zeta(3,5)  -  29\, \zeta(8)$ is the same one which occurs in quantum field theory as the Feynman residue of $K_{3,4}$. It is a very striking  fact that $\zeta(3,5)$ is only ever observed in this particular combination, and never in isolation, amongst the  vast array of periods in  quantum field theory \cite{Schnetz}. This surprising phenomenon is one particular consequence of the `Cosmic' Galois group.  The geometric interpretation \eqref{IK6} of this linear combination points to an unexplored and  deep connection between quantum fields and the reduction theory of quadratic forms. 

\subsection{Locally finite homology `motive' of $\GL_g(\Z)$} 
Let $\mathcal{H}_{\Q}$, as above,  be a  Tannakian category  of realisations which contains every  object $\mot_Q$ \eqref{motQ}.

Consider the cohomology
\[ H_c^d(g) = H^d \left( \ker \left( \Omega^{\bullet} \left(   \LA^{\trop, \BB}_g \ \backslash  \ \widetilde{\Det} \right) \To 
 \Omega^{\bullet} \left( \partial  \LA^{\trop, \BB}_g \ \backslash  \  (  \partial  \LA^{\trop, \BB}_g \cap \widetilde{\Det} )\right) \right)  \right)\]
 of the complex of compatible systems (definition \ref{defn: GlobalForm}) of algebraic forms on $  \LA^{\trop, \BB}_g\backslash \widetilde{\Det}$ whose restriction to the boundary $  \partial  \LA^{\trop, \BB}_g$ vanishes.  There is a natural map
 \[ H_c^d(g) \To H^d_{dR} \left(   \left|   \LA^{\trop, \BB}_g  \right|  \ , \  \left|   \partial \LA^{\trop, \BB}_g  \right| \right) \cong H^d_c(L \mathcal{P}_g/\GL_g(\Z);\R)\] 
 via which elements of $H_c^d(g)$  may be interpreted as smooth  compactly supported differential forms 
 on  $L \mathcal{P}_g/\GL_g(\Z)$. There is a natural map $\Omega^d_c(g) \rightarrow H_c^d(g)$ (which is injective).
 
 \begin{thm} \label{thm: MotiveGLg} For every $g>1$, and $d\geq0 $, there  exists a minimal  object $\MM^d_g$ of $\mathcal{H}_{\Q}$ which is equipped with a pair of canonical  linear maps
\begin{eqnarray}
H^{\lf}_d(\GL_g(\Z);\Q)   & \To &   ( \MM^d_g)_B^{\vee} \nonumber \\ 
H^d_c(g)   & \To &   (\MM^d_g)_{dR} \nonumber 
\end{eqnarray} 
such that the integration pairing
\[  H^{\lf}_d(\GL_g(\Z);\Q)\otimes_{\Q} H^{d}_{c}(g)  \To \C\]
factors through the period pairing:
$\langle \ , \ \rangle: (\MM^d_g)_B^{\vee}  \otimes_{\Q}  (\MM^d_g)_{dR} \rightarrow \C$. 
\end{thm}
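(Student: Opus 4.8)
\textbf{Proof plan for Theorem \ref{thm: MotiveGLg}.} The plan is to mimic the construction of $\mot_Q$ from Definition \ref{def: motQ}, but glued over the whole diagram category rather than for a single cone, and to exploit that $\widetilde{\Det}$ is at infinity (Theorem \ref{thm: Detatinfinity}). Concretely, for each object $x$ of $\mathcal{D}_g^{\perf,\BB}$ we have the pair $\bigl(\PF \LA_g^{\trop,\BB}(x)\setminus \widetilde{\Det}_x,\ L^{\BB}_x \setminus (L^{\BB}_x\cap\widetilde{\Det}_x)\bigr)$ of smooth $\Q$-varieties, and the relative cohomology of this pair is an object of $\mathcal{H}_{\Q}$ (indeed it lies in the Tannakian subcategory generated by the $\mot_Q$, since the faces of $\cone^{\BB}_Q$ are products of blow-ups of polyhedral linear configurations by the corollary in \S\ref{sect: FacesAndMultBoundary} and the relative cohomology decomposes accordingly). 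The first step is to assemble these into a single object: one forms the complex in $\mathcal{H}_{\Q}$ computing the cohomology of the diagram, i.e. the totalisation of the cochain complex indexed by $\mathcal{D}_g^{\perf,\BB}$ (equivalently by $\mathcal{I}_g^{\perf,\BB}$ after inverting extraneous modifications, which are cohomology isomorphisms on the relevant relative groups). Define $\MM_g^d$ to be the image, in the appropriate cohomological degree of this complex of $\mathcal{H}_{\Q}$-objects, of the subobject generated by the classes coming from $H^d_c(g)$ and dually by the locally finite cycles — i.e. take $\MM_g^d$ minimal with the required maps, which exists because $\mathcal{H}_{\Q}$ is abelian and the two maps (one into $(-)^\vee_B$, one into $(-)_{dR}$) have a common source/target after applying $\mathrm{comp}_{B,dR}$.

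The second step is the de Rham side. By the definition of $H^d_c(g)$ as the cohomology of the kernel complex of $\Omega^\bullet(\LA_g^{\trop,\BB}\setminus\widetilde{\Det})\to\Omega^\bullet(\partial\LA_g^{\trop,\BB}\setminus(\cdots))$, and since $\widetilde{\Det}$ is at infinity, every class in $H^d_c(g)$ is represented by a compatible system of algebraic forms with poles only along $\widetilde{\Det}$ and vanishing on the boundary; this is exactly a de Rham class of the relative cohomology complex above. Hence there is a canonical map $H^d_c(g)\to (\MM_g^d)_{dR}$, functorially in the diagram. The third step is the Betti side: by Theorem \ref{thm: RelativeCellular} and the identification $|\LA_g^{\trop,\BB}|\setminus|\partial\LA_g^{\trop,\BB}|\cong L\mathcal{P}_g/\GL_g(\Z)$, the locally finite homology $H^{\lf}_d(\GL_g(\Z);\Q)=H^{\lf}_d(L\mathcal{P}_g/\GL_g(\Z);\Q)$ is computed by the face complex $\mathfrak{C}_{\LA_g^{\trop,\BB}}/\mathfrak{C}_{\partial\LA_g^{\trop,\BB}}$, whose generators are (oriented) cells $[\sigma_Q^{\BB},\varpi]$; each such cell, being disjoint from $\widetilde{\Det}$ by Corollary \ref{cor: sigmadoesnotmeetDet}, defines a relative singular cycle in the pair $\bigl(\PF\LA_g^{\trop,\BB}(x)(\C)\setminus\widetilde{\Det}_x,\ L^{\BB}_x(\C)\setminus\cdots\bigr)$ and hence a functional on Betti relative cohomology. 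Checking compatibility with the differentials — that the face boundary map corresponds to the connecting map in relative cohomology — is routine and parallels the proofs of Theorems \ref{thm: CellularHomology} and \ref{thm: RelativeCellular}. This produces $H^{\lf}_d(\GL_g(\Z);\Q)\to(\MM_g^d)_B^\vee$.

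The final step is to check that the integration pairing $\int_\gamma\omega$ factors through the period pairing of $\MM_g^d$. This is a direct consequence of the compatibility of Stokes' theorem with the comparison isomorphism $\mathrm{comp}_{B,dR}$: the period of the motivic object $[\MM_g^d,[\gamma],[\omega]]$ is by construction the integral of a representing algebraic form over a representing topological cycle, and the integral converges (the forms are regular near the compact cells and the cells avoid $\widetilde{\Det}$), exactly as in Corollary \ref{cor: IntQcoverges}. The main obstacle I expect is the bookkeeping needed to make ``assemble the relative cohomologies over the diagram into a single object of $\mathcal{H}_{\Q}$'' precise: one must produce an actual complex (or at least a well-defined object) in $\mathcal{H}_{\Q}$ — not merely its Betti and de Rham realisations separately — and verify that the spectral sequence / totalisation argument used for $H^{\bullet}_{dR}$ and $H_{\bullet}$ in \S\ref{section: HomologyCohomology} lifts to the motivic level, using that each term $H^\bullet(\PF F_x\setminus\widetilde{\Det}_x,\,L_x^{\BB}\setminus\cdots)$ is an object of $\mathcal{H}_{\Q}$ and each morphism in $\mathcal{D}_g^{\perf,\BB}$ (face map, isomorphism, extraneous modification) induces a morphism in $\mathcal{H}_{\Q}$. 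Once this is granted, defining $\MM_g^d$ as the minimal subquotient carrying the two maps and the pairing is formal, and minimality is what pins it down uniquely.
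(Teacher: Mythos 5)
Your proposal takes a genuinely different route from the paper, and the place where you yourself flag the ``main obstacle'' is in fact where a real gap lies. You propose to totalise the relative-cohomology objects over the diagram $\mathcal{D}_g^{\perf,\BB}$ (or $\mathcal{I}_g^{\perf,\BB}$, after inverting extraneous modifications) into a complex in $\mathcal{H}_{\Q}$ and take its cohomology. The problem is that extraneous modifications are \emph{not} isomorphisms in $\mathcal{H}_{\Q}$: they leave the semi-algebraic set $\sigma^{\BB}$ unchanged but genuinely alter the ambient scheme, hence alter $H^d(P^{\BB}\setminus\widetilde{\Det},\,L^{\BB}\setminus\cdots)$ as an object of $\mathcal{H}_{\Q}$. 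The paper says exactly this in the remark after Definition \ref{def: motQ}. So the assertion in your proposal that the totalisation can be formed ``equivalently'' over $\mathcal{I}_g^{\perf,\BB}$ because extraneous modifications ``are cohomology isomorphisms on the relevant relative groups'' does not hold at the level of $\mathcal{H}_{\Q}$-objects (only on topological realisations), and without it the identification of the Betti realisation of your totalisation with $H^{\lf}_d(\GL_g(\Z);\Q)$ is not secured. Repairing this would require a non-trivial argument showing the totalisation over $\mathcal{D}_g^{\perf,\BB}$ still has the right Betti realisation despite the non-invertibility.

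The paper sidesteps this entirely with a different device. Rather than totalising, it forms the crude direct sum $M_g^d=\bigoplus_{\dim\sigma_Q=d}\mot_Q^{\mathrm{Aut}(\sigma_Q)}$ over isomorphism classes of cells of a single dimension $d$ (so no differentials, no complex to build). It then uses the Tannakian formalism directly: given the $\Q$-subspaces $B_g^d\subset Z_g^d$ of $(M_g^d)_{dR}$ (exact and closed compatible systems), the unique subobjects $\langle B_g^d\rangle\subset\langle Z_g^d\rangle\subset M_g^d$ with $\langle Z_g^d\rangle_{dR}=G_{dR}\cdot Z_g^d$ and likewise for $B$ are defined via the Tannaka equivalence with $\mathrm{Rep}\,G_{dR}$, and one sets $N_g^d=\langle Z_g^d\rangle/\langle B_g^d\rangle$. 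The Betti map is then obtained by showing that cellular cycles land in the subquotient, which is proved by invoking the \emph{motivic} Stokes relation (Theorem \ref{thm: Stokes}) — i.e.\ that the full $G_{dR}$-orbit of a coboundary or a boundary-supported chain pairs to zero — a step your proposal leaves as ``routine'' but which is the substantive input. This ``direct sum plus $G_{dR}$-orbit subobjects'' trick is exactly what lets the paper bypass the need to build a complex in $\mathcal{H}_{\Q}$ and to reckon with extraneous modifications, and it is the idea missing from your proposal.
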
 
\begin{proof} 
First consider the object of $\mathcal{H}_{\Q}$ defined by:
\[ M^d_g = \bigoplus_{\dim \sigma_Q = d}  \mot_Q^{\mathrm{Aut}(\sigma_Q)}\ ,\]
where the sum is over  isomorphism  classes of cones $\sigma_Q$ of dimension $d$. 
Consider the subspace  $Z_g^d \subset \Omega^d( \LA^{\trop,\BB}_g\backslash \widetilde{\Det})$ consisting of compatible systems of closed forms whose restriction to 
    $ \partial \LA^{\trop, \BB}_g$ vanishes, and let $B_g^d\subset Z^d_g$ be the subspace spanned by  $d\omega$, where $\omega$ are compatible systems 
    of forms vanishing  on   $ \partial \LA^{\trop, \BB}_g$.
      There is a natural map
$Z_g^d \rightarrow  (M^d_g)_{dR}$.  By the Tannaka theorem, there are unique subobjects $\langle B_g^d \rangle \subset \langle Z_g^d \rangle \subset M^d_g$ such that 
\[  \langle Z_g^d\rangle_{dR}  =    G_{dR}\, Z_g^d    \qquad \hbox{and} \qquad   \langle B_g^d\rangle_{dR}  =  G_{dR}\, B_g^d\ .\]
Let $ N_g^d  =  \langle Z_g^d \rangle /  \langle B_g^d \rangle$. 
By definition, there is  a natural map $H^{d}_{c}(g)   \rightarrow   \left( N^d_g \right)_{dR}$.

By theorem \ref{thm: Detatinfinity},  inclusion defines a  natural map
\[ H_d(\sigma_Q, \partial \sigma_{Q})  \To (\mot_Q)^{\vee}_B =  H_d \left( (P^{\BB_Q} \backslash \widetilde{\Det}) (\C) \  , \  L^{\BB_Q} \backslash (L^{\BB_Q} \cap  \widetilde{\Det})(\C);\Q \right) \] 
which induces  a map
\[  \bigoplus_{ \dim \sigma_Q =d}  H_d(\sigma_Q, \partial \sigma_{Q})  /\mathrm{Aut}(\sigma_Q)  \To (M^d_g)_B^{\vee} \ ,  \]
where the direct sum is over all isomorphism classes of cones $\sigma_Q$  of dimension $d$.  Via the cellular interpretation of homology (theorem \ref{thm: CellularHomology}), we deduce a map from closed   
 cellular chains $\gamma$  in $\LA_g^{\trop,\BB}$  
   to    $(M_g^d)^{\vee}_B$ and thence to its quotient  $\langle Z^d_g\rangle_B^{\vee}$. Since Stokes' theorem \eqref{StokesForQ} holds for motivic periods, the period pairing 
$ \langle \gamma, g\omega \rangle $
vanishes 
for all $g\in G^{dR}$ if $\omega \in B_g^d$; and for all $g\in G^{dR}$ and $\omega\in Z_g^d$ if $\gamma$ is  supported on 
$\partial \LA_g^{\trop,\BB}$, or if $\gamma = \partial \alpha$ is  a boundary. 
This proves, at the same time, that the image of a cellular chain $\gamma$  lands in the subspace $(\langle Z^d_g\rangle  / \langle B^d_g\rangle)_B^{\vee}$ of $\langle Z^d_g\rangle_B^{\vee}$, and that it is zero if $\gamma$ is a boundary, or supported on $\partial \LA_g^{\trop, \BB}$. Thus by theorem \ref{thm: RelativeCellular}, there is a  well-defined map 
\[ H_d\left( \left| \LA_g^{\trop,\BB} \right| \ ,  \  \left| \partial \LA_g^{\trop,\BB} \right|  \right)   \To  (N^d_g)_B^{\vee} \ .  \] 
 The left-hand group  is isomorphic to the locally finite homology
$H^{\lf}_d(\GL_g(\Z); \Q),$
and so the object $N^d_g$ satisfies the conditions of the theorem. This proves the existence. For the minimality, one can repeat the argument of \cite[\S2.4]{NotesMot}, to show that  any object $M$  of $\mathcal{H}_{\Q}$ equipped with maps  $V \hookrightarrow  M_B^{\vee}$ 
and $W \hookrightarrow M_{dR}$, where $V,W$ are $\Q$-vector spaces, has a unique smallest subquotient with the this property (the case when $V$, and $W$ have dimension $1$ is proven in  \cite[\S2.4]{NotesMot}, but the general case is similar). 
\end{proof}

It follows from \eqref{OmegaInjectsBothLPG}  and the relative de Rham theorem \ref{thm: RelativeCellular} that the composition
\[\Omega_{c}^d(g)\To H^{d}_c(g)    \To   (\MM^d_g)_{dR}\] 
is injective. Theorem \ref{thm: MotiveGLg} implies that any integral of a canonical form of compact type over a locally finite homology class in $H^{\lf}_d(\GL_g(\Z))$ is motivic.
In particular, we deduce a motivic interpretation of Minkowski's  volume integrals \eqref{MinkowskiVolume}.

\begin{rem}  An interesting question is whether the map  $H^{\lf}_d(\GL_g(\Z);\Q) \rightarrow ( \MM^d_g)_B^{\vee}$ is injective or not: is it the case that the entire  locally finite  homology of $\GL_g(\Z)$ is motivic in this sense, or only a quotient of it?  (One could  capture the entire cohomology by enlarging $H^d_c(g)$, for instance by replacing the determinant locus with  suitable linear subspaces, since every cohomology class can be represented by a compatible family of  algebraic forms  by remark \ref{rem: AllFormsPolynomial}.)   A deeper question is  to describe the action of  $G_{dR}$ on $ ( \MM^d_g)_{dR}$.  See  \cite[\S9.5]{BrSigma} for a related discussion for  motives arising from the moduli space of tropical curves. 
\end{rem} 

\begin{example} In the case $g=3, d=5$, the map 
\[ \Q \cong H_5^{\lf}(\GL_3(\Z);\Q) \To (M^5_3)^{\vee}_B\] 
is injective. In fact, $M_3^5$ is given by the graph `motive' \cite{BEK} of the wheel with 3 spokes $W_3$, which, one can show, is of rank two, and  is a non-trivial  extension of $\Q(-3)$ by $\Q(0)$.  The space $\Q(-3)_{dR}$ is spanned by the class of the canonical form $\omega^5$; the group $ H_5^{\lf}(\GL_3(\Z);\Q)$ is spanned by the  fundamental class   which is  the relative homology  class of   $\sigma^{\BB}_{W_3}$.  The corresponding period is the `wheel integral' and equals  $60 \zeta(3)$.  
\end{example}

\section{Appendix: Blow-ups and Borel-Serre for $\GL_n$} 

In this appendix, we prove that our blow-up construction can be used to retrieve the Borel-Serre compactification. 
In view of our applications, we have focused on the case $\GL_n(\Z)$, but many of the  arguments presented  here are quite general. This section is mostly self-contained, although one or two  proofs require statements in the main text. 
\subsection{Statements} \label{sect: BSstatement}
Let $V$ be a vector space of dimension $n$ over $\Q$. Let $\Quad(V)$ denote the space of quadratic forms on $V$ and $\Pro(\Quad(V))$ the associated projective space.
Let 
\begin{equation} \label{XsubPQ} X  \quad \subset \quad \Pro(\Quad(V))(\R) \end{equation} 
denote the space of positive definite  real quadratic forms.   It is contained in the complement of the determinant hypersurface $\Det \subset \Pro(\Quad(V))$. 
For any  $0\neq K\subset V$, the space $\Pro(\Quad(V/K))\subset \Det$ is the subspace of quadratic forms with null space $K$.

\begin{defn} Let $\pi_{\BBB}: \BBB \rightarrow \Pro(\Quad(V))$ denote the space\footnote{We shall only consider   $\BBB(\R)$, viewed as a  topological space with a stratification by exceptional divisors} obtained by blowing up all (infinitely many) linear  subspaces of $ \Pro(\Quad(V)) $   of the form  
$ \Pro(\Quad(V/K))$ 
for $0\neq K\subset V$, in increasing order of dimension.  Let $\widetilde{\Det} \subset \BBB$ denote the strict transform of $\Det$.
\end{defn}

Let $X^{\mathrm{BS}}$ denote the Borel-Serre compactification of $X$.  It has a stratification 
\[ X^{\mathrm{BS}} = X \cup \bigcup_{P} e(P)\]
where $P$ ranges over rational parabolics (see \S \ref{sect: RemindersBS}). The exceptional divisor  $\mathcal{E}$ for $\pi_{\BBB}$  defines a stratification on $\BBB$ by taking intersections of irreducible components.  Since  $\pi_{\BBB}(\mathcal{E})\subset \Det$,  the inclusion \eqref{XsubPQ} induces an injective map: 
\begin{equation} \label{XsubBopen}  X  \ \To \   \left( \Pro(\Quad(V)) \setminus \Det \right) (\R)   \overset{\pi_{\BBB}^{-1}}{\To}  \left( \BBB   \setminus   \mathcal{E} \right) (\R)  \ .
\end{equation}

\begin{thm}   \label{thm: MainXBStoBlowup} There is a continuous injective map of stratified spaces
\begin{equation} \label{XBSToBlowUp} 
f : X^{\mathrm{BS}} \To  \left(\BBB  \ \setminus \ \widetilde{\Det}\right) (\R) 
\end{equation} 
  whose restriction to the big open stratum is the embedding \eqref{XsubBopen}.
 The image of \eqref{XBSToBlowUp} is the closure of $X$, for the analytic topology, inside $\BBB (\R)$.
 The map $f$ is equivariant with respect to the natural actions of $\GL(V)$ on both $X^{\mathrm{BS}}$ and $\BBB$.
 
Let $P$ denote the rational parabolic associated to  a  nested sequence of  strict subspaces $0 \subset V_d\subset V_{d-1}\subset \ldots \subset V_1 \subset V$. The map  \eqref{XBSToBlowUp}  restricts to a map
\begin{equation} \label{foneP}  f: e(P) \To \mathcal{E}_P \backslash \left( \widetilde{\Det} \cap \mathcal{E}_P\right)  \,(\R) 
\end{equation} 
where $\mathcal{E}_P\subset \BBB$ is the exceptional locus associated to the iterated  blow-up of   \[ \Pro(\Quad(V/V_1)) \subset \ldots \subset \Pro(\Quad(V/V_d))  \ \]
in increasing order of dimension.  In particular, $f(e(P)) = f(X^{\mathrm{BS}}) \cap \mathcal{E}_P(\R)$.
The complement of the strict transform of $\Det$ is    canonically  isomorphic to  a product of hypersurface complements:
\[   \mathcal{E}_P \backslash \left( \widetilde{\Det} \cap \mathcal{E}_P\right) \cong \prod_{k=0}^d   \left(  \Pro\left( \frac{\Quad(V/V_{k+1}) }{   \Quad(V/V_{k})  }   \right) 
\ \setminus \  \Det\big|_{V_{k}/V_{k+1}}   \right) 
\]
where  we write $V_0=V$ and $V_{d+1}=0$, and
 $\Det|_{V_{k}/V_{k+1}}$ is  the vanishing locus of the determinant of the  restriction of a quadratic form on $V/V_{k+1}$ to the subspace $V_k/V_{k+1}$.
\end{thm}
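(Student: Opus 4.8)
The plan is to prove Theorem \ref{thm: MainXBStoBlowup} by going through the Borel--Serre construction directly and identifying its boundary strata with exceptional divisors of the (infinite) iterated blow-up $\BBB$, using the local factorisation of the determinant established in Proposition \ref{lem: stricttransformofDet} and its iterated form in Proposition \ref{prop: structureoffacesinblownupcone}.

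First I would set up the explicit model of $X^{\mathrm{BS}}$. Recall that for a rational parabolic $P$ stabilising a flag $0 \subset V_d \subset \cdots \subset V_1 \subset V$, the associated boundary component $e(P)$ fibers over the product of the symmetric spaces $X_{V_k/V_{k+1}}$ of positive definite forms on the successive quotients, with unipotent fiber. Concretely, a positive definite quadratic form $Q$ degenerating towards $e(P)$ can be written, after choosing a splitting compatible with the flag, in a block form whose diagonal blocks scale at very different rates; rescaling the $k$-th block by a parameter $t_k \to 0$ and passing to the limit, the off-diagonal (unipotent) data disappears in the naive limit but is retained in the Borel--Serre corners via the geodesic action. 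This is precisely mirrored, on the blow-up side, by the local coordinates of \S\ref{sect: LocalBlowUpCoordinates}: blowing up the nested chain $\Pro(\Quad(V/V_1)) \subset \cdots \subset \Pro(\Quad(V/V_d))$ introduces exceptional parameters $z_1,\ldots,z_d$, and setting them to zero recovers, by \eqref{detfact} applied iteratively (Proposition \ref{prop: structureoffacesinblownupcone}), the product structure
\[
\mathcal{E}_P \setminus (\widetilde{\Det}\cap\mathcal{E}_P) \;\cong\; \prod_{k=0}^d \left( \Pro\!\left( \frac{\Quad(V/V_{k+1})}{\Quad(V/V_k)}\right) \setminus \Det\big|_{V_k/V_{k+1}} \right)\ .
\]
Here the factor indexed by $k$ records the restriction of the $z_k$-scaled block to $V_k/V_{k+1}$ together with the residual off-diagonal Hom-space, exactly the data of $X_{V_k/V_{k+1}}$ together with its unipotent fiber direction. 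So the plan is: construct $f$ on the open stratum as in \eqref{XsubBopen}; show that a sequence in $X$ converging to a point of $e(P)$ in the Borel--Serre topology has the property that $\pi_{\BBB}^{-1}$ of it converges in $\BBB(\R)$ to a point of $\mathcal{E}_P(\R) \setminus \widetilde{\Det}(\R)$; and conversely that every limit point in $\BBB(\R) \setminus \widetilde{\Det}(\R)$ arises this way, giving that $f(X^{\mathrm{BS}})$ is precisely the closure of $X$.

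The key steps, in order: (1) Fix a parabolic $P$ and a compatible splitting $V \cong \bigoplus_k V_k/V_{k+1}$; write the generic positive definite $Q$ in block-triangular form and match the Borel--Serre scaling parameters $t_k$ with the blow-up exceptional parameters $z_k$ via the change of variables of \S\ref{sect: LocalBlowUpCoordinates}. (2) Use the iterated determinant factorisation \eqref{detfact} (i.e. Proposition \ref{lem: stricttransformofDet} applied successively) to check that the strict transform $\widetilde{\Det}$ meets $\mathcal{E}_P$ only along the loci where one of the \emph{diagonal} blocks degenerates, hence that the positive definite locus stays away from $\widetilde{\Det}$ in the limit; this is where the product-of-hypersurface-complements description drops out. (3) Verify that the limit of $\pi_{\BBB}^{-1}(Q)$ as $Q$ approaches $e(P)$ depends continuously on the boundary data and that the off-diagonal Hom-blocks $P_k$ survive into $\mathcal{E}_P$ (they are precisely the fiber coordinates of \eqref{piKdef}), recovering the fibration structure of $e(P)$. (4) Check $\GL(V)$-equivariance: both constructions are functorial in $V$ and the blow-up of all $\Pro(\Quad(V/K))$ is $\GL(V)$-stable, so $f$ intertwines the two actions; reference Lemma \ref{lem: PiKdef} for the equivariance of $\pi_K$. (5) Assemble the strata: since every point of $X^{\mathrm{BS}}$ lies in a unique $e(P)$ and every intersection of exceptional components of $\BBB$ is some $\mathcal{E}_P$ (by the translation of nested sequences of subspaces into iterated blow-up strata, cf. \eqref{proWflag}), injectivity and the identification $f(e(P)) = f(X^{\mathrm{BS}}) \cap \mathcal{E}_P(\R)$ follow from the disjointness of strata on both sides. (6) Finally, continuity of $f$ as a map of stratified spaces: check it in the explicit corner charts, matching the Borel--Serre collar coordinates $(t_1,\ldots,t_d)$ with the blow-up chart coordinates $(z_1,\ldots,z_d)$; this is a local computation in each chart.

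The main obstacle I anticipate is step (3) together with the continuity statement in step (6): one must show not merely that the \emph{sets} match but that the Borel--Serre topology on $X^{\mathrm{BS}}$ — defined via the geodesic action and horospherical coordinates — agrees with the subspace topology from $\BBB(\R)$, and in particular that the unipotent/off-diagonal data is captured correctly in the limit rather than being lost or double-counted. This requires writing down the Borel--Serre corner charts and the blow-up local charts simultaneously and exhibiting an explicit homeomorphism between them, being careful that the blow-up introduces one exceptional parameter per step of the flag (matching the rank of the geodesic action torus $(\R_{>0})^d$) and that the remaining coordinates split cleanly into the "diagonal" symmetric-space directions and the "off-diagonal" unipotent directions. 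Once the charts are aligned, the global statement follows by $\GL(V)$-equivariance and the combinatorics of strata; but setting up that chart-level comparison cleanly — essentially a precise dictionary between reduction theory à la Borel--Serre and the local coordinates of \S\ref{sect: LocalBlowUpCoordinates} — is the technical heart of the argument.
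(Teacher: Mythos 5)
Your plan attacks the theorem by a direct chart-level comparison on the space of quadratic forms: write a degenerating $Q$ in block form, match the Borel--Serre collar parameters $(t_1,\ldots,t_d)$ coming from the geodesic action with the blow-up parameters $(z_1,\ldots,z_d)$ of \S\ref{sect: LocalBlowUpCoordinates}, and use the iterated factorisation \eqref{detfact} to control $\widetilde{\Det}$. This is a genuinely different route from the paper's, and I think it contains a real gap at exactly the step you flag as the ``technical heart''. The geodesic action of $A_P$ on $X$ is \emph{not} a block-rescaling of the symmetric matrix $X$: it is defined on the group level by $X=P^TP\mapsto P^TZ^TZP$, and when rewritten intrinsically as a function of $X$ it becomes a rational map involving determinants of minors of $X$. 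The paper makes this explicit in \S\ref{sect: RemindersBS}: example (1) computes this rational map for $n=3$, and example (2) contrasts it with the much simpler block-scaling $x_{ij}\mapsto(\nu\cdots)x_{ij}$ that the blow-up coordinates actually implement. These are two different transformations on $\Pro(\Quad(V))(\R)$, so ``matching $t_k$ with $z_k$'' is not a straightforward change of variables on the quadratic-form side; it is essentially the whole content of the theorem, and the naive version of step (1)/(6) does not establish it.

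What the paper does instead is to move the comparison to the level of the parabolic subgroup itself. It introduces a monoidal quotient $\mathcal{A}_P\ql(\overline{\mathcal{A}}_P\times\mathcal{P}_F^{\circ})$ of the space $\mathcal{P}_F$ of block-triangular matrices, where $\overline{\mathcal{A}}_P\cong\Gmb^d$ acts by a genuine block-rescaling of \emph{rows} of the triangular matrix. Proposition \ref{prop: quotientandblowup} (and its projective variant, Proposition \ref{prop: projversionQuotientBlowup}) shows that this monoidal quotient is canonically an open dense subscheme of the iterated blow-up of $\Pro(\mathcal{P}_F)$ along the corresponding flag of subspaces. That is the bridge: the Borel--Serre monoid $\overline{A}_P\times X$ is identified with (the real points of) an iterated blow-up \emph{before} quotienting by $K_P$. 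Theorem \ref{cor: ePasblowup} spells this out, and only afterwards does Theorem \ref{thm: XPmapstoBlowup} push forward along the algebraic map $M\mapsto M^TM$ to land in $B_F\Pro(\Quad(V))$, using that the equation $(OM)^TOM = M^TM$ lets the $K_P$-quotient descend. To repair your proposal you would need to supply the analogue of Proposition \ref{prop: quotientandblowup}: an explicit homeomorphism between the corner charts of $A_P\setminus(\overline{A}_P\times X)$ and the blow-up charts on the strict transform of $X$, carried out in a way that survives the nontrivial rational form of the geodesic action. In practice this forces you back onto the group/triangular-matrix level, which is what the paper does.
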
 

In short, the space $\BBB$ is an algebraic incarnation of the Borel-Serre compactification with identical combinatorial structure. The space $X^{\mathrm{BS}}$ is  identified with a semi-algebraic subset  of its real points $\BBB(\R)$. It is defined by  algebraic inequalities  of the form $u\geq 0$, where $u$ is a homogeneous polynomial given by the determinant of a matrix minor.
Note that although $\BBB$ is defined by infinitely many blow-ups,  the local structure of $X^{\mathrm{BS}}$ in terms of spaces $X(P)$ (see \S\ref{sect: RemindersBS}) may be studied by embedding each $X(P)$ into a space obtained by performing only  finitely many  linear blow-ups of $\Pro(\Quad(V))$.  This is how we shall prove theorem \ref{thm: MainXBStoBlowup}.

\begin{cor} \label{cor: LAgBBtropisBS}  Let $|\LA_g|^{\trop,\BBB}$ be the topological space defined in \eqref{LAgtropBBasFunctoronInvertedBlowups}. Then the there is  a canonical homeomorphism
\[ f: X_g^{\mathrm{BS}} / \GL_g(\Z)  \overset{\sim}{\To}  |\LA_g|^{\trop,\BBB} \ .\] 
\end{cor}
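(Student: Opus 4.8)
\textbf{Proof plan for Corollary \ref{cor: LAgBBtropisBS}.}

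The plan is to deduce the corollary from Theorem \ref{thm: MainXBStoBlowup} by passing to the quotient by $\GL_g(\Z)$ and identifying the resulting space with the colimit $\varinjlim_{x \in \mathcal{I}_g^{\perf,\BB}} \sigma_x$ of \eqref{LAgtropBBasFunctoronInvertedBlowups}. First I would recall that Theorem \ref{thm: MainXBStoBlowup} gives a $\GL(V)$-equivariant homeomorphism $f$ from $X_g^{\mathrm{BS}}$ onto the closure $\overline{X_g}$ of $X_g = \mathcal{P}_g$ (viewed projectively, i.e.\ $X_g \cong L\mathcal{P}_g$) inside $\BBB(\R) \setminus \widetilde{\Det}(\R)$, and that this closure is stratified by the exceptional loci $\mathcal{E}_P$ indexed by rational parabolics $P$, equivalently by nested sequences of subspaces $0 \subset V_d \subset \ldots \subset V_1 \subset V$. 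Since $f$ is equivariant, it descends to a homeomorphism $X_g^{\mathrm{BS}}/\GL_g(\Z) \overset{\sim}{\to} \overline{X_g}/\GL_g(\Z)$; so the content of the corollary is the identification of $\overline{X_g}/\GL_g(\Z)$ with $|\LA_g|^{\trop,\BBB}$.

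The key step is to compare the two ways of building the quotient. On the $\LA_g^{\trop,\BB}$ side, $|\LA_g^{\trop,\BB}|$ is the colimit over $\mathcal{I}_g^{\perf,\BB}$ of the blown-up cones $\sigma^{\BB}_Q$, glued along face diagrams \eqref{facediagram} (with extraneous modifications inverted, per \S\ref{sect: InvertExtraneous}). On the Borel--Serre side, $\overline{X_g}$ is covered by the $\GL_g(\Z)$-translates of the closed cones $\sigma^{\BB}_Q$ sitting inside $\BBB(\R)$: indeed, by the Vorono\"i theorem, $\mathcal{P}_g^{\rt} = \bigcup_Q \widehat{\sigma}_Q$ is an admissible decomposition, so $L\mathcal{P}_g^{\rt}$ is covered by the $\sigma_Q$, and the closure $\overline{X_g} = \overline{L\mathcal{P}_g}$ in $\BBB(\R)$ is covered by the closures of the $\sigma_Q$, which by Corollary \ref{cor: sigmadoesnotmeetDet} and the construction of \S\ref{section: PerfectConeCompact} are precisely the $\sigma^{\BB}_Q$ (the blow-up $P^{\BB_Q}$ is, via Proposition \ref{prop: UniversalANDIdeal}(i), the strict transform of $\Pro^Q$ in $\BBB$, restricted to the loci meeting $\sigma_Q$). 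The overlaps of two such translates are governed by faces: if $g_1 \sigma^{\BB}_{Q_1}$ and $g_2 \sigma^{\BB}_{Q_2}$ meet in $\overline{X_g}$, their intersection is a common face, because this already holds for the $\sigma_Q$ in the admissible decomposition (intersection of two cones is a face of both) and blow-up along the functorial loci $\BB_Q$ respects faces up to extraneous modification (Lemma \ref{lem: extraneousblowdowns}, remark \ref{rem: glueconesfacediagrams}). Therefore $\overline{X_g}$ is exactly the colimit of the diagram of cones $\sigma^{\BB}_Q$ and their faces under the $\GL_g(\Z)$-action — which is the definition of $\varinjlim_{x\in \mathcal{I}_g^{\perf,\BB}}\sigma_x = |\LA_g^{\trop,\BB}|$. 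I would carry this out by: (i) checking that the $\GL_g(\Z)$-orbits of the faces of the $\sigma^{\BB}_Q$ are in bijection with the objects of $\mathcal{I}_g^{\perf,\BB}$, and the face inclusions and isomorphisms with its morphisms (this is essentially the content of \S\ref{section: PerfectConeCompact} together with the nested-sequence description of faces in Proposition \ref{prop: structureoffacesinblownupcone}); (ii) verifying that the topology on $\overline{X_g}/\GL_g(\Z)$ induced from $\BBB(\R)$ coincides with the colimit (quotient) topology — here one uses that there are finitely many orbits of cones, each compact, so the colimit is along a finite diagram and the quotient map is closed; (iii) matching the stratification of $\overline{X_g}$ by the $\mathcal{E}_P$ of Theorem \ref{thm: MainXBStoBlowup} with the stratification of $|\LA_g^{\trop,\BB}|$ by $|\partial\LA_g^{\trop,\BB}|$ and its strata, so that $f$ is an isomorphism of stratified spaces, not merely of underlying spaces.

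The main obstacle I anticipate is step (ii) together with the bookkeeping in step (i): showing that the gluing pattern of the $\GL_g(\Z)$-translates of $\sigma^{\BB}_Q$ inside the fixed ambient space $\BBB(\R)$ agrees \emph{as a topological colimit} with the abstractly-defined $|\LA_g^{\trop,\BB}|$, given the subtlety that $\mathcal{D}_g^{\perf,\BB}$ does not literally have face maps and one must pass to $\mathcal{I}_g^{\perf,\BB}$ (inverting extraneous modifications) before a genuine colimit description is available. The point to be careful about is that an extraneous modification $F_{Q'}\sigma^{\BB}_Q \to \sigma^{\BB}_{Q'}$ is a homeomorphism on topological realisations (Proposition \ref{prop: minimalblow-down}, \ref{lem: extraneousblowdowns}), so inside $\BBB(\R)$ the face $F_{Q'}\sigma^{\BB}_Q$ literally \emph{is} $\sigma^{\BB}_{Q'}$ — there is no choice being made — and hence the face-diagram gluing of remark \ref{rem: glueconesfacediagrams} collapses to honest face identifications in $\overline{X_g}$, exactly as predicted by \eqref{LAgtropBBasFunctoronInvertedBlowups}. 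Once this is pinned down, equivariance of $f$ gives the homeomorphism of quotients and the corollary follows.
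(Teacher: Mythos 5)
Your plan is essentially the same as the paper's proof: both pass through the $\GL_g(\Z)$-equivariance of $f$, decompose $X$ into Vorono\"i cones, identify the closure of each $\sigma_Q$ in $\BBB(\R)$ with the blown-up polyhedron $\sigma^{\BB}_Q$ (using extraneous-modification invariance and proposition~\ref{prop: structureoffacesinblownupcone}), and then match the $\GL_g(\Z)$-quotient of $\bigcup\sigma^{\BB}_Q$ with the colimit \eqref{LAgtropBBasFunctoronInvertedBlowups}.

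One small point of circularity you should be aware of: you treat ``the image of $f$ equals the closure $\overline{X_g}$ in $\BBB(\R)\setminus\widetilde{\Det}(\R)$'' as a given input from theorem~\ref{thm: MainXBStoBlowup}, but in the paper the proof of that particular assertion is deferred \emph{to} the corollary's argument. The paper establishes it in two halves: (a) each $\sigma^{\BB}_Q$ lies in $X^{\mathrm{BS}}$ because its open faces are products of strictly positive polyhedra and therefore land in some $e(P)$ by the last part of theorem~\ref{thm: XPmapstoBlowup}, whence $\bigcup_\sigma\sigma^{\BB}\subseteq X^{\mathrm{BS}}$; and (b) $X^{\mathrm{BS}}\subseteq\overline{X}$ because $X(P)=A_P\setminus(\overline{A}_P\times X)$ is obtained from $A_P\setminus(A_P\times X)=X$ by letting the geodesic-action coordinates $\lambda_i\to 0$. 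Your step~(iii) (matching the $\mathcal{E}_P$-stratification with the boundary strata) contains the ingredients for (a), and your covering argument gives the reverse, but as written the plan assumes what it is also being asked to establish. This is easily repaired — simply prove $X^{\mathrm{BS}}=\bigcup_\sigma\sigma^{\BB}$ directly by the two inclusions above rather than invoking the closure statement wholesale — and once that is done the rest of your plan (finitely many orbits, compact cells, the collapse of extraneous modifications to honest face identifications inside $\BBB(\R)$) goes through exactly as in the paper.
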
 

\subsection{Reminders on the Borel-Serre compactifcation for $\GL_n$} \label{sect: RemindersBS} 
Let $V$ be a vector space of dimension $n$ over $\R$ equipped with an  inner product, or equivalently, an isomorphism $V \cong  V^{\vee}$. The group of automorphisms $\GL(V)$ is isomorphic to 
$\GL_n(\R)$, and the subgroup preserving the inner product defines a maximal compact subgroup $K= O(V)$ of $\GL(V)$.  It is isomorphic to $O_n(\R)$.
  Given any filtration $F$ of $V$ of length $d$ by subspaces 
\begin{equation} \label{FlagF} F:  \qquad  0= V_{d+1} \subset  V_d \subset \ldots \subset  V_1 \subset V_{0}=  V \ , 
\end{equation} 
where all inclusions are strict, 
 denote by $P_F \leq \mathrm{GL}(V)$
the parabolic subgroup of automorphisms of $V$ which preserve $F$.
A filtration   $F'$ 
\[ F': \qquad 0 \subset  V_{i_k}  \subset \ldots \subset V_{i_1}  \subset V\ \]
where $1 \leq i_1<\ldots <i_k \leq d$ is  obtained by omitting elements from the filtration $F$, is denoted by $F'\leq F$.   The set of filtrations $F'$ such that  $F'\leq F$  forms a finite poset.  Since for $F'\leq F$,  one has  $P_F\leq P_{F'}$, the set $\{P_{F'} : F'\leq F\}$ with respect to inclusion of groups is  isomorphic to the opposite poset.

For any such $P=P_F$, as above, denote by  $K_P = P \cap K$.  The natural  map
\[ P \To \prod^d_{i=0} \GL(V_{i}/V_{i+1})\]
  induces an isomorphism   $K_P \cong \prod_{i=0}^d  O(V_{i}/V_{i+1})$, the product  of orthogonal groups with respect to the induced inner products on $V_i\rightarrow V \cong V^{\vee} \rightarrow V_i^{\vee}$.

  Let   $Z_P \leq P$ denote the subgroup of central elements acting via scalar multiplication by $\R^{\times}_{>0}$    on each quotient $V_{i}/V_{i+1}$ for $0\leq i \leq d$.
 Thus $Z_P\cong (\R^{\times}_{>0})^{d+1}$. 

\begin{example} \label{ex: Matrices}  Let $e_1,\ldots, e_n$ denote a basis  of $V$ compatible with $F$. In this basis, automorphisms of $V$ may be identified with $n\times n$ matrices, and the groups $P$ and $Z_P$ are block-lower triangular, and block diagonal respectively: 
\[ P = \begin{pmatrix}  P_{00} & & \\ P_{01} & P_{11} \\   \vdots & & \ddots \\  P_{0k} & P_{1k} & \ldots & P_{kk} \end{pmatrix} 
\qquad  , \qquad Z_P = \begin{pmatrix}  \mu_0 I_0 & & \\  & \mu_1 I_1  \\    & & \ddots \\   & &  & \mu_k I_k  \end{pmatrix}  \] 
where $k=d$, $I_0,\ldots, I_k$ are block identity matrices and $\mu_0,\ldots, \mu_k >0$.  The $P_{ij}$ are likewise block matrices. The group $K_P$ is isomorphic to the group of block diagonal matrices where the matrix $O_i$ in the $i^\mathrm{th}$ block is orthogonal: $O_i^T O_i = I_i$.
\end{example}

The group $Z_P$ will be made to  act upon  $P$ by   multiplication on the left.
Denote the  group of central elements  by $H = Z_P \cap Z(\mathrm{GL}(V)) \cong \R^{\times}_{>0}$.  In example \ref{ex: Matrices}, $H\leq Z_P$ is the subgroup of diagonal matrices. 
Let $X$ denote the space of projective equivalence classes of real positive definite quadratic forms on $V$.

\subsubsection{Geodesic action}
For any  $P=P_F$  as above there is a homeomorphism:
\begin{eqnarray}  \label{XasquotientofP}   H K_P  \setminus P  & \overset{\sim}{\To}  &   X    \\
g & \mapsto & g^T g \nonumber \ .
\end{eqnarray} 
Since it commutes with $K_P$ and $H$,  the action of $Z_P$ on $P$ passes to a well-defined action on $ H K_P  \setminus P$.  We write $A_P=H \setminus  Z_P$. The   action of $A_P$ on $X$ induced by  \eqref{XasquotientofP}  is called the 
 geodesic action. Borel and Serre consider the following space of   orbits for its action:
\[ e(P)  =   A_P \setminus X \overset{\eqref{XasquotientofP}}\cong Z_P K_P \setminus P\ .\]
It is homeomorphic to a Euclidean space $\R^{\binom{n+1}{2}-d-1}$.

The compactification of $X$ is defined as follows. Consider the  isomorphism
 \begin{eqnarray} \label{muCoordsonAP} 
 H \setminus Z_P   & \cong &  A_P \cong (\R^{\times}_{>0})^d    \\
  (\mu_0,\mu_1,\ldots, \mu_{d}) & \mapsto  & ( \lambda_1,\ldots, \lambda_{d})    \nonumber 
  \end{eqnarray} 
  where $\lambda_i = \mu_{i}/\mu_{i-1}$ for $1\leq i\leq d$. Consider the multiplicative monoid
\[ \overline{A}_P =    (\R^{\times}_{\geq 0})^d\]
with coordinates $\lambda_i \geq 0$.  The subgroup $A_P\leq \overline{A}_P$ has coordinates  $\lambda_i>0$.  
Borel and Serre define 
\[ X(P)  =   A_P \setminus (  \overline{A}_P \times X)   \ . \]
The space $\overline{A}_P$ is  stratified by the closed subspaces $\lambda_i=0$ for $i=1,\ldots, d$. The  corresponding open  stratification  is a union of copies of $(\R_{>0}^{\times})^a$ for $a\leq d$, which may be identified with the subgroups $A_{P_{F'}}$, for all  $F' \leq F$.   Thus 
\[ X(P) = X \sqcup  \coprod_{P'  }  A_{P'} \setminus  X = X \sqcup \coprod_{P' } e(P')\]  
is a disjoint union of spaces $e(P')$, where   $P'=P_{F'}$ for all $F' \leq F$. 

\subsubsection{Borel-Serre compactification}
Now suppose that $V$ has a $\Q$-structure and that its inner product  is defined over $\Q$. A parabolic $P= P_F$ is called rational if the vector spaces $V_i$ in the corresponding sequence \eqref{FlagF} are all defined over $\Q$.

The Borel-Serre compactification is defined to be
\begin{equation} \label{XBSasunionXP} X^{\mathrm{BS}} = \bigcup_{P}  X(P) 
\end{equation} 
where the union is over all rational parabolics $P$.

\begin{example} In the following example we compare and contrast the Borel-Serre approach with our own.

(1) (Borel-Serre).  Let $V= \R^3$, $d=2$, and $V_i \cong \R^{3-i}$, for $0\leq i\leq 2$. 
Write 
\[  P =  \begin{pmatrix}  p_{00} &  &   \\
p_{01} & p_{11} &   \\
p_{02} & p_{12} & p_{22}
\end{pmatrix} 
\quad \ , \ \quad
Z =  \begin{pmatrix}  \mu_0  &   &   \\
    & \mu_1 &   \\
    &     & \mu_2\end{pmatrix} 
\]
where $p_{ij} \in \R$ and $\det(P)= p_{00}p_{11}p_{22}\neq 0$. 
We set 
\[
X = P^T P =  \begin{pmatrix}  p_{00}^2 + p_{01}^2 + p_{02}^2 &  p_{02} p_{12}+p_{01} p_{11}  & p_{02}p_{22}  \\ 
p_{02}p_{12}+p_{01}p_{11} &  p_{12}^2+p_{11}^2 &  p_{12}p_{22} \\
 p_{02}p_{22}  &  p_{1 2}p_{2 2}& p_{22}^2
\end{pmatrix} \]
Let $X_{ij}=x_{ij}$. 
The geodesic action $X\mapsto Z \circ X $ where $Z \circ X =    P^T Z^T Z P $, may, as one can  check, be written intrinsically as a function of $X$
via 
\[Z \circ X =  
\mu_0^2 \begin{pmatrix}  \frac{\det(X)}{\det(X^{11})} & & \\
& & \\
& & 
\end{pmatrix} +\mu_1^2
 \begin{pmatrix}     \frac{\det(X^{21})^2}{\det(X^{11}) x_{33}}    & 
\frac{ \det(X^{21})}{x_{33}}        &    \\
\frac{ \det(X^{21})}{x_{33}}       &   \frac{\det(X^{11})}{x_{33}}    &     \\
&  & \end{pmatrix}
+ 
\mu_2^2
 \begin{pmatrix}  \frac{x_{13}^2}{x_{33}}    & 
 \frac{x_{1 3}x_{2 3}}{x_{33}}   &     x_{1 3}  \\
 \frac{x_{1 3}x_{2 3}}{x_{33}}   &   \frac{x_{2 3}^2}{x_{3 3}}    &   x_{2 3}  \\
  x_{1 3} &    x_{2 3}&  x_{3 3}
\end{pmatrix}
\]
where  $\det(X^{11}) = x_{22} x_{33} -x_{23}^2$
and 
$ \det(X^{21})  = x_{1 2}x_{3 3} -x_{13}x_{23}$ are determinants of minors of $X$.
The action $X \mapsto Z \circ X$  is clearly algebraic in the entries of $X$ and is well-defined on the locus where 
$ \det(X) \det(X^{11}) x_{33} \neq 0$. 
This example illustrates that the coordinates  given by the geodesic action are perhaps not ideally suited to studying the asymptotic behaviour of canonical differential forms at infinity. 
\\

(2). (Blow-ups.)          With notations as above, the coordinates in the blow-up are, in our view, much simpler and  are given by  scalar multiplication on matrix blocks (see \eqref{LambdaBlockForm}). They  correspond to a transformation of the form
\[  \begin{pmatrix} x_{11} & x_{12} & x_{13} \\
x_{12} & x_{22} & x_{23} \\
x_{13} & x_{23} & x_{33} \\ 
\end{pmatrix}  \mapsto  \begin{pmatrix}  \nu_0\nu_1\nu_2  \,   x_{11} & \nu_1 \nu_2 \,     x_{12} &  \nu_2\,   x_{13} \\
\nu_1\nu_2\,   x_{12} & \nu_1\nu_2\,  x_{22} & \nu_2\,  x_{23} \\
\nu_2\, x_{13} &  \nu_2\, x_{23} & \nu_2\,  x_{33} \\ 
\end{pmatrix} 
 \]
for suitable $\nu_0,\nu_1,\nu_2$.

Although the geometric origin of these two points of view are markedly  different, 
we  prove that they are in fact equivalent when restricted to the space of positive definite matrices.

\end{example} 

\begin{rem} The following intuition for the structure of $\BB(\R)$ may be helpful. Consider an  analytic family of (projective classes of) symmetric matrices $P_t \subset X\subset 
\BB(\R)$, as $t\rightarrow 0$, which are positive definite for $t>0$ and   which we think of as quadratic forms on a vector space $V$. This family  is not necessarily  geodesic. The limiting matrix $P_{0} \in \Pro(\Quad(V))(\R)$ may have vanishing determinant, in which case it is positive semi-definite, and has a null space $K\subset V$. By choosing a complementary space $V = K \oplus C$, we may write $P_t$ locally near $t=0$ in block matrix form 
\[ P_t = \begin{pmatrix}  A t &  U t \\ U^T t  & P_0 \end{pmatrix} + \hbox{higher order terms in } t \ .\] 
If $A$ is positive definite, the curve $ t\mapsto P_t$ in $\BBB(\R)$ has a limiting  point for $t=0$ which lies on  the exceptional divisor associated to the blow-up of $\Pro(\Quad(V/K))$  (if $A$ is not positive definite, then repeat the argument for the restriction of $P_t$ to $K$, i.e. the top-left block, to define a point on a higher codimension exceptional locus). To describe this limiting point, consider the projective classes of the pair  of symmetric matrices: 
\[      \begin{pmatrix} A & U \\ U^T & 0 \end{pmatrix} ,     \begin{pmatrix}  0  & 0  \\ 0  & P_0 \end{pmatrix}  \] 
which define  a point  in  the exceptional divisor $\Pro(\Quad(V)/\Quad(V/K))(\R)  \times \Pro(\Quad(V/K))(\R)$. The latter is the projectivised normal bundle of $\Pro(\Quad(V/K))$ inside $\Pro(\Quad(V))$ as evidenced by the fact that the limiting point of $P_t$ in $\BB(\R)$ is  the pair: (normal vector, limiting value). \end{rem}

\subsection{Linear monoidal actions and blow-ups}\label{sect: setup}
The Borel-Serre construction is topological, and relies on the geodesic action which is only defined on the space of positive definite matrices, since it makes crucial use of the surjectivity of  \eqref{XasquotientofP}. In order to reformulate it  in the language of algebraic geometry, we  first need some preliminaries on  monoidal actions on vector space schemes 
and their quotients by multiplicative groups. A possible relation between Borel-Serre's construction and blow-ups is hinted at in \cite[pg 437, line 5]{BorelSerre}, but does not seem to have been developed further as far as we are aware. 

Let $k$ be a field. All schemes will be over $k$.

\subsubsection{Multiplicative monoid}
The multiplicative group  is defined by $\Gm= \mathrm{Spec}\,  k[t,t^{-1}]$, where $\Or(\Gm) = k[t,t^{-1}]$ is the  Hopf algebra whose coproduct $\Delta: \Or(\Gm)\rightarrow \Or(\Gm) \otimes_k \Or(\Gm)$  satisfies $\Delta t = t\otimes t$. The multiplication on $\Gm$  is dual to $\Delta$. 

We denote the multiplicative monoid  scheme  $\Gm\leq \Gmb$ to be
\[ \Gmb = \mathrm{Spec} \, k[t] \]
where $k[t]$ is the coalgebra equipped with the   coproduct $\Delta:k[t]\rightarrow k[t]\otimes_k k[t]$ satisfying the same formula $\Delta t= t\otimes t$.  Thus,  as a scheme, $\Gmb$ is simply the affine line $\mathbb{A}^1$, but has a multiplication morphism $ \Gmb \times \Gmb \rightarrow \Gmb$. It has a distinguished point $0 \in \Gmb$ such that 
$\Gmb \backslash 0 = \Gm$, and has the property that the multiplication satisfies $0 \times \Gmb \rightarrow  0$.

\subsubsection{Linear monoidal action}
A vector space of dimension $n$ over a field $k$ will be viewed as the $k$-points of an affine scheme $\mathbb{A}^n$ in the usual manner. Direct sums correspond to products of vector-space schemes.
Consider a  finite-dimensional graded vector space $W$ over a field $k$  
\[  W= W_n \oplus W_{n-1} \oplus \ldots \oplus W_1 \oplus W_0 \ \]
where  $\dim W_i\geq 1$ for all $i$. 
The multiplicative group $\Gm \times W_i \rightarrow W_i$ acts by scalar multiplication on each component, denoted by $(\lambda, w) \mapsto \lambda w$. 
It extends to an action $\Gmb \times W_i \rightarrow W_i$  which sends $(0,w)$ to the origin of  $W_i$.
Consider the monoid  action 
\begin{equation} \label{MonoidActionMdefn}  m:  \Gmb^n \times W    \To   W   \ , 
\end{equation}
where the $i^\mathrm{th}$ component of $\Gmb$ for $i=1,\ldots, n$,  acts via 
\begin{eqnarray}  \Gmb   \times W   & \To&    W   \label{ithcomponentGmbaction} \\
(\lambda_i,   (w_n,\ldots, w_0) )  & \mapsto &  (\lambda_i w_n, \ldots, \lambda_i w_i, w_{i-1}, \ldots, w_0)  \ .  \nonumber
 \end{eqnarray} 
The action \eqref{MonoidActionMdefn} restricts to an action of $\Gm^n$ on $W$. Furthermore, let $\Gm^n$ act on $\Gmb^n$ componentwise via the map 
$\Gm \times \Gmb \rightarrow \Gmb $ given by $(\mu, \lambda ) \mapsto \lambda \mu^{-1} $, where $\mu^{-1}$ is the inverse in the group $\Gm$. 
We deduce that     \eqref{MonoidActionMdefn}   is equivariant for the diagonal action of  $\Gm^n$  on $\Gmb^n \times W$, in other words, the following diagram commutes
\begin{equation} \label{GmGmbcommute}
\begin{array}{ccc}
 \Gm^n \times (\Gmb^n \times W )  & \To   &   \Gmb^n \times W    \\
  \downarrow &   & \downarrow_m  \\
 \Gmb^n \times W   & \overset{m}{\To}    &  W  
\end{array}
\end{equation} 
where the vertical map on the left is projection onto $ \Gmb^n \times W$, and the horizontal map is the diagonal action of $\Gm^n$ on both $\Gmb^n$ and $W$.
In the case $n=1$ this diagram merely expresses the trivial fact that
$m  (  \mu.  (\lambda, w)) = m (\lambda\mu^{-1}, \mu w) = \lambda w = m (\lambda, w)$.

\subsubsection{Quotients  and blowups}
For any finite dimensional vector space $W$  over $k$,  let $W^{\times} = W \backslash \{0\}$. It is an open subscheme of $W$  stable under the action of $\Gm$, but not under the extended action   of $\Gmb$, since $ \Gmb \times W^{\times} \rightarrow W$. 
Let us denote by:
\[ W^{\star} =  W^{\times}_n \times  W^{\times}_{n-1} \times \ldots \times W^{\times}_1 \times W_0 \ , \]
(component $W_0$ \emph{sic}). 
The morphism \eqref{MonoidActionMdefn} restricts to a morphism $m: \Gmb^n\times W^{\star} \rightarrow W$, which, by \eqref{GmGmbcommute}, is equivariant with respect to the action of $\Gm^n$.

We wish to define its quotient by $\Gm^n$ in the category of schemes.

\begin{example} \label{Ex: ProjAsQuotient} We warm up with the construction of projective space as a quotient.
Let $x_i^1,\ldots, x_i^n$ denote  coordinates on the  $k$-vector space $W_i \cong \mathbb{A}^n$, where $n= \dim W_i$,  and write  
\[ W_i^{\times} = \bigcup_{j=1}^n U_i^j \]
where $U_i^j \subset \mathbb{A}^n$ is the open complement of $x_i^j=0$.  Let    $U_i^{jk} = U_i^j \cap U_i^k$. The opens $U_i^j, U_i^{jk}$ are stable under the action of $\Gm$ on $W$, and the inclusions $U_i^{jk} \hookrightarrow  U_i^j$ are $\Gm$-equivariant. One may define the quotient by $\Gm$: 
\[ \Gm \ql W_i^{\times} : = \bigcup_{i=1}^n   \Gm \ql U_i^j \]
to be the  union of  schemes $ \Gm \ql U_i^i$ glued along the $ \Gm \ql  U_i^{jk}$, 
where $\Gm \ql  U_i^j = \mathrm{Spec} \, ( \Or(U^j_i)^{\Gm})$   (and similarly  $ \Gm \ql  U_i^{jk}$)  is the affine scheme whose coordinate ring consists of  $\Gm$-invariant polynomials. Since 
$ \Or(U_i^j)^{\Gm} = k[ \frac{x^1_i}{x_i^j}, \ldots, \frac{x_i^n}{x_i^j}]$, we recover the standard affine covering of projective space and  thus 
\[   \Gm \ql W_i^{\times} \cong \Pro(W_i) \]
is canonically isomorphic to the projective space of $W_i$. 
\end{example} 

\begin{defn}   Consider   a $\Gm$-equivariant covering $W^{\times}_i= \bigcup_{1\leq j \leq \dim W_i}  U_i^{j}$  of $W_i$ for every $i=1,\ldots, n$ as in the previous example, and define a scheme:
\begin{equation} \label{MonoidalQuotientDefn}  \Gm^n \ql (\Gmb^n\times W^{\star}) :  =   \bigcup_{i_1,\ldots, i_n}    \Gm^n  \ql \left(  \Gmb^n \times U_n^{i_n} \times \ldots \times U_1^{i_1} \times W_0\right) \ ,\end{equation}
where each $1\leq i_k \leq \dim W_k$.  It does not depend on the choice of coordinates (since $\Gm^n$ is central in the product of  general linear groups $\mathrm{GL}(W_i)$).
\end{defn} 
Since it is  $\Gm^n$-equivariant, the map $m$ \eqref{MonoidActionMdefn}  defines a morphism of schemes also denoted by
\begin{equation}  \label{monquotient}   m \ : \    \Gm^n \ql (\Gmb^n\times W^{\star}) \To   W\ . 
\end{equation}

\begin{prop} \label{prop: quotientandblowup}
The scheme   \eqref{MonoidalQuotientDefn}     canonically embeds as an open subscheme of   the  iterated  blow-up $B\rightarrow W$  of  the affine space $W$  along the linear subspaces
\[ W_0 \  \subset \ W_0 \oplus W_1 \  \subset \   \ldots  \ \subset \  W_0 \oplus W_1 \oplus \ldots \oplus W_{n-1} \]
in increasing order of dimension. It is Zariski-dense in $B$.
 \end{prop}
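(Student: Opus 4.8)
The statement is essentially local and reduces to an explicit computation in affine charts, so the plan is to compare charts on both sides. First I would set up coordinates as in Example~\ref{Ex: ProjAsQuotient}: choose a basis $x_i^1,\ldots,x_i^{\dim W_i}$ on each $W_i$, so that $W \cong \mathbb{A}^N$ with $N = \sum_i \dim W_i$, and the linear subspaces to be blown up are $F_m := W_0\oplus\cdots\oplus W_m$, defined by the vanishing of all $x_i^j$ with $i>m$. The iterated blow-up $B\to W$ along $F_0\subset F_1\subset\cdots\subset F_{n-1}$ is then exactly the "wonderful"/linear blow-up studied in \S\ref{sect: LocalBlowUpCoordinates}, and its standard affine charts are indexed by a choice, for each level $1\le k \le n$, of a distinguished coordinate $x_k^{i_k}$ on $W_k$ (the coordinate one divides by at that stage). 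The key step is to write down these charts explicitly using the recipe of \S\ref{sect: LocalBlowUpCoordinates}: on the chart indexed by $(i_1,\ldots,i_n)$, one introduces blow-up coordinates consisting of the ratios $x_i^j/x_i^{i_i}$ (for $i\ge 1$, $j\ne i_i$), the ratios $x_i^{i_i}/x_{i+1}^{i_{i+1}}$ (for $1\le i\le n-1$), the coordinate $x_n^{i_n}$ itself, and the coordinates $x_0^j$ on $W_0$.

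\textbf{Key steps, in order.} (1) Identify, chart by chart, the coordinate ring $\mathcal{O}\bigl(\Gm^n\ql(\Gmb^n\times U_n^{i_n}\times\cdots\times U_1^{i_1}\times W_0)\bigr)$: the $\Gm^n$-invariants of $\mathcal{O}(\Gmb^n)\otimes\mathcal{O}(U_n^{i_n})\otimes\cdots$ are generated by $\lambda_i x_i^{i_i}$ (which is $\Gm$-invariant because the $i$-th copy of $\Gm$ scales $\lambda_i$ by $\mu_i^{-1}$ and $x_i^{i_i}$, being a coordinate on a component of index $\ge i$, by $\mu_i$), together with the ratios $x_i^j/x_i^{i_i}$ and the invariant $x_0^j$. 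One checks this is exactly the affine ring of the chart of $B$ described in (0), via the dictionary $\beta := \lambda_i x_i^{i_i}/x_{i+1}^{i_{i+1}}$ at intermediate levels, $\beta_n := \lambda_n x_n^{i_n}$ at the top, and the ratios matching directly; so each such chart of the quotient is canonically isomorphic to a chart of $B$. (2) Check these isomorphisms are compatible on overlaps, so they glue to an open immersion $\Gm^n\ql(\Gmb^n\times W^{\star})\hookrightarrow B$; this is formal once (1) is in hand, because both sides have the same gluing data (the transition maps are determined by the ratios $x_i^j/x_i^{i_i}$, which are intrinsic). (3) Verify the map is compatible with the projections to $W$, i.e.\ that $m$ of \eqref{monquotient} agrees with $\pi_B$ under this immersion — immediate from the formula \eqref{MonoidActionMdefn}, since on each chart $m$ sends the invariant coordinates to exactly the pulled-back functions $x_i^j = (\lambda_i x_i^{i_i})\cdot\prod(\text{lower }\beta\text{'s})\cdot(x_i^j/x_i^{i_i})$, which is the same polynomial expression as $\pi_B^*x_i^j$. (4) Zariski density: the image contains the open locus where all the $\lambda_i\ne 0$ and all $x_i^{i_i}\ne 0$, which maps to the complement in $W$ of the coordinate hyperplanes and of $F_{n-1}$; since $B\to W$ is an isomorphism over $W\setminus F_{n-1}$ and that set is dense in $W$, hence in $B$, density follows. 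Alternatively, one notes directly that each chart of the quotient is a dense open in the corresponding chart of $B$ because the conditions $x_i^j\ne 0$ for $j=i_i$ and $\lambda_i\ne 0$ cut out a dense open in affine space.

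\textbf{Main obstacle.} The only genuinely delicate point is step~(1): getting the combinatorics of the invariant generators to match the blow-up coordinates of \S\ref{sect: LocalBlowUpCoordinates} exactly, keeping careful track of which copy of $\Gm$ acts on which coordinate (a coordinate $x_i^j$ on $W_i$ is scaled by $\mu_k$ precisely for $k\le i$, because of the "staircase" action \eqref{ithcomponentGmbaction}), and hence which monomials $\prod_k \lambda_k^{a_k}\,x_i^{j}$ are $\Gm^n$-invariant. Once the correct bookkeeping is set up the identification is mechanical, but it must be done with care because an off-by-one in the indexing of the filtration $F_m$ versus the grading $W_i$ would break the comparison. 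Everything else (gluing, compatibility with $\pi_B$, density) is then routine and follows the pattern already used to prove Propositions~\ref{prop: UniversalANDIdeal} and~\ref{prop: BlowUpStructure} in local coordinates.
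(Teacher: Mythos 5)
Your overall strategy (chart-by-chart comparison of the $\Gm^n$-invariant ring with the affine ring of the iterated blow-up, as set up via Example \ref{Ex: ProjAsQuotient}) is precisely what the paper does, and steps (2)--(4) would indeed be routine once step (1) is done correctly. However, there is a genuine error in step (1): the elements $\lambda_i x_i^{i_i}$ that you claim generate the invariants are \emph{not} $\Gm^n$-invariant for $i\geq 2$. You only checked invariance under the $i$-th copy of $\Gm$, but the "staircase" action \eqref{ithcomponentGmbaction} means that $x_i^{i_i}$ is also scaled by $\mu_k$ for every $k<i$ (as you correctly note in your "main obstacle" paragraph), while $\lambda_i$ is only rescaled by $\mu_i$. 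Hence $\lambda_i x_i^{i_i}$ carries the nontrivial weight $\mu_1\cdots\mu_{i-1}$ when $i\geq 2$, and so cannot appear in the invariant ring.

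The same issue infects your dictionary: $\lambda_i x_i^{i_i}/x_{i+1}^{i_{i+1}}$ is also not invariant (under $\mu_i$ it picks up a factor of $\mu_i^{-1}$, since $x_{i+1}^{i_{i+1}}$ is scaled by $\mu_i$ as well, cancelling the gain from $x_i^{i_i}$). You have the direction of the index shift reversed: the correct invariant generators are $\lambda_1 x_1^{i_1}$ together with $\lambda_i\, x_i^{i_i}/x_{i-1}^{i_{i-1}}$ for $2\leq i\leq n$, i.e.\ one divides by the distinguished coordinate of the \emph{previous} level, not the next. (Equivalently, after setting $\alpha_j^i := \lambda_1\cdots\lambda_j\, x_j^i$, the invariant ring is $\Or(W_0)\bigl[\alpha_1^{i_1},\, \alpha_j^i/\alpha_j^{i_j},\, \alpha_{j+1}^{i_{j+1}}/\alpha_j^{i_j}\bigr]$, which visibly matches the local blow-up coordinates of \S\ref{sect: LocalBlowUpCoordinates}.) Once this correction is made, your steps (2)--(4) go through; but as stated, the proposed generators do not lie in the invariant ring and the chart identification with $B$ fails.
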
 

\begin{proof} The most direct proof is  via explicit coordinates generalising example \ref{Ex: ProjAsQuotient}.
Consider the coordinate ring of a particular open  in \eqref{MonoidalQuotientDefn}
\[    \Or\left(  \Gmb^n \times U_n^{i_n} \times \ldots \times U_1^{i_1} \times W_0\right) \]
where $1 \leq i_r \leq \dim W_r$. With the labelling of coordinates of example  \ref{Ex: ProjAsQuotient}, it is
\[ k  \left[\lambda_1,\ldots, \lambda_n,   \left(x^i_{j}\right)_{\substack{0\leq j \leq n \\ 1\leq i \leq \dim W_i}},  \frac{1}{x_1^{i_1}}, \ldots,  \frac{1}{x_n^{i_n}} \right] \]
where $\lambda_i$ is the coordinate on the $i$th component of $\Gmb$ (and the $x^j_0$ are coordinates on $W_0$). The  action of the $i$th component of $\Gm$ is equivalent to the action
\[  \lambda_j \mapsto \begin{cases}  \lambda_j  \mu_i^{-1} \quad  \hbox{ if } i = j \ , \\
\lambda_j  \quad \qquad \hbox{ if } i \neq j  \end{cases}  
\quad \hbox{ and } \quad   
x_k^j \mapsto \begin{cases} \mu_i x_k^j \quad \hbox{ if } k \geq i  \\ 
x_k^j  \qquad \hbox{ if } k < i 
\end{cases}
\]
for all $j\geq 1$. The ring of  $\Gm^n$-invariants of $\Or\left(  \Gmb^n \times U_n^{i_n} \times \ldots \times U_1^{i_1} \times W_0\right)$ is 
\[ \Or(W_0) \Bigg[  \left(\frac{x^i_1}{x^{i_1}_1}\right)_{1\leq i\leq \dim W_1},   \left(\frac{x^i_2}{x^{i_2}_2}\right)_{1\leq i\leq \dim W_2},     \ldots ,  \left(\frac{x^i_n}{x^{i_n}_n}\right)_{1\leq i\leq \dim W_n}, \lambda_1x_1^{i_1},
\lambda_2 \frac{x_2^{i_2}}{x_1^{i_1}}, \ldots, \lambda_n\frac{x_n^{i_n}}{x_{n-1}^{i_{n-1}}}     \Bigg] \ , \]
since the former is generated over the latter (which is clearly $\Gm^n$-invariant) by $(x_1^{i_1})^{\pm 1}, \ldots, (x_n^{i_n})^{ \pm 1}$. 
By introducing $\Gm^n$-invariant variables 
  \[  \alpha_j^i=  \lambda_1 \lambda_2 \ldots \lambda_j x^i_j     \]
we can rewrite the ring  of  $\Gm^n$-invariants in the slightly different form
\[  \Or(W_0)\Bigg[ \alpha_1^{i_1}  ,   \left(\frac{\alpha_1^{i}}{\alpha_1^{i_1}}\right)_{1\leq i\leq \dim W_1},  \frac{ \alpha_2^{i_2}}{\alpha_1^{i_1}}  ,     \left(\frac{\alpha_2^{i}}{\alpha_2^{i_2}}\right)_{1\leq i\leq \dim W_2} ,   \ldots   , \quad 
 \frac{ \alpha_n^{i_n} }{\alpha_{n-1}^{i_{n-1}}}       , \left( \frac{\alpha_n^{i}}{\alpha_n^{i_n}}\right)_{1\leq i\leq \dim W_n}     \Bigg]  \ . \] 
These are identical to a system of local coordinates on the iterated blow-up described in   \S\ref{sect: LocalBlowUpCoordinates} (after reversing the indexation of the subscripts of the $\alpha$'s)  along
\[ V(\alpha_j^i: j\geq   1) \quad \subset  \quad  V(\alpha_j^i: j\geq   2) \quad \subset  \quad \ldots \quad  \subset \quad  V(\alpha_j^i: j\geq   n)  \ .  \]
 These coordinates define a canonical  open immersion of this affine  chart  of  \eqref{MonoidalQuotientDefn}     into $B$. Since these coordinate charts are compatible they glue together to form an explicit open immersion of schemes from    \eqref{MonoidalQuotientDefn}     to $B$. \end{proof}

Inspection of the proof shows that the  scheme   \eqref{MonoidalQuotientDefn}  is canonically isomorphic to the complement in $B$ of strict transforms of  some linear subspaces  of $W$ which may be written in terms of the $W_i$, although we will not need such an explicit description here.

\begin{rem} The recursive structure of the iterated blow-up may be interpreted using the monoidal action. 
Consider the morphism 
\begin{multline}
\pi_r :   \Gmb^r \times W_n \times  \ldots  \times W_{r+1} \times  W_r^{\times}\times \ldots \times W_1^{\times} \times W_0  
\To\\
    \Gmb^{r-1} \times W_n \times  \ldots  \times W_{r} \times  W_{r-1}^{\times}\times \ldots \times W_1^{\times} \times W_0 \end{multline} 
which sends $((\lambda_1,\ldots, \lambda_r) , ( w_n,\ldots, w_0))$ to $((\lambda_1,\ldots, \lambda_{r-1} ), ( \lambda_r w_n,\ldots, \lambda_r w_r, w_{r-1} ,\ldots, w_0))$.

\begin{lem}  \label{lem:pirinducesblowup}
 The morphism $\pi_r$ induces a map 
 \begin{multline}
 \Gm^r \ql \left(  \Gmb^r \times W_n \times  \ldots  \times W_{r+1} \times  W_r^{\times}\times \ldots \times W_1^{\times} \times W_0  \right) 
\To\\
  \Gm^{r-1} \ql \left( \Gmb^{r-1} \times W_n \times  \ldots  \times W_{r} \times  W_{r-1}^{\times}\times \ldots \times W_1^{\times} \times W_0 \right) \end{multline} 
which sends  $V(\lambda_r)$, the vanishing locus of $\lambda_r$, to 
\[Z_r=  0  \times \ldots \times 0   \times    \Gm^{r-1} \ql  \left( \Gmb^{r-1} \times   (  W_{r-1}^{\times} \times \ldots \times W_1^{\times}\times W_0 )\right)   \ . \] 
Its restriction to the complement of $V(\lambda_r)$ is  an isomorphism  onto the complement of $Z_r$ in its image. (It is not surjective, since the locus  $w_r=0$ is not in its image).
\end{lem}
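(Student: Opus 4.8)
\textbf{Proof proposal for Lemma \ref{lem:pirinducesblowup}.} The plan is to reduce the statement to a direct computation in the explicit local coordinates introduced in the proof of Proposition \ref{prop: quotientandblowup}, exploiting the fact that the relevant monoidal quotients are glued together from affine charts on which everything becomes transparent. First I would observe that $\pi_r$ is $\Gm^r$-equivariant: the $i$-th factor of $\Gm^r$ acts on the source and target via the formulas displayed before the lemma, and one checks that $\pi_r$ intertwines these actions, with the last copy of $\Gm$ in the source (acting on $\lambda_r$ and on $w_n,\dots,w_r$) being precisely the one that is quotiented away on the target when passing from $\Gmb^r$ to $\Gmb^{r-1}$. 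Hence $\pi_r$ descends to a morphism of the quotient schemes, which we continue to call $\pi_r$ by abuse of notation; the target quotient exists because the action of $\Gm^{r-1}$ on $\Gmb^{r-1}\times W_n\times\cdots\times W_r\times W_{r-1}^{\times}\times\cdots\times W_1^{\times}\times W_0$ restricts the action considered in \eqref{MonoidalQuotientDefn}.

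Next I would pass to a standard affine chart. Fixing indices $1\le i_k\le \dim W_k$ for $k=1,\dots,r-1$ (and implicitly an index $i_r$ for $W_r^{\times}$, which I will then allow to vary), the chart on the source is $\Gm^r\ql(\Gmb^r\times W_n\times\cdots\times W_{r+1}\times U_r^{i_r}\times\cdots\times U_1^{i_1}\times W_0)$. Using the invariant variables $\alpha_j^i=\lambda_1\cdots\lambda_j x_j^i$ exactly as in the proof of Proposition \ref{prop: quotientandblowup}, the coordinate ring of this chart is generated over $\Or(W_n\times\cdots\times W_{r+1}\times W_0)$ by the quantities $\alpha_1^{i_1}$, the ratios $\alpha_k^i/\alpha_k^{i_k}$ for $1\le k\le r$, and the successive ratios $\alpha_k^{i_k}/\alpha_{k-1}^{i_{k-1}}$ for $2\le k\le r$. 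On the target chart the same formulas hold with $r$ replaced by $r-1$ and with $W_{r-1}^{\times}$ no longer quotiented against $\lambda_r$. The map $\pi_r^*$ on rings then sends the target generators to the corresponding source generators, and the coordinate $\lambda_r$ corresponds, up to units and a reindexing, to the last ratio $\alpha_r^{i_r}/\alpha_{r-1}^{i_{r-1}}$ — in other words $V(\lambda_r)$ is cut out in the source chart by the vanishing of that ratio. Setting it to zero kills all the blow-up coordinates lying ``above'' level $r$, leaving precisely the coordinates of $\Gm^{r-1}\ql(\Gmb^{r-1}\times W_n\times\cdots\times W_r\times W_{r-1}^{\times}\times\cdots\times W_1^{\times}\times W_0)$ but now with the components $W_n,\dots,W_r$ forced to lie at the origin; this is exactly the subscheme $Z_r$ as described. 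Away from $V(\lambda_r)$ the ratio $\alpha_r^{i_r}/\alpha_{r-1}^{i_{r-1}}$ is invertible, and one reads off from the coordinate descriptions that $\pi_r^*$ is an isomorphism of rings onto the localization of the target ring by that element, i.e. $\pi_r$ restricts to an isomorphism from the complement of $V(\lambda_r)$ onto the complement of $Z_r$ inside its image. Finally I would note non-surjectivity: the locus $w_r=0$ in the source — meaning all $\alpha_r^i=0$, equivalently all coordinates $x_r^i$ vanish — is never hit, because in the target the point $w_{r-1}\in W_{r-1}^{\times}$ together with the remaining data determines a point whose preimage requires $x_r^{i_r}\ne 0$ in the chart $U_r^{i_r}$; varying the chart index $i_r$ shows this holds globally.

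The only genuinely delicate point is bookkeeping: one must be careful that the charts on source and target are glued compatibly, so that the chartwise isomorphisms and the chartwise descriptions of $V(\lambda_r)$ and $Z_r$ patch to global statements. This is routine because the gluing data is the usual one for projective-space quotients (as in Example \ref{Ex: ProjAsQuotient}) and the invariant variables $\alpha_j^i/\alpha_j^{i_j}$ transform correctly under change of the distinguished index $i_j$; but it is the step that requires the most care to write out. Everything else is an unwinding of definitions. Once the chartwise picture is in place, the three assertions of the lemma — that $\pi_r$ sends $V(\lambda_r)$ to $Z_r$, that it is an isomorphism off $V(\lambda_r)$ onto the complement of $Z_r$ in its image, and that $w_r=0$ is not in the image — all follow immediately.
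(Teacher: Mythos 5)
Your chart-by-chart strategy is a genuinely different route from the paper's proof, and it could in principle work, but as written it contains a real error in the very computation on which it rests. You claim that, in a source chart, the invariant ring is generated over $\Or(W_n\times\cdots\times W_{r+1}\times W_0)$ by the blow-up variables $\alpha_1^{i_1}$, $\alpha_k^i/\alpha_k^{i_k}$ ($1\le k\le r$) and $\alpha_k^{i_k}/\alpha_{k-1}^{i_{k-1}}$ ($2\le k\le r$). This cannot be right. By the action formula \eqref{ithcomponentGmbaction}, the $k$-th copy of $\Gm$ in $\Gm^r$ scales every $w_j$ with $j\ge k$, so the coordinates $x_j^i$ on $W_j$ with $j>r$ transform under \emph{all} of $\mu_1,\dots,\mu_r$ and are not $\Gm^r$-invariant. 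The only honest base is $\Or(W_0)$; the factors $W_{r+1},\dots,W_n$ can only enter through invariant combinations such as $\lambda_1\cdots\lambda_r\, x_j^i$ (equivalently, in the chart where $x_r^{i_r}$ is inverted, the ratios $x_j^i/x_r^{i_r}$), and these generators are entirely missing from your description. Consequently the chartwise verification that $\pi_r^*$ is an isomorphism onto a localization does not close, because you have not said what $\pi_r^*$ does to the part of the ring coming from $W_{r+1},\dots,W_n$ — precisely the bookkeeping you defer to the final paragraph as routine. There is also a smaller source/target slip at the end: you speak of ``the locus $w_r=0$ in the source,'' but the source contains $W_r^{\times}$, so $w_r$ never vanishes there; the locus in question lives in the target, where the factor is $W_r$ rather than $W_r^{\times}$.

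The paper's proof is much shorter and avoids describing the invariant ring at all. It notes that $\pi_r$ is invariant under the last copy of $\Gm$ in $\Gm^r$ and equivariant under the first $r-1$, so it descends to the quotient; that since $w_r\neq 0$ in the source the equation $\lambda_r w_r = 0$ is equivalent to $\lambda_r=0$, which gives $\pi_r^{-1}(Z_r) = V(\lambda_r)$ directly; and that every point with $\lambda_r\neq 0$ has a unique representative with $\lambda_r=1$ after quotienting by the last $\Gm$, on which $\pi_r$ is visibly the identity, whence the isomorphism off $V(\lambda_r)$. This normalization argument encodes exactly what your chart computation would ultimately have to verify, but sidesteps the delicate listing of invariant generators for the $j>r$ factors — which is exactly where your write-up goes astray. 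I would reorganize the proof around that normalization step and only drop into explicit coordinates if and where you actually need them.
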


\begin{proof} The map $\pi_r$ is $\Gm$-invariant for the action of the last component of $\Gm^{r}=\Gm^{r-1}\times \Gm$  (with coordinate $\mu_r$)
and equivariant for the action of $\Gm^{r-1}$ (with coordinates $(\mu_1,\ldots, \mu_{r-1})$). 
Since $w_r \neq 0$, the equation $\lambda_r w_r \neq 0$ is equivalent to  $\lambda_r\neq 0$.  Hence  $\pi^{-1}_r (Z_r) = V(\lambda_r)$. 
Recall that the last copy of $\Gm$ in $\Gm^{r} = \Gm^{r-1} \times \Gm$, with coordinate $\mu_r$, acts via 
\[  \mu_r \times  (\lambda_1,\ldots, \lambda_r, w_n,\ldots, w_0) \mapsto ( \lambda_1,\ldots, \lambda_{r-1}, \lambda_r \mu_r^{-1}  ,  \mu_r w_n ,\ldots, \mu_r w_r, w_{r-1} ,\ldots, w_0)\]
 By passing to the quotient for this action, every point with  $\lambda_r\neq 0$  has a unique representative  with  $\lambda_r=1$. On such points, the map in question  is the identity and hence $\pi_r$ is an isomorphism onto its image outside $V(\lambda_r)$.  
\end{proof} 
Now observe that the  map  $\Gm^n \ql (\Gmb^{n} \times W^{\star} ) \rightarrow W$ is a composition of morphisms
\begin{eqnarray}  \Gm^n \ql \left( \Gmb^{n} \times W^{\star}\right)  & \overset{\pi_n}{\To} &  W_n \times    \Gm^{n-1} \ql  \left(\Gmb^{n-1} \times    (  W_{n-1}^{\times} \times \ldots \times W_1^{\times}\times  W_0   ) \right)       \nonumber \\
&    \overset{\id \times \pi_{n-1}}{\To}   &
W_n \times  W_{n-1} \times  \Gm^{n-2} \ql   \left(\Gmb^{n-2} \times    (  W_{n-2}^{\times} \times \ldots \times W_1^{\times}\times  W_0   ) \right)       \nonumber  \\
& \vdots & \nonumber \\
&  \To &   W_n \times  W_{n-1} \times \ldots W_1 \times W_0\ . \nonumber
\end{eqnarray} 
The divisors $V(\lambda_r)$ can be described by  a computation given in \S\ref{sectExceptional}.
It follows from this and lemma \ref{lem:pirinducesblowup},  that each map is a blow-up along  $ Z_r$, and  one deduces that  $\Gm^n \ql (\Gmb^{n} \times W^{\star} )$  is an iterated blow-up along  the strict transforms of $0 \times \ldots \times 0 \times W_{r-1} \times \ldots \times W_0$.
\end{rem} 

\subsubsection{Variant: projective version} \label{sect: VariantProjective}
With notation as above, let 
\[ W^{\circ} =  W^{\times}_n \times W^{\times}_{n-1} \times \ldots \times W^{\times}_1 \times W_0^{\times}  \]
with an action of $\Gm^{n+1} =\Gm^n \times \Gm$  where $\Gm^n$ acts as above, but with an additional  copy of $\Gm$ (with coordinate $\mu_0$) acting by scalar multiplication on $W^{\circ}$ (i.e. diagonally on each copy $W^{\times}_i$ ).
Then  we may define the  quotient $  \Gm^{n+1} \ql (\Gmb^n\times W^{\circ})$ as before. 
By \eqref{ithcomponentGmbaction}, all components of $\Gm^n$ act trivially on $W_0$ and the morphism \eqref{monquotient}  now corresponds to:
 \[  \Gm^{n+1} \ql (\Gmb^n\times W^{\circ}) \To \Gm \ql( W\backslash \{0\}) = \Pro(W)  \ . \]
Note that  its image is  the  open complement of $\Pro(W_n\oplus \ldots \oplus W_1 \oplus \{0\})$ in $\Pro(W)$.
The projective version of proposition \ref{prop: quotientandblowup} is:

\begin{prop}  \label{prop: projversionQuotientBlowup}The scheme  $  \Gm^{n+1} \ql (\Gmb^n\times W^{\circ}) $, relative to the morphism:
\[   \Gm^{n+1} \ql (\Gmb^n\times W^{\circ}) \To \Pro(W) \]
 canonically embeds into the  iterated  blow-up $B_W\rightarrow \Pro(W)$  along the linear subspaces
\[ \Pro(W_0) \  \subset \ \Pro(W_0 \oplus W_1) \  \subset \   \ldots  \ \subset \  \Pro(W_0 \oplus W_1 \oplus \ldots \oplus W_{n-1}) \]
in increasing order of dimension. It is Zariski-dense in $B_W$.
 \end{prop}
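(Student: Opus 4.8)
The plan is to reduce the projective statement (Proposition \ref{prop: projversionQuotientBlowup}) to the affine statement (Proposition \ref{prop: quotientandblowup}) by passing to an affine cone. Concretely, set $\widetilde{W} = W \oplus k$, with the extra $k$-summand placed in degree $0$ and absorbed into $W_0$; equivalently, consider the graded vector space $\widetilde{W} = W_n \oplus \cdots \oplus W_1 \oplus \widetilde{W}_0$ with $\widetilde{W}_0 = W_0 \oplus k$. Then the affine iterated blow-up $B_{\widetilde{W}} \to \widetilde{W}$ of Proposition \ref{prop: quotientandblowup}, blown up along the subspaces $\widetilde{W}_0 \subset \widetilde{W}_0 \oplus W_1 \subset \cdots$, is related to the projective blow-up $B_W \to \Pro(W)$ by the standard fact that blowing up a projective space along linear subspaces is the quotient by $\Gm$ (scalar multiplication) of the blow-up of the corresponding affine space along the cones over those subspaces, restricted to the complement of the origin. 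Since the blow-up loci here are genuinely contained in $\widetilde{W} \setminus \{0\}$ away from the origin's strict transform only in the relevant chart, one must be slightly careful, but the passage $B_W = \Gm \ql (B_{\widetilde{W}}^{\times})$ holds, where $B_{\widetilde{W}}^{\times}$ denotes the inverse image of $\widetilde{W} \setminus \{0\}$.

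First I would record the key compatibility: on the side of the quotients, one has the evident identification
\[
\Gm^{n+1} \ql (\Gmb^n \times W^{\circ}) \;\cong\; \Gm \ql \bigl( \Gm^n \ql (\Gmb^n \times W^{\star}) \bigr)^{\times},
\]
where on the left the extra $\Gm$ is the diagonal scalar action with coordinate $\mu_0$, and on the right $(\,\cdot\,)^{\times}$ means we have removed the locus lying over $0 \in W$; this is just the observation that taking $W_0^{\times}$ instead of $W_0$ and adjoining the diagonal $\Gm$-action is the same as first forming $\Gm^n \ql (\Gmb^n \times W^{\star})$, then removing the fibre over the origin, then quotienting by the residual scalar $\Gm$. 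This step is a bookkeeping exercise with the explicit affine charts exhibited in the proof of Proposition \ref{prop: quotientandblowup}: in each chart one has coordinates $\Or(W_0)$ together with the ratios $\alpha^i_j/\alpha^{i_j}_j$ and the monomials $\alpha^{i_1}_1, \alpha^{i_2}_2/\alpha^{i_1}_1, \ldots$, and passing to the projective version simply replaces $\Or(W_0) = k[x^1_0,\ldots,x^{\dim W_0}_0]$ by the homogeneous coordinate ring of $\Pro(W_0 \oplus W_1 \oplus \cdots)$ in the standard affine charts.

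Next I would invoke Proposition \ref{prop: quotientandblowup} to identify $\Gm^n \ql (\Gmb^n \times W^{\star})$ with an open dense subscheme of $B_{\widetilde W}$ (equivalently $B$ in the notation there, built on $W$ with the affine blow-up loci $W_0 \subset W_0 \oplus W_1 \subset \cdots$), compatibly with the maps down to $W$. Applying the functor $\Gm \ql (\,\cdot\,)^{\times}$ to both sides — which is exact on the category of quasi-projective schemes with $\Gm$-action having the origin removed, and commutes with blow-ups along $\Gm$-stable centres in the appropriate sense — yields that $\Gm^{n+1}\ql (\Gmb^n \times W^{\circ})$ is an open dense subscheme of $\Gm \ql (B_{\widetilde W})^{\times}$, which is precisely $B_W$, the iterated blow-up of $\Pro(W)$ along $\Pro(W_0) \subset \Pro(W_0 \oplus W_1) \subset \cdots$. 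The map to $\Pro(W)$ is the one induced by $m$. Zariski density is inherited from the affine case since $\Gm \ql (\,\cdot\,)^{\times}$ sends dense opens to dense opens. Alternatively, and perhaps cleaner for a final write-up, one can simply redo the explicit local-coordinate computation of Proposition \ref{prop: quotientandblowup} verbatim with the extra scalar $\Gm$: in the chart where $x^{i_0}_0 \neq 0$ one divides through and recovers exactly the local charts of $B_W$ from \S\ref{sect: LocalBlowUpCoordinates}, so the open immersion and density follow by the same argument word for word.

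\textbf{Main obstacle.} The only real subtlety — and the step I expect to require the most care — is the interaction of the quotient-by-$\Gm$ operation with the removal of the origin and with the iterated blow-up: one must check that $\Gm \ql (B_{\widetilde W})^{\times}$ genuinely equals $B_W$ rather than some other birational model, i.e.\ that the strict transforms of the affine cones over $\Pro(W_0 \oplus \cdots \oplus W_{r-1})$ descend to the strict transforms of the projective subspaces, and that no extra components of the exceptional divisor are created or lost in passing $0 \in \widetilde{W}$ to the discarded locus. This is true because each blow-up centre is a $\Gm$-stable linear subspace not containing a general point and the blow-up commutes with the free $\Gm$-quotient away from the origin; but stating it precisely is where one has to be attentive. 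In practice it is cleanest to avoid this abstract descent argument altogether and to present the proof as a direct reprise of the local-coordinate computation in Proposition \ref{prop: quotientandblowup}, adding the coordinate $\mu_0$ and the extra relations it induces; this makes the open immersion and the density manifest and sidesteps the functoriality concern entirely.
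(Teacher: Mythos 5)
Your second approach (redo the local-coordinate computation of Proposition \ref{prop: quotientandblowup} verbatim, adding the coordinate $\mu_0$ and passing to the standard affine charts of $\Pro(W)$) is exactly what the paper intends; the paper gives no separate proof for the projective variant, so this is the argument it implicitly relies on, and it works.

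Your first, more abstract approach has a genuine error in the setup. You propose to pass to $\widetilde{W} = W \oplus k$ with $\widetilde{W}_0 = W_0 \oplus k$ and claim $B_W = \Gm \ql (B_{\widetilde{W}}^{\times})$. This is false: the $\Gm$-quotient of $B_{\widetilde{W}}^{\times}$ is the iterated blow-up of $\Pro(\widetilde{W})$ along $\Pro(\widetilde{W}_0) \subset \Pro(\widetilde{W}_0 \oplus W_1) \subset \cdots$, which has dimension $\dim W$, one more than $\dim \Pro(W)$. There is no reason to introduce $\widetilde{W}$ at all: $\Pro(W) = (W \setminus \{0\})/\Gm$ directly, the affine blow-up centres $W_0 \subset W_0 \oplus W_1 \subset \cdots$ are $\Gm$-stable linear subspaces through the origin, and $B_W$ is the $\Gm$-quotient of the preimage of $W \setminus \{0\}$ in the affine iterated blow-up $B \to W$ of Proposition \ref{prop: quotientandblowup}. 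Your own parenthetical remark ``(equivalently $B$ in the notation there, built on $W$...)'' is in fact the correct statement, and it contradicts the $\widetilde{W}$ set-up. A second, smaller slip: the claimed ``evident identification''
\[
\Gm^{n+1} \ql (\Gmb^n \times W^{\circ}) \;\cong\; \Gm \ql \bigl( \Gm^n \ql (\Gmb^n \times W^{\star}) \bigr)^{\times}
\]
is only an open embedding, not an isomorphism: the right-hand side also contains orbits of points with $w_0 = 0$ but $\lambda_1 \neq 0$, which are not over the origin of $W$ but are absent from $\Gmb^n \times W^{\circ}$; since neither the $\mu_0$-scalar action nor the $\Gm^n$-action can turn $w_0 = 0$ into $w_0 \neq 0$, the two quotients are not equal. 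This does not damage the argument --- you only need an open embedding into $B_W$, and Zariski density still follows since the image contains the big open locus where all $\lambda_i, w_j$ are nonzero --- but it should be stated correctly. With $\widetilde{W}$ replaced by $W$ and the ``$\cong$'' replaced by an open embedding, the abstract reduction is a valid, genuinely different route from the paper's coordinate computation; it buys you a cleaner statement of functoriality, at the cost of the $\Gm$-descent lemma you correctly flag as the one point requiring care.
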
 

\subsubsection{Exceptional divisors}  \label{sectExceptional}
The exceptional divisors of $  \Gm^{n+1} \ql (\Gmb^n\times W^{\circ})$  are in one-to-one correspondence with the divisors $V(\lambda_i)$ defined by $\lambda_i=0$ for $i=1,\ldots, n$.

\begin{lem}
The  divisor $V(\lambda_i)$  is canonically isomorphic to  a product
\[  V(\lambda_i)  \ \cong  \  \Gm^{n-i+1} \ql (\Gmb^{n-i}  \times W_n^{\times}\times \ldots \times  W_{i}^{\times})  \ \times  \   \Gm^{i} \ql (\Gmb^{i-1}  \times W_{i-1}^{\times} \times \ldots \times W_{0}^{\times})\ .  \]
\end{lem}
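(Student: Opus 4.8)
The statement is purely a matter of unwinding the definitions of the monoidal quotient together with the recursive structure exhibited in Lemma~\ref{lem:pirinducesblowup}. The plan is to compute $V(\lambda_i)$ directly in the affine charts used in the proof of Proposition~\ref{prop: quotientandblowup}, and then to reorganise the resulting coordinates into two independent groups.

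First I would recall that in each coordinate chart $\Gm^{n+1}\ql\bigl(\Gmb^n\times U_n^{i_n}\times\ldots\times U_1^{i_1}\times U_0^{i_0}\bigr)$ the ring of invariants is generated (after passing to the variables $\alpha_j^k=\lambda_1\cdots\lambda_j\, x_j^k$) by the affine coordinates on $W_0$, the ratios $\alpha_j^k/\alpha_j^{i_j}$ for $1\le k\le\dim W_j$ and $1\le j\le n$, and the successive quotients $\alpha_1^{i_1},\ \alpha_2^{i_2}/\alpha_1^{i_1},\ \ldots,\ \alpha_n^{i_n}/\alpha_{n-1}^{i_{n-1}}$; the local equation of $V(\lambda_i)$ is $\alpha_i^{i_i}/\alpha_{i-1}^{i_{i-1}}=0$ (this being the $\Gm^n$-invariant avatar of $\lambda_i=0$, as already noted in \S\ref{sect: LocalBlowUpCoordinates}). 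Setting this coordinate to zero, the remaining free coordinates split cleanly into those involving only indices $\ge i$ — namely the $\alpha_j^k/\alpha_j^{i_j}$ for $j\ge i$ and the quotients $\alpha_{i+1}^{i_{i+1}}/\alpha_i^{i_i},\ldots,\alpha_n^{i_n}/\alpha_{n-1}^{i_{n-1}}$ — and those involving only indices $\le i-1$ — the coordinates on $W_0$, the $\alpha_j^k/\alpha_j^{i_j}$ for $j\le i-1$, and the quotients $\alpha_1^{i_1},\ldots,\alpha_{i-1}^{i_{i-1}}/\alpha_{i-2}^{i_{i-2}}$. Matching these two groups against the coordinate descriptions of $\Gm^{n-i+1}\ql(\Gmb^{n-i}\times W_n^\times\times\cdots\times W_i^\times)$ and $\Gm^i\ql(\Gmb^{i-1}\times W_{i-1}^\times\times\cdots\times W_0^\times)$ respectively gives the isomorphism chart-by-chart. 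I would then check that these chart isomorphisms are compatible with the gluing (changing the distinguished indices $i_j$), which is automatic since the splitting of coordinates respects the index ranges and the transition functions act separately on each group.

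Alternatively — and this is the cleaner route I would actually write down — I would argue functorially using Lemma~\ref{lem:pirinducesblowup} and Proposition~\ref{prop: BlowUpStructure}. By Proposition~\ref{prop: projversionQuotientBlowup} the scheme $\Gm^{n+1}\ql(\Gmb^n\times W^\circ)$ is an open dense subscheme of the iterated blow-up $B_W\to\Pro(W)$ along the flag $\Pro(W_0)\subset\Pro(W_0\oplus W_1)\subset\cdots\subset\Pro(W_0\oplus\cdots\oplus W_{n-1})$, and the divisor $V(\lambda_i)$ is the trace on this open subscheme of the exceptional divisor $\mathcal{E}_{W}$ lying over $\Pro(W_0\oplus\cdots\oplus W_{i-1})$. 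Proposition~\ref{prop: BlowUpStructure}, applied with $W'=W_0\oplus\cdots\oplus W_{i-1}$, identifies $\mathcal{E}_{W}$ with $P^{\BB_{W'}}\times P^{\BB_{/W'}}$, where $\BB_{W'}$ is the induced flag inside $\Pro(W')$ and $\BB_{/W'}$ the induced flag inside $\Pro(W/W')\cong\Pro(W_n\oplus\cdots\oplus W_i)$. Intersecting with the open dense locus and applying Proposition~\ref{prop: projversionQuotientBlowup} once more to each factor recovers exactly $\Gm^{n-i+1}\ql(\Gmb^{n-i}\times W_n^\times\times\cdots\times W_i^\times)$ for the $\Pro(W/W')$-factor and $\Gm^i\ql(\Gmb^{i-1}\times W_{i-1}^\times\times\cdots\times W_0^\times)$ for the $\Pro(W')$-factor.

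\textbf{Main obstacle.} The only genuine subtlety is bookkeeping: making sure the index ranges $(0,\ldots,i-1)$ versus $(i,\ldots,n)$ line up correctly with the two factors, and in particular that the second factor carries $\Gmb^{i-1}$ rather than $\Gmb^i$ (the $i$-th $\Gmb$-coordinate $\lambda_i$ has been consumed in defining the divisor) while the first factor carries $\Gmb^{n-i}$. This is exactly the phenomenon visible in Remark~\ref{rem: singleblowupfirst} and in the local-coordinate splitting of \S\ref{sect: LocalBlowUpCoordinates}, where the coordinate $\beta_{i_m}$ (respectively $\lambda_i$) is lost at the exceptional divisor and the remaining coordinates partition into two independent iterated-blow-up systems. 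Once this dictionary is set up carefully the argument is a formality; I would present the functorial version for brevity and relegate the coordinate verification to a remark.
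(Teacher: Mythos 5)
Your proof is correct, but it takes a genuinely different route from the paper. The paper's argument never touches the blow-up description or the $\alpha$-coordinate charts: it constructs an explicit isomorphism \emph{upstairs}, before quotienting, between the locus $\lambda_i=0$ in $\Gmb^{i-1}\times\{0\}\times\Gmb^{n-i}\times W^\circ$ and the product $(\Gmb^{i-1}\times W_{i-1}^\times\times\ldots\times W_0^\times)\times(\Gmb^{n-i}\times W_n^\times\times\ldots\times W_i^\times)$ simply by permuting factors, then checks equivariance with respect to the explicit regrouping $\Gm^i\times\Gm^{n-i+1}\cong\Gm^{n+1}$ given by $(\mu_0,\ldots,\mu_{i-1})\times(\mu_i,\ldots,\mu_n)\mapsto(\mu_0,\ldots,\mu_{i-1},(\mu_0\cdots\mu_{i-1})^{-1}\mu_i,\mu_{i+1},\ldots,\mu_n)$, and passes to the quotient. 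This is a two-line calculation that is both more elementary and more canonical than either of your proposals, since it bypasses the choice of charts and the blow-up machinery entirely.

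Your first (coordinate) approach is sound and would give a complete proof, if more laborious. Your second (``cleaner'') route has a small gap you should be aware of: after applying Proposition~\ref{prop: BlowUpStructure} to identify $\mathcal{E}_W\cong P^{\BB_{W'}}\times P^{\BB_{/W'}}$, you assert that ``intersecting with the open dense locus and applying Proposition~\ref{prop: projversionQuotientBlowup} once more'' recovers the product of the two smaller monoidal quotients. But Proposition~\ref{prop: projversionQuotientBlowup} only tells you that $\Gm^{n+1}\ql(\Gmb^n\times W^\circ)$ is \emph{some} dense open subscheme of $B_W$ (the paper explicitly declines to describe its complement), so it does not by itself pin down the intersection of that open with $\mathcal{E}_W$. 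To close the gap you would need to show that this open subscheme meets $\mathcal{E}_W\cong P^{\BB_{W'}}\times P^{\BB_{/W'}}$ precisely in the product of the corresponding dense opens on each factor — which is exactly what the coordinate computation, or the paper's direct upstairs argument, establishes. If you want a short write-up, the paper's direct argument is the one to imitate; your functorial route is better viewed as a cross-check once the lemma is in hand.
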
 
\begin{proof}
 There is a canonical isomorphism from 
$ \Gmb^{i-1} \times 0 \times \Gmb^{n-i}  \times W^{\times}_n\times \ldots \times W_0^{\times}$ to 
\[ \Gmb^{i-1} \times  W^{\times}_{i-1}\times \ldots \times W_0^{\times}  \times    \Gmb^{n-i} \times  W^{\times}_{n}\times \ldots \times W_i^{\times} \]
which maps  an element  $
(\lambda_1,\ldots, \lambda_{i-1}, 0, \lambda_{i+1}, \ldots, \lambda_n,  w_n,\ldots, w_0) $ to 
\[ (\lambda_1,\ldots, \lambda_{i-1},  w_{i-1},\ldots, w_0)
\times (\lambda_{i+1}, \ldots, \lambda_n,  w_n,\ldots, w_{i+1}, w_i) \ . \] 
One checks that it is equivariant with respect to the respective  actions of 
\begin{eqnarray}   \Gm^{i} \times \Gm^{n-i+1}  &\overset{\sim}{\To}&   \Gm^{n+1}  \nonumber \\
(\mu_0,\ldots, \mu_{i-1}) \times ( \mu_{i}, \ldots, \mu_n) & \mapsto &  (\mu_0,\ldots, \mu_{i-1}, \left(\mu_0\ldots \mu_{i-1}\right)^{-1}  \mu_i , \mu_{i+1}, \ldots, \mu_n) \nonumber \ . 
\end{eqnarray} 
The statement follows by passing to the quotient. 
\end{proof}

If we write $W'= W_{i-1} \oplus  \ldots \oplus W_0$,  it follows that $V(\lambda_i)$ embeds into the product   \begin{equation} \label{VlambdatoProduct}  V(\lambda_i) \  \hookrightarrow  \  B_{W/W'} \,  \Pro( W/W' ) \times    B_{W'}  \, \Pro(W')
\end{equation}
where the blow-ups are with respect to the flags of subspaces defined by the filtration $F_m=\bigoplus_{k=0}^m W_k$ on $W$ and the filtrations it induces on $W'$ and $W/W'$.

The irreducible components $\SE_i$ of the exceptional divisor $\SE$  defines a stratification on the blow-up $B_W$ of $\Pro(W)$. Consider the corresponding  quasi-projective stratification
\begin{equation}   \label{Bopenstrata} B_W =  \bigcup_{I}  \SE^{\circ}_I \end{equation}
where the union is over all subsets $I\subset \{1,\ldots, n\}$, and where we denote   $\SE_I= \bigcap_{i\in I} \SE_i$ , and  $\SE_{\emptyset} = \Pro(W)$,  and define 
\[ \SE^{\circ}_I =  \SE_I  \setminus \left( \bigcup_{j\notin I} \SE_I  \cap \SE_j \right) \ .  \]  
It follows from the description in proposition \ref{prop: BlowUpStructure}  that each  stratum of \eqref{Bopenstrata} (and  each stratum of $  \Gm^{n+1} \ql (\Gmb^n\times W^{\circ})$ via \eqref{VlambdatoProduct}), is embedded in a product of projective spaces:
\begin{equation} \label{SEcircembeds}   \SE^{\circ}_I  \quad \subset \quad \prod^{|I|}_{k=1} \Pro \left( \bigoplus_{i_{k-1} \leq j < i_k} W_j \right)   = \prod^{|I|}_{k=1} \Pro \left( \mathrm{gr}^{F'}_k W \right)  \ ,  \end{equation}
where  $F' W$ is the increasing filtration on $W$ whose $k^\mathrm{th}$ graded quotient is 
$  \bigoplus_{i_{k-1} \leq j < i_k} W_j $ where we set $i_0=0, i_{n+1}=n+1$. The strata of \eqref{Bopenstrata}  are in one-to-one correspondence with filtrations $F' \leq F$.

The restriction of the embedding \eqref{SEcircembeds} to the locus  in  $ \Gm^{n+1} \ql (\Gmb^n\times W^{\circ})$ where $\lambda_i=0$ for all $i\in I$ and 
$\lambda_j\neq 0$ for all $j\neq 0$,   is  given explicitly  by the map
\[ (\lambda_1,\ldots, \lambda_n, w_n,\ldots, w_0) \mapsto (w_0,\ldots, w_{i_1-1}) \times  (w_{i_1},\ldots, w_{i_2-1}) \times \ldots \times 
(w_{i_{n-1}},\ldots, w_n)   \]
where the right-hand side is in the space
$ \prod_{k=1}^{n+1}   \Gm \ql (W^{\times}_{i_{k-1}}  \times \ldots \times W_{i_{k}-1}^{\times}) $ which embeds in a product of projective spaces by example \ref{Ex: ProjAsQuotient}. One may verify that it  is an embedding because, for every $\lambda_j\neq 0$, we may apply a unique element of the $j$th copy of $\Gm$ in $\Gm^{n+1}$ to make the coordinate $\lambda_j$ equal to $1$.

\subsection{Spaces of parabolic and symmetric matrices} \label{sect: SpacesParabolics}
Using the previous construction, we may define  an algebraic version of   $A_P \setminus (\overline{A}_P \times P)$ associated to a rational parabolic $P$  and embed  it in an iterated blow-up.  We then provide an algebraic interpretation of the space $X(P)$ by embedding it into the real points of a finite iterated blow-up of projective space. 
We shall denote algebraic groups with caligraphic letters to distinguish them from the topological groups considered in \S\ref{sect: RemindersBS}.  

Let $V$ be a finite-dimensional vector space over $\Q$ and consider a flag of subspaces  
\[ F \  :  \ 0 \subset V_d \subset V_{d-1} \subset \ldots \subset V_1 \subset V_0=V\]
 which has a  splitting: $V\cong \bigoplus_{i\geq 0} V_{i}/V_{i+1}$.  Let 
$\mathcal{P}_F \subset \mathrm{End}_F(V)$
denote the affine variety over $\Q$ of  endomorphisms of $V$ which preserve $F$.   Using the splitting of $V$ we may write
\[ \mathrm{End}_F(V) = \mathrm{Hom}_F( V  \ , \   \bigoplus_i  V_i/V_{i+1} ) = \bigoplus_i   \mathrm{Hom}_F( V ,  V_i/V_{i+1} )\] 
which, upon setting  $\mathcal{P}_i= \mathrm{Hom}_F( V ,  V_i/V_{i+1} )$,  gives a decomposition (of schemes, corresponding to the direct sum in the category of vector spaces) 
\begin{equation} 
\label{PFdecomp}  \mathcal{P}_F = \mathcal{P}_d\times  \mathcal{P}_{d-1} \times \ldots \times \mathcal{P}_0\ .
\end{equation}
Define a subscheme $\mathcal{P}_F^{\circ}=   \mathcal{P}^{\times}_d\times  \mathcal{P}^{\times}_{d-1} \times \ldots \times \mathcal{P}^{\times}_0$  of $\mathcal{P}_F$ as in  \S\ref{sect: VariantProjective}.
Let 
\[  \mathcal{Z}_P = \Gm^{d+1} \   \hookrightarrow  \ \GL \left(\bigoplus_i V_i/V_{i+1} \right) = \prod_i \GL( V_i/V_{i+1})\]
  be the  product of  multiplicative groups whose $k^{\mathrm{th}}$ component, where $0\leq k\leq d$,  acts by scalar multiplication on  $V_k/V_{k+1}$. Denote its coordinate ring  $\Or(\mathcal{Z}_P) = k[\mu_0,\mu_0^{-1},\ldots, \mu_d,\mu_d^{-1}]$.

 Consider the  affine group scheme $\mathcal{A}_P = \Gm^d$, with coordinates  $\lambda_1,\ldots, \lambda_d$. There is a morphism of affine group schemes 
 $ \mathcal{A}_P \rightarrow \mathcal{Z}_P $,
  which on  coordinate rings is the map
  \[  (\mu_0,\ldots , \mu_d) \mapsto (1,\  \lambda_1, \  \lambda_1\lambda_2, \  \ldots, \ \lambda_1\cdots \lambda_d) : \Or(\mathcal{Z}_P) \To \Or(\mathcal{A}_P)\ . \]
  This  is consistent with  \eqref{muCoordsonAP}. The previous morphism, via the action of $\mathcal{Z}_P$ on $\bigoplus_i V_i/V_{i+1}$, defines an action of $\mathcal{A}_P$ on 
  $\mathcal{P}_F^{\circ}$. This action extends to an action of the  multiplicative monoid 
   $\overline{\mathcal{A}}_P= \Gmb^d$  whose  $k^{\mathrm{th}}$ component has  coordinate $\lambda_k$ where $1\leq k\leq d$. 
   It acts by scalar multiplication on $V_k$
  as in \S \ref{sect: setup}, i.e., for $v_i \in  V_i/V_{i+1}  $, for $0\leq i\leq d$ we have:
    \[ (\lambda_1,\ldots, \lambda_d) \times (v_d,\ldots, v_1, v_0) \mapsto  (\lambda_d\cdots \lambda_1 v_d,\   \ldots , \ \lambda_2\lambda_1 v_2,  \  \lambda_1 v_1, \  v_0) \ .  \]  
    Since the components of  $\mathcal{Z}_P$ act centrally on $V_i/V_{i+1}$, this defines an action  $\overline{\mathcal{A}}_P  \times \mathcal{P}_F^{\circ}\rightarrow   \mathcal{P}_F $ by scalar multiplication on each component.   It is given by the identical formula to the above, upon replacing $v_i$ with $p_i$, for $p_i \in \mathcal{P}^{\times}_i$. On points, an element   $a\in \mathcal{A}_P$ acts upon $(\lambda, p) \in \overline{\mathcal{A}}_P \times \mathcal{P}_F^{\circ}$ by $a. (\lambda, p) = ( \lambda . a^{-1}, a. p)$. 
    Finally let $\mathcal{H}= \Gm$ denote the subgroup of $\mathcal{Z}_P$ with coordinates $\mu_0=\ldots = \mu_d$, corresponding to diagonal matrices. It   acts on    $\overline{\mathcal{A}}_P \times \mathcal{P}_F^{\circ}$ by acting trivially on $ \overline{\mathcal{A}}_P$, and acting via scalar multiplication on
    $ \mathcal{P}_F^{\circ}$.

\begin{thm}  \label{cor: ePasblowup} 
There is a canonical Zariski-dense embedding: 
  \begin{equation} \label{APembeds}  (\mathcal{A}_P\times \mathcal{H} ) \ql \left( \overline{\mathcal{A}}_P \times \mathcal{P}^{\circ}_F \right)  \To B_F 
 \, \Pro(\mathcal{P}_F)  \ , 
 \end{equation} 
where $B_F \Pro(\mathcal{P}_F )$ is the iterated blow-up of $\Pro(\mathcal{P}_F)$ along the subspaces
 \[\Pro(\mathcal{P}_0)  \ \subset  \  \Pro(\mathcal{P}_{0} \oplus \mathcal{P}_{1})  \ \subset \   \ldots  \ \subset \  \Pro(\mathcal{P}_{0} \oplus \ldots \oplus \mathcal{P}_{d-1}) \ .\] 
Let $I=\{i_1,\ldots, i_k\}$ and $1\leq i_1 < \ldots<i_k\leq d$. The image of the  locus $V(\lambda_i, i\in I)$  under the morphism \eqref{APembeds} is contained in  
the codimension $k$ stratum   $\SE_I$ of the exceptional divisor of $B_F \Pro(\mathcal{P}_F )$ which corresponds to the flag $F'\leq F$: 
\begin{equation} \label{Fprimeflag}   F'  \ : \  0=V_{i_{k+1}} \subset  V_{i_k} \subset V_{i_{k-1}} \subset \ldots \subset V_{i_1} \subset  V_{i_0}=V 
\ .
\end{equation}  
Both  $V(\lambda_i, i\in I)$ and its Zariski-closure $\SE_I$  are canonically isomorphic to products,  and the morphism 
\eqref{APembeds} restricts to the embedding:
\begin{equation} \label{ProductsAPFquotients}  V(\lambda_i, i\in I) \cong \prod_{m=0}^k   (\mathcal{A}_{P_{F_m}} \times \mathcal{H} )\ql \left( \overline{\mathcal{A}}_{P_{F_m}}\times \mathcal{P}^{\circ}_{F_m}  \right)  \To  \SE_I  \cong \prod_{m=0}^k  B_{F_m} \Pro(P_{F_m})
\end{equation}
 where  $\mathcal{P}_{F_m} = \prod_{i_m\leq j < i_{m+1}} \mathcal{P}_{j} $,   $\mathcal{P}^{\circ}_{F_m} = \prod_{i_m\leq j < i_{m+1}} \mathcal{P}^{\times}_{j} $  and
 $F_m$ are the filtrations   induced by $F$ on the quotients $V_{i_m}/V_{i_{m+1}}$.

 The restriction of \eqref{ProductsAPFquotients} to the open locus $\{\lambda_j \neq 0 :  j \notin I\}$ gives an embedding 
 \begin{equation}  \label{OpenStratumAndAPquotient} 
(  \mathcal{A}_{P_{F'}}  \times \mathcal{H} ) \ql \mathcal{P}_F^{\circ} 
  \quad  \hookrightarrow  \quad    \SE^{\circ}_I   \subseteq   \prod_{m=0}^k \Pro(P_{F_m})\ .
 \end{equation} 
 \end{thm}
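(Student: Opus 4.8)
\textbf{Proof strategy for Theorem \ref{cor: ePasblowup}.}
The plan is to reduce everything to the general monoidal-quotient formalism of \S\ref{sect: setup}. The point of the theorem is that the triple $(\mathcal{A}_P \times \mathcal{H}, \overline{\mathcal{A}}_P, \mathcal{P}_F^\circ)$ is, after unwinding the definitions, precisely an instance of the setup $(\Gm^{n+1}, \Gmb^n, W^\circ)$ with $n=d$, $W_i = \mathcal{P}_{d-i}$ (after reversing the index, to match the convention of \S\ref{sect: LocalBlowUpCoordinates}), the group $\mathcal{A}_P = \Gm^d$ playing the role of $\Gm^n$, and $\mathcal{H} = \Gm$ the extra scaling factor. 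First I would check that the action of $\mathcal{A}_P$ on $\mathcal{P}_F^\circ$ defined via $\mathcal{A}_P \to \mathcal{Z}_P$ and the action of $\overline{\mathcal{A}}_P$ by scalar multiplication on blocks are, verbatim, the actions \eqref{MonoidActionMdefn}, \eqref{ithcomponentGmbaction} of \S\ref{sect: setup}: indeed the formula $(\lambda_1,\dots,\lambda_d)\times(p_d,\dots,p_0)\mapsto(\lambda_d\cdots\lambda_1 p_d,\dots,\lambda_1 p_1, p_0)$ written out in \S\ref{sect: SpacesParabolics} is exactly the composite of the $i$th-component actions \eqref{ithcomponentGmbaction}, and the equivariance diagram \eqref{GmGmbcommute} holds because the components of $\mathcal{Z}_P$ are central in $\prod_i \GL(V_i/V_{i+1})$. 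With this dictionary in place, the first assertion — the Zariski-dense embedding \eqref{APembeds} into $B_F\Pro(\mathcal{P}_F)$ along the increasing flag $\Pro(\mathcal{P}_0)\subset\Pro(\mathcal{P}_0\oplus\mathcal{P}_1)\subset\cdots$ — is literally Proposition \ref{prop: projversionQuotientBlowup} applied to $W=\mathcal{P}_F$ with the decomposition \eqref{PFdecomp}.

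Next I would handle the strata. The locus $V(\lambda_i, i\in I)$ inside $(\mathcal{A}_P\times\mathcal{H})\ql(\overline{\mathcal{A}}_P\times\mathcal{P}_F^\circ)$ corresponds under the embedding to a stratum of the exceptional divisor of $B_F\Pro(\mathcal{P}_F)$. The identification of which stratum — the one indexed by the flag $F'$ in \eqref{Fprimeflag} — follows from the description of exceptional divisors of iterated linear blow-ups in Proposition \ref{prop: BlowUpStructure} together with the explicit coordinate computation \S\ref{sectExceptional}: there it is shown that $V(\lambda_i)$ is the pullback of the exceptional divisor over $\Pro(\mathcal{P}_0\oplus\cdots\oplus\mathcal{P}_{d-i})$, hence an intersection $\SE_I$ over $i\in I$ corresponds to the subflag obtained by keeping exactly the steps $V_{i_1}\supset\cdots\supset V_{i_k}$. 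For the product decompositions \eqref{ProductsAPFquotients}, I would invoke the Lemma of \S\ref{sectExceptional}, which gives the canonical isomorphism
\[
V(\lambda_i) \cong \Gm^{n-i+1}\ql(\Gmb^{n-i}\times W_n^\times\times\cdots\times W_i^\times)\ \times\ \Gm^i\ql(\Gmb^{i-1}\times W_{i-1}^\times\times\cdots\times W_0^\times),
\]
applied iteratively over $i\in I$; reindexing via $W_i = \mathcal{P}_{d-i}$ turns the factors into exactly the $(\mathcal{A}_{P_{F_m}}\times\mathcal{H})\ql(\overline{\mathcal{A}}_{P_{F_m}}\times\mathcal{P}_{F_m}^\circ)$, where $F_m$ is the filtration induced on $V_{i_m}/V_{i_{m+1}}$. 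On the blow-up side, the matching product $\SE_I\cong\prod_m B_{F_m}\Pro(P_{F_m})$ is \eqref{VlambdatoProduct} iterated (equivalently, Proposition \ref{prop: BlowUpStructure} applied repeatedly), and compatibility of the two sides with the embedding \eqref{APembeds} is automatic because both are built block-by-block from the same monoidal data.

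Finally, for the last assertion \eqref{OpenStratumAndAPquotient}: restricting \eqref{ProductsAPFquotients} to $\{\lambda_j\neq 0: j\notin I\}$ means, in each factor $(\mathcal{A}_{P_{F_m}}\times\mathcal{H})\ql(\overline{\mathcal{A}}_{P_{F_m}}\times\mathcal{P}_{F_m}^\circ)$, deleting all the remaining boundary divisors, which collapses $\overline{\mathcal{A}}_{P_{F_m}}$ down to its open torus and identifies the factor with $(\mathcal{A}_{P_{F_m}}\times\mathcal{H})\ql\mathcal{P}_{F_m}^\circ$; but once one reassembles, the product of these open tori with $\mathcal{H}$ is precisely $\mathcal{A}_{P_{F'}}\times\mathcal{H}$ acting on $\mathcal{P}_F^\circ$, and the target is the open stratum $\SE_I^\circ$ with its embedding into $\prod_m \Pro(P_{F_m})$ from \eqref{SEcircembeds}. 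That the resulting map is an embedding follows from the explicit coordinate form of \eqref{SEcircembeds} at the end of \S\ref{sectExceptional}: for each $\lambda_j\neq 0$ with $j\notin I$ one uses the $j$th copy of $\Gm$ to normalise $\lambda_j=1$, leaving a unique representative, so the map is injective with injective differential.

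\textbf{Main obstacle.} The real work is not any single step but the bookkeeping: making the dictionary between the two index conventions (the increasing flag $F$ on $V$ with $\mathcal{P}_i = \mathrm{Hom}_F(V,V_i/V_{i+1})$ versus the graded vector space $W = W_n\oplus\cdots\oplus W_0$ of \S\ref{sect: setup}, where the blow-up is along $W_0\subset W_0\oplus W_1\subset\cdots$) completely unambiguous, and then checking that the iterated product decompositions on the quotient side and the blow-up side are compatible with \emph{all} the structure maps simultaneously — the blow-down to $\Pro(\mathcal{P}_F)$, the stratification, and the $\GL(V)$-equivariance implicit in the parabolic picture. None of this is deep, but it is where an argument of this type can go wrong, so I would be careful to state the reindexing $W_i = \mathcal{P}_{d-i}$ once and for all and then quote \S\ref{sect: setup} mechanically.
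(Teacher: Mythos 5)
Your overall route is the same as the paper's: quote Proposition \ref{prop: projversionQuotientBlowup} with $W = \mathcal{P}_F$ to get \eqref{APembeds}; iterate the lemma of \S\ref{sectExceptional} (together with \eqref{VlambdatoProduct}) for the product decomposition \eqref{ProductsAPFquotients}; and for \eqref{OpenStratumAndAPquotient} use the fact that each $\lambda_j\neq 0$ locus is a $\Gm$-torsor, so normalising $\lambda_j=1$ eliminates one coordinate and one $\Gm$-factor simultaneously. This is exactly the argument the paper gives, in the same order.

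The one concrete error is your stated dictionary $W_i = \mathcal{P}_{d-i}$, which is wrong. Comparing the actions: in \S\ref{sect: setup}, \eqref{ithcomponentGmbaction} has the $i$th component of $\Gmb$ scaling $W_j$ precisely for $j\geq i$, hence $w_j\mapsto \lambda_1\cdots\lambda_j w_j$; in \S\ref{sect: SpacesParabolics} the action sends $p_j\mapsto\lambda_1\cdots\lambda_j p_j$. So the correct identification is simply $W_j = \mathcal{P}_j$, $n=d$, with no reversal. Your ``reversal'' appears to come from conflating two different uses of the letter $W$: in \S\ref{sect: LocalBlowUpCoordinates} the $W_m$ are a nested sequence of \emph{blow-up loci} (with $W_k\subset\cdots\subset W_1$), whereas in \S\ref{sect: setup} the $W_i$ are the \emph{graded pieces} of a direct sum decomposition, and the blow-up loci are the partial sums $W_0\subset W_0\oplus W_1\subset\cdots$. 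With your $W_i=\mathcal{P}_{d-i}$ the blow-up centers in Proposition \ref{prop: projversionQuotientBlowup} would be $\Pro(\mathcal{P}_d)\subset\Pro(\mathcal{P}_d\oplus\mathcal{P}_{d-1})\subset\cdots$, contradicting the flag stated in the theorem. Since you explicitly flagged bookkeeping as the main obstacle, this is worth catching: once corrected to the identity reindexing, the rest of the argument goes through as you describe.
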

 
 \begin{proof} Apply proposition  \ref{prop: projversionQuotientBlowup} to  $\mathcal{P}_F$. The second part follows by iterating the description of a codimension one exceptional divisor given in \S \ref{sectExceptional}. The only novelty  is   \eqref{OpenStratumAndAPquotient}.
 It follows from the fact that the locus   $\{\lambda_j \neq 0: j\notin I\}$ is isomorphic to
 \[        \left( \prod_{m=0}^k (\mathcal{A}_{P_{F_m}}  \times \mathcal{H}) \ql \left( \overline{\mathcal{A}}_{P_{F_m}}\times \mathcal{P}^{\circ}_{F_m}  \right) \right) \setminus \bigcup_{j\notin I} V(\lambda_j)  \cong (\mathcal{A}_{P_{F'}}  \times \mathcal{H}) \ql \mathcal{P}_F^{\circ}    \ .  \]
 This follows from the isomorphism $\mathcal{P}_F^{\circ}  =  \mathcal{P}^{\circ}_{F_0} \times \ldots \times \mathcal{P}^{\circ}_{F_m} $, and the 
  fact that the locus where $\lambda_j\neq 0$ is a torsor over  the jth copy of $\Gm$ in $\mathcal{A}_{P_{F_m}}$, and so any point with coordinate $\lambda_j\neq 0$ has a unique representative in its  $\Gm$-orbit with $\lambda_j=1$. 
 \end{proof} 
  
  Since the determinant is a homogeneous polynomial in matrix entries, we may define  the determinant  hypersurface
 $\mathrm{Det} \subset \Pro(\mathcal{P}_F)$ to be its  vanishing locus.  Let 
 \[  \widetilde{\mathrm{Det}} \quad \subset\quad  B_F \Pro( \mathcal{P}_F) \]
 denote its strict transform.  
Let $P= \mathrm{GL}_F(V\otimes_{\Q} \R)$ denote the (topological) group of automorphisms  considered in \S\ref{sect: RemindersBS}, and  likewise $A_P\leq P$.
Then we may interpret 
\[ P  \  \cong  \   \left( \mathcal{P}_F \backslash \mathrm{Det} \right) (\R) \] 
and  it follows from theorem \ref{cor: ePasblowup} that there are injections:
\begin{equation} \label{Apinjects}
(A_P  \times H )\setminus \left( \overline{A}_P \times P \right)  \To   \left(( \mathcal{A}_P \times \mathcal{H} )\ql  \left( \overline{\mathcal{A}}_P \times ( \mathcal{P}^{\circ}_F \backslash \mathrm{Det}\right)\right) (\R) \To \left( B_F \Pro(\mathcal{P}_F) \setminus \widetilde{\mathrm{Det}}\right)(\R)  \  .   
\end{equation} 
On the left is a topological group;   in the middle, the real points of a scheme-theoretic group quotient; on the right, the real points of an iterated blow up.

\begin{example}  \label{example: Vflag2} Consider the case when $d=2$ and $F$ is $0\subset V_2 \subset V_1 \subset V$. By choosing a basis of $V$ adapted to this filtration, 
the points of $\mathcal{P}_F$ are given by block lower triangular matrices, and $\overline{\mathcal{A}}_P \cong \Gmb^2$ acts by 
\[  (\lambda_1, \lambda_2)  \times  \begin{pmatrix} P_{0} & & \\ P_{01} & P_1 & \\ P_{02} & P_{12} & P_{2}  \end{pmatrix}  \mapsto      \begin{pmatrix} P_{0} & & \\  
\lambda_1 P_{01} & \lambda_1 P_1 & \\ \lambda_1 \lambda_2 P_{02} &  \lambda_1\lambda_2 P_{12} &  \lambda_1 \lambda_2 P_{2}  \end{pmatrix}   \]
where $\lambda_1, \lambda_2$ are points on $\Gmb^2$. 
 The  points of  $\mathcal{P}_i$, for $i=0,1,2$ are given by the subsets  of matrices which vanish everywhere except in (block) row $i+1$, and the subspaces $\mathcal{P}_0 \subset \mathcal{P}_0 \oplus \mathcal{P}_1$ which are to be  blown up are the spaces of matrices of the form: 
\[    \begin{pmatrix} P_{0} & 0   & 0  \\    0   & 0  & 0  \\   0  &  0  & 0   \end{pmatrix}   \ \subseteq  \  \begin{pmatrix} P_{0} & 0 & 0  \\ P_{01} & P_1 & 0  \\ 0    & 0    & 0    \end{pmatrix}   
\ . \]
The points of the subspace $\mathcal{P}^{\circ}_F$ satisfy $P_0 \neq 0$,   $P_1\neq 0$ and $P_2 \neq 0$. This strictly contains the complement of the locus  $\mathrm{Det}$ 
 defined by $\det(P_0) \det(P_1) \det(P_2) =0$.
 
 The   stratification of   $\Gm \ql (\Gmb^2 \times \mathcal{P}^{\circ}_F)$  is generated by the equations  $  \lambda_1=0$ and $\lambda_2=0$ (below left). The strata themselves are described as quotients  (below, middle), which   embed into the following products of blow-ups (below, right):
\[  \begin{array}{rccc}
 V(\lambda_1)   \quad :  & \left(\Gm \ql  \mathcal{P}^{\circ}_0\right) \times  \Gm^2 \ql\left( \Gmb \times \mathcal{P}^{\circ}_1\times  \mathcal{P}^{\circ}_2   \right)  &   \subseteq &   \Pro(\mathcal{P}_0) \times B_{\,\Pro(\mathcal{P}_1)} \Pro( \mathcal{P}_2 \oplus \mathcal{P}_1) \nonumber \\ 
   V(\lambda_2)    \quad : &  \Gm^2 \ql   \left( \Gmb \times \mathcal{P}^{\circ}_0\times  \mathcal{P}^{\circ}_1   \right)  \times  \left(\Gm \ql  \mathcal{P}^{\circ}_2 \right)  &  \subseteq  &  B_{\,\Pro(\mathcal{P}_0)}  \Pro(\mathcal{P}_0\oplus \mathcal{P}_1) \times\Pro( \mathcal{P}_2 ) \nonumber \\ 
 V(\lambda_1,\lambda_2)   \quad   : &  \left( \Gm \ql  \mathcal{P}^{\circ}_0\right)  \times  \left( \Gm \ql  \mathcal{P}^{\circ}_1\right) \times \left(\Gm \ql  \mathcal{P}^{\circ}_2  \right)    & \subseteq  &    \Pro(\mathcal{P}_0) \times \Pro(\mathcal{P}_1) \times \Pro(\mathcal{P}_2) \nonumber\  . 
 \end{array}  \ .
 \]
 These strata, respectively, correspond to the flags:
 $0 \subset V_1 \subset V$ for $\lambda_1=0$;  $0 \subset V_2 \subset V$ for $\lambda_2=0$; and the full flag $0\subset V_2  \subset V_1 \subset V$ for $\lambda_1=\lambda_2=0$.  
 
 See Example \ref{ex: Vflag2cont} for the continuation of this example.

 \end{example}

\subsection{Spaces of quadratic forms, blow-ups and $X(P)$}
Let $F$, $P$ be as in \S\ref{sect: SpacesParabolics}. There is a natural morphism of algebraic varieties 
\begin{eqnarray} \label{PFtoPQV}  \Pro(\mathcal{P}_F) & \To &  \Pro(\Quad(V))   \\
M  &\mapsto&  M^T M  \nonumber 
\end{eqnarray} 
which sends  the subspace
$\Pro(\mathcal{P}_0 \oplus \ldots \oplus \mathcal{P}_i)$ to $\Pro(\Quad(V/V_{i+1})) \subset \Pro(\Quad(V)) $, the projective space of the subspace of quadratic forms which vanish on $V_{i+1}$.

Consequently,  by  the universal property of blow-ups, \eqref{PFtoPQV} induces a morphism 
\begin{equation} \label{blowupMTM}   B_F  \Pro(\mathcal{P}_F) \To    B_F \Pro(\Quad(V))  
\end{equation} 
where 
$B_F \Pro(\Quad(V))$ is the iterated blow-up of $\Pro(\Quad(V))$ along the linear subspaces
\[   \Pro\left( \Quad(V/V_1)\right)  \  \subset \   \Pro\left(\Quad(V/V_{2})\right) \  \subset \  \ldots \  \subset \  \Pro\left( \Quad(V/V_d) \right)  \]
associated to the flag $F$. 
Combining with  theorem \ref{cor: ePasblowup}   we deduce a natural map:
\begin{equation} \label{APtoQV}    (\mathcal{A}_P  \times \mathcal{H}) \ql \left( \overline{\mathcal{A}}_P \times \mathcal{P}^{\circ}_F \right) \To  B_F  \Pro(\mathcal{P}_F) \To    B_F \Pro(\Quad(V))     \  .  \end{equation}
 Recall from \S\ref{sect: BlowUpandDet} that  the   exceptional divisor $\mathcal{E}$ of  $B_F \Pro(\Quad(V))$  defines a stratification 
 whose strata $\mathcal{E}_{F'}$ are in one-to-one correspondence with flags $F' \leq F$  \eqref{Fprimeflag}, and 
are canonically isomorphic to a product of iterated  blow-ups 
\[  \mathcal{E}_{F'} \cong  \prod_{m=0}^k  B_{F_i}  \Pro\left( \frac{ \Quad(V/V_{i_{m+1}})}{ \Quad(V/V_{i_{m}})} \right) \ ,   \  \]
where $F_i$ is the induced filtration on the successive quotients $V_i/V_{i+1}$ of   \eqref{Fprimeflag}.
 Let $\mathrm{Det} \subset \Pro(\Quad(V))$ denote the vanishing locus of the determinant, and $\widetilde{\mathrm{Det}}$ its strict transform in $B_F \Pro(\Quad(V))$. 
It was proven in proposition \ref{prop: structureoffacesinblownupcone}  that 
\begin{equation} \label{Detfactorizes}  \mathcal{E}_{F'}   \setminus \left(\mathcal{E}_{F'}   \cap \widetilde{\mathrm{Det}}  \right)  \cong \prod_{m=0}^k    \left(  \Pro\left( \frac{ \Quad(V/V_{i_{m+1}})}{ \Quad(V/V_{i_{m}})} \right)  \ \setminus  \ \mathrm{Det}\big|_{V_{i_m}/V_{i_{m+1}}}  \right) \  
\end{equation} 
where  $\mathrm{Det}\big|_{V_{i_m}/V_{i_{m+1}}}$ is the zero locus of the homogeneous map 
\[  \frac{ \Quad(V/V_{i_{m+1}})}{ \Quad(V/V_{i_{m}})}  \To \Quad(V_{i_m}/V_{i_{m+1}}) \overset{\mathrm{det}}{\To} \Q  , \]
where the first map is restriction of quadratic forms (compare \S\ref{sect: CollapseNormal}).

\begin{thm} \label{thm: XPmapstoBlowup}  The map \eqref{APtoQV} induces an injective  continuous map 
\begin{equation} \label{XPtoBlowUp}   X(P)   \To  \left( B_F \, \Pro(Q(V))  \ \setminus \  \widetilde{\mathrm{Det}} \right) (\R)  \ . 
\end{equation} 
For every flag $F'$ of the form \eqref{Fprimeflag} the map \eqref{XPtoBlowUp} restricts to  a  map 
\begin{equation} \label{ePtostratum}  e(P_{F'})   \To      \mathcal{E}_{F'}   \setminus \left(\mathcal{E}_{F'}   \cap \widetilde{\mathrm{Det}}  \right) (\R)   
\end{equation} 
to the  corresponding stratum in the exceptional locus of $ B_F \, \Pro(Q(V))$.
The image of $e(P_{F'})$, via the identification \eqref{Detfactorizes}, is the product $e(P_{F'}) \cong \prod_{m} X_m$ where 
\[    X_m \ \subset \ \Pro\left( \frac{ \Quad(V/V_{i_{m+1}})}{ \Quad(V/V_{i_{m}})}  \right)(\R)   \]
 is the set of projective classes of matrices whose image in 
$\Pro(\Quad(V_{i_m}/V_{i_{m+1}}))(\R)$ is positive definite. 
\end{thm}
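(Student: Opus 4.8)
The strategy is to compose the constructions already in place: theorem \ref{cor: ePasblowup} embeds $(\mathcal{A}_P\times\mathcal{H})\ql(\overline{\mathcal{A}}_P\times\mathcal{P}^{\circ}_F)$ into $B_F\Pro(\mathcal{P}_F)$ Zariski-densely, the morphism \eqref{PFtoPQV} sending $M\mapsto M^TM$ induces \eqref{blowupMTM} on blow-ups, and we already know by \eqref{Apinjects} that $(A_P\times H)\setminus(\overline{A}_P\times P)$ injects into the real points. The point is to check that this descends to the geodesic-action quotient $X(P)=A_P\setminus(\overline{A}_P\times X)$ and lands in the complement of the strict transform of the determinant, and then to identify the images of the boundary strata $e(P_{F'})$.

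First I would recall from \eqref{XasquotientofP} that $X\cong HK_P\setminus P$ via $g\mapsto g^Tg$, and that the geodesic action of $A_P$ on $X$ is induced from left multiplication of $Z_P$ on $P$. Translating into the algebraic picture, the map $M\mapsto M^TM$ identifies the topological quotient $(A_P\times H)\setminus(\overline{A}_P\times P)$ of \eqref{Apinjects} — more precisely, its further quotient by the compact group $K_P$, which acts on $P$ on the left and is killed by $M\mapsto M^TM$ — with $X(P)$. Concretely, $X(P)=A_P\setminus(\overline{A}_P\times X)=A_P\setminus(\overline{A}_P\times (HK_P\setminus P))$, and since $K_P$ commutes with $\overline{A}_P$, $H$ and $A_P$, this is $(K_P)\setminus\big((A_P\times H)\setminus(\overline{A}_P\times P)\big)$. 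So the composite in \eqref{Apinjects} followed by \eqref{blowupMTM} factors through the quotient by $K_P$, giving a well-defined continuous map \eqref{XPtoBlowUp}; one then checks it lands in the complement of $\widetilde{\mathrm{Det}}$ because the image of $X$ consists of positive definite forms, hence avoids $\mathrm{Det}$ on the open stratum, and the boundary behaviour is handled stratum-by-stratum below. Injectivity of \eqref{XPtoBlowUp} follows from injectivity of \eqref{Apinjects} together with the fact that $M\mapsto M^TM$ is injective modulo the left $K_P=O(V_0/V_1)\times\cdots\times O(V_d/V_{d+1})$-action on block-triangular $M$ (this is the standard polar-decomposition / Gram-matrix argument applied block by block).

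Next I would analyse the boundary strata. The locus $V(\lambda_i,i\in I)$ inside $(\mathcal{A}_P\times\mathcal{H})\ql(\overline{\mathcal{A}}_P\times\mathcal{P}^{\circ}_F)$ corresponds, by the $\overline{A}_P$-stratification of $\overline{A}_P\times X$, exactly to the face $e(P_{F'})$ of $X(P)$ for the flag $F'$ of \eqref{Fprimeflag}. By theorem \ref{cor: ePasblowup}, \eqref{ProductsAPFquotients}, this locus maps into the stratum $\SE_I$ of $B_F\Pro(\mathcal{P}_F)$, which is a product $\prod_m B_{F_m}\Pro(\mathcal{P}_{F_m})$; composing with \eqref{blowupMTM} and using the compatibility of the block-wise $M\mapsto M^TM$ map with the product decomposition, this maps into the product $\prod_m B_{F_m}\Pro(\Quad(V_{i_m}/V_{i_{m+1}}))$ and hence into $\mathcal{E}_{F'}$. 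This yields \eqref{ePtostratum}. To identify the image, I would use the explicit block form: a point of $e(P_{F'})$ is represented, after the geodesic degeneration, by a block-triangular matrix whose $m$-th graded block $M_m\in\mathcal{P}_{F_m}^{\times}$ (with $M_m^{(0)}$, the diagonal-most sub-block on $V_{i_m}/V_{i_{m+1}}$, invertible) maps under $M_m\mapsto M_m^TM_m$ to a symmetric matrix whose restriction to $V_{i_m}/V_{i_{m+1}}$ is $(M_m^{(0)})^T M_m^{(0)}$, which is positive definite; conversely every such matrix arises. This is precisely the set $X_m$ defined in the statement, and comparing with the factorisation \eqref{Detfactorizes} — where $\mathrm{Det}|_{V_{i_m}/V_{i_{m+1}}}$ is the determinant of the restriction to $V_{i_m}/V_{i_{m+1}}$ — shows $e(P_{F'})\cong\prod_m X_m$ sits inside $\mathcal{E}_{F'}\setminus(\mathcal{E}_{F'}\cap\widetilde{\mathrm{Det}})(\R)$, giving the final assertion.

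\textbf{Main obstacle.} The routine parts are the group-theoretic bookkeeping; the real work is verifying that the block-wise Gram map $M\mapsto M^TM$ is compatible with \emph{both} the blow-up structure (so that \eqref{blowupMTM} respects the product decompositions of exceptional divisors on each side) \emph{and} the $\overline{A}_P$-stratification, so that passing to the limit $\lambda_i\to 0$ on the $\mathcal{P}_F$-side matches the limit defining $e(P)$ on the $X$-side. Concretely one must show that in the local blow-up coordinates of \S\ref{sect: LocalBlowUpCoordinates}, the rescaled matrix $M^T M$ degenerates, as the exceptional parameter $\to 0$, to exactly the block-diagonal Gram form predicted by \eqref{Detfactorizes}; this is where one invokes the asymptotic determinant factorisation \eqref{detfact} (proposition \ref{lem: stricttransformofDet}) and its refinement proposition \ref{prop: structureoffacesinblownupcone}. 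I expect the bulk of the proof to be making this degeneration computation precise and checking that the positive-definiteness of the diagonal blocks is preserved in the limit, so that the image genuinely lies in the $\widetilde{\mathrm{Det}}$-complement and not merely in its closure.
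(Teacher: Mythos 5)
Your proposal is correct and follows essentially the same strategy as the paper: factor through the $K_P$-quotient to descend the map \eqref{APtoQV} to $X(P)$, then identify boundary strata via the block-product structure of theorem \ref{cor: ePasblowup} and the explicit Gram computation. The place where the paper is slicker is the step you flag as the ``main obstacle.'' You propose verifying by direct degeneration computation in local blow-up coordinates that the $K_P$-invariance and the block structure are compatible as $\lambda_i\to 0$; the paper instead observes that the left-multiplication invariance by the group subscheme $\mathcal{K}_{P_F}=\{O: O^TO=1\}$ holds on the Zariski-dense open locus (the Gram map) and, being an algebraic condition that commutes with the $\mathcal{A}_P$ and $\mathcal{H}$ actions, automatically extends to the closure in $B_F\Pro(\mathcal{P}_F)$. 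This one-line Zariski-density argument avoids any asymptotic computation and makes the descent to $X(P)$ purely formal; the compatibility with the exceptional stratification then falls out of \eqref{ProductsAPFquotients} and \eqref{blowupMTM}, and one only needs the explicit block computation \eqref{inproofAUBmatrix} for the final surjectivity onto $\prod_m X_m$, where the key observation is that $B$ being invertible lets one take $U^TB$ to be an arbitrary block, which you hint at but should make explicit. Your route would work, but the algebraicity shortcut is both shorter and also makes it transparent that nothing goes wrong along exceptional divisors.
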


  \begin{proof}
 The equation $\det(M^TM) = \det(M)^2$ implies that \eqref{PFtoPQV} preserves  the determinant loci $\mathrm{Det}$ in $\Pro(\mathcal{P}_F)$ and $\Pro(\Quad(V))$, and so the morphism  \eqref{blowupMTM} preserves their strict transforms.   
    It follows that  \eqref{APtoQV} defines a natural  map
\begin{equation} \label{inproof: APtoBF}    (\mathcal{A}_P \times \mathcal{H} )  \ql \left( \overline{\mathcal{A}}_P \times \mathcal{P}^{\circ}_F \right)  \To  B_F \Pro(\Quad(V)) \setminus \  \widetilde{\mathrm{Det}}       \end{equation} 
whose restriction to the open locus $\{\lambda_i\neq 0,  \hbox{ for } 1\leq i \leq d\}$ is  the morphism
\[ \Gm \ql \mathcal{P}^{\circ}_F \To  \Pro(\Quad(V)) \setminus \mathrm{Det}  \]
which, in coordinates given by a basis of $V$, sends a matrix $M$ to the projective class of $M^T M$. This map is invariant by left-multiplication by the subscheme
$\mathcal{K}_{P_F} \leq    \mathcal{P}^{\circ}_F$
whose points are  matrices $O$ such that  $ O^TO = 1$, since one has  $(OM)^T OM =M$.  
Since this equation is algebraic, it remains true on the Zariski closure of $\mathcal{P}^{\circ}_F$, and hence \eqref{inproof: APtoBF} is also invariant under left multiplication by $\mathcal{K}_{P_F}$, since   the action of $\mathcal{K}_{P_F}$ on $\mathcal{P}^{\circ}_F$ commutes with that of $\mathcal{A}_P$ and $\mathcal{H}$.
We deduce that the map  of topological spaces
  \[  ( A_P  \times H) \setminus \left(\overline{A}_P \times P\right)      \To    \left( B_F \Pro(\Quad(V)) \setminus \  \widetilde{\mathrm{Det}}   \right)(\R) \]
which is induced by  taking real points of  the map 
$\eqref{inproof: APtoBF}$  and composing with  the first map in \eqref{Apinjects},  is   $K_P \leq \mathcal{K}_{P_F}(\R)$ invariant and so   passes to the quotient 
\[  X(P) =   Z_P K_P  \setminus  \left(\overline{A}_P \times P\right)  \To  \left(B_F \Pro(\Quad(V)) \setminus \  \widetilde{\mathrm{Det}}  \right)(\R)    \  . \]
 It gives a well-defined continuous map   \eqref{XPtoBlowUp} such that  the following diagram  commutes: 
    \[
\begin{array}{ccc}
Z_P \setminus \left(\overline{A}_P \times P\right)  &  \To     &  \left( B_F \Pro(P_F) \setminus \  \widetilde{\mathrm{Det}}   \right)(\R)  \\
  \downarrow  &   &  \downarrow   \\
X(P) =    A_P \setminus  \left(\overline{A}_P \times X\right)  &  \To   &    \left( B_F \Pro(\Quad(V)) \setminus \  \widetilde{\mathrm{Det}}   \right)(\R)  
\end{array} \ , 
\]
where the horizontal map along the top is the morphism \eqref{APembeds}   and 
  the vertical maps are $M\mapsto M^TM$ on the left, and \eqref{blowupMTM} on the right. 
  
  Now compute the restriction of the map \eqref{XPtoBlowUp} on each stratum indexed by a flag $F'$ of the form \eqref{Fprimeflag}.
 The  map \eqref{inproof: APtoBF} restricts to a map on exceptional divisors 
   \[   ( \mathcal{A}_{P_{F'}} \times \mathcal{H} )  \ql \mathcal{P}^{\circ}_F \  \overset{\eqref{OpenStratumAndAPquotient}}{\To}   \ \mathcal{S}^{\circ}_{F'}  \overset{\eqref{blowupMTM}}{\To} \mathcal{E}_{F'}\ . \]
    Since it is    left-invariant by $\mathcal{K}_{P_F}$,  it descends, after taking real points,  to the quotient 
  \[ e(P_{F'})=  A_{P_{F'}} \setminus X  \cong Z_{P_{F'}} K_{P_{F'}} \setminus P_{F'}  \To   \mathcal{E}_{F'} \backslash \left(\mathcal{E}_{F'} \setminus \widetilde{\mathrm{Det}}\right) (\R)\ .\]
 This morphism is injective on each stratum $e(P_{F'})$ since $ M^T M=N^T N$ holds for two invertible real matrices $M,N$ if and only if $M=ON$, for $O$ an orthogonal matrix.
 The final statement follows from the fact that \eqref{XPtoBlowUp}  identifies the open $X\subset X(P)$  with  the subspace of projective classes of positive-definite matrices inside  $ \Pro(\Quad(V))(\R)$. This proves the result for the trivial flag. In the case of a flag $0\subset V_1 \subset V$ of length $d=1$,  the map from $e(P)$ to the corresponding exceptional locus is given on the level of matrices  by 
 \begin{equation} \label{inproofAUBmatrix}   \begin{pmatrix}  A & 0 \\ U & B \end{pmatrix}    \quad   \mapsto  \quad   \begin{pmatrix}  A^TA  & 0 \\ 0 & 0  \end{pmatrix}  , \begin{pmatrix}  0 & U^TB \\    B^TU  &B^TB  \end{pmatrix} \ ,\end{equation}
where the matrices are on the right denote projective equivalence classes. We can choose $A,B$ such that $A^TA$, and $B^TB$  are arbitrary (projective classes of) positive definite real symmetric matrices, and finally choose $U$ such that $U^TB$ is any other matrix, since $B$ is invertible. This indeed proves that $e(P)$ surjects onto the  space described in the statement of the theorem. The general case, for a flag of arbitrary length, is similar (or follows from the case of length one by induction). 
  \end{proof}

\begin{ex} \label{ex: Vflag2cont}
As in  example \ref{example: Vflag2}, 
use  the splitting $V \cong V/V_1 \oplus   V_1/V_2 \oplus V_2 $ to write endomorphisms of $V$ in block matrix form. 
Consider the deepest stratum which is associated to the full flag $F$,  which corresponds to $\mathcal{E}_{12}$ (denoted  $V(\lambda_1,\lambda_2)$ in  example \ref{example: Vflag2})). 

The morphism \eqref{ePtostratum}  
\[  e(P) \To  \Pro(\Quad(V/V_1))(\R) \times  \Pro\left(\frac{\Quad(V/V_2)}{\Quad(V/V_1)}\right) (\R)\times  \Pro\left( \frac{\Quad(V)}{\Quad(V/V_2)}\right)(\R)\]
is given explicitly on matrix representatives in $P$ of classes in $e(P) =A_P K_P \setminus P$  by:
 \[ \begin{pmatrix} P_{0} & & \\ P_{01} & P_1 & \\ P_{02} & P_{12} & P_{2}  \end{pmatrix}  \mapsto   \begin{pmatrix} P_{0}^T P_{0} &   0&0 \\ 0 & 0  & 0 \\ 0 &0  &0  \end{pmatrix}   \ , \  \begin{pmatrix} 0 &  P_{01}^T P_1 & 0  \\ P_{1}^T P_{01}  & P_{1}^T P_1 & 0  \\ 0 &0 & 0  \end{pmatrix} 
 \ , \  \begin{pmatrix} 0 &  0 & P_{02}^T P_{2}   \\ 0 & 0  & P_{12}^T P_{2}     \\ P_2^T P_{02}  & P_2^T P_{12} & P_2^T P_2  \end{pmatrix}   \]

\noindent  Now consider  the stratum associated to the flag $F':  0\subset V_1 \subset V$, which corresponds to the exceptional divisor $\mathcal{E}_1$ (corresponding to $V(\lambda_1)$ in  example \ref{example: Vflag2}). The morphism  \eqref{ePtostratum}  
\[     e(P')   \To    \Pro(\Quad(V/V_1))(\R) \times   \Pro\left( \frac{\Quad(V)}{\Quad(V/V_1)}\right)(\R) \]
 is given on matrix representatives  in $P$ of classes  in $e(P') =A_{P'} K_{P'} \setminus P' =  A_{P'} K_P \setminus P$  by 
setting  $A =   P_0$,   $U = \left(\begin{smallmatrix} P_{01} \\ P_{02} \end{smallmatrix}\right)$, and    $B = \left( \begin{smallmatrix} P_{1} & 0  \\ P_{12} & P_2 \end{smallmatrix}\right)$  in   \eqref{inproofAUBmatrix}. This  gives the map
\[    \begin{pmatrix} P_{0} & & \\ P_{01} & P_1 & \\ P_{02} & P_{12} & P_{2}  \end{pmatrix}  \mapsto   \begin{pmatrix} P_{0}^T P_{0} &   0&0 \\ 0 & 0  & 0 \\ 0 &0  &0  \end{pmatrix}   
 \ , \  \begin{pmatrix} 0 &  P_{01}^T P_1 +P_{02}^TP_{12}  & P_{02}^T P_{2}   \\ P_{1}^T P_{01} +P_{12}^TP_{02}  & P_{1}^T P_1  +P_{12}^T P_{12}  & P_{12}^T P_{2}     \\ P_2^T P_{02}  & P_2^T P_{12} & P_2^T P_2  \end{pmatrix} 
      \]
\end{ex}
\subsection{Proof of the statements in \S\ref{sect: BSstatement}}

\subsubsection{Proof of theorem  \ref{thm: MainXBStoBlowup}} 
Theorem   \ref{thm: XPmapstoBlowup}  provides an injective map of stratified spaces from $X(P)$ to  the iterated blow-up $B_F \Pro(Q(V))$. 
By the argument of  Proposition \ref{prop: UniversalANDIdeal}  (ii), there is a canonical map $\BBB(\R)\rightarrow   B_F \Pro(Q(V))$ which is an isomorphism away from exceptional divisors. Since the image of the exceptional divisors is contained in the (strict transform) of the determinant locus, which does not meet the image of $X(P)$, it follows that the continuous map $X(P) \rightarrow  B_F \Pro(Q(V)) \setminus \widetilde{\Det}$ canonically lifts to a map
$X(P) \rightarrow  (\BBB\setminus \widetilde{\Det})(\R)$.  Via the description \eqref{XBSasunionXP} of $X^{\mathrm{BS}}$ as a union of $X(P)$, for $P$ rational parabolics, we deduce the canonical continous map $X^{\mathrm{BS}}\rightarrow (\BBB\setminus \widetilde{\Det})(\R)$.  It is injective since, by   Theorem   \ref{thm: XPmapstoBlowup}, the image of each open stratum $e(P)$  of $X^{\mathrm{BS}}$ is contained in the complement of the determinant locus in a unique exceptional stratum. They are disjoint, since the intersections of exceptional strata are contained in the determinant loci. 
   The statement abut the closure of $X$ will follow from the argument in the next paragraph below. 
      The description of the image of $e(P)$ follows from   Theorem   \ref{thm: XPmapstoBlowup}   and   \eqref{Detfactorizes}.
 Finally, the group of rational points  $\GL(V)(\Q)$ acts on $X$ via $P \mapsto Pg$. Correspondingly, the 
 algebraic group $\GL(V)$ acts upon $\Pro(\Quad(V))$ via $(g,M) \mapsto g^T M g$. Since  it permutes the set of  flags \eqref{FlagF} and hence the centers of all the blow-ups which define $\BBB$, it extends to a natural action of $\GL(V)$ upon $\BBB$. Its group of $\Q$-points  $\GL(V)(\Q)$ acts algebraically upon $\BBB$. This action is by compatible, via  $P \mapsto P^TP$,  with the  action of $\GL(V)(\Q)$ upon $X$.

 \subsubsection{Proof of corollary  \ref{cor: LAgBBtropisBS}} 
 Consider a $\GL_g(\Z)$-equivariant admissible decomposition $X= \bigcup_{\sigma} \sigma$, obtained, for example, as the intersection of an admissible decomposition  \S\ref{sect: AdmissDecomp} with the set of positive definite symmetric matrices. 
 Let us identify $X^{\mathrm{BS}}$ with its image in $ \BBB(\R)$ under the map $f$. Thus we may write 
 \[  X \subset X^{\mathrm{BS}}  \subset \BBB(\R)\ . \]
 Consider a polyhedron $\sigma \subset X$, and denote its closure in $X^{\mathrm{BS}}$ by $\overline{\sigma}$. Its closure in $\BBB(\R)$ is the compact polyhedron
 $\sigma^{\BB}=\sigma^{\BB^{\min,\sigma}}$ in the minimal blow-up of $\sigma$, since, by definition, the latter is the topological closure of the inverse image of the interior of $\sigma$ in its minimal blow up.  The equality $\sigma^{\BB}=\sigma^{\BB^{\min,\sigma}}$ follows from invariance under extraneous modifications (proposition \ref{prop: minimalblow-down}). 
 By proposition \ref{prop: structureoffacesinblownupcone}, each open face  of $\sigma^{\BB}$ is a  product of strictly positive polyhedra, and is therefore contained in the image of an $e(P)$, for a certain $P$, by  the last part of theorem    \ref{thm: XPmapstoBlowup}. It follows that $\sigma^{\BB} \subset X^{\mathrm{BS}}$.  Thus 
 \[X \subset  \bigcup_{\sigma} \sigma^{\BB} \subset X^{\mathrm{BS}} \   \]
  and in particular   $\overline{\sigma} = \sigma^{\mathrm{BS}}$.
   By construction, $X(P) = A_P \setminus (\overline{A}_P \times X)$ is contained in the closure of $X = A_P \setminus (A_P \times X)$ (let the coordinates $\lambda_i$ on $A_P$ tend to zero) and so $X^{\mathrm{BS}}$ is contained in the closure of $X$. The previous equation then implies  that
  \begin{equation} \label{XBSdecomp}  X^{\mathrm{BS}} =      \bigcup_{\sigma} \sigma^{\BB} \end{equation}
  It follows that $X^{\mathrm{BS}}$ equals the closure of $X$ in $\BBB(\R)$, which was the remaining statement to prove in theorem \ref{thm: MainXBStoBlowup}.
 The decomposition \eqref{XBSdecomp}  has similar properties to an admissible decomposition  as in \S\ref{sect: AdmissDecomp} (finitely many $\GL_g(\Z)$ orbits, intersections are closures of faces, etc).      Conclude using the fact that, by definition \eqref{LAgtropBBasFunctoronInvertedBlowups}, the topological space $\left| \LA_g^{\trop,\BBB}\right|$ is the quotient of  $\bigcup_{\sigma} \sigma^{\BB}$ by the action of $\GL_g(\Z)$, since the gluing relations in the diagram category $\mathcal{D}_g^{\trop,\BB}$ are  those induced by $\GL_g(\Z)$.

\bibliographystyle{alpha}

\bibliography{biblio}

\end{document}